\newtheorem{lemma}{{\bf Lemma}}[section]
\newtheorem{prop}[lemma]{{\bf Proposition}}
\newtheorem{cor}[lemma]{{\bf Corollary}}
\newtheorem{theorem}[lemma]{{\bf Theorem}}
\newtheorem{remark}[lemma]{{\bf Remark}}
\newtheorem{example}[lemma]{{\bf Example}}
\newtheorem{defn}[lemma]{{\bf Definition}}
\newenvironment{proof}%
{\begin{trivlist}\item[]{\bf Proof} }%
{\hfill$\Box$\end{trivlist}}
\newcommand{\trsub}[1]{\mbox{$\mbox{\rm tr}_{_{#1}}$}}
\newcommand{\mma}{multi--matrix algebra}
\newcommand{\Diprod}[2]{\left\langle{}#1,#2\right\rangle_{tr_{_{D}}}}
\newcommand{\csq}{commuting square}
\newcommand{\Rijklamsq}[3]{\frac{\!\sqrt{R_{#1}(\!\lambda\!)R_{#2}(\!\lambda\!)}}{R_{#3}(\lambda)}}
\newcommand{\Rijksq}[3]{\frac{\sqrt{R_{#1}R_{#2}}}{R_{#3}}}
\newcommand{\Rnkvot}[2]{\frac{R_{#1}}{R_{#2}}}
\newcommand{\Rnt}[1]{R_{#1}(t)}
\newcommand{\firestar}[4]{\mbox{S$(#1$,$#2$,$#3$,$#4)$}}
\newcommand{\deltasub}[1]{\mbox{$\delta_{_{#1}}$}}
\newcommand{\delsqi}[1]{\mbox{$\delta_{_{#1}}^{2}$}}      
\newcommand{\alialj}[2]{\mbox{$\alpha_{_{#1}}\alpha_{_{#2}} $}}
\newcommand{\uij}[2]{\mbox{$u_{_{#1#2}}$}}
\newcommand{\sqalialj}[2]{\mbox{$\sqrt{\alpha_{_{#1}}\alpha_{_{#2}} }$}}
\newcommand{\vij}[2]{\mbox{$v_{_{#1}}^{#2}$}}
\newcommand{\rijkl}[4]{\mbox{$\frac{R_{#1}R_{#2}}{R_{#3}R_{#4}}$}}
\newcommand{\rikl}[3]{\mbox{$\frac{R_{#1}}{R_{#2}R_{#3}}$}}
\newcommand{\rijl}[3]{\mbox{$\frac{R_{#1}R_{#2}}{R_{#3}}$}}
\newcommand{\rik}[2]{\mbox{$\frac{R_{#1}}{R_{#2}}$}}
\newcommand{\ri}[1]{\mbox{$R_{#1}$}}
\newcommand{\sqrijkl}[4]{\mbox{$\sqrt{\frac{R_{#1}R_{#2}}{R_{#3}R_{#4}}}$}}
\newcommand{\sqrikl}[3]{\mbox{$\sqrt{\frac{R_{#1}}{R_{#2}R_{#3}}}$}}
\newcommand{\ca}{\mbox{${\cal A}$}}
\newcommand{\cb}{\mbox{${\cal B}$}}
\newcommand{\cc}{\mbox{${\cal C}$}}
\newcommand{\cd}{\mbox{${\cal D}$}}
\newcommand{\sqdelal}[2]{\mbox{$\sqrt{\frac{\lambda\delta_{1}^{^{2}}}{\alpha_{_{#1}}\alpha_{_{#2}}}}$}}
\newcommand{\sqalikl}[3]{\mbox{$\sqrt{\frac{\alpha_{_{#1}}}{\alpha_{_{#2}}\alpha_{_{#3}}}}$}}
\newcommand{\sqalijkl}[4]{\mbox{$\sqrt{\frac{\alpha_{_{#1}}\alpha_{_{#2}}}{\alpha_{_{#3}}\alpha_{_{#4}}}}$}}
\newcommand{\sqlamdel}[2]{\mbox{$\sqrt{\frac{\lambda\delta_{1}^{^{2}}}{\delta_{_{#1}}\delta_{_{#2}}}}$}}
\newcommand{\sqdelijkl}[4]{\mbox{$\sqrt{\frac{\delta_{_{#1}}\delta_{_{#2}}}{\delta_{_{#3}}\delta_{_{#4}}}}$}}
\newcommand{\gam}[1]{\mbox{$\gamma_{_{#1}}$}}
\newcommand{\al}[1]{\mbox{$\alpha_{_{#1}}$}}
\newcommand{\del}[1]{\mbox{$\delta_{_{#1}}$}}
\newcommand{\lam}[1]{\mbox{$\lambda_{_{#1}}$}}
\newcommand{\eij}[2]{\mbox{$e_{_{#1#2}}$}}
\newcommand{\eijeij}[2]{\mbox{$e_{_{#1#2}}\otimes{}\overline{e}_{_{#1#2}}$}}
\newcommand{\xx}[1]{\mbox{$x_{_{#1}}\otimes{}\overline{x}_{_{#1}}$}}
\newcommand{\yy}[1]{\mbox{$y_{_{#1}}\otimes{}\overline{y}_{_{#1}}$}}
\newcommand{\ee}[1]{\mbox{$e_{_{#1}}\otimes{}\overline{e}_{_{#1}}$}}
\newcommand{\aij}[2]{\mbox{$a_{_{#1#2}}$}}
\newcommand{\qij}[2]{\mbox{$q_{_{#1#2}}$}}
\newcommand{\qijs}[2]{\mbox{$q_{_{#1#2}}^{s}$}}
\newcommand{\qa}{\mbox{$q^{a}$}}
\newcommand{\eefrac}[2]{\frac{e^{#1x}-e^{-#1x}}{e^{#2x}-e^{-#2x}}}
\newcommand{\trestar}[3]{\mbox{S$(#1$,$#2$,$#3)$}}
\newcommand{\Rnlam}[2]{R_{#1}(\lambda_{_{#2}})}
\newcommand{\Rnlamkvot}[3]{\frac{R_{#1}(\lambda_{_{#3}})}{R_{#2}(\lambda_{_{#3}})}}
\newcommand{\edg}[2]{\mbox{$(\{#1\},\{#2\})$}}
\newcommand{\tinyedg}[2]{\mbox{{\tiny $(\{#1\},\{#2\})$}}}
\newcommand{\edgu}[2]{\mbox{$(\{#1\},\{#2\})_{_{u}}$}}
\newcommand{\expp}[1]{\mbox{$e^{^{i(#1)x}}-e^{^{-i(#1)x}}$}}
\newcommand{\edgv}[2]{\mbox{$(\{#1\},\{#2\})_{_{v}}$}}
\newcommand{\ta}[4]{\mbox{\tiny{$
\frac{\beta_{_{#1}}\beta_{_{#2}}}{\beta_{_{#3}}\beta_{_{#4}}}$}}}
\newcommand{\taa}[4]{\mbox{\scriptsize{$
\sqrt{\frac{\alpha_{_{#1}}\alpha_{_{#2}}}{\alpha_{_{#3}}\alpha_{_{#4}}}}$}}}
\newcommand{\AenexAnul}{\mbox{$\langle{}A_{1},e_{A_{0}}\rangle{}$}} 
\newcommand{\BenexBnul}{\mbox{$\langle{}B_{1},e_{B_{0}}\rangle{}$}}
\newcommand{\BexA}{\mbox{$\langle{}B,e_{A}\rangle{}$}} 
\newcommand{\AenexBnul}{\mbox{$\{A_{1},e_{B_{0}}\}''$}} 
\newcommand{\eAnul}{\mbox{$e_{A_{0}}$}} 
\newcommand{\eBnul}{\mbox{$e_{B_{0}}$}} 
\newcommand{\eA}{\mbox{$e_{A}$}} 
\newcommand{\EAnul}{\mbox{$E_{A_{0}}$}} 
\newcommand{\EAen}{\mbox{$E_{A_{1}}$}} 
\newcommand{\EBnul}{\mbox{$E_{B_{0}}$}} 
\newcommand{\EBn}{\mbox{$E_{B_{n}}$}}
\newcommand{\EA}{\mbox{$E_{A}$}} 
\newcommand{\trAen}{\mbox{$tr_{A_{1}}$}} 
\newcommand{\trAto}{\mbox{$tr_{A_{2}}$}} 
\newcommand{\trAn}{\mbox{$tr_{A_{n}}$}}
\newcommand{\trBen}{\mbox{$tr_{B_{1}}$}} 
\newcommand{\trBto}{\mbox{$tr_{B_{2}}$}} 
\newcommand{\trBn}{\mbox{$tr_{B_{n}}$}} 
\newcommand{\trA}{\mbox{$tr_{A}$}} 
\newcommand{\trB}{\mbox{$tr_{B}$}} 
\newcommand{\LAen}{\mbox{$L^{2}(A_{1},tr_{A_{1}})$}} 
\newcommand{\LBen}{\mbox{$L^{2}(B_{1},tr_{B_{1}})$}} 
\newcommand{\LA}{\mbox{$L^{2}(A,tr_{A})$}} 
\newcommand{\LB}{\mbox{$L^{2}(B,tr_{B})$}} 
\newcommand{\restrLAen}{\mbox{$|_{L^{2}(A_{1},tr_{A_{1}})}$}} 
\newcommand{\restrLAto}{\mbox{$|_{L^{2}(A_{2},tr_{A_{2}})}$}}
\newcommand{\norm}[1]{\mbox{$\,\|\,#1\,\|\,$}}
\newcommand{\etnorm}[1]{\mbox{$\,\|\,#1\,\|_{_{_{1}}}\,$}}
\newcommand{\tonorm}[1]{\mbox{$\,\|\,#1\,\|_{_{_{2}}}\,$}}
\newcommand{\tonormsq}[1]{\mbox{$\,\|\,#1\,\|_{_{_{2}}}^{^{2}}\,$}}
\newcommand{\GtG}{\mbox{$G^{t}G{}$}}
\newcommand{\GGt}{\mbox{$GG^{t}{}$}}
\newcommand{\lGGt}{\mbox{$\frac{1}{\lambda^{2}}GG^{t}{}$}}
\newcommand{\lampow}[1]{\mbox{$\lambda^{#1}$}}
\newcommand{\etalow}[1]{\mbox{$\eta_{_{_{#1}}}$}}
\newcommand{\chapter}[1]{{\bf{}\Large{}#1}}
\newcommand{\ro}[1]{\mbox{$\rho^{#1}$}}
\newcommand{\efrac}[1]{\frac{e^{#1x}-e^{-#1x}}{e^{x}-e^{-x}}}
\renewcommand{\theequation}{\roman{equation}}
\title{Commuting Squares\\and\\Index for Subfactors\\\mbox{\normalsize (Ph.D Thesis)}}
\author{John Kehlet Schou}
\date{December 1990}
\begin{document}
\maketitle
\pagenumbering{roman}
\setcounter{equation}{0}
\tableofcontents
\newpage
\setcounter{secnumdepth}{0}
\section{Summary}
The index for subfactors was introduced by V.\ Jones in \cite{jones}. In this paper he proves that the only values of the index for a subfactor, $A,$ of the hyperfinite $II_{1}-$factor, $R,$ in the interval $[1,4)$ are the numbers
\[\{4\cos^{2}\mbox{$\frac{\pi}{n}$}\;|\;n=3,4,\ldots\}.\] The values above 4 of the index for irreducible subfators of $R$ are not known today, but recently S.\ Popa \cite{popa1}  proved, that all real  numbers above 4 can be the index of a irreducible pair of non-hyperfinite factors. 

It seems likely, that the values of the index for irreducible subfactors of $R,$ are related to finite or infinite graphs, in the sense that the index values has to be the square of the norm  of such  graphs \cite{popa2}. The close connection with graphs is also reflected in the present work, where all the given examples of the index, arise as the square of the norm of a finite or infinite graph.

The first real breakthrough in constructing values of the index for irreducible subfactors of $R,$ came with H.\ Wenzl's work \cite{Wen2}, where he uses periodic ladders of inclusions of multi-matrix algebras to construct subfactors
of $R.$ He also gives an easy way to determine the index for the constructed
pair, and a very useful  criterion for determining irreducibility of the pair.
Wenzl constructs the following values of the index
\[\frac{\sin^{2}\frac{k\pi}{l}}{\sin^{2}\frac{\pi}{l}},\;\;\;\;l\geq{}3,\;
2\leq{}k\leq{}l-2.\]

A key ingredient in building periodic ladders of multi-matrix   algebras, is the notion of a commuting square (see \cite{HGJ} chapter 4), which consist of four multi-matrix algebras $A,B,C$ and $D,$ included in each other via inclusion
matrices $G,H,K$ and $L$ \[\begin{array}{lcl}
      C & \subset_{L} & D \\
       \cup_{K} &\,&\cup_{H}\\
      A & \subset_{G} & B  \end{array}  \] together with a faithful trace
$\mbox{tr}_{_{D}}$ on $D,$ such that
$E_{_{A}}=E_{_{B}}E_{_{C}}=E_{_{C}}E_{_{B}},$ where $E_{_{X}}$ denotes the
unique trace preserving conditional expectation of $D$ onto $X,\;X=A,B,C.$

The present work is divided into 6 chapters.\footnote{Chapter 1 and chapter 2 are joint work with Prof. Uffe Haagerup, Dept. of Mathematics and Computer Science,
Odense University.} In the beginning of chapter 1 we give  a characterization of commuting squares of
multi-matrix algebras, the {\em bi-unitary condition}, which also occurs in the
unpublished work of A.\ Ocneanu, in a slightly different setting. If the inclusion
matrices satisfy $HL^{t}=G^{t}K$ in addition to the necessary condition
$GH=KL,$ the above mentioned ladder construction will, under certain mild extra
assumptions on $G,H,K$ and $L,$ produce an irreducible subfactor of the
Hyperfinite $II_{_{1}}-$factor of index $\|H\|^{2}=\|K\|^{2}.$ The main part
of chapter 1 is used to determine which matrices, $G,$ can be used to build a
commuting square of the form 
\begin{equation} \label{summ*1}
\begin{array}{lcl}
      C & \subset_{nG^{t}} & D \\
       \cup_{G} &\,&\cup_{G^{t}}\\
      A & \subset_{nG} & B  \end{array}  \;\;\;\;\;\;n\in{\Bbb N} 
\end{equation}
under the assumption that the resulting index values, $\|G\|^{2},$ should be in
the interval $(4,5).$ The lowest value obtained this way is 
\[\frac{5+\sqrt{13}}{2}\approx{}4.302\]

Following a suggestion by A.\ Ocneanu \cite{Ocn}, we, in chapter 2, obtain
index values closer to 4, by considering commuting squares of the form
\begin{equation} \label{summ*2}
\begin{array}{lcl}
      C & \subset_{G^{t}G-I} & D \\
       \cup_{G} &\,&\cup_{G}\\
      A & \subset_{GG^{t}-I} & B  \end{array}
\end{equation}
Moreover we determine which matrices, $G,$ that can occur in (\ref{summ*2}),
under the assumption that the associated graph $\Gamma_{_{G}},$ has the form of
a star with three rays.

In chapter 2 we also bring a
presentation of a construction of a commuting square based on the graph
$E_{_{10}}.$ The example was originally conceived by A.\ Ocneanu \cite{Ocn}, and
is particularly interesting, since the index value obtained from this graph, is
the lowest (above 4) which can be obtained as $\|G\|^{2},$ for $G$ a matrix
with non-negative integer entries.

In chapter 3 we bring a construction of commuting squares, based on the Dynkin
diagrams $A_{_{l}}.$ The index values constructed this way are
\[\frac{\sin^{2}\frac{k\pi}{l}}{\sin^{2}\frac{\pi}{l}},\;\;\;\;l\geq{}3,\;
2\leq{}k\leq{}l-2,\]
i.e. the same as Wenzl constructed in \cite{Wen2}.

In chapter 4 we define the notion of an infinite dimensional multi-matrix
algebra, and show that the theory for building subfactors of $R,$ from commuting
squares of multi-matrix algebras, can be generalized to commuting squares of
infinite dimensional multi-matrix algebras. We also prove that Wenzl's irreducibility criterion is still valid in this setting.

In chapter 5 we look at some infinite graphs defined by J. Shearer
\cite{shearer}. For each $\lambda>\sqrt{2+\sqrt{5}},$ Shearer defines an
infinite graph, $\Gamma_{_{\!\!\lambda}},$ with largest eigenvalue $\lambda.$ We
show that the Perron--Frobenius vector of these graphs is summable, which
implies that these graphs might be used to define the inclusions $H$ and $K$ of
a commuting square of infinite dimensional multi-matrix algebras. For the only
obvious choice of the other inclusions, $G$ and $L,$ it is shown that a commuting
square of this form implies that $\Gamma_{_{\!\!\lambda}}$ is eventually periodic.
This is not a property of $\Gamma_{_{\!\!\lambda}}$ in general, so there is no
simple way to build a commuting square of infinite dimensional multi-matrix 
algebras from  $\Gamma_{_{\!\!\lambda}}$ for general $\lambda.$

In chapter 6 we look at another class of infinite graphs, $T(1,n,\infty),$ defined
by A.\ Hoffmann in \cite{Hof}. We show, that the value of the index, which might
be constructed using this graph, is not obtainable from a construction on any
finite graph, and we construct commuting squares of infinite dimensional
multi-matrix algebras  based on $T(1,2,\infty),$ $T(1,3,\infty)$ and
$T(1,4,\infty)$. Unfortunately there is no general pattern in these
constructions, which could show us how to do the construction for general $n.$\newpage{}
\section{Sammenfatning (Danish Summary)}
\noindent{}Index for delfaktorer blev
introduceret af V.\ Jones i \cite{jones}. I denne artikel bevises det, at de
eneste v\ae{}rdier af indexet for en irreducibel delfaktor af $R,$ den
Hyperendelige $II_{1}-$faktor, i intervallet $[1,4)$ er tallene
\[\{4\cos^{2}\mbox{$\frac{\pi}{n}$}\;|\;n=3,4,\ldots\}.\] 
M\ae{}ngden af 
v\ae{}rdier over 4, som  index for en irreducibel delfaktor af den
Hyperendelige   $II_{1}-$faktor kan antage, er til dato ukendt, men for nylig
viste S.\ Popa \cite{popa1}, at alle reelle tal over 4 kan antages af index
for par af ikke Hyperendelige irreducible faktorer.

Det virker sandsynligt, at der er en sn\ae{}ver sammenh\ae{}ng mellem
v\ae{}rdierne af index for par af irreducible Hyperendelige $II_{1}-$faktorer
og endelige/uendelige grafer, forst\aa{}et s\aa{}ledes, at indexv\ae{}rdierne
er kvadratet p\aa{} normer af  adjacensmatricer for  s\aa{}danne grafer 
\cite{popa2}. Denne sammenh\ae{}ng er ogs\aa{} synlig i dette arbejde, idet
alle de konstruerede v\ae{}rdier af index kommer til verden som kvadratet
p\aa{} normen af en endelig eller uendelig graf.

Det f\o{}rste  gennembrud i konstruktionen af index for irreducible
delfaktorer af $R$ kom med H.\ Wenzls artikel  \cite{Wen2}. Her benyttes
periodiske  ``ladders'' af inklusioner af multi-matrix algebraer til at
konstruere irreducible delfaktorer af $R.$ Der bestemmes ligeledes en nem
m\aa{}de hvorp\aa{} index for de konstruerede delfaktorer kan beregnes, samt et
yderst anvendeligt kriterium til at bevise  irreducibilitet af af de
konstruerede delfaktorer. Wenzl bestemmer  f\o{}lgende v\ae{}rier af index
\[\frac{\sin^{2}\frac{k\pi}{l}}{\sin^{2}\frac{\pi}{l}},\;\;\;\;l\geq{}3,\;
2\leq{}k\leq{}l-2.\]

Et af de v\ae{}sentligste redskaber i konstruktionen af periodiske ladders
af multi-matrix algebraer er begrebet ``commuting squares''. Disse best\aa{}r if\o{}lge 
\cite{HGJ} kapitel 4 af fire multi-matrix algebraer $A,B,C$ og $D,$
indlejret i hinanden via inklusionsmatricerne $G,H,K$ og $L$
\[\begin{array}{lcl}
      C & \subset_{L} & D \\
       \cup_{K} &\,&\cup_{H}\\
      A & \subset_{G} & B  \end{array}  \] samt et tro spor,
$\mbox{tr}_{_{D}},$ p\aa{} $D,$ s\aa{}ledes at
$E_{_{A}}=E_{_{B}}E_{_{C}}=E_{_{C}}E_{_{B}},$ hvor $E_{_{X}}$ betegner den
entydige ``conditional expectation'' af  $D$ p\aa{} $X,\;X=A,B,C.$

N\ae{}rv\ae{}rende arbejde er delt i 6 kapitler.\footnote{Kapitel 1 og kapitel 2
er f\ae{}lles arbejde med Prof. Uffe Haagerup, Institut for Matematik og Datalogi,  Odense Universitet.} I starten af kapitel 1 gives en karakterisation af
commuting squares af multi-matrix algebraer, den s\aa{}kaldte
{\em bi-unit\ae{}re betingelse}, der endvidere er en del af A.\ Ocneanus, endnu
ikke offentliggjorte, arbejde, hvor den forekommer i en lidt anden
sammenh\ae{}ng. 

\noindent{}Hvis inklusionsmatricerne opfylder $HL^{t}=G^{t}K,$ 
ud over den n\o{}dvendige betingelse, $GH=KL,$ vil den ovenfor n\ae{}vnte
ladder konstruktion, med nogle f\aa{}, ikke specielt restriktive,
antagelser om  $G,H,K$ and $L,$ resultere i en irreducibel delfaktor af den
Hyperendelige $II_{1}-$faktor med index $\|H\|^{2}=\|K\|^{2}.$ 

Hovedparten af kapitel 1 bruges til at bestemme hvilke matricer, $G,$ der kan
bruges til konstruktion af commuting squares p\aa{} formen 
\begin{equation} \label{samm*1} \begin{array}{lcl}
      C & \subset_{nG^{t}} & D \\
       \cup_{G} &\,&\cup_{G^{t}}\\
      A & \subset_{nG} & B  \end{array}  \;\;\;\;\;\;n\in{\Bbb N} 
\end{equation}
hvor det yderligere foruds\ae{}ttes, at den resulterende indexv\ae{}rdi skal tilh\o{}re
intervallet $(4,5).$ Den mindste indexv\ae{}rdi, der fremkommer p\aa{} denne
m\aa{}de, er
\[\frac{5+\sqrt{13}}{2}\approx{}4.302\]

Idet en ide af  A.\ Ocneanu \cite{Ocn}  f\o{}lges op, konstrueres  i
kapitel 2 v\ae{}rdier af index, der ligger meget t\ae{}ttere p\aa{} 4, ved at
betragte commuting squares p\aa{} formen
 \begin{equation} \label{samm*2}
\begin{array}{lcl}
      C & \subset_{G^{t}G-I} & D \\
       \cup_{G} &\,&\cup_{G}\\
      A & \subset_{GG^{t}-I} & B  \end{array}
\end{equation}
Desuden bestemmes  hvilke matricer, $G,$ der kan benyttes i (\ref{samm*2}),
under antagelse af, at de korresponderende grafer, $\Gamma_{_{G}},$  har
form som en stjerne med tre str\aa{}ler.

I kapitel 2 bringes ligeledes en pr\ae{}sentation af en konstruktion, der
oprindelig skyldes A.\ Ocneanu \cite{Ocn}. Denne konstruktion er specielt
interessant, idet den resulterer i den mindst mulige v\ae{}rdi af index (over 4), der
kan fremkomme som  $\|G\|^{2}$ for en matrix, $G,$ med ikke-negative heltallige
koefficienter.

I kapitel 3 bringes en konstruktion af commuting squares baseret p\aa{}
Dynkin diagrammerne $A_{_{l}}.$ Indexv\ae{}rdierne konstrueret p\aa{} denne
m\aa{}de er 
\[\frac{\sin^{2}\frac{k\pi}{l}}{\sin^{2}\frac{\pi}{l}},\;\;\;\;l\geq{}3,\;
2\leq{}k\leq{}l-2,\] i.e. de samme v\ae{}rdier som Wenzl konstruerede i 
\cite{Wen2}.

I kapitel 4 defineres begrebet en uendeligdimensional multi-matrix algebra, og
det vises at teorien for at konstruere delfaktorer af $R,$ p\aa{} grundlag af
commuting squares af multi-matrix algebraer, kan generaliseres til at
g\ae{}lde for commuting squares af uendeligdimensionale multi-matrix
algebraer. Endvidere vises det, at Wenzls kriterium til at bevise
irreducibilitet af det konstruerede par af Hyperendelige $II_{1}-$faktorer,
ogs\aa{} g\ae{}lder i dette tilf\ae{}lde.

I kapitel 5 betragtes nogle uendelige grafer, defineret af J. Shearer
\cite{shearer}. For ethvert $\lambda>\sqrt{2+\sqrt{5}}$ definerer Shearer
en  uendelig graf, $\Gamma_{_{\!\!\lambda}},$ med st\o{}rste egenv\ae{}rdi 
$\lambda.$ Her bevises det, at den tilh\o{}rende positive egenvektor er
 summabel, hvilket betyder, at disse grafer m\aa{}ske kan benyttes til at
definere inklusionerne $H$ og $K$ i en commuting square af uendeligdimensionale multi-matrix algebraer. For det eneste oplagte valg af inklusioner
$G$ og $L$ vises det, at eksistensen af en s\aa{}dan commuting square
implicerer, at grafen $\Gamma_{_{\!\!\lambda}}$ er periodisk fra et vist trin. En
s\aa{}dan periodicitet er ikke en egenskab som $\Gamma_{_{\!\!\lambda}}$ generelt
er i besiddelse af, s\aa{} der er ikke nogen nem m\aa{}de at konstruere en
commuting square af uendeligdimensionale multi-matrix algebraer ud fra
$\Gamma_{_{\!\!\lambda}}$ for et generelt $\lambda.$

I kapitel 6 betragtes en klasse af uendelige grafer, $T(1,n,\infty),$
defineret af  A.\ Hoffmann i \cite{Hof}. Det vises, at de v\ae{}rdier af index,
der eventuelt kan konstrueres ud fra disse grafer, ikke kan konstrueres p\aa{}
grundlag af nogen endelig graf. Herefter konstrueres eksempler p\aa{} grundlag
af graferne $T(1,2,\infty),$ $T(1,3,\infty)$ og $T(1,4,\infty)$. Desv\ae{}rre
afsl\o{}rer der sig ikke noget generelt m\o{}nster i disse konstruktioner, der
kunne give en ide til en konstruktion for vilk\aa{}rligt $n.$\newpage{}
\noindent{\large{}Acknowledgement}\\[1cm]

\noindent{}I would like to thank my adviser and co-author on the two first chapters of the present work, Prof. Uffe Haagerup, for his continuing interest in what I have been trying to do, and for his many valuable suggestions.\newpage{}
\renewcommand{\theequation}{\thesection.\arabic{equation}}
\pagenumbering{arabic}
\setcounter{secnumdepth}{2}
\part{Finite Dimensional Commuting Squares}

\chapter{The Simplest Possible Commuting Squares}
\label{chapter1}

\setcounter{equation}{0}
%sidst rettet 19/12

\section{Ocneanu's Bi--unitary Condition and Symmetric Commuting Squares}
Following \cite{HGJ}  chapter IV, a commuting square is a set of four finite von Neumann algebras $A,$ $B,$ $C$ and $D,$ nested in each other by $A\subset{}B\subset{}D$ and $A\subset{}C\subset{}D,$ together with a normal faithful tracial state $\trsub{D}$ on $D,$ such that the unique trace preserving conditional expectations $E_{_{\!\!A}},$ $E_{_{\!\!B}}$ and $E_{_{\!\!C}}$ of $D$ onto $A,$ $B$ resp. $C$ satisfy
\[E_{_{\!\!A}}=E_{_{\!\!B}}E_{_{\!\!C}}=E_{_{\!\!C}}E_{_{\!\!B}}.\]
If $A,$ $B,$ $C$ and $D$ are \mma{}s, the inclusions $A\subset{}B\subset{}D$ and $A\subset{}C\subset{}D$ are given by inclusion matrices $G,$ $H,$ $K,$ and $L$ and we will write
\[\begin{array}{lcl}
      C & \subset_{L} & D \\
       \cup_{K} &\,&\cup_{H}\\
      A & \subset_{G} & B.\end{array}  \]
For our purposes we will need the following characterization of a commuting square of \mma{}s.\label{fire1page1}
\begin{lemma}
\label{fire1lem11}
Let $A,$ $B,$ $C$ and $D$ be \mma{}s $A\subset{}B\subset{}D,$  $A\subset{}C\subset{}D$ and let \trsub{D} be a trace on $D.$ Set $\Diprod{d_{_{1}}}{d_{_{2}}}=\trsub{D}(d_{_{1}}^{}d_{_{2}}^{*}),$ $d_{_{1}},d_{_{2}}\in{}D.$ Then the following conditions are equivalent
\begin{enumerate}
\item{}$A,$ $B,$ $C$ and $D$ form a commuting square with respect to the the trace \trsub{D} on $D.$
\item{}$A,$ $A^{\perp}\cap{}B$ and $A^{\perp}\cap{}C$ are orthogonal with respect to the inner product $\Diprod{\cdot}{\cdot}.$
\item{}$\Diprod{E_{_{\!\!A}}(b)}{E_{_{\!\!A}}(c)}=\Diprod{b}{c}$ for all $b\in{}B$ and all $c\in{}C.$
\end{enumerate}
\end{lemma}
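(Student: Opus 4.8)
The plan is to prove the cycle of implications $(1)\Rightarrow(3)\Rightarrow(2)\Rightarrow(1)$, working throughout with the Hilbert space $L^{2}(D,\trsub{D})$ in which $D$ sits as a dense subspace and each subalgebra $X$ (for $X=A,B,C$) sits as a closed subspace; the conditional expectation $E_{_{\!\!X}}$ is exactly the orthogonal projection onto that subspace, and the inner product $\Diprod{\cdot}{\cdot}$ is the $L^{2}$ inner product. I will freely identify $E_{_{\!\!A}}$, $E_{_{\!\!B}}$, $E_{_{\!\!C}}$ with these self-adjoint idempotents and use $\trsub{D}(xy^{*})=\Diprod{x}{y}$, together with the elementary facts that $E_{_{\!\!A}}E_{_{\!\!B}}=E_{_{\!\!B}}E_{_{\!\!A}}=E_{_{\!\!A}}$ and likewise for $C$ (since $A\subset B$, $A\subset C$), and that $E_{_{\!\!A}}$ is self-adjoint as an operator on $L^{2}$.

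First I would do $(1)\Rightarrow(3)$: assuming $E_{_{\!\!A}}=E_{_{\!\!B}}E_{_{\!\!C}}=E_{_{\!\!C}}E_{_{\!\!B}}$, take $b\in B$ and $c\in C$ and compute
\[
\Diprod{E_{_{\!\!A}}(b)}{E_{_{\!\!A}}(c)}
=\Diprod{b}{E_{_{\!\!A}}E_{_{\!\!A}}(c)}
=\Diprod{b}{E_{_{\!\!A}}(c)}
=\Diprod{b}{E_{_{\!\!B}}E_{_{\!\!C}}(c)}
=\Diprod{b}{E_{_{\!\!B}}(c)}
=\Diprod{E_{_{\!\!B}}(b)}{c}
=\Diprod{b}{c},
\]
where the first and fifth equalities use self-adjointness of $E_{_{\!\!A}}$ and $E_{_{\!\!B}}$, the second uses idempotency, the fourth uses $E_{_{\!\!C}}(c)=c$, and the last uses $b\in B$ so $E_{_{\!\!B}}(b)=b$. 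Next, $(3)\Rightarrow(2)$: for $a\in A$, $b'\in A^{\perp}\cap B$ we have $\Diprod{a}{b'}=0$ by hypothesis, and similarly $A\perp(A^{\perp}\cap C)$; the real content is $(A^{\perp}\cap B)\perp(A^{\perp}\cap C)$. Given $b\in B$, $c\in C$, decompose $b=E_{_{\!\!A}}(b)+(b-E_{_{\!\!A}}(b))$ and likewise $c$; since $b-E_{_{\!\!A}}(b)\in A^{\perp}\cap B$ and $c-E_{_{\!\!A}}(c)\in A^{\perp}\cap C$ (using $B\supset A$, $C\supset A$ so these differences stay in $B$, resp. $C$), condition (3) rearranges to
\[
\Diprod{b-E_{_{\!\!A}}(b)}{c-E_{_{\!\!A}}(c)}
=\Diprod{b}{c}-\Diprod{E_{_{\!\!A}}(b)}{E_{_{\!\!A}}(c)}=0,
\]
after expanding and cancelling cross terms via $\Diprod{E_{_{\!\!A}}(b)}{c}=\Diprod{E_{_{\!\!A}}(b)}{E_{_{\!\!A}}(c)}$ (since $E_{_{\!\!A}}$ is the projection onto $A\subset C$, so $E_{_{\!\!A}}E_{_{\!\!C}}=E_{_{\!\!A}}$). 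This gives (2).

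Finally, $(2)\Rightarrow(1)$: from (2), $B=A\oplus(A^{\perp}\cap B)$ and $C=A\oplus(A^{\perp}\cap C)$ as orthogonal direct sums inside $L^{2}(D,\trsub{D})$, and the three summands $A$, $A^{\perp}\cap B$, $A^{\perp}\cap C$ are mutually orthogonal. I want $E_{_{\!\!B}}E_{_{\!\!C}}=E_{_{\!\!A}}$; it suffices to check this on the dense subspace, or rather to show the ranges and kernels match. For $d\in D$, $E_{_{\!\!C}}(d)\in C=A\oplus(A^{\perp}\cap C)$; applying $E_{_{\!\!B}}$ and noting that $E_{_{\!\!B}}$ fixes $A$ and annihilates $A^{\perp}\cap C$ — the latter because $A^{\perp}\cap C$ is orthogonal to $A$ and to $A^{\perp}\cap B$, hence orthogonal to all of $B$, hence in $\ker E_{_{\!\!B}}$ — we get $E_{_{\!\!B}}E_{_{\!\!C}}(d)=E_{_{\!\!A}}E_{_{\!\!C}}(d)=E_{_{\!\!A}}(d)$, the last step since $A\subset C$. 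The identity $E_{_{\!\!C}}E_{_{\!\!B}}=E_{_{\!\!A}}$ follows symmetrically (or by taking adjoints, all three projections being self-adjoint). I expect the main obstacle to be purely bookkeeping: being careful that the orthogonal decompositions of $B$ and $C$ are taken inside the ambient $L^{2}(D,\trsub{D})$ and that ``$A^{\perp}\cap C$ orthogonal to $B$'' genuinely follows from (2), since that is the one spot where the three-fold orthogonality — rather than just pairwise statements already packaged in the hypotheses — is doing the work.
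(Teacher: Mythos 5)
Your proposal is correct and follows essentially the same route as the paper: identify the trace-preserving conditional expectations with orthogonal projections on $L^{2}(D,\mbox{tr}_{_{D}})$, use the decompositions $B=A\oplus(A^{\perp}\cap B)$, $C=A\oplus(A^{\perp}\cap C)$, and read off $E_{_{B}}E_{_{C}}=E_{_{C}}E_{_{B}}=E_{_{A}}$ from the three-fold orthogonality. The only cosmetic difference is the routing ($1\Rightarrow3\Rightarrow2\Rightarrow1$, with $1\Rightarrow3$ done by a direct projection computation, versus the paper's $1\Rightarrow2$ via trace identities together with $2\Leftrightarrow3$), which changes nothing of substance.
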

\begin{proof}
Let \trsub{A}, \trsub{B} and \trsub{C} denote the restriction of \trsub{D} to $A,$ $B$ resp. $C.$

\noindent{}$1\Rightarrow{}2$: Let $c\in{}C$ and $b\in{}A^{\perp}\cap{}B,$ then
\[\Diprod{b}{c}=\trsub{C}(E_{_{\!\!C}}(bc^{*}))=\trsub{C}(E_{_{\!\!C}}(b)c^{*})=\trsub{C}(E_{_{\!\!A}}(b)c^{*})=0.\]
Hence $C\perp(A^{\perp}\cap{}B),$ and we get: $A^{\perp},$ $A^{\perp}\cap{}B$ and $A^{\perp}\cap{}C$ are orthogonal.

\noindent{}$2\Rightarrow{}1$: Assume 2. Let $e_{_{1}},e_{_{2}}$ and $e_{_{3}}$ be the projections on the orthogonal subspaces $A^{\perp},$ $A^{\perp}\cap{}B$ and $A^{\perp}\cap{}C.$ Then
\[E_{_{\!\!B}}E_{_{\!\!C}}=(e_{_{1}}+e_{_{2}})(e_{_{1}}+e_{_{3}})=e_{_{1}}=e_{_{A}}\]
\[E_{_{\!\!C}}E_{_{\!\!B}}=(e_{_{1}}+e_{_{3}})(e_{_{1}}+e_{_{2}})=e_{_{1}}=e_{_{A}}\]

\noindent{}$2\Leftrightarrow{}3$: Let $b\in{}B$ and $c\in{}C,$ and decompose
\[b=b_{_{1}}+b_{_{2}},\;\;\;c=c_{_{1}}+c_{_{2}},\;\;\;b_{_{1}},c_{_{1}}\in{}A,\;\;\;b_{_{2}}\in{}A^{\perp}\cap{}B,\;\;\;c_{_{2}}\in{}A^{\perp}\cap{}C.\]
Then $E_{_{\!\!A}}(b)=b_{_{1}}$ and $E_{_{\!\!A}}(c)=c_{_{1}}.$ Moreover
\begin{equation}
\label{fire1lem1eqn1}
\Diprod{b}{c}=\Diprod{b_{_{1}}}{c_{_{1}}}+\Diprod{b_{_{1}}}{c_{_{2}}}+\Diprod{b_{_{2}}}{c_{_{1}}}+\Diprod{b_{_{2}}}{c_{_{2}}}.
\end{equation}
Assume 2, then the last three terms of (\ref{fire1lem1eqn1}) vanish, and we get 3.

\noindent{}Assume 3, then for $b\in{}A^{\perp}\cap{}B$ and $c\in{}A^{\perp}\cap{}C$
\[\Diprod{b}{c}=\Diprod{E_{_{\!\!A}}(b)}{E_{_{\!\!A}}(c)}=0.\]
This proves 2.
\end{proof}
\begin{lemma}
\label{fire1lem12}
Let 
\begin{equation}
\label{fire1lem12*}
\begin{array}{lcl}
      C & \subset_{L} & D \\
       \cup_{K} &\,&\cup_{H}\\
      A & \subset_{G} & B  \end{array}  
\end{equation}
be a \csq{} of \mma{}s with respect to a trace \trsub{D} on $D,$ and let $e\in{}A$ be an abelian projection with central support 1. Then
\begin{equation}
\label{fire1lem12**}
\begin{array}{ccc}
      eCe & \subset &eDe \\
       \cup &\,&\cup\\
      eAe & \subset &eBe  \end{array}  
\end{equation}
form a \csq{} with the same inclusion matrices as in (\ref{fire1lem12*}) with respect to the trace $\frac{1}{\mbox{\tiny tr}_{_{D}}(e)}\trsub{D}$ on $eDe.$ Moreover $eAe$ is abelian.
\end{lemma}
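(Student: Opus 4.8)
The plan is to verify condition~3 of Lemma~\ref{fire1lem11} for the corner square~(\ref{fire1lem12**}), since that condition is an identity about conditional expectations and inner products which is easy to transport between $D$ and $eDe$. Write $\tau = \frac{1}{\mbox{\footnotesize tr}_{_{D}}(e)}\trsub{D}$ for the renormalised trace on $eDe$; it is clearly a faithful tracial state. First I would observe that for any intermediate \mma{} $X$ with $eAe\subseteq eXe$ (in particular $X = A,B,C$), the map $d \mapsto \frac{1}{\mbox{\footnotesize tr}_{_{D}}(e)} E_{_{\!\!X}}(d)$, restricted to $eDe$, is the unique $\tau$-preserving conditional expectation of $eDe$ onto $eXe$ — this is because $E_{_{\!\!X}}$ commutes with multiplication by $e \in A \subseteq X$, so it maps $eDe$ into $eXe$, and it preserves the trace up to the fixed scalar $\mbox{\footnotesize tr}_{_{D}}(e)$; uniqueness of the trace-preserving expectation then identifies it with $E_{_{eXe}}$. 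Call this identification $(\star)$.

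Granting $(\star)$, condition~3 for the corner square reads: $\langle E_{_{eAe}}(b), E_{_{eAe}}(c)\rangle_{\tau} = \langle b,c\rangle_{\tau}$ for all $b \in eBe$, $c \in eCe$. But for $x,y \in eDe$ we have $\langle x,y\rangle_\tau = \tau(xy^{*}) = \frac{1}{\mbox{\footnotesize tr}_{_{D}}(e)}\trsub{D}(xy^{*}) = \frac{1}{\mbox{\footnotesize tr}_{_{D}}(e)}\Diprod{x}{y}$, and by $(\star)$, $E_{_{eAe}}(b) = \frac{1}{\mbox{\footnotesize tr}_{_{D}}(e)}E_{_{\!\!A}}(b)$ — wait, the normalising scalar there is absorbed into the expectation, so more carefully $E_{_{eAe}}(b) = E_{_{\!\!A}}(b)$ as elements of $eAe$ (the scalar in $(\star)$ only appeared because I wrote the formula before normalising; since $E_{_{\!\!A}}(e)=e$, no scalar is actually needed). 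Thus condition~3 for the corner square is literally condition~3 for the original square applied to the elements $b \in eBe \subseteq B$ and $c \in eCe \subseteq C$, which holds by Lemma~\ref{fire1lem11} applied to~(\ref{fire1lem12*}). Hence~(\ref{fire1lem12**}) is a \csq. The statements about the inclusion matrices follow from the fact that cutting a \mma{} by an abelian projection with central support $1$ preserves the centre and each simple summand, hence preserves the Bratteli/inclusion matrix; for $eAe$ being abelian, if $e$ is abelian with central support $1$ in $A$ then $eAe$ is the centre of $A$, which is abelian.

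The main obstacle is the lemma marked $(\star)$: one must be sure that $E_{_{\!\!X}}$ genuinely restricts to a conditional expectation of $eDe$ onto $eXe$ and that it is the $\tau$-preserving one. The key points are (i) $e \in A \subseteq X$ commutes with $E_{_{\!\!X}}$ as a bimodule map, so $E_{_{\!\!X}}(e d e) = e\,E_{_{\!\!X}}(d)\,e \in eXe$; (ii) for $d \in eDe$, $\tau(E_{_{\!\!X}}(d)) = \frac{1}{\mbox{\footnotesize tr}_{_{D}}(e)}\trsub{D}(E_{_{\!\!X}}(d)) = \frac{1}{\mbox{\footnotesize tr}_{_{D}}(e)}\trsub{D}(d) = \tau(d)$; and (iii) the restriction is still a conditional expectation (unital as a map $eDe \to eXe$ since $E_{_{\!\!X}}(e)=e$, positive, $eXe$-bimodular). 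Uniqueness of the trace-preserving conditional expectation onto a von Neumann subalgebra then forces it to equal $E_{_{eXe}}$. Everything else is bookkeeping: no delicate estimates, just the interplay between the expectations and the cutdown by $e$.
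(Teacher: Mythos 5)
Your treatment of the commuting--square part is correct and is in substance the paper's own argument: the whole point is your claim $(\star)$ that $E_{_{\!\!X}}$ (for $X=A,B,C$) restricts to the $\tau$-preserving conditional expectation of $eDe$ onto $eXe$ (using $e\in{}A\subseteq{}X$ and $\trsub{D}\circ{}E_{_{\!\!X}}=\trsub{D}$), after which either the operator identity $E_{_{\!\!A}}=E_{_{\!\!B}}E_{_{\!\!C}}=E_{_{\!\!C}}E_{_{\!\!B}}$ or condition~3 of Lemma~\ref{fire1lem11} passes to the corner verbatim; the paper states exactly this (tersely), so your detour through condition~3 is only a cosmetic difference.

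Where your proposal has a genuine gap is the second half of the lemma, the assertion that (\ref{fire1lem12**}) has the \emph{same inclusion matrices} as (\ref{fire1lem12*}). You dispose of this with ``cutting a \mma{} by an abelian projection with central support 1 preserves the centre and each simple summand,'' but the hypothesis only gives that $e$ is abelian with central support $1$ \emph{in $A$}; in $B$, $C$ and $D$ it is neither abelian nor, a priori, of full central support, and full central support in each of the four algebras is exactly what is needed so that no central summand of $B$, $C$ or $D$ is killed by the cutdown and so that $z\mapsto{}ze$ identifies ${\cal Z}(B)$ with ${\cal Z}(eBe)$, etc. The missing (easy) step is: since the central support of $e$ in $A$ is $1$, the supremum of $\{ueu^{*}\;|\;u\in{}A\mbox{ unitary}\}$ is $1$, and this supremum is dominated by the central support of $e$ in each of $B$, $C$, $D$; hence $e$ has central support $1$ there too. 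One then still has to argue that the multiplicities are unchanged, e.g.\ as in the paper: minimal projections of $eXe$ are exactly the minimal projections of $X$ dominated by $e$, and every minimal projection of $X$ ($X=A,B,C,D$) is equivalent to one dominated by $e$ because $e$ has full central support in $X$. Finally, a small misstatement: $eAe$ is not ``the centre of $A$'' but equals ${\cal Z}(A)e\cong{\cal Z}(A)$; in any case its abelianness is immediate from the definition of an abelian projection and needs no argument.
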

\begin{proof}
It is clear that (\ref{fire1lem12**}) is a commuting square with respect to  $\frac{1}{\mbox{\tiny tr}_{_{D}}(e)}\trsub{D},$ because $E_{_{\!\!A}},$ $E_{_{\!\!B}}$ and $E_{_{\!\!C}}$ map $eDe$ onto $eAe,$ $eBe$ resp. $eCe.$ Since $e$ has central support 1 in $A,$ the least upper bound of $\{ueu^{*}\;|\;u\in{}A\mbox{ unitary}\}$ is 1. Hence $e$ also has central support 1 in $B,$ $C$ and $D.$ Therefore the map $z\mapsto{}ze$ is an isomorphism of the center ${\cal Z}(A)$ (resp. ${\cal Z}(B),$ ${\cal Z}(C),$ ${\cal Z}(D)$) onto the center ${\cal Z}(eAe)$ (resp. ${\cal Z}(eBe),$ ${\cal Z}(eCe),$ ${\cal Z}(eDe)$), and (up to these isomorphisms) the inclusion matrices of (\ref{fire1lem12**}) are the same as those of (\ref{fire1lem12*}), because any minimal projection in $A$ (resp. $B,$ $C,$ $D$) is equivalent to a projection dominated by $e,$ since $e$ has central support 1 in all four algebras.
\end{proof}
\begin{remark}{\rm 
Note that lemma \ref{fire1lem12} tells us, that a construction of a \csq{} of \mma{} with given inclusion matrices, need only be concerned with an abelian algebra defining the smallest algebra, $A,$ of the \csq{}.
}\end{remark}
\begin{lemma}
\label{fire1lem13}
Let 
\[A\subset_{G}B\subset_{H}D\subset{}B({\cal H})\] 
be \mma{}s with the commutant of $D,$ $D',$ abelian, and let ${\cal K}$ be a Hilbert space with $\dim({\cal H})=\dim({\cal K}).$ For  $U\in{}B({\cal H},{\cal K})$ a unitary matrix, we put $A_{_{1}}=UAU^{*},$ $B_{_{1}}=UBU^{*}$  and $D_{_{1}}=UDU^{*},$ then 
\[A_{_{1}}\subset_{G}B_{_{1}}\subset_{H}D_{_{1}}\subset{}B({\cal H}),\] 
and  $D_{_{1}}'$ is abelian.
\end{lemma}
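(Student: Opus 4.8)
The statement is essentially a triviality once the right facts are assembled, so the plan is to reduce everything to three observations: (a) conjugation by a unitary is a $*$-isomorphism, (b) $*$-isomorphisms of multi--matrix algebras preserve inclusion matrices, and (c) conjugation carries commutants to commutants. I would begin by noting that $\mathrm{Ad}(U)\colon x\mapsto UxU^{*}$ is a normal $*$-isomorphism of $B(\mathcal H)$ onto $B(\mathcal K)$ (this uses only $\dim\mathcal H=\dim\mathcal K$, so that such a unitary $U\in B(\mathcal H,\mathcal K)$ exists), hence restricts to $*$-isomorphisms $A\to A_{1}$, $B\to B_{1}$, $D\to D_{1}$. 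In particular $A_{1}$, $B_{1}$, $D_{1}$ are again multi--matrix algebras and the nesting $A_{1}\subset B_{1}\subset D_{1}\subset B(\mathcal K)$ is immediate.

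Next I would check that the inclusion matrices are unchanged. Since $\mathrm{Ad}(U)$ restricted to $B$ is a $*$-isomorphism onto $B_{1}$, it carries the center $\mathcal Z(B)$ onto $\mathcal Z(B_{1})$, hence gives a bijection between the minimal central projections of $B$ and those of $B_{1}$, and likewise for $A$; moreover it carries minimal projections of $A$ to minimal projections of $A_{1}$. The entry of the inclusion matrix $G$ indexed by a pair (central summand of $A$, central summand of $B$) is the number of copies of a minimal projection of the $A$-summand that fit inside a minimal projection of the $B$-summand — a quantity manifestly preserved by any $*$-isomorphism. Hence $A_{1}\subset_{G}B_{1}$ with the \emph{same} matrix $G$ (once we identify the central summands via $\mathrm{Ad}(U)$), and identically $B_{1}\subset_{H}D_{1}$.

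Finally, for the commutant: a direct computation gives $D_{1}'=(UDU^{*})'=U D' U^{*}$, because $y$ commutes with every $UdU^{*}$ iff $U^{*}yU$ commutes with every $d$. Thus $D_{1}'=\mathrm{Ad}(U)(D')$ is the image of the abelian algebra $D'$ under a $*$-isomorphism, hence abelian. There is no real obstacle here; the only point requiring a word of care is the bookkeeping in step two, namely making precise that ``same inclusion matrix'' means up to the canonical identification of the centers induced by $\mathrm{Ad}(U)$ — exactly the same kind of identification already used in Lemma \ref{fire1lem12}. One could even phrase the whole proof in one line: $\mathrm{Ad}(U)$ is a $*$-isomorphism of $B(\mathcal H)$ onto $B(\mathcal K)$ carrying the tower $A\subset B\subset D$ to $A_{1}\subset B_{1}\subset D_{1}$ and $D'$ to $D_{1}'$, and $*$-isomorphisms preserve both inclusion data and abelianness.
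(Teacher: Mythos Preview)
Your proof is correct and matches the paper's approach exactly: the paper's proof consists of the single word ``Trivial.'', and you have simply spelled out the trivialities (that $\mathrm{Ad}(U)$ is a $*$-isomorphism preserving inclusion matrices and carrying $D'$ onto $D_{1}'$). Your remark that the conclusion should read $D_{1}\subset B(\mathcal K)$ rather than $B(\mathcal H)$ is also well taken.
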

\begin{proof}
Trivial.
\end{proof}
\begin{lemma}
\label{fire1lem14}
Let ${\cal H},{\cal K}$ be finite dimensional Hilbert spaces. Let $A,D\subset{}B({\cal H})$ and\linebreak{}
$A_{_{1}},D_{_{1}}\subset{}B({\cal K})$ be four \mma{}s, such that $A\subset{}D$ and $A_{_{1}}\subset{}D_{_{1}},$ and such that the two inclusions have the same inclusion matrix, $G,$ with respect to given isomorphisms $\Phi:{\cal Z}(A)\rightarrow{}{\cal Z}(A_{_{1}})$ and $\Psi:{\cal Z}(D)\rightarrow{}{\cal Z}(D_{_{1}}).$ If furthermore $A,A_{_{1}},D'$ and $D_{_{1}}'$ are abelian, then there is a unitary $U\in{}B({\cal H},{\cal K}),$ such that
\[UAU^{*}=A_{_{1}}\;\;\;\mbox{ and }\;\;\;UDU^{*}=D_{_{1}}\]
and such that $U$ implements the given isomorphisms  $\Phi:{\cal Z}(A)\rightarrow{}{\cal Z}(A_{_{1}})$ and $\Psi:{\cal Z}(D)\rightarrow{}{\cal Z}(D_{_{1}}).$
\end{lemma}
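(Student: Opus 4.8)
The plan is to reduce the statement to a labelled form of the elementary observation that two orthogonal decompositions of two finite dimensional Hilbert spaces into subspaces of pairwise equal dimensions are carried onto one another by a unitary, and to carry this out one simple summand of $D$ (equivalently of $D_{_{1}}$) at a time.

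First I would set up notation that makes the two abelianness hypotheses bite. Let $p_{_{1}},\ldots,p_{_{q}}$ be the minimal projections of $A$, let $e_{_{1}},\ldots,e_{_{r}}$ be the minimal central projections of $D$, and let $q_{_{1}},\ldots,q_{_{q}}$, $e'_{_{1}},\ldots,e'_{_{r}}$ be the same objects for $A_{_{1}}$, $D_{_{1}}$; relabel so that $\Phi(p_{_{i}})=q_{_{i}}$ and $\Psi(e_{_{j}})=e'_{_{j}}$. Since $D'$ is abelian, $D$ acts on ${\cal H}=\bigoplus_{j}e_{_{j}}{\cal H}$ with each simple summand of multiplicity one, so $De_{_{j}}=B(e_{_{j}}{\cal H})$ and $\dim(e_{_{j}}{\cal H})$ equals the matrix size $n_{_{j}}$ of the $j$-th simple summand of $D$; likewise $D_{_{1}}e'_{_{j}}=B(e'_{_{j}}{\cal K})$ with $\dim(e'_{_{j}}{\cal K})=n_{_{j}}$, the size being the same because, $A$ and $A_{_{1}}$ being abelian, the dimension relation for the inclusion matrix reads $\sum_{i}G_{ij}=n_{_{j}}$ for both inclusions. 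In particular $\dim{\cal H}=\sum_{j}n_{_{j}}=\dim{\cal K}$, so that a unitary ${\cal H}\to{\cal K}$ can at least exist. As $A\subset D$ and $A_{_{1}}\subset D_{_{1}}$, all the projections $p_{_{i}}$, $q_{_{i}}$ are block diagonal for these central decompositions, and the phrase ``same inclusion matrix with respect to $\Phi$ and $\Psi$'' says exactly that $p_{_{i}}e_{_{j}}$ and $q_{_{i}}e'_{_{j}}$ have the same rank $G_{ij}$ in $B(e_{_{j}}{\cal H})$, resp.\ $B(e'_{_{j}}{\cal K})$.

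Now I would build $U$ summandwise. Inside the $j$-th summand the subspaces $V_{ij}:=p_{_{i}}e_{_{j}}{\cal H}$ ($i=1,\ldots,q$) are an orthogonal decomposition of $e_{_{j}}{\cal H}$ with $\dim V_{ij}=G_{ij}$, and the $W_{ij}:=q_{_{i}}e'_{_{j}}{\cal K}$ do the same for $e'_{_{j}}{\cal K}$ with $\dim W_{ij}=G_{ij}$. Pick, for each $i$ with $G_{ij}>0$, any unitary $U_{ij}:V_{ij}\to W_{ij}$, put $U_{j}=\bigoplus_{i}U_{ij}$, and then $U=\bigoplus_{j}U_{j}:{\cal H}\to{\cal K}$. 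One then checks $Ue_{_{j}}U^{*}=e'_{_{j}}$, so $UB(e_{_{j}}{\cal H})U^{*}=B(e'_{_{j}}{\cal K})$, whence $UDU^{*}=D_{_{1}}$ with $U$ implementing $\Psi$; and $Up_{_{i}}U^{*}=\sum_{j}U_{j}(p_{_{i}}e_{_{j}})U_{j}^{*}=\sum_{j}q_{_{i}}e'_{_{j}}=q_{_{i}}$, so (as $A$ is the linear span of its minimal projections) $UAU^{*}=A_{_{1}}$ with $U$ implementing $\Phi$.

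There is no genuine difficulty here; the steps requiring attention are bookkeeping. It is worth making explicit where the hypotheses are used: $D',D_{_{1}}'$ abelian is precisely what removes multiplicities, so that $De_{_{j}}$ fills $B(e_{_{j}}{\cal H})$ and $\dim{\cal H}=\dim{\cal K}$ can be read off $G$; $A,A_{_{1}}$ abelian is what makes $Ae_{_{j}}$ a ``diagonal'' subalgebra, so that its unitary conjugacy class in $B(e_{_{j}}{\cal H})$, with minimal projections labelled by $\Phi$, is pinned down by the single column $(G_{ij})_{i}$ alone. (Without these hypotheses the statement would still be true by a Skolem--Noether type argument, but one would have to keep track of representation multiplicities and of the finer internal structure of $Ae_{_{j}}$.) The other delicate point is the relabelling: it is exactly the clause ``with respect to the given isomorphisms $\Phi$ and $\Psi$'' that lets one index everything so the ranks of $p_{_{i}}e_{_{j}}$ and $q_{_{i}}e'_{_{j}}$ match, and hence guarantees that the $U$ produced implements the \emph{given} $\Phi$ and $\Psi$ rather than merely some pair of isomorphisms.
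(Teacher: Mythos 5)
Your proof is correct, but it takes a different route from the paper. The paper's own proof is a two-citation argument: since $A,A_{_{1}}$ are abelian, $\Phi$ is already an isomorphism of $A$ onto $A_{_{1}}$; equality of the inclusion matrices then gives, by Bratteli's classification of inclusions of \mma{}s, an abstract isomorphism $\Lambda:D\rightarrow{}D_{_{1}}$ with $\Lambda(A)=A_{_{1}}$, $\Lambda|_{_{A}}=\Phi$ and $\Lambda|_{_{{\cal Z}(D)}}=\Psi$; finally, because $D,D_{_{1}}$ are type I with abelian commutants, Dixmier's theorem makes $\Lambda$ spatial, producing $U$. You instead construct $U$ by hand: using $D',D_{_{1}}'$ abelian to get multiplicity-one summands $De_{_{j}}=B(e_{_{j}}{\cal H})$, and $A,A_{_{1}}$ abelian to read the inclusion matrix entries as ranks of $p_{_{i}}e_{_{j}}$ and $q_{_{i}}e_{_{j}}'$, you match the two orthogonal decompositions $\{p_{_{i}}e_{_{j}}{\cal H}\}$ and $\{q_{_{i}}e_{_{j}}'{\cal K}\}$ subspace by subspace and take direct sums; the verification that this $U$ conjugates $A$ onto $A_{_{1}}$, $D$ onto $D_{_{1}}$ and implements the given $\Phi,\Psi$ is exactly as you say, and your bookkeeping of where each abelianness hypothesis and the labelling clause enter is accurate. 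What the paper's route buys is brevity and the fact that the spatial-implementation step is quoted in a generality (type I, abelian commutant) that does not depend on $A$ being abelian, which fits its later use in Theorem \ref{fire1thm10}; what your route buys is a self-contained, elementary argument that makes transparent why the hypotheses force uniqueness up to unitary equivalence, at the cost of redoing in this special case what Bratteli and Dixmier supply in general.
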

\begin{proof}
Since $A$ and $A_{_{1}}$ are abelian, $\Phi$ is an isomorphism of $A$ onto $A_{_{1}}.$ Since the inclusion matrices of $A\subset{}D$ and $A_{_{1}}\subset{}D_{_{1}}$ are the same it follows from \cite{bratteli}, that there is an isomorphism, $\Lambda,$ of $D$ onto $D_{_{1}},$ such that $\Lambda(A)=A_{_{1}},$  $\Lambda|_{_{A}}=\Phi$ and $\Lambda|_{_{{\cal Z}(D)}}=\Psi.$ But since $D$ and $D_{_{1}}$ are type I von Neumann algebras with abelian commutants, $\Lambda$ is implemented by a unitary $U\in{}B({\cal H},{\cal K}).$ (See \cite{dixmier}, chap. III, $\S$ 3, sect. 2).
\end{proof}
\begin{cor}
\label{fire1cor5}
If $A\subset_{G}B\subset_{H}D\subset{}B({\cal H})$ and  $A_{_{1}}\subset_{K}C\subset_{L}D_{_{1}}\subset{}B({\cal K})$ are \mma{}s, such that $A,A_{_{1}},D'$ and $D_{_{1}}'$ are abelian and $GH=KL$ with respect to given isomorphisms $\Phi:{\cal Z}(A)\rightarrow{}{\cal Z}(A_{_{1}})$ and $\Psi:{\cal Z}(D)\rightarrow{}{\cal Z}(D_{_{1}}),$ then there exists a unitary $U\in{}B({\cal H},{\cal K}),$ such that
\begin{equation}
\label{fire1cor15eqn1}
UA_{_{1}}U^{*}=A,\;\;\;UD_{_{1}}U^{*}=D,\;\;\;A\subset_{K}UCU^{*}\subset_{L}D
\end{equation}
and
\begin{equation}
\label{fire1cor15eqn2}
\mbox{ad}(U)|_{_{{\cal Z}(A)}}=\Phi^{^{-1}},\;\;\;ad(U)|_{_{{\cal Z}(D)}}=\Psi^{^{-1}}.
\end{equation}
\end{cor}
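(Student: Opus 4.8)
The corollary is essentially a bookkeeping reformulation of Lemma~\ref{fire1lem14} applied to the pair $A \subset D$ (on $\mathcal{H}$) and $A_1 \subset D_1$ (on $\mathcal{K}$), together with a transport of the intermediate algebra $C$ along the resulting unitary. The hypothesis $GH = KL$ guarantees that the two towers $A \subset B \subset D$ and $A_1 \subset C \subset D_1$ have the same inclusion matrix $GH = KL$ from the bottom to the top, so in particular $A \subset D$ and $A_1 \subset D_1$ have matching inclusion matrices with respect to the given isomorphisms $\Phi : {\cal Z}(A) \to {\cal Z}(A_1)$ and $\Psi : {\cal Z}(D) \to {\cal Z}(D_1)$.

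First I would apply Lemma~\ref{fire1lem14} directly. The standing assumptions are that $A, A_1, D', D_1'$ are abelian and that $A \subset D$, $A_1 \subset D_1$ carry the same inclusion matrix relative to $\Phi$ and $\Psi$; these are exactly the hypotheses of Lemma~\ref{fire1lem14}, so we obtain a unitary $V \in B(\mathcal{H},\mathcal{K})$ with $VAV^* = A_1$, $VDV^* = D_1$, and such that $\mathrm{ad}(V)$ implements $\Phi$ on ${\cal Z}(A)$ and $\Psi$ on ${\cal Z}(D)$. Then I would set $U = V^*$, so that $UA_1U^* = A$, $UD_1U^* = D$, and $\mathrm{ad}(U)$ implements $\Phi^{-1}$ on ${\cal Z}(A)$ (note: on the copy inside $D_1$, but via the identification this reads as $\mathrm{ad}(U)|_{{\cal Z}(A)} = \Phi^{-1}$ in the sense of (\ref{fire1cor15eqn2})) and $\Psi^{-1}$ on ${\cal Z}(D)$, giving (\ref{fire1cor15eqn2}).

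Next I would define $UCU^*$ and check the inclusion data in (\ref{fire1cor15eqn1}). Since $A_1 \subset C \subset D_1$, conjugation by $U$ gives $A = UA_1U^* \subset UCU^* \subset UD_1U^* = D$. The inclusion matrices are unchanged under a spatial isomorphism, so $A \subset_K UCU^*$ and $UCU^* \subset_L D$ with the same matrices $K, L$ as for $A_1 \subset C \subset D_1$ — here one must be slightly careful to track the central isomorphisms: the inclusion matrix $A \subset_K UCU^*$ is computed relative to the isomorphism ${\cal Z}(A) \to {\cal Z}(UCU^*)$ induced by $\mathrm{ad}(U)$ composed with the original ${\cal Z}(A_1) \to {\cal Z}(C)$, and similarly at the top; but since $UCU^*$ is only required to sit between $A$ and $D$ with these matrices, and no further compatibility with $\Phi,\Psi$ beyond what $\mathrm{ad}(U)$ already provides is demanded, this is immediate.

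The only mildly delicate point — the main "obstacle", though it is minor — is the correct reading of the central-isomorphism conventions: Lemma~\ref{fire1lem14} produces $V$ implementing $\Phi$ and $\Psi$, and one must verify that passing to $U = V^*$ indeed yields the inverses in the precise normalization (\ref{fire1cor15eqn2}) intended by the statement, and that the tower $A \subset UCU^* \subset D$ is compatible with writing its inclusion matrices as $K$ and $L$ (i.e.\ that the identification of centers implicit in "$\subset_K$" and "$\subset_L$" is the one carried along by $U$). Once these identifications are pinned down, everything else is a routine transport of structure by a spatial isomorphism, and the proof is short.
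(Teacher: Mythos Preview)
Your proposal is correct and follows essentially the same approach as the paper: apply Lemma~\ref{fire1lem14} to the outer inclusions $A\subset D$ and $A_1\subset D_1$ (whose inclusion matrices agree since $GH=KL$), then transport $C$ by the resulting unitary and invoke Lemma~\ref{fire1lem13} to read off the inclusion matrices $K$ and $L$. The paper's proof is two lines and does exactly this; your discussion of taking adjoints to get the direction of $\mathrm{ad}(U)$ right is the only care needed, and you handle it correctly.
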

\begin{proof}
Since $GH=KL$ and $A,A_{_{1}},D'$ and $D_{_{1}}'$ are abelian, lemma \ref{fire1lem14} produces a unitary $U\in{}B({\cal H},{\cal K}),$ such that $UA_{_{1}}U^{*}=A,$ $UD_{_{1}}U^{*}=D$ and such that (\ref{fire1cor15eqn2}) holds.

\noindent{}By lemma \ref{fire1lem13} we get the assertion 
\[A\subset_{K}UCU^{*}\subset_{L}D.\]
\end{proof}
We will now turn to the path model, which will allow us to build squares
\[\begin{array}{lcl}
      C & \subset_{L} & D \\
       \cup_{K} &\,&\cup_{H}\\
      A & \subset_{G} & B  \end{array}  \]
of \mma{}s with given inclusion matrices $G,$ $H,$ $K$ and $L.$

\noindent{}Let $G\in{}M_{_{nm}}({\Bbb Z}),$ $H\in{}M_{_{mq}}({\Bbb Z}),$ $K\in{}M_{_{np}}({\Bbb Z})$ and $L\in{}M_{_{pq}}({\Bbb Z})$ be matrices with non--negative entries, such that
\[GH=KL\]
and let $\Gamma_{_{G}},$ $\Gamma_{_{H}},$ $\Gamma_{_{K}}$ and $\Gamma_{_{L}}$ be the corresponding bi--partite graphs, i.e. the graphs with adjacency matrices\[\left(\begin{array}{cc}0& G\\ G^{t}& 0\end{array}\right),\;\;\;
\left(\begin{array}{cc}0&H \\ H^{t}& 0\end{array}\right),\;\;\;
\left(\begin{array}{cc}0& K\\ K^{t}& 0\end{array}\right),\;\;\;
\left(\begin{array}{cc}0& L\\ L^{t}& 0\end{array}\right).\]
The Bratteli diagram for $A\subset{}B\subset{}D$ should be of the form
\begin{center}\label{graphadjoin}
\setlength{\unitlength}{1cm}
\begin{picture}(2,4)

\put(-0.1,3.2){$i$}
\put(1.4,3.2){$j$}
\put(2.9,3.2){$k$}
\put(0.65,0.2){$\Gamma_{_{G}}$}
\put(2.15,0.2){$\Gamma_{_{H}}$}

\put(0,1){\circle*{0.15}}
\put(-0.05,1.3){$\vdots$}
\put(0,2){\circle*{0.15}}
\put(0,2.5){\circle*{0.15}}
\put(0,3){\circle*{0.15}}

\put(1.5,1){\circle*{0.15}}
\put(1.45,1.3){$\vdots$}
\put(1.5,2){\circle*{0.15}}
\put(1.5,2.5){\circle*{0.15}}
\put(1.5,3){\circle*{0.15}}

\put(3,0.5){\circle*{0.15}}
\put(3,1){\circle*{0.15}}
\put(2.95,1.3){$\vdots$}
\put(3,2){\circle*{0.15}}
\put(3,2.5){\circle*{0.15}}
\put(3,3){\circle*{0.15}}

\put(0,3.){\line(1,0){1.5}}
\put(0,3.025){\line(3,-1){1.5}}
\put(0,2.975){\line(3,-1){1.5}}
\put(0,2.5){\line(1,0){1,5}}
\put(0,2){\line(3,1){1.5}}
\put(0,2){\line(1,0){1.5}}
\put(0,1){\line(1,0){1.5}}

\put(1.5,3){\line(1,0){1.5}}
\put(1.5,3){\line(3,-1){1.5}}
\put(1.5,2.5){\line(1,0){1.5}}
\put(1.5,2.5){\line(3,-1){1.5}}
\put(1.5,2){\line(3,1){1.5}}
\put(1.5,1){\line(1,0){1.5}}
\put(1.5,0.975){\line(3,-1){1.5}}
\put(1.5,1.025){\line(3,-1){1.5}}
\end{picture}
\end{center}
where the three columns have $n,$ $m$ and $q$ vertices respectively. The paths from the left--hand column to the right--hand column are labeled by
\[{\cal S}=\left\{(i,j,k,\rho,\sigma)\;|\;G_{_{ij}}H_{_{jk}}\neq{}0,\;1\leq{}\rho\leq{}G_{_{ij}},\;1\leq\sigma\leq{}H_{_{jk}}\right\},\]
where $\rho$ (resp. $\sigma$) labels the edges joining the same pair of vertices $(i,j)$ (resp. $(j,k)$) in case of multiple edges.

\noindent{}Let ${\cal H}$ be the Hilbert space of dimension $|{\cal S}|$ with orthonormal basis
\[\left\{\xi_{_{ijk}}^{(\rho,\sigma)}\;|\;(i,j,k,\rho,\sigma)\in{\cal S}\right\}.\]
For $x,y\in{\cal H}$ we let $x\otimes{}\overline{y}$ denote the rank one operator on ${\cal H}$ given by
\[(x\otimes{}\overline{y})(z)=(z,y)x,\;\;\; \mbox{ for } z\in{}{\cal H}\]
Set
\[p_{_{i}}=\sum_{\stackrel{j,k,\rho,\sigma}{\mbox{\tiny $(i,j,k,\rho,\sigma)\in{\cal S}$}}}\!\!\!\!\!\!\xi^{(\rho,\sigma)}_{_{i,j,k}}\otimes{}\overline{\xi}^{(\rho,\sigma)}_{_{i,j,k}},\;\;\;i=1,\ldots,n,\]
\[q_{_{j}}=\sum_{\stackrel{i,k,\rho,\sigma}{\mbox{\tiny$(i,j,k,\rho,\sigma)\in{\cal S}$}}}\!\!\!\!\!\!\xi^{(\rho,\sigma)}_{_{i,j,k}}\otimes{}\overline{\xi}^{(\rho,\sigma)}_{_{i,j,k}},\;\;\;j=1,\ldots,m\]
and
\[r_{_{k}}=\sum_{\stackrel{i,j,\rho,\sigma}{\mbox{\tiny$(i,j,k,\rho,\sigma)\in{\cal S}$}}}\!\!\!\!\!\!\xi^{(\rho,\sigma)}_{_{i,j,k}}\otimes{}\overline{\xi}^{(\rho,\sigma)}_{_{i,j,k}},\;\;\;k=1,\ldots,q.\]
Then the $p_{_{i}}'$s (resp. the $q_{_{j}}'$s and $r_{_{k}}'$s) are orthogonal projections with sum 1. For fixed $j$ the operators
\[f^{(j)}_{_{(i,\rho)(i',\rho')}}=\sum_{\stackrel{k,\sigma}{\stackrel{\mbox{\tiny$(i,j,k,\rho,\sigma)\in{\cal S}$}}{\mbox{\tiny$(i',j,k,\rho',\sigma)\in{\cal S}$}}}}\!\!\!\!\!\!\xi^{(\rho,\sigma)}_{_{i,j,k}}\otimes{}\overline{\xi}^{(\rho',\sigma)}_{_{i',j,k}}\]
form a set of matrix units  for a full matrix algebra $B_{_{j}}$ with unit 
\[\sum_{i,\rho}f^{(j)}_{_{(i,\rho)(i,\rho)}}=q_{_{j}}\]
and for fixed $k$ the operators 
\[g^{(k)}_{_{(i,j,\rho,\sigma)(i',j',\rho',\sigma')}}=\xi^{(\rho,\sigma)}_{_{i,j,k}}\otimes{}\overline{\xi}^{(\rho',\sigma')}_{_{i',j',k}}\]
form a set of matrix units for a full matrix algebra $D_{_{k}}$ with unit
\[\sum_{i,j,\rho,\sigma}g^{(k)}_{_{(i,j,\rho,\sigma)(i,j,\rho,\sigma)}}=r_{_{k}}.\]
Set
\[\begin{array}{lcl}
A&=&\bigoplus_{i}{\Bbb C}p_{_{i}},\\[0.2cm]
B&=&\bigoplus_{j}B_{_{j}},\\[0.2cm]
D&=&\bigoplus_{k}D_{_{k}},
\end{array}\]
then one easily checks that $A\subset{}B\subset{}D$ with the inclusion matrices
 $A\subset_{G}B$ and $B\subset_{H}D.$ Moreover $A$ and  the commutant, $D',$ of $D$ are abelian algebras.

In the same way we can build algebras $A_{_{1}}\subset{}C_{_{1}}\subset{}D_{_{1}}$ in $B({\cal H}_{_{1}})$ with inclusion matrices $A_{_{1}}\subset_{K}B_{_{1}}$ and $B_{_{1}}\subset_{L}D_{_{1}},$ such that $A_{_{1}}$ and  $D_{_{1}}'$ are abelian algebras. ${\cal H}_{_{1}}$ is the Hilbert space with orthonormal basis
\[\left\{\eta_{_{ilk}}^{(\phi,\psi)}\;|\;(i,l,k,\phi,\psi)\in{\cal T}\right\}\]
where
\[{\cal T}=\left\{(i,l,k,\phi,\psi)\;|\;K_{_{il}}L_{_{lk}}\neq{}0,\;1\leq{}\phi\leq{}K_{_{il}},\;1\leq{}\psi\leq{}L_{_{lk}}\right\}.\]
Moreover $A_{_{1}}=\bigoplus_{i}{\Bbb C}p_{_{i}}^{1},$ where
\[p_{_{i}}^{1}=\sum_{\stackrel{l,k,\phi,\psi}{\mbox{\tiny $(i,l,k,\phi,\psi)\in{\cal T}$}}}\!\!\!\!\!\!\eta^{(\phi,\psi)}_{_{i,l,k}}\otimes{}\overline{\eta}^{(\phi,\psi)}_{_{i,l,k}},\;\;\;i=1,\ldots,n\]
and $C_{_{1}}=\bigoplus_{l}C_{_{l}}^{1},$ $D_{_{1}}=\bigoplus_{k}D_{_{k}}^{1},$ where the minimal central projections of $C_{_{1}}$ and $D_{_{1}}$ are given by
\[s_{_{l}}^{1}=\sum_{\stackrel{i,k,\phi,\psi}{\mbox{\tiny $(i,l,k,\phi,\psi)\in{\cal T}$}}}\!\!\!\!\!\!\eta^{(\phi,\psi)}_{_{i,l,k}}\otimes{}\overline{\eta}^{(\phi,\psi)}_{_{i,l,k}},\;\;\;l=1,\ldots,p\]
\[r_{_{k}}^{1}=\sum_{\stackrel{i,l,\phi,\psi}{\mbox{\tiny $(i,l,k,\phi,\psi)\in{\cal T}$}}}\!\!\!\!\!\!\eta^{(\phi,\psi)}_{_{i,l,k}}\otimes{}\overline{\eta}^{(\phi,\psi)}_{_{i,l,k}},\;\;\;k=1,\ldots,q\]
respectively. A set of matrix units for $C_{_{l}}^{1}$ is given by
\[h^{1(l)}_{_{(i,\phi)(i',\phi')}}=\sum_{\stackrel{k,\psi}{\stackrel{\mbox{\tiny$(i,l,k,\phi,\psi)\in{\cal T}$}}{\mbox{\tiny$(i',l,k,\phi',\psi)\in{\cal T}$}}}}
\!\!\!\!\!\!\eta^{(\phi,\psi)}_{_{i,l,k}}\otimes{}\overline{\eta}^{(\phi',\psi)}_{_{i',l,k}}.\]
By corollary \ref{fire1cor5} there is a unitary $U\in{}B({\cal H},{\cal H}_{_{1}})$ such that
\begin{equation}
\label{fire1path*1}
U^{*}A_{_{1}}U=A,\;\;\;\;\;U^{*}D_{_{1}}U=D
\end{equation}
and
\begin{equation}
\label{fire1path*2}
\begin{array}{lcll}
U^{*}p_{_{i}}^{1}U&=&p_{_{i}},\;\;\;\;i=1,\ldots{}n\\[0.3cm]
U^{*}r_{_{k}}^{1}U&=&r_{_{k}},\;\;\;\;k=1,\ldots{}q.
\end{array}\end{equation}
Moreover, by lemma \ref{fire1lem13} for any unitary $U\in{}B({\cal H},{\cal H}_{_{1}})$ satisfying (\ref{fire1path*1}) and (\ref{fire1path*2}),
\begin{equation}
\label{fire1path*3}
\begin{array}{ccc}
      U^{*}C_{_{1}}U & \subset & D \\
       \cup &\,&\cup\\
      A & \subset & B  
\end{array}\end{equation}
is a square (not necessarily commuting) of \mma{}s with the given inclusion matrices $G,$ $H,$ $K$ and $L.$ Furthermore $A$ and $D'$ are abelian.

Next we shall find a necessary and sufficient condition on $U,$ for which (\ref{fire1path*3}) is a {\em commuting} square with respect to a given faithful trace \trsub{D} on $D.$

Note first that (\ref{fire1path*2}) implies (\ref{fire1path*1}) because
\[A=\mbox{span}\{p_{_{i}}\;|\;i=1,\ldots,n\},\;\;\;\;\;\;A_{_{1}}=\mbox{span}\{p_{_{i}}^{1}\;|\;i=1,\ldots,n\}\]
\[D'=\mbox{span}\{r_{_{k}}\;|\;k=1,\ldots,q\},\;\;\;\;\;\;D_{_{1}}'=\mbox{span}\{r_{_{k}}^{1}\;|\;k=1,\ldots,q\}\]
Assume that $U\in{}B({\cal H},{\cal K})$ is a unitary which satisfies (\ref{fire1path*2}). Since $p_{_{i}}$ is the projection on
\[\mbox{span}\left\{\xi^{(\rho,\sigma)}_{_{i,j,k}}\;|\;(i,j,k,\rho,\sigma)\in{\cal S},\;(i\;\mbox{fixed})\right\}\]
and $p^{1}_{_{i}}$ is the projection on
\[\mbox{span}\left\{\eta^{(\phi,\psi)}_{_{i,l,k}}\;|\;(i,l,k,\phi,\psi)\in{\cal T},\;(i\;\mbox{fixed})\right\}\]
the condition $U^{*}p_{_{i}}^{1}U=p_{_{i}}$ implies that
\begin{equation}
\label{fire1019}
\left(\xi^{(\rho,\sigma)}_{_{i,j,k}},\eta^{(\phi,\psi)}_{_{i',l,k'}}\right)=0,\;\;\;\;\;\mbox{ when } i\neq{}i',
\end{equation}
and similarly $U^{*}q_{_{k}}^{1}U=q_{_{k}}$ implies that
\begin{equation}
\label{fire10110}
\left(\xi^{(\rho,\sigma)}_{_{i,j,k}},\eta^{(\phi,\psi)}_{_{i',l,k'}}\right)=0,\;\;\;\;\;\mbox{ when } k\neq{}k'.
\end{equation}
Hence the matrix, $u,$ of $U$ with respect to the $\xi-$basis of ${\cal H}$ and the $\eta-$basis of ${\cal K},$ can be decomposed as a direct sum of unitary blocks
\[u=\bigoplus_{(i,k)}u^{(i,k)},\]
where the summation runs over all pairs $(i,k)$ for which $(GH)_{_{i,k}}=(KL)_{_{ik}}\neq{}0,$ and each block is given by
\[u^{(i,k)}=\left(
u_{(j,\rho,\sigma)(l,\phi,\psi)}^{(i,k)}\right)_{\stackrel{\mbox{\tiny$(i,j,k,\rho,\sigma)\in{\cal S}$}}{\mbox{\tiny$(i,l,k,\phi,\psi)\in{\cal T}$}}},\]
where 
\[u_{(j,\rho,\sigma)(l,\phi,\psi)}^{(i,k)}=\left(U\xi^{(\rho,\sigma)}_{_{i,j,k}},\eta^{(\phi,\psi)}_{_{i,l,k}}\right)\]
Note that each $u^{(i,k)}$ is a square matrix, with  $(GH)_{_{ik}}=(KL)_{_{ik}}$ rows and columns.

\noindent{}Conversely, if $U\in{}B({\cal H},{\cal K})$ has a direct summand decomposition as described above, then (\ref{fire1019}) and (\ref{fire10110}) hold. Thus $U$ maps $p_{_{i}}({\cal H})$ onto $p^{1}_{_{i}}({\cal K})$ and $q_{_{k}}({\cal H})$ onto $q^{1}_{_{k}}({\cal K}),$ so (\ref{fire1path*2}) holds.

Assume in the following, that $U\in{}B({\cal H},{\cal K})$ satisfies (\ref{fire1path*2}). Let $\alpha_{_{i}},$ $\beta_{_{j}},$ $\gamma_{_{l}}$ and $\delta_{_{k}}$ be the trace--weights on $A,$ $B,$ $C=U^{*}C_{_{1}}U$ and $D$ respectively.

\noindent{}For $d\in{}D$ 
\[E_{_{\!\!A}}(d)=\sum_{i}\frac{\Diprod{d}{p_{_{i}}}}{\Diprod{p_{_{i}}}{p_{_{i}}}}p_{_{i}}\]
$\Diprod{p_{_{i}}}{p_{_{i}}}=\trsub{D}(p_{_{i}})=\alpha_{_{i}},$ since $p_{_{i}}$ is a minimal projection in $A_{_{i}}.$ Hence
\[E_{_{\!\!A}}(d)=\sum_{i}\mbox{$\frac{1}{\alpha_{_{i}}}$}\trsub{D}(dp_{_{i}})p_{_{i}}.\]
$\trsub{D}(f^{(j)}_{_{(i,\rho)(i',\rho')}}p_{_{i''}})=0$ unless $i=i'=i''$ and $\rho=\rho',$ and since $f^{(j)}_{_{(i,\rho)(i,\rho)}}\leq{}p_{_{i}}$ we get
\[\trsub{D}(f^{(j)}_{_{(i,\rho)(i,\rho)}}p_{_{i}})=\trsub{D}(f^{(j)}_{_{(i,\rho)(i,\rho)}})=\beta_{_{j}},\]
because $f^{(j)}_{_{(i,\rho)(i,\rho)}}$ is a minimal projection in $B_{_{j}}.$ We now have
\begin{equation}
\label{fire10111}
E_{_{\!\!A}}(f^{(j)}_{_{(i,\rho)(i',\rho')}})=\left\{\begin{array}{cl}
\frac{\beta_{_{j}}}{\alpha_{_{i}}}p_{_{i}}&\mbox{ if } i=i'\mbox{ and }\rho=\rho'\\[0.3cm]
0&\mbox{ otherwise.}
\end{array}\right.
\end{equation}
Similarly 
\[E_{_{\!\!A_{_{1}}}}(h^{1(l)}_{_{(i,\phi)(i',\phi')}})=\left\{\begin{array}{cl}
\frac{\gamma_{_{l}}}{\alpha_{_{i}}}p^{1}_{_{i}}&\mbox{ if } i=i'\mbox{ and }\phi=\phi'\\[0.3cm]
0&\mbox{ otherwise,}
\end{array}\right.\]
or equivalently
\begin{equation}
\label{fire10112}
E_{_{\!\!A}}(h^{(l)}_{_{(i,\phi)(i',\phi')}})=\left\{\begin{array}{cl}
\frac{\gamma_{_{l}}}{\alpha_{_{i}}}p_{_{i}}&\mbox{ if } i=i'\mbox{ and }\phi=\phi'\\[0.3cm]
0&\mbox{ otherwise,}
\end{array}\right.
\end{equation}
where we set 
\[h^{(l)}_{_{(i,\phi)(i',\phi')}}=U^{*}h^{1(l)}_{_{(i,\phi)(i',\phi')}}U.\]
Note that $C=\bigoplus_{l}C_{_{l}},$ where $C_{_{l}}=U^{*}C_{_{l}}^{1}U$ and for fixed $l,$ $h^{(l)}_{_{(i,\phi)(i',\phi')}}$ form a set of matrix units for $C_{_{l}}.$

\noindent{}By (\ref{fire10111}) and (\ref{fire10112}) we get
\begin{equation}
\label{fire10113}
\trsub{D}(E_{_{\!\!A}}(f^{(j)}_{_{(i,\rho)(i',\rho')}})E_{_{\!\!A}}(h^{(l)}_{_{(i'',\phi)(i''',\phi')}}))=
\left\{\begin{array}{cl}
\frac{\beta_{_{j}}\gamma_{_{l}}}{\alpha_{_{i}}}&\mbox{ if } i=i'=i''=i''',\;\;\rho=\rho'\mbox{ and }\phi=\phi'\\[0.3cm]
0&\mbox{ otherwise.}
\end{array}\right.
\end{equation}
We will now compute
\[\trsub{D}(f^{(j)}_{_{(i,\rho)(i',\rho')}}h^{(l)}_{_{(i'',\phi)(i''',\phi')}})=\sum_{k}\delta_{_{k}}\mbox{Tr}(f^{(j)}_{_{(i,\rho)(i',\rho')}}h^{(l)}_{_{(i'',\phi)(i''',\phi')}}r_{_{k}}),\]
where Tr is the usual trace on each of the full matrix algebras $D_{_{k}},$ $k=1,\ldots,q.$

\noindent{}Note that for $x,y,z,v\in{}{\cal H}$
\begin{equation}
\label{fire10114}
\mbox{Tr}((x\otimes\overline{y})(z\otimes\overline{v}))=(z,y)\mbox{Tr}(x\otimes\overline{v})=(z,y)(x,v).
\end{equation}
We now get
\[f^{(j)}_{_{(i,\rho)(i',\rho')}}r_{_{k}}=
\left\{\begin{array}{cl}
\sum_{\sigma=1}^{H_{_{jk}}}\xi^{(\rho,\sigma)}_{_{i,j,k}}\otimes{}\overline{\xi}^{(\rho',\sigma)}_{_{i',j,k}}&\mbox{ if } H_{_{jk}}\neq{}0\\[0.3cm]
0&\mbox{ otherwise,}
\end{array}\right.\]
\[r_{_{k}}h^{(l)}_{_{(i,\phi)(i',\phi')}}=
\left\{\begin{array}{cl}
\sum_{\psi=1}^{L_{_{lk}}}U^{*}\eta^{(\phi,\psi)}_{_{i,l,k}}\otimes{}\overline{U^{*}\eta}^{(\phi',\psi)}_{_{i',l,k}}&\mbox{ if } L_{_{lk}}\neq{}0\\[0.3cm]
0&\mbox{ otherwise.}
\end{array}\right.\]
Therefore
\[\mbox{Tr}(f^{(j)}_{_{(i,\rho)(i',\rho')}}h^{(l)}_{_{(i'',\phi)(i''',\phi')}}r_{_{k}})=\mbox{Tr}(f^{(j)}_{_{(i,\rho)(i',\rho')}}r_{_{k}}r_{_{k}}h^{(l)}_{_{(i'',\phi)(i''',\phi')}})=\]
\[\sum_{\sigma=1}^{H_{_{jk}}}\sum_{\psi=1}^{L_{_{lk}}}\mbox{Tr}((\xi^{(\rho,\sigma)}_{_{i,j,k}}\otimes{}\overline{\xi}^{(\rho',\sigma)}_{_{i',j,k}})(U^{*}\eta^{(\phi,\psi)}_{_{i'',l,k}}\otimes{}\overline{U^{*}\eta}^{(\phi',\psi)}_{_{i''',l,k}}))=\]
\[\sum_{\sigma=1}^{H_{_{jk}}}\sum_{\psi=1}^{L_{_{lk}}}\left(U^{*}\eta^{(\phi,\psi)}_{_{i'',l,k}},\xi^{(\rho',\sigma)}_{_{i',j,k}}\right)
\left(\xi^{(\rho,\sigma)}_{_{i,j,k}},U^{*}\eta^{(\phi',\psi)}_{_{i''',l,k}}\right)=\]
\[\sum_{\sigma=1}^{H_{_{jk}}}\sum_{\psi=1}^{L_{_{lk}}}\left(U^{*}\eta^{(\phi,\psi)}_{_{i',l,k}},\xi^{(\rho',\sigma)}_{_{i',j,k}}\right)
\left(\xi^{(\rho,\sigma)}_{_{i,j,k}},U^{*}\eta^{(\phi',\psi)}_{_{i,l,k}}\right),\;\;\mbox{ if } i''=i',i=i''' \mbox{ and } 0 \mbox{ otherwise.}\]
Hence
\[\trsub{D}(f^{(j)}_{_{(i,\rho)(i',\rho')}}h^{(l)}_{_{(i'',\phi)(i''',\phi')}})=\sum_{k}\sum_{\sigma=1}^{H_{_{jk}}}\sum_{\psi=1}^{L_{_{lk}}}\delta_{_{k}}u_{_{(j,\rho,\sigma)(l,\phi',\psi)}}^{(i,k)}\overline{u}_{_{(j,\rho',\sigma)(l,\phi,\psi)}}^{(i',k)}\mbox{ if } i''=i',i=i''' \]
and $\trsub{D}(f^{(j)}_{_{(i,\rho)(i',\rho')}}h^{(l)}_{_{(i'',\phi)(i''',\phi')}})=0$ otherwise.

\noindent{}Combining with (\ref{fire10113}) and lemma \ref{fire1lem11} 3, we see that (\ref{fire1path*3}) is a \csq{} if and only if
\begin{equation}
\label{fire10114a}
\sum_{k}\sum_{\sigma=1}^{H_{_{jk}}}\sum_{\psi=1}^{L_{_{lk}}}\delta_{_{k}}u_{_{(j,\rho,\sigma)(l,\phi',\psi)}}^{(i,k)}\overline{u}_{_{(j,\rho',\sigma)(l,\phi,\psi)}}^{(i',k)}=
\left\{\begin{array}{cl}
\frac{\beta_{_{j}}\gamma_{_{l}}}{\alpha_{_{i}}}&\mbox{ if }i=i', \rho=\rho,,\phi=\phi'\\[0.3cm]
0&\mbox{ otherwise.}
\end{array}\right.
\end{equation}
Put
\begin{equation}
\label{fire10115}
v_{_{(i,\rho,\phi)(k,\sigma,\psi)}}^{(j,l)}=
\sqrt{\mbox{$\frac{\alpha_{_{i}}\delta_{_{k}}}{\beta_{_{j}}\gamma_{_{l}}}$}}u_{_{(j,\rho,\sigma)(l,\phi,\psi)}}^{(i,k)}.\end{equation}
Then
\[\sum_{k}\sum_{\sigma=1}^{H_{_{jk}}}\sum_{\psi=1}^{L_{_{lk}}}v_{_{(i,\rho,\phi)(k,\sigma,\psi)}}^{(j,l)}\overline{v}_{_{(i',\rho',\phi')(k,\sigma,\psi)}}^{(j,l)}=\delta_{(i,\rho,\phi)(i',\rho',\phi')},\]
where 
\[\delta_{(i,\rho,\phi)(i',\rho',\phi')}=\left\{\begin{array}{cl}
1&\mbox{ if }(i,\rho,\phi)=(i',\rho',\phi')\\[0.3cm]
0&\mbox{ otherwise.}
\end{array}\right.\]
Let $v$ be the matrix $v=\bigoplus_{(j,l)}v^{(j,l)},$ where
\[v^{(j,l)}=\left(v_{_{(i,\rho,\phi)(k,\sigma,\psi)}}^{(j,l)}\right)_{\stackrel{\mbox{\tiny$(i,j,k,\rho,\sigma)\in{\cal S}$}}{\mbox{\tiny$(i,l,k,\phi,\psi)\in{\cal T}$}}}.\]
Then the commuting square condition (\ref{fire10114a}) is equivalent to: Each summand, $v^{(j,l)},$ satisfies $v^{(j,l)}{v^{(j,l)}}^{*}=1,$ which in turn means, that $v$ is the matrix of an isometry. In particular $v^{(j,l)}$ has  at least as many columns as rows. 

\noindent{}Since
\[\mbox{\# rows\hspace{3ex}}=\sum_{i}\sum_{\rho=1}^{G_{_{ij}}}\sum_{\phi=1}^{K_{_{il}}}1=\sum_{i}G_{_{ij}}K_{_{il}}=(G^{t}K)_{_{jl}},\]
\[\mbox{\# columns} = \sum_{k}\sum_{\sigma=1}^{H_{_{jk}}}\sum_{\psi=1}^{L_{_{lk}}}1=\sum_{k}H_{_{jk}}L_{_{lk}}=(HL^{t})_{_{jl}},\]
a necessary condition for (\ref{fire1path*3}) to be a  commuting square is, that $G^{t}K\leq{}HL^{t}$ (element wise ordering).\label{fire1page2}

\noindent{}All in all  we have proved
\begin{theorem}
\label{fire1thm17}
With the notation introduced previously we have
\begin{description}
\item[1.]{}The square (\ref{fire1path*3}) of \mma{}s
\[\begin{array}{ccl}
      U^{*}C_{_{1}}U & \subset_{L} & D \\
      \;\;\;\cup_{K} &\,&\cup_{H}\\
      A & \subset_{G} & B  
\end{array}\]
has the indicated inclusion matrices if and only if $U\in{}B({\cal H},{\cal K})$ is a unitary for which the matrix, $u,$  with respect to the $\xi-$basis for ${\cal H}$ and the $\eta-$basis for ${\cal K},$ is of the form
\[u=\bigoplus_{(i,k)}u^{(i,k)}\]
where
\[u^{(i,k)}=\left(u_{_{(j,\rho,\sigma)(l,\phi,\psi)}}^{(i,k)}\right)_{\stackrel{\mbox{\tiny$(i,j,k,\rho,\sigma)\in{\cal S}$}}{\mbox{\tiny$(i,l,k,\phi,\psi)\in{\cal T}$}}}.\]
\item[2.]{}The square (\ref{fire1path*3}) is a {\em commuting} square with respect to a given faithful trace, $\trsub{D},$ on $D$  if and only if the matrix $v=\bigoplus_{(j,l)}v^{(j,l)}$ with entries
\[v_{_{(i,\rho,\phi)(k,\sigma,\psi)}}^{(j,l)}=
\sqrt{\mbox{$\frac{\alpha_{_{i}}\delta_{_{k}}}{\beta_{_{j}}\gamma_{_{l}}}$}}u_{_{(j,\rho,\sigma)(l,\phi,\psi)}}^{(i,k)}\]
is an isometry.
\item[3.]{}A necessary condition for (\ref{fire1path*3}) to be an \csq{} is $G^{t}K\leq{}HL^{t}$ (element wise ordering).
\end{description}
\end{theorem}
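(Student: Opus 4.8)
The plan is to organise the path-model computations carried out above into the three assertions; no new idea is required, only a careful assembly. For part 1, I would collect the chain of equivalences already established: for a unitary $U\in{}B({\cal H},{\cal K})$, the relations $U^{*}p_{_{i}}^{1}U=p_{_{i}}$ and $U^{*}r_{_{k}}^{1}U=r_{_{k}}$ for all $i,k$ (condition (\ref{fire1path*2})) are equivalent to the block decomposition $u=\bigoplus_{(i,k)}u^{(i,k)}$ over the pairs with $(GH)_{_{ik}}=(KL)_{_{ik}}\neq{}0$ — one direction is the derivation of the orthogonality relations (\ref{fire1019}) and (\ref{fire10110}), the other is the observation that a block-diagonal $u$ sends $p_{_{i}}({\cal H})$ onto $p_{_{i}}^{1}({\cal K})$ and $r_{_{k}}({\cal H})$ onto $r_{_{k}}^{1}({\cal K})$; and since $A$, $A_{_{1}}$ are spanned by the $p_{_{i}}$ resp. $p_{_{i}}^{1}$ and $D'$, $D_{_{1}}'$ by the $r_{_{k}}$ resp. $r_{_{k}}^{1}$, (\ref{fire1path*2}) implies (\ref{fire1path*1}), whence by Lemma \ref{fire1lem13} — using $A\subset_{G}B\subset_{H}D$ and $A_{_{1}}\subset_{K}C_{_{1}}\subset_{L}D_{_{1}}$ from the construction — the square (\ref{fire1path*3}) has the prescribed inclusion matrices. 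That at least one such $U$ exists is Corollary \ref{fire1cor5}. This is part 1.

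For part 2, I would fix a $U$ of the form in part 1, let $\alpha_{_{i}},\beta_{_{j}},\gamma_{_{l}},\delta_{_{k}}$ be the trace-weights on $A$, $B$, $C=U^{*}C_{_{1}}U$, $D$, and carry out the two trace computations. First, applying $E_{_{\!\!A}}(d)=\sum_{i}\alpha_{_{i}}^{-1}\trsub{D}(dp_{_{i}})p_{_{i}}$ to the matrix units $f^{(j)}_{_{(i,\rho)(i',\rho')}}$ of $B$ and $h^{(l)}_{_{(i,\phi)(i',\phi')}}=U^{*}h^{1(l)}_{_{(i,\phi)(i',\phi')}}U$ of $C$ yields (\ref{fire10111}) and (\ref{fire10112}), hence (\ref{fire10113}) for $\trsub{D}(E_{_{\!\!A}}(f)E_{_{\!\!A}}(h))$. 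Second, expanding $\trsub{D}$ over the blocks $D_{_{k}}$ as $\trsub{D}(d)=\sum_{k}\delta_{_{k}}\mbox{Tr}(dr_{_{k}})$, multiplying out $f^{(j)}r_{_{k}}$ and $r_{_{k}}h^{(l)}$ in terms of the $\xi$'s and $U^{*}\eta$'s, and using the rank-one trace identity (\ref{fire10114}), one obtains the bilinear expression in the $u^{(i,k)}$-entries for $\trsub{D}(f^{(j)}_{_{(i,\rho)(i',\rho')}}h^{(l)}_{_{(i'',\phi)(i''',\phi')}})$. By Lemma \ref{fire1lem11}(3) the square (\ref{fire1path*3}) is commuting iff $\Diprod{E_{_{\!\!A}}(b)}{E_{_{\!\!A}}(c)}=\Diprod{b}{c}$ for all $b\in{}B$, $c\in{}C$, and since the $f$'s and $h$'s span $B$ resp. $C$ this is equivalent to the system (\ref{fire10114a}). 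Finally the substitution (\ref{fire10115}) rewrites (\ref{fire10114a}) as $v^{(j,l)}{v^{(j,l)}}^{*}=1$ for every $(j,l)$, i.e. as the statement that $v=\bigoplus_{(j,l)}v^{(j,l)}$ is an isometry; this is part 2.

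Part 3 is then immediate: since each block $v^{(j,l)}$ satisfies $v^{(j,l)}{v^{(j,l)}}^{*}=1$ it has at least as many columns as rows, and one computes that the number of rows of $v^{(j,l)}$ is $\sum_{i}G_{_{ij}}K_{_{il}}=(G^{t}K)_{_{jl}}$ while the number of columns is $\sum_{k}H_{_{jk}}L_{_{lk}}=(HL^{t})_{_{jl}}$; hence $(G^{t}K)_{_{jl}}\leq(HL^{t})_{_{jl}}$ for all $j,l$. The only part that demands attention rather than ingenuity is the second trace computation in part 2 — keeping the index sets ${\cal S}$, ${\cal T}$ and the block labels $(i,k)$, $(j,l)$ straight, and verifying that $\trsub{D}(f^{(j)}h^{(l)})$ vanishes off the relevant diagonal cases — but there is no genuine obstacle, since the existence of a suitable $U$ is supplied by Corollary \ref{fire1cor5} and the characterisation of commuting squares by Lemma \ref{fire1lem11}.
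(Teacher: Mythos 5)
Your proposal is correct and follows essentially the same route as the paper: the theorem is stated there precisely as a summary of the preceding path-model computations, and your assembly — part 1 from the equivalence of (\ref{fire1path*2}) with the block decomposition together with Lemma \ref{fire1lem13}, part 2 from the trace computations (\ref{fire10111})--(\ref{fire10114a}) combined with Lemma \ref{fire1lem11}(3) and the substitution (\ref{fire10115}), and part 3 from counting rows and columns of the blocks $v^{(j,l)}$ — matches the paper's argument step for step. No gaps.
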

\begin{defn}
\label{symmetricdefn}
If
\[\begin{array}{lcl}
      C & \subset_{L} & D \\
       \cup_{K} &\,&\cup_{H}\\
      A & \subset_{G} & B  \end{array}  \]
is a \csq{}, with respect to the faithful trace, $\trsub{D},$ on $D,$ such that $G^{t}K=HL^{t}$ in addition to $GH=KL,$ we say that the square is a {\em symmetric} \csq{}
\end{defn}
\begin{remark}\label{biunitcond}{\rm  
In the case (\ref{fire1path*3}) is a symmetric \csq{}, the isometry $v$ in theorem \ref{fire1thm17} 2 becomes a unitary, so in the symmetric case we will refer to the condition in theorem \ref{fire1thm17} 2 as the {\em bi--unitary condition for the pair $(u,v)$}.
}\end{remark}
\begin{theorem}
\label{fire1thm10}
Let $G\in{}M_{_{nm}}({\Bbb Z}),$ $H\in{}M_{_{mq}}({\Bbb Z}),$ $K\in{}M_{_{np}}({\Bbb Z})$ and $L\in{}M_{_{pq}}({\Bbb Z})$ be matrices with non--negative entries, such that
\[GH=KL\;\;\mbox{ and }\;\;G^{t}K=HL^{t}.\]
Then the following conditions are equivalent
\begin{description}
\item[(a)]{}There exists a (symmetric) \csq{}
\[(A\subset{}B\subset{}D,\;\;A\subset{}C\subset{}D,\;\;\trsub{D})\]
of \mma{}s, with inclusion matrices
\[\begin{array}{lcl}
      C & \subset_{L} & D \\
       \cup_{K} &\,&\cup_{H}\\
      A & \subset_{G} & B.  \end{array}  \]
\item[(b)]{}There exists a pair of matrices $(u,v)$ satisfying the bi--unitary condition, i.e.
\[u=\bigoplus_{(i,k)}u^{(i,k)},\;\;\;\;\;v=\bigoplus_{(j,l)}v^{(j,l)}\]
where the direct summands
\[u^{(i,k)}=\left(
u_{(j,\rho,\sigma)(l,\phi,\psi)}^{(i,k)}\right)_{\stackrel{\mbox{\tiny$(i,j,k,\rho,\sigma)\in{\cal S}$}}{\mbox{\tiny$(i,l,k,\phi,\psi)\in{\cal T}$}}},\]
\[v^{(j,l)}=\left(v_{_{(i,\rho,\phi)(k,\sigma,\psi)}}^{(j,l)}\right)_{\stackrel{\mbox{\tiny$(i,j,k,\rho,\sigma)\in{\cal S}$}}{\mbox{\tiny$(i,l,k,\phi,\psi)\in{\cal T}$}}}\]
are unitary matrices and
\begin{equation}
\label{fire10116}v_{_{(i,\rho,\phi)(k,\sigma,\psi)}}^{(j,l)}=
\sqrt{\mbox{$\frac{\alpha_{_{i}}\delta_{_{k}}}{\beta_{_{j}}\gamma_{_{l}}}$}}u_{_{(j,\rho,\sigma)(l,\phi,\psi)}}^{(i,k)}
\end{equation}
Here $\alpha_{_{i}},$ $\beta_{_{j}},$ $\gamma_{_{l}}$ and $\delta_{_{k}}$ are the trace weights on $A,$ $B,$ $C$ resp. $D$ coming from $\trsub{D},$ and the indices $i,j,k,l,\rho,\sigma,\phi$ and $\psi$ are as in theorem \ref{fire1thm17}.
\end{description}
\end{theorem}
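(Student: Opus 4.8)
The plan is to read the statement as a repackaging of Theorem~\ref{fire1thm17} together with Remark~\ref{biunitcond}. From that point of view the implication (b)$\Rightarrow$(a) is almost immediate, and the work is in (a)$\Rightarrow$(b), which needs one extra ingredient: that \emph{every} symmetric \csq{} with the prescribed inclusion matrices is spatially isomorphic to one of the path--model squares~(\ref{fire1path*3}).

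For (b)$\Rightarrow$(a): since $GH=KL$ each block $u^{(i,k)}$ is square, and since $G^{t}K=HL^{t}$ each block $v^{(j,l)}$ is square, so the word ``unitary'' in (b) is meaningful and $u=\bigoplus_{(i,k)}u^{(i,k)}$ is a unitary of exactly the block form appearing in Theorem~\ref{fire1thm17}.1. Let $U\in B({\cal H},{\cal K})$ be the operator whose matrix in the $\xi$-- and $\eta$--bases is $u$, and equip $D$ with the faithful trace with weights $\delta_k$. By Theorem~\ref{fire1thm17}.1 the square~(\ref{fire1path*3}) has inclusion matrices $G,H,K,L$; since the $v$ of (b) is by hypothesis an isometry and is built from $u$ by~(\ref{fire10116}), Theorem~\ref{fire1thm17}.2 shows that~(\ref{fire1path*3}) is a \csq{} with respect to that trace; and $G^{t}K=HL^{t}$ makes it symmetric. (That $A,B,C$ then carry precisely the trace weights $\alpha_i,\beta_j,\gamma_l$ of (b) follows from the inclusion data together with $GH=KL$.)

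For (a)$\Rightarrow$(b): given a symmetric \csq{} $(A\subset B\subset D,\ A\subset C\subset D,\ \trsub{D})$ with the stated inclusion matrices, pick an abelian projection $e\in A$ of central support $1$ and pass, via Lemma~\ref{fire1lem12}, to the compressed \csq{} on $eDe$. This keeps the inclusion matrices (so symmetry is preserved), makes $eAe$ abelian, and replaces \trsub{D} by $\frac{1}{\trsub{D}(e)}\trsub{D}$ --- a common rescaling of all trace weights, under which the relation~(\ref{fire10116}) is invariant, so nothing is lost. Represent $eDe$ on a Hilbert space on which its commutant is abelian (possible since $eDe$ is a \mma{}; this does not touch the conditional expectations, so it is still a \csq{}). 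Now run the path--model construction preceding Theorem~\ref{fire1thm17} for $G,H,K,L$, and argue exactly as in Lemmas~\ref{fire1lem13}, \ref{fire1lem14} and Corollary~\ref{fire1cor5}, but applying Bratteli's theorem \cite{bratteli} to the whole chains $A\subset B\subset D$ and $A\subset C\subset D$ rather than to single inclusions: this produces a unitary $W$ carrying the compressed $A\subset B\subset D$ onto the path model's $A\subset B\subset D$ and respecting the obvious identifications of centres, and then a unitary $U$, satisfying~(\ref{fire1path*1})--(\ref{fire1path*2}), carrying $A\subset WCW^{*}\subset D$ onto $A_1\subset C_1\subset D_1$. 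For this $U$ the square~(\ref{fire1path*3}) is spatially isomorphic to the compressed \csq{}, hence is itself a \csq{}; then Theorem~\ref{fire1thm17}.2 makes the associated matrix $v$ of~(\ref{fire10116}) an isometry, and Remark~\ref{biunitcond} (using $G^{t}K=HL^{t}$) makes it unitary. So $(u,v)$ is the required bi--unitary pair.

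The one genuinely new point is the spatial uniqueness input in the last step: one must know that a faithful representation of a \mma{} with abelian commutant is unique up to unitary equivalence (\cite{dixmier}, chap.~III) and use this to upgrade Lemma~\ref{fire1lem14} from a single inclusion to a chain $A\subset B\subset D$ of \mma{}s --- which is exactly why the reductions ``$A$ abelian'' and ``$D'$ abelian'' are arranged first, via Lemma~\ref{fire1lem12} and the choice of representation. The remaining bookkeeping, with the block decompositions of $u$ and $v$ and with the harmless common rescaling of the trace weights, is routine.
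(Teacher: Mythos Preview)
Your proposal is correct and follows essentially the same route as the paper: (b)$\Rightarrow$(a) via Theorem~\ref{fire1thm17} and Remark~\ref{biunitcond}, and (a)$\Rightarrow$(b) by compressing with an abelian projection (Lemma~\ref{fire1lem12}), representing so that $D'$ is abelian, and then invoking the spatial uniqueness of chains $A\subset B\subset D$ with $A$ and $D'$ abelian (as in the proof of Lemma~\ref{fire1lem14}) to identify the given square with a path-model square~(\ref{fire1path*3}). The paper records the same ``chain'' upgrade of Lemma~\ref{fire1lem14} that you flag as the new point, and likewise notes that the common rescaling of trace weights under compression leaves the ratio in~(\ref{fire10116}) unchanged.
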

\begin{proof}

\noindent{}$(b)\Rightarrow(a)$ follows from theorem \ref{fire1thm17} and remark \ref{biunitcond}.

\noindent{}$(a)\Rightarrow(b).$ Assume $(a).$ Then by reducing with an abelian projection $e$ in $A,$ with central support 1, as in lemma \ref{fire1lem12}, we get a new \csq{},
\begin{equation}
\label{fire10117}
\begin{array}{lcl}
     {\cal C }& \subset_{L} &{\cal D} \\
       \cup_{K} &\,&\cup_{H}\\
     {\cal A} & \subset_{G} & {\cal B } \end{array}  
\end{equation}
of \mma{}s, with the same inclusion matrices, such that ${\cal A}$ is abelian. Note that the reduction with $e$ does not change the factor $\sqrt{\alpha_{_{i}}\delta_{_{k}}\left/\beta_{_{j}}\gamma_{_{l}}\right.}$ in (\ref{fire10116}), because $\alpha_{_{i}},$ $\beta_{_{j}},$ $\gamma_{_{l}}$ and $\delta_{_{k}}$ are all multiplied with the same constant $(\trsub{\cal D}(e))^{-1}.$ 

\noindent{}Next we can represent ${\cal D}$ on a Hilbert space, such that the commutant, ${\cal D}',$ is abelian.

\noindent{}As in the proof of lemma \ref{fire1lem14}, the inclusion ${\cal A}\subset_{G}{\cal B}\subset_{H}{\cal D}$ is spatially isomorphic to any other inclusion of \mma{}s, $A\subset_{G}B\subset_{H}D,$ with the same inclusion matrices, for which $A$ and $D'$ are abelian. In particular it is spatially isomorphic to $A\subset_{G}B\subset_{H}D$ coming from the path construction described previously. Similarly  ${\cal A}\subset_{K}{\cal C}\subset_{L}{\cal D}$ is spatially isomorphic to $A_{_{1}}\subset_{K}C_{_{1}}\subset_{L}D_{_{1}}$ coming from the path construction. Hence (\ref{fire10117}) is spatially isomorphic to (\ref{fire1path*3}) for some unitary $u\in{}B({\cal H},{\cal K}).$ Therefore $(a)\Rightarrow(b)$ follows from theorem \ref{fire1thm17} and remark \ref{biunitcond}.
\end{proof}
\begin{prop}
\label{fire1prop17}
If
\[(A\subset_{G}B\subset_{H}D,\;\;A\subset_{K}C\subset_{L}D,\;\;\trsub{D})\]
is a symmetric \csq{}, such that the Bratteli diagrams $\Gamma_{_{G}},$  $\Gamma_{_{H}},$  $\Gamma_{_{K}}$ and $\Gamma_{_{L}}$ are connected, then
\begin{description}
\item[(I)]{}$\|K\|=\|H\|.$ Moreover $\trsub{D}$ is the Markov trace of the embedding $C\subset{}D,$ and $\trsub{D}|_{_{B}}$ is  the Markov trace of the embedding $A\subset{}B.$
\item[(II)]{}$\|G\|=\|L\|.$ Moreover $\trsub{D}$ is the Markov trace of the embedding $B\subset{}D,$ and $\trsub{D}|_{_{C}}$ is  the Markov trace of the embedding $A\subset{}C.$
\end{description}
\end{prop}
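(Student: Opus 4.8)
The plan is to pass from the symmetric \csq{} to its associated bi--unitary pair $(u,v)$ and then to read off eigenvalue equations for the trace--weight vectors by playing the unitarity relations for $u$ off against those for $v$. Let $\vec\alpha=(\alpha_{_{i}}),\vec\beta=(\beta_{_{j}}),\vec\gamma=(\gamma_{_{l}}),\vec\delta=(\delta_{_{k}})$ be the weight vectors of $\trsub{D}$ on $A,B,C,D$; they are strictly positive by faithfulness and satisfy $\vec\alpha=G\vec\beta$, $\vec\beta=H\vec\delta$, $\vec\alpha=K\vec\gamma$, $\vec\gamma=L\vec\delta$. By Theorem \ref{fire1thm10} the square yields a bi--unitary pair $u=\bigoplus_{(i,k)}u^{(i,k)}$, $v=\bigoplus_{(j,l)}v^{(j,l)}$ with $u,v$ \emph{unitary} (Remark \ref{biunitcond}) and $v^{(j,l)}_{_{(i,\rho,\phi)(k,\sigma,\psi)}}=\sqrt{\frac{\alpha_{_{i}}\delta_{_{k}}}{\beta_{_{j}}\gamma_{_{l}}}}\,u^{(i,k)}_{_{(j,\rho,\sigma)(l,\phi,\psi)}}$. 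Writing $V^{(i,k)}_{jl}=\sum_{\rho,\sigma,\phi,\psi}|u^{(i,k)}_{_{(j,\rho,\sigma)(l,\phi,\psi)}}|^{2}$ and $W^{(j,l)}_{ik}=\sum_{\rho,\sigma,\phi,\psi}|v^{(j,l)}_{_{(i,\rho,\phi)(k,\sigma,\psi)}}|^{2}$, the rescaling gives $V^{(i,k)}_{jl}=\frac{\beta_{_{j}}\gamma_{_{l}}}{\alpha_{_{i}}\delta_{_{k}}}W^{(j,l)}_{ik}$, while summing the diagonal entries of $uu^{*}=1$, $u^{*}u=1$ and $v^{*}v=1$ over the labels $\rho,\sigma,\phi,\psi$ gives
\[\sum_{l}V^{(i,k)}_{jl}=G_{_{ij}}H_{_{jk}},\quad\sum_{j}V^{(i,k)}_{jl}=K_{_{il}}L_{_{lk}},\quad\sum_{i}W^{(j,l)}_{ik}=H_{_{jk}}L_{_{lk}}.\]

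The heart of the matter is to combine these correctly. Substituting the rescaling into the first identity gives $\sum_{l}\gamma_{_{l}}W^{(j,l)}_{ik}=\frac{\alpha_{_{i}}\delta_{_{k}}}{\beta_{_{j}}}G_{_{ij}}H_{_{jk}}$; summing over the $A$--index $i$, substituting $\sum_{i}W^{(j,l)}_{ik}=H_{_{jk}}L_{_{lk}}$ on the left, and cancelling $H_{_{jk}}$, one obtains $\frac{(L^{t}\vec\gamma)_{k}}{\delta_{_{k}}}=\frac{(G^{t}\vec\alpha)_{j}}{\beta_{_{j}}}$ along every edge of $\Gamma_{_{H}}$. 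As $\Gamma_{_{H}}$ is connected, both sides equal one constant $c$, so (using $\vec\alpha=G\vec\beta$, $\vec\gamma=L\vec\delta$) $G^{t}G\vec\beta=c\vec\beta$ and $L^{t}L\vec\delta=c\vec\delta$; since $\vec\beta,\vec\delta>0$ and $\Gamma_{_{G}},\Gamma_{_{L}}$ are connected (so $G^{t}G$, $L^{t}L$ are irreducible), Perron--Frobenius forces $c=\|G\|^{2}=\|L\|^{2}$. By the standard description of the Markov trace through the eigenvalue equation for the inclusion matrix (\cite{HGJ}, Ch.\ IV), this gives $\|G\|=\|L\|$, that $\trsub{D}|_{_{B}}$ is the Markov trace of $A\subset{}B$, and that $\trsub{D}$ is the Markov trace of $C\subset{}D$. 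The mirror computation --- substitute the rescaling into $\sum_{j}V^{(i,k)}_{jl}=K_{_{il}}L_{_{lk}}$, sum over $i$, use $\sum_{i}W^{(j,l)}_{ik}=H_{_{jk}}L_{_{lk}}$ together with $\vec\alpha=K\vec\gamma$, $\vec\beta=H\vec\delta$ --- produces $\frac{(H^{t}\vec\beta)_{k}}{\delta_{_{k}}}=\frac{(K^{t}\vec\alpha)_{l}}{\gamma_{_{l}}}$ along every edge of $\Gamma_{_{L}}$, whence (by connectedness of $\Gamma_{_{L}},\Gamma_{_{H}},\Gamma_{_{K}}$ and the same Perron--Frobenius step) $\|H\|=\|K\|$, $\trsub{D}$ is the Markov trace of $B\subset{}D$, and $\trsub{D}|_{_{C}}$ is the Markov trace of $A\subset{}C$. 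Collecting these statements is exactly (I) and (II).

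The main obstacle is to find a combination of the unitarity relations that carries genuine content: almost every way of contracting the $V$'s and $W$'s against the weight vectors merely reproduces one of the restriction relations $\vec\alpha=G\vec\beta$, $\vec\beta=H\vec\delta$, etc. The combination that works is to contract $\sum_{l}\gamma_{_{l}}W^{(j,l)}_{ik}$ over the $A$--index $i$ and feed in $v^{*}v=1$ (respectively feed $u^{*}u=1$ into the mirror computation); after that the argument is routine Perron--Frobenius, the only delicate point being to keep straight which of the four connectedness hypotheses is used where.
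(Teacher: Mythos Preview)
Your proof is correct and follows essentially the same approach as the paper. The paper organizes the computation around the single weighted quantity $N(i,j,k,l,\rho,\sigma,\phi,\psi)=\alpha_{_{i}}\delta_{_{k}}|u^{(i,k)}_{\cdots}|^{2}=\beta_{_{j}}\gamma_{_{l}}|v^{(j,l)}_{\cdots}|^{2}$ rather than your pair $V^{(i,k)}_{jl},W^{(j,l)}_{ik}$, but the four unitarity identities obtained by summing out the multiplicity labels are the same, and the decisive step---contracting two of them to obtain $\frac{(L^{t}\vec\gamma)_{k}}{\delta_{_{k}}}=\frac{(G^{t}\vec\alpha)_{j}}{\beta_{_{j}}}$ along $\Gamma_{_{H}}$ (respectively $\frac{(H^{t}\vec\beta)_{k}}{\delta_{_{k}}}=\frac{(K^{t}\vec\alpha)_{l}}{\gamma_{_{l}}}$ along $\Gamma_{_{L}}$), then invoking connectedness and Perron--Frobenius---is identical. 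Your grouping of the six conclusions across the two computations differs from the way the proposition packages them into (I) and (II), but you correctly note this and collect them at the end.
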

\begin{proof}
By the assumptions $GH=KL$ and $G^{t}K=HL^{t}.$ Let $u=\bigoplus{}u^{(i,k)}$ and $v=\bigoplus{}v^{(j,l)}$ be as in theorem \ref{fire1thm10}. Let
\[N(i,j,k,l,\rho,\sigma,\phi,\psi)=\alpha_{_{i}}\delta_{_{k}}\left|u_{(j,\rho,\sigma)(l,\phi,\psi)}^{(i,k)}\right|^{2}=\beta_{_{j}}\gamma_{_{l}}\left|v_{_{(i,\rho,\phi)(k,\sigma,\psi)}}^{(j,l)}\right|^{2}\]
if $(i,j,k,\rho,\sigma)\in{\cal S}$ and $(i,l,k,\phi,\psi)\in{\cal T},$ and let $N(i,j,k,l,\rho,\sigma,\phi,\psi)=0$ otherwise. Since $u$ and $v$ are unitary we get

\begin{equation}
\label{fire1N1}
\sum_{j}\sum_{\rho=1}^{G_{_{ij}}}\sum_{\sigma=1}^{H_{_{jk}}}N(i,j,k,l,\rho,\sigma,\phi,\psi)=\left\{
\begin{array}{cl}
\alpha_{_{i}}\delta_{_{k}}&\mbox{ if there exists a path } k-l-i\\[0.2cm]
0&\mbox{ otherwise}
\end{array}\right.
\end{equation}

\begin{equation}
\label{fire1N2}
\sum_{l}\sum_{\phi=1}^{K_{_{il}}}\sum_{\psi=1}^{L_{_{lk}}}N(i,j,k,l,\rho,\sigma,\phi,\psi)=\left\{
\begin{array}{cl}
\alpha_{_{i}}\delta_{_{k}}&\mbox{ if there exists  a path } i-j-k\\[0.2cm]
0&\mbox{ otherwise}
\end{array}\right.
\end{equation}

\begin{equation}
\label{fire1N3}
\sum_{i}\sum_{\rho=1}^{G_{_{ij}}}\sum_{\phi=1}^{K_{_{il}}}N(i,j,k,l,\rho,\sigma,\phi,\psi)=\left\{
\begin{array}{cl}
\beta_{_{j}}\gamma_{_{l}}&\mbox{ if there exists a path } j-k-l \\[0.2cm]
0&\mbox{ otherwise}
\end{array}\right.
\end{equation}

\begin{equation}
\label{fire1N4}
\sum_{k}\sum_{\sigma=1}^{H_{_{jk}}}\sum_{\psi=1}^{L_{_{lk}}}N(i,j,k,l,\rho,\sigma,\phi,\psi)=\left\{
\begin{array}{cl}
\beta_{_{j}}\gamma_{_{l}}&\mbox{ if there exists a path } l-i-j\\[0.2cm]
0&\mbox{ otherwise}
\end{array}\right.
\end{equation}
where the term ``path'' refers to paths on the graphs $\Gamma_{_{X}},$ $X=G,H,K,L,$ so f.\ inst. there exists a path $k-i-l$ if and only if $L_{_{lk}}\neq{}0$ and $K_{_{il}}\neq{}0.$

\noindent{}Assume that there is an edge $l-k,$ i.e. $L_{_{lk}}\neq{}0$ By (\ref{fire1N1}) we get
\[\sum_{\stackrel{\mbox{\tiny$i$}}{\mbox{\tiny$K_{_{il}}\neq{}0$}}}\sum_{\phi=1}^{K_{_{il}}}\sum_{j}\sum_{\rho=1}^{G_{_{ij}}}\sum_{\sigma=1}^{H_{_{jk}}}N(i,j,k,l,\rho,\sigma,\phi,\psi)=\sum_{\stackrel{\mbox{\tiny$i$}}{\mbox{\tiny$K_{_{il}}\neq{}0$}}}\sum_{\phi=1}^{K_{_{il}}}\alpha_{_{i}}\delta_{_{k}}= \sum_{i}\alpha_{_{i}}K_{_{il}}\delta_{_{k}}=(K^{t}\alpha)_{_{l}}\delta_{_{k}}.\]
Hence 
\begin{equation}
\label{fire1N*1}
\sum_{j}\sum_{i}\sum_{\sigma=1}^{H_{_{jk}}}\sum_{\rho=1}^{G_{_{ij}}}\sum_{\phi=1}^{K_{_{il}}}N(i,j,k,l,\rho,\sigma,\phi,\psi)=\left\{
\begin{array}{cl}
(K^{t}\alpha)_{_{l}}\delta_{_{k}}&\mbox{ if } L_{_{lk}}\neq{}0\\[0.2cm]
0&\mbox{ otherwise}
\end{array}\right.
\end{equation}
Summing over $j$ and $\sigma$ in (\ref{fire1N3}) gives
\[(\ref{fire1N*1})=\left\{
\begin{array}{cl}
(H^{t}\beta)_{_{k}}\gamma_{_{l}}&\mbox{ if } L_{_{lk}}\neq{}0\\[0.2cm]
0&\mbox{ otherwise,}
\end{array}\right.\]
and we have
\[\frac{(H^{t}\beta)_{_{k}}}{\delta_{_{k}}}=\frac{(K^{t}\alpha)_{_{l}}}{\gamma_{_{l}}}\;\;\mbox{ if } \;\;L_{_{lk}}\neq{}0.\]
Since $\Gamma_{_{L}}$ is connected, we then have
\[\frac{(H^{t}\beta)_{_{k}}}{\delta_{_{k}}}=\frac{(K^{t}\alpha)_{_{l}}}{\gamma_{_{l}}}\;\;\mbox{ for all }\;\; l,k,\]
and hence we can find $\mu>0$ such that
\begin{equation}
\label{fire1N*2}
H^{t}\beta=\mu\delta\;\;\mbox{ and }\;\;K^{t}\alpha=\mu\gamma
\end{equation}
and since 
\[\beta=H\delta\;\;\mbox{ and }\;\;\alpha=K\gamma\]
we have
\begin{equation}
\label{fire1N*3}
H^{t}H\delta=\mu\delta\;\;\mbox{ and }\;\;K^{t}K\gamma=\mu\gamma,
\end{equation}
which shows that $\delta$ is the Perron--Frobenius eigenvector for $H^{t}H,$ and that $\gamma$ is the Perron--Frobenius eigenvector for $K^{t}K,$ both corresponding to the same eigenvalue. Hence $\|H\|=\|K\|.$ Using \cite{HGJ} theorem 2.1.3(i), (\ref{fire1N*2}) and (\ref{fire1N*3}) imply the Markov trace assertions of (I).

\noindent{}The proof of (II) follows the same lines, considering (\ref{fire1N2}) and (\ref{fire1N4}) for fixed $i,j$ such that $G_{_{ij}}\neq{}0.$
\end{proof}
Let
\[\begin{array}{lcl}
      B_{0} & \subset_{L} & B_{1} \\
       \cup_{K} &\,&\cup_{H}\\
      A_{0} & \subset_{G} & A_{1}  \end{array}  \]
be a symmetric \csq{}. By the Markov trace properties of proposition \ref{fire1prop17} and  \cite{HGJ} lemma 4.2.4 and proposition 2.4.1, we can use the fundamental construction to obtain a ladder of \mma{}s
\[\begin{array}{lclclclc}
      B_{0} & \subset_{L} & B_{1} & \subset_{L^{t}} & B_{2}
                                                & \subset_{L} & B_{3} &\cdots\\
       \cup_{K} &\,&\cup_{H}&\,&\cup_{K}&\,&\cup_{H}&\,\\
      A_{0} & \subset_{G} & A_{1}  & \subset_{G^{t}} & A_{2}
                                              & \subset_{G} & A_{3}  &\cdots  
\end{array}  \]
By \cite{HGJ} corollary 4.2.3 each
\[\begin{array}{ccc}
      B_{i} & \subset & B_{i+1} \\
       \cup &\,&\cup\\
      A_{i} & \subset & A_{i+1}  \end{array}  \]
is a commuting square.

\noindent{}Theorem 1.5 and theorem 1.6 of \cite{Wen2} and corollary \ref{cor65} now give
\begin{prop}
\label{fire1prop18}
If 
\[\begin{array}{lcl}
      B_{0} & \subset_{L} & B_{1} \\
       \cup_{K} &\,&\cup_{H}\\
      A_{0} & \subset_{G} & A_{1}  \end{array}  \]
is a symmetric \csq{}, with $\Gamma_{_{G}},$ $\Gamma_{_{H}},$ $\Gamma_{_{K}}$ and $\Gamma_{_{L}}$  connected, and if $\Gamma_{_{H}}$ or $\Gamma_{_{K}}$ has a vertex which is  connected to only one other vertex of valency 1, then there exists an irreducible subfactor, $A,$ of the hyperfinite $II_{_{1}}-$factor, $R,$ such that 
\[[R:A]=\|H\|^{2}=\|K\|^{2}\]
\end{prop}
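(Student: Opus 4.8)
The plan is to feed the symmetric commuting square into the ladder construction and then invoke Wenzl's machinery verbatim. First I would use Proposition \ref{fire1prop17}: since $\Gamma_{_{G}},\Gamma_{_{H}},\Gamma_{_{K}},\Gamma_{_{L}}$ are connected, we have $\|H\|=\|K\|$ and $\trsub{D}$ (with the notation $B_{0}\subset B_{1}$ for $C\subset D$, etc.) is simultaneously the Markov trace for the vertical embeddings $A_{0}\subset B_{0}$ and $A_{1}\subset B_{1}$ (and also for the horizontal ones by part (II)). This is exactly the compatibility needed so that the iterated fundamental construction, applied alternately in the horizontal and vertical directions, produces the periodic ladder of \mma{}s displayed just before the statement, with every small square a commuting square by \cite{HGJ} Corollary 4.2.3. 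Taking the $\|\cdot\|_{2}$-closures of $\bigcup_{i}A_{i}$ and $\bigcup_{i}B_{i}$ with respect to the (consistent) Markov traces yields a pair of hyperfinite $II_{1}$-factors $A\subset R$; that $R$ is the hyperfinite factor follows because it is an increasing union of finite-dimensional algebras with the trace, and that $A$ is a factor (hence the union is genuinely a $II_{1}$-inclusion) will come out of the irreducibility statement below.

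Next I would compute the index. The periodicity of the ladder — horizontal inclusion matrices alternating $H,H^{t},H,\dots$ (equivalently the bipartite graph $\Gamma_{_{H}}$) — together with the Markov property means the Jones tower for $A\subset R$ is modeled on the graph $\Gamma_{_{H}}$, so by the standard computation (Wenzl, \cite{Wen2} Theorem 1.5, or \cite{HGJ}) $[R:A]=\|H\|^{2}$, and by Proposition \ref{fire1prop17}(I) this equals $\|K\|^{2}$. For irreducibility I would apply Wenzl's criterion, \cite{Wen2} Theorem 1.6: the extra hypothesis that $\Gamma_{_{H}}$ or $\Gamma_{_{K}}$ has a vertex joined to exactly one other vertex, itself of valency $1$, is precisely the combinatorial condition in that theorem guaranteeing $A'\cap R=\mathbb{C}$. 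One feeds the commuting-square ladder into Wenzl's theorem; the hypotheses there are (connectedness of the relevant graphs, the Markov trace property, and this pendant-vertex condition), all of which we have verified. The reference to ``corollary \ref{cor65}'' presumably supplies the remaining bookkeeping linking our path-model square to the inclusions of Wenzl's setup.

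The only genuinely nontrivial point is checking that the hypotheses of Wenzl's Theorems 1.5 and 1.6 are met in the exact form in which he states them — in particular that the consistent Markov trace we get from Proposition \ref{fire1prop17} is the one his construction requires, and that connectedness of all four graphs (not just of $\Gamma_{_{H}}$) is what makes the tower non-degenerate and the index formula valid. Everything else is assembly: Proposition \ref{fire1prop17} does the Markov-trace and norm-equality work, \cite{HGJ} Corollary 4.2.3 keeps the squares commuting up the ladder, and Wenzl's two theorems then hand us the irreducible subfactor of $R$ with the stated index. I expect the write-up to be short, essentially a citation argument once Proposition \ref{fire1prop17} is in hand.
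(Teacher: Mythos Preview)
Your proposal is correct and follows essentially the same route as the paper, which simply cites Proposition~\ref{fire1prop17}, \cite{HGJ} (Lemma~4.2.4, Proposition~2.4.1, Corollary~4.2.3), and Wenzl's Theorems~1.5 and~1.6 together with Corollary~\ref{cor65}. One clarification: Corollary~\ref{cor65} is not bookkeeping but rather the paper's own relative-commutant bound, $\dim(A'\cap R)\leq(\min\{\text{1-norms of rows and columns of }H,K\})^{2}$; the pendant-vertex hypothesis forces this minimum to be~$1$, giving irreducibility directly --- so it plays the same role as (and overlaps with) Wenzl's Theorem~1.6 rather than supplementing it.
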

\newpage{}

\setcounter{equation}{0}
%sidst rettet 19/12

\section{Special Symmetric Commuting Squares}
\label{firesection2}
\setcounter{equation}{0}
In this section we shall look  at which bi-partite connected graphs  $\Gamma$  with $\| \Gamma \|^{2} \in  (4,5)$ can define a commuting square of the form
\begin{equation}
\label{fire2eqn21}
\begin{array}{lcl}
      C & \subset_{nG^{t}} & D \\
       \cup_{G} &\,&\cup_{G^{t}}\\
      A & \subset_{nG} & B  \end{array}  
\end{equation}  
where  G  is the  Bratteli diagram of a bi-partition of  $\Gamma,$ and we  shall compute the bi-unitarity condition for these graphs.

Note that if the above diagram is a commuting square, then it necessarily is a symmetric commuting square. Hence by proposition \ref{fire1prop17} we know that the only tracial weights, which will satisfy the Markov-trace conditions on the inclusions  $A \subseteq{}  B,$ $ B \subseteq{}  D,$ $ A \subseteq{}  C$  and  $C \subseteq{}  D$  are those determined by the  Perron--Frobenius eigenvector of  $G.$ 
We shall now inductively define an  $n'$th degree  polynomial  $R_{_{n}}(t),$ which will be of great help in the discussion to come.

\begin{defn}
\label{firestar2def21}Let  $t \in{}{\Bbb R} $   and put  $R_{_{ 0}}(t) = 1,$ $ R_{_{1 }}(t) = t$  and define inductively  
\[R_{_{n}}(t) = tR_{_{ n-1}}(t) - R_{_{n-2}}(t),\;\;\; n\geq{}2.\]
Note that
\[\Rnt{n}=\left\{\begin{array}{cl}
\frac{\sin((n+1)x)}{\sin(x)}\;\;\;&\mbox{ if }\;\;\;t=2\cos(x),\;\;x\in{}(0,\frac{\pi}{2})\\[0.3cm]
n+1 & \mbox{ if }\;\;\;t=2\\[0.3cm]
\frac{\mbox{\rm \footnotesize sinh}((n+1)x)}{\mbox{\rm \footnotesize sinh}(x)}&\mbox{ if }\;\;\;t=2\mbox{\rm  cosh}(x),\;\;x>0
\end{array}\right.\]
\end{defn}
\begin{remark}\label{fire2extrarem}{\rm If we want to determine whether a symmetric commuting square of the form
\begin{equation}
\label{extraeqn}\begin{array}{lcl}
      C & \subset_{L} & D \\
       \cup_{K} &\,&\cup_{H}\\
      A & \subset_{G} & B  \end{array}  
\end{equation}
exists, the bi-unitary condition tells us that we have to show the existence of the two matrices $u$ and $v$ of (\ref{biunitcond}). The way we will usually proceed to prove or disprove the existence of $u$ and/or $v$ is as follows.
\begin{description}
\item[(a)]Determine all the cycles of length four in the diagram \ref{extraeqn}. These cycles label the entries of $u$ and $v,$ if such matrices  exist, and so we can group these to obtain the labeling of the blocks which $u$ and $v$ must consist of.
\item[(b)]Using:
\begin{enumerate}
\item{}The block structure determined above,
\item{}The transition rule that $u$ and $v$ must satisfy,
\item{}The matrix consisting of the moduli squared of the entries of a unitary matrix has sum of a row or a column equal to 1,
\end{enumerate}
we are, in most cases, able to determine the the values  which the moduli of the entries of $u$ and $v$ must have, if a solution exists.
\item[(c)]Determine whether ``phases'' on each entry of the, in b. determined, matrices of moduli can be found, to make these into the unitary matrices $u$ and $v$.
\end{description}}\end{remark}
\begin{lemma}
\label{fire2lem22}Let  $\Gamma$  be a bi-partite connected graph without multiple edges and without any cycles of length  4. Let  $G$  be the adjacency matrix of a bi-partition of  $ \Gamma.$

\noindent{}Consider the diagram
\[\begin{array}{lcl}
      C & \subset_{nG^{t}} & D \\
       \cup_{G} &\,&\cup_{G^{t}}\\
      A & \subset_{nG} & B  \end{array}\]
with Bratteli diagrams determined by the bi-partition of  $\Gamma.$ The labels of the minimal central projections will be

\begin{center}
\setlength{\unitlength}{1cm}
\begin{picture}(2,2)
\put(0.0,1.5){$\{l\}$}
\put(0.0,0.0){$\{i\}$}
\put(1.5,0.0){$\{j\}$}
\put(1.5,1.5){$\{k\}$} 
\put(0.25,0.5){\line(0,1){0.73}} 
\put(1.75,0.5){\line(0,1){0.73}}

\put(0.62,-0.1){\line(1,0){0.73}}
\put(0.62,0.05){\line(1,0){0.73}}
\put(0.62,0.20){\line(1,0){0.73}}

\put(0.62,1.43){\line(1,0){0.73}}
\put(0.62,1.58){\line(1,0){0.73}}
\put(0.62,1.73){\line(1,0){0.73}}
\end{picture}
\end{center} 

Denote  the unitary part of the  bi-unitary condition, corresponding to $i$  and  $k$   fixed,  by  $u$ and  the part corresponding to   $j$  and  $l$ fixed by  $v.$ If there is a solution to (\ref{fire2eqn21}) for $n$ we have, that for  $i$  and  $k$  fixed (resp.  $j$  and  $l$ fixed) the block   of  $u$  (resp. $v$)  labeled  by paths of the form  $i-j-k-l,$ $j,l$ varying (resp. $i,k$ varying), is proportional to an  $n\times{}n$  unitary, and the proportionality constant is the modulus of the corresponding entry of the matrix discussed in remark \ref{fire2extrarem} (a) and (b),  in the case  n = 1.
\end{lemma}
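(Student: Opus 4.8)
The plan is to pin down the trace weights from Proposition~\ref{fire1prop17}, observe that the factor $\sqrt{\alpha_i\delta_k/\beta_j\gamma_l}$ in the bi--unitarity relation is independent of $n$, and then examine the block decompositions of $u$ and $v$ one sub-block at a time, the absence of four-cycles being what controls their sizes. Since $G$ has entries in $\{0,1\}$ and (\ref{fire2eqn21}) is a symmetric \csq{}, Proposition~\ref{fire1prop17} forces the trace weights to be $\delta=$ the Perron--Frobenius eigenvector of $GG^{t}$ (with eigenvalue $\mu=\|G\|^{2}$), $\beta=G^{t}\delta$, $\gamma=nG^{t}\delta=n\beta$ and $\alpha=nG\beta=n\mu\delta$. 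Substituting these into (\ref{fire10116}) the two occurrences of $n$ cancel, and
\[\sqrt{\frac{\alpha_i\delta_k}{\beta_j\gamma_l}}=\sqrt{\frac{\mu\,\delta_i\delta_k}{\beta_j\beta_l}},\]
which is precisely the factor produced by the same square with $n=1$. Because $\Gamma$ has no multiple edges the path labels $\sigma$ and $\phi$ are always $1$; because $\Gamma$ has no four-cycle, two distinct vertices in one part of the bipartition have at most one common neighbour. Hence for $i\neq k$ the block $u^{(i,k)}$ is either empty or a single $n\times n$ matrix, labelled by $(j,l)$ with $j=l$ the common neighbour; while for $i=k$ the block $u^{(i,i)}$ decomposes into $n\times n$ sub-blocks $M_{jl}$ indexed by pairs $(j,l)$ of neighbours of $i$; the mirror-image statements hold for $v$. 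Finally, if (\ref{fire2eqn21}) has a solution then by Theorem~\ref{fire1thm17} and Remark~\ref{biunitcond} both $u$ and $v$ are unitary, so every (square) block $u^{(i,k)}$ and $v^{(j,l)}$ is unitary.

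The ``distinct'' cases are then immediate. For $i\neq k$ the block $u^{(i,k)}$ is itself an $n\times n$ unitary, i.e.\ a unitary times the constant $1$, which is the modulus of the corresponding unit-modulus entry of the $n=1$ solution; similarly for $v^{(j,l)}$ with $j\neq l$. For $i=k$ and $j\neq l$ neighbours of $i$, the no-four-cycle hypothesis makes $i$ the unique common neighbour of $j$ and $l$, so $v^{(j,l)}$ is a single $n\times n$ unitary, and the bi--unitarity relation reads
\[M_{jl}=\frac{\sqrt{\beta_j\beta_l}}{\sqrt{\mu}\,\delta_i}\,v^{(j,l)},\]
exhibiting $M_{jl}$ as a positive scalar times a unitary; carrying out the same computation for $n=1$ identifies that scalar with the modulus of the corresponding entry of the $n=1$ matrix of Remark~\ref{fire2extrarem}.

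The remaining point, and the one requiring real work, is the diagonal sub-blocks $M_{jj}$ of $u^{(i,i)}$, which are not blocks of $v$ at all. Here I would invoke the unitarity of $u^{(i,i)}$ directly: for $\rho,\rho'\in\{1,\dots,n\}$ form the inner product of the rows of $u^{(i,i)}$ labelled $(j,\rho,1)$ and $(j,\rho',1)$ and split the column sum over the sub-blocks $M_{jl}$ ($l\neq j$) and $M_{jj}$. By the previous paragraph the contribution of a fixed $M_{jl}$, $l\neq j$, equals $\frac{\beta_j\beta_l}{\mu\delta_i^{2}}$ times the $(\rho,\rho')$ entry of $v^{(j,l)}(v^{(j,l)})^{*}=I$, hence $\frac{\beta_j\beta_l}{\mu\delta_i^{2}}$ for $\rho=\rho'$ and $0$ otherwise. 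Since the rows of the unitary $u^{(i,i)}$ are orthonormal, it follows that the rows of $M_{jj}$ are mutually orthogonal, all of squared norm $1-\sum_{l\neq j}\frac{\beta_j\beta_l}{\mu\delta_i^{2}}$, so $M_{jj}$ is the square root of this quantity times an $n\times n$ unitary. For $n=1$ the identical computation gives $|u^{(i,i)}_{jj}|^{2}=1-\sum_{l\neq j}\frac{\beta_j\beta_l}{\mu\delta_i^{2}}$, so the proportionality constant is once more the modulus of the corresponding $n=1$ entry. Interchanging the two parts of the bipartition repeats the whole argument verbatim for the blocks of $v$. The only genuine care needed throughout is bookkeeping the index ranges and the repeated appeals to the absence of four-cycles that guarantee the ``unique common neighbour'' facts making the relevant $u$-- and $v$--blocks honestly $n\times n$.
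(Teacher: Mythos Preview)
Your proof is correct and follows the same approach as the paper's: both use the no-four-cycle hypothesis to reduce $u^{(i,k)}$ (for $i\neq k$) and $v^{(j,l)}$ (for $j\neq l$) to single $n\times n$ unitaries, deduce that the off-diagonal sub-blocks $M_{jl}$ of $u^{(i,i)}$ are scalars times unitaries, and then use row-orthonormality of $u^{(i,i)}$ to handle the diagonal $M_{jj}$. If anything you are more explicit than the paper, which compresses your row-inner-product computation for $M_{jj}$ into the bare assertion ``This implies that the diagonal blocks are also proportional to $n\times n$ unitaries,'' and which does not spell out the $n$-independence of the transition factor $\sqrt{\alpha_i\delta_k/\beta_j\gamma_l}$ as you do.
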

\begin{proof}
Let   $i,$ $j,$ $k$ and  $l$  be  such that  $G_{_{ij}}G_{_{kj}}G_{_{kl}}G_{_{il}}\neq{}0,$ i.e. there exists a path  $i-j-k -l-i$  in the diagram

\begin{center}
\setlength{\unitlength}{1cm}
\begin{picture}(2,2)
\put(0.0,1.5){$\{l\}$}
\put(0.0,0.0){$\{i\}$}
\put(1.5,0.0){$\{j\}$}
\put(1.5,1.5){$\{k\}$} 
\put(0.25,0.5){\line(0,1){0.73}} 
\put(1.75,0.5){\line(0,1){0.73}}

\put(0.62,-0.1){\line(1,0){0.73}}
\put(0.62,0.05){\line(1,0){0.73}}
\put(0.62,0.20){\line(1,0){0.73}}

\put(0.62,1.43){\line(1,0){0.73}}
\put(0.62,1.58){\line(1,0){0.73}}
\put(0.62,1.73){\line(1,0){0.73}}
\end{picture}
\end{center} 
If   $i$  and  $k$   do not  label the same vertex of  $\Gamma,$ then  $j$  and  $l$ must label the  same vertex of  $\Gamma,$ since otherwise
\begin{center}
\setlength{\unitlength}{1cm}
\begin{picture}(5,1)
\put(0.5,1){\line(1,0){4}}
\multiput(0.5,1)(1,0){5}{\circle*{0.15}}
\put(0.4,0.4){$i$}
\put(1.4,0.4){$j$}
\put(2.4,0.4){$k$}
\put(3.4,0.4){$l$}
\put(4.4,0.4){$i$}
\end{picture}
\end{center}
would define a cycle of length  4  on  $\Gamma.$

\noindent{}In  particular the only paths fixing   $i$  and   $k$   are of the form $(ij)_{_{\nu}}(kl)_{_{\mu}},$ where  $(ij)_{_{\nu}}$   resp.  $(k l)_{_{\mu}}$   denotes one of the  $n$  edges joining  $i$  to  $j,$ resp. $k$  to  $l,$ in the diagram. The part of  $u,$ $u^{(i,k)},$  corresponding to  $i$  and $k$   fixed is then an  $n\times{}n$ unitary, and the corresponding part of  $v$  is
\[\sqrt{\frac{\alpha_{_{i}}\delta_{_{k}}}{\beta_{_{j}}\gamma_{_{l}}}}\left(u^{(i,k)}\right)^{t}\]
and hence proportional to a unitary. The proportionality constant is the modulus of the corresponding entry in the case $n=1.$ 

\noindent{}If   $j$   and  $l$   do not label  the same  vertex of   $\Gamma$ a similar argument is valid.

\noindent{}If  $i$  and $k$  denote the same vertex of  $\Gamma,$ and $j$ and $l$  also  denote the  same vertex  of   $\Gamma,$ there  are two essentially different situations:
\begin{description}
\item[a)]$i$  is only  connected to  $j$  and  vice  versa. This  implies that $\Gamma =$
\setlength{\unitlength}{1cm}
\raisebox{-0.05cm}{\begin{picture}(1,0.3)
\put(0,0.15){\circle*{0.15}}
\put(0,0.15){\line(1,0){1}}
\put(1,0.15){\circle*{0.15}}
\end{picture}} ,  and hence  $\| \Gamma \|  = 2.$
\item[b)]Either  $i$  or $j$  is connected to  some other vertex. 
\end{description}
Assume that there exists  $j_{_{1}}\neq{}j$ s.t.  $G_{_{ij_{1}}}\neq{}0,$ and let ${\cal J}=\left\{j|G_{_{ij}}\neq{}0\right\}.$

\noindent{}The only paths joining  $i-j-k-l-i,$ which fix $i$ and  $k,$ are of the form
\[i-j_{_{1}}-k-j_{_{2}}-i,\;\mbox{ where } j_{_{1}},j_{_{2}}\in{\cal J}.\]If   $j_{_{1}}\neq{}j_{_{2}},$ we  know that  the part of $v,$ corresponding  to $j_{_{1}}$ and $j_{_{2}}$ fixed, is  an  $n\times{}n$  unitary  $v^{(j_{1},j_{2})},$  and hence the corresponding part of  $u$  is proportional to a unitary. Hence the part of  $u$  corresponding to $i$ and $k$  fixed, is of the form
\[u^{(i,k)}=\left(\begin{array}{cccc}
X_{_{j_{1}j_{1}}} &X_{_{j_{1}j_{2}}}&\cdots&X_{_{j_{1}j_{m}}}\\
X_{_{j_{2}j_{1}}}&\ddots{} & & \\
\vdots{} & & & \\
X_{_{j_{m}j_{1}}}& & &X_{_{j_{m}j_{m}}}
\end{array}\right)\]
with all the off-diagonal blocks  proportional to   $n\times{}n$   unitaries. This  implies that  the diagonal blocks are also  proportional to $n\times{}n$ unitaries, i.e.
\[X_{_{j_{q}j_{q}}}=k_{_{j_{q}j_{q}}}C_{_{j_{q}j_{q}}},\;\;C_{_{j_{q}j_{q}}}\in{}U(M_{_{n}}({\Bbb C})).\]
Since   $u^{(i,k)}$ is unitary,  the matrix formed  by all  moduli squared of the entries of $u^{(i,k)}$ is doubly stochastic. Summing over a row in this matrix yields
\[k_{_{j_{q}j_{q}}}^{2}+\sum_{j\neq{}j_{_{q}}}\frac{\beta_{_{j}}\beta_{_{j_{q}}}}{\alpha_{_{i}}^{2}}=1,\]
which is also the equation that occurs in the case $n=1.$ Hence $k_{_{j_{q}j_{q}}}$ equals the  modulus of the corresponding entry in the case $n=1.$ 
\end{proof}
\begin{cor}
\label{fire2cor23}Let    $\Gamma$  be  a connected  bi-partite graph  without multiple  edges, and consider a fixed path  $i-j-k-l-i$  in the diagram (\ref{fire2eqn21}). If one of  the edges in  $\Gamma$   labeled by   $i-j$  or  $j-k$ and one of the edges in  $\Gamma$  labeled by  $i-l$  or  $l-k$  are not edges in a cycle of length 4, then the statement of lemma \ref{fire2lem22} holds for the path  $i-j-k-l-i.$ 
\end{cor}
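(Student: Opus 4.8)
The plan is to rerun the case analysis in the proof of Lemma \ref{fire2lem22}, but localised to the single closed walk $i-j-k-l-i$, and to check that wherever that proof invoked the absence of a cycle of length 4 in $\Gamma$, the offending cycle actually runs through one of the two edges which the hypothesis forbids from lying in a cycle of length 4. Assume that (\ref{fire2eqn21}) has a solution for the given $n$, with associated bi-unitary pair $u=\bigoplus u^{(i,k)}$, $v=\bigoplus v^{(j,l)}$, where $v$ is unitary as in the proof of Lemma \ref{fire2lem22}. Since $\Gamma$ has no multiple edges, two of the four edge-multiplicity indices are trivial, and (\ref{fire10116}) identifies the $(j,l)$-subblock of $u^{(i,k)}$ with the $(i,k)$-subblock of $v^{(j,l)}$ up to the scalar $\sqrt{\alpha_i\delta_k/(\beta_j\gamma_l)}$; it therefore suffices to show that one of these two $n\times n$ matrices is a scalar multiple of an $n\times n$ unitary, with the scalar equal to the modulus of the corresponding entry in the case $n=1$.

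First I would record the dichotomy: if one of the edges $i-j$, $j-k$ (equivalently, one of $i-l$, $l-k$) lies in no cycle of length 4, then $i,k$ label the same vertex of $\Gamma$ or $j,l$ do, since otherwise $i,j,k,l$ would be four distinct vertices and $i-j-k-l-i$ a cycle of length 4 through both $i-j$ and $j-k$. This leaves three cases. If $i\neq k$, then $j=l$, and $j$ must be the unique common neighbour of $i$ and $k$, for a second one, $j'$, would give the cycle $i-j-k-j'-i$ of length 4 through $i-j$ and $j-k$ (note that $i-l=i-j$ and $l-k=j-k$ here); hence $u^{(i,k)}$ is a single $n\times n$ block, so a direct summand of the unitary $u$ and thus itself an $n\times n$ unitary, with proportionality constant $1$, as for $n=1$. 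If $i=k$ but $j\neq l$, then the hypothesis forces both $i-j$ and $i-l$ to lie in no cycle of length 4 (since $j-k=i-j$ and $l-k=i-l$), so a second common neighbour $i'$ of $j$ and $l$ would give the cycle $j-i-l-i'-j$ of length 4 through $i-j$; hence $i$ is the unique common neighbour of $j$ and $l$, $v^{(j,l)}$ is an $n\times n$ unitary, and (\ref{fire10116}) gives the assertion with constant $\sqrt{\beta_j\gamma_l/(\alpha_i\delta_i)}$, which is $|u^{(i,i)}_{jl}|$ in the case $n=1$.

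The remaining case $i=k$, $j=l$ is the analogue of part b) of the proof of Lemma \ref{fire2lem22} and is the main point; here the hypothesis says exactly that $i-j$ lies in no cycle of length 4. Let ${\cal J}$ be the set of neighbours of $i$. For each $j'\in{\cal J}$ with $j'\neq j$, a common neighbour $i'\neq i$ of $j$ and $j'$ would give the cycle $i-j-i'-j'-i$ of length 4 through $i-j$, so $i$ is the unique common neighbour of $j$ and $j'$; hence $v^{(j,j')}$ is an $n\times n$ unitary, and by (\ref{fire10116}) the $(j,j')$-subblock $X_{jj'}$ of $u^{(i,i)}$ satisfies $X_{jj'}X_{jj'}^{*}=\frac{\beta_j\gamma_{j'}}{\alpha_i\delta_i}I_n$. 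Reading off the $(j,j)$-block of the identity $u^{(i,i)}(u^{(i,i)})^{*}=I$ then gives $\sum_{j'\in{\cal J}}X_{jj'}X_{jj'}^{*}=I_n$, whence $X_{jj}X_{jj}^{*}=\kappa^{2}I_n$ with $\kappa^{2}=1-\sum_{j'\neq j}\frac{\beta_j\gamma_{j'}}{\alpha_i\delta_i}\geq 0$, so that $X_{jj}$ is $\kappa$ times an $n\times n$ unitary (or $0$). The same computation for $n=1$ gives $|u^{(i,i)}_{jj}|^{2}=\kappa^{2}$, so $\kappa$ equals the modulus of the corresponding entry in the case $n=1$, and the statement for $v$ follows from (\ref{fire10116}).

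I expect the last case to be the main obstacle. Two things need care there. First, one must notice that the only cycles of length 4 whose absence is actually used are those through the single distinguished edge $i-j$, so the hypothesis still applies even though $\Gamma$ may carry cycles of length 4 elsewhere; in particular, unlike in Lemma \ref{fire2lem22}, one controls only the off-diagonal blocks $X_{jj'}$, $j'\neq j$, lying in the $j$-th block row, but that already determines $X_{jj}$ via the $(j,j)$-block of $u^{(i,i)}(u^{(i,i)})^{*}=I$. Second, one needs the elementary observation that in a unitary block matrix, if all off-diagonal blocks of a given block row are scalar multiples of unitaries then so is the diagonal block of that row. Everything else is a routine transcription of the proof of Lemma \ref{fire2lem22}, with (\ref{fire10116}) used to pass between the $u$- and $v$-sides.
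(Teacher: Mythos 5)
Your proof is correct and is exactly what the paper intends: its own proof of the corollary is the one-line instruction to copy the proof of Lemma \ref{fire2lem22}, and your argument is that copy, carried out carefully so that every cycle of length 4 invoked passes through one of the distinguished edges $i-j$, $j-k$, $i-l$, $l-k$, with the transition formula (\ref{fire10116}) used to move between the $u$- and $v$-blocks just as in the lemma.
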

\begin{proof}Copy the proof of lemma \ref{fire2lem22}.\end{proof}
For the discussion to follow we let  $\Gamma$   be a bi-partite connected graph with  $\| \Gamma \|^{2}  \in  (4,5),$ such that there exists a commuting square of the form (\ref{fire2eqn21}) for some  $n.$
\begin{remark}{\rm 
The graph 
\setlength{\unitlength}{1cm}
\raisebox{-0.4cm}{\begin{picture}(2.3,1)
\put(0.15,0.5){\circle*{0.15}}
\put(1.15,0.5){\circle*{0.15}}
\put(2.15,0.5){\circle*{0.15}}
\put(0.15,0.45){\line(1,0){1}}
\put(0.15,0.55){\line(1,0){1}}
\put(1.15,0.5){\line(1,0){1}}
\end{picture}}
has Perron--Frobenius eigenvalue $\sqrt{5},$ so $\Gamma$ does not contain multiple edges.
}\end{remark}
\begin{lemma}
\label{fire2lem24}Assume  $\Gamma$ has an edge which is not  part of a 4-cycle. Denote this  edge by  \raisebox{-0.3cm}{\setlength{\unitlength}{1cm}\begin{picture}(1.7,0.4)
\put(0.5,0.4){\line(1,0){1}}
\multiput(0.5,0.4)(1,0){2}{\circle*{0.15}}
\put(0.4,0.1){$p$}
\put(1.4,0.1){$q$}
\end{picture}},  and the corresponding Perron--Frobenius coordinates by  
$\alpha_{_{p}}$ and $\alpha_{_{q}}.$ Then either 
\[\frac{\alpha_{_{p}}}{\alpha_{_{q}}}\geq{}e^{x} \mbox{ or }\frac{\alpha_{_{p}}}{\alpha_{_{q}}}\leq{}e^{-x},\] 
where  $\| \Gamma \|  = e^{x}  + e^{-x}.$
\end{lemma}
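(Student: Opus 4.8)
The plan is to read off, from the bi-unitary pair $(u,v)$ whose existence is guaranteed by the assumed commuting square (Theorem \ref{fire1thm10}), the one scalar identity that constrains $\alpha_p/\alpha_q$, and then to notice that a manifestly non-negative quantity equals a quadratic in $\alpha_q/\alpha_p$ whose roots are exactly $e^{\pm x}$.

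First I would dispose of a degenerate case: if $p$ and $q$ both had valency $1$ then $\Gamma$ would be a single edge with $\|\Gamma\|=2$, contradicting $\|\Gamma\|^2>4$; since the asserted dichotomy is unchanged under interchanging $p$ and $q$, I may assume $\deg_\Gamma(p)\ge 2$. Now consider the ``degenerate cell'' $p-q-p-q-p$ of the diagram (\ref{fire2eqn21}), i.e.\ the block $u^{(p,p)}$ of $u$ corresponding to $i=k=p$ fixed; its rows and columns are labelled by the neighbours of $p$ in $\Gamma$ together with a multiplicity index running over $\{1,\dots,n\}$ (there being no further multiplicities, as $\Gamma$ has no multiple edges). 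Because $p-q$ lies in no $4$-cycle of $\Gamma$, any neighbour $l\ne q$ of $p$ has $p$ as its \emph{only} common neighbour with $q$ --- a second one would close a $4$-cycle through $p-q$ --- so the block $v^{(q,l)}$ is an $n\times n$ unitary. This is precisely the hypothesis of Corollary \ref{fire2cor23} applied to the path $p-q-p-q-p$, and it forces the $(q,l)$-subblock of $u^{(p,p)}$ to be a scalar times an $n\times n$ unitary, the scalar being the modulus of the corresponding entry for $n=1$, namely $\sqrt{\beta_q\gamma_l/(\alpha_p\delta_p)}$; then, since $u^{(p,p)}$ itself is unitary, the $(q,q)$-subblock is likewise a non-negative scalar times a unitary.

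The rest is bookkeeping. By Proposition \ref{fire1prop17} the four trace-weight vectors on $A,B,C,D$ are proportional to the Perron--Frobenius eigenvector of $\Gamma$, and writing out the proportionality constants --- they satisfy $c_A=\|\Gamma\|\,c_C$ and $c_B=\|\Gamma\|\,c_D$, which follows from $\alpha=G\gamma$, $\beta=G^{t}\delta$ and the eigenvalue equation --- one finds $\beta_q\gamma_l/(\alpha_p\delta_p)=\alpha_q\alpha_l/\alpha_p^{2}$, where $\alpha_\bullet$ now denotes the Perron--Frobenius coordinate. Summing the moduli squared along one fixed row of the unitary block $u^{(p,p)}$ yields $1$; the columns with $l\ne q$ contribute $\sum_{l\sim p,\,l\ne q}\alpha_q\alpha_l/\alpha_p^{2}$, which by $\sum_{l\sim p}\alpha_l=\|\Gamma\|\,\alpha_p$ equals $\|\Gamma\|(\alpha_q/\alpha_p)-(\alpha_q/\alpha_p)^{2}$. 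Hence the columns with $l=q$ contribute $1-\|\Gamma\|(\alpha_q/\alpha_p)+(\alpha_q/\alpha_p)^{2}$, and this is a sum of moduli squared, so it is $\ge 0$. Writing $t=\alpha_q/\alpha_p$ we get $t^{2}-\|\Gamma\|\,t+1\ge 0$; since $\|\Gamma\|=e^{x}+e^{-x}$ the roots of this quadratic are exactly $e^{x}$ and $e^{-x}$, so $t\le e^{-x}$ or $t\ge e^{x}$, i.e.\ $\alpha_p/\alpha_q\ge e^{x}$ or $\alpha_p/\alpha_q\le e^{-x}$.

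The step I expect to need the most care is the second one: checking that ``no $4$-cycle through $p-q$'' is exactly what is required for the $q$-indexed row-block of $u^{(p,p)}$ to have all its off-diagonal subblocks --- and hence, by unitarity of $u^{(p,p)}$, its diagonal subblock --- proportional to unitaries with the correct proportionality constants, and then pinning down $\beta_q\gamma_l/(\alpha_p\delta_p)=\alpha_q\alpha_l/\alpha_p^{2}$ via the scaling constants $c_A,c_B,c_C,c_D$; the quadratic-inequality endgame is then immediate.
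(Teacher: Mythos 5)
Your proposal is correct and follows essentially the same route as the paper: both arguments use the bi-unitary pair for the square (\ref{fire2eqn21}), observe that the absence of a 4-cycle through $p-q$ makes the blocks indexed by the fixed pair $(q,l)$, $l\neq q$ a single $n\times n$ unitary (so the corresponding sub-blocks of $u^{(p,p)}$ have the forced moduli $\sqrt{\alpha_q\alpha_l}/\alpha_p$), and then a row-sum of moduli squared in the unitary block with $p$ fixed in both corners yields $(\alpha_q/\alpha_p)^2-\lambda(\alpha_q/\alpha_p)+1\geq 0$, whose roots $e^{\pm x}$ give the dichotomy. Your explicit verification of the trace-weight proportionality constants and the direct treatment of general $n$ just spell out steps the paper delegates to Proposition \ref{fire1prop17} and Corollary \ref{fire2cor23}.
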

\begin{proof}If there is a solution for  $n = 1$ we have:  At  $p$  the graph will look  like
\begin{center}
\setlength{\unitlength}{1cm}
\begin{picture}(4,3)
\put(0.5,0){\circle*{0.15}}
\put(0.5,0.8){$\vdots$}
\put(0.5,2){\circle*{0.15}}
\put(0.5,3){\circle*{0.15}}
\put(1.5,1){\circle*{0.15}}
\put(2.5,1){\circle*{0.15}}
\put(1.5,1){\line(1,0){1}}
\put(0.5,2){\line(1,-1){1}}
\put(0.5,3){\line(1,-2){1}}
\put(0.5,0){\line(1,1){1}}
\put(0,0){$r_{_{m}}$}
\put(0,2){$r_{_{2}}$}
\put(0,3){$r_{_{1}}$}
\put(1.4,0.5){$p$}
\put(2.4,0.5){$q$}
\end{picture}
\end{center}
Let  $\alpha_{_{r_{i}}}$   denote the Perron--Frobenius coordinate corresponding to  $r_{_{i}},$ $ i = 1,...,m.$
\noindent{}Let  $u$  and  $v$  denote the two unitaries of the bi-unitarity condition as in  remark \ref{biunitcond}. In the labels of  cycles in  $A\; \mbox{\scriptsize \raisebox{-0.2cm}{$\stackrel{\rightarrow}{G}$}}\; B  \;\mbox{\scriptsize \raisebox{-0.25cm}{$\stackrel{\rightarrow}{G^{t}}$}}\;D\;  \mbox{\scriptsize\raisebox{-0.2cm}{$\stackrel{\rightarrow}{G}$}} \;C \; \mbox{\scriptsize\raisebox{-0.25cm}{$\stackrel{\rightarrow}{G^{t}}$}}\;A$  we may substitute   $B$  labels for   $C$   labels. Consider the  cycles of the form $p?p?$. These are
\[pqpq, \;pqpr_{_{1}},\ldots,pqpr_{_{m}}, \;pr_{_{1}}pq,\;pr_{_{1}}pr_{_{2}},\ldots,pr_{_{m}}pr_{_{m}}.\]
Since the only path of length  2    from  $q$ to  $r_{_{i}}$ passes through  $p,$ the  modulus of the corresponding entry  in say  $u$   (or  $v$  depending on which  bi-partition is chosen) is   1  for  all paths   $pr_{_{i}}pq.$ Hence the  modulus  squared  of  the corresponding  entry  of   $v$   is  $\frac{\alpha_{q}\alpha_{r_{i}}}{\alpha_{p}^{2}}.$ By double stochastic property  of the matrix formed by the moduli squared of the entries of $v,$ we get, that the modulus squared, of the entry corresponding to  $pqpq$  is  $1 - \frac{\alpha_{q}}{\alpha_{p}^{2}}\sum_{i}\alpha_{r_{i}}.$ Since  $\sum{}\alpha_{_{r_{i}}}+\alpha_{_{q}}   = \lambda{}\alpha_{_{p}},$ we have
\begin{equation}
\label{fire2eqn241}
0\leq{}1-\frac{\alpha_{q}}{\alpha_{p}^{2}}\sum_{i}\alpha_{r_{i}}\leq{}1\;\;\;\Leftrightarrow
\end{equation}
\[\left(\frac{\alpha_{q}}{\alpha_{p}}\right)^{2}-\lambda{}\left(\frac{\alpha_{q}}{\alpha_{p}}\right)+1\in[0,1]\Leftrightarrow{}
\left(\frac{\alpha_{q}}{\alpha_{p}}\right)\in{}\left[\mbox{$\frac{1}{\lambda}$},\mbox{$\frac{\lambda-\sqrt{\lambda^{2}-4}}{2}$}\right]\cup\left[\mbox{$\frac{\lambda+\sqrt{\lambda^{2}-4}}{2}$},\lambda\right]=\left[\mbox{$\frac{1}{\lambda}$},e^{-x}\right]\cup\left[e^{x},\lambda\right].\]
For general  $n,$ the statement  of corollary \ref{fire2cor23} will hold for the path $pqpq,$ and we  have that the  corresponding entries of   $u$  and   $v$  are proportional  to  $n\times{}n$  unitaries, with proportionality constants equal the  entries of the above determined matrices. When we sum the moduli-squared of a row  or column  of an  $n\times{}n$  unitary we get   1, and hence also obtain the inequality (\ref{fire2eqn241}) in this case.
\end{proof}
Assume  $\Gamma$ has no 4-cycles, and  look  at an edge   \raisebox{-0.3cm}{\setlength{\unitlength}{1cm}\begin{picture}(1.7,0.4)
\put(0.5,0.4){\line(1,0){1}}
\multiput(0.5,0.4)(1,0){2}{\circle*{0.15}}
\put(0.4,0.1){$p$}
\put(1.4,0.1){$q$}
\end{picture}}. We may assume    $\frac{\alpha_{_{p}}}{\alpha_{_{q}}}\geq{}e^{x}$ (by lemma \ref{fire2lem24}). If the number of vertices adjacent to  $q$ is  $m + 1,$ we have
\begin{center}
\setlength{\unitlength}{1cm}
\begin{picture}(4,3)
\put(0.5,0){\circle*{0.15}}
\put(0.5,0.8){$\vdots$}
\put(0.5,2){\circle*{0.15}}
\put(0.5,3){\circle*{0.15}}
\put(1.5,1){\circle*{0.15}}
\put(2.5,1){\circle*{0.15}}
\put(1.5,1){\line(1,0){1}}
\put(0.5,2){\line(1,-1){1}}
\put(0.5,3){\line(1,-2){1}}
\put(0.5,0){\line(1,1){1}}
\put(0,0){$r_{_{m}}$}
\put(0,2){$r_{_{2}}$}
\put(0,3){$r_{_{1}}$}
\put(1.4,0.5){$q$}
\put(2.4,0.5){$p$}
\end{picture}
\end{center}
Since   $\lambda\alpha_{_{r_{i}}}\geq\alpha_{_{q}},$ $i=1,\ldots,m$ and $\alpha_{_{p}}+\sum_{i}\alpha_{_{r_{i}}}=\lambda\alpha_{_{q}}$ we have
\[\mbox{$\frac{m}{\lambda}$}+e^{x}\leq{}\lambda\Leftrightarrow{}m\leq{}\lambda{}e^{-x}=1+e^{-2x}\leq{}2.\]
Hence the valency of  $q$  is either 1 or 2.

\noindent{}If  $\Gamma$ has at least two vertices of valency  at least  3, $\Gamma$ looks like
\begin{center}
\setlength{\unitlength}{1cm}
\begin{picture}(7,4)
\put(1,0){\circle*{0.15}}
\put(1,1){\circle*{0.15}}
\put(1,1.85){$\vdots$}
\put(1,3){\circle*{0.15}}
\put(1,4){\circle*{0.15}}
\put(1,0){\line(1,2){1}}
\put(1,1){\line(1,1){1}}
\put(1,3){\line(1,-1){1}}
\put(1,4){\line(1,-2){1}}

\put(2,2){\circle*{0.15}}
\put(3,2){\circle*{0.15}}
\put(2,2){\line(1,0){1}}
\put(3.3,1.9){$\cdots$}
\put(4,2){\line(1,0){1}}
\put(4,2){\circle*{0.15}}
\put(5,2){\circle*{0.15}}
\put(6,0){\circle*{0.15}}
\put(6,1){\circle*{0.15}}
\put(6,1.85){$\vdots$}
\put(6,3){\circle*{0.15}}
\put(6,4){\circle*{0.15}}
\put(6,0){\line(-1,2){1}}
\put(6,1){\line(-1,1){1}}
\put(6,3){\line(-1,-1){1}}
\put(6,4){\line(-1,-2){1}}
\put(1.9,1.5){$p_{_{1}}$}
\put(2.9,1.5){$p_{_{2}}$}
\put(3.9,1.5){$p_{_{m-1}}$}
\put(4.8,1.5){$p_{_{m}}$}
\end{picture}
\end{center}
Since the valency of $p_{_{1}}$ is three, we must have  $\frac{\alpha_{p_{2}}}{\alpha_{p_{1}}}\leq{}e^{-x},$ by the above argument. Since the $\alpha_{_{p}}'$s are  the  Perron--Frobenius coordinates, we have    $\lambda\alpha_{p_{k}}  =  \alpha_{p_{k-1}}+\alpha_{p_{k+1}}, $ $2\leq{}k\leq{}m-1.$

\noindent{}We have 
\[\frac{\alpha_{p_{3}}}{\alpha_{p_{2}}}=\frac{\lambda\alpha_{p_{2}}-\alpha_{p_{1}}}{\alpha_{p_{2}}}=\lambda-\frac{\alpha_{p_{1}}}{\alpha_{p_{2}}}=e^{x}+e^{-x}-\frac{\alpha_{p_{1}}}{\alpha_{p_{2}}}\leq{}e^{-x},\]
so $\frac{\alpha_{p_{3}}}{\alpha_{p_{2}}}\leq{}e^{-x}.$ Assume now that $\frac{\alpha_{p_{k+1}}}{\alpha_{p_{k}}}\leq{}e^{-x},$ for all $k\leq{}j\leq{}m-2.$ Then
\[\frac{\alpha_{p_{j+1}}}{\alpha_{p_{j}}}=\frac{\lambda\alpha_{p_{j}}-\alpha_{p_{j-1}}}{\alpha_{p_{j}}}=\lambda-\frac{\alpha_{p_{j-1}}}{\alpha_{p_{j}}}=e^{x}+e^{-x}-\frac{\alpha_{p_{j-1}}}{\alpha_{p_{j}}}\leq{}e^{-x},\]
and hence by induction: $\frac{\alpha_{p_{j+1}}}{\alpha_{p_{j}}}\leq{}e^{-x},$ for all $1\leq{}j\leq{}m-1.$ In particular we have $\frac{\alpha_{p_{m}}}{\alpha_{p_{m-1}}}\leq{}e^{-x},$ which contradicts that the valency of $p_{_{m}}$ is three. Hence we may conclude, that if $\Gamma$ has no cycles of length 4 then $\Gamma$ has at most one vertex of valency $\geq{}$ 3.

\noindent{}If $\Gamma$ has no cycles of length 4, but longer cycles, the discussion so far gives, that $\Gamma$ is of the form
\begin{center}
\setlength{\unitlength}{0.6cm}
\begin{picture}(5,5)
\put(1,1){\circle*{0.15}}
\put(2,1.5){\circle*{0.15}}
\put(3,3){\circle*{0.15}}
\put(2,4.5){\circle*{0.15}}

\put(1,5){\circle*{0.15}}
\put(4,2){\circle*{0.15}}
\put(4,3){\circle*{0.15}}
\put(4,4){\circle*{0.15}}
\put(1,1){\line(2,1){1}}
\put(2,1.5){\line(2,3){1}}
\put(3,3){\line(-2,3){1}}
\put(2,4.5){\line(-2,1){1}}

\put(-0.2,1){\circle*{0.15}}
\put(-1.2,1.5){\circle*{0.15}}
\put(-2.2,3){\circle*{0.15}}
\put(-1.2,4.5){\circle*{0.15}}
\put(-0.2,5){\circle*{0.15}}
\put(-0.2,1){\line(-2,1){1}}
\put(-1.2,1.5){\line(-2,3){1}}
\put(-2.2,3){\line(2,3){1}}
\put(-1.2,4.5){\line(2,1){1}}

\put(0.1,5){$\ldots$}
\put(0.1,1){$\ldots$}
\put(3,3){\line(1,-1){1}}
\put(3,3){\line(1,0){1}}
\put(3,3){\line(1,1){1}}  
\put(4.2,2){$\ldots$}
\put(4.2,3){$\ldots$} 
\put(4.2,4){$\ldots$} 

\put(1.8,4){$r_{_{1}}$}
\put(0.8,4.5){$r_{_{2}}$}
\put(0.8,0.5){$r_{_{m-1}}$}
\put(1.8,1){$r_{_{m}}$}
\put(2.5,2.9){$p$}  
\end{picture}
\end{center}
We must have
\[\frac{\alpha_{_{r_{1}}}}{\alpha_{_{p}}}\leq{}e^{-x}\;\mbox{ and }\frac{\alpha_{_{r_{m}}}}{\alpha_{_{p}}}\leq{}e^{-x},\]
and, if  we use the previous argument starting in $r_{_{1}},$ we obtain
\[\frac{\alpha_{_{r_{k+1}}}}{\alpha_{_{r_{k}}}}\leq{}e^{-x},\;\;k=1,\ldots,m-1.\]
If we start in $r_{_{m}}$ we also get
\[\frac{\alpha_{_{r_{k}}}}{\alpha_{_{r_{k+1}}}}\leq{}e^{-x},\;\;k=1,\ldots,m-1.\]
Hence
\[e^{x}\leq{}\frac{\alpha_{_{r_{k+1}}}}{\alpha_{_{r_{k}}}}\leq{}e^{-x},\;\;k=1,\ldots,m-1,\]
and since $x\geq{}0$ we must have $x=0,$ and hence $\|\Gamma\|=2.$
\begin{defn}
\label{fire2defn25}
We  say that $\Gamma$ is a m-star, if  $\Gamma$ is connected, and has a ``central'' vertex  $p,$ of valency $ m,$  and   $m$  rays of the form  \raisebox{-0.3cm}{\setlength{\unitlength}{1cm}\begin{picture}(4.7,0.4)
\put(0.5,0.4){\line(1,0){2}}
\put(3.5,0.4){\line(1,0){1}}
\put(2.8,0.4){$\ldots$}
\multiput(0.5,0.4)(1,0){5}{\circle*{0.15}}
\put(0.4,0.1){$p$}
\end{picture}} with $k_{_{i}}$ vertices (not counting   $p$),  $i = 1,...,m.$   We will denote a m-star by  S$(k_{_{1}},\;k_{_{2}},\;\ldots,k_{_{m}}),$ $k_{_{1}}\leq{}k_{_{2}}\leq\cdots\leq{}k_{_{m}}.$
\end{defn}
Assume that $\Gamma$ has at least one cycle of length 4.

\noindent{}Since
\begin{enumerate}
\item{}
\[\left\|
\mbox{\setlength{\unitlength}{0.75cm}
\raisebox{-0.28cm}{\begin{picture}(1.8,1)
\put(0.5,0){\circle*{0.15}}
\put(0,0.5){\circle*{0.15}}
\put(0.5,1){\circle*{0.15}}
\put(1,0.5){\circle*{0.15}}
\put(1.5,0){\circle*{0.15}}
\put(1.5,1){\circle*{0.15}}
\put(0.5,0){\line(-1,1){0.5}}
\put(0.5,0){\line(1,1){0.5}}
\put(0.5,1){\line(-1,-1){0.5}}
\put(0.5,1){\line(1,-1){0.5}}
\put(1,0.5){\line(1,1){0.5}}
\put(1,0.5){\line(1,-1){0.5}}
\end{picture}}}
\right\|^{2}=3+\sqrt{5}>5\]
each vertex in a 4-cycle has valency at most 3.
\item{}
\[\left\|
\mbox{\setlength{\unitlength}{0.75cm}
\raisebox{-0.1cm}{\begin{picture}(1.2,0.7)
\put(0,0){\circle*{0.15}}
\put(0.5,0){\circle*{0.15}}
\put(1,0){\circle*{0.15}}
\put(0,0.5){\circle*{0.15}}
\put(0.5,0.5){\circle*{0.15}}
\put(1,0.5){\circle*{0.15}}
\put(0,0){\line(1,0){1}}
\put(0,0.5){\line(1,0){1}}
\put(0,0){\line(0,1){0.5}}
\put(0.5,0){\line(0,1){0.5}}
\end{picture}}}
\right\|^{2}=\frac{\sin^{2}\frac{3\pi}{7}}{\sin^{2}\frac{\pi}{7}}>5\]
\item{} 
\[\left\|
\mbox{\setlength{\unitlength}{0.75cm}
\raisebox{-0.28cm}{\begin{picture}(2.1,1)
\put(1,0){\circle*{0.15}}
\put(0.5,0.5){\circle*{0.15}}
\put(1,1){\circle*{0.15}}
\put(1.5,0.5){\circle*{0.15}}
\put(2,0.5){\circle*{0.15}}
\put(0,0.5){\circle*{0.15}}
\put(1,0){\line(-1,1){0.5}}
\put(1,0){\line(1,1){0.5}}
\put(1,1){\line(-1,-1){0.5}}
\put(1,1){\line(1,-1){0.5}}
\put(1.5,0.5){\line(1,0){0.5}}
\put(0,0.5){\line(1,0){0.5}}
\end{picture}}}
\right\|^{2}=5\]                         
\end{enumerate}
Hence at most one vertex in a 4-cycle has valency 3.

\noindent{}By  1, 2 and 3 an ``extra'' edge on  a vertex in a 4-cycle cannot be edge in a 4-cycle. The  argument used to show that there  is at most one vertex  of valency    3, in the case with no 4-cycles, only depends on the fact that we have an edge which not part of a 4-cycle. Hence $\Gamma$ must be of the form
\begin{center}\setlength{\unitlength}{1cm}
\begin{picture}(5,1)
\put(0.5,0){\circle*{0.15}}
\put(0,0.5){\circle*{0.15}}
\put(0.5,1){\circle*{0.15}}
\put(1,0.5){\circle*{0.15}}
\put(2,0.5){\circle*{0.15}}
\put(3,0.5){\circle*{0.15}}
\put(4,0.5){\circle*{0.15}}
\put(5,0.5){\circle*{0.15}}
\put(3.3,0.5){$\ldots$}
\put(0.5,0){\line(-1,1){0.5}}
\put(0.5,0){\line(1,1){0.5}}
\put(0.5,1){\line(-1,-1){0.5}}
\put(0.5,1){\line(1,-1){0.5}}
\put(1,0.5){\line(1,0){2}}
\put(4,0.5){\line(1,0){1}}
\end{picture}
\end{center} 
$k + 4$  vertices, and $k\geq{}1$ since  
\[\left\|
\mbox{\setlength{\unitlength}{0.75cm}
\raisebox{-0.28cm}{\begin{picture}(1.3,1)
\put(0.5,0){\circle*{0.15}}
\put(0,0.5){\circle*{0.15}}
\put(0.5,1){\circle*{0.15}}
\put(1,0.5){\circle*{0.15}}
\put(0.5,0){\line(-1,1){0.5}}
\put(0.5,0){\line(1,1){0.5}}
\put(0.5,1){\line(-1,-1){0.5}}
\put(0.5,1){\line(1,-1){0.5}}
\end{picture}}}
\right\|^{2}=4.\]
We shall call the above graph  kite($k$ ).

\noindent{}Note that the Perron--Frobenius eigenvalue of an $m-$star with all rays of length 1 is  $\sqrt{m},$ so our discussion has excluded all graphs except 3-stars, 4-stars  and  kite($k$ ), $k\in{\Bbb N}.$ 

Since it turns out that the computation of the bi-unitarity condition for 3-stars  and 4-stars, defining the inclusions in (\ref{fire2eqn21}), is almost the same, we will use the  rest of this section to compute the condition for an $m-$star, $m  \geq{} 2.$

\begin{lemma}
\label{fire2lem26}If $\Gamma$ is a graph, with part  of $\Gamma$ a ray (in the sense of the definition of $m-$stars)
\begin{center}
\setlength{\unitlength}{1cm}
\begin{picture}(8,1)
\multiput(1,0.5)(1,0){7}{\circle*{0.15}}
\put(1,0.5){\line(1,0){4}}
\put(6,0.5){\line(1,0){1}}
\put(5.3,0.5){$\ldots$}
\put(0.9,0.1){$v_{_{0}}$}
\put(1.9,0.1){$v_{_{1}}$}                                                 \put(2.9,0.1){$v_{_{2}}$}
\put(3.9,0.1){$v_{_{3}}$}
\put(4.9,0.1){$v_{_{4}}$}
\put(5.9,0.1){$v_{_{k}}$} 
\end{picture}
\end{center}
then   the  coordinate  of  the  Perron--Frobenius vector   corresponding  to    $v_{_{i}}$   is proportional  to   $\Rnlam{i}{},$  $i = 0,\ldots,k,$  where   $\lambda$     denotes the Perron--Frobenius eigenvalue of  $\Gamma.$
\end{lemma}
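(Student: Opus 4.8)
The plan is to read off the Perron--Frobenius eigenvalue equation along the ray and to recognise in it exactly the two--term recursion defining the polynomials $R_{_{n}}$ in definition~\ref{firestar2def21}.

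First I would fix notation. Let $\alpha=(\alpha_{v})_{v}$ be the (strictly positive) Perron--Frobenius eigenvector of the adjacency matrix of $\Gamma,$ so that $\sum_{w\sim v}\alpha_{w}=\lambda\alpha_{v}$ for every vertex $v,$ where $\lambda=\|\Gamma\|.$ By the definition of a ray (as in definition~\ref{fire2defn25}) the end vertex $v_{_{0}}$ has valency $1$ with unique neighbour $v_{_{1}},$ while each $v_{_{i}}$ with $1\le i\le k-1$ has valency $2$ with neighbours $v_{_{i-1}}$ and $v_{_{i+1}}.$ Hence the eigenvalue equation at $v_{_{0}}$ gives
\[\alpha_{v_{1}}=\lambda\alpha_{v_{0}},\]
and the eigenvalue equation at $v_{_{i}},$ $1\le i\le k-1,$ gives
\[\alpha_{v_{i+1}}=\lambda\alpha_{v_{i}}-\alpha_{v_{i-1}}.\]
Note that the equation at $v_{_{k}}$ is never used, so the (possibly large) valency of $v_{_{k}}$ is irrelevant; only the free end $v_{_{0}}$ matters for anchoring the recursion.

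Next I would set $c=\alpha_{v_{0}}>0$ and prove $\alpha_{v_{i}}=c\,R_{_{i}}(\lambda)$ for $0\le i\le k$ by induction on $i.$ The cases $i=0$ and $i=1$ follow at once from $R_{_{0}}(\lambda)=1,$ $R_{_{1}}(\lambda)=\lambda$ and the two displayed equations. For the inductive step, if $\alpha_{v_{i-1}}=c\,R_{_{i-1}}(\lambda)$ and $\alpha_{v_{i}}=c\,R_{_{i}}(\lambda)$ with $1\le i\le k-1,$ then the valency--$2$ relation together with the recursion $R_{_{i+1}}(\lambda)=\lambda R_{_{i}}(\lambda)-R_{_{i-1}}(\lambda)$ of definition~\ref{firestar2def21} yields
\[\alpha_{v_{i+1}}=\lambda\alpha_{v_{i}}-\alpha_{v_{i-1}}=c\bigl(\lambda R_{_{i}}(\lambda)-R_{_{i-1}}(\lambda)\bigr)=c\,R_{_{i+1}}(\lambda),\]
which completes the induction, with proportionality constant $c=\alpha_{v_{0}}.$

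There is essentially no obstacle here: the lemma is a direct consequence of two matching linear recursions. The only point needing a little care is the orientation of the ray — one must anchor the recursion at the valency--$1$ end $v_{_{0}}$ so that the initial data are $R_{_{0}}$ and $R_{_{1}}$ rather than some shifted pair, which is exactly why the statement identifies the free end with the index $0.$
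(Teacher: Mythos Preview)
Your proof is correct and follows essentially the same approach as the paper: both anchor at the valency--$1$ end $v_{_{0}}$ and run the obvious induction matching the eigenvector recursion $\alpha_{i+1}=\lambda\alpha_{i}-\alpha_{i-1}$ against the defining recursion of $R_{_{n}}.$ Your write--up is in fact somewhat more explicit than the paper's (which is very terse), and your remark that the eigenvalue equation at $v_{_{k}}$ is never used is a helpful clarification.
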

\begin{proof}Let   $\alpha_{_{0}}$  denote the Perron--Frobenius coordinate at   $v_{_{0}}.$ Then   $\lambda\alpha_{_{0}}=\alpha_{_{1}},$ and we have  $\alpha_{_{1}} = R_{_{1}}(\lambda)\alpha_{_{0}}.$

\noindent{}Assume $\alpha_{_{j}} = R_{_{j}}(\lambda)\alpha_{_{0}},$ for $j\leq{}n<k.$ We then have
\[\alpha_{_{n+1}}=\lambda\alpha_{_{n}}-\alpha_{_{n-1}}=(\lambda\Rnlam{n}{}-\Rnlam{n-1}{})\alpha_{_{0}}=\Rnlam{n+1}{}\alpha_{_{0}}.\]
\end{proof}
If  we apply lemma \ref{fire2lem26} to each ray of   S($k_{_{1}},k_{_{2}},\ldots,k_{_{m}})$ we have, for the $i'$th ray,
\begin{center}
\setlength{\unitlength}{1cm}
\begin{picture}(10,1)
\multiput(1,0.5)(1,0){7}{\circle*{0.15}}
\put(1,0.5){\line(1,0){3}}
\put(5,0.5){\line(1,0){2}}
\put(4.3,0.5){$\ldots$}
\put(0.9,0.1){$v_{_{0}}^{i}$}
\put(1.9,0.1){$v_{_{1}}^{i}$}                                           \put(2.9,0.1){$v_{_{2}}^{i}$}
\put(3.9,0.1){$v_{_{3}}^{i}$}
\put(4.9,0.1){$v_{_{k_{i}-1}}^{i}$}
\put(5.9,0.1){$v_{_{k_{i}}}^{i}$} 
\put(6.9,0.1){central vertex}
\end{picture}
\end{center}
and the Perron--Frobenius coordinate at  $\vij{j}{i}$    can be chosen to 
\begin{equation}
\label{PFdefnstar}\Rnlamkvot{j}{k_{i}}{},\;\;j=1,\ldots,k_{_{i}},\;i=1,\ldots,m,\end{equation}
and  the  central  vertex  is  assigned  the  value  1.  If  the  defined coordinates  shall define an eigenvector on all of   $\Gamma,$ they must match up at the central vertex to give
\[\sum_{i=1}^{m}\Rnlamkvot{k_{i}-1}{k_{i}}{}=\lambda,\]
that is, the Perron--Frobenius eigenvalue of S($k_{_{1}},k_{_{2}},\ldots,k_{_{m}})$  is the largest solution to
\begin{equation}
\label{stareigenvaleq}
\sum_{i=1}^{m}\frac{R_{_{k_{i}-1}}(t)}{R_{_{k_{i}}}(t)}=t.
\end{equation}
The above equation will be referred to as the {\em eigenvalue equation}.

To  determine the bi-unitarity condition for  the diagram (\ref{fire2eqn21}) with  $G$  the adjacency matrix of an m-star, lemma \ref{fire2lem22}    suggests that we first look   at  the case  $ n = 1,$ and  determine the  moduli of  the matrix  entries.  For  $n = 1$   cycles in the diagram  corresponds to cycles on $\Gamma$ of length 4. In this case a cycle in the diagram (\ref{fire2eqn21}) is completely determined by listing the edges 
\raisebox{-0.3cm}{\setlength{\unitlength}{1cm}\begin{picture}(1.5,0.4)
\put(0.2,0.4){\line(1,0){1}}
\multiput(0.2,0.4)(1,0){2}{\circle*{0.15}}
\put(0.1,0.1){$i$}
\put(1.1,0.1){$l$}
\end{picture}},\raisebox{-0.3cm}{\setlength{\unitlength}{1cm}\begin{picture}(1.5,0.4)
\put(0.2,0.4){\line(1,0){1}}
\multiput(0.2,0.4)(1,0){2}{\circle*{0.15}}
\put(0.1,0.1){$j$}
\put(1.1,0.1){$k$}
\end{picture}} of $\Gamma.$ Hence we can picture the blocks in a diagram with the edges \raisebox{-0.3cm}{\setlength{\unitlength}{1cm}\begin{picture}(1.5,0.4)
\put(0.2,0.4){\line(1,0){1}}
\multiput(0.2,0.4)(1,0){2}{\circle*{0.15}}
\put(0.1,0.1){$i$}
\put(1.1,0.1){$l$}
\end{picture}} on the vertical axis, and the edges \raisebox{-0.3cm}{\setlength{\unitlength}{1cm}\begin{picture}(1.7,0.4)
\put(0.5,0.4){\line(1,0){1}}
\multiput(0.5,0.4)(1,0){2}{\circle*{0.15}}
\put(0.4,0.1){$j$}
\put(1.4,0.1){$k$}
\end{picture}} on the horizontal axis. 

\noindent{}We will first look at what happens on a ray of the m-star, so far from the central vertex, that the cycles do not involve the central vertex. To indicate that a vertex is in the ``$i$'' or ``$k$'' corner of the diagram (\ref{fire2eqn21}), we will draw it as a $\star.$ The vertices of the ``$j$'' and ``$l$'' corners of the diagram are drawn as a $\bullet.$

\noindent{}We have chosen to put the central vertex in the ``$i$'' and ``$k$'' corners of the diagram. Had we chosen to put the central vertex in the other two corners, the only difference in the discussion to come would be, that all the $u$ matrices would be $v$ matrices and vice versa. Since the graphs do not contain multiple edges we do not need the $\sigma,\rho,\phi$ and $\psi$ labels in the bi--unitary condition.

\noindent{}The block structure of $u$ and $v$ is pictured in the diagrams by the thick lines.

\noindent{}The moduli in the boxes are determined using the following facts
\begin{enumerate}
\item{}If we have a $1\times{}1$ block, the modulus of this element is 1, since it has to be a $1\times{}1$ unitary.
\item{}In a $2\times{}2$ block the moduli have to be of the following form\[\left(\begin{array}{cc}a&\sqrt{1-a^{2}}\\\sqrt{1-a^{2}}&a\end{array}\right),\;\;\;\;0\leq{}a\leq{}1.\]
\item{}The transition formula between $u$ and $v$
\[\left|v_{_{ik}}^{jl}\right|=\sqrt{\frac{\alpha_{_{i}}\delta_{_{k}}}{\beta_{_{j}}\gamma_{_{l}}}}\left|u_{_{jl}}^{ik}\right|\]
\[\left|u_{_{jl}}^{ik}\right|=\sqrt{\frac{\beta_{_{j}}\gamma_{_{l}}}{\alpha_{_{i}}\delta_{_{k}}}}\left|v_{_{ik}}^{jl}\right|\]
\end{enumerate}
1, 2, and 3 allows us to determine the modulus of a entry in each $2\times{}2$ block, and hence by 2 of every entry in the block.

\noindent{}Let   $v_{_{c}}$  denote the central vertex of  S($k_{_{1}},k_{_{2}},\ldots,k_{_{m}})$ and  consider the $j'$th ray
\begin{center}
\setlength{\unitlength}{1cm}
\begin{picture}(10,1)
\multiput(2,0.5)(2,0){3}{\circle*{0.15}}
\multiput(0.95,0.45)(2,0){4}{$\star$}
\put(1,0.5){\line(1,0){3}}
\put(5,0.5){\line(1,0){2}}
\put(4.3,0.5){$\ldots$}
\put(0.9,0){$v_{_{0}}^{j}$}
\put(1.9,0){$v_{_{1}}^{j}$}                                           
\put(2.9,0){$v_{_{2}}^{j}$}
\put(3.9,0){$v_{_{3}}^{j}$}
\put(4.9,0){$v_{_{k_{j}-1}}^{j}$}
\put(5.9,0){$v_{_{k_{j}}}^{j}$}
\put(6.9,0){$c$}
\put(7.6,0.4){$k_{_{j}}$ even}
\end{picture}
\end{center}
\begin{center}
\setlength{\unitlength}{1cm}
\begin{picture}(10,1)
\multiput(1.95,0.45)(2,0){2}{$\star$}
\multiput(1,0.5)(2,0){2}{\circle*{0.15}}
\put(6,0.5){\circle*{0.15}}
\put(6.95,0.45){$\star$}
\put(4.95,0.45){$\star$}
\put(1,0.5){\line(1,0){3}}
\put(5,0.5){\line(1,0){2}}
\put(4.3,0.5){$\ldots$}
\put(0.9,0){$v_{_{0}}^{j}$}
\put(1.9,0){$v_{_{1}}^{j}$}                                           
\put(2.9,0){$v_{_{2}}^{j}$}
\put(3.9,0){$v_{_{3}}^{j}$}
\put(4.9,0){$v_{_{k_{j}-1}}^{j}$}
\put(5.9,0){$v_{_{k_{j}}}^{j}$}
\put(6.9,0){$c$}
\put(7.6,0.4){$k_{_{j}}$ odd}
\end{picture}\\[0.2cm]
\end{center} 
\begin{center}
\setlength{\unitlength}{1cm}
\begin{picture}(13,13)
\multiput(1,0)(2,0){7}{\line(0,1){12}}
\multiput(1,0)(0,2){7}{\line(1,0){12}}
\put(3.5,12.9){Ray of even length. The blocks of $u$.}
\linethickness{2pt}
\multiput(1,10.05)(0,0.2){10}{\line(0,1){0.1}}
\multiput(1.05,12)(0.2,0){10}{\line(1,0){0.1}}

\put(3,12){\line(1,0){2}}
\put(1,10){\line(1,0){6}}
\put(1,8){\line(1,0){2}}
\put(7,8){\line(1,0){2}}
\put(3,6){\line(1,0){8}}
\put(5,4){\line(1,0){2}}
\put(11,4){\line(1,0){2}}
\put(7,2){\line(1,0){6}}
\put(9,0){\line(1,0){4}}

\put(1,10){\line(0,-1){2}}
\put(3,12){\line(0,-1){6}}
\put(5,12){\line(0,-1){2}}
\put(5,6){\line(0,-1){2}}
\put(7,10){\line(0,-1){8}}
\put(9,8){\line(0,-1){2}}
\put(9,2){\line(0,-1){2}}
\put(11,6){\line(0,-1){6}}
\put(13,4){\line(0,-1){4}}

\put(0.5,12.3){$v_{_{6}}^{j}$}
\put(2.9,12.3){$v_{_{5}}^{j}$}
\put(4.9,12.3){$v_{_{4}}^{j}$}
\put(6.9,12.3){$v_{_{3}}^{j}$}
\put(8.9,12.3){$v_{_{2}}^{j}$}
\put(10.9,12.3){$v_{_{1}}^{j}$}
\put(12.9,12.3){$v_{_{0}}^{j}$}
\put(0.3,9.9){$v_{_{5}}^{j}$}
\put(0.3,7.9){$v_{_{4}}^{j}$}
\put(0.3,5.9){$v_{_{3}}^{j}$}
\put(0.3,3.9){$v_{_{2}}^{j}$}
\put(0.3,1.9){$v_{_{1}}^{j}$}
\put(0.3,-0.1){$v_{_{0}}^{j}$}
\multiput(0.925,-0.05)(0,4){4}{$\star$}
\multiput(1,2)(0,4){3}{\circle*{0.15}}
\multiput(4.925,11.95)(4,0){3}{$\star$}
\multiput(3,12)(4,0){3}{\circle*{0.15}}

\put(9.9,0.9){$1$}
\put(11.9,0.9){$1$}

\put(7.1,2.9){$\Rijklamsq{1}{3}{2}$}
\put(9.6,2.9){$\Rnlamkvot{0}{2}{}$}
\put(11.9,2.9){$1$}

\put(5.9,4.9){$1$}
\put(7.6,4.9){$\Rnlamkvot{0}{2}{}$}
\put(9.1,4.9){$\Rijklamsq{1}{3}{2}$}

\put(3.1,6.9){$\Rijklamsq{3}{5}{4}$}
\put(5.6,6.9){$\Rnlamkvot{0}{4}{}$}
\put(7.9,6.9){$1$}

\put(1.9,8.9){$1$}
\put(3.6,8.9){$\Rnlamkvot{0}{4}{}$}
\put(5.1,8.9){$\Rijklamsq{3}{5}{4}$}

\put(1.6,10.9){$\Rnlamkvot{0}{6}{}$}
\put(3.9,10.9){$1$}
\end{picture}
\end{center}
\begin{center}
\setlength{\unitlength}{1cm}
\begin{picture}(13,13)
\multiput(1,0)(2,0){7}{\line(0,1){12}}
\multiput(1,0)(0,2){7}{\line(1,0){12}}
\put(3.5,12.9){Ray of even length. The blocks of $v$.}
\linethickness{2pt}
\put(1,12){\line(1,0){4}}
\put(5,10){\line(1,0){2}}
\put(1,8){\line(1,0){8}}
\put(3,6){\line(1,0){2}}
\put(9,6){\line(1,0){2}}
\put(5,4){\line(1,0){8}}
\put(7,2){\line(1,0){2}}
\put(9,0){\line(1,0){4}}

\put(1,12){\line(0,-1){4}}
\put(3,8){\line(0,-1){2}}
\put(5,12){\line(0,-1){8}}
\put(7,10){\line(0,-1){2}}
\put(7,4){\line(0,-1){2}}
\put(9,8){\line(0,-1){8}}
\put(11,6){\line(0,-1){2}}
\put(13,4){\line(0,-1){4}}
\put(0.5,12.3){$v_{_{6}}^{j}$}
\put(2.9,12.3){$v_{_{5}}^{j}$}
\put(4.9,12.3){$v_{_{4}}^{j}$}
\put(6.9,12.3){$v_{_{3}}^{j}$}
\put(8.9,12.3){$v_{_{2}}^{j}$}
\put(10.9,12.3){$v_{_{1}}^{j}$}
\put(12.9,12.3){$v_{_{0}}^{j}$}
\put(0.3,9.9){$v_{_{5}}^{j}$}
\put(0.3,7.9){$v_{_{4}}^{j}$}
\put(0.3,5.9){$v_{_{3}}^{j}$}
\put(0.3,3.9){$v_{_{2}}^{j}$}
\put(0.3,1.9){$v_{_{1}}^{j}$}
\put(0.3,-0.1){$v_{_{0}}^{j}$}
\multiput(0.925,-0.05)(0,4){4}{$\star$}
\multiput(1,2)(0,4){3}{\circle*{0.15}}
\multiput(4.925,11.95)(4,0){3}{$\star$}
\multiput(3,12)(4,0){3}{\circle*{0.15}}
\put(9.1,0.9){$\Rijklamsq{0}{2}{1}$}
\put(11.6,0.9){$\Rnlamkvot{0}{1}{}$}

\put(7.9,2.9){$1$}
\put(9.6,2.9){$\Rnlamkvot{0}{1}{}$}
\put(11.1,2.9){$\Rijklamsq{0}{2}{1}$}

\put(5.1,4.9){$\Rijklamsq{2}{4}{3}$}
\put(7.6,4.9){$\Rnlamkvot{0}{3}{}$}
\put(9.9,4.9){$1$}

\put(3.9,6.9){$1$}
\put(5.6,6.9){$\Rnlamkvot{0}{3}{}$}
\put(7.1,6.9){$\Rijklamsq{2}{4}{3}$}

\put(1.1,8.9){$\Rijklamsq{4}{6}{5}$}
\put(3.6,8.9){$\Rnlamkvot{0}{5}{}$}
\put(5.9,8.9){$1$}

\put(1.6,10.9){$\Rnlamkvot{0}{5}{}$}
\put(3.1,10.9){$\Rijklamsq{4}{6}{5}$}
\end{picture}
\end{center}
\begin{center}
\setlength{\unitlength}{1cm}
\begin{picture}(13,13)
\multiput(1,0)(2,0){7}{\line(0,1){12}}
\multiput(1,0)(0,2){7}{\line(1,0){12}}
\put(3.5,12.9){Ray of odd length. The blocks of $u$.}
\linethickness{2pt}
\put(1,12){\line(1,0){4}}
\put(5,10){\line(1,0){2}}
\put(1,8){\line(1,0){8}}
\put(3,6){\line(1,0){2}}
\put(9,6){\line(1,0){2}}
\put(5,4){\line(1,0){8}}
\put(7,2){\line(1,0){2}}
\put(9,0){\line(1,0){4}}

\put(1,12){\line(0,-1){4}}
\put(3,8){\line(0,-1){2}}
\put(5,12){\line(0,-1){8}}
\put(7,10){\line(0,-1){2}}
\put(7,4){\line(0,-1){2}}
\put(9,8){\line(0,-1){8}}
\put(11,6){\line(0,-1){2}}
\put(13,4){\line(0,-1){4}}
\put(0.5,12.3){$v_{_{6}}^{j}$}
\put(2.9,12.3){$v_{_{5}}^{j}$}
\put(4.9,12.3){$v_{_{4}}^{j}$}
\put(6.9,12.3){$v_{_{3}}^{j}$}
\put(8.9,12.3){$v_{_{2}}^{j}$}
\put(10.9,12.3){$v_{_{1}}^{j}$}
\put(12.9,12.3){$v_{_{0}}^{j}$}
\put(0.3,9.9){$v_{_{5}}^{j}$}
\put(0.3,7.9){$v_{_{4}}^{j}$}
\put(0.3,5.9){$v_{_{3}}^{j}$}
\put(0.3,3.9){$v_{_{2}}^{j}$}
\put(0.3,1.9){$v_{_{1}}^{j}$}
\put(0.3,-0.1){$v_{_{0}}^{j}$}
\multiput(1,0)(0,4){4}{\circle*{0.15}}
\multiput(0.925,1.95)(0,4){3}{$\star$}
\multiput(5,12)(4,0){3}{\circle*{0.15}}
\multiput(2.925,11.95)(4,0){3}{$\star$}
\put(9.1,0.9){$\Rijklamsq{0}{2}{1}$}
\put(11.6,0.9){$\Rnlamkvot{0}{1}{}$}

\put(7.9,2.9){$1$}
\put(9.6,2.9){$\Rnlamkvot{0}{1}{}$}
\put(11.1,2.9){$\Rijklamsq{0}{2}{1}$}

\put(5.1,4.9){$\Rijklamsq{2}{4}{3}$}
\put(7.6,4.9){$\Rnlamkvot{0}{3}{}$}
\put(9.9,4.9){$1$}

\put(3.9,6.9){$1$}
\put(5.6,6.9){$\Rnlamkvot{0}{3}{}$}
\put(7.1,6.9){$\Rijklamsq{2}{4}{3}$}

\put(1.1,8.9){$\Rijklamsq{4}{6}{5}$}
\put(3.6,8.9){$\Rnlamkvot{0}{5}{}$}
\put(5.9,8.9){$1$}

\put(1.6,10.9){$\Rnlamkvot{0}{5}{}$}
\put(3.1,10.9){$\Rijklamsq{4}{6}{5}$}
\end{picture}
\end{center}
\begin{center}
\setlength{\unitlength}{1cm}
\begin{picture}(13,13)
\multiput(1,0)(2,0){7}{\line(0,1){12}}
\multiput(1,0)(0,2){7}{\line(1,0){12}}
\put(3.5,12.9){Ray of odd length. The blocks of $v$.}
\linethickness{2pt}
\multiput(1,10.05)(0,0.2){10}{\line(0,1){0.1}}
\multiput(1.05,12)(0.2,0){10}{\line(1,0){0.1}}

\put(3,12){\line(1,0){2}}
\put(1,10){\line(1,0){6}}
\put(1,8){\line(1,0){2}}
\put(7,8){\line(1,0){2}}
\put(3,6){\line(1,0){8}}
\put(5,4){\line(1,0){2}}
\put(11,4){\line(1,0){2}}
\put(7,2){\line(1,0){6}}
\put(9,0){\line(1,0){4}}

\put(1,10){\line(0,-1){2}}
\put(3,12){\line(0,-1){6}}
\put(5,12){\line(0,-1){2}}
\put(5,6){\line(0,-1){2}}
\put(7,10){\line(0,-1){8}}
\put(9,8){\line(0,-1){2}}
\put(9,2){\line(0,-1){2}}
\put(11,6){\line(0,-1){6}}
\put(13,4){\line(0,-1){4}}

\put(0.5,12.3){$v_{_{6}}^{j}$}
\put(2.9,12.3){$v_{_{5}}^{j}$}
\put(4.9,12.3){$v_{_{4}}^{j}$}
\put(6.9,12.3){$v_{_{3}}^{j}$}
\put(8.9,12.3){$v_{_{2}}^{j}$}
\put(10.9,12.3){$v_{_{1}}^{j}$}
\put(12.9,12.3){$v_{_{0}}^{j}$}
\put(0.3,9.9){$v_{_{5}}^{j}$}
\put(0.3,7.9){$v_{_{4}}^{j}$}
\put(0.3,5.9){$v_{_{3}}^{j}$}
\put(0.3,3.9){$v_{_{2}}^{j}$}
\put(0.3,1.9){$v_{_{1}}^{j}$}
\put(0.3,-0.1){$v_{_{0}}^{j}$}
\multiput(1,0)(0,4){4}{\circle*{0.15}}
\multiput(0.925,1.95)(0,4){3}{$\star$}
\multiput(5,12)(4,0){3}{\circle*{0.15}}
\multiput(2.925,11.95)(4,0){3}{$\star$}
\put(9.9,0.9){$1$}
\put(11.9,0.9){$1$}

\put(7.1,2.9){$\Rijklamsq{1}{3}{2}$}
\put(9.6,2.9){$\Rnlamkvot{0}{2}{}$}
\put(11.9,2.9){$1$}

\put(5.9,4.9){$1$}
\put(7.6,4.9){$\Rnlamkvot{0}{2}{}$}
\put(9.1,4.9){$\Rijklamsq{1}{3}{2}$}

\put(3.1,6.9){$\Rijklamsq{3}{5}{4}$}
\put(5.6,6.9){$\Rnlamkvot{0}{4}{}$}
\put(7.9,6.9){$1$}

\put(1.9,8.9){$1$}
\put(3.6,8.9){$\Rnlamkvot{0}{4}{}$}
\put(5.1,8.9){$\Rijklamsq{3}{5}{4}$}

\put(1.6,10.9){$\Rnlamkvot{0}{6}{}$}
\put(3.9,10.9){$1$}
\end{picture}
\end{center}\newpage{}
We will now look at a ray in the vicinity of the central vertex, but only deal with the cycles that do not ``cross over'' to another ray. For convenience the $\lambda'$s are omitted in the notation.
\begin{center}
\setlength{\unitlength}{1cm}
\begin{picture}(13,13)
\multiput(1,0)(2,0){7}{\line(0,1){12}}
\multiput(1,0)(0,2){7}{\line(1,0){12}}
\put(3.5,12.9){The blocks of $u$ close to the center.}
\linethickness{2pt}
\multiput(1,10.05)(0,0.2){10}{\line(0,1){0.1}}
\multiput(1.05,12)(0.2,0){10}{\line(1,0){0.1}}

\put(3,12){\line(1,0){2}}
\put(1,10){\line(1,0){6}}
\put(1,8){\line(1,0){2}}
\put(7,8){\line(1,0){2}}
\put(3,6){\line(1,0){8}}
\put(5,4){\line(1,0){2}}
\put(11,4){\line(1,0){2}}
\put(7,2){\line(1,0){6}}
\put(9,0){\line(1,0){2}}
\multiput(11.05,0)(0.2,0){10}{\line(1,0){0.1}}

\put(1,10){\line(0,-1){2}}
\put(3,12){\line(0,-1){6}}
\put(5,12){\line(0,-1){2}}
\put(5,6){\line(0,-1){2}}
\put(7,10){\line(0,-1){8}}
\put(9,8){\line(0,-1){2}}
\put(9,2){\line(0,-1){2}}
\put(11,6){\line(0,-1){6}}
\put(13,4){\line(0,-1){2}}
\multiput(13,0.05)(0,0.2){10}{\line(0,1){0.1}}

\put(0.5,12.3){$v_{_{c}}$}
\put(2.9,12.3){$v_{_{k_{j}}}^{j}$}
\put(4.9,12.3){$v_{_{k_{j}-1}}^{j}$}
\put(6.9,12.3){$v_{_{k_{j}-2}}^{j}$}
\put(8.9,12.3){$v_{_{k_{j}-3}}^{j}$}
\put(10.9,12.3){$v_{_{k_{j}-4}}^{j}$}
\put(12.9,12.3){$v_{_{k_{j}-5}}^{j}$}
\put(0,9.9){$v_{_{k_{j}}}^{j}$}
\put(0,7.9){$v_{_{k_{j}-1}}^{j}$}
\put(0,5.9){$v_{_{k_{j}-2}}^{j}$}
\put(0,3.9){$v_{_{k_{j}-3}}^{j}$}
\put(0,1.9){$v_{_{k_{j}-4}}^{j}$}
\put(0,-0.1){$v_{_{k_{j}-5}}^{j}$}
\multiput(0.925,-0.05)(0,4){4}{$\star$}
\multiput(1,2)(0,4){3}{\circle*{0.15}}
\multiput(4.925,11.95)(4,0){3}{$\star$}
\multiput(3,12)(4,0){3}{\circle*{0.15}}
\put(9.9,0.9){$1$}
\put(11.6,0.9){$\Rnkvot{0}{k_{j}\!-\!6}{}$}

\put(7.05,2.9){$\Rijksq{k_{j}\!-\!3}{k_{j}\!-\!5}{k_{j}\!-\!4}$}
\put(9.6,2.9){$\Rnkvot{0}{k_{j}\!-\!4}{}$}
\put(11.9,2.9){$1$}

\put(5.9,4.9){$1$}
\put(7.6,4.9){$\Rnkvot{0}{k_{j}\!-\!4}{}$}
\put(9.05,4.9){$\Rijksq{k_{j}\!-\!3}{k_{j}\!-\!5}{k_{j}\!-\!4}$}

\put(3.05,6.9){$\Rijksq{k_{j}\!-\!1}{k_{j}\!-\!3}{k_{j}\!-\!2}$}
\put(5.6,6.9){$\Rnkvot{0}{k_{j}\!-\!2}{}$}
\put(7.9,6.9){$1$}

\put(1.9,8.9){$1$}
\put(3.6,8.9){$\Rnkvot{0}{k_{j}\!-\!2}{}$}
\put(5.05,8.9){$\Rijksq{k_{j}\!-\!1}{k_{j}\!-\!3}{k_{j}\!-\!2}$}

\put(1.7,10.9){$\Rnkvot{0}{k}{}$}
\put(3.9,10.9){$1$}
\end{picture}
\end{center}
\begin{center}
\setlength{\unitlength}{1cm}
\begin{picture}(13,13)
\multiput(1,0)(2,0){7}{\line(0,1){12}}
\multiput(1,0)(0,2){7}{\line(1,0){12}}
\put(3.5,12.9){The blocks of $v$ close to the center.}
\linethickness{2pt}
\put(1,12){\line(1,0){4}}
\put(5,10){\line(1,0){2}}
\put(1,8){\line(1,0){8}}
\put(3,6){\line(1,0){2}}
\put(9,6){\line(1,0){2}}
\put(5,4){\line(1,0){8}}
\put(7,2){\line(1,0){2}}
\put(9,0){\line(1,0){4}}

\put(1,12){\line(0,-1){4}}
\put(3,8){\line(0,-1){2}}
\put(5,12){\line(0,-1){8}}
\put(7,10){\line(0,-1){2}}
\put(7,4){\line(0,-1){2}}
\put(9,8){\line(0,-1){8}}
\put(11,6){\line(0,-1){2}}
\put(13,4){\line(0,-1){4}}
\put(0.5,12.3){$v_{_{c}}$}
\put(2.9,12.3){$v_{_{k_{j}}}^{j}$}
\put(4.9,12.3){$v_{_{k_{j}-1}}^{j}$}
\put(6.9,12.3){$v_{_{k_{j}-2}}^{j}$}
\put(8.9,12.3){$v_{_{k_{j}-3}}^{j}$}
\put(10.9,12.3){$v_{_{k_{j}-4}}^{j}$}
\put(12.9,12.3){$v_{_{k_{j}-5}}^{j}$}
\put(0,9.9){$v_{_{k_{j}}}^{j}$}
\put(0,7.9){$v_{_{k_{j}-1}}^{j}$}
\put(0,5.9){$v_{_{k_{j}-2}}^{j}$}
\put(0,3.9){$v_{_{k_{j}-3}}^{j}$}
\put(0,1.9){$v_{_{k_{j}-4}}^{j}$}
\put(0,-0.1){$v_{_{k_{j}-5}}^{j}$}
\multiput(0.925,-0.05)(0,4){4}{$\star$}
\multiput(1,2)(0,4){3}{\circle*{0.15}}
\multiput(4.925,11.95)(4,0){3}{$\star$}
\multiput(3,12)(4,0){3}{\circle*{0.15}}

\put(9.05,0.9){$\Rijksq{k_{j}\!-\!4}{k_{j}\!-\!6}{k_{j}\!-\!5}$}
\put(11.6,0.9){$\Rnkvot{0}{k_{j}\!-\!5}{}$}

\put(7.9,2.9){$1$}
\put(9.6,2.9){$\Rnkvot{0}{k_{j}\!-\!5}{}$}
\put(11.05,2.9){$\Rijksq{k_{j}\!-\!4}{k_{j}\!-\!6}{k_{j}\!-\!5}$}

\put(5.05,4.9){$\Rijksq{k_{j}\!-\!2}{k_{j}\!-\!4}{k_{j}\!-\!3}$}
\put(7.6,4.9){$\Rnkvot{0}{k_{j}\!-\!3}{}$}
\put(9.9,4.9){$1$}

\put(3.9,6.9){$1$}
\put(5.6,6.9){$\Rnkvot{0}{k_{j}\!-\!3}{}$}
\put(7.05,6.9){$\Rijksq{k_{j}\!-\!2}{k_{j}\!-\!4}{k_{j}\!-\!3}$}

\put(1.05,8.9){$\Rijksq{k_{j}}{k_{j}\!-\!2}{k_{j}\!-\!1}$}
\put(3.6,8.9){$\Rnkvot{0}{k_{j}\!-\!1}{}$}
\put(5.9,8.9){$1$}

\put(1.6,10.9){$\Rnkvot{0}{k_{j}\!-\!1}{}$}
\put(3.05,10.9){$\Rijksq{k_{j}}{k_{j}\!-\!2}{k_{j}\!-\!1}$}
\end{picture}
\end{center}
We will now look at the cycles ``crossing'' the central vertex. We will list these cycles by the vertices involved. To indicate which vertices are considered fixed, we will underline the fixed ones. I.e the cycle $\underline{v}_{_{1}}v_{_{2}}\underline{v}_{_{3}}v_{_{4}}$ is different from the cycle $v_{_{1}}\underline{v}_{_{2}}v_{_{3}}\underline{v}_{_{4}}.$ The first corresponds to an entry of $u$ and the second to an entry of $v.$ 

\noindent{}For simplicity we will call the vertices $v_{_{k_{j}}}^{j}$ for $a_{_{j}},$ $j=1,\ldots,m,$ and the vertex $v_{_{c}}$ will be called $c.$

\noindent{}The only cycles, for which we have not yet determined the modulus of the corresponding elements in $u$ and $v,$  span an $m\times{}m$ matrix of $u,$ indexed by
\begin{equation}
\label{fire2cycles}
\left(\begin{array}{cccc}
\underline{c}a_{_{1}}\underline{c}a_{_{1}}&
\underline{c}a_{_{1}}\underline{c}a_{_{2}}&
\cdots &
\underline{c}a_{_{1}}\underline{c}a_{_{m}}\\
\underline{c}a_{_{2}}\underline{c}a_{_{1}}&
\underline{c}a_{_{2}}\underline{c}a_{_{2}}&
\cdots &
\underline{c}a_{_{2}}\underline{c}a_{_{m}}\\
\vdots{}&\vdots{}&\ddots{}&\vdots{}\\
\underline{c}a_{_{m}}\underline{c}a_{_{1}}&
\underline{c}a_{_{m}}\underline{c}a_{_{2}}&
\cdots &
\underline{c}a_{_{m}}\underline{c}a_{_{m}}
\end{array}\right)\end{equation}
In the above we saw, that the modulus of the entry of $u$ corresponding to the cycle $\underline{c}a_{_{i}}\underline{c}a_{_{i}}$ is $\frac{1}{R_{_{k_{i}}}(\lambda)}.$

\noindent{}The cycles $c\underline{a}_{_{i}}c\underline{a}_{_{j}},$ $i\neq{}j,$ are the only cycles fixing $a_{_{i}}$ and $a_{_{j}},$ hence they must span a $1\times{}1$ block of $v$ and the modulus of the entry of $u$ corresponding to the cycle $\underline{c}a_{_{i}}\underline{c}a_{_{j}},$ $i\neq{}j,$ is
\[\sqrt{\frac{R_{_{k_{i}-1}}(\lambda)R_{_{k_{j}-1}}(\lambda)}{R_{_{k_{i}}}(\lambda)R_{_{k_{j}}}(\lambda)}}.\]
Hence the moduli of the elements of $u$ corresponding to the cycles in (\ref{fire2cycles}) are given by
\[\left(\begin{array}{cccc}
\frac{1}{R_{_{k_{1}}}(\lambda)}&
\sqrt{\frac{R_{_{k_{1}-1}}(\lambda)R_{_{k_{2}-1}}(\lambda)}{R_{_{k_{1}}}(\lambda)R_{_{k_{2}}}(\lambda)}}&
\cdots &
\sqrt{\frac{R_{_{k_{1}-1}}(\lambda)R_{_{k_{m}-1}}(\lambda)}{R_{_{k_{1}}}(\lambda)R_{_{k_{m}}}(\lambda)}}\\[0.4cm]
\sqrt{\frac{R_{_{k_{1}-1}}(\lambda)R_{_{k_{2}-1}}(\lambda)}{R_{_{k_{1}}}(\lambda)R_{_{k_{2}}}(\lambda)}}&
\frac{1}{R_{_{k_{2}}}(\lambda)}&
\cdots &
\sqrt{\frac{R_{_{k_{2}-1}}(\lambda)R_{_{k_{m}-1}}(\lambda)}{R_{_{k_{2}}}(\lambda)R_{_{k_{m}}}(\lambda)}}\\
\vdots{}&\vdots{}&\ddots{}&\vdots{}\\[0.4cm]
\sqrt{\frac{R_{_{k_{1}-1}}(\lambda)R_{_{k_{m}-1}}(\lambda)}{R_{_{k_{1}}}(\lambda)R_{_{k_{m}}}(\lambda)}}&
\sqrt{\frac{R_{_{k_{2}-1}}(\lambda)R_{_{k_{m}-1}}(\lambda)}{R_{_{k_{2}}}(\lambda)R_{_{k_{m}}}(\lambda)}}&
\cdots &
\frac{1}{R_{_{k_{m}}}(\lambda)}
\end{array}\right).\]
To show that the matrix, formed by the squares of the entries of the matrix above, is doubly stochastic, we need to show
\[\frac{1}{R_{_{k_{i}}}(\lambda)^{2}}+\sum_{j\neq{}i}\frac{R_{_{k_{i}-1}}(\lambda)R_{_{k_{j}-1}}(\lambda)}{R_{_{k_{i}}}(\lambda)R_{_{k_{j}}}(\lambda)}=1.\]
Since $\sum_{j}\Rnlamkvot{k_{i}-1}{k_{i}}{}=\lambda$ (see \ref{stareigenvaleq}), we have
\[\begin{array}{cl}
&\frac{1}{R_{_{k_{i}}}(\lambda)^{2}}+\sum_{j\neq{}i}\frac{R_{_{k_{i}-1}}(\lambda)R_{_{k_{j}-1}}(\lambda)}{R_{_{k_{i}}}(\lambda)R_{_{k_{j}}}(\lambda)}\\[0.4cm]
=&
\frac{1}{R_{_{k_{i}}}(\lambda)^{2}}+\frac{R_{_{k_{i}-1}}(\lambda)}{R_{_{k_{i}}}(\lambda)}\left(\lambda-\frac{R_{_{k_{i}-1}}(\lambda)}{R_{_{k_{i}}}(\lambda)}\right)\\[0.4cm]
=&
\left.\left(1+\lambda{}R_{_{k_{i}-1}}(\lambda)R_{_{k_{i}}}(\lambda)-R_{_{k_{i}-1}}(\lambda)R_{_{k_{i}-1}}(\lambda)\right)\right/R_{_{k_{i}}}(\lambda)^{2}\\[0.4cm]
=&
\left.\left(1+R_{_{k_{i}-1}}(\lambda)\left(\lambda{}R_{_{k_{i}}}(\lambda)-R_{_{k_{i}-1}}(\lambda)\right)\right)\right/R_{_{k_{i}}}(\lambda)^{2}\\[0.4cm]
=&
\left.\left(1+R_{_{k_{i}-1}}(\lambda)R_{_{k_{i}+1}}(\lambda)\right)\right/R_{_{k_{i}}}(\lambda)^{2}.
\end{array}\]
Hence we need to show 
\[R_{_{k_{i}}}(\lambda)^{2}=1+R_{_{k_{i}-1}}(\lambda)R_{_{k_{i}+1}}(\lambda).\]
To show that the moduli squared of the  $2\times{}2$ blocks of $u$ and $v$ form doubly stochastic matrices, we must show 
\[R_{_{j}}(\lambda)^{2}=1+R_{_{j-1}}(\lambda)R_{_{j+1}}(\lambda),\]
which is the same identity as above. A proof of this identity if found as part of the proof of lemma \ref{firegraflem0}.

\begin{remark}{\rm
If $u$ is a $n\times{}n$ unitary  and $I$ denotes the $n\times{}n$ identity matrix, then the matrix
\[\left(\begin{array}{cc}tu& \sqrt{1-t^{2}}I\\\sqrt{1-t^{2}}I&-tu^{*}\end{array}\right),\;\;0\leq{}t\leq{}1\]
is unitary in $M_{_{2n}}({\Bbb C}).$
}\end{remark}

\noindent{}The only part of the bi-unitary condition which is  non-trivial to solve, is the  $M_{_{m}}(M_{_{n}}({\Bbb C}))$  part  indexed by
\[\left(\underline{c}a_{_{j_{1}}}\underline{c}a_{_{j_{2}}}\right)_{j_{1},j_{2}=1}^{m}.\]
If we have a solution to this part, it will determine some of the entries in the $2\times{}2$ blocks of $u$ resp. $v$. However, at most one entry of a $2\times{}2$ block is determined, and the three remaining entries can be determined by the above remark. Continuing the argument, as we move towards the end of a ray, we see that for each  $2\times{}2$ block only one entry is determined by the previous blocks, and hence a solution to the entire $2\times{}2$ block can be determined.

\noindent{}If we put
\[\alpha_{_{j}}=\Rnlamkvot{k_{j}-1}{k_{j}}{}\;\mbox{ and }\delta_{_{j}}=\frac{1}{R_{_{k_{j}}}(\lambda)}\]
we have
\begin{prop}
\label{fire2prop27}
If $\Gamma$= S$(k_{_{1}},\ldots,k_{_{m}})$  and  $\alpha_{_{j}},\delta_{_{j}}$  are defined as above, there exists a commuting square of the form (\ref{fire2eqn21}) if and only if there exists $n\times{}n$ unitaries $u_{_{ij}}$ such that
\[\left(\begin{array}{cccc}
\delta_{_{1}}u_{_{11}} & \sqalialj{1}{2}u_{_{12}} & \cdots & \sqalialj{1}{m}u_{_{1m}}\\
\sqalialj{1}{2}u_{_{21}}&\delta_{_{2}}u_{_{22}}& \cdots & \sqalialj{2}{m}u_{_{2m}}\\
\vdots & \vdots & \ddots & \vdots\\
\sqalialj{m}{1}u_{_{m1}}&\sqalialj{m}{2}u_{_{m2}}& & \delta_{_{m}}u_{_{mm}}
\end{array}\right)\]
is a unitary matrix.
\end{prop}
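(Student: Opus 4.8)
The plan is to assemble the ingredients that have just been developed. Since $\Gamma=S(k_{_{1}},\ldots,k_{_{m}})$ is a tree it is connected, bipartite, has no multiple edges and no cycle of length $4$; hence, as noted at the start of the section, a diagram of the form (\ref{fire2eqn21}) can be a \csq{} only for the trace coming from the Perron--Frobenius eigenvector of $G$ (Proposition \ref{fire1prop17}), and such a \csq{} exists if and only if there is a pair $(u,v)$ of unitaries of the block form of Theorem \ref{fire1thm17} satisfying the transition formula (\ref{fire10116}) (Theorem \ref{fire1thm10} and Remark \ref{biunitcond}). With the central vertex $v_{_{c}}$ placed in the $i$- and $k$-corners as above, Lemma \ref{fire2lem22} and Corollary \ref{fire2cor23}, applied ray by ray, together with the analysis in case b) of the proof of Lemma \ref{fire2lem22} applied at $i=k=v_{_{c}}$, force the moduli of all entries of $u$ and $v$ to be the ones read off from the block diagrams above. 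In particular, writing $a_{_{j}}=v_{_{k_{j}}}^{j}$, the direct summand $u^{(v_{_{c}},v_{_{c}})}$, indexed by the cycles $\bigl(\underline{v}_{_{c}}a_{_{j_{1}}}\underline{v}_{_{c}}a_{_{j_{2}}}\bigr)_{j_{1},j_{2}=1}^{m}$, is an $m\times m$ array of $n\times n$ blocks, the $(j_{1},j_{2})$-block being a nonnegative scalar multiple of an $n\times n$ unitary, with scalar equal to $\delta_{_{j}}=\frac{1}{R_{_{k_{j}}}(\lambda)}$ when $j_{1}=j_{2}=j$ and to $\sqrt{\alpha_{_{j_{1}}}\alpha_{_{j_{2}}}}=\sqrt{\frac{R_{_{k_{j_{1}}-1}}(\lambda)R_{_{k_{j_{2}}-1}}(\lambda)}{R_{_{k_{j_{1}}}}(\lambda)R_{_{k_{j_{2}}}}(\lambda)}}$ when $j_{1}\neq j_{2}$. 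Thus the matrices of the indicated block form are exactly the candidates for $u^{(v_{_{c}},v_{_{c}})}$, with the $u_{_{ij}}$ its $n\times n$ unitary factors.

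The implication $(a)\Rightarrow(b)$ is then immediate: if a \csq{} of the form (\ref{fire2eqn21}) exists, Theorem \ref{fire1thm10} yields a unitary $u$, and its summand $u^{(v_{_{c}},v_{_{c}})}$ is, by the previous paragraph, of the displayed block form; being unitary, its $n\times n$ unitary factors $u_{_{ij}}$ make the displayed matrix unitary, as required.

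For $(b)\Rightarrow(a)$ I would run the construction in reverse. Given $n\times n$ unitaries $u_{_{ij}}$ for which the displayed block matrix is unitary, take that matrix to be $u^{(v_{_{c}},v_{_{c}})}$, set every $1\times 1$ block of $u$ and of $v$ equal to $1$, and then fill in the remaining $2\times 2$ blocks (of $n\times n$ blocks) by moving outward along each ray. As noted after the moduli computation, each such $2\times 2$ block has at most one of its four $n\times n$ entries pinned down by the blocks already determined, that entry being a nonnegative scalar multiple of an $n\times n$ unitary; by the remark preceding the statement, if $w$ is an $n\times n$ unitary then $\left(\begin{array}{cc}tw&\sqrt{1-t^{2}}I\\\sqrt{1-t^{2}}I&-tw^{*}\end{array}\right)$ is unitary for $0\leq t\leq 1$, so the remaining three entries of the block can be chosen to make the whole $2n\times 2n$ block unitary. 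The scalars $t$ that arise lie in $[0,1]$ because the matrices formed by the moduli-squared of the entries are doubly stochastic, which in turn reduces to the identity $R_{_{j}}(\lambda)^{2}=1+R_{_{j-1}}(\lambda)R_{_{j+1}}(\lambda)$ proved in Lemma \ref{firegraflem0}. Iterating toward the free end of every ray produces unitary blocks $u^{(i,k)}$ and $v^{(j,l)}$ for all of $u$ and $v$; the transition formula (\ref{fire10116}) holds by construction, so $(u,v)$ satisfies the bi-unitary condition and Theorem \ref{fire1thm10} delivers the \csq{}.

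The main obstacle is this last propagation step: one must check that the outward induction along the rays is consistent — that at each stage exactly one $n\times n$ entry of the next $2\times 2$ block is already determined (so the remark can be applied) — and that the rays of even and of odd length, as well as the boundary block at the free end of each ray, are all covered by the same mechanism. This is exactly the data recorded in the block diagrams above, so the verification is routine bookkeeping; it is, however, the only part of the argument that is not purely formal.
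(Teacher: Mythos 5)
Your argument is correct and follows essentially the same route as the paper: the moduli are forced by Lemma \ref{fire2lem22}, Corollary \ref{fire2cor23} and the doubly stochastic identity $R_{j}(\lambda)^{2}=1+R_{j-1}(\lambda)R_{j+1}(\lambda)$, the central $m\times m$ block of $n\times n$ unitaries is identified with the displayed matrix, and sufficiency is obtained by propagating outward along each ray using the remark on $2n\times 2n$ unitaries, exactly as in the text preceding the proposition. The "routine bookkeeping" you flag is precisely what the paper settles with its block diagrams for rays of even and odd length and near the central vertex.
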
For  $\Gamma$= S$(k_{_{1}},k_{_{2}},k_{_{3}},k_{_{4}})$ the  answer to this  problem is given  by the theorems \ref{fire4thm411} and  \ref{fire4thm412},   which state,  that if  we label the  $\delta'$s   such that $\delta_{_{1}}\geq{}\delta_{_{2}}\geq{}\delta_{_{3}}\geq{}\delta_{_{4}}> 0,$ then
\begin{enumerate}
\item{}We  need only look  at  $n = 2,$ and a solution exists if and only if $\delta_{_{1}}-\delta_{_{2}}-\delta_{_{3}}-\delta_{_{4}}\leq{}0.$
\item{}A  solution  in the  case   $n =  1$   exists if and only if  $\delta_{_{1}}-\delta_{_{2}}-\delta_{_{3}}-\delta_{_{4}}\leq{}0$ and $\delta_{_{1}}-\delta_{_{2}}-\delta_{_{3}}+\delta_{_{4}}\geq{}0.$
\end{enumerate}
Since the proof of  the above results is long, we  have devoted  section \ref{algnece} to the proof,  and will concentrate on  the 3-stars in  the remainder of this  section.
\begin{lemma}
\label{33existence}Let
\[D=\left(\begin{array}{ccc}
d_{_{11}}&d_{_{12}}&d_{_{13}}\\
d_{_{21}}&d_{_{22}}&d_{_{23}}\\
d_{_{31}}&d_{_{32}}&d_{_{33}}
\end{array}\right)\]
be  doubly stochastic, and put  $\alpha  = d_{_{11}}d_{_{21}},$ $\beta  = d_{_{12}}d_{_{22}}$ and $\gamma  = d_{_{13}}d_{_{23}},$ then there exists a unitary  $u = (u_{_{ij}})$  such that   $|u_{_{ij}}|^{2}=d_{_{ij}},$ $i,j=1,2,3,$ if  and only if $\sqrt{\alpha},$ $\sqrt{\beta}$ and $\sqrt{\gamma}$ satisfy the triangle inequality. The last condition is equivalent to
\[\alpha^{2}+\beta^{2}+\gamma^{2}-2\alpha\beta-2\alpha\gamma-2\beta\gamma\leq{}0.\]
\end{lemma}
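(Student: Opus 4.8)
The plan is to pit the orthogonality relations of a $3\times3$ unitary against the full strength of ``doubly stochastic'', i.e.\ that \emph{both} the row sums and the column sums of $D$ equal $1$; the necessity half will use only one orthogonality relation, while the sufficiency half is where the column-sum condition becomes indispensable.

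For necessity, suppose $u=(u_{ij})$ is unitary with $|u_{ij}|^{2}=d_{ij}$. First I would write down orthogonality of the first two rows: $z_{1}+z_{2}+z_{3}=0$ with $z_{j}:=u_{1j}\overline{u_{2j}}$, noting $|z_{1}|=\sqrt{d_{11}d_{21}}=\sqrt{\alpha}$, $|z_{2}|=\sqrt{\beta}$, $|z_{3}|=\sqrt{\gamma}$. Then $\sqrt{\alpha}=|z_{1}|=|z_{2}+z_{3}|\le\sqrt{\beta}+\sqrt{\gamma}$, and symmetrically the other two inequalities, which is precisely the triangle inequality for $\sqrt{\alpha},\sqrt{\beta},\sqrt{\gamma}$.

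For sufficiency I would argue constructively. Assuming the triangle inequality, lay three segments of lengths $\sqrt{\alpha},\sqrt{\beta},\sqrt{\gamma}$ head to tail into a closed (possibly degenerate) triangle to obtain reals $\theta_{1},\theta_{2},\theta_{3}$ with $\sqrt{\alpha}\,e^{i\theta_{1}}+\sqrt{\beta}\,e^{i\theta_{2}}+\sqrt{\gamma}\,e^{i\theta_{3}}=0$. Set $u_{1j}=\sqrt{d_{1j}}$ and $u_{2j}=\sqrt{d_{2j}}\,e^{-i\theta_{j}}$; since the $j$-th row of $D$ sums to $1$ these two rows have norm $1$, and the choice of the $\theta_{j}$ makes them orthogonal, hence linearly independent. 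Now pick any unit vector $(u_{3j})_{j}$ orthogonal to both; then $u=(u_{ij})$ has orthonormal rows, so it is unitary, so each of its columns has norm $1$, which forces $|u_{3j}|^{2}=1-d_{1j}-d_{2j}=d_{3j}$ --- the last equality being exactly the column-sum condition on $D$. This is the step I expect to be the crux: the third row is only determined up to a scalar by orthogonality to rows $1$ and $2$, and it is the column sums of $D$ (information not available from mere row-stochasticity) that pin its moduli to the prescribed values $d_{3j}$.

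It then remains to identify the triangle inequality with the displayed polynomial inequality. Writing $a=\sqrt{\alpha},\ b=\sqrt{\beta},\ c=\sqrt{\gamma}\ge0$, at most one of $b+c-a,\ a+c-b,\ a+b-c$ can be negative (any two of them add to something $\ge0$), and $a+b+c\ge0$, so the triangle inequality is equivalent to
\[(a+b+c)(b+c-a)(a+c-b)(a+b-c)\ge0.\]
Expanding this Heron-type product gives $2a^{2}b^{2}+2b^{2}c^{2}+2a^{2}c^{2}-a^{4}-b^{4}-c^{4}=-\bigl(\alpha^{2}+\beta^{2}+\gamma^{2}-2\alpha\beta-2\alpha\gamma-2\beta\gamma\bigr)$, so the triangle inequality holds exactly when $\alpha^{2}+\beta^{2}+\gamma^{2}-2\alpha\beta-2\alpha\gamma-2\beta\gamma\le0$, as required.
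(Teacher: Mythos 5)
Your proposal is correct and follows essentially the same route as the paper: both proofs identify the triangle inequality for $\sqrt{\alpha},\sqrt{\beta},\sqrt{\gamma}$ with the orthogonality relation between the first two rows, and both complete the sufficiency by noting that the third row is forced by orthogonality while the doubly stochastic (column-sum) property of $D$ pins its moduli to $d_{3j}$ — the paper realizes this third row as the conjugate cross product of the first two, which is the same step you carry out by completing to an orthonormal basis and invoking column norms.
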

\begin{proof}If such a unitary exists, we can choose it to be of the form
\[\left(\begin{array}{ccc}
\sqrt{d_{_{11}}}&\sqrt{d_{_{12}}}&\sqrt{d_{_{13}}}\\
\sqrt{d_{_{21}}}&\sqrt{d_{_{22}}}e^{i\phi}&\sqrt{d_{_{23}}}e^{i\theta}\\
\sqrt{d_{_{31}}}&\sqrt{d_{_{32}}}e^{i\psi}&\sqrt{d_{_{33}}}e^{i\eta}
\end{array}\right)\]
and orthogonality of the two first rows implies
\begin{equation}\label{fire2lem28eqn1}
\sqrt{d_{_{11}}d_{_{21}}}+\sqrt{d_{_{12}}d_{_{22}}}e^{i\phi}+\sqrt{d_{_{13}}d_{_{23}}}e^{i\theta}=0\Leftrightarrow\sqrt{\alpha}+\sqrt{\beta}e^{i\phi}+\sqrt{\gamma}e^{i\theta}=0.\end{equation}
Conversely, if we can find scalars $e^{i\psi}$ and  $e^{i\theta}$ such that (\ref{fire2lem28eqn1}) is satisfied, the double  stochastic property  of   D  assures,  that we can find  a unitary of the desired form, by putting the $3'$rd row equal to the conjugate vector product of the $1'$st and $2'$nd row. 

\noindent{}Hence a necessary and sufficient condition for the existence of a unitary with the stated properties is
\[\begin{array}{cl}
& \sqrt{\alpha},\;\;\sqrt{\beta} \mbox{ and } \sqrt{\gamma}\mbox{ satisfy the triangle inequality }\\
\Updownarrow{} & \\
&|\sqrt{\alpha}-\sqrt{\beta}|\leq{}\sqrt{\gamma}\leq\sqrt{\alpha}+\sqrt{\beta}\\
\Updownarrow &\\
&\alpha^{2}+\beta^{2}+\gamma^{2}-2\alpha\beta-2\alpha\gamma-2\beta\gamma\leq{}0
\end{array}\]
\end{proof}
\begin{lemma}
\label{fire2lem29}Let
\[D=\left(\begin{array}{ccc}
d_{_{11}}&d_{_{12}}&d_{_{13}}\\
d_{_{21}}&d_{_{22}}&d_{_{23}}\\
d_{_{31}}&d_{_{32}}&d_{_{33}}
\end{array}\right)\]
be doubly stochastic, and put  $\alpha{} = d_{_{11}}d_{_{21}},$   $\beta{}=d_{_{12}}d_{_{22}}$ and $\gamma{}=d_{_{13}}d_{_{23}}.$ If there exists  $n\times{}n$   unitaries  $u_{_{ij}},$   $i,j = 1,2,3,$  such that
\[\left(\begin{array}{ccc}
\sqrt{d_{_{11}}}u_{_{11}}&\sqrt{d_{_{12}}}u_{_{12}}&\sqrt{d_{_{13}}}u_{_{13}}\\
\sqrt{d_{_{21}}}u_{_{21}}&\sqrt{d_{_{22}}}u_{_{22}}&\sqrt{d_{_{23}}}u_{_{23}}\\
\sqrt{d_{_{31}}}u_{_{31}}&\sqrt{d_{_{32}}}u_{_{32}}&\sqrt{d_{_{33}}}u_{_{33}}
\end{array}\right)\]
is unitary, then  $\sqrt{\alpha},$ $\sqrt{\beta}$ and $\sqrt{\gamma}$ satisfy the triangle inequality.
\end{lemma}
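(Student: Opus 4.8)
The plan is to run the necessity half of the argument in Lemma~\ref{33existence}, with the scalar orthogonality relation (\ref{fire2lem28eqn1}) replaced by its operator-valued analogue. Write $U$ for the $3n\times 3n$ unitary in the statement, viewed as a $3\times 3$ array of $n\times n$ blocks $U_{ij}=\sqrt{d_{ij}}\,u_{ij}$. Unitarity of $U$ makes its first two block rows orthogonal, i.e. the $(1,2)$ block of $UU^{*}$ vanishes:
\[
\sum_{k=1}^{3}\sqrt{d_{1k}}\,\sqrt{d_{2k}}\;u_{1k}u_{2k}^{*}=0 .
\]
Putting $V_{k}=u_{1k}u_{2k}^{*}$, which is unitary as a product of unitaries, and recalling $\alpha=d_{11}d_{21}$, $\beta=d_{12}d_{22}$, $\gamma=d_{13}d_{23}$, this reads
\[
\sqrt{\alpha}\,V_{1}+\sqrt{\beta}\,V_{2}+\sqrt{\gamma}\,V_{3}=0 .
\]

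The triangle inequality now follows by taking norms. Rewriting as $\sqrt{\alpha}\,V_{1}=-\bigl(\sqrt{\beta}\,V_{2}+\sqrt{\gamma}\,V_{3}\bigr)$ and applying the operator norm, with $\|V_{k}\|=1$, gives
\[
\sqrt{\alpha}=\bigl\|\sqrt{\beta}\,V_{2}+\sqrt{\gamma}\,V_{3}\bigr\|\le\sqrt{\beta}+\sqrt{\gamma};
\]
equivalently, one may evaluate the identity on a unit vector $\xi$ and compare $\|\sqrt{\alpha}\,V_{1}\xi\|=\sqrt{\alpha}$ with $\|\sqrt{\beta}\,V_{2}\xi+\sqrt{\gamma}\,V_{3}\xi\|\le\sqrt{\beta}+\sqrt{\gamma}$. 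Isolating $V_{2}$ and then $V_{3}$ in the same identity yields $\sqrt{\beta}\le\sqrt{\alpha}+\sqrt{\gamma}$ and $\sqrt{\gamma}\le\sqrt{\alpha}+\sqrt{\beta}$, so $\sqrt{\alpha},\sqrt{\beta},\sqrt{\gamma}$ satisfy the triangle inequality. Its equivalence with $\alpha^{2}+\beta^{2}+\gamma^{2}-2\alpha\beta-2\alpha\gamma-2\beta\gamma\le 0$ was already recorded in Lemma~\ref{33existence}, so nothing further is needed.

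There is essentially no real obstacle here: the only care required is in reading the block-orthogonality relation off $UU^{*}=I$ correctly and in observing that each $V_{k}=u_{1k}u_{2k}^{*}$ is unitary, hence of operator norm $1$; the estimate is then immediate and completely insensitive to $n$. This asymmetry is exactly why the lemma is stated as a one-directional implication — reconstructing the $u_{ij}$ from the triangle inequality (the converse) is the genuinely $n$-dependent step and is handled separately.
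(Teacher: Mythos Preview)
Your proof is correct and follows essentially the same route as the paper: derive a linear relation $\sqrt{\alpha}\,W_1+\sqrt{\beta}\,W_2+\sqrt{\gamma}\,W_3=0$ among three $n\times n$ unitaries from block orthogonality, then take norms. The only cosmetic differences are that the paper uses column orthogonality ($U^{*}U=I$, yielding $W_k=u_{1k}^{*}u_{2k}$) and the Hilbert--Schmidt norm, whereas you use row orthogonality and the operator norm; either choice works since all unitaries have the same norm.
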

\begin{proof}If  $u_{_{ij}}$  exists, we have
\[\sqrt{d_{_{11}}d_{_{21}}}u_{_{11}}^{*}u_{_{21}}+\sqrt{d_{_{12}}d_{_{22}}}u_{_{12}}^{*}u_{_{22}}+\sqrt{d_{_{13}}d_{_{23}}}u_{_{13}}^{*}u_{_{23}}=0\Leftrightarrow{}
\sqrt{\alpha}u_{_{11}}^{*}u_{_{21}}+\sqrt{\beta}u_{_{12}}^{*}u_{_{22}}+\sqrt{\gamma}u_{_{13}}^{*}u_{_{23}}=0\]
Let  $\|\cdot{}\|_{_{HS}}$ denote the Hilbert-Schmidt norm on  $M_{_{n}}({\Bbb C}),$ then $\|u\|_{_{HS}}=n$ for any unitary 

\noindent{}$u\in{}M_{_{n}}({\Bbb C}),$ and we have
\[\begin{array}{cl}
& \left\|\sqrt{\alpha}u_{_{11}}^{*}u_{_{21}}+\sqrt{\beta}u_{_{12}}^{*}u_{_{22}}+\sqrt{\gamma}u_{_{13}}^{*}u_{_{23}}\right\|_{_{HS}}=0\\
\Updownarrow &\\
& \left\|\sqrt{\alpha}u_{_{11}}^{*}u_{_{21}}\right\|_{_{HS}},\;\left\|\sqrt{\beta}u_{_{12}}^{*}u_{_{22}}\right\|_{_{HS}},\;\left\|\sqrt{\gamma}u_{_{13}}^{*}u_{_{23}}\right\|_{_{HS}} \mbox{ satisfy the triangle inequality}\\ \Updownarrow &\\
& \sqrt{\alpha},\;\;\sqrt{\beta} \mbox{ and } \sqrt{\gamma}\mbox{ satisfy the triangle inequality }
\end{array}\]
\end{proof}
Lemmas \ref{33existence} and \ref{fire2lem29} now tell us, that we need only look  at  $n = 1$  in the diagram (\ref{fire2eqn21}), when $\Gamma$ is a 3-star.

\noindent{}Now  let $\Gamma$ = S$(k_{_{1}},k_{_{2}},k_{_{3}}),$ the  critical part of the  bi-unitarity  condition  of (\ref{fire2eqn21})  is  the existence  of a  $3\times{}3$ unitary $u$ such that
\[|u_{_{ij}}|^{2}=\left\{\begin{array}{ll}
\delta_{_{i}}^{2},&i=j\\
\alpha_{_{i}}\alpha_{_{j}},&i\neq{}j\end{array}\right.\]
where $\delta_{_{i}}=\frac{1}{R_{_{k_{i}}}(\lambda)}$ and $\alpha_{_{i}}=\frac{R_{_{k_{i}-1}}(\lambda)}{R_{_{k_{i}}}(\lambda)}.$ Note that
\begin{equation}
\label{deltadef}\delta_{_{i}}=\sqrt{\alpha_{_{i}}^{2}-\lambda\alpha_{_{i}}+1},\;\;i=1,2,3,\end{equation}
and that the eigenvalue equation (\ref{stareigenvaleq}) is
\[\alpha_{_{1}}+\alpha_{_{2}}+\alpha_{_{3}}=\lambda.\]
Put   $\alpha  = \delta_{_{1}}^{2}\alpha_{_{1}}\alpha_{_{2}},$   $\beta  = \alpha_{_{1}}\alpha_{_{2}}\delta_{_{2}}^{2}$ and $  \gamma  = \alpha_{_{1}} \alpha_{_{2}}  \alpha_{_{3}}^{2},$ then by lemma \ref{33existence} $u$ exists if and only if
\[\begin{array}{cl}& \alpha^{2} + \beta^{2} + \gamma^{2} - 2 \alpha\beta- 2\alpha \gamma  -2\beta\gamma\leq{}   0\\
\Updownarrow & \\
& (\gamma-\alpha-\beta)^{2}\leq{}4\alpha\beta
\end{array},\]
and since
\[\delta_{_{1}}^{2}=\alpha_{_{1}}^{2}-\lambda\alpha_{_{1}}+1=\alpha_{_{1}}^{2}-(\alpha_{_{1}}+\alpha_{_{2}}+\alpha_{_{3}})\alpha_{_{1}}+1=1-\alpha_{_{1}}\alpha_{_{2}}-\alpha_{_{1}}\alpha_{_{3}},\]
and
\[\delta_{_{2}}^{2}=1-\alpha_{_{1}}\alpha_{_{2}}-\alpha_{_{2}}\alpha_{_{3}},\]
we get
\[\begin{array}{cl}
& (\gamma-\alpha-\beta)^{2}\leq{}4\alpha\beta\\
\Updownarrow&\\
&(\alpha_{_{3}}^{2}+2\alpha_{_{1}}\alpha_{_{2}}+\alpha_{_{1}}\alpha_{_{3}}+\alpha_{_{2}}\alpha_{_{3}}-2)^{2}\leq{}4(1-\alpha_{_{1}}\alpha_{_{2}}-\alpha_{_{1}}\alpha_{_{3}})(1-\alpha_{_{1}}\alpha_{_{2}}-\alpha_{_{2}}\alpha_{_{3}})\\
\Updownarrow&\\
&\alpha_{_{3}}^{4}+\alpha_{_{1}}^{2}\alpha_{_{3}}^{2}+\alpha_{_{2}}^{2}\alpha_{_{3}}^{2}+2\alpha_{_{1}}\alpha_{_{3}}^{3}+2\alpha_{_{2}}\alpha_{_{3}}^{3}-4\alpha_{_{3}}^{2}+2\alpha_{_{1}}\alpha_{_{2}}\alpha_{_{3}}^{2}\leq{}0\\
\Updownarrow&\\
&\alpha_{_{3}}^{2}((\alpha_{_{1}}+\alpha_{_{2}}+\alpha_{_{3}})^{2}-4)\leq{}0\\
\Updownarrow&\\
&\lambda=\alpha_{_{1}}+\alpha_{_{2}}+\alpha_{_{3}}\leq{}2.
\end{array}\]
Hence  a 3-star $\Gamma$  can only define  the inclusions of a symmetric  commuting square of the form (\ref{fire2eqn21}) if  $\| \Gamma \|\leq{}2.$

We  shall  now briefly  discuss the  remaining  type of  graph,  which may produce commuting squares of the form (\ref{fire2eqn21}), with index in the interval $(4,5).$

\noindent{}Consider the graph
\begin{center}\setlength{\unitlength}{1cm}
\begin{picture}(6,2)
\put(1,0.5){\circle*{0.15}}
\put(0.5,1){\circle*{0.15}}
\put(1,1.5){\circle*{0.15}}
\put(1.5,1){\circle*{0.15}}
\put(2.5,1){\circle*{0.15}}
\put(3.5,1){\circle*{0.15}}
\put(4.5,1){\circle*{0.15}}
\put(5.5,1){\circle*{0.15}}
\put(0.9,0.1){$v_{_{a}}$}
\put(0.9,1.7){$v_{_{c}}$}
\put(0.1,0.9){$v_{_{b}}$}
\put(1.4,0.7){$v_{_{k}}$}
\put(2.4,0.7){$v_{_{k-1}}$}
\put(3.4,0.7){$v_{_{k-2}}$}
\put(4.4,0.7){$v_{_{1}}$}
\put(5.4,0.7){$v_{_{0}}$}
\put(3.8,1){$\ldots$}
\put(1,0.5){\line(-1,1){0.5}}
\put(1,0.5){\line(1,1){0.5}}
\put(1,1.5){\line(-1,-1){0.5}}
\put(1,1.5){\line(1,-1){0.5}}
\put(1.5,1){\line(1,0){2}}
\put(4.5,1){\line(1,0){1}}
\end{picture}
\end{center} 
If $\alpha_{_{j}}$ denotes the coordinate of the Perron-Frobenius vector at the vertex $v_{_{j}},$ the vector is given by:
\[\alpha_{_{j}}=2\Rnlamkvot{j}{k+1}{},\;\;j=0,\ldots,k,\;\;\alpha_{_{a}}=1,\;\;\alpha_{_{b}}=\mbox{$\frac{2}{\lambda}$},\;\;\alpha_{_{c}}=1.\]
The Perron-Frobenius eigenvalue, $\lambda,$ is the largest solution to
\[\mbox{$\frac{2}{\lambda}$}+2\Rnlamkvot{k}{k+1}{}=\lambda,\]
which is also the Perron-Frobenius eigenvalue of S$(1,1,k+1,k+1).$

\noindent{}As  we shall  see all these  4-stars give rise  to indices of irreducible subfactors  of the hyperfinite $II_{1}-$factor. We will therefore just mention, that  our computations for kite$(k)$ show that  all the graphs can  form  a commuting square of the form (\ref{fire2eqn21}) for   $n = 2,$  but none can define such a commuting square if $n = 1.$

\newpage{}

\setcounter{equation}{0}
%sidst rettet 19/12

\section{The 4-stars Satisfying the Conditions}
In this section we shall determine which 4-stars satisfy the conditions of Theorems \ref{fire4thm411} and \ref{fire4thm412}, i.e. if $ 1 \leq {} k_{_{1 }}\leq{}  k_{_{2}} \leq{}  k_{_{3}} \leq{}  k_{_{4 }},$ and  $\lambda$   is the Perron--Frobenius eigenvalue of \firestar{k_{_{1}}}{k_{_{2}}}{k_{_{3}}}{k_{_{4}}}, we put  $\deltasub{i}= \frac{1}{\Rnlam{k_{_{i}}}{}},$ and the conditions are
\begin{equation}
\label{firegrafcond1}(1) \;\; \deltasub{1}  -  \deltasub{2}- \deltasub{3}  - \deltasub{3}  \leq{}  0 \mbox{ see (\ref{fire4cond1}) }\end{equation}
\begin{equation}
\label{firegrafcond2}(2) \;\; \deltasub{1} - \deltasub{2}  - \deltasub{3}  + \deltasub{4} \geq{}0  \mbox{ see (\ref{fire4cond2}) }\end{equation}
The computations will use the following lemmas extensively: 
\begin{lemma}
\label{firegraflem0}
$\Rnlamkvot{n-1}{n}{}$ is increasing in $n$ and decreasing in $\lambda,$ when $\lambda\geq{}2.$
\end{lemma}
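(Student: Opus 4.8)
The plan is to work with the quotients $f_{n}(\lambda)=R_{n-1}(\lambda)/R_{n}(\lambda)$ and exploit the defining recursion. First I would record the elementary fact that for $\lambda\geq 2$ one has $R_{n}(\lambda)>0$ for every $n\geq 0$: this follows by induction from $R_{0}=1$, $R_{1}=\lambda$ and the identity $R_{n}-R_{n-1}=(\lambda-1)R_{n-1}-R_{n-2}$, which yields simultaneously $R_{n}>0$ and $R_{n}\geq R_{n-1}$ (using $\lambda-1\geq 1$). Hence $f_{n}(\lambda)$ is well defined and positive. Dividing the defining recursion $R_{n+1}=\lambda R_{n}-R_{n-1}$ by $R_{n}$ gives the continued-fraction form
\[
f_{n+1}(\lambda)=\frac{R_{n}}{\lambda R_{n}-R_{n-1}}=\frac{1}{\lambda-f_{n}(\lambda)},\qquad f_{1}(\lambda)=\frac{1}{\lambda},
\]
which will drive the monotonicity in $\lambda$.

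For the dependence on $\lambda$ I would prove, by induction on $n$, the joint assertion: for $\lambda\geq 2$ we have $0<f_{n}(\lambda)<1$, and $f_{n}$ is (strictly) decreasing on $[2,\infty)$. The base case $f_{1}=1/\lambda$ is immediate. For the step, from $f_{n}(\lambda)<1$ and $\lambda\geq 2$ one gets $\lambda-f_{n}(\lambda)>1>0$, hence $f_{n+1}(\lambda)=1/(\lambda-f_{n}(\lambda))\in(0,1)$; and for $2\leq\lambda_{1}<\lambda_{2}$ the decrease of $f_{n}$ gives $\lambda_{1}-f_{n}(\lambda_{1})<\lambda_{2}-f_{n}(\lambda_{2})$ with both quantities positive, so $f_{n+1}(\lambda_{1})>f_{n+1}(\lambda_{2})$. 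This settles the $\lambda$-monotonicity.

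For the dependence on $n$ I would first establish the polynomial identity $R_{n}(t)^{2}=1+R_{n-1}(t)R_{n+1}(t)$ for all $n\geq 1$ — the identity already flagged in this section. It holds for $n=1$ since $t^{2}=1+(t^{2}-1)$, and the inductive step uses $R_{n+2}=tR_{n+1}-R_{n}$:
\[
R_{n+1}^{2}-R_{n}R_{n+2}=R_{n+1}^{2}-R_{n}(tR_{n+1}-R_{n})=R_{n+1}(R_{n+1}-tR_{n})+R_{n}^{2}=-R_{n+1}R_{n-1}+R_{n}^{2}=1 .
\]
Dividing by $R_{n}R_{n+1}>0$ (valid for $\lambda\geq 2$ by the first paragraph) gives $f_{n+1}(\lambda)-f_{n}(\lambda)=1/(R_{n}(\lambda)R_{n+1}(\lambda))>0$, so $f_{n}$ is strictly increasing in $n$.

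The argument has no serious obstacle; the only point needing a little care is carrying the strict bound $f_{n}<1$ through the induction, since it is exactly what keeps $\lambda-f_{n}$ bounded away from $0$ for $\lambda\geq 2$ — and this is precisely what the coupled induction of the second paragraph provides. As a cross-check one may note the closed form $f_{n}(\lambda)=\sinh(nx)/\sinh((n+1)x)$ for $\lambda=2\cosh x$, $x\geq 0$, from which both monotonicities are visible directly and the endpoint value $f_{n}(2)=n/(n+1)$ is recovered by continuity; but the recursive induction above is the cleaner, fully self-contained route, and it simultaneously delivers the identity $R_{n}^{2}=1+R_{n-1}R_{n+1}$ needed elsewhere in the section.
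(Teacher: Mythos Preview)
Your proof is correct and takes essentially the same approach as the paper: both derive the key identity $R_{n}^{2}=1+R_{n-1}R_{n+1}$ (you by direct induction, the paper by showing $R_{n}^{2}-\lambda R_{n}R_{n-1}+R_{n-1}^{2}$ is constant in $n$) and use it for the $n$-monotonicity, and both handle the $\lambda$-monotonicity by induction through the continued-fraction recursion $f_{n+1}=1/(\lambda-f_{n})$. Your version is slightly more explicit about carrying the bound $f_{n}<1$ and the positivity of $R_{n}$, which the paper uses but does not spell out in the lemma itself.
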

\begin{proof}
Consider
\[\begin{array}{cl}
 & \;\;\;\;\,\Rnlam{n}{}^{^{2}}-\lambda\Rnlam{n}{}\Rnlam{n-1}{}+\Rnlam{n-1}{}^{^{2}}\\[0.2cm] 
& -(\Rnlam{n}{}^{^{2}}-\lambda\Rnlam{n}{}\Rnlam{n+1}{}+\Rnlam{n+1}{}^{^{2}})\\[0.2cm]
=&\Rnlam{n-1}{}^{^{2}}-\lambda\Rnlam{n}{}\Rnlam{n-1}{}-\Rnlam{n+1}{}^{^{2}}+\lambda\Rnlam{n}{}\Rnlam{n+1}{}\\[0.2cm]
=&\Rnlam{n-1}{}(\Rnlam{n-1}{}-\lambda\Rnlam{n}{})+\Rnlam{n+1}{}(\lambda\Rnlam{n}{}-\Rnlam{n+1}{})\\[0.2cm]
=&-\Rnlam{n-1}{}\Rnlam{n+1}{}+\Rnlam{n+1}{}\Rnlam{n-1}{}\\[0.2cm]
=&0.\end{array}\]
Hence  $\Rnlam{n}{}^{^{2}}-\lambda\Rnlam{n}{}\Rnlam{n-1}{}+\Rnlam{n-1}{}^{^{2}}$ is independent of $n.$

\noindent{}For $n=1$ we have 
\[\Rnlam{1}{}^{^{2}}-\lambda\Rnlam{1}{}\Rnlam{0}{}+\Rnlam{0}{}^{^{2}}=\lambda^{^{2}}-\lambda^{^{2}}+1=1,\]
i.e. for all $n$ and for all $\lambda$
\begin{equation}
\label{firegrafeqn2}
\Rnlam{n}{}^{^{2}}-\lambda\Rnlam{n}{}\Rnlam{n-1}{}+\Rnlam{n-1}{}^{^{2}}=1.\end{equation}
We can rewrite (\ref{firegrafeqn2}) as follows
\[\begin{array}{lcl}
1 & = & \Rnlam{n}{}^{^{2}}-\lambda\Rnlam{n}{}\Rnlam{n-1}{}+\Rnlam{n-1}{}^{^{2}}\\[0.2cm]
& = & \Rnlam{n}{}^{^{2}}+\Rnlam{n-1}{}(-\lambda\Rnlam{n}{}+\Rnlam{n-1}{})\\[0.2cm]
&=&\Rnlam{n}{}^{^{2}}-\Rnlam{n-1}{}\Rnlam{n+1}{},
\end{array}\]
and conclude
\[\Rnlam{n}{}^{^{2}}=\Rnlam{n-1}{}\Rnlam{n+1}{}+1.\]
We now have
\[\Rnlamkvot{n-1}{n}{}\left/\Rnlamkvot{n}{n+1}{}\right.=
\frac{\Rnlam{n-1}{}\Rnlam{n+1}{}}{\Rnlam{n}{}^{^{2}}}<1,\]
and hence $\Rnlamkvot{n}{n+1}{}$ in increasing in $n.$ (Recall that $\Rnlam{n}{}>0$ for all $\lambda\geq{}2.$)

From the recursion formula, we have
\[\Rnlamkvot{n+1}{n}{}=\lambda-\Rnlamkvot{n-1}{n}{},\]
so if we can show that $\Rnlamkvot{n-1}{n}{}$ is decreasing in $\lambda,$ we can conclude that also $\Rnlamkvot{n}{n+1}{}$ is decreasing in $\lambda.$ For $n=0$ we have $\Rnlamkvot{0}{1}{}=\frac{1}{\lambda},$ which clearly is decreasing in $\lambda.$
\end{proof}
\begin{cor}
\label{firegrafcor0}For $m\geq{}1,$ $n\geq{}0,$ and $\lambda\geq{}2,$ $\Rnlamkvot{n}{n+m}{}$ has the following properties
\begin{enumerate}
\item{}$\Rnlamkvot{n}{n+m}{}$ is decreasing in $\lambda.$
\item{}$\Rnlamkvot{n}{n+m}{}$ is increasing in $n.$
\item{}$\Rnlamkvot{n}{n+m}{}$ is decreasing in $m.$
\end{enumerate}
\end{cor}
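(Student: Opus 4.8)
The plan is to reduce all three statements to Lemma \ref{firegraflem0} by writing the quotient as a telescoping product of consecutive ratios,
\[
\frac{R_{n}(\lambda)}{R_{n+m}(\lambda)}=\prod_{j=n}^{n+m-1}\frac{R_{j}(\lambda)}{R_{j+1}(\lambda)}.
\]
Since $R_{k}(\lambda)>0$ for every $k$ when $\lambda\geq 2$ (recorded in the proof of Lemma \ref{firegraflem0}), each factor on the right is a positive quantity, and Lemma \ref{firegraflem0}, applied with $j+1$ in the role of $n$, says that each factor $R_{j}(\lambda)/R_{j+1}(\lambda)$ is decreasing in $\lambda$. A finite product of positive decreasing functions is decreasing, which gives property 1 at once.

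For property 2 I would compare the products for consecutive values of $n$. After cancellation,
\[
\frac{R_{n+1}(\lambda)/R_{n+m+1}(\lambda)}{R_{n}(\lambda)/R_{n+m}(\lambda)}=\frac{R_{n+m}(\lambda)/R_{n+m+1}(\lambda)}{R_{n}(\lambda)/R_{n+1}(\lambda)},
\]
and since Lemma \ref{firegraflem0} states that $R_{j}(\lambda)/R_{j+1}(\lambda)$ is increasing in $j$ and $n+m>n$, the numerator exceeds the denominator; hence $R_{n}(\lambda)/R_{n+m}(\lambda)$ is (strictly) increasing in $n$.

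For property 3 I would use $R_{n}/R_{n+m+1}=(R_{n}/R_{n+m})\cdot(R_{n+m}/R_{n+m+1})$ and show the last factor is $<1$, i.e. that $R_{k+1}(\lambda)>R_{k}(\lambda)$ for all $k$ when $\lambda\geq 2$. This follows by an immediate induction from the recursion: $R_{1}=\lambda\geq 2>1=R_{0}$, and if $R_{k}>R_{k-1}>0$ then $R_{k+1}=\lambda R_{k}-R_{k-1}\geq 2R_{k}-R_{k-1}=R_{k}+(R_{k}-R_{k-1})>R_{k}$. Multiplying $R_{n}/R_{n+m}$ by this factor $<1$ decreases it, giving property 3.

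Honestly there is no real obstacle here: the corollary is a routine bookkeeping consequence of Lemma \ref{firegraflem0}. The only points that need a little care are keeping the positivity of the $R_{k}(\lambda)$ in view (so that products and quotients of inequalities are legitimate) and supplying the small auxiliary monotonicity $R_{k+1}>R_{k}$ used for property 3.
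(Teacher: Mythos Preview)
Your proof is correct and follows essentially the same telescoping-product approach as the paper for parts 1 and 2. For part 3 there is a minor difference worth noting: the paper obtains $R_{k}(\lambda)/R_{k+1}(\lambda)\leq 1$ by invoking the explicit formula $R_{n}(\lambda)=\frac{e^{(n+1)x}-e^{-(n+1)x}}{e^{x}-e^{-x}}$ and observing that the increasing sequence $R_{k}/R_{k+1}$ has limit $e^{-x}\leq 1$, whereas you prove $R_{k+1}>R_{k}$ directly by induction on the recursion. Your argument is slightly more self-contained since it avoids the closed-form expression, but both routes are equally short.
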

\begin{proof}

\noindent{}1 and 2.
\[\Rnlamkvot{n}{n+m}{}=\Rnlamkvot{n}{n+1}{}\Rnlamkvot{n+1}{n+2}{}\cdots{}\Rnlamkvot{n+m-1}{n+m}{},\]
and the right-hand side clearly has the stated properties.

\noindent{}3. For $\lambda\geq{}2$ we can write $\lambda = e^{x}+e^{-x}$ for some $x\geq{}0.$ We have
\[\Rnlamkvot{n}{n+1}{}=\eefrac{n}{(n+1)}\;\;\mbox{\raisebox{-1mm}{$\stackrel{\nearrow}{\mbox{\tiny $n\rightarrow\infty$}}$}}\;\;e^{-x}\leq{}1,\]
and since $\Rnlamkvot{n}{n+1}{}\leq\Rnlamkvot{n+1}{n+2}{}$ we have
\[\Rnlamkvot{n}{n+1}{}\leq{}1 \;\;\mbox{ for all }n.\]
We now have
\[\Rnlamkvot{n}{n+m+1}{}\left/\Rnlamkvot{n}{n+m}{}\right.= \Rnlamkvot{n+m}{n+m+1}{}\leq{}1,\]
and hence 
\[\Rnlamkvot{n}{n+m+1}{}\leq{}\Rnlamkvot{n}{n+m}{}.\]
\end{proof}
\begin{remark}{\rm 
\label{firegrafrem1}For $\lam{}\geq{}2,$ $\lam{}=e^{x}+e^{-x},\;\;x\geq{}0$ we have $\Rnlam{n}{}=\eefrac{n}{},$ and hence
\[\lim_{n\rightarrow\infty}\Rnlamkvot{n}{n+m}{}=e^{-mx}.\]
}\end{remark}
    
The following lemma reduces the number of 4--stars for which we have to check condition (\ref{firegrafcond1}).
\begin{lemma}
\label{firegraflem32}
Let  $\lambda$   be the Perron--Frobenius eigenvalue of  \firestar{j}{j+n_{1}}{j+n_{2}}{j+n_{3}}, $j \geq{}1,\;\;0\leq{}n_{_{1}}\leq{}n_{_{2}}\leq{}n_{_{3}},$ and let  $\lambda_{_{1}}$   be the Perron--Frobenius eigenvalue of  \firestar{j}{j+m_{_{1}}}{j+m_{_{2}}}{j+m_{_{3}}}, $j \geq{}1,\;\;0\leq{}m_{_{1}}\leq{}m_{_{2}}\leq{}m_{_{3}}.$ If $m_{_{1}}\leq{}n_{_{1}},$ $m_{_{2}}\leq{}n_{_{2}}$ and $m_{_{3}}\leq{}n_{_{3}}$ then
\[R_{_{j}}(\lambda)\left(\frac{1}{R_{j+n_{_{1}}}(\lambda)}+\frac{1}{R_{j+n_{_{2}}}(\lambda)}+\frac{1}{R_{j+n_{_{3}}}(\lambda)}\right)\leq{}
R_{_{j}}(\lambda_{_{1}})\left(\frac{1}{R_{j+m_{_{1}}}(\lambda_{_{1}})}+\frac{1}{R_{j+m_{_{2}}}(\lambda_{_{1}})}+\frac{1}{R_{j+m_{_{3}}}(\lambda_{_{1}})}\right).\]
\end{lemma}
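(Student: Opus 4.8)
The plan is to write both sides as values of one two‑parameter family and then push everything through the monotonicity already recorded in Lemma \ref{firegraflem0} and Corollary \ref{firegrafcor0}. For the fixed $j\geq 1$ and an integer triple $0\leq p_1\leq p_2\leq p_3$ set
\[Q(t;p_1,p_2,p_3)=\frac{R_j(t)}{R_{j+p_1}(t)}+\frac{R_j(t)}{R_{j+p_2}(t)}+\frac{R_j(t)}{R_{j+p_3}(t)},\]
so that the left‑hand side of the asserted inequality is $Q(\lambda;n_1,n_2,n_3)$ and the right‑hand side is $Q(\lambda_1;m_1,m_2,m_3)$. Before anything else I would note that both eigenvalues lie in $[2,\infty)$: every $4$‑star contains \firestar{1}{1}{1}{1} as a connected subgraph, and \firestar{1}{1}{1}{1} has norm $2$ (the $4$‑star with all rays of length $1$ has eigenvalue $\sqrt 4$), so by monotonicity of the spectral radius under subgraphs $\lambda\geq 2$ and $\lambda_1\geq 2$; hence Lemma \ref{firegraflem0} and Corollary \ref{firegrafcor0} are available at $t=\lambda$ and $t=\lambda_1$.

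The argument then consists of three comparisons. (1) \emph{$\lambda_1\leq\lambda$.} Since $m_i\leq n_i$ for $i=1,2,3$ and the two stars share the first ray length $j$, the graph \firestar{j}{j+m_1}{j+m_2}{j+m_3} is obtained from \firestar{j}{j+n_1}{j+n_2}{j+n_3} by deleting the last $n_i-m_i$ vertices of the $i$‑th ray, hence is a subgraph, so $\lambda_1\leq\lambda$. (Alternatively, one may argue straight from the eigenvalue equation (\ref{stareigenvaleq}): with $g_n(t)=t-\frac{R_{j-1}(t)}{R_j(t)}-\sum_{i=1}^{3}\frac{R_{j+n_i-1}(t)}{R_{j+n_i}(t)}$, Lemma \ref{firegraflem0} gives that $g_n$ is strictly increasing on $[2,\infty)$ and that $g_m\geq g_n$ there; since $\lambda,\lambda_1$ are the unique zeros of $g_n,g_m$ in $[2,\infty)$, the inequality $g_m(\lambda)\geq g_n(\lambda)=0=g_m(\lambda_1)$ forces $\lambda\geq\lambda_1$.) (2) \emph{Termwise comparison at a fixed $t\geq 2$.} By Corollary \ref{firegrafcor0}.3, $R_j(t)/R_{j+p}(t)$ is decreasing in $p$, so $m_i\leq n_i$ yields $Q(t;n_1,n_2,n_3)\leq Q(t;m_1,m_2,m_3)$ for every $t\geq 2$. (3) \emph{Monotonicity in $t$.} By Corollary \ref{firegrafcor0}.1 each summand of $Q(\cdot;m_1,m_2,m_3)$ is decreasing in $t$ on $[2,\infty)$, so $\lambda_1\leq\lambda$ gives $Q(\lambda;m_1,m_2,m_3)\leq Q(\lambda_1;m_1,m_2,m_3)$.

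Chaining (2) at $t=\lambda$ with (3) gives $Q(\lambda;n_1,n_2,n_3)\leq Q(\lambda;m_1,m_2,m_3)\leq Q(\lambda_1;m_1,m_2,m_3)$, which is precisely the claim. There is no serious obstacle here: everything after step (1) is bookkeeping with the monotonicity statements of Lemma \ref{firegraflem0} and Corollary \ref{firegrafcor0}, and step (1) itself is soft — the subgraph argument is the quickest, with the eigenvalue‑equation version available if one prefers to stay inside the paper's toolkit and not invoke spectral‑radius monotonicity directly.
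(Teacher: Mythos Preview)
Your proof is correct and follows essentially the same route as the paper: reduce to the termwise inequalities $R_j(\lambda)/R_{j+n_i}(\lambda)\leq R_j(\lambda)/R_{j+m_i}(\lambda)\leq R_j(\lambda_1)/R_{j+m_i}(\lambda_1)$ via parts 3 and 1 of Corollary~\ref{firegrafcor0}, using $\lambda_1\leq\lambda$. You are simply more explicit than the paper in justifying $\lambda_1\leq\lambda$ (which the paper asserts without argument) and in checking that both eigenvalues are at least $2$ so that the corollary applies.
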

\begin{proof}
Since $\lambda_{_{1}}\leq{}\lambda$ corollary \ref{firegrafcor0} 1. and 3. gives
\[\Rnlamkvot{j}{j+n_{i}}{}\leq{}\Rnlamkvot{j}{j+m_{i}}{}\leq{}\Rnlamkvot{j}{j+m_{i}}{1},\;\mbox{ for }i=1,2,3.\]
This proves the statement.
\end{proof}
The following corollary is just a restatement of lemma \ref{firegraflem32}.
\begin{cor}
\label{firegrafcor33}\hfill{}
\begin{description}
\item[(a)] If  \firestar{j}{j+n_{_{1}}}{j+n_{_{2}}}{j+n_{_{3}}}  satisfies condition (\ref{firegrafcond1}), then  \firestar{j}{j+m_{_{1}}}{j+m_{_{2}}}{j+m_{_{3}}}  also satisfies condition (\ref{firegrafcond1}), whenever  $m_{_{1}} \leq{}  n_{_{1}},$ $m_{_{2}} \leq{}  n_{_{2}}$ and  $m_{_{3}} \leq{}  n_{_{3}} .$
\item[(b)]If   \firestar{j}{j+n_{_{1}}}{j+n_{_{2}}}{j+n_{_{3}}} does not satisfy condition (\ref{firegrafcond1})), then  \firestar{j}{j+m_{_{1}}}{j+m_{_{2}}}{j+m_{_{3}}} does not satisfy condition (\ref{firegrafcond1}), whenever  $m_{_{1}} \geq{}  n_{_{1}},$ $m_{_{2}} \geq{}  n_{_{2}}$ and  $m_{_{3}} \geq{}  n_{_{3}} .$
\end{description}
\end{cor}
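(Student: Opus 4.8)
The plan is to translate condition (\ref{firegrafcond1}) into the inequality that already appears in Lemma \ref{firegraflem32} and then read parts (a) and (b) directly off that lemma; this is exactly why the statement is advertised as a restatement, and I do not expect a genuine obstacle beyond keeping the indices straight.

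First I would reformulate condition (\ref{firegrafcond1}). Let $\lambda$ be the Perron--Frobenius eigenvalue of $S(j,j+n_{1},j+n_{2},j+n_{3})$ with $0\le n_{1}\le n_{2}\le n_{3}$, so that $\deltasub{1}=1/R_{j}(\lambda)$, $\deltasub{2}=1/R_{j+n_{1}}(\lambda)$, $\deltasub{3}=1/R_{j+n_{2}}(\lambda)$ and $\deltasub{4}=1/R_{j+n_{3}}(\lambda)$. Since $\lambda\ge 2$ we have $R_{n}(\lambda)>0$ for every $n$ (recorded in the proof of Lemma \ref{firegraflem0}), so multiplying the inequality $\deltasub{1}\le\deltasub{2}+\deltasub{3}+\deltasub{4}$ through by $R_{j}(\lambda)$ shows that condition (\ref{firegrafcond1}) is equivalent to
\[
R_{j}(\lambda)\left(\frac{1}{R_{j+n_{1}}(\lambda)}+\frac{1}{R_{j+n_{2}}(\lambda)}+\frac{1}{R_{j+n_{3}}(\lambda)}\right)\ge 1,
\]
and similarly condition (\ref{firegrafcond1}) for $S(j,j+m_{1},j+m_{2},j+m_{3})$ is equivalent to the analogous inequality with each $n_{i}$ replaced by $m_{i}$ and $\lambda$ by the Perron--Frobenius eigenvalue $\lambda_{1}$ of that star. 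The one place warranting care is precisely this bookkeeping: matching the subscripts $k_{1},\dots,k_{4}$ of the $4$-star to the shifted indices $j,j+n_{1},j+n_{2},j+n_{3}$ and confirming that the monotonicity hypotheses of Lemma \ref{firegraflem32} are exactly those imposed in (a) and (b).

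For part (a) I would assume $m_{i}\le n_{i}$ for $i=1,2,3$ and that $S(j,j+n_{1},j+n_{2},j+n_{3})$ satisfies (\ref{firegrafcond1}), so the quantity displayed above is $\ge 1$; Lemma \ref{firegraflem32} then gives
\[
R_{j}(\lambda)\left(\frac{1}{R_{j+n_{1}}(\lambda)}+\frac{1}{R_{j+n_{2}}(\lambda)}+\frac{1}{R_{j+n_{3}}(\lambda)}\right)\le R_{j}(\lambda_{1})\left(\frac{1}{R_{j+m_{1}}(\lambda_{1})}+\frac{1}{R_{j+m_{2}}(\lambda_{1})}+\frac{1}{R_{j+m_{3}}(\lambda_{1})}\right),
\]
so the right-hand side is also $\ge 1$, which by the reformulation is precisely the assertion that $S(j,j+m_{1},j+m_{2},j+m_{3})$ satisfies (\ref{firegrafcond1}). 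Part (b) I would obtain as the contrapositive of (a): if $m_{i}\ge n_{i}$ for all $i$ and $S(j,j+m_{1},j+m_{2},j+m_{3})$ did satisfy (\ref{firegrafcond1}), then part (a) applied with the roles of the two index-tuples interchanged --- legitimate because now $n_{i}\le m_{i}$ --- would force $S(j,j+n_{1},j+n_{2},j+n_{3})$ to satisfy (\ref{firegrafcond1}) as well, contradicting the hypothesis of (b). No analytic content remains once Lemma \ref{firegraflem32} is invoked.
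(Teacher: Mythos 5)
Your proposal is correct and is essentially the paper's own argument: the paper records the equivalent reformulation of condition (\ref{firegrafcond1}) as the inequality (\ref{firegrafcond1prime}) and then states that the corollary "is just a restatement of lemma \ref{firegraflem32}," which is exactly your reduction, with (b) obtained as the contrapositive of (a). You merely fill in the routine details (positivity of $R_{n}(\lambda)$ when multiplying through, and the index bookkeeping), so there is nothing further to add.
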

Recall that the Perron--Frobenius eigenvalue of \firestar{i}{j}{k}{l} satisfies the equation
\[\Rnlamkvot{i-1}{i}{}+\Rnlamkvot{j-1}{j}{}+\Rnlamkvot{k-1}{k}{}+\Rnlamkvot{l-1}{l}{}=\lambda\]
and that the polynomials $R_{_{n}}(\lambda)$ are defined recursively by 
\[R_{_{0}}(\lambda)=1,\;\;R_{_{1}}(\lambda)=\lambda,\;\;R_{_{n+1}}(\lambda)=\lambda{}R_{_{n}}(\lambda)-R_{_{n-1}}(\lambda)\]
\begin{lemma}
\label{firegraflem34}\hfill{}
\begin{enumerate}
\item{}If  $\lambda$  is the Perron--Frobenius eigenvalue of  \firestar{j}{j+1}{j+1}{j+1}, then  $\Rnlamkvot{j}{j+1}{}=\frac{1}{\sqrt{3}}.$
\item{}If  $\lambda$   is the Perron--Frobenius eigenvalue of  \firestar{j}{j+2}{j+2}{j+2}, then $\Rnlamkvot{j}{j+2}{}=\frac{1}{3}.$         
\end{enumerate}
\end{lemma}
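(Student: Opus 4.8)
The plan is to read off both identities directly from the eigenvalue equation (\ref{stareigenvaleq}) for the relevant $4$-star, using nothing but the recursion of Definition \ref{firestar2def21} and the positivity $R_n(\lambda)>0$ for $\lambda\ge 2$. The one fact I would isolate first is that the recursion $R_{n+1}(\lambda)=\lambda R_n(\lambda)-R_{n-1}(\lambda)$, divided through by $R_n(\lambda)$ (positive since $\lambda\ge 2$ for any $4$-star), can be rewritten as
\[
\lambda-\frac{R_{n-1}(\lambda)}{R_n(\lambda)}=\frac{R_{n+1}(\lambda)}{R_n(\lambda)},\qquad n\ge 1 .
\]
This is precisely the expression one meets after moving the contribution of the short ray (the ray of length $j$) to the right-hand side of (\ref{stareigenvaleq}).

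For part 1, specialising (\ref{stareigenvaleq}) to $\firestar{j}{j+1}{j+1}{j+1}$ gives
\[
\frac{R_{j-1}(\lambda)}{R_j(\lambda)}+3\,\frac{R_j(\lambda)}{R_{j+1}(\lambda)}=\lambda .
\]
Subtracting $R_{j-1}(\lambda)/R_j(\lambda)$ from both sides and applying the identity above with $n=j$ turns this into $3R_j(\lambda)/R_{j+1}(\lambda)=R_{j+1}(\lambda)/R_j(\lambda)$, i.e.\ $R_{j+1}(\lambda)^2=3R_j(\lambda)^2$; taking the positive square root (legitimate because $R_j(\lambda)$ and $R_{j+1}(\lambda)$ are both positive at $\lambda$) gives $R_j(\lambda)/R_{j+1}(\lambda)=1/\sqrt 3$. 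For part 2, specialising (\ref{stareigenvaleq}) to $\firestar{j}{j+2}{j+2}{j+2}$ gives
\[
\frac{R_{j-1}(\lambda)}{R_j(\lambda)}+3\,\frac{R_{j+1}(\lambda)}{R_{j+2}(\lambda)}=\lambda ,
\]
and the same manoeuvre (subtract $R_{j-1}(\lambda)/R_j(\lambda)$, then use the identity with $n=j$) yields $3R_{j+1}(\lambda)/R_{j+2}(\lambda)=R_{j+1}(\lambda)/R_j(\lambda)$; cancelling the common factor $R_{j+1}(\lambda)>0$ gives $R_{j+2}(\lambda)=3R_j(\lambda)$, which is the assertion.

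I do not expect any real obstacle: in each case the argument is a two-line manipulation of (\ref{stareigenvaleq}). The only points worth a sentence of justification are that $\lambda\ge 2$ for a $4$-star (its Perron--Frobenius eigenvalue dominates that of $\firestar{1}{1}{1}{1}$, which is $2$), hence $R_n(\lambda)>0$ for every $n$ by the $\sinh$-formula in Definition \ref{firestar2def21}; this positivity is exactly what makes the divisions and the choice of square root in part 1 legitimate, and one checks the small case $j=1$ causes no trouble since $R_0(\lambda)=1$.
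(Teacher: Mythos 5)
Your argument is correct and is essentially the paper's own proof: both rewrite the recursion as $\lambda-\frac{R_{j-1}(\lambda)}{R_{j}(\lambda)}=\frac{R_{j+1}(\lambda)}{R_{j}(\lambda)}$ and substitute it into the eigenvalue equation (\ref{stareigenvaleq}) for the respective 4-star, obtaining $3\bigl(R_{j}(\lambda)/R_{j+1}(\lambda)\bigr)^{2}=1$ in case 1 and $3R_{j}(\lambda)/R_{j+2}(\lambda)=1$ in case 2. Your extra remarks on $\lambda\geq 2$ and the positivity of $R_{n}(\lambda)$ only make explicit what the paper leaves implicit.
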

\begin{proof}The recursion formula  can be rewritten as
\[\lambda  - \Rnlamkvot{j-1}{j}{}=\Rnlamkvot{j+1}{j}{}.\]
Using this we have

\noindent{}1. The eigenvalue equation is
\[\Rnlamkvot{j-1}{j}{}+3\Rnlamkvot{j}{j+1}{}=\lambda\Leftrightarrow{}
3\left(\Rnlamkvot{j}{j+1}{}\right)^{2}=1\Leftrightarrow{}\Rnlamkvot{j}{j+1}{}=\frac{1}{\sqrt{3}}.\]

\noindent{}2. The eigenvalue equation is
\[\Rnlamkvot{j-1}{j}{}+3\Rnlamkvot{j+1}{j+2}{}=\lambda\Leftrightarrow{}3\Rnlamkvot{j}{j+2}{}=1.\]
\end{proof}
Let  $\lambda_{_{\infty}}$    be the Perron--Frobenius eigenvalue of  \firestar{\mbox{``$\infty$''}}{\mbox{ ``$\infty$''}}{\mbox{ ``$\infty$''}}{\mbox{ ``$\infty$''}}, $\lambda_{_{\infty}}= \frac{4}{\sqrt{3}}.$ We shall first consider condition (\ref{firegrafcond1}) which, for  $\lambda$  = Perron--Frobenius eigenvalue of  \firestar{k_1}{k_2}{k_3}{k_4}, $k_1 \leq{}  k_2 \leq{}  k_3 \leq{}  k_4$  is equivalent to
\begin{equation}
\label{firegrafcond1prime}
R_{_{k_{1}}}(\lambda)\left(\frac{1}{R_{_{k_{2}}}(\lambda)}+\frac{1}{R_{_{k_{3}}}(\lambda)}+\frac{1}{R_{_{k_{4}}}(\lambda)}\right)\geq{}1.
\end{equation}
We divide the discussion into several steps:
\begin{description}
\item[(A)]\firestar{j}{j}{k}{l},  $j \leq{}  k \leq{}  l$  trivially satisfy condition (\ref{firegrafcond1prime}).
\item[
(B)]Consider  \firestar{j}{j+1}{j+1}{j+m}, $m \geq{}  1 $, with  Perron--Frobenius eigenvalue  $\lambda.$  By corollary \ref{firegrafcor0} 1. and 2. we obtain
\[2\Rnlamkvot{j}{j+1}{}+\Rnlamkvot{j}{j+m}{} \geq{}2\Rnlamkvot{1}{2}{}\geq{}2\Rnlamkvot{1}{2}{\infty}=\frac{8\sqrt{3}}{13}>1.\]
\item[(C)]Consider  \firestar{j}{j+1}{j+2}{j+4}  with Perron--Frobenius eigenvalue  $\lambda.$ 

\noindent{}For  $j \geq{} 3$  we have
\[\mbox{RHS}(\ref{firegrafcond1prime})\geq{} R_{_{3}}(\lambda_{_{\infty}})\left(\frac{1}{R_{_{4}}(\lambda_{_{\infty}})}+\frac{1}{R_{_{5}}(\lambda_{_{\infty}})}+\frac{1}{R_{_{7}}(\lambda_{_{\infty}})}\right)\approx{}1.01>1.\]

\noindent{}For  $j = 2$  we have $\lambda\approx{}2.2862<\sqrt{5.25}=\lambda_{_{0}},$ and hence
\[\mbox{RHS}(\ref{firegrafcond1prime})\geq{} R_{_{2}}(\lambda_{_{0}})\left(\frac{1}{R_{_{3}}(\lambda_{_{0}})}+\frac{1}{R_{_{4}}(\lambda_{_{0}})}+\frac{1}{R_{_{6}}(\lambda_{_{0}})}\right)\approx{}1.02>1.\]

\noindent{}For  $j = 1$  we have $\lambda\approx{}2.2291<\sqrt{5}=\lambda_{_{0}},$ and hence
\[\mbox{RHS}(\ref{firegrafcond1prime})\geq{} R_{_{1}}(\lambda_{_{0}})\left(\frac{1}{R_{_{2}}(\lambda_{_{0}})}+\frac{1}{R_{_{3}}(\lambda_{_{0}})}+\frac{1}{R_{_{5}}(\lambda_{_{0}})}\right)=\frac{6\sqrt{5}+11}{24}>1.\]
I.e. \firestar{j}{j+1}{j+2}{j+4} satisfies condition (\ref{firegrafcond1}) for all $j,$ and hence so does  \firestar{j}{j+1}{j+2}{j+2}  and  \firestar{j}{j+1}{j+2}{j+3} by corollary \ref{firegrafcor33}.
 \item[(D)]By lemma \ref{firegraflem34} 2. it is easily seen that  \firestar{j}{j+2}{j+2}{j+2} satisfies condition (\ref{firegrafcond1}) for all  $j.$
\end{description}                                                         To show that the 4-stars listed in (A)-(D) are the only ones that satisfy  condition (\ref{firegrafcond1}), corollary \ref{firegrafcor33} tells us that we only need to prove that
\begin{description}
\item[(E)]\firestar{j}{j+2}{j+2}{j+3},  $j \geq{}  1,$
\item[(F)]\firestar{j}{j+1}{j+3}{j+3},  $j \geq{}  1,$
\item[(G)]\firestar{j}{j+1}{j+2}{j+5)},  $j \geq{}  1$
\end{description}
do not satisfy condition (1).

\noindent{}(E)  Let  $\lambda$   be the Perron--Frobenius eigenvalue of  \firestar{j}{j+2}{j+2}{j+3}, and let  $\lambda_{_{1}}$  be the Perron--Frobenius eigenvalue of \firestar{j}{j+2}{j+2}{j+2}. Since $\lambda_{_{1}} < \lambda$ we have
\[\mbox{RHS}(\ref{firegrafcond1prime}) <2 \Rnlamkvot{j}{j+2}{1} + \Rnlamkvot{j}{j+3}{}<\mbox{$\frac{2}{3}$} + e^{-3x}.\]
The solution to $\frac{2}{3} + e^{-3x} < 1$  is  $x >  \frac{1}{3}\log{}3,$  corresponding to  $ \lambda>\lambda_{_{2}} = 2\mbox{cosh}(x) = 3^{\frac{1}{3}}    + 3^{-\frac{1}{3}} \approx{} 2.1356.$ Since the Perron--Frobenius eigenvalue of  $\firestar{1}{3}{3}{4} \approx{}2.2411,$ we obtain the statement of (E).

\noindent{}(F)  Let  $\lambda$   be the Perron--Frobenius eigenvalue of  \firestar{j}{j+1}{j+3}{j+3}  and  $\lambda_{_{1}}$   be the Perron--Frobenius eigenvalue of  \firestar{j}{j+1}{j+1}{j+1}, then
\[\mbox{RHS}(\ref{firegrafcond1prime}) < \Rnlamkvot{j}{j+1}{1}\left(1+2\Rnlamkvot{j+1}{j+3}{}\right)<\mbox{$\frac{1}{\sqrt{3}}$}(1+2e^{-2x}).\]
The solution to  $\frac{1}{\sqrt{3}}(1+2e^{-2x}) < 1$  corresponds to  $\lambda>\lambda_{_{2}} \approx{} 2.2579.$ For  $j = 2$  we have  $\lambda \approx{} 2.2870 > \lambda_{_{2}},$ so (F) is proved for  $j \geq  2.$
           
\noindent{}In the case  j = 1  we have  $\lambda  \approx{} 2.2323 > 2.22 =\lambda_{_{3}},$ hence
\[\mbox{RHS}(\ref{firegrafcond1prime}) < R_{_{1}}(\lambda_{_{3}})\left(\frac{1}{R_{_{2}}(\lambda_{_{3}})}+\frac{2}{R_{_{4}}(\lambda_{_{3}})}\right)\approx{}0.9878<1.\]

\noindent{}(G)  Let  $\lambda$   be the Perron--Frobenius eigenvalue of  \firestar{j}{j+1}{j+2}{j+5}  and $\lambda_{_{1}}$  be the Perron--Frobenius eigenvalue of  \firestar{j}{j+1}{j+1}{j+1}. Then   $\mbox{RHS}(\ref{firegrafcond1prime}) <\frac{1}{\sqrt{3}}(e^{-x}+e^{-4x}),$  and the solution of $\frac{1}{\sqrt{3}}(e^{-x}+e^{-4x})< 1 $  corresponds to  $\lambda>\lambda_{_{2}} \approx{} 2.0035.$ If $j = 1$  we have   $\lambda  \approx{} 2.2298 > \lambda_{_{2}},$ which proves (G).
       
 \noindent{}By corollary  \ref{firegrafcor33} neither of the 4-stars listed below satisfy condition (\ref{firegrafcond1})\\[0.2cm]

\begin{tabular}{clll}
(1)& \firestar{j}{j+2+n_{_{1}}}{j+2+n_{_{2}}}{j+3+n_{_{3}}},   &  $j  \geq{}  1,$  $0 \leq{}  n_{_{1}} \leq{}  n_{_{2}} \leq{} n_{_{3}}$ &  (implied by (E))\\[0.2cm]
(2)& \firestar{j}{j+1+n_{_{1}}}{j+3+n_{_{2}}}{j+3+n_{_{3}}},   &  $j  \geq{}  1,$ $0\leq{} n_{_{1}}  \leq{}  n_{_{2}} \leq{}  n_{_{3}}$ & (implied by (F))\\[0.2cm]
(3) & \firestar{j}{j+1+n_{_{1}}}{j+2+n_{_{2}}}{j+5+n_{_{3}}},   &  $j  \geq{}  1,$ $0\leq{}n_{_{1}} \leq n_{_{2}} \leq{}  n_{_{3}}$  &(implied  by (G))
\end{tabular}\\[0.2cm]
                        
\noindent{}Since we will only be concerned with condition (\ref{firegrafcond2}) when condition (\ref{firegrafcond1}) is satisfied, we will now determine which of the 4-stars listed in (A)-(D)  satisfy condition (\ref{firegrafcond2}).

\noindent{}If  $\lambda$  is the Perron--Frobenius eigenvalue of  \firestar{j}{j+n_{_{1}}}{j+n_{_{2}}}{j+n_{_{3}}},  $0 \leq{}  n_{_{1}}\leq{}  n_{_{2}}\leq{}  n_{_{3}},$ condition (\ref{firegrafcond2}) is
\begin{equation}
\label{firegrafcond2prime}
R_{_{j}}(\lambda)\left(\frac{1}{R_{_{j+n_{_{1}}}}(\lambda)}+\frac{1}{R_{_{j+n_{_{2}}}}(\lambda)}-\frac{1}{R_{_{j+n_{_{3}}}}(\lambda)}\right)\leq{}1.\end{equation}

\noindent{}(A') \firestar{j}{j}{k}{l}  $j \leq{}  k \leq{}  l$  is easily seen to satisfy (\ref{firegrafcond2prime}) if and only if k = l .

\noindent{}(B') Put  $n_{_{1}} = 1,$ $n_{_{2}} = 1$  and  $n_{_{3}} = m .$ 

\noindent{}If  $m = 2$  or  $m = 3,$ we have
\[\mbox{RHS}(\ref{firegrafcond2prime})<\mbox{$\frac{1}{\sqrt{3}}$}\left(2-\Rnlamkvot{2}{4}{\infty}\right)\approx{}0.9686 < 1 .\]
                  
\noindent{}Let  $m \geq  4.$  Then
\[\mbox{RHS}(\ref{firegrafcond2prime})\geq{}\Rnlamkvot{2}{3}{\infty}(2-e^{-3x})\] 
and the solution to $\Rnlamkvot{2}{3}{\infty}(2-e^{-3x})>1$ corresponds to  $\lambda>\lambda_{_{2}} \approx{} 2.2546.$ As the Perron--Frobenius eigenvalue of  \firestar{2}{3}{3}{6}  is $2.2823 $(approx.), we have shown that  \firestar{j}{j+1}{j+1}{j+m}, $j \geq  2,\; m \geq{} 4,$  does not satisfy condition (\ref{firegrafcond2}).

\noindent{}A similar argument shows that  \firestar{j}{j+1}{j+1}{j+m}, $j \geq{} 1,\; m \geq{}  6,$  does not satisfy condition (\ref{firegrafcond2}), hence we now only need to consider  \firestar{1}{2}{2}{5}. We will show that (\ref{firegrafcond2prime}) is satisfied with  equality for \firestar{1}{2}{2}{5}. 

\noindent{}We have  $R_{_{5}}(\lambda ) = R_{_{1}}(\lambda )R_{_{2}}(\lambda )(\lambda^{2}-3),$ and easy computations show
\[ 2\Rnlamkvot{1}{2}{}-\Rnlamkvot{1}{5}{}=1\Leftrightarrow{}R_{_{1}}(\lambda )R_{_{2}}(\lambda )(\lambda -2)(\lambda^{3} -4\lambda -2) = 0 ,\] 
and since  $R_{_{1}}(\lambda )R_{_{2}}(\lambda )(\lambda -2) \neq{}  0,$ we must show  $\lambda^{3} - 4\lambda  - 2 = 0.$

\noindent{}The eigenvalue equation is
\[\Rnlamkvot{0}{1}{} + 2\Rnlamkvot{1}{2}{}+\Rnlamkvot{4}{5}{}= \lambda \Leftrightarrow{}    \lambda (\lambda^{2} -1)(\lambda^{3}-4\lambda +2)(\lambda^{3}-4\lambda -2) = 0 .\] 
Since  $\lambda$   is the largest root of the above equation, we must have $\lambda^{3} - 4\lambda  - 2 = 0.$

\noindent{}(C') Put  $n_{_{1}} = 1,\; n_{_{2}} = 2,\; n_{_{3}} = m,$ then
\[\mbox{RHS}(\ref{firegrafcond2prime})<\Rnlamkvot{j}{j+1}{}\left(1+\Rnlamkvot{j+1}{j+2}{}\right)<\mbox{$\frac{1}{\sqrt{3}}$}(1+e^{-x}).\]

\noindent{}The solution of $  \frac{1}{\sqrt{3}}(1+e^{-x})\leq{}  1$  corresponds to $\lambda> \lambda_{_{2}} \approx{} 2.0981,$ and since the Perron--Frobenius eigenvalue of  \firestar{1}{2}{3}{3}  is 2.2216 (approx.),  \firestar{j}{j+1}{j+1}{j+m}  satisfies condition (\ref{firegrafcond2}) for $ j \geq{} 1$  and  $m \geq{}  2.$

\noindent{}(D') Using lemma \ref{firegraflem34} 2,  \firestar{j}{j+2}{j+2}{j+2} is easily seen to satisfy condition (\ref{firegrafcond2}).

\noindent{}Hence the following is a total list of 4-stars which satisfy\\[0.2cm]

\begin{tabular}{ll} 
Condition (\ref{firegrafcond1})  &  Condition (\ref{firegrafcond1}) \& Condition (\ref{firegrafcond2})\\[0.2cm]
 \firestar{j}{j}{k}{l}, $1\leq{} j\leq{} k\leq{} l ,$&\firestar{j}{j}{k}{k}, $1\leq{} j\leq{} k,$\\[0.2cm]
\firestar{j}{j+1}{j+1}{j+m}, $j\geq{} 1,\; m\geq{} 1,$ & \firestar{j}{j+1}{j+1}{j+m}, $j\geq{} 1,\; 1\leq{} m\leq{} 3,$\\[0.2cm]
\firestar{j}{j+1}{j+2}{j+m}, $j\geq{} 1, \;2\leq{} m\leq{} 4,$&\firestar{1}{2}{2}{5},\\[0.2cm]
 \firestar{j}{j+2}{j+2}{j+2}, $j\geq{} 1,$&\firestar{j}{j+1}{j+2}{j+m}, $j\geq{} 1,\; 2\leq{} m\leq{} 4,$\\[0.2cm]
 &   \firestar{j}{j+2}{j+2}{j+2}, $j\geq{}  1.$
\end{tabular}\\[0.3cm]

 We end this section with a list of indices of irreducible subfactors of the hyperfinite $II_{1}$-factor in the interval $(4,5)$ which are produced by our construction.
 Most  of the values are  obtained by numerical  methods, since it is not in general possible to solve the equation (\ref{stareigenvaleq}) for a 4--star analytically.

\begin{center}
\begin{tabular}{|l|l|l|l|l|l|}\hline
\multicolumn{6}{|c|}{S$(1,1,k,l)$,  $1 \leq{}  k \leq{}  l$}\\ \hline{}
l\ \ \ \vline{}\ k & \mbox{\hspace{0.5cm}} 1      & \mbox{\hspace{0.5cm}} 2      &\mbox{\hspace{0.5cm}}  3     & \mbox{\hspace{0.5cm}} 4      & \mbox{\hspace{0.5cm}} 5\\\hline
                
         1  &   4.00000*&             &            &            & \\
         2  &  4.30278   &  4.56155*&             &              &\\
         3  &  4.41421  &   4.65109* &   4.73205*&              &\\
         4  &   4.46050  &   4.68554 &    4.76251 &    4.79129*&\\
         5  &   4.48119  &   4.69963 &    4.77462 &    4.80262  &  4.81361*\\
         6  &   4.49086  &   4.70559 &    4.77959  &   4.80721 &   4.81804\\
         7  &   4.49551  &   4.70816 &    4.78165 &    4.80910 &   4.81986\\
         8  &   4.49778  &   4.70928 &    4.78252 &    4.80988 &   4.82060\\
         9  &   4.49889   &  4.70977 &    4.78289 &    4.81021 &   4.82092\\
        10  &   4.49945  &   4.70998  &   4.78304  &   4.81035 &   4.82104\\
 $\vdots$   & \multicolumn{1}{|c}{$\vdots$}  & \multicolumn{1}{|c}{ $\vdots$} &   \multicolumn{1}{|c}{$\vdots$} &   \multicolumn{1}{|c}{$\vdots$} &  \multicolumn{1}{|c|}{ $\vdots$} \\                          
      limit  &  4.50000   &  4.71015  &   4.78316  &   4.81044 &   4.82114\\ \hline
\end{tabular}
\end{center}

\begin{center}
\begin{tabular}{|l|l|l|l|l|l|}\hline
\multicolumn{6}{|c|}{S$(1,1,k,l)$,  $1 \leq{}  k \leq{}  l$}\\ \hline{}
l\ \ \ \vline{}\ k & \mbox{\hspace{0.5cm}} 6      & \mbox{\hspace{0.5cm}} 7      &\mbox{\hspace{0.5cm}}  8     & \mbox{\hspace{0.5cm}} 9      & \mbox{\hspace{0.5cm}} 10\\\hline

         6  &   4.82240*&           &           &           &          \\
         7  &   4.82419 &   4.82596*&           &           &          \\
         8  &   4.82492 &   4.82668 &   4.82741*&           &          \\
         9  &   4.82522 &   4.82698 &   4.82771 &   4.82801*&          \\
        10  &   4.82535 &   4.82711 &   4.82783 &   4.82813 &  4.82825*\\
$\vdots$   & \multicolumn{1}{|c}{$\vdots$}  & \multicolumn{1}{|c}{ $\vdots$} &   \multicolumn{1}{|c}{$\vdots$} &   \multicolumn{1}{|c}{$\vdots$} &  \multicolumn{1}{|c|}{ $\vdots$} \\ 
      limit &   4.82544 &   4.82720 &   4.82792 &   4.82822 &  4.82834\\
\hline\end{tabular}
\end{center}
The limit values converge to  $2 + 2\sqrt{2}\approx{}  4.82843.$

\begin{center}
\begin{tabular}{|c|l|c|l|}\hline
\multicolumn{2}{|c|}{S$(1,2,2,k)$}&\multicolumn{2}{c|}{S$(1,2,3,k)$ }\\
\hline{}
k& \multicolumn{1}{|c|}{$\lambda^{2}$ }& k & \multicolumn{1}{|c|}{$\lambda^{2}$ }\\\hline
2   & 4.79129*  &  3 &    4.93543*\\
3   & 4.86620*  &  4 &    4.95978*\\
4   & 4.89307*  &  5  &   4.96876*\\
5   & 4.90321*  & & \\
6   & 4.90715   & & \\            
7   & 4.90869   & & \\            
8   & 4.90930   & & \\ 
9   & 4.90955   & & \\
10  & 4.90964   & & \\
$\vdots$ &  \multicolumn{1}{|c|}{$\vdots$} & & \\   limit  &  4.90971 & & \\
\hline
\end{tabular}
\end{center}

\noindent{}From \firestar{j}{j+2}{j+2}{j+2} there are no indices in the interval $(4,5).$

By  the discussion  in the  beginning of  section \ref{firesection2},  these are  the only values  which can  arise from commuting squares of the form
\[\begin{array}{lcl}
  C & \subset_{nG^{^{t}}} & D \\
   \cup_{G} &\,&\cup_{G^{^{t}}}\\
  A & \subset_{nG} & A  \end{array},\;\;\;\; n\in{\Bbb N}.  \]      
The lowest value is $\frac{1+\sqrt{13}}{2}.$ The numbers  marked with a star, are those which come from a commuting square of the form
  \[\begin{array}{lcl}
  C & \subset_{G^{^{t}}} & D \\
   \cup_{G} &\,&\cup_{G^{^{t}}}\\
  A & \subset_{G} & A  \end{array},  \]  
the lowest of which is  $\frac{1+\sqrt{17}}{2}.$ 

We  have, of course, a lot of other values  of the index corresponding to the  other 4-stars which satisfy the two conditions, but it would take up too much space to list some of the index values obtained from these graphs.

Also, as we will see in chapter \ref{uendeligcsq}, all the limit values of the determined families are values of the index for an irreducible subfactor of the hyperfinite $II_{1}-$factor.

\newpage{}

\setcounter{equation}{0}
%sidst rettet 19/12

\section{Algebraic Necessities}
\label{algnece}
\setcounter{equation}{0}
\begin{lemma}
\label{fire4lem41}Let $\delta_{_{1}} \geq \delta_{_{2}} \geq \delta_{_{3}} \geq \delta_{_{4}}\geq{} 0.$ If $\delta_{_{1}} \pm \delta_{_{2}} \pm \delta_{_{3}} \pm \delta_{_{4}} = 0$ for some choice of signs, then either
\[\delta_{_{1}} - \delta_{_{2}} - \delta_{_{3}} - \delta_{_{4}} = 0 \]
or
\[\delta_{_{1}} - \delta_{_{2}} - \delta_{_{3}} + \delta_{_{4}} = 0 .\]
\end{lemma}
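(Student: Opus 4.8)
The statement concerns only the eight sign patterns $\delta_{1}\pm\delta_{2}\pm\delta_{3}\pm\delta_{4}=0$; since the coefficient of $\delta_{1}$ is $+1$ (or may be made so by negating the whole relation, which merely permutes the patterns), I may assume the hypothesis reads $\delta_{1}+\epsilon_{2}\delta_{2}+\epsilon_{3}\delta_{3}+\epsilon_{4}\delta_{4}=0$ with $\epsilon_{i}\in\{+1,-1\}$. Both target identities have $-\delta_{2}$ and $-\delta_{3}$, so the natural organisation of the argument is a case split first on $\epsilon_{2}$, then on $\epsilon_{3}$ and $\epsilon_{4}$, using the ordering $\delta_{1}\ge\delta_{2}\ge\delta_{3}\ge\delta_{4}\ge0$ to collapse the "bad" patterns onto one of the two allowed ones.

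First I would treat the case $\epsilon_{2}=+1$. Here, using $\delta_{1}\ge\delta_{3}$ and $\delta_{2}\ge\delta_{4}$,
\[0=\delta_{1}+\delta_{2}+\epsilon_{3}\delta_{3}+\epsilon_{4}\delta_{4}\ge\delta_{1}+\delta_{2}-\delta_{3}-\delta_{4}=(\delta_{1}-\delta_{3})+(\delta_{2}-\delta_{4})\ge0,\]
so every inequality is an equality; in particular $\delta_{1}=\delta_{3}$ and $\delta_{2}=\delta_{4}$, which together with $\delta_{1}\ge\delta_{2}\ge\delta_{3}\ge\delta_{4}$ forces $\delta_{1}=\delta_{2}=\delta_{3}=\delta_{4}$, whence $\delta_{1}-\delta_{2}-\delta_{3}+\delta_{4}=0$.

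Next, the case $\epsilon_{2}=-1$. If moreover $\epsilon_{3}=-1$, the relation is $\delta_{1}-\delta_{2}-\delta_{3}\pm\delta_{4}=0$, which is literally one of the two alternatives. If $\epsilon_{3}=+1$, the relation is $\delta_{1}-\delta_{2}+\delta_{3}+\epsilon_{4}\delta_{4}=0$ with $\delta_{1}-\delta_{2}\ge0$ and $\delta_{3}\ge0$; when $\epsilon_{4}=+1$ the non-negative summands $\delta_{1}-\delta_{2},\ \delta_{3},\ \delta_{4}$ have sum $0$ and so each vanishes, giving $\delta_{1}=\delta_{2}$ and $\delta_{3}=\delta_{4}=0$, hence $\delta_{1}-\delta_{2}-\delta_{3}-\delta_{4}=0$; when $\epsilon_{4}=-1$ we get $(\delta_{1}-\delta_{2})+(\delta_{3}-\delta_{4})=0$ with both brackets non-negative, so $\delta_{1}=\delta_{2}$ and $\delta_{3}=\delta_{4}$, whence $\delta_{1}-\delta_{2}-\delta_{3}+\delta_{4}=0$. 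This exhausts the eight patterns.

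There is essentially no obstacle here; the proof is a finite case check. The only point that needs a little care is bookkeeping in the degenerate sub-cases: one must verify that the equalities among the $\delta_{i}$ that get forced (all four equal, or $\delta_{1}=\delta_{2}$ with $\delta_{3}=\delta_{4}=0$, or $\delta_{1}=\delta_{2}$ with $\delta_{3}=\delta_{4}$) are consistent with $\delta_{1}\ge\delta_{2}\ge\delta_{3}\ge\delta_{4}\ge0$ and that they produce \emph{one} of the two stated identities (and, as in the first case, not necessarily the other).
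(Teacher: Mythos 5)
Your proof is correct and takes essentially the same approach as the paper: the paper's proof is just the contrapositive packaging of your case check, assuming $\delta_{1}-\delta_{2}-\delta_{3}-\delta_{4}\neq 0$ and $\delta_{1}-\delta_{2}-\delta_{3}+\delta_{4}\neq 0$ and then showing every other sign pattern is strictly positive, using exactly the same two groupings $(\delta_{1}-\delta_{2})+(\delta_{3}-\delta_{4})\geq 0$ and $(\delta_{1}-\delta_{3})+(\delta_{2}-\delta_{4})\geq 0$ with their equality conditions. Your direct exhaustion of the eight patterns, including the degenerate sub-cases, is complete and correct.
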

\begin{proof}Assume that $f = \delta_{_{1}} - \delta_{_{2}} - \delta_{_{3}} - \delta_{_{4}}\neq{} 0 $ and $g = \delta_{_{1}} - \delta_{_{2}} - \delta_{_{3}} + \delta_{_{4}}\neq{} 0.$ We have 
\begin{equation}
\label{fire4*} (\delta_{_{1}} - \delta_{_{2}}) + (\delta_{_{3}} - \delta_{_{4}}) \geq 0
\end{equation}
with equality if an only if $\delta_{_{1}}= \delta_{_{2}}$ and $\delta_{_{3}} = \delta_{_{4}}.$ Thus equality in (\ref{fire4*}) implies $g = 0 .$ I.e. $\delta_{_{1}} - \delta_{_{2}} + \delta_{_{3}} - \delta_{_{4}} > 0.$

\noindent{}We also have 
\begin{equation}
\label{fire4**} (\delta_{_{1}} - \delta_{_{3}}) + (\delta_{_{2}}-\delta_{_{4}}) \geq 0 
\end{equation}
with equality if and only if $\delta_{_{1}} = \delta_{_{2}} = \delta_{_{3}} = \delta_{_{4}}.$ Hence equality in (\ref{fire4**}) implies $ g = 0.$ I.e. $\delta_{_{1}} + \delta_{_{2}} - \delta_{_{3}} - \delta_{_{4}} > 0.$

\noindent{}If at least two of $\delta_{_{2}}, \delta_{_{3}}, \delta_{_{4}} $ must be chosen with positive sign, then either $\delta_{_{2}}$ or $\delta_{_{3}}$ is chosen positive. In this case we get: Sum of $\delta_{_{ i}}'$s with signs $\geq{}$ (\ref{fire4*}) resp. (\ref{fire4**}) $ > 0.$ The above contradicts the possible choice of signs as stated.
\end{proof}
       
\begin{prop}
\label{fire4prop42}Let $\delta_{_{1}} \geq \delta_{_{2}} \geq \delta_{_{3}} \geq \delta_{_{4}} \geq 0.$ Then the following two conditions are equivalent:
\begin{enumerate}
\item{}There exists $t \in{}[0,\delsqi{4}],$ and a choice of signs such that
\[\sqrt{\delsqi{1}-t}\pm\sqrt{\delsqi{2}-t}\pm\sqrt{\delsqi{3}-t}\pm\sqrt{\delsqi{4}-t}=0.\]
\item{}$\delta_{_{1}} - \delta_{_{2}} - \delta_{_{3}} - \delta_{_{4}} \leq{}0 $ and $\delta_{_{1}} - \delta_{_{2}} - \delta_{_{3}} + \delta_{_{4}} \geq{} 0.$
\end{enumerate}
\end{prop}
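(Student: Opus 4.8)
The plan is to recast the whole statement in terms of two continuous scalar functions and then combine Lemma~\ref{fire4lem41} with the intermediate value theorem. For $t\in[0,\delta_{4}^{2}]$ set $e_{i}(t)=\sqrt{\delta_{i}^{2}-t}$, so that $e_{1}(t)\ge e_{2}(t)\ge e_{3}(t)\ge e_{4}(t)\ge 0$, each $e_{i}$ is continuous on $[0,\delta_{4}^{2}]$, and put
\[g(t)=e_{1}(t)-e_{2}(t)-e_{3}(t)-e_{4}(t),\qquad h(t)=e_{1}(t)-e_{2}(t)-e_{3}(t)+e_{4}(t).\]
Condition~2 is then exactly ``$g(0)\le 0$ and $h(0)\ge 0$'', and since $e_{4}(\delta_{4}^{2})=0$ we have the crucial coincidence $g(\delta_{4}^{2})=h(\delta_{4}^{2})$. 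Applying Lemma~\ref{fire4lem41} to the ordered nonnegative tuple $\bigl(e_{1}(t),e_{2}(t),e_{3}(t),e_{4}(t)\bigr)$ shows that condition~1 is equivalent to: there exists $t\in[0,\delta_{4}^{2}]$ with $g(t)=0$ or $h(t)=0$ (the reverse implication being trivial, since $g(t)=0$ realises the sign pattern $+,-,-,-$ and $h(t)=0$ the pattern $+,-,-,+$).

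For $2\Rightarrow 1$ I would split into two cases. If $g(\delta_{4}^{2})\ge 0$, then $g$ is continuous with $g(0)\le 0\le g(\delta_{4}^{2})$, hence $g$ vanishes somewhere in $[0,\delta_{4}^{2}]$. If $g(\delta_{4}^{2})<0$, then $h(\delta_{4}^{2})=g(\delta_{4}^{2})<0\le h(0)$, hence $h$ vanishes somewhere in $[0,\delta_{4}^{2}]$. Either way condition~1 holds.

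For $1\Rightarrow 2$, fix $t^{*}\in[0,\delta_{4}^{2}]$ with $g(t^{*})=0$ or $h(t^{*})=0$, write $e_{i}=e_{i}(t^{*})$, and set $\phi(x)=\sqrt{x^{2}+t^{*}}$, so that $\delta_{i}=\phi(e_{i})$. I will use two elementary properties of $\phi$ on $[0,\infty)$: (i) subadditivity, $\phi(x+y)\le\phi(x)+\phi(y)$, proved by squaring and using $xy\le\sqrt{(x^{2}+t^{*})(y^{2}+t^{*})}$ together with $t^{*}\ge 0$; and (ii) if $a\ge b\ge c\ge d\ge 0$ and $a-b\ge c-d$, then $\phi(a)+\phi(d)\ge\phi(b)+\phi(c)$, a consequence of the convexity and monotonicity of $\phi$ (for $a>d$: express $c$ and $a-c+d$ as the two complementary convex combinations of $a$ and $d$, apply convexity to each and add, then use $b\le a-c+d$ and monotonicity; the case $a=d$ is trivial). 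If $g(t^{*})=0$, i.e.\ $e_{1}=e_{2}+e_{3}+e_{4}$, then (i) gives $\delta_{1}=\phi(e_{2}+e_{3}+e_{4})\le\delta_{2}+\delta_{3}+\delta_{4}$, while $e_{1}-e_{2}=e_{3}+e_{4}\ge e_{3}-e_{4}$ together with (ii) gives $\delta_{1}+\delta_{4}\ge\delta_{2}+\delta_{3}$; these two inequalities are precisely condition~2. If instead $h(t^{*})=0$, i.e.\ $e_{1}+e_{4}=e_{2}+e_{3}$, then $e_{1}\le e_{2}+e_{3}$ together with monotonicity and (i) gives $\delta_{1}=\phi(e_{1})\le\phi(e_{2}+e_{3})\le\delta_{2}+\delta_{3}\le\delta_{2}+\delta_{3}+\delta_{4}$, and $e_{1}-e_{2}=e_{3}-e_{4}$ with (ii) again gives $\delta_{1}+\delta_{4}\ge\delta_{2}+\delta_{3}$.

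The main obstacle is exactly the passage in $1\Rightarrow 2$ from the sign identity among the $e_{i}$ (where Lemma~\ref{fire4lem41} places us) to the two inequalities among the $\delta_{i}$: one must isolate the correct structural features of $\phi(x)=\sqrt{x^{2}+t^{*}}$ — its subadditivity and the convexity estimate (ii) — and check they suffice in both the $g$-case and the $h$-case. Granting that, the remainder is a routine application of Lemma~\ref{fire4lem41} and the intermediate value theorem.
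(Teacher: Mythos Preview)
Your proof is correct. The reduction via Lemma~\ref{fire4lem41} and the $2\Rightarrow 1$ direction match the paper exactly (same functions, same intermediate value argument using $g(\delta_4^2)=h(\delta_4^2)$).

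For $1\Rightarrow 2$ you take a genuinely different route. The paper argues by contradiction and calculus: assuming condition~2 fails, it computes the derivatives $f'(t)$ and $g'(t)$, shows by a bootstrap argument (if the derivative ever hit zero the function value there would force the derivative to be nonzero) that each keeps constant sign on $[0,\delta_4^2)$, and concludes neither $f$ nor $g$ can vanish. You instead work directly with the inverse transformation $\phi(x)=\sqrt{x^2+t^*}$ taking $e_i(t^*)\mapsto\delta_i$, and extract exactly the two structural facts needed: subadditivity of $\phi$ gives the first inequality in~2, and a clean convexity/monotonicity estimate (your property~(ii), which is essentially that for a convex increasing $\phi$, moving two points further apart while fixing their sum increases the sum of their $\phi$-values) gives the second. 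Your argument is shorter, avoids derivatives entirely, and handles the $g$-case and $h$-case uniformly; the paper's approach, on the other hand, gives slightly more information (strict monotonicity of $f$ and $g$ under the contrapositive hypotheses) that is not needed here.
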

\begin{proof}Put $\delta_{_{i}}(t) = \sqrt{\delsqi{i}-t},\;i = 1,2,3,4,\;t \in{}[0,\delsqi{4}],$ then $\delta_{_{1}}(t) \geq \delta_{_{2}}(t) \geq \delta_{_{3}}(t) \geq{} \delta_{_{4}}(t) \geq{} 0.$ Put  
\[f(t) = \delta_{_{1}}(t) - \delta_{_{2}}(t) - \delta_{_{3}}(t) - \delta_{_{4}}(t) \]
and
\[g(t) = \delta_{_{1}}(t) - \delta_{_{2}}(t) - \delta_{_{3}}(t) + \delta_{_{4}}(t).\]
By lemma \ref{fire4lem41} 1 is equivalent to 1': $f(t) = 0$ for some $t \in{}[0,\delsqi{4}]$ or $g(t) = 0$ for some $t \in{}[0,\delsqi{4}].$ 
\noindent{}We will first prove that 2 $\Rightarrow$ 1'.

In the above notation the statement of 2 is $f(0) \leq{} 0$ and $g(0)\geq{} 0,$ and since $f(\delsqi{4} ) = g(\delsqi{4} )$ there exists a $t \in{}[0,\delsqi{4}],$ such that  $f(t) = 0$ or $g(t) = 0.$
     
\noindent{}Proof of 1' $\Rightarrow$ 2: Assume $1'.$ If $\delta_{_{4}}=0,$ the only possible value of $t$ is $t=0,$ so 2 follows immediately. Let $\delta_{_{4}}>0$ and assume that 2 is false, i.e.
\begin{description}
\item[(a)]$\delta_{_{1}} - \delta_{_{2}} - \delta_{_{3}} - \delta_{_{4}} > 0$ or
\item[(b)]$\delta_{_{1}} - \delta_{_{2}} - \delta_{_{3}} + \delta_{_{4}} < 0.$
\end{description} 

\noindent{}If (a) is valid, we get
\[f'(t) = \mbox{$\frac{1}{2}$}(-\delta_{_{1}}(t)^{-1} + \delta_{_{2}}(t)^{-1} + \delta_{_{3}}(t)^{-1} + \delta_{_{4}}(t)^{-1}),\;\; 0 \leq{}t < 
\delsqi{4},\]
and $\delta_{_{1}}(0) \geq{} \delta_{_{2}}(0) \geq{} \delta_{_{3}}(0) \geq{} \delta_{_{4}}(0) > 0$ implies $f'(0) > 0.$ Assume that there exists a $t \in{}(0,\delsqi{4} )$ such that  $f'(t) = 0,$ and let $t_{_{0}}$ be the smallest such $t.$ Since $f'(0) > 0$ we get $f'(t) > 0,\; 0 \leq{} t < t_{_{0}},$ and hence $f(t_{_{0}}) \geq{} f(0) > 0,$ i.e. $\delta_{_{1}}(t_{_{0}}) - \delta_{_{2}}(t_{_{0}}) - \delta_{_{3}}(t_{_{0}}) - \delta_{_{4}}(t_{_{0}}) > 0.$ The above argument (in the case $t = t_{_{0}}$ instead of $t=0$) yields $f'(t_{_{0}}) > 0,$ which is a contradiction. I.e. $f'(t) > 0 ,$ for all $t \in{} [ 0,\delsqi{4} )$ and thus $f(t) \geq{} f(0) > 0,$ for all $t \in{} [ 0,\delsqi{4}].$ As $g(t) = f(t) + 2\delta_{_{4}}(t)$ we also get $g(t) > 0,$ for all $t \in{} [ 0,\delsqi{4}].$ This proves the implication in case (a).

\noindent{}Assume (b). For all $t \in{} [ 0,\delsqi{4}]$
\[\begin{array}{lcl}
g'(t) &=&\frac{1}{2}(- \delta_{_{1}}(t)^{-1} + \delta_{_{2}}(t)^{-1} + 
      \delta_{_{3}}(t)^{-1} - \delta_{_{4}}(t)^{-1})\\[0.3cm]
 & = & 
\frac{1}{2}\left(\frac{\delta_{_{1}}(t)-\delta_{_{2}}(t)}{\delta_{_{1}}(t)\delta_{_{2}}(t)}-
\frac{\delta_{_{3}}(t)-\delta_{_{4}}(t)}{\delta_{_{3}}(t)\delta_{_{4}}(t)}\right).
\end{array}\]
Since $\delta_{_{1}}(t) \geq{} \delta_{_{3}}(t)$ and $\delta_{_{2}}(t) \geq{} \delta_{_{4}}(t)$ we have $\delta_{_{1}}(t)\delta_{_{2}}(t) \geq{} \delta_{_{3}}(t)\delta_{_{4}}(t),$ and we get
\[ g'(t)\leq{}\frac{1}{2\delta_{_{1}}(t)\delta_{_{2}}(t)}g(t),\;\;\mbox{for all } t\in{}[0,\delsqi{4}]\]
in particular $ g'(0) < 0.$
Assume there exists $t\in{}(0,\delsqi{4})$ such that  $g'(t) = 0,$ and let $t_{_{0}}$ be the smallest such $t.$ Then $g(t)$ is decreasing on $[0,t_{_{0}}],$ and hence $g(t_{_{0}}) \leq{} g(0) < 0,$ but then we get $g'(t_{_{0}}) < 0,$ which is a contradiction. I.e. $g(t) \leq{} g(0) < 0 ,$ for all $t\in{}[0,\delsqi{4}],$ and since $f(t) = g(t) - 2\delta_{_{4}}(t)$ we get $f(t) < 0 ,$ for all $t\in{}[0,\delsqi{4}].$ This proves the implication in case (b).
\end{proof}
For the rest of this section we let $\alpha_{_{1}},\alpha_{_{2}},\alpha_{_{3}}$ and $\alpha_{_{4}}$ denote positive reals, such that we for $\lambda = \alpha_{_{1}}+\alpha_{_{2}}+\alpha_{_{3}}+\alpha_{_{4}}$ have
\[\lambda{}>2\;\mbox{ and }\;0<\alpha_{_{i}}\leq\frac{\lambda-\sqrt{\lambda^{2}-4}}{2},\;\;i=1,2,3,4,\]
and we put
\[\delta_{_{i}} = \sqrt{\alpha_{_{i}}^{2}-\lambda\alpha_{_{i}}+1},\;\;i = 1,3,2,4.\]
This is well defined, because $\frac{1}{2}(\lambda-\sqrt{\lambda^{2}-4})$ is the smallest root of the polynomial $x^{2}-\lambda{}x+1.$
\begin{remark}{\rm Since for $\{i,j,k,l\}=\{1,2,3,4\}$ 
\[\delta_{_{i}}^{2} + \alpha_{_{i}} \alpha_{_{j}} + \alpha_{_{i}} \alpha_{_{k}} + \alpha_{_{i}} \alpha_{_{l}} = (\alpha_{_{i}}+\alpha_{_{j}}+\alpha_{_{k}}+\alpha_{_{l}})\alpha_{_{i}}-\lambda\alpha_{_{i}}+1=1\] 
the matrix
\begin{equation}
\label{fire4matrix}D=
\left(\begin{array}{cccc}
\delsqi{1} & \alialj{1}{2} & \alialj{1}{3}& \alialj{1}{4}\\
\alialj{1}{2} & \delsqi{2} &\alialj{2}{3} & \alialj{2}{4} \\
\alialj{1}{3} &\alialj{2}{3} &\delsqi{3} &\alialj{3}{4} \\  
\alialj{1}{4} & \alialj{2}{4} & \alialj{3}{4} &\delsqi{4}
\end{array}\right)
\end{equation}
is doubly stochastic, i.e.  rows and columns have sum 1.
}\end{remark}
\begin{lemma}
\label{fire4lem44}With $\alpha_{_{1}},\; \alpha_{_{2}},\; \alpha_{_{3}}$ and $\alpha_{_{4}}$ as above:
\begin{enumerate}
\item{}$\delta_{_{i}}<\frac{\lambda}{2}-\alpha_{_{i}},$ $i=1,2,3,4.$
\item{}$\delta_{_{i}} + \delta_{_{j}} <\alpha_{_{k}} + \alpha_{_{l}},$ when $\{i,j,k,l\}=\{1,2,3,4\}.$
\item{}$|\delta_{_{i}} - \delta_{_{j}}| \geq{} \alpha_{_{i}} - \alpha_{_{j}},$ with equality if and only if $\alpha_{_{i}} = \alpha_{_{j}}.$
\end{enumerate}
\end{lemma}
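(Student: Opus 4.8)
The plan is to prove the three parts in order, deriving part 2 immediately from part 1 and part 3 from part 2, and to use throughout only the defining identity $\delta_i^2 = \alpha_i^2 - \lambda\alpha_i + 1$ (equivalently $\delta_i^2 = 1 - \alpha_i\sum_{j\neq i}\alpha_j$, as recorded in the Remark preceding the lemma) together with the standing hypotheses $\lambda > 2$ and $0 < \alpha_i \le \frac{1}{2}(\lambda - \sqrt{\lambda^2-4})$ for $i=1,2,3,4$.

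For part 1, I would first check that $\frac{\lambda}{2} - \alpha_i > 0$: since $\lambda > 2$ we have $\sqrt{\lambda^2-4} > 0$, hence $\alpha_i \le \frac{1}{2}(\lambda - \sqrt{\lambda^2-4}) < \frac{\lambda}{2}$. As $\delta_i \ge 0$ and the right-hand side is positive, the inequality $\delta_i < \frac{\lambda}{2} - \alpha_i$ is equivalent to $\delta_i^2 < \left(\frac{\lambda}{2} - \alpha_i\right)^2$. Substituting $\delta_i^2 = \alpha_i^2 - \lambda\alpha_i + 1$ on the left and $\left(\frac{\lambda}{2} - \alpha_i\right)^2 = \frac{\lambda^2}{4} - \lambda\alpha_i + \alpha_i^2$ on the right, this collapses to $1 < \frac{\lambda^2}{4}$, which is precisely $\lambda > 2$.

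Part 2 is then immediate: for $\{i,j,k,l\} = \{1,2,3,4\}$, adding the two instances $\delta_i < \frac{\lambda}{2} - \alpha_i$ and $\delta_j < \frac{\lambda}{2} - \alpha_j$ of part 1 gives $\delta_i + \delta_j < \lambda - \alpha_i - \alpha_j = \alpha_k + \alpha_l$. For part 3, the case $\alpha_i = \alpha_j$ gives $\delta_i = \delta_j$ and hence equality, while the case $\alpha_i < \alpha_j$ is trivial (then $\alpha_i - \alpha_j < 0 \le |\delta_i - \delta_j|$, strictly). So I may assume $\alpha_i > \alpha_j$. The identity gives $\delta_j^2 - \delta_i^2 = (\alpha_i - \alpha_j)(\lambda - \alpha_i - \alpha_j)$, which is positive since $\alpha_i + \alpha_j = \lambda - \alpha_k - \alpha_l < \lambda$; hence $\delta_j > \delta_i \ge 0$, so $\delta_i + \delta_j > 0$ and
\[
\delta_j - \delta_i = \frac{\delta_j^2 - \delta_i^2}{\delta_i + \delta_j} = \frac{(\alpha_i - \alpha_j)(\lambda - \alpha_i - \alpha_j)}{\delta_i + \delta_j}.
\]
By part 2, $\delta_i + \delta_j < \alpha_k + \alpha_l = \lambda - \alpha_i - \alpha_j$, so replacing the factor $\lambda - \alpha_i - \alpha_j$ in the numerator by the strictly smaller positive quantity $\delta_i + \delta_j$ shows the last fraction exceeds $\alpha_i - \alpha_j$; thus $|\delta_i - \delta_j| = \delta_j - \delta_i > \alpha_i - \alpha_j$, strictly, which together with the first case yields the equality clause.

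The argument is elementary and I do not anticipate a genuine obstacle; the only two points needing a moment's care are verifying that $\frac{\lambda}{2} - \alpha_i > 0$ before squaring in part 1, and noting in part 3 that $\delta_i + \delta_j \neq 0$ — which is forced, since $\alpha_i > \alpha_j$ makes $\delta_j$ strictly positive — so that the division producing the displayed identity is legitimate.
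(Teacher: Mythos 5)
Your proof is correct. Parts 1 and 2 are essentially the paper's own argument: the paper proves 1 by noting $\alpha_i^2-\lambda\alpha_i+1<(\alpha_i-\tfrac{\lambda}{2})^2$ when $\lambda>2$, so $\delta_i<|\alpha_i-\tfrac{\lambda}{2}|=\tfrac{\lambda}{2}-\alpha_i$, and obtains 2 by adding two instances of 1, exactly as you do. For part 3, however, you take a genuinely different route. The paper argues via calculus: the function $f(\alpha)=\sqrt{\alpha^2-\lambda\alpha+1}$ on $\bigl(0,\tfrac{1}{2}(\lambda-\sqrt{\lambda^2-4})\bigr]$ is strictly decreasing with $f'(\alpha)<-1$ (again because $(\tfrac{\lambda}{2}-\alpha)^2>\alpha^2-\lambda\alpha+1$), so $|f(\alpha)-f(\beta)|>|\alpha-\beta|$ whenever $\alpha\neq\beta$; part 3 is thus independent of part 2 and in fact yields the slightly stronger conclusion $|\delta_i-\delta_j|>|\alpha_i-\alpha_j|$ for $\alpha_i\neq\alpha_j$. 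You instead factor $\delta_j^2-\delta_i^2=(\alpha_i-\alpha_j)(\lambda-\alpha_i-\alpha_j)$ and divide by $\delta_i+\delta_j$, then invoke part 2 to see the quotient $\frac{\lambda-\alpha_i-\alpha_j}{\delta_i+\delta_j}$ exceeds $1$; this is calculus-free, makes the logical dependence $2\Rightarrow 3$ explicit, and sidesteps the endpoint where the paper's derivative bound degenerates, at the mild cost of a short case split on the sign of $\alpha_i-\alpha_j$ (which, as you note, and as the symmetric application of your statement in both orders shows, still recovers the two-sided inequality the paper later uses). Both arguments are sound; yours is a legitimate and arguably cleaner alternative.
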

\begin{proof}\hfill{}

\noindent{}1. Since $\lambda>2,$ $\alpha_{_{i}}^{2}-\lambda\alpha_{_{i}}+1<(\alpha_{_{i}}-\frac{\lambda}{2})^{2}.$ Hence
\[\delta_{_{i}}<|\alpha_{_{i}}-\mbox{$\frac{\lambda}{2}$}|=\mbox{$\frac{\lambda}{2}$}-\alpha_{_{i}}.\]

\noindent{}2. Let $(i,j,k,l)$ be a permutation of $(1,2,3,4).$ By 1 \[\delta_{_{i}}+\delta_{_{j}}<\lambda-\alpha_{_{i}}-\alpha_{_{j}}=\alpha_{_{l}}+\alpha_{_{j}}.\]

\noindent{}3. The function
\[f(\alpha)=\sqrt{\alpha^{2}-\lambda\alpha+1},\;\;\;0<\alpha\leq\frac{\lambda-\sqrt{\lambda^{2}-4}}{2},\]
is strictly decreasing, and 
\[f'(\alpha)=-\frac{\frac{\lambda}{2}-\alpha}{\sqrt{\alpha^{2}-\lambda\alpha+1}}<-1,\;\;\;\left(\alpha\neq{}\frac{\lambda-\sqrt{\lambda^{2}-4}}{2}\right),\]
because $(\frac{\lambda}{2}-\alpha)^{2}>\alpha^{2}-\lambda\alpha+1,$ since $\lambda>2.$ Hence
\[|f(\alpha)-f(\beta)|>|\alpha-\beta|\]
for all $\alpha,\beta\in{}[0,\frac{1}{2}(\lambda-\sqrt{\lambda^{2}-4})],$ $\alpha\neq{}\beta.$ This proves 3.
\end{proof}
The rest of this section will be taken up by a study of the properties of some special matrices, which eventually will lead to the main results of this section.
\begin{lemma}
\label{fire4lem45}Let $u \in{} M_{_{4}}(M_{_{n}}({\Bbb C} )),\;u = \left(u_{_{ij}}\right)_{i,j=1}^{4}$  be a unitary matrix such that $\uij{i}{j}u_{_{ij}}^{*} = \alpha_{_{ij}}I_{_{n}},\; i,j=1,...,4,$ where $\left(\alpha_{_{ij}}\right)_{i,j=1}^{4}$  is doubly stochastic and symmetric, then
\[\sqrt{\alpha_{_{i_{1}i_{1}}}}\leq{}\sqrt{\alpha_{_{i_{2}i_{2}}}}+\sqrt{\alpha_{_{i_{3}i_{3}}}}+\sqrt{\alpha_{_{i_{4}i_{4}}}},\;\;\;\;\{i_{_{1}},i_{_{2}},i_{_{3}},i_{_{4}}\}=\{1,2,3,4\}.\]
\end{lemma}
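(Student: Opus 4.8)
The plan is to reduce $u$ to a normalized form, extract from the orthogonality of its rows a family of scalar ``polygon'' inequalities among the numbers $\sqrt{\alpha_{ij}}$, and then combine these — the combination being the real work.

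First I would dispose of the trivial case $\alpha_{i_1i_1}=0$ and, after relabelling, assume $i_1=1$ and $\alpha_{11}>0$. Then I normalize: for any block-diagonal unitaries $v=\mathrm{diag}(v_1,\dots,v_4)$, $w=\mathrm{diag}(w_1,\dots,w_4)$ in $M_4(M_n(\mathbb{C}))$ the matrix $vuw$ is again unitary and $(vuw)_{ij}(vuw)_{ij}^*=\alpha_{ij}I_n$, so I may choose $v,w$ to arrange $u_{1j}=\sqrt{\alpha_{1j}}\,I_n$ and $u_{i1}=\sqrt{\alpha_{i1}}\,I_n$ for all $i,j$, using $\alpha_{1j}=\alpha_{j1}$. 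I would also record the elementary fact that a square matrix $a$ with $aa^*=\mu I_n$, $\mu>0$, satisfies $a^*a=\mu I_n$ and $\|a\|=\sqrt{\mu}$; applied blockwise this gives $\|u_{ij}\|=\sqrt{\alpha_{ij}}$ and $\|u_{ij}\|=\|u_{ji}\|$, and it will be used repeatedly below.

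The basic tool is the \emph{matrix polygon inequality}: if $X_1,\dots,X_4\in M_n(\mathbb{C})$ satisfy $\sum_k X_k=0$ and $X_kX_k^*=\mu_kI_n$, then $\sqrt{\mu_k}\le\sum_{l\ne k}\sqrt{\mu_l}$ for each $k$, which is immediate from $\|X_k\|=\sqrt{\mu_k}$ and the triangle inequality. I would apply this to $X_k=u_{ik}u_{jk}^*$, for which $X_kX_k^*=u_{ik}(u_{jk}^*u_{jk})u_{ik}^*=\alpha_{jk}u_{ik}u_{ik}^*=\alpha_{ik}\alpha_{jk}I_n$, the relation $\sum_kX_k=0$ being orthogonality of block-rows $i$ and $j$. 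Using the normalization and $\alpha_{i1}=\alpha_{1i}$, the $k=1$ instance with one of the rows equal to the first one gives, for each $i\in\{2,3,4\}$ with $\alpha_{1i}>0$, the inequality $\sqrt{\alpha_{11}}\le\sqrt{\alpha_{ii}}+\frac{1}{\sqrt{\alpha_{1i}}}\sum_{k\in\{2,3,4\}\setminus\{i\}}\sqrt{\alpha_{1k}\alpha_{ik}}$, and the $k=1$ instance for a pair $i\ne j$ in $\{2,3,4\}$ gives $\sqrt{\alpha_{1i}\alpha_{1j}}\le\sum_{k\in\{2,3,4\}}\sqrt{\alpha_{ik}\alpha_{jk}}$, where now $\alpha_{ij}=\alpha_{ji}$ enters. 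The same orthogonality relations, together with their column analogues, also yield the rigidity identity $\sum_{j\ne i}\sqrt{\alpha_{1j}}\,(u_{ij}-u_{ji})=0$ for each $i\in\{2,3,4\}$.

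The final step — and the part I expect to be the main obstacle — is to combine the inequalities of the previous paragraph so as to eliminate the cross terms $\sqrt{\alpha_{ik}}$ with $i\ne k$ both in $\{2,3,4\}$ and conclude $\sqrt{\alpha_{11}}\le\sqrt{\alpha_{22}}+\sqrt{\alpha_{33}}+\sqrt{\alpha_{44}}$. The naive reverse-triangle estimate $\|u_{ii}\|\ge\sqrt{\alpha_{11}}-\|(\mbox{correction})\|$ loses too much, so one must play these inequalities against one another, crucially using the full symmetry $\alpha_{ij}=\alpha_{ji}$ on $\{2,3,4\}$ (which, as the example of a plain rotation shows, is exactly what fails without it), and one must also treat separately the boundary cases where some $\alpha_{1i}$ vanish. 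A conceptually clean way to organize the argument is to observe that after the normalization the $\{2,3,4\}$-corner $u'$ of $u$ is a \emph{normal} operator with $u'u'^*=u'^*u'=I-(1-\alpha_{11})WW^*$, where $W$ is the isometry whose blocks are $\sqrt{\alpha_{1i}}/\sqrt{1-\alpha_{11}}\,I_n$, so that $-\sqrt{\alpha_{11}}$ is an eigenvalue of $u'$ with eigenspace the range of $W$; the desired inequality then amounts to bounding the smallest singular value of $u'$ by the sum of the operator norms of its three diagonal blocks, and it is precisely here that the norm-symmetry of the off-diagonal blocks of $u'$ must be brought in to rule out the cancellations that would otherwise make $\sqrt{\alpha_{22}}+\sqrt{\alpha_{33}}+\sqrt{\alpha_{44}}$ too small.
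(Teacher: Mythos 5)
There is a genuine gap: you never actually prove the inequality, you only reduce it to an equivalent statement and then flag that statement as ``the main obstacle.'' The polygon inequalities you extract from row orthogonality (e.g.\ $\sqrt{\alpha_{11}}\le\sqrt{\alpha_{ii}}+\frac{1}{\sqrt{\alpha_{1i}}}\sum_{k}\sqrt{\alpha_{1k}\alpha_{ik}}$ and $\sqrt{\alpha_{1i}\alpha_{1j}}\le\sum_k\sqrt{\alpha_{ik}\alpha_{jk}}$) are genuine consequences, but no combination of them is exhibited that eliminates the cross terms, and you concede the naive estimates lose too much. Your proposed reorganization via the corner $u'$ on the indices $\{2,3,4\}$ is correct as far as it goes ($u'$ is normal, $u'u'^{*}=I-(1-\alpha_{11})WW^{*}$, and $-\sqrt{\alpha_{11}}$ is an eigenvalue on the range of $W$), but the statement you would then need --- that the smallest singular value of $u'$ is dominated by the sum of the operator norms of its diagonal blocks --- is false for general block matrices (a block cyclic permutation has zero diagonal blocks and smallest singular value $1$), so everything hinges on how the norm-symmetry $\|u_{ij}\|=\|u_{ji}\|$ is to be exploited; you acknowledge this but supply no argument. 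Since that is exactly the content of the lemma, the proof is not complete.

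For comparison, the paper's proof runs through a different and quite short mechanism: write $u=\left(\begin{array}{cc}a&b\\c&d\end{array}\right)$ with $a,b,c,d\in M_{2}(M_{n}({\Bbb C}))$; unitarity gives $a^{*}a=1-c^{*}c$ and $dd^{*}=1-cc^{*}$, hence $|\det(a)|=|\det(d)|$ and $\mbox{Tr}(a^{*}a)=\mbox{Tr}(dd^{*})$. Writing $u_{ij}=\sqrt{\alpha_{ij}}V_{ij}$ and diagonalizing the single unitary $V=V_{21}^{*}V_{22}V_{12}^{*}V_{11}$ traps $|\det(a)|^{1/n}$ in $\left[\,\left|\sqrt{\alpha_{11}\alpha_{22}}-\alpha_{12}\right|,\ \sqrt{\alpha_{11}\alpha_{22}}+\alpha_{12}\,\right]$, and similarly for $d$ with indices $3,4$; so these two intervals meet. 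The Hilbert--Schmidt equality gives $\alpha_{11}+\alpha_{22}+2\alpha_{12}=\alpha_{33}+\alpha_{44}+2\alpha_{34}$, and the affine change $t\mapsto(\alpha_{11}+\alpha_{22}+2\alpha_{12})-2t$ converts the intersecting intervals into $\left[(\sqrt{\alpha_{11}}-\sqrt{\alpha_{22}})^{2},(\sqrt{\alpha_{11}}+\sqrt{\alpha_{22}})^{2}\right]\cap\left[(\sqrt{\alpha_{33}}-\sqrt{\alpha_{44}})^{2},(\sqrt{\alpha_{33}}+\sqrt{\alpha_{44}})^{2}\right]\neq\emptyset$, which is exactly the four--angle inequality. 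If you want to salvage your approach, the missing ingredient is some such global invariant (determinant plus Hilbert--Schmidt norm of a $2\times2$ corner) that couples the $\{1,2\}$ data to the $\{3,4\}$ data; the row-by-row polygon inequalities alone do not see enough of the symmetry hypothesis to force the conclusion.
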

\begin{proof}$u=\left(\begin{array}{cc}a&b\\c&d\end{array}\right),$ $a,b,c,d\in{}M_{_{2}}(M_{_{n}}({\Bbb C})).$ Since $u$ is unitary,  $a^{*}a=1-c^{*}c$ and  $dd^{*}=1-cc^{*}.$ Therefore $a^{*}a$ and $dd^{*}$ have the same list of eigenvalues, so
\[\|a\|^{2}_{_{2}}=\mbox{Tr}(a^{*}a)= \mbox{Tr}(dd^{*})=\|d\|^{2}_{_{2}}\]and
\[|\mbox{det}(a)|=\mbox{det}(a^{*}a)^{\mbox{\scriptsize$\frac{1}{2}$}}= \mbox{det}(dd^{*})^{\mbox{\scriptsize$\frac{1}{2}$}}=|\mbox{det}(d)|.\]
By the assumptions
\[u_{_{ij}}=\sqrt{\alpha_{_{ij}}}V_{_{ij}},\;\;\;\;i,j=1,2,3,4\]
where the $V_{_{ij}}'$s are unitary $n\times{}n$ matrices.

\noindent{}Set $V = V_{_{21}}^{*}V_{_{22}}V_{_{12}}^{*}V_{_{11}}.$ Then 
\[ a =
\left(\begin{array}{cc}\uij{1}{1} & \uij{1}{2}\\ 
                      \uij{2}{1} & \uij{2}{2}\end{array}\right)=\left(\begin{array}{cc}V_{_{11}} & 0  \\ 0 & V_{_{22}}\end{array}\right)
\left(\begin{array}{cc}\sqrt{\alpha_{_{11}}}&\sqrt{\alpha_{_{12}}}\\
                       \sqrt{\alpha_{_{21}}}&\sqrt{\alpha_{_{22}}}V
\end{array}\right)
\left(\begin{array}{cc}1 & 0 \\ 0 & V_{_{11}}^{*}V_{_{12}}
\end{array}\right).\]
Hence
\[|\mbox{det}(a)|=\left|\mbox{det}\left(\begin{array}{cc}\sqrt{\alpha_{_{11}}}&\sqrt{\alpha_{_{12}}}\\
                       \sqrt{\alpha_{_{21}}}&\sqrt{\alpha_{_{22}}}V
\end{array}\right)\right|.\]
Sine $V$ is unitary, it is unitary equivalent to a diagonal matrix, with diagonal elements $v_{_{1}},\ldots,v_{_{n}}$ of modulus 1. Hence
\[|\mbox{det}(a)|=\prod_{i=1}^{n}\left|\mbox{det}\left(\begin{array}{cc}\sqrt{\alpha_{_{11}}}&\sqrt{\alpha_{_{12}}}\\
                       \sqrt{\alpha_{_{21}}}&\sqrt{\alpha_{_{22}}}v_{_{i}}
\end{array}\right)\right|,\]
which shows that 
\[\left|\sqalialj{11}{22}-\alpha_{_{12}}\right|^{n}\leq{}|\mbox{det}(a)|\leq{}\left(\sqalialj{11}{22}+\alpha_{_{12}}\right)^{n}.\]
Similarly
\[\left|\sqalialj{33}{44}-\alpha_{_{34}}\right|^{n}\leq{}|\mbox{det}(d)|\leq{}\left(\sqalialj{33}{44}+\alpha_{_{34}}\right)^{n}.\]
Since $|\mbox{det}(a)|=|\mbox{det}(d)|$ it follows that the two intervals
\[I_{_{1}}^{0}=\left[\left|\sqalialj{11}{22}-\alpha_{_{12}}\right|,\sqalialj{11}{22}+\alpha_{_{12}}\right],\]
\[I_{_{2}}^{0}=\left[\left|\sqalialj{33}{44}-\alpha_{_{34}}\right|,\sqalialj{33}{44}+\alpha_{_{34}}\right]\]
have non-empty intersection. Hence also $I_{_{1}}\cap{}I_{_{2}}\neq\emptyset,$ where $I_{_{1}}$ and $I_{_{2}}$ are the two (possibly larger) intervals
\[I_{_{1}}=\left[\alpha_{_{12}}-\sqalialj{11}{22},\alpha_{_{12}}+\sqalialj{11}{22}\right],\]
\[I_{_{2}}=\left[\alpha_{_{34}}-\sqalialj{33}{44},\alpha_{_{34}}+\sqalialj{33}{44}\right].\]
Since $\alpha_{_{11}}+\alpha_{_{22}}+2\alpha_{_{12}}=\frac{1}{n}\|a\|_{_{2}}^{2}=\frac{1}{n}\|d\|_{_{2}}^{2}=\alpha_{_{33}}+\alpha_{_{44}}+2\alpha_{_{34}}$ the two intervals
\[J_{_{1}}=\alpha_{_{11}}+\alpha_{_{22}}+2\alpha_{_{12}}-2I_{_{1}}=
\left[(\sqrt{\alpha_{_{11}}}-\sqrt{\alpha_{_{22}}})^{2},(\sqrt{\alpha_{_{11}}}+\sqrt{\alpha_{_{22}}})^{2}\right]\]
\[J_{_{2}}=\alpha_{_{33}}+\alpha_{_{44}}+2\alpha_{_{34}}-2I_{_{2}}=
\left[(\sqrt{\alpha_{_{33}}}-\sqrt{\alpha_{_{44}}})^{2},(\sqrt{\alpha_{_{33}}}+\sqrt{\alpha_{_{44}}})^{2}\right]\]
also intersect. Hence
\[\left[|\sqrt{\alpha_{_{11}}}-\sqrt{\alpha_{_{22}}}|,\sqrt{\alpha_{_{11}}}+\sqrt{\alpha_{_{22}}}\right]\cap\left[|\sqrt{\alpha_{_{33}}}-\sqrt{\alpha_{_{44}}}|,\sqrt{\alpha_{_{33}}}+\sqrt{\alpha_{_{44}}}\right]\neq{}\emptyset,\]
which  is equivalent to the ``four--angle'' inequality for $\sqrt{\alpha_{_{11}}},$ $\sqrt{\alpha_{_{22}}},$ $\sqrt{\alpha_{_{33}}}$ and $\sqrt{\alpha_{_{44}}},$ stated in the lemma.
\end{proof}
\begin{cor}
\label{fire4cor46}
If $u\in{}M_{_{4}}({\Bbb C})$ is unitary and $|u_{_{ij}}|=|u_{_{ji}}|,$ $i,j=1,2,3,4,$ then
\[|u_{_{ii}}|\leq{}|u_{_{jj}}|+|u_{_{kk}}|+|u_{_{ll}}|,\;\;\;\{i,j,k,l\}=\{1,2,3,4\}.\]
\end{cor}
\begin{proof}Set $n=1$ in lemma \ref{fire4lem45}.\end{proof}
For the rest of this section $D$ denotes the double stochastic matrix with entries
\[D_{_{ij}}=\left\{\begin{array}{cl}\delta_{_{i}}&\mbox{ if } i=j\\
                                     \sqalialj{i}{j}&\mbox{ if } i\neq{}j
                   \end{array}\right.\]
we  then have the following
\begin{lemma}
\label{fire4lem47}If $u \in{}M_{_{4}}({\Bbb C})$ is a unitary, such that $|\uij{i}{j}| = D_{_{ij}},\; i,j = 1,2,3,4,$ then $u \neq{}u^{t}.$
\end{lemma}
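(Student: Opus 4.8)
The statement asserts that no unitary $u \in M_4(\mathbb{C})$ with modulus matrix $(|u_{ij}|) = D$ can be symmetric. My plan is to argue by contradiction: suppose $u = u^t$, so that $u$ is a \emph{symmetric} unitary with $|u_{ij}| = D_{ij}$. The key structural fact about symmetric unitaries is that $u^* = \bar u$ (since $u^t = u$ gives $u^* = \overline{u^t} = \bar u$), so $u\bar u = uu^* = I$; equivalently, writing $u = (u_{ij})$, the rows of $u$ are orthonormal \emph{and} $\sum_k u_{ik}u_{jk} = \delta_{ij}$ (no conjugation on the second factor). I would first record the diagonal entries: $|u_{ii}| = \delta_i$ with $\delta_1 \geq \delta_2 \geq \delta_3 \geq \delta_4 > 0$ (the strict positivity because $\delta_i = \sqrt{\alpha_i^2 - \lambda\alpha_i + 1} > 0$ on the relevant range), and the off-diagonal entries $|u_{ij}| = \sqrt{\alpha_i\alpha_j}$.

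The core of the argument should exploit the relation $\sum_k u_{ik}u_{jk} = 0$ for $i \neq j$ together with the double-stochasticity identity $\delta_i^2 + \sum_{j \neq i}\alpha_i\alpha_j = 1$ (Remark \ref{fire4matrix}, i.e. rows of $D$-squared sum to $1$). Consider the $(1,2)$ relation: $u_{11}u_{21} + u_{12}u_{22} + u_{13}u_{23} + u_{14}u_{24} = 0$. Using $|u_{11}u_{21}| = \delta_1\sqrt{\alpha_1\alpha_2}$, $|u_{12}u_{22}| = \sqrt{\alpha_1\alpha_2}\,\delta_2$, $|u_{13}u_{23}| = \alpha_3\sqrt{\alpha_1\alpha_2}$, $|u_{14}u_{24}| = \alpha_4\sqrt{\alpha_1\alpha_2}$, we can factor out $\sqrt{\alpha_1\alpha_2}$ and reduce to a closure condition on four complex numbers of moduli $\delta_1, \delta_2, \alpha_3, \alpha_4$. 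The plan is to extract from all six such relations (and the analogous ones from $u\bar u = I$ versus $u u^* = I$) enough polygon/triangle-type constraints on the $\delta_i$ and $\alpha_i$ to force a contradiction. The natural comparison point is Lemma \ref{fire2lem29} / Lemma \ref{33existence}: a symmetric unitary imposes \emph{more} than the generic "four-angle" inequality of Lemma \ref{fire4lem45}; I expect that symmetry pins down the phases so rigidly (each $2\times 2$ minor of $a$ and of $d$ in the block decomposition $u = \begin{pmatrix} a & b \\ c & d\end{pmatrix}$ must have its off-diagonal phases conjugate-paired) that the interval intersection in Lemma \ref{fire4lem45} degenerates to a single point, which by Lemma \ref{fire4lem44}(2)--(3) and strict inequality there is impossible unless some $\alpha_i = \alpha_j$, and then one chases the resulting forced equalities to a contradiction with $\lambda > 2$.

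Concretely, I would run the block argument of Lemma \ref{fire4lem45}'s proof once more but tracking symmetry: with $u = u^t$ we get $b = c^t$, and $a = a^t$, $d = d^t$ are themselves symmetric $2\times 2$ matrices. A symmetric $2\times 2$ matrix with prescribed entry-moduli $\begin{pmatrix} \delta_1 & \sqrt{\alpha_1\alpha_2} \\ \sqrt{\alpha_1\alpha_2} & \delta_2\end{pmatrix}$ that arises as a corner block of a unitary forces $|\det a| \in \{|\sqrt{\alpha_1\alpha_2} \cdot e^{i\theta} - \delta_1\delta_2 e^{i(\theta_1+\theta_2)}|\}$ — but here $u_{12} = u_{21}$ as complex numbers, not just in modulus, so the two off-diagonal phases coincide, and $|\det a| = |\delta_1\delta_2 e^{i(\phi_1+\phi_2)} - \alpha_1\alpha_2 e^{2i\psi}|$ ranges over $[\,|\delta_1\delta_2 - \alpha_1\alpha_2|,\ \delta_1\delta_2 + \alpha_1\alpha_2\,]$ with \emph{both} endpoints achievable but the constraint now coupled to the same phase freedom appearing in $d$. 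Matching $|\det a| = |\det d|$ and $\|a\|_2 = \|d\|_2$ as before, and then feeding in $\delta_1^2 + \alpha_1\alpha_2 + \alpha_1\alpha_3 + \alpha_1\alpha_4 = 1$ etc., should collapse the admissible set. The main obstacle I anticipate is the bookkeeping of phases: showing that symmetry genuinely removes a real degree of freedom from each block and that this removal is incompatible with the strict inequalities of Lemma \ref{fire4lem44}. I would handle this by reducing to the worst case $n=1$ of the Lemma \ref{fire4lem45} machinery, supplemented with the extra equations $\sum_k u_{ik}u_{jk} = \delta_{ij}$, and then a short case analysis on which $\alpha_i$'s can be equal, closed off using Lemma \ref{fire4lem44}(3) (equality $|\delta_i - \delta_j| = \alpha_i - \alpha_j$ iff $\alpha_i = \alpha_j$) to propagate equalities until the system is overdetermined.
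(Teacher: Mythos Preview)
Your plan has the right overall shape (assume $u=u^t$, extract orthogonality constraints, invoke Lemma~\ref{fire4lem44}) but contains a technical error and, more importantly, lacks the key step that makes the argument close.

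First, the relation ``$\sum_k u_{ik}u_{jk}=\delta_{ij}$ (no conjugation)'' does not follow from $u=u^t$ and unitarity: $u^*=\bar u$ gives $u\bar u=I$, which in coordinates is $\sum_k u_{ik}\overline{u_{kj}}=\delta_{ij}$, and with $u_{kj}=u_{jk}$ this is still $\sum_k u_{ik}\overline{u_{jk}}=\delta_{ij}$, \emph{with} conjugation. Your no-conjugation relation would amount to $uu^t=u^2=I$, which is false for a general symmetric unitary. For the moduli this slip is harmless (your factoring of $\sqrt{\alpha_1\alpha_2}$ out of the $(1,2)$ relation is correct either way), but for any phase analysis it matters.

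Second, and this is the real gap, the block-determinant route does not gain the leverage you hope for. With $a=a^t$ one has $\det a=u_{11}u_{22}-u_{12}^2$, and as the two diagonal phases and the single off-diagonal phase vary, $|\det a|$ still sweeps the full interval $[\,|\delta_1\delta_2-\alpha_1\alpha_2|,\ \delta_1\delta_2+\alpha_1\alpha_2\,]$; likewise for $d$. The coupling through $b=c^t$ does constrain things, but not in any way that makes the Lemma~\ref{fire4lem45} interval intersection collapse to a point, and you have not said what it actually forces. The ``short case analysis on equal $\alpha_i$'s'' is not reached, because no equality has been derived.

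What the paper does instead is more direct. After conjugating by a diagonal unitary $w$ (replacing $u$ by $wuw$, which preserves both $u=u^t$ and all moduli) one arranges $u_{12},u_{13},u_{14}\geq 0$. Then only three free unit phases remain: write $u_{23}=\sqrt{\alpha_2\alpha_3}\,\sigma$, $u_{24}=\sqrt{\alpha_2\alpha_4}\,\rho$, $u_{34}=\sqrt{\alpha_3\alpha_4}\,\tau$ with $|\rho|=|\sigma|=|\tau|=1$. The three row-orthogonality relations for the pairs $(1,2),(1,3),(2,3)$, after dividing out the common $\sqrt{\alpha_i\alpha_j}$, become three scalar equations in $d_1,\ldots,d_4,\rho,\sigma,\tau$. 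A suitable linear combination (multiply each by an appropriate power of $\sigma$ and add) forces $\mbox{Im}(\rho\bar\sigma+\sigma\bar\tau+\tau\bar\rho)=0$; this quantity is twice the signed area of the triangle with vertices $\rho,\sigma,\tau$ on the unit circle, so the three unit scalars are collinear and hence at least two coincide. Feeding, say, $\sigma=\rho$ back into the $(1,2)$ equation gives $|\bar d_1+d_2|=\alpha_3+\alpha_4$, hence $\delta_1+\delta_2\geq\alpha_3+\alpha_4$, contradicting Lemma~\ref{fire4lem44}(2). Your proposal does not contain this normalization-then-area argument, and without it (or an explicit substitute) the proof does not close.
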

\begin{proof}Suppose $u = u^{t}.$ By exchanging $u$ with $wuw$ for a suitably chosen diagonal unitary operator, $w,$ we can obtain $u_{_{12}},u_{_{13}},u_{_{14}}\geq{}0.$ So $u$ is of the form
\[\left(\begin{array}{cccc}
d_{_{1}}&\sqalialj{1}{2}&\sqalialj{1}{3}&\sqalialj{1}{4}\\
\sqalialj{1}{2}&d_{_{2}}&\sqalialj{2}{3}\sigma&\sqalialj{2}{4}\rho\\
\sqalialj{1}{3}&\sqalialj{2}{3}\sigma&d_{_{3}}&\sqalialj{3}{4}\tau\\
\sqalialj{1}{4}&\sqalialj{2}{4}\rho&\sqalialj{3}{4}\tau&d_{_{4}}
\end{array}\right)\]
where $\rho,\sigma,\tau\in{}{\Bbb C},$ $|\rho|=|\sigma|=|\tau|=1$ and $|d_{_{i}}|=\delta_{_{i}},\;i=1,2,3,4.$ Orthogonality implies
\[\begin{array}{clcl}
(1) & \overline{d}_{_{1}} + d_{_{2}} + \alpha_{_{3}}\sigma + \alpha_{_{4}}\rho{}& = & 0\\
(2) & d_{_{1}} +\overline{d}_{_{3}} + \alpha_{_{2}}\overline{\sigma}+ \alpha_{_{4}}\overline{\tau} & = & 0\\
(3) & \overline{d}_{_{2}}\sigma{} + d_{_{3}}\overline{\sigma} + \alpha_{_{1}} + \alpha_{_{4}}\overline{\rho}{\tau} & = & 0
\end{array}\]
which is equivalent to
\[\begin{array}{clcl}
(1') & \overline{d}_{_{1}}\overline{\sigma} + d_{_{2}}\overline{\sigma} + \alpha_{_{3}}+ \alpha_{_{4}}\rho{}\overline{\sigma}& = & 0\\
(2') & d_{_{1}}\sigma{} +\overline{d}_{_{3}}\sigma{} + \alpha_{_{2}}+ \alpha_{_{4}}\overline{\tau}\sigma{} & = & 0\\
(3') & \overline{d}_{_{2}}\sigma{} + d_{_{3}}\overline{\sigma} + \alpha_{_{1}} + \alpha_{_{4}}\overline{\rho}{\tau} & = & 0
\end{array}\]
Hence the sum of the left-hand sides of (1'), (2') and (3') is 0, which implies
\[2\mbox{Re}(d_{_{1}}\sigma + \overline{d}_{_{2}}\sigma + \overline{d}_{_{3}}\sigma ) + \alpha_{_{1}} + \alpha_{_{2}} +  \alpha_{_{3}} + \alpha_{_{4}}(\rho\overline{\sigma} + \overline{\tau}\sigma + \overline{\rho}\sigma  ) = 0\Rightarrow\mbox{Im}(\rho\overline{\sigma} + \overline{\tau}\sigma + \overline{\rho}\sigma)=0.\]
Let $T$ be the triangle with vertices at $\rho,\sigma,\tau.$ Then
\[\mbox{area}(T) = \mbox{$\frac{1}{2}$}|\mbox{Im}((\tau - \sigma )(\overline{\rho} - \overline{\sigma}))|= \mbox{$\frac{1}{2}$}|\mbox{Im}(\rho\overline{\sigma} + \overline{\tau}\sigma + \overline{\rho}\sigma)|=0.\] 
Hence $\tau, \sigma$  and $\rho$ lie on a straight line, and since $|\rho|=|\sigma|=|\tau|=1,$ at least two are equal.

\noindent{}Take for instance the case $\sigma = \rho.$ In this case (1) states $\overline{d}_{_{1}} + d_{_{2}}  + (\alpha_{_{3}} + \alpha_{_{4}})\sigma = 0 \Rightarrow{}$

\[\delta_{_{1}} + \delta_{_{2}} \geq |\overline{d}_{_{1}} + d_{_{2}}| = \alpha_{_{3}} + \alpha_{_{4}}.\]
Which contradicts lemma \ref{fire4lem44} 2.

\noindent{}The other cases are treated similarly.
\end{proof}
\begin{lemma}
\label{fire4lem10}
Let $a,b,c$ be $n\times{}n$ matrices, such that $aa^{*}+bb^{*}=1$ and $a^{*}a+c^{*}c=1.$ Then $|\det(b)|=|\det(c)|.$ If $b$ is invertible there is one and only one $n\times{}n$ matrix $d$ such that
\[\left(\begin{array}{cc}a&b\\c&d\end{array}\right)\]
is a unitary matrix, and $d$ is given by
\[d=-(c^{*})^{-1}a^{*}b=-ca^{*}(b^{*})^{-1}.\]
\end{lemma}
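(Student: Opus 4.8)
The plan is to verify each assertion of Lemma \ref{fire4lem10} by direct linear-algebra manipulation, using only the two given relations $aa^{*}+bb^{*}=1$ and $a^{*}a+c^{*}c=1$.

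\textbf{Step 1: $|\det(b)|=|\det(c)|$.} First I would observe that $bb^{*}=1-aa^{*}$ and $c^{*}c=1-a^{*}a$. Since $aa^{*}$ and $a^{*}a$ have the same list of eigenvalues (counted with multiplicity) for any square matrix $a$, the matrices $1-aa^{*}$ and $1-a^{*}a$ also have the same eigenvalue list, hence the same determinant. Therefore $|\det(b)|^{2}=\det(bb^{*})=\det(1-aa^{*})=\det(1-a^{*}a)=\det(c^{*}c)=|\det(c)|^{2}$, and taking square roots gives the claim. (This is the same argument already used for $|\det(a)|=|\det(d)|$ in the proof of Lemma \ref{fire4lem45}.)

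\textbf{Step 2: existence and uniqueness of $d$ when $b$ is invertible.} I would write the unitarity of the $2\times2$ block matrix $M=\left(\begin{array}{cc}a&b\\c&d\end{array}\right)$ as the conditions that the rows are orthonormal: from $MM^{*}=1$ we need $aa^{*}+bb^{*}=1$ (given), $cc^{*}+dd^{*}=1$, and the off-diagonal relation $ac^{*}+bd^{*}=0$. From the last equation, since $b$ is invertible, $d^{*}=-b^{-1}ac^{*}$, i.e. $d=-c a^{*}(b^{*})^{-1}$, which shows uniqueness. For the alternate formula $d=-(c^{*})^{-1}a^{*}b$ I would first need $c$ invertible: from $c^{*}c=1-a^{*}a$ and $|\det c|=|\det b|\neq0$ this holds, and then one checks the two expressions agree — equivalently that $c^{*}\bigl(ca^{*}(b^{*})^{-1}\bigr)=a^{*}b$, i.e. $c^{*}ca^{*}=a^{*}bb^{*}$, i.e. $(1-a^{*}a)a^{*}=a^{*}(1-aa^{*})$, which is the trivial identity $a^{*}-a^{*}aa^{*}=a^{*}-a^{*}aa^{*}$.

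\textbf{Step 3: the candidate $d$ really makes $M$ unitary.} This is the main point to be careful about. With $d:=-ca^{*}(b^{*})^{-1}$ I would verify $MM^{*}=1$ block by block. The $(1,1)$ entry $aa^{*}+bb^{*}=1$ is given; the $(1,2)$ entry is $ac^{*}+bd^{*}=ac^{*}+b(-b^{-1}ac^{*})=0$ (and $(2,1)$ is its adjoint); the $(2,2)$ entry is $cc^{*}+dd^{*}=cc^{*}+ca^{*}(b^{*})^{-1}b^{-1}ac^{*}=cc^{*}+ca^{*}(bb^{*})^{-1}ac^{*}=cc^{*}+ca^{*}(1-aa^{*})^{-1}ac^{*}$, and here I would use $c^{*}c=1-a^{*}a$ together with the push-through identity $a^{*}(1-aa^{*})^{-1}=(1-a^{*}a)^{-1}a^{*}$ (valid since $1-aa^{*}$ is invertible because $b$ is) to rewrite this as $c\bigl(1+(1-a^{*}a)^{-1}a^{*}a\bigr)c^{*}=c(1-a^{*}a)^{-1}c^{*}=c(c^{*}c)^{-1}c^{*}$; since $c$ is invertible this equals $1$. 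The step that needs the most attention is justifying invertibility of $1-aa^{*}$ and of $c$ from the hypothesis that $b$ is invertible, and the clean use of the push-through identity; everything else is routine. One should also note $M^{*}M=1$ follows automatically once $MM^{*}=1$ for finite square matrices, so no separate check is needed.
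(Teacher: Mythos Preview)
Your proof is correct and follows essentially the same approach as the paper: the determinant equality via the shared eigenvalue list of $aa^{*}$ and $a^{*}a$, uniqueness from an off-diagonal block of unitarity, and existence via the push-through identity $a^{*}(1-aa^{*})^{-1}=(1-a^{*}a)^{-1}a^{*}$. The only cosmetic difference is that the paper verifies $u^{*}u=1$ (computing $b^{*}b+d^{*}d$) starting from $d=-(c^{*})^{-1}a^{*}b$, whereas you verify $MM^{*}=1$ (computing $cc^{*}+dd^{*}$) starting from $d=-ca^{*}(b^{*})^{-1}$; the paper also reads off the two formulas for $d$ directly from the two off-diagonal unitarity conditions rather than checking their equality algebraically as you do.
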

\begin{proof}
Since $a^{*}a$ and $aa^{*}$ have the same list of eigenvalues
\[|\det(b)|^{2}=\det(1-aa^{*})=\det(1-a^{*}a)=|\det(c)|^{2}.\]
Assume now $|\det(b)|=|\det(c)|\neq{}0.$ If 
\[u=\left(\begin{array}{cc}a&b\\c&d\end{array}\right)\]
is unitary, then $a^{*}b+c^{*}d=0$ and $ca^{*}+db^{*}=0,$ hence
\[d=-(c^{*})^{-1}a^{*}b\;\mbox{ and }d=-ca^{*}(b^{*})^{-1}.\]
This proves uniqueness of $d,$ and the stated formulas for $d.$ 

\noindent{}To prove existence, set $d=-(c^{*})^{-1}a^{*}b.$ Then $a^{*}b+c^{*}d=0.$ By the assumptions $a^{*}a+c^{*}c=1.$ Moreover, since $af(a^{*}a)=f(aa^{*})a$ for any function $f$ on $\mbox{sp}(a^{*}a)=\mbox{sp}(aa^{*}),$ we get
\[\begin{array}{lcl}
b^{*}b+d^{*}d & = &b^{*}(1+a(c^{*}c)^{-1}a^{*})b\\
&=&b^{*}(1+a(1-a^{*}a)^{-1}a^{*})b\\
&=&b^{*}(1+(1-aa^{*})^{-1}aa^{*})b\\
&=&b^{*}(1-aa^{*})^{-1}b\\
&=&b^{*}(bb^{*})^{-1}b\\
&=&1.
\end{array}\]
Hence $u^{*}u=1,$ i.e. $u$ is unitary.
\end{proof}
\begin{prop}
\label{fire4prop49}If there exists a choice of signs such that $\delta_{_{1}} \pm{}\delta_{_{2}} \pm{}\delta_{_{3}} \pm{}\delta_{_{4}} = 0,$ then there exists a selfadjoint unitary $4\times{}4-$matrix $u,$ with Tr($u$) = 0 and $|u_{_{ij}}|=D_{_{ij}}.$
\end{prop}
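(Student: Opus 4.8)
The plan is to construct the required unitary $u$ explicitly, in $2\times2$ block form, using Lemma~\ref{fire4lem10} to fill in the fourth block once the first three are chosen. First I would use Lemma~\ref{fire4lem44}~3 (together with the hypothesis $\delta_{_1}\pm\delta_{_2}\pm\delta_{_3}\pm\delta_{_4}=0$ for some signs, and the ordering $\delta_{_1}\ge\delta_{_2}\ge\delta_{_3}\ge\delta_{_4}\ge0$) to fix, via Lemma~\ref{fire4lem41}, which of the two relations $\delta_{_1}-\delta_{_2}-\delta_{_3}-\delta_{_4}=0$ or $\delta_{_1}-\delta_{_2}-\delta_{_3}+\delta_{_4}=0$ actually holds; the two cases will be handled in parallel. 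In either case the four numbers $\delta_{_1},\delta_{_2},\delta_{_3},\delta_{_4}$ satisfy a linear relation with signs $\pm1$, so in particular (by Corollary~\ref{fire4cor46}'s inequality, which is forced here) the entries $D_{_{ij}}$ are compatible with a doubly stochastic square — indeed $D$ itself is doubly stochastic by the Remark preceding Lemma~\ref{fire4lem44}.

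Next I would build $u=\left(\begin{array}{cc}a&b\\c&d\end{array}\right)$ with $a,b,c,d\in M_{_2}({\Bbb C})$ as follows. Write
\[a=\left(\begin{array}{cc}\delta_{_1}&\sqrt{\alpha_{_1}\alpha_{_2}}\\\sqrt{\alpha_{_1}\alpha_{_2}}&-\delta_{_2}\end{array}\right),\qquad
d=\left(\begin{array}{cc}\delta_{_3}&\sqrt{\alpha_{_3}\alpha_{_4}}\,\varepsilon\\\sqrt{\alpha_{_3}\alpha_{_4}}\,\varepsilon&-\delta_{_4}\end{array}\right),\]
with $\varepsilon=\pm1$ chosen according to which of the two sign relations holds, and fill $b,c$ with the remaining entries $\sqrt{\alpha_{_i}\alpha_{_j}}$ decorated by suitable unimodular phases. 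The off-diagonal blocks $b$ and $c$ should be chosen proportional to a common $2\times2$ unitary $V$ (up to scalar phases on each entry), so that $bb^{*}$, $cc^{*}$ are scalar multiples of suitable projections matching $1-aa^{*}$ and $1-a^{*}a$. Concretely, the conditions $aa^{*}+bb^{*}=1$ and $a^{*}a+c^{*}c=1$ become, entry by entry, exactly the row/column sum-one conditions expressing that $D$ is doubly stochastic — these are already known — plus two orthogonality constraints ($a^{*}b+c^{*}d=0$ is what Lemma~\ref{fire4lem10} then gives us for free once $b$ is invertible). The selfadjointness $u=u^{*}$ and the trace condition $\mathrm{Tr}(u)=\delta_{_1}-\delta_{_2}+\delta_{_3}-\delta_{_4}=0$ will be arranged by the sign choices: note $\delta_{_1}-\delta_{_2}+\delta_{_3}-\delta_{_4}$ equals $f(0)$ or $g(0)+2\delta_{_4}$... more precisely, in case $\delta_{_1}-\delta_{_2}-\delta_{_3}+\delta_{_4}=0$ we take the diagonal $(\delta_{_1},-\delta_{_2},-\delta_{_3},\delta_{_4})$ (trace $0$), and in case $\delta_{_1}-\delta_{_2}-\delta_{_3}-\delta_{_4}=0$ we take $(\delta_{_1},-\delta_{_2},-\delta_{_3},-\delta_{_4})$ reordered so the trace vanishes; the ordering of rows/columns is immaterial since permuting them permutes $D$ in the allowed way.

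The main obstacle I expect is matching the \emph{phases} on $b$ and $c$ so that all three orthogonality relations among the rows (equivalently, the single relation $a^{*}b+c^{*}d=0$ plus $b^{*}b+d^{*}d=1$) hold simultaneously while keeping $u$ selfadjoint — this is precisely the computation that, in the $n=1$ case, ran aground on a triangle-area obstruction in Lemma~\ref{fire4lem47}. The point is that selfadjointness here is \emph{not} the same as symmetry $u=u^{t}$: a selfadjoint $u$ with $|u_{_{ij}}|=D_{_{ij}}$ needs $u_{_{ij}}=\overline{u_{_{ji}}}$, and the freedom in choosing the phases of the off-diagonal entries (three independent phases $\rho,\sigma,\tau$, subject only to $|\rho|=|\sigma|=|\tau|=1$) is enough to satisfy the orthogonality relations precisely when the four $\delta$'s satisfy a $\pm1$ linear relation — which is our hypothesis. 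So the hard step is to solve the system
\[\overline{d}_{_1}+d_{_2}+\alpha_{_3}\sigma+\alpha_{_4}\rho=0,\quad
d_{_1}+\overline{d}_{_3}+\alpha_{_2}\overline{\sigma}+\alpha_{_4}\overline{\tau}=0,\quad
\overline{d}_{_2}\sigma+d_{_3}\overline{\sigma}+\alpha_{_1}+\alpha_{_4}\overline{\rho}\tau=0\]
with $d_{_i}=\pm\delta_{_i}$ real (the chosen diagonal), $|\rho|=|\sigma|=|\tau|=1$; I would exhibit an explicit solution by taking $\sigma=1$ (so $d_{_1},d_{_2},d_{_3}$ real forces the first relation to read $d_{_1}+d_{_2}+\alpha_{_3}+\alpha_{_4}\mathrm{Re}\,\rho=0$ with the chosen signs making $d_{_1}+d_{_2}=-(\delta_{_1}-\delta_{_2})$ etc.), solving the resulting equations for $\mathrm{Re}\,\rho,\mathrm{Re}\,\tau$ and checking $|\mathrm{Re}\,\rho|\le1$, $|\mathrm{Re}\,\tau|\le1$ using Lemma~\ref{fire4lem44}~2 and~3. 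Once $a,b,c$ are in hand with $b$ invertible (invertibility of $b$ follows since $\det(aa^{*})=\delta_{_1}^2\delta_{_2}^2-(\cdots)$... which is nonzero because $\det a\ne0$ as $a$ has nonzero off-diagonal and the $\delta$'s are positive), Lemma~\ref{fire4lem10} produces the unique $d$ completing $u$ to a unitary, and a short check — using $d=-ca^{*}(b^{*})^{-1}$ and the reality/phase choices — shows this $d$ has the prescribed moduli $D_{_{3j}},D_{_{4j}}$ and that $u$ is selfadjoint with vanishing trace. This completes the proof.
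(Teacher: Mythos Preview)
Your plan has the right shape---block form, Lemma~\ref{fire4lem10} to complete the last block---but the execution has a genuine gap, and it is exactly the step you flag as ``the main obstacle.''

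The paper's key move, which you are missing, is this: rather than prescribing both $a$ \emph{and} the diagonal of $d$ in advance and then trying to solve a system of three phase equations, you should prescribe only $a$ (with real diagonal $\epsilon_{_1},\epsilon_{_2}$ where $|\epsilon_{_i}|=\delta_{_i}$ and $\sum\epsilon_{_i}=0$) and $c$ (with a single unknown pair of phases $\sigma,\tau$), then set $b=c^{*}$. Selfadjointness of the top-left $3\times3$ part is then automatic. The \emph{only} constraint at this stage is orthogonality of the first two columns, which is the single equation $\epsilon_{_1}+\epsilon_{_2}+\alpha_{_3}\sigma+\alpha_{_4}\tau=0$. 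This is solvable in unimodular $\sigma,\tau$ precisely because $|\epsilon_{_1}+\epsilon_{_2}|,\alpha_{_3},\alpha_{_4}$ satisfy the triangle inequality, which follows from Lemma~\ref{fire4lem44} parts~2 and~3 (the lower bound uses $\epsilon_{_1}+\epsilon_{_2}=-(\epsilon_{_3}+\epsilon_{_4})$). Then $d=-(c^{*})^{-1}a^{*}b=-b^{-1}ab$ is forced by Lemma~\ref{fire4lem10}, and one \emph{verifies}---using $\mathrm{Tr}(d)=-\mathrm{Tr}(a)$, the doubly-stochastic identities, and the crucial nondegeneracy $\epsilon_{_1}+\epsilon_{_2}\neq0$ (arranged by a permutation of indices)---that $d$ is selfadjoint with diagonal $\epsilon_{_3},\epsilon_{_4}$ and off-diagonal modulus $\sqrt{\alpha_{_3}\alpha_{_4}}$.

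Your attempt to fix $d$ first and then solve three equations in $\rho,\sigma,\tau$ is over-constrained, and the specific suggestion ``take $\sigma=1$'' fails outright: with the diagonal real, the first equation then forces $\rho$ real, hence $\rho=\pm1$, so you would need $d_{_1}+d_{_2}+\alpha_{_3}\pm\alpha_{_4}=0$---generically false. Your invertibility argument for $b$ is also off: what matters is $\det c\neq0$, and in the paper's setup this follows from $\sigma\neq\tau$, which is guaranteed by the \emph{strict} inequality $|\epsilon_{_1}+\epsilon_{_2}|<\alpha_{_3}+\alpha_{_4}$ from Lemma~\ref{fire4lem44}~2. Finally, the degenerate case $\delta_{_1}=\delta_{_2}=\delta_{_3}=\delta_{_4}=0$ (forcing all $\alpha_{_i}=1/\sqrt{3}$) needs a separate explicit matrix; your argument does not cover it.
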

\begin{proof}If $\delta_{_{1}}=\delta_{_{2}}=\delta_{_{3}}=\delta_{_{4}}=0,$ then all the $\alpha'$s are equal, and since $\sum_{i}\alpha_{_{i}}=\lambda,$
\[\alpha_{_{i}}=\mbox{$\frac{\lambda}{4}$},\;\;\;i=1,2,3,4.\]
Since $0=\delta_{_{i}}^{2}=\alpha_{_{i}}^{2}-\lambda\alpha_{_{i}}+1,$ it follows that $\lambda=\frac{4}{\sqrt{3}},$ and thus
\[\alpha_{_{1}}=\alpha_{_{2}}=\alpha_{_{3}}=\alpha_{_{4}}=\mbox{$\frac{1}{\sqrt{3}}$}.\]
In this case
\[u=\frac{1}{\sqrt{3}}\left(
\begin{array}{rrrr}
0&1&1&1\\
1&0&i&-i\\
1&-i&0&i\\
1&i&-i&0\end{array}\right)\]
is a selfadjoint unitary matrix with $\mbox{Tr}(u)=0,$ for which $|u_{_{ij}}|^{2}=D_{_{ij}}.$

\noindent{}Assume now, that not all the $\delta_{_{i}}'$s are $0.$ By the assumption we can choose $\epsilon_{_{1}},\epsilon_{_{2}},\epsilon_{_{3}},\epsilon_{_{4}}\in{\Bbb R},$ such that $|\epsilon_{_{i}}|=\delta_{_{i}}$ and
\[\epsilon_{_{1}}+\epsilon_{_{2}}+\epsilon_{_{3}}+\epsilon_{_{4}}=0.\]
The three numbers $\epsilon_{_{1}}+\epsilon_{_{2}},$ $\epsilon_{_{1}}+\epsilon_{_{3}}$ and $\epsilon_{_{2}}+\epsilon_{_{3}}$ cannot all be zero, because this would imply $\epsilon_{_{1}}=\epsilon_{_{2}}=\epsilon_{_{3}}=0$ and $\epsilon_{_{4}}=-(\epsilon_{_{1}}+\epsilon_{_{2}}+\epsilon_{_{3}})=0,$ which contradicts that $\delta_{_{i}}\neq{}0$ for some $i.$ Hence, by permuting the indices, we can obtain $\epsilon_{_{1}}+\epsilon_{_{2}}\neq{}0.$ This implies that $\epsilon_{_{3}}+\epsilon_{_{4}}=-(\epsilon_{_{1}}+\epsilon_{_{2}})\neq{}0.$

\noindent{}We seek a solution of the form
\[u=\left(\begin{array}{cccc} 
\epsilon_{_{1}}& \sqalialj{1}{2}&\sqalialj{1}{3}& \sqalialj{1}{4}\\[0.3cm]\sqalialj{1}{2}& \epsilon_{_{2}}&\sqalialj{2}{3}\;\overline{\sigma}&\sqalialj{2}{4}\;\overline{\tau} \\[0.3cm]
\sqalialj{1}{3}& \sqalialj{2}{3}\sigma&*&*\\[0.3cm]
\sqalialj{1}{4}& \sqalialj{2}{4}\tau&*&*\end{array}\right)=
\left(\begin{array}{cc}a & b\\c& d\end{array}\right),\]
$a,b,c,d\in{}M_{_{2}}({\Bbb C}),$ and $|\sigma|=|\tau|=1.$ Orthogonality of the 1'st and 2'nd column is equivalent to
\begin{equation}
\label{fire4eq49}
\epsilon_{_{1}} + \epsilon_{_{2}} + \alpha_{_{3}}\sigma + \alpha_{_{4}}\tau = 0\end{equation}
By lemma \ref{fire4lem44} 2,
\[|\epsilon_{_{1}} + \epsilon_{_{2}} |\leq{}|\epsilon_{_{1}}| + |\epsilon_{_{2}} |\leq{}\delta_{_{1}}+\delta_{_{2}}<\alpha_{_{3}}+\alpha_{_{4}},\]
and since $\epsilon_{_{1}}+\epsilon_{_{2}}=-(\epsilon_{_{3}}+\epsilon_{_{4}})$ lemma \ref{fire4lem44} 3 gives
\[|\epsilon_{_{1}} + \epsilon_{_{2}} |\geq{}\left||\epsilon_{_{3}}| - |\epsilon_{_{4}} |\right|=\left|\delta_{_{3}}-\delta_{_{4}}\right|\geq{}\alpha_{_{3}}-\alpha_{_{4}}.\]
Hence $|\epsilon_{_{3}} + \epsilon_{_{4}}|,$ $\alpha_{_{3}}$ and $\alpha_{_{4}}$ satisfy  the triangle inequality, so we can choose $\sigma$ and $\tau\in{\Bbb C},$ $|\sigma|=|\tau|=1,$ such that (\ref{fire4eq49}) holds. Moreover $|\epsilon_{_{1}} + \epsilon_{_{2}} |<\alpha_{_{3}}+\alpha_{_{4}}$ implies that $\sigma\neq{}\tau.$ Therefore the matrix
\[c=\left(\begin{array}{cc}\sqalialj{1}{3} & \sqalialj{2}{3}\sigma\\
\sqalialj{1}{4}& \sqalialj{2}{4}\tau\end{array}\right)\]
is invertible. By construction $a^{*}a+c^{*}c=1,$ and since $a=a^{*}$ and $b=c^{*},$ also $aa^{*}+bb^{*}=1.$

\noindent{}Let 
\[d=-(c^{*})^{-1}a^{*}b=-b^{-1}ab\]
as in lemma \ref{fire4lem10}. Since $bb^{*}=1-a^{2},$
\[d=b^{-1}ab=-b^{*}(bb^{*})^{-1}ab=-b^{*}(1-a^{2})^{-1}ab.\]
Hence $d=d^{*},$ i.e.
\[d=\left(\begin{array}{cc}d_{_{1}}&z\\\overline{z}&d_{_{2}}\end{array}\right),\;\;\;d_{_{1}},d_{_{2}}\in{\Bbb R},\;\;z\in{\Bbb C}.\]
Moreover
\[\mbox{Tr}(d)=-\mbox{Tr}(b^{-1}ab)=-\mbox{Tr}(a),\]
i.e.
\begin{equation}
\label{fire4prop49eqn1}d_{_{1}}+d_{_{2}}=-(\epsilon_{_{1}} + \epsilon_{_{2}})=\epsilon_{_{3}} + \epsilon_{_{4}}.
\end{equation}
Since $u=\left(\begin{array}{cc}a&b\\c&d\end{array}\right)$ is unitary by lemma \ref{fire4lem10},
\[\alpha_{_{1}}\alpha_{_{3}}+\alpha_{_{2}}\alpha_{_{3}}+d_{_{1}}^{2}+|z|^{2}=1,\]
\[\alpha_{_{1}}\alpha_{_{4}}+\alpha_{_{2}}\alpha_{_{4}}+|z|^{2}+d_{_{2}}^{2}=1,\]
and since the matrix 
\[D_{_{ij}}=\left\{\begin{array}{cl}
\delta_{_{i}}^{2},&i=j\\[0.2cm]
\alpha_{_{i}}\alpha_{_{j}},&i\neq{}j\end{array}\right.\]
is doubly stochastic, it follows that
\begin{equation}
\label{fire4prop49eqn2}
\begin{array}{lcl}
d_{_{1}}^{2}+|z|^{2} & = & \delta_{_{3}}^{2}+\alpha_{_{3}}\alpha_{_{4}}\\[0.2cm]
|z|^{2}+d_{_{2}}^{2}& = & \alpha_{_{3}}\alpha_{_{4}}+\delta_{_{4}}^{2}.
\end{array}\end{equation}
Hence 
\[(d_{_{1}}+d_{_{2}})(d_{_{1}}-d_{_{2}})=d_{_{1}}^{2}-d_{_{2}}^{2}=\delta_{_{3}}^{2}-\delta_{_{4}}^{2}=\epsilon_{_{3}}^{2}-\epsilon_{_{4}}^{2}=(\epsilon_{_{3}}+\epsilon_{_{4}})(\epsilon_{_{3}}-\epsilon_{_{4}}).\]
But $d_{_{1}}+d_{_{2}}=\epsilon_{_{3}}+\epsilon_{_{4}}\neq{}0.$ Thus
\begin{equation}
\label{fire4prop49eqn3}
d_{_{1}}-d_{_{2}}=\epsilon_{_{3}}-\epsilon_{_{4}},
\end{equation}
so by (\ref{fire4prop49eqn1}) and (\ref{fire4prop49eqn3}), $d_{_{1}}=\epsilon_{_{3}}$ and $d_{_{2}}=\epsilon_{_{4}}.$ Finally (\ref{fire4prop49eqn2}) gives $|z|^{2}=\alpha_{_{3}}\alpha_{_{4}}.$ Hence
\[u=\left(\begin{array}{cc}a&b\\c&d\end{array}\right)\]
is a selfadjoint unitary with
\[\mbox{Tr}(u)=\mbox{Tr}(a)+\mbox{Tr}(d)=0,\]
and 
\[|u_{_{ij}}|^{2}=D_{_{ij}},\;\;\;i,j=1,2,3,4.\]
\end{proof}
\begin{prop}
\label{fire4prop410}If there exists a unitary $u \in{} M_{_{4}}({\Bbb C})$ such that  $|\uij{i}{j}| = D_{_{ij}},$ then $u$ can be chosen as $u = \sqrt{1-\gamma^{2}}v + i\gamma{}1,$ where $v$ is selfadjoint unitary with $\mbox{Tr}(v) = 0$ and $\gamma\in[-1,1].$
\end{prop}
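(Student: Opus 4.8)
The plan is to start from an arbitrary unitary $u$ with $|u_{ij}|=D_{ij}$ and, using only operations that leave the moduli $|u_{ij}|$ fixed — multiplication on the left and right by diagonal unitaries, passage to $u^{*}$, $u^{t}$ or $\overline{u}$, and conjugation by a permutation matrix — to bring $u$ into the stated form. The first step is a reformulation of the target: $u=\sqrt{1-\gamma^{2}}\,v+i\gamma\,1$ with $v=v^{*}=v^{-1}$ and $\mbox{Tr}(v)=0$ is equivalent to the conjunction of \textbf{(A)} $u-u^{*}=2i\gamma\,1$ for some real $\gamma$, and \textbf{(B)} $\mbox{Re}\,\mbox{Tr}(u)=0$. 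Indeed, granting (A), the selfadjoint operator $h=\tfrac12(u+u^{*})$ commutes with $u$ and $uu^{*}=1$ forces $h^{2}=(1-\gamma^{2})\,1$; if $\gamma^{2}=1$ then $h=0$ and $u=\pm i\,1$, hence $|u_{ij}|=\sqrt{\alpha_{i}\alpha_{j}}=0$ for $i\neq j$, contradicting $\alpha_{i}>0$; so $\gamma^{2}<1$, $v=(1-\gamma^{2})^{-1/2}h$ is a selfadjoint unitary, and (B) is exactly $\mbox{Tr}(v)=0$. Thus it suffices to produce a representative of the given moduli satisfying (A) and (B).

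The second step is to pass to the $2\times 2$ block decomposition $u=\left(\begin{array}{cc}a&b\\c&d\end{array}\right)$, as in Lemma \ref{fire4lem45}, Lemma \ref{fire4lem47} and Proposition \ref{fire4prop49}. From $uu^{*}=1$ and $u^{*}u=1$ one has $aa^{*}+bb^{*}=1$ and $a^{*}a+c^{*}c=1$, so when $b$ is invertible Lemma \ref{fire4lem10} forces $d=-(c^{*})^{-1}a^{*}b$. A short preliminary reduction — permuting the four indices among the three pair-partitions if necessary (this keeps $D$ symmetric with positive off-diagonal entries), and checking that the degenerate case in which all three partitions give singular off-diagonal blocks forces extra vanishing incompatible with $\alpha_{i}>0$ — lets one assume $b$ is invertible.

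The heart of the argument is a phase normalization modelled on the proof of Proposition \ref{fire4prop49}. Using the freedom to replace $u$ by $\mbox{diag}(w_{1}',w_{1}'')\,u\,\mbox{diag}(w_{2}',w_{2}'')$ with $2\times 2$ diagonal unitary blocks, one normalizes so that $c=b^{*}$, so that the within-block off-diagonal entries of $a$ (resp.\ of $d$) are complex conjugates of one another, i.e.\ $u_{21}=\overline{u_{12}}$ and $u_{43}=\overline{u_{34}}$, and so that $\mbox{Im}(u_{11})=\mbox{Im}(u_{22})$ and $\mbox{Im}(u_{33})=\mbox{Im}(u_{44})$. This is precisely where the triangle-inequality / area computation of Lemma \ref{fire4lem47} re-enters: the orthogonality of the rows and of the columns of $u$, together with the special shape of $D$ (off-diagonal entry $\sqrt{\alpha_{i}}\sqrt{\alpha_{j}}$), leaves only the phase freedom just named, and a final cross-block comparison — using $c=b^{*}$ and $d=-(c^{*})^{-1}a^{*}b=-b^{-1}a^{*}b$ — forces $u_{43}=\overline{u_{34}}$ and $\mbox{Im}(u_{33})=\mbox{Im}(u_{11})$, so that all four diagonal entries share a common imaginary part $\gamma$; this is condition (A).

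Condition (B) then follows automatically: once $c=b^{*}$ and $a-a^{*}=2i\gamma I$ we have $a^{*}=a-2i\gamma I$, hence $d=-b^{-1}a^{*}b=-b^{-1}ab+2i\gamma I$, so $\mbox{Tr}(d)=-\mbox{Tr}(a)+4i\gamma$ and $\mbox{Tr}(u)=\mbox{Tr}(a)+\mbox{Tr}(d)=4i\gamma$ is purely imaginary. Combined with the first step this gives the proposition. I expect the main obstacle to be the third step: showing that the phases of $u$ can be adjusted, without disturbing the moduli, so that the anti-Hermitian matrix $u-u^{*}$ collapses onto the scalar $2i\gamma\,1$. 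Lemma \ref{fire4lem47} already rules out a symmetric representative ($u-u^{t}$ never vanishes), so one must instead argue that the unavoidable asymmetry of $u$ can always be absorbed into a common imaginary shift of the diagonal, and making this precise is exactly the delicate $2\times2$-block phase bookkeeping sketched above.
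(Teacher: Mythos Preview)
Your reformulation in step one and your endgame (once $c=b^{*}$, the formula $d=-b^{-1}a^{*}b$ gives $\mbox{Tr}(d)=-\mbox{Tr}(a)+4i\gamma$ and hence $u=s+i\gamma 1$ with $s=s^{*}$, $\mbox{Tr}(s)=0$) are exactly what the paper does. The gap is in your third step: you assert that diagonal phase conjugation lets one \emph{normalize} to $c=b^{*}$, but this is not a free normalization. A quick count already shows an obstruction: writing $u_{ji}/\overline{u_{ij}}=e^{i\theta_{ji}}$, the conditions $u_{31}=\overline{u_{13}}$, $u_{41}=\overline{u_{14}}$, $u_{32}=\overline{u_{23}}$, $u_{42}=\overline{u_{24}}$ and $u_{21}=\overline{u_{12}}$ become linear equations in the sums $\phi_{i}+\psi_{i}$ whose solvability requires $\theta_{31}+\theta_{42}=\theta_{41}+\theta_{32}$, a genuine constraint on the original $u$.

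The paper gets $c=b^{*}$ not by normalization but by a dichotomy. First it disposes of the case $\delta_{1}\pm\delta_{2}\pm\delta_{3}\pm\delta_{4}=0$ via Proposition~\ref{fire4prop49} (taking $\gamma=0$); this lets one assume the $\delta_{i}$ are not all equal, say $\delta_{1}>\delta_{2}$. After making the first row and first column real positive, a further $1\!+\!3$ block trick $\left(\begin{smallmatrix}a&b\\c&d\end{smallmatrix}\right)\mapsto\left(\begin{smallmatrix}ae^{i\theta}&b\\c&de^{-i\theta}\end{smallmatrix}\right)$ forces $\mbox{Im}(u_{11})=\mbox{Im}(u_{22})=:\gamma$, so $u_{11}+u_{22}=\epsilon_{1}+\epsilon_{2}+2i\gamma$ with $\epsilon_{1}+\epsilon_{2}$ real and, crucially, strictly positive (this uses $\delta_{1}>\delta_{2}$). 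Orthogonality of the first two columns and of the first two rows then both reduce to the \emph{same} equation $\epsilon_{1}+\epsilon_{2}+\alpha_{3}z+\alpha_{4}z'=0$, $|z|=|z'|=1$; since $\epsilon_{1}+\epsilon_{2}$, $\alpha_{3}$, $\alpha_{4}$ satisfy a strict triangle inequality (Lemma~\ref{fire4lem44}) there are exactly two solutions, a complex-conjugate pair. Hence either $(\sigma,\sigma')=(\mu,\mu')$, which forces $u=u^{t}$ and is excluded by Lemma~\ref{fire4lem47}, or $(\sigma,\sigma')=(\overline{\mu},\overline{\mu'})$, which is precisely $c=b^{*}$. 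That is the role of Lemma~\ref{fire4lem47}: it kills one branch of an unavoidable dichotomy, not a phase-bookkeeping device. Once you insert this argument (and the preliminary reduction via Proposition~\ref{fire4prop49}), your outline becomes the paper's proof.
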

\begin{proof}By proposition \ref{fire4prop49} we may assume that there is no choice of signs such that  $\delta_{_{1}} \pm{}\delta_{_{2}}\pm{} \delta_{_{3}}\pm{} \delta_{_{4}} = 0.$ Particularly not all $\delta'$s have the same value. If we relabel the $\delta'$s to get $\delta_{_{1}} > \delta_{_{2}},$  $u$ can be chosen to be
\[\left(\begin{array}{cccc}
\delta_{_{1}} & \sqalialj{1}{2} & \sqalialj{1}{3} & \sqalialj{1}{4}\\
\sqalialj{1}{2} & u_{_{22}} & * & * \\
\sqalialj{1}{3} & * & * & *\\ 
\sqalialj{1}{4} & * & * & *
\end{array}\right), \mbox{ where } |u_{_{22}}|=\delta_{_{2}}.\]
If $u = \left(\begin{array}{cc}a & b\\c& d\end{array}\right)$ is unitary, then so is $\left(\begin{array}{cc}ae^{i\theta} & b\\c& de^{-i\theta}\end{array}\right).$ Hence we may substitute $e^{i\theta}\delta_{_{1}}$ for $\delta_{_{1}}$ and $e^{-i\theta}u_{_{22}}$ for $u_{_{22}},$ where $\theta$ is chosen such that  $e^{i\theta}(\delta_{_{1}} + \overline{u}_{_{22}}) > 0.$ 

\noindent{}Put $u_{_{11}}' = e^{i\theta}\delta_{_{1}}$ and $u_{_{22}}' = e^{-i\theta}u_{_{22}},$ then Im($u_{_{11}}'$) = Im($u_{_{22}}'$)  and $u$ is now transformed to
\[\left(\begin{array}{cc} 
\begin{array}{cc} \epsilon_{_{1}}+i\gamma & \sqalialj{1}{2}\\[0.2cm]
                  \sqalialj{1}{2}& \epsilon_{_{2}}+i\gamma\end{array} & 
\begin{array}{cc} \sqalialj{1}{3}& \sqalialj{1}{4}\\[0.2cm]
                  \sqalialj{2}{3}\mu& \sqalialj{2}{4}\mu'\end{array}\\[0.5cm] 
\begin{array}{cc} \sqalialj{1}{3}&\sqalialj{2}{3}\sigma \\[0.2cm]
                  \sqalialj{1}{4}& \sqalialj{2}{4}\sigma'\end{array} & d
\end{array}\right),\]
where $\epsilon_{_{1}},\epsilon_{_{2}}\in{\Bbb R}.$

\noindent{}Orthogonality of the two first rows, respectively columns, gives\[\begin{array}{cl}
(1) &  \epsilon_{_{1}} + \epsilon_{_{2}} + \alpha_{_{3}}\sigma + \alpha_{_{4}}\sigma' = 0\\
(2) &  \epsilon_{_{1}} + \epsilon_{_{2}} + \alpha_{_{3}}\mu + \alpha_{_{4}}\mu' = 0
\end{array}\] 
Since $\epsilon_{_{1}}+\epsilon_{_{2}} > 0$ there are only two solutions to (1), hence either $\sigma=\mu$ (and $\sigma'=\mu'$) or $\sigma=\overline{\mu}$ (and $\sigma'=\overline{\mu}'$).

\noindent{}By lemma \ref{fire4lem44}  $\epsilon_{_{1}} + \epsilon_{_{2}}\leq{}\delta_{_{1}}+\delta_{_{2}}<\alpha_{_{3}}+\alpha_{_{4}},$ so the triangle \setlength{\unitlength}{1cm}
\raisebox{-0.8cm}{\begin{picture}(2,2)
\put(0.5,0.5){\line(1,2){0.5}}
\put(1.5,0.5){\line(-1,2){0.5}}
\put(0.5,0.5){\line(1,0){1}}
\put(0.5,0.2){$\epsilon_{_{1}}+\epsilon_{_{2}}$}
\put(0,1){$\alpha_{_{3}}\sigma$}
\put(1.3,1){$\alpha_{_{4}}\sigma'$}
\end{picture}}
 does not degenerate to a straight line. I.e. $\sigma$ and $\sigma'$ have non-trivial imaginary parts. 

\noindent{}We are now in one of the following situations
\[(a)\;\;\;\;u=\left(\begin{array}{cc}a+i\gamma{}1 & b \\
b^{t} & d \end{array}\right)\;\;\;\;\;\;\;
(b)\;\;\;\;u=\left(\begin{array}{cc}a+i\gamma{}1 & b \\
b^{*} & d \end{array}\right)\]
In case (a) $d$ is uniquely determined as
\[d = - (\overline{b})^{-1}(a - i\gamma{}1)b= -b^{t}(a - i\gamma{}1)(b^{*})^{-1},\]
hence $d = d^{t}\Rightarrow{}u=u^{t},$ which contradicts lemma \ref{fire4lem47}. I.e. we must be in case (b).

\noindent{}Here we get $d = -(b^{-1})(a - i\gamma{}1)b= - b^{*}(a - i\gamma{}1)(b^{*})^{-1},$ that is $ d = d_{_{sa}}  + i\gamma{}1,$  where $d_{_{sa}}$  is selfadjoint. Moreover 
\[\mbox{Tr}(\mbox{selfadjoint part of }u) = 0,\]
because
    \[\mbox{Tr}(d_{_{sa}}) = \mbox{Tr}(-b^{-1}ab) = - \mbox{Tr}(a).\]
Hence $u=s+i\gamma{}1,$ where $s$ is selfadjoint with $\mbox{Tr}(s)=0.$ But $u^{*}u=1$ implies $s^{*}s=(1-\gamma^{2})1.$ Thus $|\gamma|\leq{}1$ and $s=\sqrt{1-\gamma^{2}}v$ for a selfadjoint unitary $v$ with trace 0.
\end{proof}
\begin{theorem}
\label{fire4thm411}
Let $\lambda,\delta_{_{1}} \geq{} \delta_{_{2}} \geq{} \delta_{_{3}} \geq{} \delta_{_{4}} > 0$ be defined by $0 <\alpha_{_{1}} \leq{} \alpha_{_{2}}  \leq{}\alpha_{_{3}} \leq{} \alpha_{_{4}}$ as before. Then the following are equivalent
\begin{enumerate}
\item{}There exists a unitary $u\in{}M_{_{4}}({\Bbb C}),\; u = (\uij{i}{j})$  such that  $|\uij{i}{j}| = D_{_{ij}}.$
\item{}\begin{equation}
\label{fire4cond1}
\delta_{_{1}} - \delta_{_{2}} - \delta_{_{3}} - \delta_{_{4}} \leq{}0
\end{equation}
\begin{equation}
\label{fire4cond2}
\delta_{_{1}} - \delta_{_{2}} - \delta_{_{3}} + \delta_{_{4}} \geq{} 0.
\end{equation}
\end{enumerate}
\end{theorem}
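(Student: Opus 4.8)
The plan is to prove the two implications separately, leaning heavily on the structural results already established. For the direction $2 \Rightarrow 1$, I would combine Proposition \ref{fire4prop42} and Proposition \ref{fire4prop49}. First, observe that the doubly stochastic matrix $D$ of (\ref{fire4matrix}) has diagonal entries $\delta_i^2$, so $D_{ij} = \delta_i(0)^2$ for $i=j$ in the notation $\delta_i(t) = \sqrt{\delta_i^2 - t}$; the off-diagonal entries are $\sqrt{\alpha_i\alpha_j}$. Condition 2 is exactly statement 2 of Proposition \ref{fire4prop42} (with $t$ ranging over $[0,\delta_4^2]$), so it yields a $t \in [0,\delta_4^2]$ and a choice of signs with $\sqrt{\delta_1^2 - t} \pm \sqrt{\delta_2^2 - t} \pm \sqrt{\delta_3^2 - t} \pm \sqrt{\delta_4^2 - t} = 0$. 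At this point I would want to run the argument of Proposition \ref{fire4prop49} not with the $\delta_i$ themselves but with $\delta_i(t)$; I need to check that the hypotheses of that proposition survive, namely that the matrix with diagonal $\delta_i(t)^2$ and off-diagonal $\sqrt{\alpha_i\alpha_j}$ is still doubly stochastic and that the key inequalities of Lemma \ref{fire4lem44} still hold after replacing $\delta_i$ by $\delta_i(t)$. The point is that passing from $t=0$ to $t>0$ subtracts the same constant from every diagonal entry and correspondingly adds to the off-diagonal sums; so one should set up a one-parameter family $D(t)$ that remains doubly stochastic, with $D(t)_{ij} = \delta_i(t)^2$ on the diagonal. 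Proposition \ref{fire4prop49} then produces a selfadjoint unitary $u$ with $|u_{ij}|^2 = D(t)_{ij}$; in particular $|u_{ij}| = D_{ij}$ for $i \neq j$ (the off-diagonal entries are unchanged), but the diagonal entries have the wrong modulus. To fix this I would invoke Proposition \ref{fire4prop410} in reverse spirit: the selfadjoint unitary $v := u$ has trace $0$, and $\sqrt{1-\gamma^2}\, v + i\gamma\, 1$ is again unitary for $\gamma \in [-1,1]$, with $|(\sqrt{1-\gamma^2} v + i\gamma)_{ij}| = \sqrt{1-\gamma^2}\,|v_{ij}|$ off the diagonal and $\sqrt{(1-\gamma^2)v_{ii}^2 + \gamma^2}$ on the diagonal (using $v_{ii}$ real). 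Choosing $\gamma^2 = t$ and rescaling appropriately gives $|(\cdot)_{ii}|^2 = (1-t)\,\delta_i(t)^2 \cdot(\text{scaling}) + t$; one has to arrange the bookkeeping so this equals $\delta_i^2$ and simultaneously the off-diagonal moduli equal $\sqrt{\alpha_i\alpha_j}$. The cleanest route is probably: normalize so that the family $D(t)$ is $D(t)_{ij} = \alpha_i\alpha_j/(1-t)$ for $i\neq j$ and $D(t)_{ii} = (\delta_i^2 - t)/(1-t)$, which is doubly stochastic; apply Proposition \ref{fire4prop49} to the corresponding scaled parameters to get selfadjoint unitary $v$ with $|v_{ij}|^2 = D(t)_{ij}$; then $u = \sqrt{1-t}\,v + i\sqrt{t}\,1$ is unitary with $|u_{ij}|^2 = (1-t)D(t)_{ij}$ for $i\neq j$, i.e. $\alpha_i\alpha_j$, and $|u_{ii}|^2 = (1-t)v_{ii}^2 + t = (1-t)D(t)_{ii} + t = \delta_i^2$. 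That gives statement 1.

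For the direction $1 \Rightarrow 2$, I would argue contrapositively along the structure suggested by Proposition \ref{fire4prop410}: suppose a unitary $u$ with $|u_{ij}| = D_{ij}$ exists. By Proposition \ref{fire4prop410} it can be written $u = \sqrt{1-\gamma^2}\,v + i\gamma 1$ with $v$ selfadjoint unitary, $\operatorname{Tr}(v) = 0$, $\gamma \in [-1,1]$. Since the off-diagonal entries of $1$ vanish, $|v_{ij}| = D_{ij}/\sqrt{1-\gamma^2}$ for $i \neq j$, and $|v_{ii}|^2 = (\delta_i^2 - \gamma^2)/(1-\gamma^2)$ (using that $v_{ii}$ is real and $|u_{ii}| = \delta_i$, after absorbing the phase as in the proof of \ref{fire4prop410}). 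In particular we need $\gamma^2 \leq \delta_4^2$ (so that all diagonal entries stay real, using $\delta_4$ smallest), hence $\gamma^2 =: t \in [0, \delta_4^2]$. Now $v$ is a selfadjoint unitary $4\times 4$ matrix, so its eigenvalues are $\pm 1$ and $\operatorname{Tr}(v) = 0$ forces exactly two $+1$'s and two $-1$'s; thus $v = 2P - 1$ for a rank-$2$ projection $P$. The diagonal of $P$ is $(v_{ii}+1)/2$, and these four numbers are the diagonal of a rank-$2$ projection, hence satisfy $\sum (v_{ii}+1)/2 = 2$, i.e. $\sum v_{ii} = 0$ (consistent), and — this is the crucial constraint — the vector $\big((v_{ii}+1)/2\big)_i$ lies in the set of diagonals of rank-$2$ projections, which by Horn's theorem / the Schur–Horn theorem is $\{x \in [0,1]^4 : \sum x_i = 2\}$... but that alone is not enough; one needs the sharper fact coming from the off-diagonal moduli being prescribed. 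The real obstruction surfaces here: I would use the $2\times 2$ block / determinant argument exactly as in Lemma \ref{fire4lem45} applied to $v$ (which satisfies $|v_{ij}| = D_{ij}/\sqrt{1-t}$, still a ``symmetric doubly stochastic moduli-squared'' pattern after scaling), to get the four-angle inequality $\sqrt{|v_{11}|}\cdot(\ldots)$, equivalently for the original data the inequality $\delta_1(t) \leq \delta_2(t) + \delta_3(t) + \delta_4(t)$ among $\delta_i(t) = \sqrt{\delta_i^2 - t}$. Combined with the existence of a suitable sign cancellation this is precisely statement 1 of Proposition \ref{fire4prop42}, which then gives statement 2 of that proposition — i.e. (\ref{fire4cond1}) and (\ref{fire4cond2}).

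So the skeleton is: (i) translate condition 2 via Proposition \ref{fire4prop42} into a $\pm$-cancellation among shifted square roots; (ii) feed that into (a parametrized version of) Proposition \ref{fire4prop49} to build a selfadjoint unitary with the right off-diagonal moduli; (iii) twist by $i\gamma 1$ with $\gamma = \sqrt{t}$ to repair the diagonal moduli, producing the unitary $u$ of statement 1; (iv) conversely, decompose any such $u$ via Proposition \ref{fire4prop410}, extract the parameter $t = \gamma^2$, apply the determinant/four-angle estimate of Lemma \ref{fire4lem45} to the selfadjoint part to obtain a four-angle inequality among the $\delta_i(t)$, and close the loop via Proposition \ref{fire4prop42}.

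\textbf{Main obstacle.} The delicate point is step (ii)–(iii): making the parametrization precise so that Proposition \ref{fire4prop49}, which is stated for the specific matrix $D$ with diagonal $\delta_i^2$, applies verbatim to the shifted/rescaled family $D(t)$. One must verify that the scaled quantities $\tilde\alpha_i, \tilde\delta_i$ (with $\tilde\delta_i^2 = (\delta_i^2 - t)/(1-t)$ and $\tilde\alpha_i\tilde\alpha_j = \alpha_i\alpha_j/(1-t)$) actually arise from a genuine eigenvalue configuration of the type hypothesized at the start of \S\ref{algnece} — i.e. that there exist positive reals $\tilde\alpha_i$ with $\sum \tilde\alpha_i = \tilde\lambda > 2$, $\tilde\alpha_i \leq \frac12(\tilde\lambda - \sqrt{\tilde\lambda^2-4})$, and $\tilde\delta_i = \sqrt{\tilde\alpha_i^2 - \tilde\lambda\tilde\alpha_i + 1}$. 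If the scaled data do not literally fit that template, I would instead reprove Proposition \ref{fire4prop49} directly for $D(t)$, which is routine since its proof only used (a) double stochasticity of the moduli-squared matrix, (b) the inequalities $\delta_i + \delta_j < \alpha_k + \alpha_l$ and $|\delta_i - \delta_j| \geq \alpha_i - \alpha_j$ of Lemma \ref{fire4lem44}, and (c) the given $\pm$-cancellation — all of which transfer to the shifted family with only cosmetic changes. The rest is bookkeeping with $2\times 2$ blocks and Lemma \ref{fire4lem10}.
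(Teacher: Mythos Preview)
Your plan is essentially the paper's proof, and the $2\Rightarrow 1$ direction is right on target: the paper rescales exactly as you suggest, setting $a_i=\alpha_i/\sqrt{1-t}$, $\lambda_1=\lambda/\sqrt{1-t}$, checking that the $a_i$ again satisfy the standing hypotheses of \S\ref{algnece} (so Proposition~\ref{fire4prop49} applies verbatim to the rescaled data), obtaining a selfadjoint trace-zero unitary $v$, and then putting $u=\sqrt{1-t}\,v+i\sqrt{t}\,1$. So your ``main obstacle'' is resolved precisely the way you anticipate, by the first of your two options.

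For $1\Rightarrow 2$ you have the right ingredients but take an unnecessary detour. Once Proposition~\ref{fire4prop410} gives $u=\sqrt{1-\gamma^2}\,v+i\gamma 1$ with $v=v^*$ and $\mathrm{Tr}(v)=0$, the diagonal entries $v_{ii}$ are \emph{real} and sum to zero. From $|u_{ii}|=\delta_i$ you get $v_{ii}=\pm\sqrt{(\delta_i^2-\gamma^2)/(1-\gamma^2)}$, hence with $t=\gamma^2\in[0,\delta_4^2]$ the trace condition reads
\[
\sum_{i}\pm\sqrt{\delta_i^2-t}=0,
\]
which is exactly statement~1 of Proposition~\ref{fire4prop42}; statement~2 follows. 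The Schur--Horn argument and the appeal to Lemma~\ref{fire4lem45} are superfluous here --- and in fact the four-angle inequality from Lemma~\ref{fire4lem45} alone would only yield $\delta_1(t)\le\delta_2(t)+\delta_3(t)+\delta_4(t)$, not the equality with signs, so it does not by itself feed into Proposition~\ref{fire4prop42}. The sign cancellation you need is already sitting in $\sum v_{ii}=0$.
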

\begin{proof}1 $\Rightarrow$ 2. If all the $\delta'$s are equal, 2 is trivially fulfilled. If the $\delta'$s are not all equal, proposition \ref{fire4prop410} states that $u$ can be chosen as $u = v + i\gamma{}1,$ where  $v$ is selfadjoint $\mbox{Tr}(v) = 0$ and $\gamma \in{}{\Bbb R},$ i.e. $u_{_{kk}}=\epsilon_{_{k}}+i\gamma$ and $\sum_{k}\epsilon_{_{k}}=0.$ Hence
\[\epsilon_{_{k}}=\pm\sqrt{|u_{_{kk}}|^{2}-\gamma^{2}}=\pm\sqrt{\delta_{_{k}}^{2}-\gamma^{2}},\]
where $\gamma\leq\min\{\delsqi{1},\delsqi{2},\delsqi{3},\delsqi{4}\}=\delsqi{4},$ and proposition \ref{fire4prop42} gives the implication.

\noindent{}2 $\Rightarrow$ 1. By proposition \ref{fire4prop42} we can choose $t\in{}[0,\delsqi{4}]$ and signs such that
\[\sqrt{\delsqi{1}-t}\pm{}\sqrt{\delsqi{2}-t}\pm{}\sqrt{\delsqi{3}-t}\pm{}\sqrt{\delsqi{4}-t}= 0.\]
Put $a_{_{i}}=\frac{1}{\sqrt{1-t}}\alpha_{_{1}},$ then $\lambda_{_{1}}=\sum_{i}a_{_{i}}=\frac{1}{\sqrt{1-t}}\lambda>2,$ and $a_{_{i}}^{2}-\lambda_{_{1}}a_{_{i}}+1=\frac{\delsqi{i}-t}{1-t}\geq{}0$\\[0.3cm]

\noindent{}Put $d_{_{i}}=\sqrt{a_{_{i}}^{2}-\lambda_{_{1}}\alpha_{_{i}}+1}=\frac{1}{\sqrt{1-t}}\sqrt{\delsqi{i}-t}.$ Since $\frac{1}{\lambda}\leq{}\frac{\alpha_{_{i}}}{1-t}$ we get $\frac{1}{\lambda_{_{1}}}\leq{}a_{_{i}}.$ 

\noindent{}Moreover, since $a_{_{i}}^{2}-\lambda_{_{1}}a_{_{i}}+1\geq{}0,$ either
\[a_{_{i}}\leq{}\frac{\lambda_{_{1}}-\sqrt{\lambda_{_{1}}^{2}-4}}{2}\;\mbox{ or }a_{_{i}}\geq{}\frac{\lambda_{_{1}}+\sqrt{\lambda_{_{1}}^{2}-4}}{2}.\]
However $\alpha_{_{i}}\leq{}\frac{\lambda-\sqrt{\lambda^{2}-4}}{2}\leq{}\frac{\lambda}{2}$ implies that $a_{_{i}}\leq\frac{\lambda_{_{1}}}{2}.$ Hence $a_{_{i}}\leq{}\frac{\lambda_{_{1}}-\sqrt{\lambda_{_{1}}^{2}-4}}{2}.$

\noindent{}Proposition \ref{fire4prop49} now produces a selfadjoint unitary $v,$ with  $\mbox{Tr}(v) = 0 $ such that
\[|v_{_{ij}}|^{2}=\left\{\begin{array}{cl}
d_{_{i}}^{2} & i=j\\
a_{_{i}}a_{_{j}}& i\neq{}j
\end{array}\right.\]
Put $u = \sqrt{1-t}v + i\sqrt{t}1,$ then $u$ is unitary and
\[\begin{array}{lclclcl}
|u_{_{ii}}|^{2} & = & (1-t)v_{_{ii}}+t & = & (1-t)d_{_{i}}^{2}+t & = & \delsqi{i}\\[0.3cm]
|u_{_{ij}}|^{2} & = & (1-t)v_{_{ij}} & = & (1-t)a_{_{i}}a_{_{j}} &= & \alpha_{_{i}}\alpha_{_{j}}\,\;\;\;\;i\neq{}j
\end{array}\]
\end{proof}
\begin{remark}{\rm 
\label{fire4rem411}Let $\delta_{_{1}} \geq{} \delta_{_{2}} \geq{} \delta_{_{3}} \geq{} \delta_{_{4}} \geq{}0.$ By trivial manipulations
\begin{enumerate}
\item{}
\[\begin{array}{cl}
& \delta_{_{1}} - \delta_{_{2}} - \delta_{_{3}} - \delta_{_{3}} \leq{}0\\
\Updownarrow & \\
 & (-\delta_{_{1}} + \delta_{_{2}}+ \delta_{_{3}}  +\delta_{_{4}})(\delta_{_{1}} - \delta_{_{2}} +\delta_{_{3}} + \delta_{_{4}})(\delta_{_{1}} + \delta_{_{2}}- \delta_{_{3}}  +\delta_{_{4}})(\delta_{_{1}}  +\delta_{_{2}} +\delta_{_{3}} - \delta_{_{4}}) \leq{} 0
\end{array}\]
\item{}
\[\begin{array}{cl}
& \delta_{_{1}} - \delta_{_{2}} - \delta_{_{3}} + \delta_{_{3}} \geq{}0\\
\Updownarrow & \\
 & (\delta_{_{1}}  +\delta_{_{2}} +\delta_{_{3}}  +\delta_{_{4}})(\delta_{_{1}} - \delta_{_{2}} -\delta_{_{3}}  +\delta_{_{4}})(\delta_{_{1}} - \delta_{_{2}}+ \delta_{_{3}} - \delta_{_{4}})(\delta_{_{1}} + \delta_{_{2}} -\delta_{_{3}} - \delta_{_{4}}) \geq{} 0.
\end{array}\] 
\end{enumerate}
}\end{remark}
Hence theorem \ref{fire4thm411} can be stated in the symmetric form:

\noindent{}{\em There exists a unitary $u \in{} M_{_{4}}({\Bbb C} ),$  $u = (\uij{i}{j})$ such that  $|\uij{i}{j}| = D_{_{ij}}$ if and only if
\[-\delta_{_{1}}\delta_{_{2}}\delta_{_{3}}\delta_{_{4}}\leq{}
\delta_{_{1}}^{4}+\delta_{_{2}}^{4}+\delta_{_{3}}^{4}+\delta_{_{4}}^{4}
-2\sum_{i<j}\delta_{_{i}}\delta_{_{j}}\leq{}\delta_{_{1}}\delta_{_{2}}\delta_{_{3}}\delta_{_{4}}.\]}

\begin{prop}
\label{fire4prop15}
If $\delta_{_{1}},\delta_{_{2}},\delta_{_{3}}$ $\delta_{_{4}}$ satisfy the ``four--angle'' inequality 
\[\delta_{_{i}}\leq{}\delta_{_{j}}+\delta_{_{k}}+\delta_{_{l}},\;\;\;\{i,j,k,l\}=\{1,2,3,4\}.\]
Then there is a unitary $4\times{}4$ matrix, $u,$ with entries in the quaternions ${\Bbb H},$ such that
\begin{equation}
\label{fire415eqn0}|u_{_{ij}}|^{2}=\left\{\begin{array}{cl}
\alpha_{_{i}}\alpha_{_{j}},&i\neq{}j\\[0.2cm]
\delta_{_{i}}^{2}&i=j.\end{array}\right.
\end{equation}
\end{prop}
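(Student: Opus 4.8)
The plan is to reduce the quaternionic problem to the real problem already solved in Proposition \ref{fire4prop49}, via the standard device of enlarging scalars. First I would reinterpret a $4\times 4$ matrix over $\mathbb H$ as an $8\times 8$ complex matrix. Recall the $\mathbb C$-linear ring embedding $\iota:\mathbb H\to M_{_{2}}(\mathbb C)$ sending $q=a+bj$ ($a,b\in\mathbb C$) to $\left(\begin{array}{cc}a&-\overline b\\ b&\overline a\end{array}\right)$; this is a $*$-homomorphism, it sends the reduced norm $|q|^{2}=a\overline a+b\overline b$ to a scalar matrix $|q|^{2}I_{_{2}}$, and it carries unitary quaternionic matrices to complex unitaries. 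Applying $\iota$ entrywise to a putative $u\in M_{_{4}}(\mathbb H)$ gives a $u'\in M_{_{8}}(\mathbb C)$ with $u'u'^{*}=\iota$-image of $uu^{*}$; so $u$ is unitary iff $u'$ is, and condition (\ref{fire415eqn0}) becomes exactly the requirement that, writing $u'$ in $2\times 2$ blocks $u'_{_{ij}}\in M_{_{2}}(\mathbb C)$, one has $u'_{_{ij}}u'^{*}_{_{ij}}=D_{_{ij}}I_{_{2}}$ with $D$ the doubly stochastic matrix of (\ref{fire4matrix}). That is precisely the hypothesis of Lemma \ref{fire4lem45} / Proposition \ref{fire4prop49} in the case $n=2$.

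So the heart of the argument is: produce a self-adjoint unitary $v\in M_{_{4}}(\mathbb H)$ (or, more modestly, a unitary $u\in M_{_{4}}(\mathbb H)$) realising the moduli $D_{_{ij}}$, given only the weaker ``four-angle'' inequality $\delta_{_{i}}\le\delta_{_{j}}+\delta_{_{k}}+\delta_{_{l}}$ rather than the pair of inequalities (\ref{fire4cond1})--(\ref{fire4cond2}). The point of going to $\mathbb H$ is exactly that the extra room kills the upper constraint (\ref{fire4cond2}): when (\ref{fire4cond2}) fails one has $\delta_{_{1}}-\delta_{_{2}}-\delta_{_{3}}+\delta_{_{4}}<0$, i.e.\ $\delta_{_{1}}+\delta_{_{4}}<\delta_{_{2}}+\delta_{_{3}}$, so there is a choice of real signs $\epsilon_{_{i}}=\pm\delta_{_{i}}$ with $\sum\epsilon_{_{i}}=0$ (take $\epsilon_{_{1}}=\delta_{_{1}}$, $\epsilon_{_{4}}=\delta_{_{4}}$ and the other two negative when (\ref{fire4cond1}) holds; the four-angle inequality guarantees such a balancing of signs is always possible). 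Then Proposition \ref{fire4prop49} already hands us a self-adjoint \emph{complex} unitary $4\times4$ matrix with the right moduli. The only case not covered by Proposition \ref{fire4prop49} is when \emph{no} real sign choice balances, but under the four-angle inequality that forces $\delta_{_{1}}=\delta_{_{2}}+\delta_{_{3}}+\delta_{_{4}}$ with all strict sub-sums failing — a genuinely boundary configuration — and here is where one must actually use the quaternions: one needs a unitary whose diagonal entries have prescribed modulus $\delta_{_{i}}$ but whose sum (as quaternions) can vanish even though the $\delta_{_{i}}$ are too spread out to cancel with $\pm$ signs. The construction mimics Proposition \ref{fire4prop49}: seek $u=\left(\begin{array}{cc}a&b\\ c&d\end{array}\right)$ with $a,b,c,d\in M_{_{2}}(\mathbb H)$, put the ``balanced'' quaternionic values $q_{_{1}},\dots,q_{_{4}}\in\mathbb H$ with $|q_{_{i}}|=\delta_{_{i}}$ and $\sum q_{_{i}}=0$ on the diagonal (these exist: e.g.\ place $q_{_{1}},q_{_{2}},q_{_{3}},q_{_{4}}$ so that their real sum-of-lengths condition is exactly the four-angle inequality for closing up a quadrilateral in a $\le 3$-dimensional subspace of $\mathbb H=\mathbb R^{4}$), fill in the off-diagonal $(1,2)$-block with $\sqrt{\alpha_{_{i}}\alpha_{_{j}}}$ times unimodular quaternions, arrange orthogonality of the first two columns by choosing suitable phases $\sigma,\tau$ (possible since $|q_{_{1}}+q_{_{2}}|$, $\alpha_{_{3}}$, $\alpha_{_{4}}$ satisfy the triangle inequality, by Lemma \ref{fire4lem44}(2)--(3) exactly as in \ref{fire4prop49}), check $c$ is invertible, and then invoke the quaternionic analogue of Lemma \ref{fire4lem10} — which holds verbatim since its proof uses only $af(a^{*}a)=f(aa^{*})a$ and matrix manipulations valid over any $C^{*}$-algebra, in particular $M_{_{2}}(\mathbb H)$ — to pin down $d$ uniquely, self-adjoint, with the required block norms via the double-stochasticity of $D$.

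I expect the main obstacle to be the boundary case just described: verifying that the four-angle inequality is exactly the condition for the existence of quaternions $q_{_{1}},\dots,q_{_{4}}$ of the prescribed lengths summing to zero (a quadrilateral-closing condition in $\mathbb R^{4}$, which is the four-angle inequality and nothing more), and then checking that the block-by-block determination of $u$ from Lemma \ref{fire4lem10}, together with double-stochasticity, actually forces $|d_{_{i}}|=\delta_{_{i+2}}$ and $|z|^{2}=\alpha_{_{3}}\alpha_{_{4}}$ rather than merely constraining their squares — this is the computation in (\ref{fire4prop49eqn1})--(\ref{fire4prop49eqn3}) and it must be redone keeping the non-commutativity of $\mathbb H$ under control (taking real parts / reduced norms where \ref{fire4prop49} took real and imaginary parts). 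Everything else is bookkeeping: the passage $\mathbb H\rightsquigarrow M_{_{2}}(\mathbb C)$ makes ``unitary over $\mathbb H$'' and the norm conditions literally the $n=2$ instance of the already-proved results, so the proposition follows once the boundary construction is in place.
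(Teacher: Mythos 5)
There is a genuine gap, and it sits exactly where you expect trouble but dismiss it as a ``boundary configuration.'' Your reduction logic is off in two ways. First, an inequality does not produce a balancing: from $\delta_1+\delta_4<\delta_2+\delta_3$ you cannot conclude that some choice of signs gives $\delta_1\pm\delta_2\pm\delta_3\pm\delta_4=0$; exact cancellation of four fixed positive reals is a codimension-one condition, and under the four--angle inequality alone it generically fails (e.g.\ $\delta_1=\delta_2=\delta_3=1$, $\delta_4=\tfrac12$). Second, by Theorem \ref{fire4thm411} the two conditions (\ref{fire4cond1}) and (\ref{fire4cond2}) are \emph{necessary} for a complex solution, so whenever (\ref{fire4cond2}) fails strictly --- an open, fat region of parameters, not a degenerate case --- no complex unitary with the moduli $D_{ij}$ exists at all, and Proposition \ref{fire4prop49} cannot be invoked even after re-signing. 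Thus the genuinely quaternionic construction is the main body of the proof, not a boundary patch. Moreover your proposed patch does not work as described: a self-adjoint quaternionic matrix has real diagonal entries, so you cannot place non-real ``balanced'' quaternions $q_i$ with $\sum q_i=0$ on the diagonal of a self-adjoint $u$; and if you drop self-adjointness, the forcing computation (\ref{fire4prop49eqn1})--(\ref{fire4prop49eqn3}) that pinned down $|d_{11}|,|d_{22}|,|z|$ in Proposition \ref{fire4prop49} (it used $d=d^{*}$, $\mathrm{Tr}(d)=-\mathrm{Tr}(a)$ and $\epsilon_1+\epsilon_2\neq0$) is no longer available, so double stochasticity only constrains the lower block up to an unknown shift.

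The missing idea is the paper's continuity argument. One puts the honest values $\delta_1,\delta_2$ (not signed or rotated versions) in the first two diagonal entries, solves the column and row orthogonality relations $\delta_1+\delta_2+\alpha_3\sigma+\alpha_4\tau=0$ and $\delta_1+\delta_2+\alpha_3\sigma'+\alpha_4\tau'=0$ with unit quaternions (possible by Lemma \ref{fire4lem44} and the triangle inequality $|\alpha_3-\alpha_4|\leq\delta_1+\delta_2<\alpha_3+\alpha_4$), and then uses the extra imaginary directions of ${\Bbb H}$ to run a one-parameter family $\sigma'(\theta)$, $\theta\in[0,\pi]$, interpolating between $\sigma'=\overline{\sigma}$ and $\sigma'=\sigma$. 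With $d$ determined by the quaternionic Lemma \ref{fire4lem10}, double stochasticity gives only $|d_{11}|^2=\delta_3^2-\kappa(\theta)$, $|d_{22}|^2=\delta_4^2-\kappa(\theta)$, $|d_{12}|^2=\alpha_3\alpha_4+\kappa(\theta)$, and the proposition is proved by showing $\kappa(0)\geq0$ --- this is the \emph{only} place the four--angle inequality is used, via the triangle inequality for $\delta_1+\delta_2,\delta_3,\delta_4$ --- and $\kappa(\pi)\leq0$ (via Lemma \ref{fire4lem44} and double stochasticity), whence $\kappa(\theta)=0$ for some $\theta$ by continuity. Your symmetric ansatz is exactly the endpoint $\theta=0$ ($b=c^{*}$), where $\kappa(0)>0$ whenever the triangle inequality is strict, so the lower diagonal moduli come out wrong; without the intermediate value step the construction fails. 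The parts of your outline that do match the paper are the bookkeeping: the identification ${\Bbb H}\subset M_2({\Bbb C})$ turning the statement into the $n=2$ block problem, and the observation that Lemma \ref{fire4lem10} extends verbatim to quaternionic entries.
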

\begin{proof}The quaternions ${\Bbb H}=\{a_{_{1}}+ia_{_{2}}+ja_{_{3}}+ka_{_{4}}\;|\;a_{_{1}},a_{_{2}},a_{_{3}},a_{_{4}}\in{\Bbb R}\}$ can be identified with the real subalgebra of $M_{_{2}}({\Bbb C})$ given by
\[\left\{\left.\left(
\begin{array}{cc}a_{_{1}}+ia_{_{2}}&a_{_{2}}+ia_{_{3}}\\
-a_{_{2}}+ia_{_{3}}&a_{_{1}}-ia_{_{2}}\end{array}\right)\right|\;a_{_{1}},a_{_{2}},a_{_{3}},a_{_{4}}\in{\Bbb R}\right\}.\]
Therefore any $v\in{}M_{_{n}}({\Bbb H})$ can be considered as an element in $M_{_{2n}}({\Bbb C}).$ Hence the second part of lemma \ref{fire4lem10} extends trivially to matrices with quaternionic entries.

\noindent{}By permuting the indices, we can assume that $\delta_{_{1}}+\delta_{_{2}}\leq{}\delta_{_{3}}+\delta_{_{4}},$ so by the assumptions on $\delta_{_{1}},\delta_{_{2}},\delta_{_{3}}$ and $\delta_{_{4}}$
\begin{equation}
\label{fire415eqn1}
|\delta_{_{3}}-\delta_{_{4}}|\leq{}\delta_{_{1}}+\delta_{_{2}}\leq{}\delta_{_{3}}+\delta_{_{4}}
\end{equation}
i.e. $\delta_{_{1}}+\delta_{_{2}},$ $\delta_{_{3}}$ and $\delta_{_{4}}$ satisfy the triangle inequality.

\noindent{}If $\delta_{_{2}}+\delta_{_{2}}=0,$ then $\delta_{_{1}}=\delta_{_{2}}=0$ and $\delta_{_{3}}=\delta_{_{4}}.$ In this case (\ref{fire415eqn0}) has a solution in $M_{_{4}}({\Bbb C})\subset{}M_{_{4}}({\Bbb H})$ by proposition \ref{fire4prop49}.

\noindent{}Hence we may assume that $\delta_{_{1}}+\delta_{_{2}}>0.$

\noindent{}We seek a solution of the form
\begin{equation}
\label{fire415eqn2}
u=\left(\begin{array}{cccc} 
\delta_{_{1}}& \sqalialj{1}{2}&\sqalialj{1}{3}& \sqalialj{1}{4}\\[0.3cm]\sqalialj{1}{2}& \delta_{_{2}}&\sqalialj{2}{3}\sigma'&\sqalialj{2}{4}\tau' \\[0.3cm]
\sqalialj{1}{3}& \sqalialj{2}{3}\sigma&*&*\\[0.3cm]
\sqalialj{1}{4}& \sqalialj{2}{4}\tau&*&*\end{array}\right)=
\left(\begin{array}{cc}a & b\\c& d\end{array}\right),\end{equation}
where $a,b,c,d\in{}M_{_{2}}({\Bbb H})$ and $\sigma,\sigma',\tau$ and $\tau'$ are quaternions of modulus 1. Orthogonality of the first two columns resp. rows is equivalent to
\begin{equation}
\label{fire415eqn3}
\delta_{_{1}}+\delta_{_{2}}+\alpha_{_{3}}\sigma+\alpha_{_{4}}\tau=0,
\end{equation}
resp.
\begin{equation}
\label{fire415eqn4}
\delta_{_{1}}+\delta_{_{2}}+\alpha_{_{3}}\sigma'+\alpha_{_{4}}\tau'=0.
\end{equation}
By lemma \ref{fire4lem44} 2, 3 and (\ref{fire415eqn1}) 
\[|\alpha_{_{3}}-\alpha_{_{4}}|\leq\delta_{_{1}}+\delta_{_{2}}<\alpha_{_{3}}+\alpha_{_{4}},\]
i.e. $\delta_{_{1}}+\delta_{_{2}},\alpha_{_{3}}$ and $\alpha_{_{4}}$ satisfy the triangle inequality.

\noindent{}If $\sigma,\tau$ are unit quaternions satisfying (\ref{fire415eqn3}), then
\[|\delta_{_{1}}+\delta_{_{2}}+\alpha_{_{3}}\sigma|^{2}=\alpha_{_{4}}^{2}.\]
Thus $\mbox{Re}\sigma=h,$ where 
\[h=\frac{\alpha_{_{4}}^{2}-\alpha_{_{3}}^{2}-(\delta_{_{1}}+\delta_{_{2}})^{2}}{2\alpha_{_{3}}(\delta_{_{1}}+\delta_{_{2}})}.\]
Since $\delta_{_{1}}+\delta_{_{2}},\alpha_{_{3}}$ and $\alpha_{_{4}}$ satisfy the triangle inequality, there are solutions to (\ref{fire415eqn4}), so in particular $|h|\leq{}1.$

\noindent{}Let $(i,j,k)$ be the standard basis for the imaginary part of ${\Bbb H}.$ Set 
\[\sigma=h+i\sqrt{1-h^{2}}\;\;\mbox{ and }\;\;\sigma'=h-(i\cos{}\theta+j\sin{}\theta)\sqrt{1-h^{2}},\;\;\theta\in{}[0,\pi]\]
Then $|\sigma|=|\sigma'|=1,$ $\mbox{Re}(\sigma)=\mbox{Re}(\sigma')=h$ and the angle between the imaginary parts of $\sigma$ and $\sigma'$ is $\pi-\theta.$ In particular
\[\sigma'=\overline{\sigma}\;\mbox{ iff } \;\theta=0,\]
\[\sigma'=\sigma \;\mbox{ iff }\; \theta=\pi.\]
Since $\mbox{Re}(\sigma)=\mbox{Re}(\sigma')=h,$
\[|\delta_{_{1}}+\delta_{_{2}}+\alpha_{_{3}}\sigma|^{2}=|\delta_{_{1}}+\delta_{_{2}}+\alpha_{_{3}}\sigma'|^{2}=\alpha_{_{4}}^{2}.\]
Hence
\[\tau=\mbox{$\frac{1}{\alpha_{_{4}}}$}(\delta_{_{1}}+\delta_{_{2}}+\alpha_{_{3}}\sigma)\]
and
\[\tau'=\mbox{$\frac{1}{\alpha_{_{4}}}$}(\delta_{_{1}}+\delta_{_{2}}+\alpha_{_{3}}\sigma')\]
are unit quaternions, and (\ref{fire415eqn3}) and (\ref{fire415eqn4}) hold, i.e.
\[a^{*}a+c^{*}c=1,\;\;\;aa^{*}+bb^{*}=1.\]
Moreover $b$ and $c$ are invertible, because the inequality $\delta_{_{1}}+\delta_{_{2}}<\alpha_{_{3}}+\alpha_{_{4}}$ implies that $\sigma\neq{}\tau$ and  $\sigma'\neq{}\tau'.$ Thus, by lemma \ref{fire4lem10} and the remarks in the beginning of this proof,
\[d=-(c^{*})^{-1}a^{*}b=-ca^{*}(b^{*})^{-1}\]
defines a matrix in $M_{_{2}}({\Bbb H}),$ such that 
\[u=\left(\begin{array}{cc}a&b\\c&d\end{array}\right)\]
is unitary.

\noindent{}Since $(|u_{_{ij}}|^{2})_{i,j}$ and $(D_{_{ij}})_{i,j}$ are doubly stochastic matrices, which coincide on the  two first rows and the two first columns
\[\begin{array}{lcl}
|d_{_{11}}|^{2}+|d_{_{12}}|^{2}&=&D_{_{33}}+D_{_{34}}\\[0.2cm]
|d_{_{21}}|^{2}+|d_{_{22}}|^{2}&=&D_{_{43}}+D_{_{44}}\\[0.2cm]
|d_{_{11}}|^{2}+|d_{_{21}}|^{2}&=&D_{_{33}}+D_{_{43}}\\[0.2cm]
|d_{_{12}}|^{2}+|d_{_{22}}|^{2}&=&D_{_{34}}+D_{_{44}}.
\end{array}\]
Since $D_{_{33}}=\delta_{_{3}}, D_{_{44}}=\delta_{_{4}}$ and $D_{_{34}}=\alpha_{_{3}}\alpha_{_{4}}$ it follows that
\begin{equation}
\label{fire415eqn5}
\left(\begin{array}{cc}
|d_{_{11}}|^{2}&|d_{_{12}}|^{2}\\
|d_{_{21}}|^{2}&|d_{_{22}}|^{2}\end{array}\right)=
\left(\begin{array}{cc}
\delta_{_{3}}^{2}-\kappa & \alpha_{_{3}}\alpha_{_{4}}+\kappa\\
\alpha_{_{3}}\alpha_{_{4}}+\kappa&\delta_{_{4}}^{2}-\kappa
\end{array}\right)\end{equation}
for some constant, $\kappa=\kappa(\theta),$ depending on $\theta.$ Hence to prove the proposition we have to show, that $\theta\in{}[0,\pi]$ can be chosen such that $\kappa(\theta)=0.$

\noindent{}If $\theta=0,$ then $b=c^{*},$ so as in the proof of proposition \ref{fire4prop49}. we have $d=d^{*}$ and
\[\mbox{Tr}(d)=-\mbox{Tr}(a)=-(\delta_{_{1}}+\delta_{_{2}}).\]
Thus $d_{_{11}},d_{_{22}}\in{\Bbb R},$
\begin{equation}
\label{fire415eqn6}
d_{_{11}}+d_{_{22}}=-(\delta_{_{1}}+\delta_{_{2}})
\end{equation}
and by (\ref{fire415eqn5})
\[d_{_{11}}^{2}-d_{_{22}}^{2}=\delta_{_{3}}^{2}-\delta_{_{4}}^{2}\]
Hence
\begin{equation}
\label{fire415eqn7}
d_{_{11}}-d_{_{22}}=-\frac{\delta_{_{3}}^{2}-\delta_{_{4}}^{2}}{\delta_{_{1}}+\delta_{_{2}}}.\end{equation}
By  (\ref{fire415eqn6}) and  (\ref{fire415eqn7})
\[d_{_{11}}=-\mbox{$\frac{1}{2}$}\left((\delta_{_{1}}+\delta_{_{2}})+
\frac{\delta_{_{3}}^{2}-\delta_{_{4}}^{2}}{\delta_{_{1}}+\delta_{_{2}}}\right),\]
\[d_{_{22}}=-\mbox{$\frac{1}{2}$}\left((\delta_{_{1}}+\delta_{_{2}})-
\frac{\delta_{_{3}}^{2}-\delta_{_{4}}^{2}}{\delta_{_{1}}+\delta_{_{2}}}\right).\]
Thus 
\[\begin{array}{lcl}\kappa(0)&=&\mbox{$\frac{1}{2}$}(\delta_{_{3}}^{2}+\delta_{_{4}}^{2}-d_{_{11}}^{2}-d_{_{22}}^{2})\\[0.3cm]
&=&-\mbox{$\frac{1}{4}$}\left((\delta_{_{1}}+\delta_{_{2}})^{2}-2(\delta_{_{3}}^{2}+\delta_{_{4}}^{2})+\frac{(\delta_{_{3}}^{2}-\delta_{_{4}}^{2})^{2}}{(\delta_{_{1}}+\delta_{_{2}})^{2}}\right)
\end{array}\]
Since the roots of the polynomial 
\[ t^{2} - 2(\delsqi{3}+\delsqi{4})t+(\delsqi{3} - \delsqi{4} )^{2}\]  
are $(\delta_{_{3}} + \delta_{_{4}})^{2}$ and $(\delta_{_{3}} - \delta_{_{4}})^{2}$ it follows that
\begin{equation}
\label{fire415eqn8}
\kappa(0)=\frac{1}{4(\delta_{_{1}}+\delta_{_{2}})^{2}}\left((\delta_{_{3}}+\delta_{_{4}})^{2}-(\delta_{_{1}}+\delta_{_{2}})^{2}\right)\left((\delta_{_{1}}+\delta_{_{2}})^{2}-(\delta_{_{3}}-\delta_{_{4}})^{2}\right).
\end{equation}
Note that $\kappa(0)\geq{}0,$ because $\delta_{_{1}}+\delta_{_{2}},\delta_{_{3}}$ and $\delta_{_{4}}$ satisfy the triangle inequality (\ref{fire415eqn1}).

\noindent{}Next we show that $\kappa(\pi)\leq{}0.$ Let $\theta\in[0,\pi].$ Since $\sigma,\tau\in\{a_{_{1}}+ia_{_{2}}\;|\;a_{_{1}},a_{_{2}}\in{\Bbb R}\}\cong{\Bbb C}$ one gets
\[(c^{*})^{-1}=\frac{1}{\sqrt{\alpha_{_{1}}\alpha_{_{2}}\alpha_{_{3}}\alpha_{_{4}}}(\overline{\tau}-\overline{\sigma})}\left(\begin{array}{cc}
\sqrt{\alpha_{_{2}}\alpha_{_{4}}}\;\overline{\tau}&-\sqrt{\alpha_{_{3}}\alpha_{_{4}}}\\[0.2cm]
-\sqrt{\alpha_{_{2}}\alpha_{_{3}}}\;\overline{\sigma}&\sqrt{\alpha_{_{1}}\alpha_{_{3}}}\end{array}\right).\]
Hence
\[d_{_{21}}=\left((c^{*})^{-1}ab\right)_{21}=
\frac{\sqrt{\alpha_{_{3}}}}{\sqrt{\alpha_{_{4}}}(\overline{\tau}-\overline{\sigma})}(\alpha_{_{1}}-\delta_{_{1}}\overline{\sigma}+\delta_{_{2}}\sigma'-\alpha_{_{2}}\overline{\sigma}\sigma').\]
Since $\mbox{Re}(\sigma) = \mbox{Re}(\sigma') = h$ and 
\[\mbox{Re}(\overline{\sigma}\sigma')=h^{2}-(1-h^{2})\cos{}\theta,\;\;\;\;\mbox{Re}(\sigma\sigma')=h^{2}+(1-h^{2})\cos{}\theta,\]
we have
\[|\alpha_{_{1}}-\alpha_{_{2}}\overline{\sigma}\sigma'-\delta_{_{1}}\overline{\sigma}+\delta_{_{2}}\sigma'|^{2}=\beta-\gamma\cos{}\theta,\]
where 
\[\beta= \alpha_{_{1}}^{2}+\alpha_{_{2}}^{2}+\delta_{_{1}}^{2}+\delta_{_{2}}^{2}-2h(\alpha_{_{1}}-\alpha_{_{2}})(\delta_{_{1}}-\delta_{_{2}})-2h^{2}(\alpha_{_{1}}\alpha_{_{2}}+\delta_{_{1}}\delta_{_{2}})\]
and 
\[\gamma=2(1-h^{2})(\alpha_{_{1}}\alpha_{_{2}}-\delta_{_{1}}\delta_{_{2}}).\]
Therefore 
\[\kappa(\theta)=|d_{_{21}}|^{2}-\alpha_{_{3}}\alpha_{_{4}}=\frac{\alpha_{_{3}}}{\alpha_{_{4}}|\overline{\tau}-\overline{\sigma}|^{2}}(\beta-\gamma\cos{}\theta)-\alpha_{_{3}}\alpha_{_{4}}.\]
In particular 
\[\kappa(0)-\kappa(\pi)=\frac{2\alpha_{_{3}}\gamma}{\alpha_{_{4}}|\overline{\tau}-\overline{\sigma}|^{2}}=\frac{4\alpha_{_{3}}(1-h^{2})}{\alpha_{_{4}}|\overline{\tau}-\overline{\sigma}|^{2}}(\alpha_{_{1}}\alpha_{_{2}}-\delta_{_{1}}\delta_{_{2}}).\]
By (\ref{fire415eqn3})
\[(\delta_{_{1}}+\delta_{_{2}})\overline{\sigma}+\alpha_{_{3}}+\alpha_{_{4}}\tau\overline{\sigma}=0.\]
Therefore 
\[(\delta_{_{1}}+\delta_{_{2}})|\mbox{Im}(\sigma)|=\alpha_{_{4}}|\mbox{Im}(\tau\overline{\sigma})|,\]
from which
\[1-h^{2}=|\mbox{Im}(\sigma)|^{2}=\frac{\alpha_{_{4}}^{2}}{(\delta_{_{1}}+\delta_{_{2}})^{2}}\left|\mbox{Im}(\tau\overline{\sigma})\right|^{2}=\frac{\alpha_{_{4}}^{2}}{(\delta_{_{1}}+\delta_{_{2}})^{2}}\left(1-\mbox{Re}(\tau\overline{\sigma})^{2}\right).\]
Moreover $|\overline{\sigma}-\overline{\tau}|^{2}=2(1-\mbox{Re}(\tau\overline{\sigma})),$ hence
\[\kappa(0)-\kappa(\pi)=\frac{2\alpha_{_{3}}\alpha_{_{4}}(1+\mbox{Re}(\tau\overline{\sigma}))}{(\delta_{_{1}}+\delta_{_{2}})^{2}}(\alpha_{_{1}}\alpha_{_{2}}-\delta_{_{1}}\delta_{_{2}}).\]
Again using (\ref{fire415eqn3})
\[|\alpha_{_{3}}\sigma+\alpha_{_{4}}\tau|^{2}=(\delta_{_{1}}+\delta_{_{2}})^{2}.\]
Thus
\[\mbox{Re}(\tau\overline{\sigma})=\frac{(\delta_{_{1}}+\delta_{_{2}})^{2}-\alpha_{_{3}}^{2}-\alpha_{_{4}}^{2}}{2\alpha_{_{3}}\alpha_{_{4}}},\]
and
\[1+\mbox{Re}(\tau\overline{\sigma})=\frac{(\delta_{_{1}}+\delta_{_{2}})^{2}-(\alpha_{_{3}}-\alpha_{_{4}})^{2}}{2\alpha_{_{3}}\alpha_{_{4}}}.\]
All together 
\begin{equation}
\label{fire415eqn9}
\kappa(0)-\kappa(\pi)=\frac{(\alpha_{_{1}}\alpha_{_{2}}-\delta_{_{1}}\delta_{_{2}})((\delta_{_{1}}+\delta_{_{2}})^{2}-(\alpha_{_{3}}-\alpha_{_{4}})^{2})}{(\delta_{_{1}}+\delta_{_{2}})^{2}}
\end{equation}
From (\ref{fire415eqn8}) and (\ref{fire415eqn9}) it follows that $\kappa(\pi)\leq{}0$ if and only if
\begin{equation}
\label{fire415eqn10}
(\alpha_{_{1}}\alpha_{_{2}}-\delta_{_{1}}\delta_{_{2}})((\delta_{_{1}}+\delta_{_{2}})^{2}-(\alpha_{_{3}}-\alpha_{_{4}})^{2})\geq{}
\mbox{$\frac{1}{4}$}((\delta_{_{3}}+\delta_{_{4}})^{2}-(\delta_{_{1}}+\delta_{_{2}})^{2})((\delta_{_{1}}+\delta_{_{2}})^{2}-(\delta_{_{3}}-\delta_{_{4}})^{2}).
\end{equation}
By lemma \ref{fire4lem44} 1,
\[\alpha_{_{1}}\alpha_{_{2}}-\delta_{_{1}}\delta_{_{2}}>\mbox{$\frac{\lambda}{2}$}(\alpha_{_{1}}+\alpha_{_{2}})-\mbox{$\frac{\lambda^{2}}{4}$}\]
\[\alpha_{_{3}}\alpha_{_{4}}-\delta_{_{3}}\delta_{_{4}}>\mbox{$\frac{\lambda}{2}$}(\alpha_{_{3}}+\alpha_{_{4}})-\mbox{$\frac{\lambda^{2}}{4}$},\]
hence
\begin{equation}
\label{fire415eqn11}
(\alpha_{_{1}}\alpha_{_{2}}-\delta_{_{1}}\delta_{_{2}})+(\alpha_{_{3}}\alpha_{_{4}}-\delta_{_{3}}\delta_{_{4}})>0.
\end{equation}
Since the matrix $D=(D_{_{ij}})$ is doubly stochastic
\[D_{_{11}}+D_{_{12}}+D_{_{21}}+D_{_{22}}=D_{_{33}}+D_{_{34}}+D_{_{43}}+D_{_{44}},\]
i.e.
\[\delta_{_{1}}^{2}+\delta_{_{2}}^{2}+2\alpha_{_{1}}\alpha_{_{2}}=\delta_{_{3}}^{2}+\delta_{_{4}}^{2}+2\alpha_{_{3}}\alpha_{_{4}}.\]
Equivalently
\begin{equation}
\label{fire415eqn12}
(\alpha_{_{1}}\alpha_{_{2}}-\delta_{_{1}}\delta_{_{2}})-(\alpha_{_{3}}\alpha_{_{4}}-\delta_{_{3}}\delta_{_{4}})=\mbox{$\frac{1}{2}$}\left((\delta_{_{3}}+\delta_{_{4}})^{2}-(\delta_{_{1}}+\delta_{_{2}})^{2}\right),
\end{equation}
so by adding (\ref{fire415eqn11}) and (\ref{fire415eqn12}),
\[(\alpha_{_{1}}\alpha_{_{2}}-\delta_{_{1}}\delta_{_{2}})>\mbox{$\frac{1}{4}$}\left((\delta_{_{3}}+\delta_{_{4}})^{2}-(\delta_{_{1}}+\delta_{_{2}})^{2}\right).\]
Moreover, by lemma \ref{fire4lem44} 3
\[(\delta_{_{1}}+\delta_{_{2}})^{2}-(\alpha_{_{3}}-\alpha_{_{4}})^{2}\geq{}(\delta_{_{1}}+\delta_{_{2}})^{2}-(\delta_{_{3}}-\delta_{_{4}})^{2}.\]
This proves (\ref{fire415eqn10}), because both factors on the right-hand side of (\ref{fire415eqn10}) are non-negative. Hence
\[\kappa(0)\geq{}0\;\mbox{ and }\;\kappa(\pi)\leq{}0,\]
so $\kappa(\theta)=0$ for some $\theta\in[0,\pi].$ This completes the proof.
\end{proof}
\begin{theorem}
\label{fire4thm412}Let $\delta_{_{1}} \geq{} \delta_{_{2}} \geq{} \delta_{_{3}} \geq{} \delta_{_{4}} > 0 $ and $\lambda$ be defined by $\alpha_{_{1}},\alpha_{_{2}},\alpha_{_{3}},\alpha_{_{4}}$ in the  usual way. Then the following are equivalent
\begin{enumerate}
\item{}There exists $n\in{\Bbb N}$ and unitaries $v_{_{ij}}\in{}M_{_{n}}({\Bbb C})$ such that  $\left(D_{_{ij}}v_{_{ij}}\right)_{i,j=1}^{4}$ is unitary in $M_{_{4n}}({\Bbb C}).$
\item{}There exists unitaries $v_{_{ij}} \in{}M_{_{2}}({\Bbb C})$ such that  $\left(D_{_{ij}}v_{_{ij}}\right)_{i,j=1}^{4}$  is unitary $M_{_{8}}({\Bbb C}).$
\item{}$\delta_{_{1}} - \delta_{_{2}} - \delta_{_{3}} - \delta_{_{4}} \leq{}0.$
\end{enumerate}
\end{theorem}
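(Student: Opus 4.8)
The plan is to prove the cycle of implications $(2)\Rightarrow(1)\Rightarrow(3)\Rightarrow(2)$, assembling the three technical results already established in this section. The implication $(2)\Rightarrow(1)$ requires no work --- it is the case $n=2$.

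For $(1)\Rightarrow(3)$ I would argue as follows. Suppose unitaries $v_{ij}\in M_{n}({\Bbb C})$ are given so that $u=(D_{ij}v_{ij})_{i,j=1}^{4}$ is unitary in $M_{4n}({\Bbb C})\cong M_{4}(M_{n}({\Bbb C}))$. Writing $u_{ij}=D_{ij}v_{ij}$ gives $u_{ij}u_{ij}^{*}=D_{ij}^{2}I_{n}$. The matrix with $(i,j)$ entry $D_{ij}^{2}$ --- equal to $\delta_{i}^{2}$ for $i=j$ and $\alpha_{i}\alpha_{j}$ for $i\neq j$ --- is doubly stochastic by the Remark preceding lemma \ref{fire4lem44}, and it is visibly symmetric. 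Hence lemma \ref{fire4lem45} applies with $\alpha_{ij}=D_{ij}^{2}$, yielding the ``four--angle'' inequality $D_{ii}\leq D_{jj}+D_{kk}+D_{ll}$, that is $\delta_{i}\leq\delta_{j}+\delta_{k}+\delta_{l}$, for every partition $\{i,j,k,l\}=\{1,2,3,4\}$. Choosing $i=1$ and using $\delta_{1}\geq\delta_{2}\geq\delta_{3}\geq\delta_{4}$ gives precisely $\delta_{1}-\delta_{2}-\delta_{3}-\delta_{4}\leq 0$, which is (3).

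For $(3)\Rightarrow(2)$ I would first note that, under the ordering $\delta_{1}\geq\delta_{2}\geq\delta_{3}\geq\delta_{4}>0$, condition (3) is equivalent to the full four--angle inequality $\delta_{i}\leq\delta_{j}+\delta_{k}+\delta_{l}$ for all $\{i,j,k,l\}=\{1,2,3,4\}$, since the three inequalities with $i\neq 1$ hold automatically. Thus proposition \ref{fire4prop15} produces a $4\times4$ matrix $u$ over the quaternions ${\Bbb H}$, unitary in the relevant sense, with $|u_{ij}|^{2}=\delta_{i}^{2}$ for $i=j$ and $|u_{ij}|^{2}=\alpha_{i}\alpha_{j}$ for $i\neq j$, i.e. $|u_{ij}|=D_{ij}$. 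Now I would invoke the identification of ${\Bbb H}$ with the real $*$-subalgebra of $M_{2}({\Bbb C})$ used in the proof of proposition \ref{fire4prop15}: under it a quaternion $q$ corresponds to a $2\times2$ complex matrix of the form $|q|\,w$ with $w$ unitary (indeed $w\in SU(2)$), and quaternion conjugation becomes the adjoint. Consequently each block $u_{ij}$, read in $M_{2}({\Bbb C})$, equals $D_{ij}v_{ij}$ for a unitary $v_{ij}\in M_{2}({\Bbb C})$, and the resulting $8\times8$ complex matrix $(D_{ij}v_{ij})_{i,j=1}^{4}$ is unitary because $u$ is. That is statement (2).

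The genuinely hard analysis --- the scalar construction of theorem \ref{fire4thm411}, the quaternionic construction of proposition \ref{fire4prop15}, and the determinant/Hilbert--Schmidt estimate behind lemma \ref{fire4lem45} --- is already in hand, so this theorem is essentially a matter of combining them. The one place needing real care is the passage in $(3)\Rightarrow(2)$ from a quaternion-unitary $4\times4$ matrix to its $M_{2}({\Bbb C})$-matrix form: I would have to check that this preserves the bi-unitary block structure, i.e. that the modulus of a quaternion factors out of its $2\times2$ representation as a genuine scalar multiple of a $2\times2$ unitary and that the $8\times8$ matrix one lands on is literally of the shape $(D_{ij}v_{ij})$. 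I expect that bookkeeping to be the (relatively minor) main obstacle; the remaining steps are direct citations of the results above.
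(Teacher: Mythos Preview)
Your proof is correct and follows essentially the same route as the paper: $2\Rightarrow 1$ trivially, $1\Rightarrow 3$ via lemma \ref{fire4lem45}, and $3\Rightarrow 2$ via proposition \ref{fire4prop15} together with the embedding ${\Bbb H}\hookrightarrow M_{2}({\Bbb C})$. The bookkeeping you flag is indeed routine --- a nonzero quaternion $q$ maps to $|q|$ times the image of the unit quaternion $q/|q|$, which lies in $SU(2)$ --- and since all $D_{ij}>0$ here there is no degenerate case to worry about.
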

\begin{proof}2 $\Rightarrow{}$1 is trivial, and lemma \ref{fire4lem45} shows 1 $\Rightarrow{}$3. Moreover 3 $\Rightarrow{}$ 2 follows from proposition \ref{fire4prop15}, by using that the quaternions ${\Bbb H}=\{a_{_{1}}+ia_{_{2}}+ja_{_{3}}+ka_{_{4}}\;|\;a_{_{1}},a_{_{2}},a_{_{3}},a_{_{4}}\in{\Bbb R}\}$ can be identified with the real subalgebra of $M_{_{2}}({\Bbb C})$ given by
\[\left\{\left.\left(
\begin{array}{cc}a_{_{1}}+ia_{_{2}}&a_{_{2}}+ia_{_{3}}\\
-a_{_{2}}+ia_{_{3}}&a_{_{1}}-ia_{_{2}}\end{array}\right)\right|\;a_{_{1}},a_{_{2}},a_{_{3}},a_{_{4}}\in{\Bbb R}\right\},\]
and that the matrix representation of $a\in{\Bbb H}$ is a unitary matrix if and only if $|a|=1,$ where $|a|^{2}=a_{_{1}}^{2}+a_{_{2}}^{2}+a_{_{3}}^{2}+a_{_{4}}^{2}.$
\end{proof}\newpage{}

\chapter{Commuting Squares Based on 3--stars}
\label{trestarchap}\label{chapter2}

\setcounter{equation}{0}
%sidst rettet 15/12

\section{A Necessary and Sufficient Condition}
\setcounter{equation}{0}
Let $\Gamma$ be the 3--star with ray length $k,$ $l,$ and $m.$ We label the vertices of $\Gamma$ the following way

\begin{center}
\setlength{\unitlength}{1cm}
\begin{picture}(8,6)
\put(0,3){\circle*{0.15}}
\put(1,3){\circle*{0.15}}
\put(2,3){\circle*{0.15}}
\put(3,3){\circle*{0.15}}
\put(4,3){\circle*{0.15}}
\put(5,3.5){\circle*{0.15}}
\put(5,2.5){\circle*{0.15}}
\put(6,4){\circle*{0.15}}
\put(6,2){\circle*{0.15}}
\put(7,4.5){\circle*{0.15}}
\put(7,1.5){\circle*{0.15}}
\put(8,5){\circle*{0.15}}
\put(8,1){\circle*{0.15}}
\put(0,3){\line(1,0){1}}
\put(2,3){\line(1,0){2}}
\put(4,3){\line(2,1){2}}
\put(4,3){\line(2,-1){2}}
\put(7,4.5){\line(2,1){1}}
\put(7,1.5){\line(2,-1){1}}
\put(-0.1,2.5){$a_{_{k}}$}
\put(0.9,2.5){$a_{_{k-1}}$}
\put(1.9,2.5){$a_{_{2}}$}
\put(2.9,2.5){$a_{_{1}}$}
\put(3.9,2.5){$d$}
\put(4.9,3){$b_{_{1}}$}
\put(4.9,2){$c_{_{1}}$}
\put(5.9,3.5){$b_{_{2}}$}
\put(5.9,1.5){$c_{_{2}}$}
\put(6.9,4){$b_{_{l-1}}$}
\put(6.9,1){$c_{_{m-1}}$}
\put(7.9,4.5){$b_{_{l}}$}
\put(7.9,0.5){$c_{_{m}}$}
\put(1.3,3){$\ldots$}
\put(6.3,4.15){$.$}
\put(6.5,4.25){$.$}
\put(6.7,4.35){$.$}
\put(6.3,1.85){$.$}
\put(6.5,1.75){$.$}
\put(6.7,1.65){$.$}
\end{picture}
\end{center}
Let $\Gamma_{_{0}}$ denote the vertices of $\Gamma.$ For $p,q\in\Gamma_{_{0}},$ $\mbox{dist}(p,q)$ denotes the minimal number of edges in a path from $p$ to $q.$ $\Gamma$ is a bi--partite graph
\[\Gamma_{_{0}}=\Gamma_{_{\mbox{\tiny even}}}\cup\Gamma_{_{\mbox{\tiny odd}}}\;\;\;\;\;\mbox{ (disjoint)},\]
where  
\[\Gamma_{_{\mbox{\tiny even}}}=\left\{p\in{}\Gamma_{_{0}}|\mbox{dist}(p,d) \mbox{ is even }\right\}\]
\[\Gamma_{_{\mbox{\tiny odd }}}=\left\{p\in{}\Gamma_{_{0}}|\mbox{dist}(p,d) \mbox{ is odd }\right\},\]
then the adjacency matrix, $\Delta_{_{\Gamma}},$ of $\Gamma$ is of the form
\[\Delta_{_{\Gamma}}=\left(\begin{array}{cc}0&G\\G^{t}&0\end{array}\right),\]
where the rows (resp. the columns) of $G$ are labeled by $\Gamma_{_{\mbox{\tiny even}}}$ (resp. $\Gamma_{_{\mbox{\tiny odd }}}$). The entries of the matrix 
\[\Delta_{_{\Gamma}}^{2}-I=\left(\begin{array}{cc}GG^{t}-I&0\\0&G^{t}G-I\end{array}\right)\]
are easily found. The off--diagonal entries are
\[\left(\Delta_{_{\Gamma}}^{2}-I\right)_{pq}=\left\{
\begin{array}{cl}1&\mbox{if dist$(p,q)=2$}\\[0.2cm]
0&\mbox{otherwise}\end{array}\right.\;\;\;\;\;p\neq{}q,\]
and the diagonal entries are
\[\left(\Delta_{_{\Gamma}}^{2}-I\right)_{pp}=\left\{
\begin{array}{cl}2&\mbox{if $p=d$}\\[0.2cm]
0&\mbox{if $p=a_{_{k}},$ $p=b_{_{l}}$ or $p=c_{_{m}}.$}\\[0.2cm]
1&\mbox{otherwise}\end{array}\right.\]
In particular $\left(\Delta_{_{\Gamma}}^{2}-I\right)_{pq}\neq{}0$ implies that dist$(p,q)=0$ or dist$(p,q)=2.$

\noindent{}We will consider commuting squares of the form
\begin{equation}
\label{tre1eqn1}
\begin{array}{lcl}
      B & \subset_{G^{t}G-I} & D \\
       \cup_{G} &\,&\cup_{G}\\
      A & \subset_{GG^{t}-I} & C \end{array} 
\end{equation}
Note that such a commuting square is symmetric in the sense of \ref{symmetricdefn}, because
\[G^{t}(GG^{t}-I)=(G^{t}G-I)G^{t}.\]
The rest of this section will be used to prove:
\begin{theorem}
\label{tre1thm11}
Let $\xi:\Gamma_{_{0}}\rightarrow{\Bbb R}_{+}$ be the Perron--Frobenius eigenvector for $\Delta_{_{\Gamma}},$ with normalization $\xi(d)=1.$ Set 
\begin{equation}
\label{tre1*2}\alpha_{_{1}}=\xi(a_{_{1}}),\;\;\;\alpha_{_{2}}=\xi(b_{_{1}})\;\;\mbox{ and }\;\;  \alpha_{_{3}}=\xi(c_{_{1}}),
\end{equation} 
and let $\lambda=\alpha_{_{1}}+\alpha_{_{2}}+\alpha_{_{3}}$ be the Perron--Frobenius eigenvalue. Then $\Gamma$ admits a commuting square of the form (\ref{tre1eqn1}) if and only if there exists 9 vectors $(e_{_{ij}})_{_{i,j=1}}^{3}$ in ${\Bbb C}^{2},$ for which
\begin{description}
\item[(a)]\hspace{1.5cm}$\|e_{_{ii}}\|^{^{2}}=(\lambda-\alpha_{_{i}})(\lambda\alpha_{_{i}}-1).$
\item[(b)]\hspace{1.5cm}$\|e_{_{ij}}\|^{^{2}}=\alpha_{_{i}}+\alpha_{_{j}}-\lambda\alpha_{_{i}}\alpha_{_{j}},$ $i\neq{}j.$
\item[(c)]\hspace{1.5cm}$\sum_{j}\eijeij{i}{j}=\alpha_{_{i}}I_{_{2}},$ $i=1,2,3.$
\item[(d)]\hspace{1.5cm}$\sum_{i}\eijeij{i}{j}=\alpha_{_{j}}I_{_{2}},$ $j=1,2,3.$
\end{description}
where $I_{_{2}}$ is the unit $2\times{}2-$matrix.
\end{theorem}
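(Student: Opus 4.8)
The plan is to specialize the general bi-unitary machinery of Theorem \ref{fire1thm17} and Theorem \ref{fire1thm10} to the commuting square (\ref{tre1eqn1}), where $A\subset_{GG^{t}-I}C$, $A\subset_{G}B\subset_{G^{t}G-I}D$, and the trace is the Markov trace forced by Proposition \ref{fire1prop17}. First I would identify the four layers: the minimal central projections of $A$ are labeled by $\Gamma_{\mbox{\tiny even}}$, those of $B$ and of $C$ by $\Gamma_{\mbox{\tiny odd}}$, and those of $D$ by $\Gamma_{\mbox{\tiny even}}$ (since both horizontal inclusions have matrix $\Delta_\Gamma^2-I$ restricted appropriately, and the vertical ones have matrix $G$). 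The trace weights are then determined by the Perron--Frobenius eigenvector $\xi$ of $\Delta_\Gamma$: up to normalization $\alpha_p=\xi(p)$ for $p\in\Gamma_{\mbox{\tiny even}}$ (weights on $A$ and $D$) and $\beta_q=\xi(q)$ for $q\in\Gamma_{\mbox{\tiny odd}}$ (weights on $B$ and $C$), because $\Delta_\Gamma\xi=\lambda\xi$ splits into $G\beta=\lambda\alpha$ and $G^t\alpha=\lambda\beta$, and the relevant Markov conditions from Proposition \ref{fire1prop17} are exactly the eigenvector equations for $(GG^t-I)$ and $(G^tG-I)$.

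Next I would write down which four-cycles occur in the diagram (\ref{tre1eqn1}). A four-cycle is $p-q-p'-q'$ with $p,p'\in\Gamma_{\mbox{\tiny even}}$, $q,q'\in\Gamma_{\mbox{\tiny odd}}$, $G_{pq},G_{p'q},G_{p'q'},G_{pq'}$ all nonzero; equivalently $\mbox{dist}(p,p')\le 2$ and $\mbox{dist}(q,q')\le 2$ in $\Gamma$ (and additionally the two horizontal edges must be legitimate edges of $GG^t-I$ and $G^tG-I$, i.e. the pairs $(p,p')$ and $(q,q')$ are at distance $0$ or $2$). Because $\Gamma$ is a tree (a $3$-star), for a given pair $(p,p')$ at distance $2$ there is a unique intermediate vertex, and the same for $(q,q')$; so most blocks $u^{(p,p')}$ (the block of $u$ with $p,p'$ fixed) are $1\times 1$. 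The only place where blocks become larger than $1\times 1$ is at the branch vertex $d$: the three vertices $a_1,b_1,c_1$ are each at distance $2$ from one another through $d$, and $d$ is at distance $2$ from itself. I would isolate, exactly as in the discussion preceding Proposition \ref{fire2prop27}, the genuinely non-trivial part of the bi-unitary condition, which is a $3\times 3$ block (indexed by the three rays at $d$) of one of the two matrices — here, because both horizontal inclusions are $\Delta_\Gamma^2-I$ rather than a multiple of $G$, the role of the $m\times m$ core is played by a $3\times 3$ matrix whose entries are themselves $2\times 2$ matrices (the ``$2$'' coming from the fact that $d$ has multiplicity-type data making the corresponding $u$-block have two rows/columns: the two length-$2$ paths through $d$ emanating from the relevant vertex).

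Then I would carry out the reduction of all the $2\times 2$ ``along the rays'' blocks. By Lemma \ref{fire2lem22} / Corollary \ref{fire2cor23} — which apply since $\Gamma$ is a tree and hence has no $4$-cycles at all along a ray — each $2\times 2$ block of $u$ or $v$ on a ray is forced (up to a unitary factor) to have its moduli determined by the $n=1$ case, and using the ``extension'' remark after the doubly-stochastic computation (the lemma that a $2\times 2$ block can be completed once one entry is known), these blocks can always be filled in. So existence of the commuting square is equivalent to solvability of the core $3\times 3$-block-of-$2\times 2$'s problem at $d$. I would then compute the moduli-squared that the core must have: the diagonal entries $|e_{ii}|^2$ correspond to the cycle $d\,a_i\,d\,a_i$-type loops and come out to $(\lambda-\alpha_i)(\lambda\alpha_i-1)$ after substituting the $R_n$-identities and the eigenvalue equation $\alpha_1+\alpha_2+\alpha_3=\lambda$ (exactly the kind of computation done for the $3$-star in Section \ref{firesection2}, using $\delta_i^2=\alpha_i^2-\lambda\alpha_i+1$ and $R_{k_i}(\lambda)^2=1+R_{k_i-1}R_{k_i+1}$); the off-diagonal ones $|e_{ij}|^2=\alpha_i+\alpha_j-\lambda\alpha_i\alpha_j$ come from the cross cycles $d\,a_i\,d\,a_j$. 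Conditions (c) and (d) are precisely the statement that the $6\times 6$ matrix built from the $e_{ij}$ (three rows/columns of $2$-vectors) has orthonormal rows and columns in the weighted sense — i.e. is the core isometry/unitary of Theorem \ref{fire1thm17}(2) — after absorbing the normalizing square-roots $\sqrt{\alpha_i\delta_k/\beta_j\gamma_l}$. I would verify that these four conditions, together with the automatic fillability of the ray blocks, are equivalent to the bi-unitary condition, hence to existence of the commuting square.

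The main obstacle I anticipate is the bookkeeping at the branch vertex $d$: correctly determining that the non-trivial core is a $3\times 3$ array of $2\times 2$ blocks (rather than scalars, as for an ordinary $3$-star S$(k,l,m)$), pinning down exactly which cycles index the rows and columns (the paths $d-a_i-d$ and $d-b_1-d$, $d-c_1-d$, counted with the correct multiplicities arising from $(\Delta_\Gamma^2-I)_{dd}=2$), and checking that the transition formula (\ref{fire10116}) turns the prescribed moduli into precisely (a)--(d). Once the combinatorial structure is correctly set up, the remaining verifications — that the ray blocks propagate, that the doubly-stochastic row/column sums close up using the eigenvalue equation, and that (a)--(d) reassemble into a genuine bi-unitary pair $(u,v)$ — are routine given Lemma \ref{fire2lem22}, Corollary \ref{fire2cor23}, the $2\times 2$-completion remark, and the $R_n$-identities proved in Lemma \ref{firegraflem0}.
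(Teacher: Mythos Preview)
Your overall strategy --- specialize the bi-unitary condition to (\ref{tre1eqn1}), isolate the non-trivial part at the branch vertex $d$, show it is equivalent to (a)--(d), then propagate along the rays --- is the same as the paper's. However, several of your structural identifications are wrong and would derail the argument if carried out as written.

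First, the labeling: in (\ref{tre1eqn1}) the horizontal inclusion $A\subset_{GG^{t}-I}C$ has $GG^{t}-I$ indexed by $\Gamma_{\mbox{\tiny even}}\times\Gamma_{\mbox{\tiny even}}$, so $c(A)$ and $c(C)$ are both labeled by $\Gamma_{\mbox{\tiny even}}$ while $c(B),c(D)$ are labeled by $\Gamma_{\mbox{\tiny odd}}$ --- not $c(B),c(C)$ odd and $c(D)$ even as you write. A $4$-cycle is then $p-r-s-q-p$ with $p,r$ even and $q,s$ odd, where the two \emph{vertical} edges $pq$ and $rs$ lie in $\Gamma$ while the two \emph{horizontal} links satisfy $(GG^{t}-I)_{pr}\neq0$ and $(G^{t}G-I)_{qs}\neq0$. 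Your description ``$G_{pq},G_{p'q},G_{p'q'},G_{pq'}$ all nonzero'' is the picture for a square with all four sides equal to $G$ or $G^{t}$, which is not the present one.

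Second, and more importantly, the core at $d$ is not a single $3\times3$ array of $2\times2$ blocks (a $6\times6$ unitary). The only multiple edge in the horizontal graphs is the loop at $d$ in $GG^{t}-I$, of multiplicity $2$, so the block $u^{(p,s)}_{qr}$ is a $1\times2$ \emph{row vector} exactly when $p=r=d$, and a scalar otherwise. The relevant summands are the three separate $3\times3$ unitaries $u^{(d,a_{1})},u^{(d,b_{1})},u^{(d,c_{1})}$ (each built from a $3\times2$ block of row vectors $f_{ij}$ and a $3\times1$ column of scalars $\sigma_{ij}$) together with the three $3\times3$ unitaries $v^{(a_{1},d)},v^{(b_{1},d)},v^{(c_{1},d)}$ (each a $2\times3$ block of column vectors $f'_{ij}$ and a $1\times3$ row of scalars). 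Setting $e_{ij}=\sqrt{\alpha_{j}}f_{ij}$, condition (d) comes from orthonormality of the two $f$-columns inside each $u^{(d,\cdot)}$, condition (c) from the two $f'$-rows inside each $v^{(\cdot,d)}$, and (a),(b) from determining $|\sigma_{ij}|$ via the $1\times1$ summands of $v$ for $i\neq j$ plus the unit-row-norm condition. There is no single $6\times6$ matrix whose unitarity encodes all of (a)--(d); the conditions genuinely live in six different $3\times3$ summands.

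Third, Lemma~\ref{fire2lem22} and Corollary~\ref{fire2cor23} are proved for squares of the form (\ref{fire2eqn21}), with horizontal inclusions $nG$ and $nG^{t}$; they do not apply to horizontal inclusions $GG^{t}-I$ and $G^{t}G-I$. The ray analysis must be redone for this form: the paper writes out all ray blocks explicitly and checks unitarity of each $2\times2$ via the identity $\lambda\delta_{1}^{2}=\alpha_{1,j+1}\alpha_{1,j+2}-\alpha_{1,j}\alpha_{1,j+3}$, which is different from the $R_{n}$-identity $R_{n}^{2}=1+R_{n-1}R_{n+1}$ used in chapter~\ref{chapter1}. The ``fill in one entry of a $2\times2$ at a time'' philosophy does survive, but the concrete moduli and their verification are new.
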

{\bf{}Proof of the necessity of (a), (b), (c) and (d):}

\noindent{}First we consider the case $k,l,m\geq{}2.$ Using the eigenvector equation $\Delta_{_{\Gamma}}\xi=\lambda\xi$ at the vertices $a_{_{1}},$ $b_{_{1}}$ and $c_{_{1}}$ one gets
\begin{equation}
\label{tre1*3}
\xi(a_{_{2}})=\lambda\alpha_{_{1}}-1,\;\;\;\xi(b_{_{2}})=\lambda\alpha_{_{2}}-1,\;\;\;\xi(c_{_{2}})=\lambda\alpha_{_{3}}-1.
\end{equation}
Particularly $\lambda\alpha_{_{i}}-1>0,$ $i=1,2,3.$

Assume that there is a commuting square of the form (\ref{tre1eqn1}). The sets of minimal central projections $c(A)$ and $c(C)$ of $A$ and $C$ are labeled by the elements of $\Gamma_{_{\mbox{\tiny even}}},$ and the sets of minimal central projections $c(B)$ and $c(D)$ of $B$ and $D$ are labeled by the elements of $\Gamma_{_{\mbox{\tiny odd}}}.$ The bi--unitary condition (\ref{biunitcond}) gives the existence of\[u=\bigoplus_{(p,s)}u^{(p,s)},\;\;\;\;\;v=\bigoplus_{(q,r)}v^{(q,r)},\]
where $u^{(p,s)}$ and $v^{(q,r)}$ are block matrices
\[u^{(p,s)}=\left(u^{(p,s)}_{_{qr}}\right)_{q,r}\;\;\;\mbox{ and }\;\;\;v^{(q,r)}=\left(v^{(q,r)}_{_{ps}}\right)_{p,s}.\]
The only sets of indices $(p,q,r,s)$ which occur are those, that can be completed to a cycle of length 4, $p-r-s-q-p,$ via the given inclusion pattern. Moreover each block $u^{(p,s)}_{_{qr}}$ is a scalar unless $q=r=d,$ in which case $u^{(p,s)}_{_{qr}}$ is a $1\times{}2-$matrix, because the edge $dd$ is the only multiple edge coming from $GG^{t}-I$ or $G^{t}G-I,$ and the multiplicity is 2. Finally
\begin{equation}
\label{tre1*4}
v^{(q,r)}_{_{ps}}=w(p,q,r,s)\left(u^{(p,s)}_{_{qr}}\right)^{t},
\end{equation}
where
\begin{equation}
\label{tre1*5}
w(p,q,r,s)=\sqrt{\frac{\xi(p)\xi(s)}{\xi(q)\xi(r)}}.
\end{equation}
The possible 4--cycles $p-r-s-q-p$ are determined by the two ``vertical'' edges $pq$ and $rs$ from $\Gamma,$ but not all pairs $(pr,qs)$ will occur. We concentrate on the 6 edges
\[da_{_{1}},\;\;db_{_{1}},\;\;dc_{_{1}},\;\;a_{_{2}}a_{_{1}},\;\;b_{_{2}}b_{_{1}},\;\;c_{_{2}}c_{_{1}},\]
in $\Gamma,$ which connect vertices at a distance of at most 2 from the central vertex $d.$

\noindent{}Figure 1 shows which combinations $(pr,qs)$ occur, and the number of dots indicates the size of the corresponding block $u^{(p,s)}_{_{qr}}.$

\begin{center}
\setlength{\unitlength}{1cm}
\begin{picture}(7,7)
\multiput(1,0)(1,0){7}{\line(0,1){6}}
\multiput(1,0)(0,1){7}{\line(1,0){6}}
\put(0.3,0.4){$c_{_{2}}c_{_{1}}$}
\put(0.3,1.4){$b_{_{2}}b_{_{1}}$}
\put(0.3,2.4){$a_{_{2}}a_{_{1}}$}
\put(0.3,3.4){$dc_{_{1}}$}
\put(0.3,4.4){$db_{_{1}}$}
\put(0.3,5.4){$da_{_{1}}$}

\put(6.2,6.2){$c_{_{2}}c_{_{1}}$}
\put(5.2,6.2){$b_{_{2}}b_{_{1}}$}
\put(4.2,6.2){$a_{_{2}}a_{_{1}}$}
\put(3.3,6.2){$dc_{_{1}}$}
\put(2.3,6.2){$db_{_{1}}$}
\put(1.3,6.2){$da_{_{1}}$}
\put(1,6){\line(-1,1){1}}
\put(1,6){\line(-1,0){1}}
\put(1,6){\line(0,1){1}}
\put(0,6.2){$pq$}
\put(0.4,6.7){$rs$}
\put(1.3,5.5){\circle*{0.1}}\put(1.7,5.5){\circle*{0.1}}
\put(2.3,5.5){\circle*{0.1}}\put(2.7,5.5){\circle*{0.1}}
\put(3.3,5.5){\circle*{0.1}}\put(3.7,5.5){\circle*{0.1}}
\put(1.3,4.5){\circle*{0.1}}\put(1.7,4.5){\circle*{0.1}}
\put(2.3,4.5){\circle*{0.1}}\put(2.7,4.5){\circle*{0.1}}
\put(3.3,4.5){\circle*{0.1}}\put(3.7,4.5){\circle*{0.1}}
\put(1.3,3.5){\circle*{0.1}}\put(1.7,3.5){\circle*{0.1}}
\put(2.3,3.5){\circle*{0.1}}\put(2.7,3.5){\circle*{0.1}}
\put(3.3,3.5){\circle*{0.1}}\put(3.7,3.5){\circle*{0.1}}

\put(1.5,0.5){\circle*{0.1}}
\put(1.5,1.5){\circle*{0.1}}
\put(1.5,2.5){\circle*{0.1}}
\put(2.5,0.5){\circle*{0.1}}
\put(2.5,1.5){\circle*{0.1}}
\put(2.5,2.5){\circle*{0.1}}
\put(3.5,0.5){\circle*{0.1}}
\put(3.5,1.5){\circle*{0.1}}
\put(3.5,2.5){\circle*{0.1}}
\put(4.5,3.5){\circle*{0.1}}
\put(4.5,4.5){\circle*{0.1}}
\put(4.5,5.5){\circle*{0.1}}
\put(5.5,3.5){\circle*{0.1}}
\put(5.5,4.5){\circle*{0.1}}
\put(5.5,5.5){\circle*{0.1}}
\put(6.5,3.5){\circle*{0.1}}
\put(6.5,4.5){\circle*{0.1}}
\put(6.5,5.5){\circle*{0.1}}
\put(4.4,2.4){?}
\put(5.4,1.4){?}
\put(6.4,0.4){?}
\end{picture}

{\bf Figure 1.} Blocks of $u$ indexed by edges close to $d$\\[1cm]
\end{center}

The 3 question marks each represent a dot if $k,l,m\geq{}3,$ but if the $a-$ray, the $b-$ray or the $c-$ray has length 2, the corresponding question mark represents an empty box, i.e. the pair $(pq,rs)$ does not correspond to a 4--cycle.

\noindent{}The corresponding entries, $u^{(p,s)}_{_{qr}},$ of $u$ are given by
\[f_{_{ij}}\in{\Bbb C}^{2}\;\;\mbox{(row vectors) and }\;\;\;\sigma_{_{ij}},\tau_{_{ij}}\in{\Bbb C}\]
as in figure 2.

\begin{center}
\setlength{\unitlength}{1cm}
\begin{picture}(7,7)
\multiput(1,0)(1,0){7}{\line(0,1){6}}
\multiput(1,0)(0,1){7}{\line(1,0){6}}
\put(0.3,0.4){$c_{_{2}}c_{_{1}}$}
\put(0.3,1.4){$b_{_{2}}b_{_{1}}$}
\put(0.3,2.4){$a_{_{2}}a_{_{1}}$}
\put(0.3,3.4){$dc_{_{1}}$}
\put(0.3,4.4){$db_{_{1}}$}
\put(0.3,5.4){$da_{_{1}}$}
\put(6.2,6.2){$c_{_{2}}c_{_{1}}$}
\put(5.2,6.2){$b_{_{2}}b_{_{1}}$}
\put(4.2,6.2){$a_{_{2}}a_{_{1}}$}
\put(3.3,6.2){$dc_{_{1}}$}
\put(2.3,6.2){$db_{_{1}}$}
\put(1.3,6.2){$da_{_{1}}$}
\put(1,6){\line(-1,1){1}}
\put(1,6){\line(-1,0){1}}
\put(1,6){\line(0,1){1}}
\put(0,6.2){$pq$}
\put(0.4,6.7){$rs$}
\put(1.35,5.4){$f_{_{11}}$}
\put(2.35,5.4){$f_{_{12}}$}
\put(3.35,5.4){$f_{_{13}}$}
\put(1.35,4.4){$f_{_{21}}$}
\put(2.35,4.4){$f_{_{22}}$}
\put(3.35,4.4){$f_{_{23}}$}
\put(1.35,3.4){$f_{_{31}}$}
\put(2.35,3.4){$f_{_{32}}$}
\put(3.35,3.4){$f_{_{33}}$}
\put(1.35,0.4){$\tau_{_{31}}$}
\put(1.35,1.4){$\tau_{_{21}}$}
\put(1.35,2.4){$\tau_{_{11}}$}
\put(2.35,0.4){$\tau_{_{32}}$}
\put(2.35,1.4){$\tau_{_{22}}$}
\put(2.35,2.4){$\tau_{_{12}}$}
\put(3.35,0.4){$\tau_{_{33}}$}
\put(3.35,1.4){$\tau_{_{23}}$}
\put(3.35,2.4){$\tau_{_{13}}$}
\put(4.35,3.4){$\sigma_{_{31}}$}
\put(4.35,4.4){$\sigma_{_{21}}$}
\put(4.35,5.4){$\sigma_{_{11}}$}
\put(5.35,3.4){$\sigma_{_{32}}$}
\put(5.35,4.4){$\sigma_{_{22}}$}
\put(5.35,5.4){$\sigma_{_{12}}$}
\put(6.35,3.4){$\sigma_{_{33}}$}
\put(6.35,4.4){$\sigma_{_{23}}$}
\put(6.35,5.4){$\sigma_{_{13}}$}
\put(4.4,2.4){?}
\put(5.4,1.4){?}
\put(6.4,0.4){?}
\end{picture}

{\bf Figure 2.} Entries of $u$\\[1cm]
\end{center}
Let $f_{_{ij}}'\in{\Bbb C}^{2}$ (column vectors) and $\sigma_{_{ij}}',\tau_{_{ij}}'\in{\Bbb C}$ be the corresponding blocks, $v_{_{p,s}}^{(qr)},$ of $v.$ By the transformation formulas (\ref{tre1*4}) and (\ref{tre1*5}), together with $\xi(d)=1$ and  (\ref{tre1*2}), (\ref{tre1*3}) we get
\begin{equation}
\label{tre1*6}
f_{_{ij}}'=\sqrt{\frac{\alpha_{i}}{\alpha_{j}}}f_{_{ij}}^{t}
\end{equation}
\begin{equation}
\label{tre1*7}
\sigma_{_{ij}}'=\sqrt{\frac{\alpha_{j}}{\alpha_{i}(\lambda\alpha_{j}-1)}}\sigma_{_{ij}}
\end{equation}
\begin{equation}
\label{tre1*8}
\tau_{_{ij}}'=\sqrt{\frac{\alpha_{j}(\lambda\alpha_{i}-1)}{\alpha_{i}}}\tau_{_{ij}}
\end{equation}
The unitary summands $u^{(p,s)}$ of $u$ for $(p,s)=(d,a_{_{1}})$ (resp. $(d,b_{_{1}})$ and $(d,c_{_{1}})$) are
\begin{equation}
\label{tre1*9}
u^{(d,a_{1})}=
\left(\begin{array}{cc}
f_{_{11}}&\sigma_{_{11}}\\
f_{_{21}}&\sigma_{_{21}}\\
f_{_{31}}&\sigma_{_{31}}
\end{array}\right),\;\;
u^{(d,b_{1})}=
\left(\begin{array}{cc}
f_{_{12}}&\sigma_{_{12}}\\
f_{_{22}}&\sigma_{_{22}}\\
f_{_{32}}&\sigma_{_{32}}
\end{array}\right),\;\;
u^{(d,c_{1})}=
\left(\begin{array}{cc}
f_{_{13}}&\sigma_{_{13}}\\
f_{_{23}}&\sigma_{_{23}}\\
f_{_{33}}&\sigma_{_{33}}
\end{array}\right).
\end{equation}
Similarly the unitary summands, $v^{(q,r)},$ of $v$ for $(q,r)=(a_{_{1}},d)$ (resp. $(b_{_{1}},d)$ and $(c_{_{1}},d)$) are

\[v^{(a_{1},d)}=\left(\begin{array}{ccc}
f_{_{11}}' & f_{_{12}}' &f_{_{13}}'\\
\tau_{_{11}}'&\tau_{_{12}}'&\tau_{_{13}}'
\end{array}\right)\]
\begin{equation}
\label{tre1*10}v^{(b_{1},d)}=\left(\begin{array}{ccc}
f_{_{21}}' & f_{_{22}}' &f_{_{23}}'\\
\tau_{_{21}}'&\tau_{_{22}}'&\tau_{_{23}}'
\end{array}\right)\end{equation}
\[v^{(c_{1},d)}=\left(\begin{array}{ccc}
f_{_{31}}' & f_{_{32}}' &f_{_{33}}'\\
\tau_{_{31}}'&\tau_{_{32}}'&\tau_{_{33}}'
\end{array}\right).\]
Since dist$(a_{_{2}},b_{_{1}})=3,$ there is only one pair $(q,r),$ such that $a_{_{2}}-r-b_{_{1}}-q-a_{_{2}}$ is a 4--cycle with the given inclusion matrices, namely $r=d$ and $q=a_{_{1}}.$ Hence
\[u^{(a_{2},b_{1})}=u_{_{a_{1}d}}^{(a_{2},b_{1})}=\tau_{_{12}}\]
is a $1\times{}1-$summand of $u.$ The same argument shows that $\tau_{_{ij}}$ is a $1\times{}1$ unitary of $u$ whenever $i\neq{}j,$ and $\sigma_{_{ij}}'$ is a $1\times{}1$ unitary of $v$ when $i\neq{}j.$ In particular 
\begin{equation}
\label{tre1*11}
|\sigma_{_{ij}}'|=|\tau_{_{ij}}|=1,\;\;\;i\neq{}j.
\end{equation}
Hence by (\ref{tre1*7}) and (\ref{tre1*8}) 
\[|\sigma_{_{ij}}|=\sqrt{\frac{\alpha_{i}(\lambda\alpha_{j}-1)}{\alpha_{j}}},\;\;\;i\neq{}j\]
\[|\tau_{_{ij}}'|=\sqrt{\frac{\alpha_{j}(\lambda\alpha_{i}-1)}{\alpha_{i}}},\;\;\;i\neq{}j.\]
Using that the 3 matrices in (\ref{tre1*9}) are unitary, one has
\[\|f_{_{ij}}\|^{2}=1-|\sigma_{_{ij}}|^{2}=\frac{1}{\alpha_{j}}(\alpha_{_{i}}+\alpha_{_{j}}-\lambda\alpha_{_{i}}\alpha_{_{j}}),\;\;\;i\neq{}j,\]
and
\[\begin{array}{lclr}
\|f_{_{ii}}\|^{2} & = &
1-|\sigma_{_{ii}}|^{2}&\\[0.3cm]
& = &|\sigma_{_{ji}}|^{2}+|\sigma_{_{ki}}|^{2}&\;\;\;\;\;\;\;i\neq{}j\neq{}k\neq{}i\\[0.3cm]
&=&\frac{1}{\alpha_{i}}(\alpha_{_{j}}+\alpha_{_{k}})(\lambda\alpha_{_{i}}-1)&\\[0.3cm]
&=&\frac{1}{\alpha_{i}}(\lambda-\alpha_{_{i}})(\lambda\alpha_{_{i}}-1).&
\end{array}\]
Set $e_{_{ij}}=\sqrt{\alpha_{_{j}}}f_{_{ij}}\in{\Bbb C}^{2}.$ Then
\[\|e_{_{ij}}\|^{2}=\left\{
\begin{array}{cl}
(\lambda-\alpha_{_{i}})(\lambda\alpha_{_{i}}-1),& i=j\\[0.3cm]
\alpha_{_{i}}+\alpha_{_{j}}-\lambda\alpha_{_{i}}\alpha_{_{j}},&i\neq{}j
\end{array}\right.\]
Moreover, using that the rows of a unitary matrix are orthogonal vectors, one gets from (\ref{tre1*9})
\[\sum_{i}e_{_{ij}}\otimes{}\overline{e}_{_{ij}}=\alpha_{_{j}}\sum_{i}f_{_{ij}}\otimes{}\overline{f}_{_{ij}}=\alpha_{_{j}}\left(\begin{array}{cc}1&0\\0&1\end{array}\right),\;\;\;\;j=1,2,3.\]
Furthermore, since
\[e_{_{ij}}=\frac{1}{\sqrt{\alpha_{i}}}\left(f_{_{ij}}'\right)^{t},\]
and since the two first rows of each of the matrices (\ref{tre1*10}) are orthonormal, we have
\[\sum_{j}e_{_{ij}}\otimes{}\overline{e}_{_{ij}}=\alpha_{_{i}}\sum_{j}\left(f_{_{ij}}'\right)^{t}\otimes{}\left(\overline{f}_{_{ij}}'\right)^{t}=\alpha_{_{i}}\left(\begin{array}{cc}1&0\\0&1\end{array}\right),\;\;\;\;i=1,2,3.\]
This proves the necessity of (a), (b), (c) and (d) in the case $k,l,m\geq{}2.$

\noindent{}If one of the rays, say the $c-$ray, has length 1, but $k,l\geq{}2,$ then
\[\alpha_{_{3}}=\frac{\xi(d)}{\lambda}=\frac{1}{\lambda},\;\;\;\;\mbox{ i.e. } \lambda\alpha_{_{3}}-1=0.\]
Figure 2 reduces to
\begin{center}
\setlength{\unitlength}{1cm}
\begin{picture}(6,6)
\multiput(1,0)(1,0){6}{\line(0,1){5}}
\multiput(1,0)(0,1){6}{\line(1,0){5}}
\put(0.3,0.4){$b_{_{2}}b_{_{1}}$}
\put(0.3,1.4){$a_{_{2}}a_{_{1}}$}
\put(0.3,2.4){$dc_{_{1}}$}
\put(0.3,3.4){$db_{_{1}}$}
\put(0.3,4.4){$da_{_{1}}$}

\put(5.2,5.2){$b_{_{2}}b_{_{1}}$}
\put(4.2,5.2){$a_{_{2}}a_{_{1}}$}
\put(3.3,5.2){$dc_{_{1}}$}
\put(2.3,5.2){$db_{_{1}}$}
\put(1.3,5.2){$da_{_{1}}$}

\put(1,5){\line(-1,1){1}}
\put(1,5){\line(-1,0){1}}
\put(1,5){\line(0,1){1}}
\put(0,5.2){$pq$}
\put(0.4,5.7){$rs$}
\put(1.35,4.4){$f_{_{11}}$}
\put(2.35,4.4){$f_{_{12}}$}
\put(3.35,4.4){$f_{_{13}}$}
\put(1.35,3.4){$f_{_{21}}$}
\put(2.35,3.4){$f_{_{22}}$}
\put(3.35,3.4){$f_{_{23}}$}
\put(1.35,2.4){$f_{_{31}}$}
\put(2.35,2.4){$f_{_{32}}$}

\put(1.35,0.4){$\tau_{_{21}}$}
\put(1.35,1.4){$\tau_{_{11}}$}
\put(2.35,0.4){$\tau_{_{22}}$}
\put(2.35,1.4){$\tau_{_{12}}$}
\put(3.35,0.4){$\tau_{_{23}}$}
\put(3.35,1.4){$\tau_{_{13}}$}

\put(4.35,2.4){$\sigma_{_{31}}$}
\put(4.35,3.4){$\sigma_{_{21}}$}
\put(4.35,4.4){$\sigma_{_{11}}$}
\put(5.35,2.4){$\sigma_{_{32}}$}
\put(5.35,3.4){$\sigma_{_{22}}$}
\put(5.35,4.4){$\sigma_{_{12}}$}

\put(4.4,1.4){?}
\put(5.4,0.4){?}
\end{picture}

{\bf Figure 3.}\\[1cm]
\end{center}
Thus $u^{(d,c_{1})}$ and $v^{(c_{1},d)}$ in (\ref{tre1*9}) and  (\ref{tre1*10}) reduce to 
$2\times{}2$ matrices
\[u^{(d,c_{1})}=\left(\begin{array}{c}f_{_{13}}\\f_{_{23}}\end{array}\right),\;\;\;\;v^{(c_{1},d)}=\left(f_{_{31}}'\;,\;f_{_{32}}'\right).\]
However this case involves  the same computations as in the case $k,l,m\geq{}2,$ if we set $f_{_{33}}=0,$ $ \sigma_{_{13}}=\sigma_{_{23}}=0$ and $\tau_{_{31}}=\tau_{_{32}}=0,$ because $\lambda\alpha_{_{3}}-1=0.$ This proves the necessity part of theorem \ref{tre1thm11}.

\noindent{}{\bf{}Proof of sufficiency of (a), (b), (c) and (d):}

\noindent{}Assume that $(e_{_{ij}})_{i,j=1,2,3}$ are 9 vectors in ${\Bbb C}^{2}$ satisfying (a), (b), (c) and (d). Since any orthonormal set in ${\Bbb C}^{3}$ can be completed to an orthonormal basis, (d) implies that there exists $\sigma_{_{ij}}\in{\Bbb C},$ such that
\begin{equation}
\label{tre1*12}
u_{_{i}}=\left(\begin{array}{cc}
\frac{1}{\sqrt{\alpha_{i}}}e_{_{1i}} & \sigma_{_{1i}}\\
\frac{1}{\sqrt{\alpha_{i}}}e_{_{2i}} & \sigma_{_{2i}}\\
\frac{1}{\sqrt{\alpha_{i}}}e_{_{3i}} & \sigma_{_{3i}}
\end{array}\right)\;\;\;\;\;i=1,2,3,
\end{equation}
are 3 unitary $3\times{}3$ matrices, and by (c) there exists $\rho_{_{ij}}\in{\Bbb C},$ such that
\begin{equation}
\label{tre1*13}
v_{_{i}}=\left(\begin{array}{ccc}
\frac{1}{\sqrt{\alpha_{i}}}e_{_{i1}}^{t}&\frac{1}{\sqrt{\alpha_{i}}}e_{_{i2}}^{t}&\frac{1}{\sqrt{\alpha_{i}}}e_{_{i3}}^{t}\\
\rho_{_{i1}}&\rho_{_{i2}}&\rho_{_{i3}}
\end{array}\right)\;\;\;\;\;i=1,2,3,
\end{equation}
are 3 unitary $3\times{}3$ matrices. By multiplying the last column in each $u_{_{i}}$ and the last row in each $v_{_{i}}$ by suitable scalars of modulus 1, we can obtain
\[\sigma_{_{ii}}\geq{}0\;\;\mbox{ and }\;\;\rho_{_{ii}}\geq{}0,\;\;i=i,2,3.\]
From (a) and (b) and the unitarity of $u_{_{i}}$ and $v_{_{i}}$ we get
\begin{equation}
\label{tre1*14}
|\sigma_{_{ij}}|^{2}=1-\mbox{$\frac{1}{\alpha_{j}}$}\|e_{_{ij}}\|^{2}=\left\{
\begin{array}{ll}
\frac{\lambda}{\alpha_{j}}\left(\alpha_{j}^{2}-\lambda\alpha_{j}+1\right), & i=j\\[0.3cm]
\frac{\alpha_{i}}{\alpha_{j}}\left(\lambda\alpha_{j}-1\right),& i\neq{}j
\end{array}\right.
\end{equation}
and
\begin{equation}
\label{tre1*15}
|\rho_{_{ij}}|^{2}=1-\mbox{$\frac{1}{\alpha_{i}}$}\|e_{_{ij}}\|^{2}=\left\{
\begin{array}{ll}
\frac{\lambda}{\alpha_{i}}\left(\alpha_{i}^{2}-\lambda\alpha_{i}+1\right), & i=j\\[0.3cm]
\frac{\alpha_{j}}{\alpha_{i}}\left(\lambda\alpha_{i}-1\right),& i\neq{}j.
\end{array}\right.
\end{equation}
Let $\delta_{_{1}},$ $\delta_{_{2}}$ and $\delta_{_{3}}$ be the coordinates of the  Perron--Frobenius vector, $\xi,$ at the endpoints of the rays,
\[\delta_{_{1}}=\xi(a_{_{k}}),\;\;\;\delta_{_{2}}=\xi(b_{_{l}}),\;\;\;\delta_{_{3}}=\xi(c_{_{m}}).\]
By chapter \ref{chapter1} (\ref{deltadef}), $\delta_{_{i}}=\sqrt{\alpha_{_{i}}^{2}-\lambda\alpha_{_{i}}+1},$ $i=1,2,3,$ so for $i=j$ the above formulas reduce to
\[|\sigma_{_{ii}}|^{2}=|\rho_{_{ii}}|^{2}=\frac{\lambda\delta_{i}^{2}}{\alpha_{i}},\;\;\;i=1,2,3,\]
and since $\sigma_{_{ii}}\geq{}0$ and $\rho_{_{ii}}\geq{}0$ we have
\begin{equation}
\label{tre1*16}
\sigma_{_{ii}}=\rho_{_{ii}}=\sqrt{\frac{\lambda\delta_{i}^{2}}{\alpha_{i}}},\;\;\;i=1,2,3.
\end{equation}
By (\ref{tre1*14}) and (\ref{tre1*15}) there exists scalars $\sigma_{_{ij}}'$ and $\rho_{_{ij}}'$ with $|\sigma_{_{ij}}'|=|\rho_{_{ij}}'|=1,$ $i\neq{}j,$ such that
\begin{equation}
\label{tre1*17}
\sigma_{_{ij}}=\sqrt{\frac{\alpha_{i}}{\alpha_{j}}\left(\lambda\alpha_{j}-1\right)}\sigma_{_{ij}}',\;\;\;i\neq{}j,
\end{equation}
\begin{equation}
\label{tre1*18}
\rho_{_{ij}}=\sqrt{\frac{\alpha_{j}}{\alpha_{i}}\left(\lambda\alpha_{i}-1\right)}\rho_{_{ij}}',\;\;\;i\neq{}j.
\end{equation}
Let $R_{_{n}}(\lambda)$ be the polynomials defined by
\[R_{_{0}}(\lambda)=1,\;\;R_{_{1}}(\lambda)=\lambda,\;\;R_{_{n+1}}(\lambda)=\lambda{}R_{_{n}}(\lambda)-R_{_{n-1}}(\lambda),\;\;n\geq{}1,\]
as in chapter \ref{chapter1} \ref{firestar2def21}. Then by (\ref{PFdefnstar}) the Perron--Frobenius vector, $\xi,$ on $\Gamma$ is given by
\[\begin{array}{lccl}
\xi(d)&=&1&\\[0.2cm]
\xi(a_{_{j}})&=&\frac{R_{k-j}(\lambda)}{R_{k}(\lambda)},\;\;\;&j=1,\ldots,k\\[0.2cm]
\xi(b_{_{j}})&=&\frac{R_{l-j}(\lambda)}{R_{l}(\lambda)},&j=1,\ldots,l\\[0.2cm]
\xi(c_{_{j}})&=&\frac{R_{m-j}(\lambda)}{R_{m}(\lambda)},&j=1,\ldots,m.
\end{array}\]
Set 
\[\alpha_{_{1j}}=\xi(a_{_{j}}),\;\;\;\alpha_{_{2j}}=\xi(b_{_{j}}),\;\;\;\alpha_{_{3j}}=\xi(c_{_{j}}).\]
In particular
\[\begin{array}{lcllcllcl}
\alpha_{_{1}}&=&\alpha_{_{11}},\;\;\;\alpha_{_{2}}&=&\alpha_{_{21}},\;\;\;\alpha_{_{3}}&=&\alpha_{_{31}},\\[0.2cm]
\delta_{_{1}}&=&\alpha_{_{1k}},\;\;\;\delta_{_{2}}&=&\alpha_{_{2l}},\;\;\;\delta_{_{3}}&=&\alpha_{_{3m}}.
\end{array}\]
We proceed to construct $u=\bigoplus{}u^{(p,s)}$ and $v=\bigoplus{}v^{(q,r)}$ satisfying the bi--unitary condition, i.e. $u$ and $v$ are unitaries and
\[v^{(q,r)}_{_{ps}}=\sqrt{\frac{\xi(p)\xi(s)}{\xi(q)\xi(r)}}u^{(p,s)}_{_{qr}}\]
for all possible 4--cycles $p-r-s-q-p.$ Since the matrix elements 
\[\left(GG^{t}-I\right)_{p,r},\;\;\;p,r\in\Gamma_{_{\mbox{\tiny even}}}\]
and
\[\left(G^{t}G-I\right)_{q,s},\;\;\;q,s\in\Gamma_{_{\mbox{\tiny odd}}}\]
vanish unless dist$(p,r)\leq{}2$ and dist$(q,s)\leq{}2,$ the possible pairs of edges $(pr,qs)$ which define 4--cycles must either be on the same ray of $\Gamma,$ or they must connect vertices of $\Gamma$ with distance at most 2 from the central vertex $d.$ In the latter case we define the entries of $u$ and $v$ by figure 4 and figure 5 below. For the moment we will assume that $k,l,m\geq{}3.$\begin{center}
\setlength{\unitlength}{2cm}
\begin{picture}(7,7)
\multiput(1,0)(1,0){7}{\line(0,1){6}}
\multiput(1,0)(0,1){7}{\line(1,0){6}}
\put(0.6,0.45){$c_{_{2}}c_{_{1}}$}
\put(0.6,1.45){$b_{_{2}}b_{_{1}}$}
\put(0.6,2.45){$a_{_{2}}a_{_{1}}$}
\put(0.7,3.45){$dc_{_{1}}$}
\put(0.7,4.45){$db_{_{1}}$}
\put(0.7,5.45){$da_{_{1}}$}
\put(6.35,6.15){$c_{_{2}}c_{_{1}}$}
\put(5.35,6.15){$b_{_{2}}b_{_{1}}$}
\put(4.35,6.15){$a_{_{2}}a_{_{1}}$}
\put(3.45,6.15){$dc_{_{1}}$}
\put(2.45,6.15){$db_{_{1}}$}
\put(1.45,6.15){$da_{_{1}}$}
\put(1,6){\line(-1,1){0.5}}
\put(1,6){\line(-1,0){0.5}}
\put(1,6){\line(0,1){0.5}}
\put(0.5,6.1){$pq$}
\put(0.75,6.3){$rs$}
\put(1.2,5.45){$\frac{1}{\sqrt{\alpha_{1}}}e_{_{11}}$}
\put(2.2,5.45){$\frac{1}{\sqrt{\alpha_{2}}}e_{_{12}}$}
\put(3.2,5.45){$\frac{1}{\sqrt{\alpha_{3}}}e_{_{13}}$}
\put(1.2,4.45){$\frac{1}{\sqrt{\alpha_{1}}}e_{_{21}}$}
\put(2.2,4.45){$\frac{1}{\sqrt{\alpha_{2}}}e_{_{22}}$}
\put(3.2,4.45){$\frac{1}{\sqrt{\alpha_{3}}}e_{_{23}}$}
\put(1.2,3.45){$\frac{1}{\sqrt{\alpha_{1}}}e_{_{31}}$}
\put(2.2,3.45){$\frac{1}{\sqrt{\alpha_{2}}}e_{_{32}}$}
\put(3.2,3.45){$\frac{1}{\sqrt{\alpha_{3}}}e_{_{33}}$}
\put(1.45,0.45){$\rho_{_{31}}'$}
\put(1.45,1.45){$\rho_{_{21}}'$}
\put(1.2,2.45){$\sqrt{\frac{\lambda\delta_{1}^{2}}{\alpha_{1}\alpha_{12}}}$}
\put(2.45,0.45){$\rho_{_{32}}'$}
\put(2.2,1.45){$\sqrt{\frac{\lambda\delta_{2}^{2}}{\alpha_{2}\alpha_{22}}}$}
\put(2.45,2.45){$\rho_{_{12}}'$}
\put(3.2,0.45){$\sqrt{\frac{\lambda\delta_{3}^{2}}{\alpha_{3}\alpha_{32}}}$}
\put(3.45,1.45){$\rho_{_{23}}'$}
\put(3.45,2.45){$\rho_{_{13}}'$}
\put(4.45,3.45){$\sigma_{_{31}}$}
\put(4.45,4.45){$\sigma_{_{21}}$}
\put(4.25,5.45){$\sqrt{\frac{\lambda\delta_{1}^{2}}{\alpha_{1}}}$}
\put(5.45,3.45){$\sigma_{_{32}}$}
\put(5.25,4.45){$\sqrt{\frac{\lambda\delta_{2}^{2}}{\alpha_{2}}}$}
\put(5.45,5.45){$\sigma_{_{12}}$}
\put(6.25,3.45){$\sqrt{\frac{\lambda\delta_{3}^{2}}{\alpha_{3}}}$}
\put(6.45,4.45){$\sigma_{_{23}}$}
\put(6.45,5.45){$\sigma_{_{13}}$}
\put(4.1,2.45){$-\sqrt{\frac{\alpha_{13}}{\alpha_{1}\alpha_{12}}}$}
\put(5.1,1.45){$-\sqrt{\frac{\alpha_{23}}{\alpha_{2}\alpha_{22}}}$}
\put(6.1,0.45){$-\sqrt{\frac{\alpha_{33}}{\alpha_{3}\alpha_{32}}}$}
\end{picture}

{\bf Figure 4.} Entries of $u$ near the central vertex.
\end{center}

\begin{center}
\setlength{\unitlength}{2cm}
\begin{picture}(7,7)
\multiput(1,0)(1,0){7}{\line(0,1){6}}
\multiput(1,0)(0,1){7}{\line(1,0){6}}
\put(0.6,0.45){$c_{_{2}}c_{_{1}}$}
\put(0.6,1.45){$b_{_{2}}b_{_{1}}$}
\put(0.6,2.45){$a_{_{2}}a_{_{1}}$}
\put(0.7,3.45){$dc_{_{1}}$}
\put(0.7,4.45){$db_{_{1}}$}
\put(0.7,5.45){$da_{_{1}}$}
\put(6.35,6.15){$c_{_{2}}c_{_{1}}$}
\put(5.35,6.15){$b_{_{2}}b_{_{1}}$}
\put(4.35,6.15){$a_{_{2}}a_{_{1}}$}
\put(3.45,6.15){$dc_{_{1}}$}
\put(2.45,6.15){$db_{_{1}}$}
\put(1.45,6.15){$da_{_{1}}$}
\put(1,6){\line(-1,1){0.5}}
\put(1,6){\line(-1,0){0.5}}
\put(1,6){\line(0,1){0.5}}
\put(0.5,6.1){$pq$}
\put(0.75,6.3){$rs$}
\put(1.2,5.45){$\frac{1}{\sqrt{\alpha_{1}}}e_{_{11}}^{t}$}
\put(2.2,5.45){$\frac{1}{\sqrt{\alpha_{1}}}e_{_{12}}^{t}$}
\put(3.2,5.45){$\frac{1}{\sqrt{\alpha_{1}}}e_{_{13}}^{t}$}
\put(1.2,4.45){$\frac{1}{\sqrt{\alpha_{2}}}e_{_{21}}^{t}$}
\put(2.2,4.45){$\frac{1}{\sqrt{\alpha_{2}}}e_{_{22}}^{t}$}
\put(3.2,4.45){$\frac{1}{\sqrt{\alpha_{2}}}e_{_{23}}^{t}$}
\put(1.2,3.45){$\frac{1}{\sqrt{\alpha_{3}}}e_{_{31}}^{t}$}
\put(2.2,3.45){$\frac{1}{\sqrt{\alpha_{3}}}e_{_{32}}^{t}$}
\put(3.2,3.45){$\frac{1}{\sqrt{\alpha_{3}}}e_{_{33}}^{t}$}
\put(1.45,0.45){$\rho_{_{31}}$}
\put(1.45,1.45){$\rho_{_{21}}$}
\put(1.25,2.45){$\sqrt{\frac{\lambda\delta_{1}^{2}}{\alpha_{1}}}$}
\put(2.45,0.45){$\rho_{_{32}}$}
\put(2.25,1.45){$\sqrt{\frac{\lambda\delta_{2}^{2}}{\alpha_{2}}}$}
\put(2.45,2.45){$\rho_{_{12}}$}
\put(3.25,0.45){$\sqrt{\frac{\lambda\delta_{3}^{2}}{\alpha_{3}}}$}
\put(3.45,1.45){$\rho_{_{23}}$}
\put(3.45,2.45){$\rho_{_{13}}$}
\put(4.45,3.45){$\sigma_{_{31}}'$}
\put(4.45,4.45){$\sigma_{_{21}}'$}
\put(4.2,5.45){$\sqrt{\frac{\lambda\delta_{1}^{2}}{\alpha_{1}\alpha_{12}}}$}
\put(5.45,3.45){$\sigma_{_{32}}'$}
\put(5.2,4.45){$\sqrt{\frac{\lambda\delta_{2}^{2}}{\alpha_{2}\alpha_{22}}}$}
\put(5.45,5.45){$\sigma_{_{12}}'$}
\put(6.2,3.45){$\sqrt{\frac{\lambda\delta_{3}^{2}}{\alpha_{3}\alpha_{32}}}$}
\put(6.45,4.45){$\sigma_{_{23}}'$}
\put(6.45,5.45){$\sigma_{_{13}}'$}
\put(4.1,2.45){$-\sqrt{\frac{\alpha_{13}}{\alpha_{1}\alpha_{12}}}$}
\put(5.1,1.45){$-\sqrt{\frac{\alpha_{23}}{\alpha_{2}\alpha_{22}}}$}
\put(6.1,0.45){$-\sqrt{\frac{\alpha_{33}}{\alpha_{3}\alpha_{32}}}$}
\end{picture}

{\bf Figure 5.} Entries of $v$ near the central vertex.
\end{center}
The empty entries in $u$ resp. $v$ correspond to pairs $(pq,rs)$ which cannot be completed to a 4--cycle.

\noindent{}The entries of $u$ and $v$ for pairs of edges $(pq,rs)$ on the $a-$ray, are defined by the figures 6, 7, 8 and 9 below

\begin{center}
\setlength{\unitlength}{1cm}
\begin{picture}(13,13)
\multiput(1,0)(2,0){7}{\line(0,1){12}}
\multiput(1,0)(0,2){7}{\line(1,0){12}}
\put(1,12){\line(0,1){1}}
\put(1,12){\line(-1,0){1}}
\put(1,12){\line(-1,1){1}}
\put(0,12.2){$pq$}
\put(0.6,12.7){$rs$}

\put(1.7,12.2){$da_{_{1}}$}
\put(3.6,12.2){$a_{_{2}}a_{_{1}}$}
\put(5.6,12.2){$a_{_{2}}a_{_{3}}$}
\put(7.6,12.2){$a_{_{4}}a_{_{3}}$}
\put(9.6,12.2){$a_{_{4}}a_{_{5}}$}
\put(11.6,12.2){$a_{_{6}}a_{_{5}}$}

\put(0,10.9){$da_{_{1}}$}
\put(0,8.9){$a_{_{2}}a_{_{1}}$}
\put(0,6.9){$a_{_{2}}a_{_{3}}$}
\put(0,4.9){$a_{_{4}}a_{_{3}}$}
\put(0,2.9){$a_{_{4}}a_{_{5}}$}
\put(0,0.9){$a_{_{6}}a_{_{5}}$}

\put(1.5,10.9){$\frac{1}{\sqrt{\alpha_{_{1}}}}e_{_{11}}$}
\put(3.5,10.9){$\sqrt{\frac{\lambda\delta_{1}^{^{2}}}{\alpha_{_{1}}}}$}
\put(5.9,10.9){1}

\put(1.3,8.9){\sqdelal{1}{12}}
\put(3.1,8.9){$-$\sqalikl{13}{1}{12}}
\put(5.3,8.9){\sqdelal{12}{13}}
\put(7.3,8.9){\sqalijkl{11}{14}{12}{13}}

\put(1.3,6.9){\sqalikl{13}{1}{12}}
\put(3.3,6.9){\sqdelal{1}{12}}
\put(5.1,6.9){$-\!\sqalijkl{11}{14}{12}{13}$}
\put(7.3,6.9){\sqdelal{12}{13}}
\put(9.9,6.9){1}

\put(3.9,4.9){1}
\put(5.3,4.9){\sqdelal{13}{14}}
\put(7.1,4.9){$-\!\sqalijkl{12}{15}{13}{14}$}
\put(9.3,4.9){\sqdelal{14}{15}}
\put(11.3,4.9){\sqalijkl{13}{16}{14}{15}}

\put(5.3,2.9){\sqalijkl{12}{15}{13}{14}}
\put(7.3,2.9){\sqdelal{13}{14}}
\put(9.1,2.9){$-\!\sqalijkl{13}{16}{14}{15}$}
\put(11.3,2.9){\sqdelal{14}{15}}

\put(7.9,0.9){1}
\put(9.3,0.9){\sqdelal{15}{16}}
\put(11.1,0.9){$-\!\sqalijkl{14}{17}{15}{16}$}
\linethickness{2pt}
\put(1.05,12){\line(1,0){0.1}}
\put(1.25,12){\line(1,0){0.1}}
\put(1.45,12){\line(1,0){0.1}}
\put(1.65,12){\line(1,0){0.1}}
\put(1.85,12){\line(1,0){0.1}}
\put(2.05,12){\line(1,0){0.1}}
\put(2.25,12){\line(1,0){0.1}}
\put(2.45,12){\line(1,0){0.1}}
\put(2.65,12){\line(1,0){0.1}}
\put(2.85,12){\line(1,0){0.1}}
\put(3.05,12){\line(1,0){0.1}}
\put(3.25,12){\line(1,0){0.1}}
\put(3.45,12){\line(1,0){0.1}}
\put(3.65,12){\line(1,0){0.1}}
\put(3.85,12){\line(1,0){0.1}}
\put(4.05,12){\line(1,0){0.1}}
\put(4.25,12){\line(1,0){0.1}}
\put(4.45,12){\line(1,0){0.1}}
\put(4.65,12){\line(1,0){0.1}}
\put(4.85,12){\line(1,0){0.1}}

\put(9.05,0){\line(1,0){0.1}}
\put(9.25,0){\line(1,0){0.1}}
\put(9.45,0){\line(1,0){0.1}}
\put(9.65,0){\line(1,0){0.1}}
\put(9.85,0){\line(1,0){0.1}}
\put(10.05,0){\line(1,0){0.1}}
\put(10.25,0){\line(1,0){0.1}}
\put(10.45,0){\line(1,0){0.1}}
\put(10.65,0){\line(1,0){0.1}}
\put(10.85,0){\line(1,0){0.1}}
\put(11.05,0){\line(1,0){0.1}}
\put(11.25,0){\line(1,0){0.1}}
\put(11.45,0){\line(1,0){0.1}}
\put(11.65,0){\line(1,0){0.1}}
\put(11.85,0){\line(1,0){0.1}}
\put(12.05,0){\line(1,0){0.1}}
\put(12.25,0){\line(1,0){0.1}}
\put(12.45,0){\line(1,0){0.1}}
\put(12.65,0){\line(1,0){0.1}}
\put(12.85,0){\line(1,0){0.1}}

\put(5,12){\line(1,0){2}}
\put(1,10){\line(1,0){8}}
\put(9,8){\line(1,0){2}}
\put(1,6){\line(1,0){12}}
\put(3,4){\line(1,0){2}}
\put(5,2){\line(1,0){}}
\put(5,2){\line(1,0){8}}
\put(7,0){\line(1,0){2}}

\put(1,12){\line(0,-1){6}}
\put(3,6){\line(0,-1){2}}
\put(5,12){\line(0,-1){10}}
\put(7,12){\line(0,-1){2}}
\put(7,2){\line(0,-1){2}}
\put(9,10){\line(0,-1){10}}
\put(11,8){\line(0,-1){2}}
\put(13,6){\line(0,-1){6}}
\end{picture}

{\bf Figure 6.} Entries of $u;$ $a-$ray near the central vertex
\end{center}

\begin{center}
\setlength{\unitlength}{1cm}
\begin{picture}(13,13)
\multiput(1,0)(2,0){7}{\line(0,1){12}}
\multiput(1,0)(0,2){7}{\line(1,0){12}}

\put(1,12){\line(0,1){1}}
\put(1,12){\line(-1,0){1}}
\put(1,12){\line(-1,1){1}}
\put(0,12.2){$pq$}
\put(0.6,12.7){$rs$}

\put(1.7,12.2){$da_{_{1}}$}
\put(3.6,12.2){$a_{_{2}}a_{_{1}}$}
\put(5.6,12.2){$a_{_{2}}a_{_{3}}$}
\put(7.6,12.2){$a_{_{4}}a_{_{3}}$}
\put(9.6,12.2){$a_{_{4}}a_{_{5}}$}
\put(11.6,12.2){$a_{_{6}}a_{_{5}}$}

\put(0,10.9){$da_{_{1}}$}
\put(0,8.9){$a_{_{2}}a_{_{1}}$}
\put(0,6.9){$a_{_{2}}a_{_{3}}$}
\put(0,4.9){$a_{_{4}}a_{_{3}}$}
\put(0,2.9){$a_{_{4}}a_{_{5}}$}
\put(0,0.9){$a_{_{6}}a_{_{5}}$}

\put(1.5,10.9){$\frac{1}{\sqrt{\alpha_{_{1}}}}e_{_{11}}^{t}$}
\put(3.3,10.9){\sqdelal{1}{12}}
\put(5.3,10.9){\sqalikl{13}{1}{12}}

\put(1.5,8.9){$\sqrt{\frac{\lambda\delta_{1}^{^{2}}}{\alpha_{_{1}}}}$}
\put(3.1,8.9){$-\!\sqalikl{13}{1}{12}$}
\put(5.3,8.9){\sqdelal{1}{12}}
\put(7.9,8.9){1}

\put(1.9,6.9){1}
\put(3.3,6.9){\sqdelal{12}{13}}
\put(5.1,6.9){$-\!\sqalijkl{11}{14}{12}{13}$}
\put(7.3,6.9){\sqdelal{13}{14}}
\put(9.3,6.9){\sqalijkl{12}{15}{13}{14}}

\put(3.3,4.9){\sqalijkl{11}{14}{12}{13}}
\put(5.3,4.9){\sqdelal{12}{13}}
\put(7.1,4.9){$-\!\sqalijkl{12}{15}{13}{14}$}
\put(9.3,4.9){\sqdelal{13}{14}}
\put(11.9,4.9){1}

\put(5.9,2.9){1}
\put(7.3,2.9){\sqdelal{14}{15}}
\put(9.1,2.9){$-\!\sqalijkl{13}{16}{14}{15}$}
\put(11.3,2.9){\sqdelal{15}{16}}

\put(7.3,0.9){\sqalijkl{13}{16}{14}{15}}
\put(9.3,0.9){\sqdelal{14}{15}}
\put(11.1,0.9){$-\!\sqalijkl{14}{17}{15}{16}$}

\linethickness{2pt}
\put(1,11.95){\line(0,-1){0.1}}
\put(1,11.75){\line(0,-1){0.1}}
\put(1,11.55){\line(0,-1){0.1}}
\put(1,11.35){\line(0,-1){0.1}}
\put(1,11.15){\line(0,-1){0.1}}
\put(1,10.95){\line(0,-1){0.1}}
\put(1,10.75){\line(0,-1){0.1}}
\put(1,10.55){\line(0,-1){0.1}}
\put(1,10.35){\line(0,-1){0.1}}
\put(1,10.15){\line(0,-1){0.1}}
\put(1,9.95){\line(0,-1){0.1}}
\put(1,9.75){\line(0,-1){0.1}}
\put(1,9.55){\line(0,-1){0.1}}
\put(1,9.35){\line(0,-1){0.1}}
\put(1,9.15){\line(0,-1){0.1}}
\put(1,8.95){\line(0,-1){0.1}}
\put(1,8.75){\line(0,-1){0.1}}
\put(1,8.55){\line(0,-1){0.1}}
\put(1,8.35){\line(0,-1){0.1}}
\put(1,8.15){\line(0,-1){0.1}}

\put(13,3.95){\line(0,-1){0.1}}
\put(13,3.75){\line(0,-1){0.1}}
\put(13,3.55){\line(0,-1){0.1}}
\put(13,3.35){\line(0,-1){0.1}}
\put(13,3.15){\line(0,-1){0.1}}
\put(13,2.95){\line(0,-1){0.1}}
\put(13,2.75){\line(0,-1){0.1}}
\put(13,2.55){\line(0,-1){0.1}}
\put(13,2.35){\line(0,-1){0.1}}
\put(13,2.15){\line(0,-1){0.1}}
\put(13,1.95){\line(0,-1){0.1}}
\put(13,1.75){\line(0,-1){0.1}}
\put(13,1.55){\line(0,-1){0.1}}
\put(13,1.35){\line(0,-1){0.1}}
\put(13,1.15){\line(0,-1){0.1}}
\put(13,0.95){\line(0,-1){0.1}}
\put(13,0.75){\line(0,-1){0.1}}
\put(13,0.55){\line(0,-1){0.1}}
\put(13,0.35){\line(0,-1){0.1}}
\put(13,0.15){\line(0,-1){0.1}}

\put(1,12){\line(1,0){6}}
\put(7,10){\line(1,0){2}}
\put(1,8){\line(1,0){10}}
\put(1,6){\line(1,0){2}}
\put(11,6){\line(1,0){2}}
\put(3,4){\line(1,0){10}}
\put(5,2){\line(1,0){2}}
\put(7,0){\line(1,0){6}}

\put(1,8){\line(0,-1){2}}
\put(3,12){\line(0,-1){8}}
\put(5,4){\line(0,-1){2}}
\put(7,12){\line(0,-1){12}}
\put(9,10){\line(0,-1){2}}
\put(11,8){\line(0,-1){8}}
\put(13,6){\line(0,-1){2}}
\end{picture}

{\bf Figure 7.} Entries of $v;$ $a-$ray near the central vertex
\end{center}

\begin{center}
\setlength{\unitlength}{1cm}
\begin{picture}(11,11)
\multiput(1,0)(2,0){6}{\line(0,1){10}}
\multiput(1,0)(0,2){6}{\line(1,0){10}}

\put(1,10){\line(0,1){1}}
\put(1,10){\line(-1,0){1}}
\put(1,10){\line(-1,1){1}}
\put(0,10.2){$pq$}
\put(0.6,10.7){$rs$}

\put(1.35,10.4){$a_{_{k-4}}a_{_{k-5}}$}
\put(3.35,10.4){$a_{_{k-4}}a_{_{k-3}}$}
\put(5.35,10.4){$a_{_{k-2}}a_{_{k-3}}$}
\put(7.35,10.4){$a_{_{k-2}}a_{_{k-1}}$}
\put(9.4,10.4){$a_{_{k}}a_{_{k-1}}$}

\put(-0.5,8.9){$a_{_{k-4}}a_{_{k-5}}$}
\put(-0.5,6.9){$a_{_{k-4}}a_{_{k-3}}$}
\put(-0.5,4.9){$a_{_{k-2}}a_{_{k-3}}$}
\put(-0.5,2.9){$a_{_{k-2}}a_{_{k-1}}$}
\put(-0.4,0.9){$a_{_{k}}a_{_{k-1}}$}

\put(1.1,8.9){$-\!\sqdelijkl{16}{13}{15}{14}$}
\put(3.3,8.9){\sqlamdel{14}{13}}
\put(5.3,8.9){\sqdelijkl{15}{12}{14}{13}}

\put(1.3,6.9){\sqlamdel{15}{14}}
\put(3.1,6.9){$-\!\sqdelijkl{15}{12}{14}{13}$}
\put(5.3,6.9){\sqlamdel{14}{13}}
\put(7.9,6.9){1}

\put(1.9,4.9){1}
\put(3.3,4.9){\sqlamdel{13}{12}}
\put(5.1,4.9){$-\!\sqdelijkl{14}{11}{13}{12}$}
\put(7.3,4.9){\sqlamdel{12}{11}}
\put(9.3,4.9){\sqdelijkl{13}{10}{12}{11}}

\put(3.3,2.9){\sqdelijkl{14}{11}{13}{12}}
\put(5.3,2.9){\sqlamdel{13}{12}}
\put(7.1,2.9){$-\!\sqdelijkl{13}{10}{12}{11}$}
\put(9.3,2.9){\sqlamdel{12}{11}}

\put(5.9,0.9){1}
\put(7.9,0.9){1}
\linethickness{2pt}

\put(1,9.95){\line(0,-1){0.1}}
\put(1,9.75){\line(0,-1){0.1}}
\put(1,9.55){\line(0,-1){0.1}}
\put(1,9.35){\line(0,-1){0.1}}
\put(1,9.15){\line(0,-1){0.1}}
\put(1,8.95){\line(0,-1){0.1}}
\put(1,8.75){\line(0,-1){0.1}}
\put(1,8.55){\line(0,-1){0.1}}
\put(1,8.35){\line(0,-1){0.1}}
\put(1,8.15){\line(0,-1){0.1}}
\put(1,7.95){\line(0,-1){0.1}}
\put(1,7.75){\line(0,-1){0.1}}
\put(1,7.55){\line(0,-1){0.1}}
\put(1,7.35){\line(0,-1){0.1}}
\put(1,7.15){\line(0,-1){0.1}}
\put(1,6.95){\line(0,-1){0.1}}
\put(1,6.75){\line(0,-1){0.1}}
\put(1,6.55){\line(0,-1){0.1}}
\put(1,6.35){\line(0,-1){0.1}}
\put(1,6.15){\line(0,-1){0.1}}

\put(1,10){\line(1,0){6}}
\put(7,8){\line(1,0){2}}
\put(1,6){\line(1,0){10}}
\put(1,4){\line(1,0){2}}
\put(3,2){\line(1,0){8}}
\put(5,0){\line(1,0){4}}

\put(1,6){\line(0,-1){2}}
\put(3,10){\line(0,-1){8}}
\put(5,2){\line(0,-1){2}}
\put(7,10){\line(0,-1){10}}
\put(9,8){\line(0,-1){2}}
\put(9,2){\line(0,-1){2}}
\put(11,6){\line(0,-1){4}}
\end{picture}

{\bf Figure 8.} 
\end{center}
Entries of $u$: $a-$ray near the end vertex for k {\em even,} $\delta_{_{1i}}=\alpha_{_{1,k-i}},$ $i=0,1,\ldots,k-1.$ The corresponding entries of $v$ are given by the mirror image of figure 8 in the main diagonal.\newpage{}

\begin{center}
\setlength{\unitlength}{1cm}
\begin{picture}(11,11)
\multiput(1,0)(2,0){6}{\line(0,1){10}}
\multiput(1,0)(0,2){6}{\line(1,0){10}}

\put(1,10){\line(0,1){1}}
\put(1,10){\line(-1,0){1}}
\put(1,10){\line(-1,1){1}}
\put(0,10.2){$pq$}
\put(0.6,10.7){$rs$}

\put(1.35,10.4){$a_{_{k-5}}a_{_{k-4}}$}
\put(3.35,10.4){$a_{_{k-3}}a_{_{k-4}}$}
\put(5.35,10.4){$a_{_{k-3}}a_{_{k-2}}$}
\put(7.35,10.4){$a_{_{k-1}}a_{_{k-2}}$}
\put(9.4,10.4){$a_{_{k-1}}a_{_{k}}$}

\put(-0.5,8.9){$a_{_{k-5}}a_{_{k-4}}$}
\put(-0.5,6.9){$a_{_{k-3}}a_{_{k-4}}$}
\put(-0.5,4.9){$a_{_{k-3}}a_{_{k-2}}$}
\put(-0.5,2.9){$a_{_{k-1}}a_{_{k-2}}$}
\put(-0.4,0.9){$a_{_{k-1}}a_{_{k}}$}

\put(1.1,8.9){$-\!\sqdelijkl{16}{13}{15}{14}$}
\put(3.3,8.9){\sqlamdel{15}{14}}
\put(5.9,8.9){1}

\put(1.3,6.9){\sqlamdel{14}{13}}
\put(3.1,6.9){$-\!\sqdelijkl{15}{12}{14}{13}$}
\put(5.3,6.9){\sqlamdel{13}{12}}
\put(7.3,6.9){\sqdelijkl{14}{11}{13}{12}}

\put(1.3,4.9){\sqdelijkl{15}{12}{14}{13}}
\put(3.3,4.9){\sqlamdel{14}{13}}
\put(5.1,4.9){$-\!\sqdelijkl{14}{11}{13}{12}$}
\put(7.3,4.9){\sqlamdel{13}{12}}
\put(9.9,4.9){1}

\put(3.9,2.9){1}
\put(5.3,2.9){\sqlamdel{12}{11}}
\put(7.1,2.9){$-\!\sqdelijkl{13}{10}{12}{11}$}
\put(9.9,2.9){1}

\put(5.3,0.9){\sqdelijkl{13}{10}{12}{11}}
\put(7.3,0.9){\sqlamdel{12}{11}}

\linethickness{2pt}
\put(1.05,10){\line(1,0){0.1}}
\put(1.25,10){\line(1,0){0.1}}
\put(1.45,10){\line(1,0){0.1}}
\put(1.65,10){\line(1,0){0.1}}
\put(1.85,10){\line(1,0){0.1}}
\put(2.05,10){\line(1,0){0.1}}
\put(2.25,10){\line(1,0){0.1}}
\put(2.45,10){\line(1,0){0.1}}
\put(2.65,10){\line(1,0){0.1}}
\put(2.85,10){\line(1,0){0.1}}
\put(3.05,10){\line(1,0){0.1}}
\put(3.25,10){\line(1,0){0.1}}
\put(3.45,10){\line(1,0){0.1}}
\put(3.65,10){\line(1,0){0.1}}
\put(3.85,10){\line(1,0){0.1}}
\put(4.05,10){\line(1,0){0.1}}
\put(4.25,10){\line(1,0){0.1}}
\put(4.45,10){\line(1,0){0.1}}
\put(4.65,10){\line(1,0){0.1}}
\put(4.85,10){\line(1,0){0.1}}

\put(5,10){\line(1,0){2}}
\put(1,8){\line(1,0){8}}
\put(9,6){\line(1,0){2}}
\put(1,4){\line(1,0){10}}
\put(3,2){\line(1,0){2}}
\put(9,2){\line(1,0){2}}
\put(5,0){\line(1,0){4}}

\put(1,10){\line(0,-1){6}}
\put(3,4){\line(0,-1){2}}
\put(5,10){\line(0,-1){10}}
\put(7,10){\line(0,-1){2}}
\put(9,8){\line(0,-1){8}}
\put(11,6){\line(0,-1){4}}
\end{picture}

{\bf Figure 9.}
\end{center}
Entries of $u$: $a-$ray near the end vertex for k {\em odd,} $\delta_{_{1i}}=\alpha_{_{1,k-i}},$ $i=0,1,\ldots,k-1.$ The corresponding entries of $v$ are given by the mirror image of figure 9 in the main diagonal.\\[1cm]
The entries of $u$ and $v,$ for pairs of edges $(pq,rs)$ on the $b-$ray or on the $c-$ray, are defined the same way, with trivial changes of the indices.

\noindent{}It is easy to check that the entries of $u$ and $v$ satisfy the transformation formula (\ref{tre1*4}). The direct summands $u^{(p,s)}$ of $u$ are marked by the framing in  fig.\ 6, fig.\ 8 and fig.\ 9 for $(p,s)$ both on the $a-$ray (and similarly for the other rays). These are either $1\times{}1-$ or $2\times{}2-$matrices. To see that the $2\times{}2-$matrices are unitary, we have to check
\begin{equation}
\label{tre1*19}
\lambda\delta_{_{1}}^{2}=\alpha_{_{1,j+1}}\alpha_{_{1,j+2}}-\alpha_{_{1,j}}\alpha_{_{1,j+3}},\;\;\;j=1,2,\ldots,k-3.
\end{equation}
From  the proof of lemma \ref{firegraflem0} in chapter \ref{chapter1} we have the identity
\[R_{_{m+1}}(\lambda)^{2}-R_{_{m+2}}(\lambda)R_{_{m}}(\lambda)=1,\;\;\;m=0,1,2,\ldots\]
Using the recursion formula for $R_{_{m}}:$
\[R_{_{m+2}}(\lambda)R_{_{m+1}}(\lambda)-R_{_{m+3}}(\lambda)R_{_{m}}(\lambda)=
(\lambda{}R_{_{m+1}}(\lambda)-R_{_{m}}(\lambda))R_{_{m+1}}(\lambda)-(\lambda{}R_{_{m+2}}(\lambda)-R_{_{m+1}}(\lambda))R_{_{m}}(\lambda).\]
Hence 
\[R_{_{m+2}}(\lambda)R_{_{m+1}}(\lambda)-R_{_{m+3}}(\lambda)R_{_{m}}(\lambda)=\lambda,\;\;\;m=0,1,2,\ldots\]
If we use
\[\delta_{_{1}}=\frac{1}{R_{k}(\lambda)}\;\mbox{ and }\;\alpha_{_{1,j}}=\frac{R_{k-j}(\lambda)}{R_{k}(\lambda)},\]
(\ref{tre1*19}) follows by putting $m=k-j-3.$

\noindent{}The only summands, $u^{(p,s)},$ of $u,$ which are not visible in fig.\ 6, fig.\ 8, fig.\ 9 or the corresponding figures for the $b-$ and the $c-$rays, are three $3\times{}3-$matrices
\[u^{(d,a_{1})},\;\;\;\;\;u^{(d,b_{1})},\;\;\;\;\;u^{(d,c_{1})}\]
and six $1\times{}1-$matrices
\[u^{(a_{2},b_{1})},\;\;u^{(a_{2},c_{1})},\;\;u^{(b_{2},a_{1})},\;\;u^{(b_{2},c_{1})},\;\;u^{(c_{2},a_{1})},\;\;u^{(c_{2},b_{1})}.\]
By the previous analysis, the $3\times{}3-$matrices are the unitaries $u_{_{i}},$ $i=1,2,3$ in (\ref{tre1*12}), and the $1\times{}1-$matrices are given by the scalars $(\rho_{_{ij}})_{i\neq{}j}$ of modulus 1. Hence $u=\bigoplus{}u^{(p,s)}$ is unitary.

\noindent{}Similarly one gets, that the only summands, $v^{(q,r)},$ of $v,$ which are not visible in fig.\ 6 and the mirror images of fig.\ 8 and fig.\ 9 (or the corresponding diagrams for the $b-$ and $c-$ray) are the three $3\times{}3$ unitaries $v_{_{i}},$ $i=1,2,3$ in (\ref{tre1*13}) and six $1\times{}1-$matrices, given by the scalars $(\sigma_{_{ij}}')_{i\neq{}j}$ of modulus 1, so also $v=\bigoplus{}v^{(q,r)}$ is unitary.

\noindent{}This proves that (a), (b), (c) and (d) of theorem \ref{tre1thm11} implies the existence of a commuting square with the inclusions given by (\ref{tre1eqn1}), for $k,l,m\geq{}3.$

\noindent{}The cases where one or more of the numbers $k,l,m$ are less than 3 follows in the same way, by appropriate cancellations in fig.\ 4--9.

\noindent{}If $k=2,$ fig.\ 6 and fig.\ 7 degenerate to
\begin{center}
\setlength{\unitlength}{1cm}
\begin{picture}(12,5)
\multiput(1,0)(2,0){3}{\line(0,1){4}}
\multiput(1,0)(0,2){3}{\line(1,0){4}}
\put(1,4){\line(0,1){1}}
\put(1,4){\line(-1,0){1}}
\put(1,4){\line(-1,1){1}}
\put(0,4.2){$pq$}
\put(0.6,4.7){$rs$}
\put(1.7,4.2){$da_{_{1}}$}
\put(3.6,4.2){$a_{_{2}}a_{_{1}}$}
\put(0,0.9){$da_{_{1}}$}
\put(0,2.9){$a_{_{2}}a_{_{1}}$}
\put(1.5,2.9){$\frac{1}{\sqrt{\alpha_{_{1}}}}e_{_{11}}$}
\put(3.5,2.9){$\sqrt{\frac{\lambda\delta_{1}^{^{2}}}{\alpha_{_{1}}}}$}
\put(1.9,0.9){1}
\put(5.8,2){and}
\multiput(8,0)(2,0){3}{\line(0,1){4}}
\multiput(8,0)(0,2){3}{\line(1,0){4}}
\put(8,4){\line(0,1){1}}
\put(8,4){\line(-1,0){1}}
\put(8,4){\line(-1,1){1}}
\put(7,4.2){$pq$}
\put(7.6,4.7){$rs$}
\put(8.7,4.2){$da_{_{1}}$}
\put(10.6,4.2){$a_{_{2}}a_{_{1}}$}
\put(7,0.9){$da_{_{1}}$}
\put(7,2.9){$a_{_{2}}a_{_{1}}$}
\put(8.5,2.9){$\frac{1}{\sqrt{\alpha_{_{1}}}}e_{_{11}}$}
\put(8.5,0.9){$\sqrt{\frac{\lambda\delta_{1}^{^{2}}}{\alpha_{_{1}}}}$}
\put(10.9,2.9){1}
\end{picture}
\end{center}
and for $k=1$ all the elements of fig.\ 6 and fig.\ 7 disappear. The cases, where $k,l,m$ are not all greater than or equal to 3, also impose cancellations in fig.\ 4 and fig.\ 5, as described in the proof of the necessity of (a), (b), (c) and (d), but it is not hard to check, that the bi-unitary condition, for the pair $(u,v)$ is also satisfied in these cases. This completes the proof of theorem \ref{tre1thm11}.

\setcounter{equation}{0}
%sidst rettet 19/12

\section{Solution to the Vector Problem}
\setcounter{equation}{0}
In this section we will prove
\begin{theorem}\label{thmtre21}

\noindent{}\begin{itemize}
\item[(1)]\label{thmtre21cond}Let $\al{1}, \al{2},\al{3}>0$ and put $\lam{}= \al{1}+\al{2}+\al{3}.$ Then there exists 9 vectors, $(\eij{i}{j})_{i,j=1}^{3}$ in a two dimensional complex  Hilbert space, ${\cal H},$ such that
\[\begin{array}{clclc}
(a)&\|\eij{i}{i}\|^{^{2}} & = & (\lam{}-\al{i})(\lam{}\al{i}-1)& \\[0.3cm]
(b)&\|\eij{i}{j}\|^{^{2}} & = & \al{i}+\al{j}-\lam{}\al{i}\al{j},&i\neq{}j\\[0.3cm]
(c)&\sum_{j=1}^{3}\eijeij{i}{j}&=&\al{i}I_{_{{\cal H}}},& i=1,2,3\\[0.3cm]
(d)&\sum_{i=1}^{3}\eijeij{i}{j}&=&\al{j}I_{_{{\cal H}}},& j=1,2,3
\end{array}\]
if and only if
\item[(2)]\[\begin{array}{cl}
(i)&\lam{}\al{i}-1\geq0,\;\;\;\;i=1,2,3\\[0.2cm]
(ii)&\alpha_{_{i}}^{2}-\lam{}\al{i}+1\geq0,\;\;\;\;i=1,2,3\\[0.2cm]
(iii)&4\lam{}\al{1}\al{2}\al{3}-4(\al{1}\al{2}+\al{1}\al{3}+\al{2}\al{3})+3\geq0 
\end{array}\]
\end{itemize}
\end{theorem}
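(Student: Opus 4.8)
The statement is a clean ``nine vectors in $\mathbb{C}^2$'' problem, and the natural strategy is to record all the algebraic constraints forced by (a)--(d), reduce them to a problem about a single $3\times 3$ doubly substochastic data matrix, and then apply the $3\times 3$ unitary existence criterion already available in Lemma~\ref{33existence}. First I would set up notation: write $M_{ij}=\eijeij{i}{j}$, a positive rank-$\leq 1$ operator on the two-dimensional Hilbert space ${\cal H}$ with $\mathrm{Tr}(M_{ij})=\|\eij{i}{j}\|^2$. Condition (a),(b) prescribe the traces, while (c),(d) say that for each $i$, $\sum_j M_{ij}=\al{i}I_2$, and for each $j$, $\sum_i M_{ij}=\al{j}I_2$. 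Taking traces in (c) and (d) shows that the numbers $t_{ij}=\|\eij{i}{j}\|^2$ must form a matrix whose $i$th row sums to $2\al{i}$ and whose $j$th column sums to $2\al{j}$; one checks directly from (a),(b) and $\lam{}=\al{1}+\al{2}+\al{3}$ that this is automatic, so no new condition arises at the level of traces. The real content is that the $M_{ij}$ are $2\times 2$ positive operators, not scalars, so the row/column identities (c),(d) are genuine operator equations.

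\textbf{Necessity.} Assuming the nine vectors exist, condition (i), $\lam{}\al{i}-1\geq 0$, follows because $\|\eij{i}{i}\|^2=(\lam{}-\al{i})(\lam{}\al{i}-1)\geq 0$ and $\lam{}-\al{i}=\al{j}+\al{k}>0$; condition (ii), $\al{i}^2-\lam{}\al{i}+1\geq 0$, follows because $\|\eij{i}{j}\|^2=\al{i}+\al{j}-\lam{}\al{i}\al{j}\geq 0$ for $i\neq j$ — indeed summing these two off-diagonal norms for fixed $i$ gives $(\al{j}+\al{k})-\lam{}\al{i}(\al{j}+\al{k})+\ldots$ which rearranges, using $\al{j}+\al{k}=\lam{}-\al{i}$, to $(\lam{}-\al{i})(1-\al{i}(\ldots))$; a short computation identifies the relevant nonnegative quantity with $\al{i}^2-\lam{}\al{i}+1=\delta_i^2$ in the notation of \ref{deltadef}. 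For (iii) I would argue as follows: fix $i$ and consider the $2\times 2$ positive operators $M_{ij}$, $M_{ik}$, $M_{ii}$ summing to $\al{i}I_2$ (row condition (c)). Three rank-$\leq 1$ positive $2\times 2$ operators summing to a scalar multiple of the identity: after diagonalizing, this is equivalent to asking that the three one-dimensional projections/vectors behave like a ``Parseval-type'' frame, and the existence of such a configuration with prescribed norms $\|\eij{ij}\|^2$ is governed exactly by a triangle-type inequality among $\sqrt{\|\eij{i1}\|^2}$, etc. — this is the $n=1$ case of Lemma~\ref{33existence} applied to the $3\times 3$ doubly stochastic matrix $(\frac{1}{?}t_{ij})$. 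Assembling the triangle inequalities coming from all three rows and all three columns, and translating them back through (a),(b), yields the single polynomial inequality (iii). I expect the bookkeeping here to be the main obstacle: one must show that the apparently six separate triangle inequalities (three rows, three columns), together with (i) and (ii), collapse to precisely (i),(ii),(iii), with no redundancy and nothing omitted.

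\textbf{Sufficiency.} Conversely, assume (i),(ii),(iii). Normalize by dividing the target matrix of norms by a suitable constant so that $D=(d_{ij})$ with $d_{ij}=t_{ij}/2\al{?}$... more precisely, I would write down the $3\times 3$ matrix $D$ whose $(i,i)$ entry is $\delta_i^2/\al{i}$-type quantity and whose $(i,j)$ entry is $\al{i}\al{j}/(\text{something})$, chosen so that $D$ is doubly stochastic; (iii) is exactly the inequality $\alpha^2+\beta^2+\gamma^2-2\alpha\beta-2\alpha\gamma-2\beta\gamma\leq 0$ of Lemma~\ref{33existence} for the products $\alpha=d_{11}d_{21}$, etc. Lemma~\ref{33existence} then produces a $3\times 3$ unitary $w=(w_{ij})$ with $|w_{ij}|^2=d_{ij}$. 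Now define $\eij{i}{j}$ to be the $j$th (or $i$th) column/row of $w$, suitably scaled by $\sqrt{\al{?}}$, viewed as a vector in ${\cal H}=\mathbb{C}^2$ after grouping the three unitaries $u_i$, $v_i$ as in the sufficiency half of the proof of Theorem~\ref{tre1thm11} (equations (\ref{tre1*12}),(\ref{tre1*13})). The orthonormality of the rows of $w$ gives (c), orthonormality of the columns gives (d), and $|w_{ij}|^2=d_{ij}$ gives (a),(b) after unwinding the normalization. The one point needing care is that a \emph{single} $3\times 3$ unitary may not simultaneously enforce both the row identities (c) and the column identities (d) as operator equations on $\mathbb{C}^2$; the fix, exactly as in Theorem~\ref{tre1thm11}, is that the nine vectors live in $\mathbb{C}^2$ and one builds three separate $3\times 3$ unitaries $u_1,u_2,u_3$ from the columns and checks the compatibility — here conditions (i),(ii) guarantee that the required moduli $|\sigma_{ij}|$, $|\rho_{ij}|$ are all in $[0,1]$, so the unitary completions of the relevant $3\times 2$ blocks exist. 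I would therefore structure the sufficiency proof as: (1) from (i),(ii),(iii) build $D$ doubly stochastic, (2) apply Lemma~\ref{33existence} to get $w$, (3) read off $\eij{i}{j}$ and verify (a)--(d) by the orthogonality relations, handling the degenerate cases $\lam{}\al{i}=1$ (some $\eij{i}{j}=0$) separately as in Theorem~\ref{tre1thm11}. The main obstacle throughout is the algebra relating the polynomial (iii) to the triangle inequality of Lemma~\ref{33existence}; I expect this to reduce, via $\delta_i^2=\al{i}^2-\lam{}\al{i}+1$ and $\lam{}=\sum\al{i}$, to the same factorization manipulations already performed in Section~\ref{firesection2} for the 3-star case.
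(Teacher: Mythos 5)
There is a genuine gap in your necessity argument for $(iii)$, and it is the heart of the theorem. You propose to extract $(iii)$ by applying the $3\times 3$ triangle-inequality criterion (Lemma~\ref{33existence}) row by row and column by column. But a single row imposes no such obstruction: for fixed $i$ the condition $\sum_j e_{ij}\otimes\overline{e}_{ij}=\alpha_i I$ only forces $\|e_{ij}\|^2\le\alpha_i$ and the inner-product moduli $|(e_{ij},e_{ik})|^2=(\alpha_i-\|e_{ij}\|^2)(\alpha_i-\|e_{ik}\|^2)$ (Lemma~\ref{lemtre25}), and conversely any two vectors with admissible norms and that inner product can always be completed to such a triple (Lemma~\ref{lemtre26}). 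So rows and columns separately are satisfiable under $(i)$ and $(ii)$ alone, and no assembly of row/column triangle inequalities can produce $(iii)$. The obstruction is global: after reducing to the six off-diagonal vectors (Proposition~\ref{protre27}), the coupling between rows and columns is the condition that the three antisymmetric parts $e_{12}\otimes\overline{e}_{12}-e_{21}\otimes\overline{e}_{21}$, etc., all coincide, and the paper's necessity proof has to push this through the quadratic map $q(x)=\sqrt{2}(x\otimes\overline{x}-\tfrac12\|x\|^2I)$ (Lemma~\ref{lemtre210}), obtain that $Q(A)-tD$ is positive semidefinite for some $t=\|q^a\|^2\ge 0$ (Lemma~\ref{lemtre213}), prove by a real-analytic continuation from the explicit $\lambda\le 2$ solution that $Q(A)-t_0D\succeq 0$ for $t_0=\lambda(4-\lambda^2)\alpha_1\alpha_2\alpha_3\le 0$ (Lemma~\ref{lemtre214}), and only then conclude by convexity that $Q(A)\succeq 0$, hence $\det(A)\ge 0$, which is $(iii)$. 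None of this machinery (nor any substitute for it) appears in your plan, and the route you describe would simply not produce the inequality.

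Your sufficiency sketch is closer to the paper but also needs repair. The correct tool is not a single $3\times 3$ unitary with prescribed moduli but Lemma~\ref{lemtre28}: three vectors in ${\Bbb C}^2$ with prescribed pairwise inner-product moduli, whose existence is governed by $\det(A)\ge 0$, $\det(A')\le 0$ and the $2\times 2$ minor conditions (your ``columns of $w$'' live in ${\Bbb C}^3$, not in ${\cal H}$). Moreover, to satisfy both $(c)$ and $(d)$ simultaneously you must dispose of the compatibility condition $(g)$ of Proposition~\ref{protre27}; the paper does this by choosing the off-diagonal vectors symmetrically, $e_{ij}=e_{ji}$, which requires $\det(A')\le 0$ and hence only works for $\lambda\ge 2$ — the case $\lambda\le 2$ must be handled separately by the explicit construction of Example~\ref{exatre24}, since there no symmetric solution exists. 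Your sketch mentions none of these points, so even the ``if'' direction is not yet a proof.
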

\begin{remark}
\label{remtre22}
{\rm Condition $(i)$ is clearly a necessary condition, because $\|\eij{i}{i}\|^{^{2}}=(\lam{}-\al{i})(\lam{}\al{i}-1)$ and $\lam{}>\al{i}.$

\noindent{}Condition $(ii)$ is also necessary, since $(b)$ implies that 
$\eijeij{i}{i}\leq\al{i}I_{_{\cal H}}.$ Therefore $\|\eij{i}{i}\|^{^{2}}\leq\al{i}.$ By $(a)$
\[\al{i}-\|\eij{i}{i}\|^{^{2}}=\lam{}(\alpha_{_{i}}^{2}-\lam{}\al{i}+1).\]
This shows $(ii)$.

The most difficult part of the proof, is to show that $(iii)$ is also a necessary condition.}
\end{remark}

\noindent{}{\bf Reformulation of condition (iii) in Theorem \ref{thmtre21}} 

\noindent{}Set
\[\gam{i}=\alpha_{_{i}}^{2}-\lam{}\al{i}+1,\;\;\;\;i=1,2,3\]
Note that\[\begin{array}{lcl}
\gam{1} & = & 1-\al{1}\al{2}-\al{1}\al{3}\\
\gam{2} & = & 1-\al{1}\al{2}-\al{2}\al{3}\\
\gam{3} & = & 1-\al{1}\al{3}-\al{2}\al{3}.
\end{array}\]
We compute
\[\begin{array}{cl}
 & 2\gam{1}\gam{2}+2\gam{1}\gam{3}+2\gam{2}\gam{3}-\gamma_{_{1}}^{2}-\gamma_{_{2}}^{2}-\gamma_{_{3}}^{2}\\[0.1cm]
= & \gam{1}(\gam{2}+\gam{3}-\gam{1}) + \gam{2}(\gam{1}+\gam{3}-\gam{2}) +
\gam{3}(\gam{1}+\gam{2}-\gam{3})\\[0.1cm]
=& (1-\al{1}\al{2}-\al{1}\al{3})(1-2\al{2}\al{3}) +
   (1-\al{1}\al{2}-\al{2}\al{3})(1-2\al{1}\al{3}) + \\[0.1cm]
 & (1-\al{1}\al{3}-\al{2}\al{3})(1-2\al{1}\al{2})\\[0.1cm]
= & 4(\al{1}+\al{2}+\al{3})\al{1}\al{2}\al{3} - 4(\al{1}\al{2}+\al{1}\al{3}+\al{2}\al{3})+3.
\end{array}\]
Hence condition $(iii)$ is equivalent to 
\[(iii')\;\;\;\;\;\;2\gam{1}\gam{2}+2\gam{1}\gam{3}+2\gam{2}\gam{3} -\gamma_{_{1}}^{2}-\gamma_{_{2}}^{2}-\gamma_{_{3}}^{2}\geq{}0.\]
If we assume $(ii)$, i.e. $\gam{i}\geq{}0,\;\;i=1,2,3,$ the inequality $(iii')$ can be further reduced. Indeed $(iii')$ is equivalent to
\[\gamma_{_{3}}^{2} -2(\gam{1}+\gam{2})\gam{3} +(\gam{1}-\gam{2})^{^{2}}\leq{}0.\]
The roots of this second order polynomial in \gam{3} are $\gam{1}+\gam{2}\pm{}2\sqrt{\gam{1}\gam{2}},$ so $(iii')$ is equivalent to
\[ \gam{1}+\gam{2}-2\sqrt{\gam{1}\gam{2}}\leq{}\gam{3}\leq{} \gam{1}+\gam{2}+2\sqrt{\gam{1}\gam{2}}\]
or
\[|\sqrt{\gam{1}}-\sqrt{\gam{2}}|\leq\sqrt{\gam{3}}\leq\sqrt{\gam{1}}+\sqrt{\gam{2}}.\]
Hence if we assume $(ii),$ and  define the numbers \del{1},\del{2} and \del{3} by
\[\del{i}=\sqrt{\alpha_{_{i}}^{2}-\lam{}\al{i}+1},\;\;\;\;i=1,2,3,\]
the inequality $(iii)$ is equivalent to the statement, that the numbers \del{1},\del{2} and \del{3} satisfy the triangle inequality,
\begin{equation}
\label{trekantulighed}
|\del{1}-\del{2}|\leq\del{3}\leq\del{1}+\del{2}.
\end{equation}
\begin{remark}
\label{remtre23}
{\rm If $\lam{}\leq{}2$ $(ii)$ is trivially true, because 
\[\alpha_{_{i}}^{2}-\lam{}\al{i}+1\geq{}\alpha_{_{i}}^{2}-2\al{i}+1 =(\al{i}-1)^{^{2}}.\]
Moreover for $\lam{}\leq{}2,$ $(i)\Rightarrow{}(iii).$ Indeed, if $(i)$ holds then
\begin{equation}
\label{lameq}
(\lam{}\al{1}-1)(\lam{}\al{2}-1)(\lam{}\al{3}-1)\geq{}0,
\end{equation}
but the left-hand side of this inequality is equal to
\[\lambda^{^{3}}\al{1}\al{2}\al{3}-\lambda^{^{2}}(\al{1}\al{2}+\al{1}\al{3}+\al{2}\al{3})+\lambda^{^{2}}-1,\]
so (\ref{lameq}) is equivalent to
\[\lambda\al{1}\al{2}\al{3}-(\al{1}\al{2}+\al{1}\al{3}+\al{2}\al{3})+1 -\mbox{$\frac{1}{\lambda^{^{2}}}$}\geq{}0.\]
If $\lam{}\leq{}2,$ then $1 -\frac{1}{\lambda^{^{2}}}\leq{}\frac{3}{4},$ so 
\[\lambda\al{1}\al{2}\al{3}-(\al{1}\al{2}+\al{1}\al{3}+\al{2}\al{3})+\mbox{ $\frac{3}{4}$}\geq{}0,\]
which proves $(iii)$.
}\end{remark}
The above remark shows, that for $\lam{}\leq{}2$ the conditions $(i),(ii)$ and $(iii)$ reduce to $(i).$ $(i)$ is clearly a necessary condition, because $\|\eij{i}{i}\|^{^{2}}=(\lam{}-\al{i})(\lam{}\al{i}-1),$ and $\lam{}-\al{i}>0.$

\noindent{}The following example shows, that for $\lam{}\leq{}2$ condition $(i)$ is also sufficient.
\begin{example}
\label{exatre24}{\rm Assume $\lam{}\leq{}2$ and write $\lam{}=2\cos\theta,$ where $0\leq\theta{}<\frac{\pi}{2}.$ Let ${\cal S}$ be the two dimensional subspace of ${\Bbb C}^{3}$ given by
\[{\cal S}=\left\{\left.(x_{_{1}},x_{_{2}},x_{_{3}})\in{\Bbb C}^{3}\right|\sqrt{\al{1}}x_{_{1}} +\sqrt{\al{2}}x_{_{2}} +\sqrt{\al{3}}x_{_{3}} =0\right\},\]
and consider the following 9 vectors in ${\Bbb C}^{3}$
\[\begin{array}{lll}
\eij{1}{1}=\sqrt{\frac{\lam{}\al{1}-1}{\lam{}}}\left(\begin{array}{c}\lam{}-\al{1}\\-\sqrt{\al{1}\al{2}}\\ -\sqrt{\al{1}\al{3}} \end{array}\right) &
\eij{1}{2}=\left(\begin{array}{c} \sqrt{\al{2}}(\al{1}-e^{i\theta})\\ \sqrt{\al{1}}(\al{2}-e^{-i\theta})\\ \sqrt{\al{1}\al{2}\al{3}} \end{array}\right) &
\eij{1}{3}=\left(\begin{array}{c}\sqrt{\al{3}}(\al{1}-e^{i\theta})\\ \sqrt{\al{1}\al{2}\al{3}}\\ \sqrt{\al{1}}(\al{3}-e^{-i\theta})\end{array}\right)\\[1cm]
\eij{2}{1}=\left(\begin{array}{c} \sqrt{\al{2}}(\al{1}-e^{-i\theta})\\ \sqrt{\al{1}}(\al{2}-e^{i\theta})\\ \sqrt{\al{1}\al{2}\al{3}} \end{array}\right)&
\eij{2}{2}=\sqrt{\frac{\lam{}\al{2}-1}{\lam{}}}\left(\begin{array}{c} -\sqrt{\al{1}\al{2}}\\ \lam{}-\al{2}\\ -\sqrt{\al{2}\al{3}} \end{array}\right) &
\eij{2}{3}=\left(\begin{array}{c} \sqrt{\al{1}\al{2}\al{3}}\\ \sqrt{\al{3}}(\al{2}-e^{i\theta})\\ \sqrt{\al{2}}(\al{3}-e^{-i\theta})\end{array}\right)\\[1cm]
\eij{3}{1}=\left(\begin{array}{c}\sqrt{\al{3}}(\al{1}-e^{-i\theta})\\ \sqrt{\al{1}\al{2}\al{3}}\\ \sqrt{\al{1}}(\al{3}-e^{i\theta})\end{array}\right) &
\eij{3}{2}=\left(\begin{array}{c} \sqrt{\al{1}\al{2}\al{3}}\\ \sqrt{\al{3}}(\al{2}-e^{-i\theta})\\ \sqrt{\al{2}}(\al{3}-e^{i\theta})\end{array}\right) &
\eij{3}{3}=\sqrt{\frac{\lam{}\al{3}-1}{\lam{}}}\left(\begin{array}{c} -\sqrt{\al{1}\al{3}}\\ -\sqrt{\al{2}\al{3}} \\ \lam{}-\al{3}\end{array}\right) 
\end{array}\]
One easily checks that $\eij{i}{j}\in{\cal S}$ and that $\|\eij{i}{j}\|^{^{2}}$ is given by the formulas $(a)$ and $(b).$ (Note that because of the symmetry of \eij{i}{j}, with respect to permutation of indices, it suffices to check $(a)$ and $(b)$ for \eij{1}{1} and \eij{1}{2}.)

\noindent{}To check $(c)$ and $(d)$ one has to identify $I_{_{\cal S}}$ with the orthogonal projection of ${\Bbb C}^{3}$ onto ${\cal S}.$ Since ${\cal S}^{\perp}$ is spanned by the unit vector 
\[\xi=\mbox{$\frac{1}{\sqrt{\lambda}}$}(\sqrt{\al{1}},\sqrt{\al{2}},\sqrt{\al{3}})\]
one has
\[I_{_{\cal S}}=I-\xi\otimes\overline{\xi}=\frac{1}{\lambda}\left(
\begin{array}{ccc}
\lam{}-\al{1}&-\sqrt{\al{1}\al{2}} & -\sqrt{\al{1}\al{3}}\\
-\sqrt{\al{1}\al{2}} & \lam{}-\al{2} &-\sqrt{\al{2}\al{3}}\\
-\sqrt{\al{1}\al{3}} &-\sqrt{\al{2}\al{3}} &\lam{}-\al{3}
\end{array}\right).\]
To check $(c),$ for $i=1,$ we show that
\[\eijeij{1}{2} +\eijeij{1}{3} =\al{1}I_{_{\cal S}}-\eijeij{1}{1}.\]
One easily gets that $\eijeij{1}{2}+\eijeij{1}{3}$ equals
\[\left(\begin{array}{ccc}
(\alpha_{_{1}}^{2}-\lam{}\al{1}+1)(\lam{}-\al{1})&
-(\alpha_{_{1}}^{2}-\lam{}\al{1}+1)\sqrt{\al{1}\al{2}}&
-(\alpha_{_{1}}^{2}-\lam{}\al{1}+1)\sqrt{\al{1}\al{3}}\\[0.3cm]
-(\alpha_{_{1}}^{2}-\lam{}\al{1}+1)\sqrt{\al{1}\al{2}}&
\al{1}(1-\al{1}\al{2})&
-\alpha_{_{1}}^{2}\sqrt{\al{2}\al{3}}\\[0.3cm]
-(\alpha_{_{1}}^{2}-\lam{}\al{1}+1)\sqrt{\al{1}\al{3}}&
-\alpha_{_{1}}^{2}\sqrt{\al{1}\al{3}}&
\al{1}(1-\al{1}\al{3})
\end{array}\right)\]
To compute $\al{1}I_{_{\cal S}}-\eijeij{1}{1},$ it is convenient to introduce the vector 
\[e_{_{11}}^{\perp}=\sqrt{\lam{}\al{1}-1}
\left(\begin{array}{c}0\\[0.2cm] \sqrt{\al{3}}\\[0.2cm] -\sqrt{\al{2}}\end{array}\right),\]
which is contained in ${\cal S}$ and is orthogonal to \eij{1}{1}. 

\noindent{}Moreover 
\[\|e_{_{11}}^{\perp}\|^{^{2}} = (\lam{}\al{1}-1)(\lam{}-\al{1}) =\|e_{_{11}}\|^{^{2}}.\]
Therefore 
\[\eijeij{1}{1}+e_{_{11}}^{\perp}\otimes\overline{e}_{_{11}}^{\perp} = (\lam{}\al{1}-1)(\lam{}-\al{1})I_{_{\cal S}}.\]
Thus 
\[\al{1}I_{_{\cal S}}-e_{_{11}}\otimes{}e_{_{11}}=e_{_{11}}^{\perp}\otimes\overline{e}_{_{11}}^{\perp} +\lam{}(\alpha_{_{1}}^{2}-\lam{}\al{1}+1)I_{_{\cal S}}\]
which, by straight forward computations, coincides with $\eijeij{1}{2}+\eijeij{1}{3}$ computed above. This proves $(c)$ for $i=1.$ By symmetry it also holds for $i=2$ and $i=3.$ Furthermore $(d)$ follows from $(c)$ because $\overline{e}_{_{ij}}=\eij{j}{i},\;\;\;i,j=1,2,3.$
}\end{example}
\begin{lemma}
\label{lemtre25}
Let $x_{_{1}},x_{_{2}}$ and $x_{_{3}}$ be 3 vectors in a two dimensional Hilbert space ${\cal H},$ and  $c\in{\Bbb R}_{+}$ be such that
\begin{equation}
\label{3xer}
\xx{1}+\xx{2}+\xx{3}=cI_{_{\cal H}}.
\end{equation}
Then 
\[\begin{array}{cl}
(a) & \|x_{_{1}}\|^{^{2}}+\|x_{_{2}}\|^{^{2}}+\|x_{_{3}}\|^{^{2}}=2c\\
(b) & \|x_{_{i}}\|^{^{2}}\leq{}c,\;\;\;\;i=1,2,3.\\
(c) & |(x_{_{i}},x_{_{j}})|^{^{2}}=(c-\|x_{_{i}}\|^{^{2}})(c-\|x_{_{j}}\|^{^{2}}),\;\;\;\;i\neq{}j
\end{array}\]
\end{lemma}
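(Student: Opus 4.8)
\textbf{Proof plan for Lemma \ref{lemtre25}.}

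The plan is to exploit the fact that on a two-dimensional Hilbert space a rank-one positive operator $x\otimes\overline{x}$ has trace $\|x\|^{2}$ and determinant $0$, together with the trace and determinant of the identity scaled by $c$. First I would take the ordinary trace $\mbox{Tr}$ of both sides of (\ref{3xer}). Since $\mbox{Tr}(x\otimes\overline{x})=\|x\|^{2}$ and $\mbox{Tr}(cI_{_{\cal H}})=2c$ (because $\dim{\cal H}=2$), this immediately gives $(a)$: $\|x_{_{1}}\|^{2}+\|x_{_{2}}\|^{2}+\|x_{_{3}}\|^{2}=2c$.

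For $(b)$, rewrite (\ref{3xer}) as $cI_{_{\cal H}}-\xx{i}=\sum_{j\neq i}\xx{j}$. The right-hand side is a sum of positive operators, hence positive; therefore $\xx{i}\leq{}cI_{_{\cal H}}$. Evaluating this inequality on the unit vector $x_{_{i}}/\|x_{_{i}}\|$ (assuming $x_{_{i}}\neq 0$; the case $x_{_{i}}=0$ is trivial) yields $\|x_{_{i}}\|^{4}\leq c\|x_{_{i}}\|^{2}$, i.e. $\|x_{_{i}}\|^{2}\leq c$, which is $(b)$.

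For $(c)$, fix $i\neq j$ and let $k$ be the third index. From (\ref{3xer}), $\xx{i}+\xx{j}=cI_{_{\cal H}}-\xx{k}$. Now take the determinant of both sides. The left-hand side is a positive operator on a $2$-dimensional space, and a direct computation (using that $x_{_{i}}\otimes\overline{x}_{_{i}}$ has matrix entries $\overline{(x_{_{i}})}_{p}(x_{_{i}})_{q}$ in an orthonormal basis) gives $\det(\xx{i}+\xx{j})=\|x_{_{i}}\|^{2}\|x_{_{j}}\|^{2}-|(x_{_{i}},x_{_{j}})|^{2}$ — this is the Gram-determinant identity, and it is where the two-dimensionality is used again (no higher-rank cross terms appear). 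For the right-hand side, $cI_{_{\cal H}}-\xx{k}$ has eigenvalues $c$ (on $x_{_{k}}^{\perp}$) and $c-\|x_{_{k}}\|^{2}$ (on $x_{_{k}}$), so its determinant is $c(c-\|x_{_{k}}\|^{2})$. Equating, and then substituting $c-\|x_{_{k}}\|^{2}=\|x_{_{i}}\|^{2}+\|x_{_{j}}\|^{2}-c$ from $(a)$, gives
\[
\|x_{_{i}}\|^{2}\|x_{_{j}}\|^{2}-|(x_{_{i}},x_{_{j}})|^{2}=c\left(\|x_{_{i}}\|^{2}+\|x_{_{j}}\|^{2}-c\right),
\]
and rearranging the right-hand side as $c\|x_{_{i}}\|^{2}+c\|x_{_{j}}\|^{2}-c^{2}-\|x_{_{i}}\|^{2}\|x_{_{j}}\|^{2}+\|x_{_{i}}\|^{2}\|x_{_{j}}\|^{2}$ and factoring yields $|(x_{_{i}},x_{_{j}})|^{2}=(c-\|x_{_{i}}\|^{2})(c-\|x_{_{j}}\|^{2})$, which is $(c)$.

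The only mildly delicate point — the "main obstacle", such as it is — is justifying the Gram-determinant computation $\det(\xx{i}+\xx{j})=\|x_{_{i}}\|^{2}\|x_{_{j}}\|^{2}-|(x_{_{i}},x_{_{j}})|^{2}$ cleanly; the slickest way is to observe that $\xx{i}+\xx{j}$ acts as zero on $(\mbox{span}\{x_{_{i}},x_{_{j}}\})^{\perp}$ and to compute its matrix in a basis adapted to $\mbox{span}\{x_{_{i}},x_{_{j}}\}$, or equivalently to note $\xx{i}+\xx{j}=M^{*}M$ where $M$ is the $2\times 2$ matrix with rows $\overline{x}_{_{i}},\overline{x}_{_{j}}$, so $\det(\xx{i}+\xx{j})=|\det M|^{2}=$ Gram determinant. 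Everything else is a routine trace/determinant bookkeeping exercise, and the two-dimensionality of ${\cal H}$ is used in exactly three places: the trace of $I_{_{\cal H}}$ in $(a)$, and the determinant evaluations of both sides in $(c)$.
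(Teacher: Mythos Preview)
Your proof is correct. Parts $(a)$ and $(b)$ match the paper's argument essentially verbatim. For part $(c)$ you take a genuinely different route: the paper writes the $x_{i}$ as rows of a $3\times 2$ matrix, observes that the hypothesis says the two columns are orthogonal of norm $\sqrt{c}$, adjoins a third column to form (after scaling) a $3\times 3$ unitary, and then reads off $(c)$ from the orthonormality of the \emph{rows}. Your determinant argument is more direct and avoids introducing the auxiliary column; the Gram-determinant identity $\det(\xx{i}+\xx{j})=\|x_{i}\|^{2}\|x_{j}\|^{2}-|(x_{i},x_{j})|^{2}$ is exactly right in dimension two (both sides equal $|\det M|^{2}$ for the $2\times 2$ matrix $M$ with rows $x_{i},x_{j}$). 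The paper's approach has the mild advantage that it transparently sets up the converse construction in the next lemma, where one actually needs to produce the third vector $x_{3}$; but for the present statement your method is at least as clean.
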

\begin{proof}
$(a)$ follows since $\mbox{Tr}(\xx{i})=\|x_{_{i}}\|^{^{2}}$ and 
$\mbox{Tr}(I_{_{\cal H}})=\mbox{dim}{\cal H} =2.$ $(b)$ is clear, because $\xx{i}\leq{}I_{_{\cal H}}.$

\noindent{}$(c):$ Let $x_{_{i}}=(x_{_{i1}},x_{_{i2}}),\;\;i=1,2,3$ be the coordinates of $x_{_{i}},$ with respect to an orthonormal basis. Define the vectors $\xi_{_{1}}=(x_{_{11}},x_{_{21}},x_{_{31}})$ and $\xi_{_{2}}=(x_{_{12}},x_{_{22}},x_{_{32}}),$ then condition \ref{3xer} is equivalent to $\xi_{_{1}},\xi_{_{2}}$ being orthogonal in ${\Bbb C}^{3},$ with $\|\xi_{_{1}}\|^{^{2}}=\|\xi_{_{2}}\|^{^{2}}=c.$ Choose a third vector $\xi_{_{3}}=(x_{_{13}},x_{_{23}},x_{_{33}})$ such that $\frac{1}{\sqrt{c}}\xi_{_{1}},\frac{1}{\sqrt{c}}\xi_{_{2}}$ and $\frac{1}{\sqrt{c}}\xi_{_{3}}$ form an orthonormal basis for ${\Bbb C}^{3}.$ Then
\[\frac{1}{\sqrt{c}}\left(\begin{array}{lll}
x_{_{11}}&x_{_{12}}&x_{_{13}}\\
x_{_{21}}&x_{_{22}}&x_{_{23}}\\
x_{_{31}}&x_{_{32}}&x_{_{33}}
\end{array}\right)\]
is a unitary matrix. In particular the rows of the matrix form an orthonormal basis. I.e.
\[\|x_{_{i}}\|^{^{2}}+|x_{_{3i}}|^{^{2}}=c,\;\;\;i=1,2,3\]
\[(x_{_{i}},x_{_{j}}) + x_{_{3i}}\overline{x}_{_{3j}}=0\;\;\;\;i\neq{}j\]
Thus\[|(x_{_{i}},x_{_{j}})|=|x_{_{3i}}||x_{_{3j}}|=\sqrt{c-\|x_{_{i}}\|^{^{2}}}\sqrt{c-\|x_{_{j}}\|^{^{2}}},\;\;\;\;i\neq{}j,\]
which proves $(c).$
\end{proof}
\begin{lemma}
\label{lemtre26}
If two vectors $x_{_{1}}$ and $x_{_{2}}$  in a two dimensional Hilbert space, ${\cal H},$ and $c\in{\Bbb R}_{+}$ are such that 
\[\begin{array}{cl}
(a)&\|x_{_{i}}\|^{^{2}}\leq{}c,\;\;\;\;i=1,2\\[0.2cm]
(b)&|(x_{_{1}},x_{_{2}})|^{^{2}} = (c-\|x_{_{i}}\|^{^{2}})(c-\|x_{_{j}}\|^{^{2}}).\end{array}\]
Then there exists a third vector $x_{_{3}}\in{\cal H}$ such that
\[\xx{1}+\xx{2}+\xx{3} =cI_{_{\cal H}}.\]
Moreover $\|x_{_{3}}\|^{^{2}}=2c-\|x_{_{1}}\|^{^{2}}-\|x_{_{2}}\|^{^{2}}.$\end{lemma}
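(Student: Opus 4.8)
\textbf{Proof proposal for Lemma \ref{lemtre26}.}

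The plan is to realize the two given vectors explicitly inside $\mathbb{C}^2$ and then read off the third vector from the columns of a suitable $3\times 2$ matrix with orthonormal columns. First I would dispose of the degenerate cases: if $\|x_1\|^2 = c$ (or $\|x_2\|^2 = c$), then condition (b) forces $(x_1,x_2) = 0$, and one can simply take $x_3$ to be a vector orthogonal to $x_1$ with $\|x_3\|^2 = c - \|x_2\|^2$; the identity $\xx{1} + \xx{2} + \xx{3} = c I_{\cal H}$ is then immediate from testing on the orthonormal basis $\{x_1/\|x_1\|, x_3/\|x_3\|\}$ (after noting $\|x_2\|^2 \le c$ and that $x_2$ is a combination of these, with the cross terms vanishing because of $(x_1,x_2)=0$). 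Similarly the case $\|x_1\|^2 = \|x_2\|^2 = c = 0$ is trivial.

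In the generic case $0 < \|x_i\|^2 < c$, I would choose an orthonormal basis $\{e_1, e_2\}$ of ${\cal H}$ so that $x_1 = (\sqrt{\|x_1\|^2},\, 0)$ and $x_2 = (t,\, s)$ where $t = (x_2, x_1)/\|x_1\|$ and $s$ is chosen so that $|s|^2 = \|x_2\|^2 - |t|^2$. Using (b), $|t|^2 = |(x_1,x_2)|^2/\|x_1\|^2 = (c - \|x_1\|^2)(c-\|x_2\|^2)/\|x_1\|^2$, and a short computation gives $c - \|x_1\|^2 - |t|^2 = (c-\|x_1\|^2)(1 - (c - \|x_2\|^2)/\|x_1\|^2) \cdot(\text{sign check})$; the point is that $c - \|x_1\|^2 - |t|^2 \ge 0$ exactly when $\|x_1\|^2 + \|x_2\|^2 \ge c$, which I will need to verify holds (it follows from $|t|^2 \le c - \|x_1\|^2$, equivalently from (b) combined with $|s|^2 \ge 0$, i.e. $|t|^2 \le \|x_2\|^2 \le c$ — actually the cleanest route is: from (b), $(c-\|x_1\|^2)(c-\|x_2\|^2) = |t|^2\|x_1\|^2 \le \|x_2\|^2 \|x_1\|^2$, which after expansion yields $c^2 - c(\|x_1\|^2+\|x_2\|^2) \le 0$, hence $\|x_1\|^2 + \|x_2\|^2 \ge c$). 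Then define the row vector $r_1 = (\sqrt{c - \|x_1\|^2},\, -\overline{t}\sqrt{c-\|x_1\|^2}/\text{(something)})$ — more carefully, build the $3\times 2$ matrix whose first two rows are the coordinate rows of $x_1$ and $x_2$ and whose third row $r_3 = (x_{13}, x_{23})$ is chosen with $|x_{13}|^2 = c - \|x_1\|^2$, $|x_{23}|^2 = c - \|x_2\|^2$, and $x_{13}\overline{x_{23}} = -(x_1,x_2)$; such $x_{13}, x_{23}$ exist precisely by (b). Then set $x_3 = (x_{13}, x_{23})$ viewed in ${\cal H}$; the columns of the matrix $\frac{1}{\sqrt c}(\text{rows } x_1, x_2, x_3)$ are orthonormal by construction (the column Gram matrix is exactly $\frac1c(\xx1 + \xx2 + \xx3)$ expressed in the basis, and orthonormality of columns $\Leftrightarrow$ $\xx1+\xx2+\xx3 = cI$), which is what reversing the argument of Lemma \ref{lemtre25}(c) gives. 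Finally $\|x_3\|^2 = |x_{13}|^2 + |x_{23}|^2 = 2c - \|x_1\|^2 - \|x_2\|^2$, and this also follows from taking traces in $\xx1+\xx2+\xx3 = cI_{\cal H}$.

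The main obstacle I anticipate is purely bookkeeping: making the phase choices for $x_{13}, x_{23}$ consistent with the prescribed inner product $x_{13}\overline{x_{23}} = -(x_1,x_2)$ while keeping the moduli fixed, and then verifying that the resulting $3\times 2$ array really has orthonormal columns rather than just orthonormal rows. The slick way to handle this is to observe that a $3\times 2$ complex matrix has orthonormal columns if and only if its two columns span a $2$-dimensional space on which it restricts to an isometry, and that the row Gram data $\|x_i\|^2$ and $(x_i,x_j)$ we have arranged are exactly the data needed: the $2\times 2$ matrix $c^{-1}(\xx1+\xx2+\xx3)$ has trace $2$ (from (a), which I'd derive as in Lemma \ref{lemtre25}(a): $\|x_1\|^2+\|x_2\|^2+\|x_3\|^2 = 2c$) and determinant $1$ after using (b) to compute the off-diagonal and diagonal entries, hence equals $I_{\cal H}$ since it is positive with both eigenvalues forced to be $1$. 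This determinant computation is the real content and I'd carry it out directly: writing $\xx1+\xx2+\xx3$ in the $\{e_1,e_2\}$ basis, its $(1,1)$ entry is $\|x_1\|^2 + |t|^2 + |x_{13}|^2$, $(2,2)$ entry is $|s|^2 + |x_{23}|^2$, and off-diagonal is $t\overline s + x_{13}\overline{x_{23}}$; substituting the values above and the relation $x_{13}\overline{x_{23}} = -(x_1,x_2) = -t\|x_1\|$... I will instead just verify trace $=2c$ and that the $(1,1)$ and off-diagonal entries give $c$ and $0$ respectively, then positivity plus trace plus one vanishing off-diagonal entry forces the matrix to be $cI_{\cal H}$.
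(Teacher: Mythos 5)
There is a genuine gap, and it sits precisely at the step you yourself flag as ``the real content.'' Your choice of the appended scalars $x_{13},x_{23}$ (moduli $\sqrt{c-\|x_1\|^2}$, $\sqrt{c-\|x_2\|^2}$, product $x_{13}\overline{x_{23}}=-(x_1,x_2)$) is exactly the right way to extend $x_1,x_2$ to two orthogonal vectors $\tilde x_i=(x_{i1},x_{i2},x_{i3})$ of length $\sqrt c$ in ${\Bbb C}^3$; but you then declare $x_3:=(x_{13},x_{23})$, i.e.\ you take the pair of appended coordinates as the third vector, and assert that the columns of the resulting $3\times2$ array are orthonormal ``by construction.'' They are not: for a non-square array, arranging the rows does nothing for the columns. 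In your own coordinates $x_1=(a,0)$, $x_2=(t,s)$, the $(1,1)$ entry of $\xx{1}+\xx{2}+\xx{3}$ is $a^2+|t|^2+(c-a^2)=c+|t|^2$ and the off-diagonal entry is $t\bar s-a\bar t$, so the identity fails whenever $(x_1,x_2)\neq0$. Concretely, $c=1$, $x_1=(1/\sqrt2,0)$, $x_2=(1/2,1/\sqrt2)$ satisfy (a) and (b), but your $x_3$ yields $\xx{1}+\xx{2}+\xx{3}=\mathrm{diag}(5/4,3/4)\neq I_{\cal H}$. (That $|x_{13}|^2+|x_{23}|^2=2c-\|x_1\|^2-\|x_2\|^2$ reproduces the correct norm of $x_3$ is what makes the slip easy to miss; the correct $x_3$ has coordinate moduli $\sqrt{c-a^2-|t|^2}$ and $\sqrt{c-|s|^2}$ and cross term $-t\bar s$.)

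The missing idea is the completion to a \emph{square} matrix, which is how the paper argues: normalize $\tilde x_1,\tilde x_2$, complete them to an orthonormal basis of ${\Bbb C}^3$ by a third vector $\tilde x_3=(x_{31},x_{32},x_{33})$, so that $\frac{1}{\sqrt c}(x_{ij})_{i,j=1}^3$ is unitary, and only then set $x_3=(x_{31},x_{32})$ --- the first two coordinates of the new \emph{row}, not the appended column. Row-orthonormality of a square matrix forces column-orthonormality, and orthonormality of the first two columns is exactly $\xx{1}+\xx{2}+\xx{3}=cI_{\cal H}$, with the norm statement then coming from Lemma \ref{lemtre25}(a). Alternatively you could bypass matrices entirely: set $T=cI_{\cal H}-\xx{1}-\xx{2}$, check from (a) and (b) that $T\geq0$ and $\det T=0$ (your computation showing $c-a^2-|t|^2\geq0$, equivalently $\|x_1\|^2+\|x_2\|^2\geq c$, is exactly what is needed, and (b) is what makes the determinant vanish), and take for $x_3$ any vector with $\xx{3}=T$. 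Your side derivation of $\|x_1\|^2+\|x_2\|^2\geq c$ and your treatment of the degenerate case $\|x_i\|^2=c$ are both correct; it is only the identification of $x_3$ that has to be repaired.
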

\begin{proof}
Let $x_{_{1}} =(x_{_{11}},x_{_{12}})$ and $x_{_{2}} =(x_{_{21}},x_{_{22}})$ be the coordinates of $x_{_{1}}$ and $x_{_{2}}$ in a fixed orthonormal basis. Choose $\theta\in{\Bbb R},$ such that
\[(x_{_{1}},x_{_{2}})=e^{i\theta}|(x_{_{1}},x_{_{2}})|.\]
Using $(a)$ we can define $x_{_{13}},x_{_{23}}\in{\Bbb C}$ by
\[x_{_{13}}=e^{i\theta}\sqrt{c-\|x_{_{1}}\|^{^{2}}}\mbox{ and }
x_{_{23}}=-\sqrt{c-\|x_{_{2}}\|^{^{2}}}.\]
Then $\|x_{_{i}}\|^{^{2}}+|x_{_{i3}}|^{^{2}}=c,\;i=1,2$ and 
$(x_{_{1}},x_{_{2}})+x_{_{13}}\overline{x}_{_{23}}=0.$ \\
Therefore $\tilde{x}_{_{i}}=(x_{_{i1}},x_{_{i2}},x_{_{i3}}),\;i=1,2$ are two orthogonal vectors in ${\Bbb C}^{3},$ both of length$\sqrt{c}.$ Choose $\tilde{x}_{_{3}}=(x_{_{31}},x_{_{32}},x_{_{33}})$ such that 
\[\mbox{$\frac{1}{\sqrt{c}}$}\tilde{x}_{_{1}},
\mbox{$\frac{1}{\sqrt{c}}$}\tilde{x}_{_{2}},
\mbox{$\frac{1}{\sqrt{c}}$}\tilde{x}_{_{3}}\]
form an orthonormal basis of ${\Bbb C}^{3}.$ Put $x_{_{3}}=(x_{_{31}},x_{_{32}}).$
Since 
\[\left(\mbox{$\frac{1}{\sqrt{c}}$}x_{_{ij}}\right)_{i,j=1}^{3}\]
is a unitary matrix, the two first columns are orthogonal vectors. Hence
\[\xx{1}+\xx{2}+\xx{3}=cI_{_{\cal H}},\]
and by lemma \ref{lemtre25} $(a)$ we have
\[\|x_{_{3}}\|^{^{2}}=2c-\|x_{_{1}}\|^{^{2}}-\|x_{_{2}}\|^{^{2}}.\]
\end{proof}
\begin{prop}
\label{protre27}
Condition $(1)$ of Theorem \ref{thmtre21} is equivalent to \begin{description}
\item[$(1')$]There exists 6 vectors $(e_{_{ij}})_{i,j=1}^{3},\;i\neq{}j,$ in a two dimensional Hilbert space, such that
\[\begin{array}{cl}
(e)&\|e_{_{ij}}\|^{^{2}}=\al{i}+\al{j}-\lam{}\al{i}\al{j},\;\;\;i\neq{}j\\[0.2cm]
(f)&|(e_{_{ij}}e_{_{ik}})|=\sqrt{\al{j}\al{k}}(\lam{}\al{i}-1),\;\;\;i\neq{}j\neq{}k\neq{}i\\[0.2cm]
(g)&\eijeij{1}{2}-\eijeij{2}{1}=\eijeij{2}{3}-\eijeij{3}{2}=\eijeij{3}{1}-\eijeij{1}{3}.
\end{array}\]
\end{description}
\end{prop}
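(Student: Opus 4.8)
\textbf{Proof proposal for Proposition \ref{protre27}.}

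The plan is to show the equivalence $(1)\Leftrightarrow(1')$ by exhibiting, in each direction, how the three ``diagonal'' vectors $e_{_{ii}}$ can be adjoined to (or stripped from) a collection of off-diagonal vectors $e_{_{ij}}$, $i\neq j$, without loss of information. The key observation is that conditions $(c)$ and $(d)$ of $(1)$, when restricted to a fixed index $i$, each express $\sum_j \eijeij{i}{j}$ or $\sum_i \eijeij{i}{j}$ as $\al{i}I_{_{\cal H}}$ or $\al{j}I_{_{\cal H}}$, i.e. as a multiple of the identity on the two-dimensional space ${\cal H}$, which is exactly the hypothesis of Lemma \ref{lemtre25} and the conclusion of Lemma \ref{lemtre26}.

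First I would prove $(1)\Rightarrow(1')$. Assume $(e_{_{ij}})_{i,j=1}^3$ satisfy $(a)$--$(d)$. Discard the three diagonal vectors. Condition $(e)$ is literally $(b)$. For $(f)$: fix $i$, and apply Lemma \ref{lemtre25} to the triple $e_{_{i1}},e_{_{i2}},e_{_{i3}}$ with $c=\al{i}$ (this uses $(c)$); part $(c)$ of that lemma gives $|(e_{_{ij}},e_{_{ik}})|^2=(\al{i}-\|e_{_{ij}}\|^2)(\al{i}-\|e_{_{ik}}\|^2)$ for $j\neq k$, and substituting $(b)$ one computes $\al{i}-\|e_{_{ij}}\|^2=\al{i}-\al{i}-\al{j}+\lam{}\al{i}\al{j}=\al{j}(\lam{}\al{i}-1)$, whence $|(e_{_{ij}},e_{_{ik}})|^2=\al{j}\al{k}(\lam{}\al{i}-1)^2$, which is $(f)$ (note $\lam{}\al{i}-1\geq 0$ since $\|e_{_{ii}}\|^2=(\lam{}-\al{i})(\lam{}\al{i}-1)\geq 0$ and $\lam{}>\al{i}$). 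For $(g)$: from $(c)$ with $i=1$ and $(d)$ with $j=1$ we get $\eijeij{1}{1}+\eijeij{1}{2}+\eijeij{1}{3}=\al{1}I_{_{\cal H}}=\eijeij{1}{1}+\eijeij{2}{1}+\eijeij{3}{1}$, so $\eijeij{1}{2}+\eijeij{1}{3}=\eijeij{2}{1}+\eijeij{3}{1}$; doing the same for indices $2$ and $3$ and rearranging the three resulting identities yields the common-value statement $(g)$ (the three differences $\eijeij{1}{2}-\eijeij{2}{1}$, $\eijeij{2}{3}-\eijeij{3}{2}$, $\eijeij{3}{1}-\eijeij{1}{3}$ sum to zero and each equals the negative of the sum of the other two in a way that forces them equal --- I would write this out as a short linear-algebra manipulation).

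Next, $(1')\Rightarrow(1)$. Given $(e_{_{ij}})_{i\neq j}$ satisfying $(e),(f),(g)$, I must produce the three diagonal vectors $e_{_{11}},e_{_{22}},e_{_{33}}$ and verify $(a)$--$(d)$. For each fixed $i$, I want to apply Lemma \ref{lemtre26} to the pair $x_1=e_{_{ij}}$, $x_2=e_{_{ik}}$ (where $\{i,j,k\}=\{1,2,3\}$) with $c=\al{i}$: hypothesis $(a)$ of that lemma is $\|e_{_{ij}}\|^2\leq\al{i}$, which by $(e)$ reads $\al{i}+\al{j}-\lam{}\al{i}\al{j}\leq\al{i}$, i.e. $\al{j}(1-\lam{}\al{i})\leq 0$, i.e. $\lam{}\al{i}\geq 1$ --- so I must first derive $\lam{}\al{i}-1\geq 0$ from $(f)$ (it follows since the right side of $(f)$ must be non-negative, being an absolute value, and $\sqrt{\al{j}\al{k}}>0$); hypothesis $(b)$ of Lemma \ref{lemtre26} is exactly $(f)$ after substituting $\al{i}-\|e_{_{ij}}\|^2=\al{j}(\lam{}\al{i}-1)$ as computed above. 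Lemma \ref{lemtre26} then produces $e_{_{ii}}$ with $\eijeij{i}{j}+\eijeij{i}{k}+\eijeij{i}{i}=\al{i}I_{_{\cal H}}$, which is $(c)$, and with $\|e_{_{ii}}\|^2=2\al{i}-\|e_{_{ij}}\|^2-\|e_{_{ik}}\|^2=2\al{i}-(\al{i}+\al{j}-\lam{}\al{i}\al{j})-(\al{i}+\al{k}-\lam{}\al{i}\al{k})=\lam{}\al{i}(\al{j}+\al{k})-\al{j}-\al{k}=(\al{j}+\al{k})(\lam{}\al{i}-1)=(\lam{}-\al{i})(\lam{}\al{i}-1)$, which is $(a)$. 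It remains to check $(d)$. Here is where $(g)$ is essential: $(c)$ for all $i$ gives $\sum_i\sum_j\eijeij{i}{j}=(\al{1}+\al{2}+\al{3})I_{_{\cal H}}=\lam{}I_{_{\cal H}}$, and I would combine this with the three relations in $(g)$ to show that for each fixed $j$, $\sum_i\eijeij{i}{j}=\al{j}I_{_{\cal H}}$. Concretely: $(g)$ says the three skew parts $S_{12}:=\eijeij{1}{2}-\eijeij{2}{1}$ etc. are all equal, call the common value $S$; but $S_{12}+S_{23}+S_{31}=0$, so $S=0$, hence $\eijeij{i}{j}$ and $\eijeij{j}{i}$ contribute symmetrically, and from the column sums $(c)$ one deduces the row sums $(d)$ by a direct rearrangement.

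The main obstacle, and the step requiring genuine care rather than routine substitution, is the handling of condition $(g)$ in the $(1')\Rightarrow(1)$ direction: Lemma \ref{lemtre26} determines each $e_{_{ii}}$ only up to the freedom in choosing the third coordinate $\tilde x_3$ in an orthonormal basis of ${\Bbb C}^3$ (equivalently, $e_{_{ii}}$ is determined up to a unitary that fixes $e_{_{ij}},e_{_{ik}}$), so a priori the three separately-constructed diagonal vectors need not be compatible in the sense required for $(d)$. I expect the resolution to be that $(d)$ for the \emph{off-diagonal} part, which is forced by $(g)$ together with $(c)$, already pins down $\sum_i\eijeij{i}{j}$ on the orthogonal complement of the span of $\{e_{_{1j}},e_{_{2j}},e_{_{3j}}\}$'s contributions, leaving no residual freedom; making this precise --- i.e. verifying that the $e_{_{ii}}$ furnished by Lemma \ref{lemtre26} automatically satisfy the column-to-row passage --- is the crux, and I would do it by the explicit $3\times 3$ unitary-matrix bookkeeping already used in the proofs of Lemmas \ref{lemtre25} and \ref{lemtre26} rather than abstractly. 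If a clean synthetic argument does not materialize, the fallback is to note that Example \ref{exatre24}-style explicit formulas can be invoked whenever $\lam{}\leq 2$ and a separate (possibly quaternionic, à la Proposition \ref{fire4prop15}) construction handles $\lam{}>2$; but I would aim to avoid this and keep the proof purely at the level of the two preceding lemmas.
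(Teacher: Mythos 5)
Your overall route is the paper's: lemma \ref{lemtre25} yields $(f)$ in the direction $(1)\Rightarrow(1')$, lemma \ref{lemtre26} manufactures the diagonal vectors in the converse, and your norm computations agree with the paper's. The genuine problem is your handling of condition $(g)$. You assert that $S_{_{12}}+S_{_{23}}+S_{_{31}}=0$, where $S_{_{12}}=\eijeij{1}{2}-\eijeij{2}{1}$ etc., and conclude that the common value $S$ in $(g)$ must vanish, so that \eijeij{i}{j} and \eijeij{j}{i} ``contribute symmetrically.'' That identity is false (the three differences do not telescope), and $S=0$ fails in general: for the explicit solution of example \ref{exatre24} one has $\eijeij{1}{2}-\eijeij{2}{1}=2i\sin(\theta)\sqrt{\al{1}\al{2}\al{3}}\,(\cdots)\neq{}0$ whenever $\lam{}<2$, as computed in the proof of lemma \ref{lemtre214}; this nonvanishing is exactly what produces $t>0$ in lemma \ref{lemtre213}, and remark \ref{remtre29} records that symmetric solutions do not exist for $\lam{}<2$. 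The same false mechanism appears in your sketch of $(g)$ in the forward direction, although there the three identities you correctly derive from $(c)$ and $(d)$, namely $\eijeij{1}{2}+\eijeij{1}{3}=\eijeij{2}{1}+\eijeij{3}{1}$ and its cyclic analogues, already read $S_{_{12}}=S_{_{31}}$, $S_{_{23}}=S_{_{12}}$, $S_{_{31}}=S_{_{23}}$, which is $(g)$; no vanishing of any sum is needed (or true).

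The step you single out as the crux in $(1')\Rightarrow(1)$ needs none of this and, in fact, no extra work at all. Condition $(g)$ is equivalent to the statement $\sum_{j\neq{}i}\eijeij{i}{j}=\sum_{j\neq{}i}\eijeij{j}{i}$ for each $i$ (this is how the paper rewrites it). Once lemma \ref{lemtre26} has produced $e_{_{jj}}$ with $\sum_{l}\eijeij{j}{l}=\al{j}I_{_{\cal H}}$, condition $(d)$ at index $j$ follows in one line:
\[\sum_{i}\eijeij{i}{j}=\eijeij{j}{j}+\sum_{i\neq{}j}\eijeij{i}{j}=\eijeij{j}{j}+\sum_{i\neq{}j}\eijeij{j}{i}=\al{j}I_{_{\cal H}}.\]
In particular each instance of $(d)$ involves only the single diagonal vector $e_{_{jj}}$, so the compatibility issue you raise between the three separately constructed diagonal vectors does not arise, and the proposed $3\times{}3$ bookkeeping and the quaternionic/example-\ref{exatre24} fallback are unnecessary. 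With the incorrect ``$S=0$'' argument replaced by this rearrangement, your proof coincides with the paper's.
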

\begin{proof}

\noindent{}$(1)\Rightarrow{}(1')$: Let $(e_{_{ij}})_{i,j=1}^{3}$ be as in $(1)$ of Theorem \ref{thmtre21}, and consider the six vectors corresponding to $i\neq{}j.$ Then $(e)$ holds. From $(c)$ in Theorem \ref{thmtre21} we have
\[\eijeij{i}{i}+\eijeij{i}{j}+\eijeij{i}{k}=\al{i}I,\]
so by lemma \ref{lemtre25} $(c)$ we have
\[\begin{array}{lcl}|(e_{_{ij}},e_{_{ik}})|^{^{2}}&=&(\al{i}-\|e_{_{ij}}\|^{^{2}})(\al{i}-\|e_{_{ik}}\|^{^{2}})\\[0.2cm]
& = & \al{j}\al{k}(\lam{}\al{i} + 1)^{^{2}}.
\end{array}\]
This proves $(f),$ because $\lam{}\al{i}-1\geq{}0$ by $(a)$ of Theorem \ref{thmtre21}. 

\noindent{}By $(c)$ and $(d)$ of Theorem \ref{thmtre21} we have
\[\sum_{j=1}^{3}(\eijeij{i}{j}-\eijeij{j}{i}) = 0.\]
The term with $j=i$  vanishes in the sum, and by rearranging the remaining terms one gets $(g).$

\noindent{}$(1')\Rightarrow{}(1)$: Assume $(e_{_{ij}})_{i,j=1,i\neq{}j}^{3}$ satisfy $(1').$ Condition $(g)$ can be rewritten as
\begin{equation}
\label{eijeij}
\sum_{j\neq{}i}\eijeij{i}{j} = \sum_{j\neq{}i}\eijeij{j}{i},\;\;\;i=1,2,3.\end{equation}
Fix $i,$ and let $j,k$ be the two remaining numbers in $\{1,2,3\}.$ By $(f)$ $\lam{}\al{i}-1\geq{}0,$ so by $(e)$
\[\begin{array}{lcl}
\al{i} - \|e_{_{ij}}\|^{^{2}} & = & \al{j}(\lam{}\al{i}-1)\geq{}0\\[0.2cm]
\al{i} - \|e_{_{ik}}\|^{^{2}} & = & \al{k}(\lam{}\al{i}-1)\geq{}0.
\end{array}\]
Moreover, by $(e)$ and $(f)$
\[|(e_{_{ij}},e_{_{ik}})|^{^{2}}=(\al{i}-\|e_{_{ij}}\|^{^{2}})(\al{i}-\|e_{_{ik}}\|^{^{2}}).\]
Hence, by lemma \ref{lemtre26}, there exists $e_{_{ii}}\in{}{\cal H},$ such that
\[\sum_{l=1}^{3}\eijeij{i}{l} = \al{i}I_{_{\cal H}},\]
and
\[\begin{array}{lcl}
\|e_{_{ii}}\|^{^{2}} & = & 2\al{i} -\|e_{_{ij}}\|^{^{2}}-\|e_{_{ik}}\|^{^{2}}\\[0.2cm]
& = &(\al{j}+\al{k})(\lam{}\al{i}-1)\\[0.2cm]
& = & (\lam{}-\al{i})(\lam{}\al{i}-1).
\end{array}\]
By (\ref{eijeij}) also $\sum_{l=1}^{3}\eijeij{l}{i} = \al{i}I_{_{\cal H}}.$  Hence this construction, for $i=1,2,3,$ provides us with three new vectors $e_{_{11}},e_{_{22}},e_{_{33}},$ which, together with the given six vectors, satisfy $(a),(b),(c)$ and $(d)$ in Theorem \ref{thmtre21}.
\end{proof}
\begin{lemma}
\label{lemtre28}
Let $A=(a_{_{ij}})_{i,j=1}^{3}$ be a symmetric $3\times{}3$ matrix with non-negative entries, and set $A'= (a_{_{ij}}')_{i,j=1}^{3},$ where $a_{_{ii}}'=a_{_{ii}}$ and $a_{_{ij}}'=-a_{_{ij}},\;i\neq{}j.$ Then the following two conditions are equivalent
\begin{itemize}
\item[(1)]There exists 3 vectors $\xi_{_{1}},\xi_{_{2}},\xi_{_{3}}$ in ${\Bbb C}^{3}$ with
\[|(\xi_{_{i}},\xi_{_{j}})|=a_{_{ij}},\;\;\;i,j=1,2,3\]
\item[(2)]$\det{}(A)\geq{}0,\;\det{}(A')\leq{}0$ and $a_{_{ij}}^{2}\leq{}a_{_{ii}}a_{_{jj}},\;\;i\neq{}j.$
\end{itemize}
\end{lemma}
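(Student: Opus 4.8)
\textbf{Proof plan for Lemma \ref{lemtre28}.}

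The plan is to recognize that condition (2) is precisely the condition for the matrix $A'$ to be positive semi-definite, and that the existence of vectors $\xi_{_{1}},\xi_{_{2}},\xi_{_{3}}$ with prescribed moduli of inner products is equivalent to finding suitable phases that turn some positive semi-definite matrix into a genuine Gram matrix. So first I would set up the direction $(1)\Rightarrow(2)$. Assume $\xi_{_{1}},\xi_{_{2}},\xi_{_{3}}$ exist with $|(\xi_{_{i}},\xi_{_{j}})|=a_{_{ij}}$. Write $(\xi_{_{i}},\xi_{_{j}})=a_{_{ij}}e^{i\theta_{_{ij}}}$ with $\theta_{_{ii}}=0$ and $\theta_{_{ji}}=-\theta_{_{ij}}$. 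The Gram matrix $M=\left((\xi_{_{i}},\xi_{_{j}})\right)_{i,j}$ is positive semi-definite, hence $\det(M)\geq 0$ and all $2\times 2$ principal minors are non-negative, which gives $a_{_{ij}}^{2}\leq a_{_{ii}}a_{_{jj}}$ directly. For the two determinant conditions on $A$ and $A'$: I would expand $\det(M)=a_{_{11}}a_{_{22}}a_{_{33}}-a_{_{11}}a_{_{23}}^{2}-a_{_{22}}a_{_{13}}^{2}-a_{_{33}}a_{_{12}}^{2}+2a_{_{12}}a_{_{13}}a_{_{23}}\cos(\theta_{_{12}}+\theta_{_{23}}+\theta_{_{31}})$, observing that the off-diagonal phases enter only through the single cyclic sum $\theta=\theta_{_{12}}+\theta_{_{23}}+\theta_{_{31}}$. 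Since $\cos\theta\in[-1,1]$, the value $\det(M)\geq 0$ sits between $\det(A')$ (obtained at $\cos\theta=-1$) and $\det(A)$ (obtained at $\cos\theta=+1$). Hence $\det(A')\leq\det(M)$ forces nothing directly, but $0\leq\det(M)$ combined with $\det(M)\leq\det(A)$ gives... here I must be careful: actually $\det(M)=D_{0}+2a_{_{12}}a_{_{13}}a_{_{23}}\cos\theta$ where $D_{0}$ is the phase-free part, so $\det(M)$ ranges over $[\det(A'),\det(A)]$ as $\cos\theta$ ranges over $[-1,1]$ (using $a_{_{ij}}\geq 0$). Positivity $\det(M)\geq 0$ then gives $\det(A)\geq\det(M)\geq 0$; for $\det(A')\leq 0$ I would argue that if instead $\det(A')>0$ then $D_{0}>2a_{_{12}}a_{_{13}}a_{_{23}}$, which combined with the positivity of the $2\times 2$ minors would, via the relation between $D_{0}$ and products $a_{_{ii}}a_{_{jj}}$, actually be automatic — so the real content is that $\det(A')\leq 0$ cannot be deduced from (1) alone unless $a_{_{12}}a_{_{13}}a_{_{23}}>0$. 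I anticipate this subtlety is where the lemma's statement is genuinely being used in a restricted setting (the application has all entries strictly positive), so I would either add the hypothesis implicitly in the context or handle the degenerate cases (some $a_{_{ij}}=0$) separately, where the claim is easy by direct construction.

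For the converse $(2)\Rightarrow(1)$, which is the operative direction for the commuting square construction, I would proceed constructively. Given (2), choose the cyclic phase angle $\theta\in[0,\pi]$ so that $D_{0}+2a_{_{12}}a_{_{13}}a_{_{23}}\cos\theta=0$; this is possible because by hypothesis $\det(A)=D_{0}+2a_{_{12}}a_{_{13}}a_{_{23}}\geq 0$ and $\det(A')=D_{0}-2a_{_{12}}a_{_{13}}a_{_{23}}\leq 0$, so $D_{0}\in[-2a_{_{12}}a_{_{13}}a_{_{23}},2a_{_{12}}a_{_{13}}a_{_{23}}]$ and $-D_{0}/(2a_{_{12}}a_{_{13}}a_{_{23}})\in[-1,1]$ (treating the case $a_{_{12}}a_{_{13}}a_{_{23}}=0$ separately). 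Set $\theta_{_{12}}=\theta$, $\theta_{_{23}}=\theta_{_{31}}=0$ (any distribution summing to $\theta$ works), define the Hermitian matrix $M$ with entries $a_{_{ij}}e^{i\theta_{_{ij}}}$. Then $\det(M)=0$, the $2\times 2$ principal minors are $a_{_{ii}}a_{_{jj}}-a_{_{ij}}^{2}\geq 0$, and the diagonal entries are non-negative, so $M$ is positive semi-definite of rank $\leq 2$. Hence $M$ is a Gram matrix: there exist vectors $\xi_{_{1}},\xi_{_{2}},\xi_{_{3}}\in{\Bbb C}^{3}$ (in fact spanning a space of dimension $\leq 2$) with $(\xi_{_{i}},\xi_{_{j}})=M_{_{ij}}$, and then $|(\xi_{_{i}},\xi_{_{j}})|=a_{_{ij}}$ as required.

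The main obstacle I expect is the bookkeeping around degenerate cases and the precise logical status of $\det(A')\leq 0$ in the forward direction: when one of the $a_{_{ij}}$ vanishes, the "cyclic phase" argument collapses and the equivalence must be checked by hand (e.g. if $a_{_{12}}=0$ then $\det(A)=\det(A')$ and the condition reduces to $\det(A)\geq 0$ together with the minor conditions, which is exactly positive semi-definiteness of the phase-free matrix). I would dispatch these cases with short direct arguments and reserve the phase-optimization computation for the generic case where all off-diagonal entries are positive. The rest — expanding the $3\times 3$ determinant, identifying the coefficient of $\cos\theta$, and invoking the standard fact that a positive semi-definite matrix is a Gram matrix — is routine and I would not belabor it.
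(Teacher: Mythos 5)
Your converse direction $(2)\Rightarrow(1)$ is essentially the paper's argument (pick the cyclic phase so the determinant of the "phased" matrix $B$ vanishes, check positive semidefiniteness, take the columns of $B^{1/2}$), and it is fine. The genuine gap is in $(1)\Rightarrow(2)$, and you half-noticed it but misdiagnosed the cure. As you observe, the off-diagonal phases enter $\det$ of the Gram matrix $B$ only through $2a_{12}a_{13}a_{23}\,{\rm Re}(\gamma_{1}\gamma_{2}\gamma_{3})$, so $\det(A')\leq\det(B)\leq\det(A)$; positivity of $B$ then gives $\det(A)\geq 0$, but nothing whatsoever gives $\det(A')\leq 0$ for three genuinely $3$-dimensional vectors. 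Indeed the implication is false in that setting: take three unit vectors in ${\Bbb R}^{3}\subset{\Bbb C}^{3}$ obtained by slightly tilting an orthonormal frame together, so that $a_{ii}=1$ and $a_{ij}=\varepsilon>0$ for $i\neq j$; then all entries are strictly positive, yet $\det(A')=(1+\varepsilon)^{2}(1-2\varepsilon)>0$ for small $\varepsilon$. So your proposed repairs — assuming strict positivity of the entries "from the context," or treating the cases $a_{ij}=0$ separately — do not close the gap.

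The missing idea is dimensional: the lemma is stated with "${\Bbb C}^{3}$" by a slip, but it is proved and used throughout the paper for vectors in a \emph{two}-dimensional space (the proof literally invokes $\dim({\Bbb C}^{2})<3$, and every application — the $Q(A)$ reformulation and the sufficiency proof of the main theorem — takes $\xi_{1},\xi_{2},\xi_{3}\in{\Bbb C}^{2}$). With the vectors confined to a two-dimensional space they are linearly dependent, hence $\det(B)=0$, and then the single inequality chain $\det(A')\leq\det(B)=0\leq\det(A)$ delivers both determinant conditions at once; the $2\times 2$ minors come from Cauchy--Schwarz as you said. This is also consistent with your own construction in the converse: since you arrange $\det(B)=0$, the vectors you produce span ${\rm range}(B)$ of dimension at most $2$, i.e.\ they can be taken in ${\Bbb C}^{2}$, which is exactly the form in which the lemma is fed into the commuting-square construction. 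Without restoring this rank-$2$ constraint your forward direction cannot be completed.
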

\begin{proof}

\noindent{}$(1)\Rightarrow{}(2)$: Assume $(1).$ Then $a_{_{ij}}^{2}\leq\aij{i}{i}\aij{j}{j}$ by the Cauchy-Schwartz inequality. Let $B$ be the matrix with entries $b_{_{ij}}=(\xi_{_{i}},\xi_{_{j}}).$ Since dim$({\Bbb C}^{2})<3,$ the vectors $\xi_{_{1}},\xi_{_{2}},\xi_{_{3}}$ are linearly dependent, i.e. there exists complex numbers $(c_{_{1}},c_{_{2}},c_{_{3}})\neq{}(0,0,0)$ such that $\sum_{j}c_{_{j}}\xi_{_{j}}=0.$ Thus
\[\sum_{j}b_{_{ij}}\overline{c}_{_{j}}=(\xi_{_{i}},\sum_{j}c_{_{j}}\xi_{_{j}})=0.\]
Hence $\det{}(B)=0.$ 

\noindent{}$B$ is of the form
\[
B=\left(\begin{array}{ccc}
\aij{1}{1} & \gam{3}\aij{1}{2}&\overline{\gamma}_{_{2}}\aij{1}{3}\\
\overline{\gamma}_{_{3}}\aij{1}{2} & \aij{2}{2} & \gam{1}\aij{2}{3}\\
\gam{2}\aij{1}{3} &\overline{\gamma}_{_{1}}\aij{2}{3}&\aij{3}{3}
\end{array}\right),\]
where $\gam{1},\gam{2},\gam{3}\in{\Bbb C},$ $|\gam{1}|=|\gam{2}|=|\gam{3}|.$ 

\noindent{}Hence
\[\det{}(B)= \aij{1}{1}\aij{2}{2}\aij{3}{3}+2\mbox{Re}(\gam{1}\gam{2}\gam{3})\aij{1}{2}\aij{2}{3}\aij{1}{3} - a_{_{11}}a_{_{23}}^{2}- a_{_{22}}a_{_{13}}^{2}- a_{_{33}}a_{_{12}}^{2}.\]
Since
\[\begin{array}{lcl}\det{}(A) &=& \aij{1}{1}\aij{2}{2}\aij{3}{3}+2\aij{1}{2}\aij{2}{3}\aij{1}{3} - a_{_{11}}a_{_{23}}^{2}- a_{_{22}}a_{_{13}}^{2}- a_{_{33}}a_{_{12}}^{2}\\[0.3cm]
\det{}(A')&=& \aij{1}{1}\aij{2}{2}\aij{3}{3}-2\aij{1}{2}\aij{2}{3}\aij{1}{3} - a_{_{11}}a_{_{23}}^{2}- a_{_{22}}a_{_{13}}^{2}- a_{_{33}}a_{_{12}}^{2}\end{array}\]
we have 
\[\det{}(A')\leq{}\det{}(B)\leq{}\det{}(A)\]
which proves $(2).$

\noindent{}$(2)\Rightarrow{}(1)$: Assume $(2).$ Since $\det{}(A')\leq{}0\leq{}\det{}(A),$ the above formulas for $\det{}(A)$ and $\det{}(A'),$ show that there exists $r\in[-1,1],$ such that 
\[\aij{1}{1}\aij{2}{2}\aij{3}{3}+2r\aij{1}{2}\aij{2}{3}\aij{1}{3} - a_{_{11}}a_{_{23}}^{2}- a_{_{22}}a_{_{13}}^{2}- a_{_{33}}a_{_{12}}^{2}=0.\]
Let $\gam{}=r+i\sqrt{1-r^{^{2}}}.$ Then $|\gam{}|=1,$ Re$(\gam{})=r,$ so the matrix
\[B=\left(\begin{array}{ccc}
\aij{1}{1} & \aij{1}{2}&\aij{1}{3}\\
\aij{1}{2} & \aij{2}{2} & \gam{}\aij{2}{3}\\
\aij{1}{3} &\overline{\gamma}\aij{2}{3}&\aij{3}{3}
\end{array}\right)\]
has $\det{}(B)=0.$ Since furthermore $\aij{1}{1},\aij{2}{2},\aij{3}{3}\geq{}0,$ and the minors
\[A_{_{11}}=\left(\begin{array}{cc}\aij{2}{2}&\gam{}\aij{2}{3}\\\overline{\gamma}\aij{2}{3}&\aij{3}{3}\end{array}\right),\;\;
A_{_{22}}=\left(\begin{array}{cc}\aij{1}{1}&\aij{1}{3}\\\aij{1}{3}&\aij{3}{3}\end{array}\right),\;\;
A_{_{33}}=\left(\begin{array}{cc}\aij{1}{1}&\aij{1}{2}\\\aij{1}{2}&\aij{2}{2}\end{array}\right),\]
have non-negative determinants by $(2),$ the matrix $B$ is positive semidefinite. (Indeed the characteristic polynomial
\[\det{}(\rho{}I-B)=\rho^{3}-\rho^{2}(\aij{1}{1}+\aij{2}{2}+\aij{3}{3}) +\rho(\sum_{i=1}^{3}\det{}(A_{_{ii}}))-\det{}(B)\]
is strictly negative when $\rho{}<0,$ therefore $B$ has only non-negative eigenvalues.) 

\noindent{}Let $\xi_{_{1}},\xi_{_{2}},\xi_{_{3}}$ be the column vectors of the matrix $B^{^{\frac{1}{2}}}.$ Then
\[|(\xi_{_{i}},\xi_{_{j}})|=|b_{_{ij}}|=\aij{i}{j}.\]
Moreover  $\mbox{span}\{\xi_{_{1}},\xi_{_{2}},\xi_{_{3}}\}=\mbox{range}(B^{^{\frac{1}{2}}})=\mbox{range}(B)$ has dimension at most 2, because $\det{}(B)=0.$ This proves $(1).$
\end{proof}
\begin{proof}{\bf of sufficiency of (i),(ii) and (iii) in Theorem \ref{thmtre21}}

\noindent{}The case $\lam{}\leq{}2$ was treated in example \ref{exatre24}, so we can assume $\lam{}>2.$ Assume $(i),(ii)$ and $(iii),$ and set 
\[A=(\aij{i}{j})_{i,j=1}^{3}=\left(\begin{array}{ccc}
\al{2}+\al{3}-\lam{}\al{2}\al{3}&\sqrt{\al{1}\al{2}}(\lam{}\al{3}-1)&\sqrt{\al{1}\al{3}}(\lam{}\al{2}-1)\\[0.2cm]
\sqrt{\al{1}\al{2}}(\lam{}\al{3}-1)&\al{1}+\al{3}-\lam{}\al{1}\al{3} &\sqrt{\al{2}\al{3}}(\lam{}\al{1}-1)\\[0.2cm]
\sqrt{\al{1}\al{3}}(\lam{}\al{2}-1)&\sqrt{\al{2}\al{3}}(\lam{}\al{1}-1)&\al{1}+\al{2}-\lam{}\al{1}\al{2} \end{array}\right)\]
Then $A$ has non-negative entries, because $\lam{}\al{i}-1\geq{}0$ by $(i),$ and for $i\neq{}j$ we have
\[\al{i}+\al{j}-\lam{}\al{i}\al{j}=(\lam{}-\al{i}-\al{j})(\lam{}\al{i}-1)+\lam{}(\alpha_{_{i}}^{2}-\lam{}\al{i}+1)\geq{}0\]
by $(i)$ and $(ii).$ We will show, that $A$ satisfies condition $(2)$ of lemma \ref{lemtre28}. Let $A'$ be as in lemma \ref{lemtre28}. A tedious, but straight forward computation shows that
\[\begin{array}{lcl}
\det{}(A)&=&\lambda^{2}\al{1}\al{2}\al{3}(4\lam{}\al{1}\al{2}\al{3} -4(\al{1}\al{2}+\al{1}\al{3}+\al{2}\al{3}) + 3)\\[0.2cm]
\det{}(A')&=&\lam{}(4-\lambda^{2})\al{1}\al{2}\al{3}.
\end{array}\]
Hence $\det{}(A)\geq{}0$ by $(iii)$ and $\det{}(A')<0,$ because $\lam{}>2.$

\noindent{}Moreover by $(ii)$
\[\begin{array}{lcl}
\aij{1}{1}\aij{2}{2}-a_{_{12}}^{2} & = & \aij{1}{1}\aij{2}{2} -(\al{3}-\aij{1}{1})(\al{3}-\aij{2}{2})\\[0.2cm]
& = & \al{3}(\aij{1}{1}+\aij{2}{2}-\al{3})\\[0.2cm]
& = &\lam{}\al{3}(\lambda^{2}-\lam{}\al{3}+1)\\[0.2cm]
&\geq &0,\end{array}\]
and similarly $\aij{1}{1}\aij{3}{3}-a_{_{13}}^{2}\geq{}0$ and 
$\aij{2}{2}\aij{3}{3}-a_{_{23}}^{2}\geq{}0.$ Hence by lemma \ref{lemtre28}, there exists 3 vectors $\xi_{_{1}},\xi_{_{2}},\xi_{_{3}}$ in ${\Bbb C}^{2}$ such that 
\[|(\xi_{_{i}},\xi_{_{j}})|=\aij{i}{j},\;\;\;i,j=1,2,3.\]
Now put $\eij{1}{2}=\eij{2}{1}=\xi_{_{3}},$ $\eij{1}{3}=\eij{3}{1}=\xi_{_{2}}$ and $\eij{2}{3}=\eij{3}{2}=\xi_{_{1}}.$ Then $(\eij{i}{j})_{i,j=1,i\neq{}j}^{3}$ clearly satisfy the conditions $(e),$ $(f)$ and $(g)$ in Proposition \ref{protre27}, so it can be extended to a set of 9 vectors $(\eij{i}{j})_{i,j=1}^{3}$ satisfying $(a),$ $(b),$ $(c)$ and $(d)$ in Theorem \ref{thmtre21}.
\end{proof}
\begin{remark}
\label{remtre29}
The above solution to $(a),$ $(b)$ and $(c)$ in Theorem \ref{thmtre21} for $\lam{}>2$ satisfies\\$\eij{i}{j}=\eij{j}{i},\;i,j=1,2,3.$ However if $\lam{}<2,$ $\det{}(A')<0,$ so by lemma \ref{lemtre28} there are no solutions to $(a),$ $(b),$ $(c)$ and $(d)$ which satisfy $\eij{i}{j}=\eij{j}{i},\;i,j=1,2,3.$
\end{remark}
To prove that $(i)$ $(ii)$ and $(iii)$ are necessary conditions, we need to look at the following map:
\begin{lemma}
\label{lemtre210}
The map 
\[q:{\Bbb C}^{2}\rightarrow\left\{a\in{}M_{_{2}}({\Bbb C})|a=a^{*},\mbox{Tr}(a)=0\right\}\]
given by
\[q(x)=\sqrt{2}(\xx{}-\mbox{$\frac{1}{2}$}\|x\|^{^{2}}I)\]
has the following properties
\begin{itemize}
\item[(1)]$q(x)=q(y)$ if and only if $y=\gam{}x$ for some $\gam{}\in{\Bbb C}, |\gam{}|=1.$
\item[(2)]$q$ maps ${\Bbb C}^{2}$ onto $\{a\in{}M_{_{2}}({\Bbb C})|a=a^{*},\mbox{Tr}(a)=0\}.$
\item[(3)]$\|q(x)\|=\|x\|^{^{2}},$ where we consider the Hilbert-Schmidt norm on $M_{_{2}}({\Bbb C}).$
\item[(4)]$(q(x),q(y))=2|(x,y)|^{^{2}}-\|x\|^{^{2}}\|y\|^{^{2}},$ where $(q(x),q(y))=\mbox{tr}(q(x)q(y)).$
\end{itemize}
\end{lemma}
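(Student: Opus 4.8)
The map $q(x) = \sqrt{2}(\xx{} - \frac{1}{2}\|x\|^{2}I)$ sends a vector in ${\Bbb C}^{2}$ to the traceless part of $\sqrt{2}\,x\otimes\overline{x}$, so all four properties are elementary computations once one sets up coordinates. I would fix an orthonormal basis of ${\Bbb C}^{2}$ and write $x = (x_{1},x_{2})$, $y = (y_{1},y_{2})$, and simply verify each item by direct matrix algebra, using the Pauli-matrix type parametrization of traceless selfadjoint $2\times 2$ matrices. The key identity to keep in mind throughout is that for rank-one operators $\mbox{Tr}((x\otimes\overline{y})(z\otimes\overline{v})) = (z,y)(x,v)$, which is already recorded as (\ref{fire10114}) in the excerpt and which immediately handles the Hilbert--Schmidt inner products in (3) and (4).

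\textbf{The individual steps.} For (3): $\|q(x)\|^{2} = \mbox{Tr}(q(x)^{2}) = 2\,\mbox{Tr}\big((\xx{} - \frac{1}{2}\|x\|^{2}I)^{2}\big) = 2\big(\mbox{Tr}((\xx{})^{2}) - \|x\|^{2}\mbox{Tr}(\xx{}) + \frac{1}{4}\|x\|^{4}\mbox{Tr}(I)\big)$; since $\mbox{Tr}((\xx{})^{2}) = \|x\|^{4}$, $\mbox{Tr}(\xx{}) = \|x\|^{2}$ and $\mbox{Tr}(I) = 2$, this collapses to $2(\|x\|^{4} - \|x\|^{4} + \frac{1}{2}\|x\|^{4}) = \|x\|^{4}$, so $\|q(x)\| = \|x\|^{2}$. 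For (4): expand $(q(x),q(y)) = \mbox{Tr}(q(x)q(y)) = 2\,\mbox{Tr}\big((\xx{} - \frac12\|x\|^{2}I)(y\otimes\overline{y} - \frac12\|y\|^{2}I)\big)$; the cross terms give $-\frac12\|y\|^{2}\|x\|^{2} - \frac12\|x\|^{2}\|y\|^{2} + \frac12\|x\|^{2}\|y\|^{2}$ while $\mbox{Tr}((\xx{})(y\otimes\overline{y})) = |(x,y)|^{2}$, yielding $2(|(x,y)|^{2} - \frac12\|x\|^{2}\|y\|^{2}) = 2|(x,y)|^{2} - \|x\|^{2}\|y\|^{2}$. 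For (2): a traceless selfadjoint $a\in M_{2}({\Bbb C})$ has the form $a = \mu \sigma$ where $\sigma = \sigma^{*}$, $\mbox{Tr}(\sigma)=0$, and one can choose a unit eigenvector $e$ of $\sigma$ with eigenvalue $+1$ so that $\sigma = 2(e\otimes\overline{e}) - I$; then $x = (\mu/\sqrt2)^{1/2}e$ — more carefully, scale so that $\sqrt2(\|x\|^{2}(e\otimes\overline{e}) - \frac12\|x\|^{2}I) = a$, which requires $\|x\|^{2} = \|a\|/\sqrt{\ldots}$ fixed by (3), giving $\|x\|^{2} = \|a\|$ (Hilbert--Schmidt norm) and hence $x$ of norm $\|a\|^{1/2}$ in the $+1$-eigendirection does the job, provided $a\neq 0$; the case $a = 0$ is trivial ($x=0$).

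\textbf{The main point and the one genuinely non-routine step.} The assertion requiring actual thought is (1): $q(x) = q(y)$ iff $y = \gamma x$ with $|\gamma| = 1$. The ``if'' direction is immediate since $(\gamma x)\otimes\overline{(\gamma x)} = |\gamma|^{2}\,\xx{} = \xx{}$. For the ``only if'' direction: from $q(x) = q(y)$ and (3) we get $\|x\| = \|y\|$; if both are zero we are done, so assume $x,y\neq 0$. Then $\xx{} = y\otimes\overline{y}$ as operators; comparing ranges, $y$ is a scalar multiple of $x$, say $y = \gamma x$, and comparing $\|y\|^{2}$ with $|\gamma|^{2}\|x\|^{2}$ forces $|\gamma| = 1$. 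This is where one must be slightly careful that equality of the traceless parts really does force equality of the full rank-one operators — but that follows because both $\xx{}$ and $y\otimes\overline{y}$ have the \emph{same} trace (namely $\|x\|^{2} = \|y\|^{2}$, already established from (3)), so $\xx{} - \frac12\|x\|^2 I = y\otimes\overline{y} - \frac12\|y\|^2 I$ gives $\xx{} = y\otimes\overline{y}$ directly. I expect no real obstacle here; the whole lemma is a warm-up computation, and I would present it compactly rather than belaboring the matrix entries, invoking (\ref{fire10114}) wherever a trace of a product of rank-ones appears.
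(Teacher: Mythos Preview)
Your proof is correct and follows essentially the same approach as the paper: direct computation using the trace identity for rank-one operators, (3) forcing equal norms in the proof of (1), and diagonalization for surjectivity in (2). The only cosmetic difference is that the paper derives (4) first---by observing that $\frac{1}{2}\|x\|^{2}I$ is the orthogonal projection of $\xx{}$ onto ${\Bbb R}I$ in the selfadjoint matrices---and then gets (3) as the special case $x=y$, whereas you compute (3) and (4) separately; your exposition of (2) is also slightly rougher than the paper's (which writes $a=2\rho\,\ee{1}-\rho I$ and reads off $x=(\sqrt{2}\rho)^{1/2}e_{_{1}}$ directly), but the substance is identical.
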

\begin{remark}
\label{remtre211}{\rm 
By $(3)$ and $(4)$ it follows, that if $(x,y)=\|x\|\|y\|\cos{}\theta,\;0\leq\theta\leq\frac{\pi}{2},$ then $(q(x),q(y)) = \|q(x)\|\|q(y)\|\cos{}2\theta.$ Hence $q$ doubles the angles between vectors.}
\end{remark}
\begin{proof}{\bf of lemma \ref{lemtre210}} Since $\frac{1}{2}\|x\|^{^{2}}I$ is equal to the orthogonal projection of \xx{} onto ${\Bbb R}I$ in $\{a\in{}M_{_{2}}({\Bbb C})|a=a^{*}\},$ with respect to the inner product $(a,b)=\mbox{Tr}(ab),$ we have
\[\begin{array}{lcl}
|(x,y)|^{^{2}}& = & (\xx{},\yy{})\\[0.2cm]
& = & (\mbox{$\frac{1}{2}$}\|x\|^{^{2}}I,\mbox{$\frac{1}{2}$}\|y\|^{^{2}}I)+(\xx{}-\mbox{$\frac{1}{2}$}\|x\|^{^{2}}I,\yy{}-\mbox{$\frac{1}{2}$}\|y\|^{^{2}}I)\\[0.2cm]
&=&\mbox{$\frac{1}{4}$}\|x\|^{^{2}}\|y\|^{^{2}}\mbox{Tr}(I)+\mbox{$\frac{1}{2}$}(q(x),q(y)).\end{array}\]
Since $\mbox{Tr}(I)=2$ ($I$ is the identity on ${\Bbb C}^{2}$), $(4)$ follows. Moreover $(4)\Rightarrow{}(3)$ is trivial.

\noindent{}By $(3),$ $q(x)=q(y)$ implies $\|x\|=\|y\|,$ and then $\xx{}=\yy{}$ by the definition of $q.$ This shows that $y=\gam{}x$ for some $\gam{}\in{\Bbb C},$ $|\gam{}|=1.$ Hence $(1)$ holds.

\noindent{}Finally if $a\in{}M_{_{2}}({\Bbb C}),$ $a=a^{*},$ $\mbox{Tr}(a)=0,$ then the eigenvalues of $a$ are $\{\rho,-\rho\}$ for some $\rho\geq{}0,$ and we can choose an orthonormal basis $(e_{_{1}},e_{_{2}})$ for ${\Bbb C}^{2},$ such that \[\begin{array}{lcl}
a&=&\rho\ee{1}-\rho\ee{2}\\[0.2cm]
&=&2\rho\ee{1}-\rho{}I,\end{array}\]
since $\ee{1}+\ee{2}=I.$ Hence $a=q(x),$ where $x=(\sqrt{2}\rho)^{^{\frac{1}{2}}}e_{_{1}}.$ This shows $(2).$
\end{proof}
\begin{lemma}
\label{lemtre212}
Let $A=(\aij{i}{j})_{i,j=1}^{3}$ be a matrix with non-negative entries, and define a $3\times{}3$ matrix $Q(A)$ by
\[Q(A)_{_{ij}}=\left\{\begin{array}{l}
a_{_{ii}}^{2},\;\;\;i=j\\[0.32cm]
2a_{_{ij}}^{2}-\aij{i}{i}-\aij{j}{j},\;\;\;i\neq{}j\end{array}\right.\]
Then the conditions $(1)$ and $(2)$ of lemma \ref{lemtre28} are equivalent to

$(3)$ $Q(A)$ is positive semidefinite.
\end{lemma}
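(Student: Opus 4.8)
The plan is to reduce Lemma~\ref{lemtre212} to Lemma~\ref{lemtre28} using the angle-doubling map $q$ of Lemma~\ref{lemtre210}. The key observation is that $Q(A)$ is exactly the Gram matrix of the vectors $q(\xi_{_{1}}),q(\xi_{_{2}}),q(\xi_{_{3}})$ whenever $A$ is the matrix of moduli of inner products of three vectors $\xi_{_{1}},\xi_{_{2}},\xi_{_{3}}$ in a two-dimensional Hilbert space. Indeed, if $|(\xi_{_{i}},\xi_{_{j}})|=a_{_{ij}}$ then by Lemma~\ref{lemtre210}(3), $\|q(\xi_{_{i}})\|^{2}=\|\xi_{_{i}}\|^{4}=a_{_{ii}}^{2}=Q(A)_{_{ii}}$, and by Lemma~\ref{lemtre210}(4), $(q(\xi_{_{i}}),q(\xi_{_{j}}))=2|(\xi_{_{i}},\xi_{_{j}})|^{2}-\|\xi_{_{i}}\|^{2}\|\xi_{_{j}}\|^{2}=2a_{_{ij}}^{2}-a_{_{ii}}-a_{_{jj}}=Q(A)_{_{ij}}$ for $i\neq{}j$. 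A Gram matrix is always positive semidefinite, so this immediately gives the implication from condition $(1)$ of Lemma~\ref{lemtre28} to condition $(3)$ here.

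For the converse, $(3)\Rightarrow(1)$, I would argue as follows. If $Q(A)$ is positive semidefinite, write $Q(A)=G^{t}\overline{G}$ (or use the positive square root) to obtain three vectors $\eta_{_{1}},\eta_{_{2}},\eta_{_{3}}$ in ${\Bbb C}^{3}$ (or in ${\Bbb C}^{2}$ if $\det Q(A)=0$) whose Gram matrix is $Q(A)$. These $\eta_{_{i}}$ are self-adjoint traceless $2\times{}2$ matrices in disguise only if they lie in the image of $q$; the point is that each $\eta_{_{i}}$ satisfies $\|\eta_{_{i}}\|^{2}=a_{_{ii}}^{2}$, so by Lemma~\ref{lemtre210}(2) I can pick $x_{_{i}}\in{\Bbb C}^{2}$ with $q(x_{_{i}})=\eta_{_{i}}$ after identifying the span of the $\eta_{_{i}}$ with (a subspace of) the traceless self-adjoint $2\times2$ matrices — but this identification requires that the $\eta_{_{i}}$ genuinely span a $3$-dimensional real space of traceless self-adjoint matrices, which may fail. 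So instead I would run the reverse calculation directly: set $\|x_{_{i}}\|^{2}=a_{_{ii}}$ (legitimate since $Q(A)_{_{ii}}=a_{_{ii}}^{2}\geq0$), and solve for the moduli of the pairwise inner products from $(q(x_{_{i}}),q(x_{_{j}}))=Q(A)_{_{ij}}$, obtaining $|(x_{_{i}},x_{_{j}})|^{2}=\tfrac{1}{2}(Q(A)_{_{ij}}+a_{_{ii}}+a_{_{jj}})=a_{_{ij}}^{2}$. Then positive semidefiniteness of $Q(A)$ must be translated into the three scalar conditions of Lemma~\ref{lemtre28}(2): $\det(A)\geq0$, $\det(A')\leq0$, and $a_{_{ij}}^{2}\leq a_{_{ii}}a_{_{jj}}$.

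Concretely, the bridge between the two formulations is a determinant/minor computation: I expect that $\det Q(A)$ factors as (a positive constant times) $\det(A)\cdot(-\det(A'))$, that the $2\times2$ principal minors of $Q(A)$ are (positive multiples of) the quantities $a_{_{ii}}a_{_{jj}}-a_{_{ij}}^{2}$, and that the diagonal entries $a_{_{ii}}^{2}$ are automatically non-negative. Given these identities, positive semidefiniteness of $Q(A)$ (equivalent to non-negativity of all principal minors, since $Q(A)$ is symmetric) is exactly condition $(2)$ of Lemma~\ref{lemtre28}, and then Lemma~\ref{lemtre28} supplies the vectors $\xi_{_{1}},\xi_{_{2}},\xi_{_{3}}$ required by condition $(1)$.

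\textbf{Main obstacle.} The serious work is the algebraic verification that the principal minors of $Q(A)$ match up with $\det(A)$, $\det(A')$, and the $2\times2$ minors of $A$ up to positive factors — in particular pinning down the sign of the $\det(A')$ factor and checking there is no spurious extra factor that could vanish or change sign. Once that bookkeeping is done, the rest is a direct appeal to Lemmas~\ref{lemtre28} and~\ref{lemtre210}. A minor subtlety to watch is the dimension count: the construction via $q$ produces vectors in ${\Bbb C}^{2}$, and one must make sure Lemma~\ref{lemtre28}'s hypothesis (vectors living in ${\Bbb C}^{3}$, forced to be dependent) is compatible, which it is since ${\Bbb C}^{2}\subset{\Bbb C}^{3}$.
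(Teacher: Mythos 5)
Your direction $(1)\Rightarrow(3)$ is the paper's own argument: $Q(A)$ is the Gram matrix of $q(\xi_{_{1}}),q(\xi_{_{2}}),q(\xi_{_{3}})$, hence positive semidefinite. One caveat: the identity this rests on is $(q(\xi_{_{i}}),q(\xi_{_{j}}))=2a_{_{ij}}^{2}-a_{_{ii}}a_{_{jj}}$ (a product, as in lemma \ref{lemtre213} and in the displayed computation inside the paper's proof); the ``$-a_{_{ii}}-a_{_{jj}}$'' in your chain of equalities reproduces a typo in the statement and is not what lemma \ref{lemtre210}(4) gives, so you should work with the product form throughout.

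The gap is in $(3)\Rightarrow(1)$. First, you discard the pull-back route for a reason that does not hold up: the identification required is of all of ${\Bbb R}^{3}$ with the $3$-dimensional real space of traceless self-adjoint $2\times2$ matrices, not of the span of the $\eta_{_{i}}$ with that space. Since $Q(A)$ is real symmetric positive semidefinite, the columns $\eta_{_{i}}$ of $Q(A)^{1/2}$ are real vectors with Gram matrix $Q(A)$; each corresponds to a traceless self-adjoint matrix, and lemma \ref{lemtre210}(2) says $q$ is \emph{onto} that set, so there exist $\xi_{_{i}}\in{\Bbb C}^{2}$ with $q(\xi_{_{i}})=\eta_{_{i}}$, whence $\|\xi_{_{i}}\|^{2}=a_{_{ii}}$ and (using $a_{_{ij}}\geq0$) $|(\xi_{_{i}},\xi_{_{j}})|=a_{_{ij}}$ --- condition (1) of lemma \ref{lemtre28} directly, with no determinants at all. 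This is exactly the paper's proof. Your substitute route, proving $(3)\Rightarrow(2)$ by matching principal minors, is viable but is left at the level of ``I expect'': the needed identities are $a_{_{ii}}^{2}a_{_{jj}}^{2}-(2a_{_{ij}}^{2}-a_{_{ii}}a_{_{jj}})^{2}=4a_{_{ij}}^{2}(a_{_{ii}}a_{_{jj}}-a_{_{ij}}^{2})$ and $\det Q(A)=-4\det(A)\det(A')$, and even granted them you must still (i) handle the factor $4a_{_{ij}}^{2}$, which vanishes when $a_{_{ij}}=0$ (harmless, since then $a_{_{ij}}^{2}\leq a_{_{ii}}a_{_{jj}}$ is automatic), and (ii) observe that $\det Q(A)\geq0$ only yields $\det(A)\det(A')\leq0$; splitting this into $\det(A)\geq0$ and $\det(A')\leq0$ requires the extra fact $\det(A)-\det(A')=4a_{_{12}}a_{_{13}}a_{_{23}}\geq0$. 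Neither point is addressed, and the intermediate step where you ``solve for $|(x_{_{i}},x_{_{j}})|$'' presupposes the existence of the $x_{_{i}}$, which is precisely what has to be proved. So as written the crux of $(3)\Rightarrow(1)$ is missing; the quickest repair is to reinstate the pull-back argument you abandoned.
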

\begin{proof}
Since $(1)\Leftrightarrow{}(2),$ it suffices to prove $(3)\Rightarrow{}(1)\Rightarrow{}(3),$ where $(1)$ and $(2)$ refer to lemma \ref{lemtre28}.

Assume $\xi_{_{1}},\xi_{_{2}},\xi_{_{3}}\in{\Bbb C}^{2}$ are vectors, such that
\[|(\xi_{_{i}},\xi_{_{j}})|=\aij{i}{j},\;\;\;i,j=1,2,3.\]
By lemma \ref{lemtre210} $(3)$ and $(4)$
\[(q(\xi_{_{i}}),q(\xi_{_{j}}))=Q(A)_{_{ij}},\;\;\;i,j=1,2,3.\]
For $c_{_{1}},c_{_{2}},c_{_{3}}\in{\Bbb C},$
\[\sum_{i,j}(q(\xi_{_{i}}),q(\xi_{_{j}}))c_{_{i}}\overline{c}_{_{j}}=
\|\sum_{i}c_{_{i}}q(\xi_{_{i}})\|^{^{2}}\geq{}0,\]
which shows that $Q(A)$ is positive semidefinite. Hence $(1)\Rightarrow{}(3).$ Assume conversely that $Q(A)$ is positive semidefinite. Let $\eta_{_{1}},\eta_{_{2}},\eta_{_{3}}$ be the columns of $Q(A)^{^{\frac{1}{2}}}.$ Then $\eta_{_{1}},\eta_{_{2}},\eta_{_{3}}\in{\Bbb R}^{3}$ and 
\[(\eta_{_{i}},\eta_{_{j}})=Q(A)_{_{ij}},\;\;\;i,j=1,2,3.\]
We may identify ${\Bbb R}^{3}$ with the Hilbert space
\[\{a\in{}M_{_{2}}({\Bbb C})|a=a^{*},\mbox{Tr}(a)=0\},\]
with inner product $(a,b)=\mbox{Tr}(ab).$ Hence by lemma \ref{lemtre210} $(2)$ there exists $\xi_{_{1}},\xi_{_{2}},\xi_{_{3}}\in{\Bbb C}^{2}$ such that 
\[(q(\xi_{_{i}}),q(\xi_{_{j}}))=Q(A)_{_{ij}},\;\;\;i,j=1,2,3.\]
Thus \[\|\xi_{_{i}}\|^{^{4}}=a_{_{ii}}^{2}\mbox{ and }
2|(\xi_{_{i}},\xi_{_{j}})|^{^{2}}-\|\xi_{_{i}}\|^{^{2}}\|\xi_{_{j}}\|^{^{2}}=2a_{_{ij}}^{2}-\aij{i}{i}\aij{j}{j}\]
by lemma \ref{lemtre210} and the definition of $Q(A).$ 

\noindent{}Since $\aij{i}{j}\geq{}0$ for all $i,j,$ it follows that
\[\|\xi_{_{i}}\|^{^{2}}=\aij{i}{i} \mbox{ and }|(\xi_{_{i}},\xi_{_{j}})| = \aij{i}{j},\;\;i\neq{}j.\]
Hence $A$ satisfies $(1)$ of lemma \ref{lemtre28}, and $(3)\Rightarrow{}(1).$
\end{proof}
\begin{lemma}
\label{lemtre213}
Let $\al{1},\al{2},\al{3}>0$ and $\lam{}=\al{1}+\al{2}+\al{3}.$ Put
\[A=(\aij{i}{j})_{i,j=1}^{3}=\left(\begin{array}{ccc}
\al{2}+\al{3}-\lam{}\al{2}\al{3}&\sqrt{\al{1}\al{2}}(\lam{}\al{3}-1)&\sqrt{\al{1}\al{3}}(\lam{}\al{2}-1)\\[0.2cm]
\sqrt{\al{1}\al{2}}(\lam{}\al{3}-1)&\al{1}+\al{3}-\lam{}\al{1}\al{3} &\sqrt{\al{2}\al{3}}(\lam{}\al{1}-1)\\[0.2cm]
\sqrt{\al{1}\al{3}}(\lam{}\al{2}-1)&\sqrt{\al{2}\al{3}}(\lam{}\al{1}-1)&\al{1}+\al{2}-\lam{}\al{1}\al{2} \end{array}\right),\]
as in the proof of sufficiency of $(i),(ii)$ and $(iii)$ in Theorem \ref{thmtre21}, and let $Q(A)$ be the matrix with entries
\[Q(A)_{_{ij}}=\left\{\begin{array}{l}
a_{_{ii}}^{2},\;\;\;i=j\\[0.2cm]
2a_{_{ij}}^{2}-\aij{i}{i}\aij{j}{j},\;\;\;i\neq{}j.
\end{array}\right.\]
Moreover set 
\[D=\left(\begin{array}{rrr}1&-1&-1\\-1&1&-1\\-1&-1&1\end{array}\right).\]Then condition $(1)$ in Theorem \ref{thmtre21} implies that $\aij{i}{j}\geq{}0,\;i,j=1,2,3,$ and that
\[Q(A)-tD\]
is positive semidefinite for some $t\geq{}0.$
\end{lemma}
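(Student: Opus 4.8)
The strategy is to take a commuting square of the form (\ref{tre1eqn1}) for $\Gamma$, produce via Theorem \ref{tre1thm11} the nine vectors $(e_{_{ij}})_{i,j=1}^{3}$ in ${\Bbb C}^{2}$, and then read off the claimed positivity from the algebraic identity $(q(\xi),q(\eta))=2|(\xi,\eta)|^{2}-\|\xi\|^{2}\|\eta\|^{2}$ of lemma \ref{lemtre210}. Concretely: set $\xi_{_{1}}=e_{_{23}}=e_{_{32}}$-type vectors, i.e. use the six off-diagonal vectors $e_{_{ij}}$ ($i\neq j$) and reorganise them as $\xi_{_{3}}\leftrightarrow e_{_{12}},e_{_{21}}$, $\xi_{_{2}}\leftrightarrow e_{_{13}},e_{_{31}}$, $\xi_{_{1}}\leftrightarrow e_{_{23}},e_{_{32}}$. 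The point of invoking Theorem \ref{tre1thm11} is that (a) and (b) give $\|e_{_{ij}}\|^{2}$ explicitly, and (c)/(d) force, via lemma \ref{lemtre25}(c) applied to $\sum_{l}\eijeij{i}{l}=\alpha_{_{i}}I$, the inner-product moduli $|(e_{_{ij}},e_{_{ik}})|^{2}=(\alpha_{_{i}}-\|e_{_{ij}}\|^{2})(\alpha_{_{i}}-\|e_{_{ik}}\|^{2})=\alpha_{_{j}}\alpha_{_{k}}(\lambda\alpha_{_{i}}-1)^{2}$. In particular every entry $a_{_{ij}}$ of the matrix $A$ in lemma \ref{lemtre213} equals $|(\xi_{_{i}},\xi_{_{j}})|$ for an appropriate relabelling, so $A$ has non-negative entries and realises condition (1) of lemma \ref{lemtre28}; the non-negativity $a_{_{ij}}\geq 0$ is immediate from the displayed formulas once we know $\lambda\alpha_{_{i}}-1\geq 0$, which is remark \ref{remtre22} / condition (i), itself forced by (a) of Theorem \ref{tre1thm11}.

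\textbf{Key steps, in order.} First, from the commuting square invoke Theorem \ref{tre1thm11} to obtain $(e_{_{ij}})_{i,j=1}^{3}$ satisfying (a)--(d). Second, restrict to the six vectors with $i\neq j$; using (c) together with lemma \ref{lemtre25}(c) compute $|(e_{_{ij}},e_{_{ik}})|$ and thereby check that the matrix $A$ of lemma \ref{lemtre213} is exactly the Gram-modulus matrix of the three vectors $\xi_{_{1}}=e_{_{23}},\ \xi_{_{2}}=e_{_{13}},\ \xi_{_{3}}=e_{_{12}}\in{\Bbb C}^{2}$ (using $e_{_{ij}}$ and $e_{_{ji}}$ as the "two copies" feeding the two relevant inner products; here one must be slightly careful that $|(e_{_{ij}},e_{_{ik}})|$ and $|(e_{_{ji}},e_{_{ki}})|$ agree, which follows from (b) since $\|e_{_{ij}}\|=\|e_{_{ji}}\|$ and from the symmetry in (c)/(d)). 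Third, apply the $q$-map of lemma \ref{lemtre210}: by parts (3) and (4), $(q(\xi_{_{i}}),q(\xi_{_{j}}))=Q(A)_{_{ij}}$ with $Q(A)$ as in lemma \ref{lemtre212}/\ref{lemtre213}, and hence $Q(A)$ is the Gram matrix of $q(\xi_{_{1}}),q(\xi_{_{2}}),q(\xi_{_{3}})$ in the real Hilbert space $\{a\in M_{_{2}}({\Bbb C})\mid a=a^{*},\ \mathrm{Tr}(a)=0\}$; in particular $Q(A)$ is positive semidefinite. Fourth---the extra content of lemma \ref{lemtre213} beyond "$Q(A)\geq 0$"---produce the $t\geq 0$ with $Q(A)-tD$ still positive semidefinite. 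For this I would compute $D$ as $Q$ of the all-ones situation, i.e. observe $(\eta_{_{i}},\eta_{_{j}})=\delta_{ij}-(1-\delta_{ij})$ for the images under $q$ of three equiangular unit vectors in ${\Bbb C}^{2}$ at mutual $60^{\circ}$, so $D$ is itself positive semidefinite of rank $2$ with kernel spanned by $(1,1,1)$; then take $t=\min\{\,\rho/4 : \rho\in\mathrm{sp}(Q(A))\text{ restricted to }(1,1,1)^{\perp}\,\}$ or more simply use that the $\xi_{_{i}}$ lie in ${\Bbb C}^{2}$ so $Q(A)$ has rank $\leq 2$ and, comparing the $(1,1,1)$ directions, subtract off the largest multiple of $D$ that keeps the $2\times 2$ compressions nonnegative. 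Concretely $\det(Q(A)-tD)$ and its principal $2\times 2$ minors are affine (resp. quadratic) in $t$, vanishing appropriately at the endpoint; choosing $t$ as the largest value for which all leading principal minors stay $\geq 0$ does the job.

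\textbf{Main obstacle.} The genuinely delicate part is the last step: upgrading "$Q(A)$ positive semidefinite" to "$Q(A)-tD$ positive semidefinite for some $t\geq 0$", i.e. showing there is \emph{room} to subtract a multiple of $D$. The clean way is to exploit that $\xi_{_{1}},\xi_{_{2}},\xi_{_{3}}$ live in a \emph{two}-dimensional space, so $\mathrm{rank}\,Q(A)\leq 2$, and that $D$ has kernel ${\Bbb R}(1,1,1)$; one checks $(1,1,1)\notin\ker Q(A)$ unless $A$ is degenerate (i.e. unless $\lambda\leq 2$, the example \ref{exatre24} case, where one can take $t=0$ after a separate check), and then on the orthogonal complement of $(1,1,1)$ both $Q(A)$ and $D$ are genuinely $2\times 2$, with $D$ positive definite there, so $t=\lambda_{\min}(D^{-1/2}Q(A)D^{-1/2})\geq 0$ on that subspace works and extends by $0$ on $(1,1,1)$. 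Carrying this through requires being careful about the $\lambda\leq 2$ versus $\lambda>2$ dichotomy and about the case $\lambda\alpha_{_{i}}=1$ (one ray of length $1$), where some $e_{_{ij}}$ vanish and the reorganisation into three vectors must be done with the conventions $f_{_{33}}=0$, $\sigma_{_{i3}}=0$ used in the necessity proof of Theorem \ref{tre1thm11}; in that degenerate case the corresponding row/column of $A$ and $Q(A)$ collapses and the statement is checked directly. I expect no surprises in steps one through three; essentially all the work is in pinning down this $t$.
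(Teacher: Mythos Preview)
Your step two/three has a genuine gap: there is no choice of three vectors among the six $e_{_{ij}}$ ($i\neq j$) whose Gram-modulus matrix is $A$. The modulus $|(e_{_{ij}},e_{_{kl}})|$ is determined by lemma \ref{lemtre25}(c) only when the two vectors share a row index (via (c)) or a column index (via (d)); a short case check shows that with $\xi_{_{1}}\in\{e_{_{23}},e_{_{32}}\}$, $\xi_{_{2}}\in\{e_{_{13}},e_{_{31}}\}$, $\xi_{_{3}}\in\{e_{_{12}},e_{_{21}}\}$, exactly one of the three pairs always shares neither. For your specific choice $(\xi_{_{1}},\xi_{_{2}},\xi_{_{3}})=(e_{_{23}},e_{_{13}},e_{_{12}})$ the bad pair is $(\xi_{_{1}},\xi_{_{3}})=(e_{_{23}},e_{_{12}})$, whose inner product is not forced to be $a_{_{13}}$. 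Your parenthetical observation that $|(e_{_{ij}},e_{_{ik}})|=|(e_{_{ji}},e_{_{ki}})|$ is correct but does not address this cross-term. Consequently step four is misconceived: had step three worked, $Q(A)$ would already be a Gram matrix of the $q(\xi_{_{i}})$, hence positive semidefinite, and $t=0$ would finish the lemma with no optimisation needed (you have the logic reversed---``$Q(A)\geq 0$'' is the $t=0$ case, not something weaker). And the rank argument fails anyway: $q$ is not linear, so $\xi_{_{i}}\in{\Bbb C}^{2}$ does not give $\mathrm{rank}\,Q(A)\leq 2$.

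The paper's proof is structurally different. One works with \emph{all six} off-diagonal vectors, passes to $q_{_{ij}}=q(e_{_{ij}})$, and uses the key identity (this is condition (g) of Proposition \ref{protre27}, obtained by subtracting (c) from (d)) that the three antisymmetric combinations $q_{_{12}}-q_{_{21}}$, $q_{_{23}}-q_{_{32}}$, $q_{_{31}}-q_{_{13}}$ are all equal; call the common value $2q^{a}$. One checks $q^{a}\perp q^{s}_{_{ij}}:=\tfrac{1}{2}(q_{_{ij}}+q_{_{ji}})$, and then a direct computation of inner products (using the known $\|q_{_{ij}}\|$ and $(q_{_{ij}},q_{_{ik}})$ from (e), (f)) yields $((\eta_{_{i}},\eta_{_{j}}))_{i,j}=Q(A)-tD$ for $\eta_{_{1}}=q^{s}_{_{23}}$, $\eta_{_{2}}=q^{s}_{_{31}}$, $\eta_{_{3}}=q^{s}_{_{12}}$ and $t=\|q^{a}\|^{2}\geq 0$. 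So $t$ is not found by any search: it is the squared norm of the common antisymmetric part, measuring how far the solution is from satisfying $e_{_{ij}}=e_{_{ji}}$; when the solution happens to be symmetric one recovers $t=0$ and your intended argument becomes valid.
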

\begin{proof}
Clearly $\aij{i}{j}\geq{}0,$ by $(a)$ and $(b)$ of Theorem \ref{thmtre21}. Let $(\eij{i}{j})_{i,j=1}^{3}$ be a solution to $(a),(b),(c)$ and $(d)$ of Theorem \ref{thmtre21}. Set 
\[\qij{i}{j}=q(\eij{i}{j})=\sqrt{2}(\eijeij{i}{j}-\mbox{$\frac{1}{2}$}\|\eij{i}{j}\|^{^{2}}).\]
By Proposition \ref{protre27} and lemma \ref{lemtre210}
\begin{itemize}
\item[$(i)$]$\|\qij{i}{j}\|=\|\eij{i}{j}\|^{^{2}}.$
\item[$(ii)$]$(\qij{i}{j},\qij{i}{k})=2|(\eij{i}{j},\eij{i}{k})|^{^{2}} - \|\eij{i}{j}\|^{^{2}}\|\eij{i}{k}\|^{^{2}},\;\;i\neq{}j\neq{}k\neq{}i.$
\item[$(iii)$]$\qij{1}{2}-\qij{2}{1}=\qij{3}{1}-\qij{1}{3}=\qij{2}{3}-\qij{3}{2}.$
\end{itemize}
Put $\qijs{i}{j} =\frac{1}{2}(\qij{i}{j}+\qij{j}{i}),\;i\neq{}j$ and
\[\qa=\mbox{$\frac{1}{2}$}(\qij{1}{2}-\qij{2}{1})=
\mbox{$\frac{1}{2}$}(\qij{2}{3}-\qij{3}{2})=\mbox{$\frac{1}{2}$}(\qij{3}{1}-\qij{1}{3}).\]
Then
\[\begin{array}{ll}
\qij{1}{2}=\qijs{1}{2}+\qa,\;\;\;\;\;&\qij{2}{1}=\qijs{2}{1}-\qa\\
\qij{2}{3}=\qijs{2}{3}+\qa,\;\;\;\;\;&\qij{3}{2}=\qijs{3}{2}-\qa\\
\qij{3}{1}=\qijs{3}{1}+\qa,\;\;\;\;\;&\qij{1}{3}=\qijs{1}{3}-\qa.
\end{array}\]
Since $\|\qij{i}{j}\|^{^{2}}=\|\qij{j}{i}\|^{^{2}}$ it follows that \qa{} is orthogonal to \qijs{1}{2},\qijs{2}{3} and \qijs{3}{1}. (The range of $q$ is a real Hilbert space.) Set $t=\|\qa\|^{^{2}}.$ Then
\[\|\qij{i}{j}\|^{^{2}}=\|\qijs{i}{j}\|^{^{2}}+t,\;\;i\neq{}j.\]
Moreover
\[\begin{array}{l}
(\qij{1}{2},\qij{3}{1})=(\qijs{1}{2},\qijs{3}{1})-t\\
(\qij{2}{1},\qij{2}{3})=(\qijs{2}{1},\qijs{2}{3})-t\\
(\qij{3}{1},\qij{3}{2})=(\qijs{3}{1},\qijs{3}{2})-t.\end{array}\]
Hence by $(i)$ and $(ii)$
\begin{itemize}
\item[$(iv)$]$\|\qijs{i}{j}\|^{^{2}}=\|\eij{i}{j}\|^{^{4}}-t,\;\;i\neq{}j.$
\item[$(v)$]$(\qijs{1}{2},\qijs{3}{1})=2|(\eij{1}{2},\eij{3}{1})|^{^{2}}-\|\eij{1}{2}\|^{^{2}}\|\eij{3}{1}\|^{^{2}}+t.$
\item[$(vi)$]$(\qijs{2}{1},\qijs{2}{3})=2|(\eij{2}{1},\eij{2}{3})|^{^{2}}-\|\eij{2}{1}\|^{^{2}}\|\eij{2}{3}\|^{^{2}}+t.$
\item[$(vii)$]$(\qijs{3}{1},\qijs{3}{2})=2|(\eij{3}{1},\eij{3}{2})|^{^{2}}-\|\eij{3}{1}\|^{^{2}}\|\eij{3}{2}\|^{^{2}}+t.$
\end{itemize}
Put $\eta_{_{1}}=\qijs{2}{3},\eta_{_{2}}=\qijs{3}{1}$ and $\eta_{_{3}}=\qijs{1}{2}.$ By $(a)$ and $(b)$ of Theorem \ref{thmtre21} and the definition of $A$ we have
\[\begin{array}{ll}
\aij{1}{1} =\|\eij{2}{3}\|^{^{2}}=\|\eij{3}{2}\|^{^{2}},\;\;\;\; & 
\aij{1}{2}=\aij{2}{1}=|(\eij{3}{1},\eij{3}{2})|,\\
\aij{2}{2} =\|\eij{1}{3}\|^{^{2}}=\|\eij{3}{1}\|^{^{2}}, &
\aij{1}{3}=\aij{3}{1}=|(\eij{2}{1},\eij{2}{3})|,\\
\aij{3}{3} =\|\eij{1}{2}\|^{^{2}}=\|\eij{2}{1}\|^{^{2}}, &
\aij{2}{3}=\aij{3}{2}=|(\eij{1}{2},\eij{1}{3})|.\end{array}\]
Hence by the definition of $Q(A)$ and $(iv),(v),(vi)$ and $(vii),$
\[((\eta_{_{i}},\eta_{_{j}}))_{i,j=1}^{3}=Q(A)-tD.\]
For $(c_{_{1}},c_{_{2}},c_{_{3}})\in{\Bbb C}^{3},$
\[\sum_{i=1}^{3}(\eta_{_{i}},\eta_{_{j}})c_{_{i}}\overline{c}_{_{j}} = \|\sum_{i=1}^{3}c_{_{i}}\eta_{_{i}}\|^{^{2}}\geq{}0,\]
so the matrix is positive semidefinite. This proves lemma \ref{lemtre213}.\end{proof}
\begin{lemma}
\label{lemtre214}
Let $\al{1},\al{2},\al{3}>0$ and $\lam{}=\al{1}+\al{2}+\al{3}.$ Let $A,Q(A)$ and $D$ be as in lemma \ref{lemtre213}. Then
\[Q(A)-t_{_{0}}D\]
is positive semidefinite for $t_{_{0}}=\lam{}(4-\lambda^{^{2}})\al{1}\al{2}\al{3}.$
\end{lemma}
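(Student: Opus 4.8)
\textbf{Proof plan for Lemma \ref{lemtre214}.}

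The plan is to compute the matrix $Q(A)-t_{_{0}}D$ explicitly and show it is a rank-one (or at worst very small rank) positive semidefinite matrix, i.e. of the form $vv^{t}$ for an explicit real vector $v\in{\Bbb R}^{3}$, possibly plus a manifestly positive semidefinite correction. First I would record the entries of $A$ from the statement of lemma \ref{lemtre213}: $a_{_{11}}=\al{2}+\al{3}-\lam{}\al{2}\al{3}$, $a_{_{12}}=a_{_{21}}=\sqalialj{1}{2}(\lam{}\al{3}-1)$, etc., and then write down
\[Q(A)_{_{ij}}=\left\{\begin{array}{l}a_{_{ii}}^{2},\;\;\;i=j\\[0.2cm]2a_{_{ij}}^{2}-a_{_{ii}}a_{_{jj}},\;\;\;i\neq{}j.\end{array}\right.\]
Using the identity $a_{_{ii}}+a_{_{jj}}=\al{k}+\,$(something), or more directly the doubly-stochastic-type relations $a_{_{jj}}=\al{1}+\al{2}+\al{3}-\al{i}-\al{k}-\lam{}\al{i}\al{k}$ that come from the definitions, I would simplify each entry of $Q(A)-t_{_{0}}D$. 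The diagonal entries become $a_{_{ii}}^{2}-t_{_{0}}$ and the off-diagonal ones $2a_{_{ij}}^{2}-a_{_{ii}}a_{_{jj}}+t_{_{0}}$, with $t_{_{0}}=\lam{}(4-\lambda^{^{2}})\al{1}\al{2}\al{3}$.

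The key computational step is to verify that this symmetric $3\times3$ matrix equals $\xi\xi^{t}$ for
\[\xi=\left(\mbox{$\sqrt{\lam{}\al{1}}$}\,\sqrt{\lambda^{2}-\lam{}\al{1}+1},\;\mbox{$\sqrt{\lam{}\al{2}}$}\,\sqrt{\lambda^{2}-\lam{}\al{2}+1},\;\mbox{$\sqrt{\lam{}\al{3}}$}\,\sqrt{\lambda^{2}-\lam{}\al{3}+1}\right)^{t}\]
or some similar explicit vector; the guess is motivated by the fact (computed in the proof of sufficiency in Theorem \ref{thmtre21}) that $\det A=\lambda^{2}\al{1}\al{2}\al{3}(4\lam{}\al{1}\al{2}\al{3}-4(\al{1}\al{2}+\al{1}\al{3}+\al{2}\al{3})+3)$ and $\det A'=t_{_{0}}$, so $Q(A)-t_{_{0}}D$ should be exactly the boundary case where positive semidefiniteness degenerates to rank $\le1$. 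Concretely I would check three things: (1) each diagonal entry $a_{_{ii}}^{2}-t_{_{0}}$ equals $\xi_{_{i}}^{2}$; (2) each off-diagonal entry $2a_{_{ij}}^{2}-a_{_{ii}}a_{_{jj}}+t_{_{0}}$ equals $\xi_{_{i}}\xi_{_{j}}$; (3) conclude $Q(A)-t_{_{0}}D=\xi\xi^{t}\ge0$. Step (2) requires knowing the signs, i.e. that $2a_{_{ij}}^{2}-a_{_{ii}}a_{_{jj}}+t_{_{0}}$ is nonnegative so that it really is $\xi_{_{i}}\xi_{_{j}}$ and not $-\xi_{_{i}}\xi_{_{j}}$; this should follow from lemma \ref{fire4lem44}-style inequalities already available, or directly from $\lambda^{2}-\lam{}\al{i}+1>0$ which holds since $\lam{}\al{i}\le\frac{\lambda^{2}}{2}$ in the relevant range. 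If the rank-one ansatz fails to match exactly, the fallback is to instead exhibit $Q(A)-t_{_{0}}D$ as $\xi\xi^{t}+$ (a diagonal matrix with nonnegative entries), or to check positivity via Sylvester's criterion: all three $1\times1$ minors $a_{_{ii}}^{2}-t_{_{0}}\ge0$, all three $2\times2$ minors $\ge0$, and $\det(Q(A)-t_{_{0}}D)\ge0$; by construction we expect the $3\times3$ determinant to vanish.

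The main obstacle is purely the algebra: expanding $2a_{_{ij}}^{2}-a_{_{ii}}a_{_{jj}}+t_{_{0}}$ in terms of $\al{1},\al{2},\al{3}$ (with $\lam{}=\al{1}+\al{2}+\al{3}$) and recognizing it as a perfect product $\xi_{_{i}}\xi_{_{j}}$ involves degree-$6$ symmetric polynomial identities, and one must be careful with square roots when factoring. I expect this to be a "tedious but straightforward computation'' of exactly the kind already flagged in the proof of Theorem \ref{thmtre21}, so no conceptual difficulty beyond bookkeeping; the payoff is immediate once the factorization is pinned down, since a symmetric matrix equal to $\xi\xi^{t}$ is automatically positive semidefinite.
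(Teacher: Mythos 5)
Your main route does not work, and the failure is visible already in the fully symmetric case. Take $\alpha_{1}=\alpha_{2}=\alpha_{3}=\alpha$, so $\lambda=3\alpha$, $a_{ii}=\alpha(2-3\alpha^{2})$ and $a_{ij}=\alpha(3\alpha^{2}-1)$ for $i\neq j$; a short computation then gives
\[Q(A)-t_{0}D\;=\;2\alpha^{2}(1-3\alpha^{2})^{2}\left(\begin{array}{rrr}2&-1&-1\\-1&2&-1\\-1&-1&2\end{array}\right),\]
a matrix of rank two whose off-diagonal entries are all negative. No real vector $\xi$ can have $\xi_{i}\xi_{j}<0$ for all three pairs $i\neq j$, so $Q(A)-t_{0}D$ is neither of the form $\xi\xi^{t}$ nor of the form $\xi\xi^{t}$ plus a nonnegative diagonal matrix; moreover your candidate $\xi_{i}=\sqrt{\lambda\alpha_{i}}\sqrt{\lambda^{2}-\lambda\alpha_{i}+1}$ already fails on the diagonal ($\xi_{i}^{2}=3\alpha^{2}(6\alpha^{2}+1)$ versus $4\alpha^{2}(1-3\alpha^{2})^{2}$ here). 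The degeneration at $t=t_{0}$ is to rank two (one vanishing eigenvalue), not rank one. What then remains of your proposal is the principal-minor criterion, but for that you offer no argument: the nonnegativity of the three $2\times 2$ principal minors and the identical vanishing of $\det(Q(A)-t_{0}D)$ are high-degree polynomial statements in $\alpha_{1},\alpha_{2},\alpha_{3}$ (valid for all positive $\alpha_{i}$, including $\lambda>2$), and nothing in the sketch indicates how to establish them; as it stands the lemma is not proved.

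The paper's proof is structural and avoids this algebra entirely. On the region $\lambda\leq 2$, $\lambda\alpha_{i}\geq 1$ it takes the explicit solution $(e_{ij})$ of example \ref{exatre24} and, following the proof of lemma \ref{lemtre213}, writes $Q(A)-tD$ as the Gram matrix $((\eta_{i},\eta_{j}))_{i,j=1}^{3}$ with $\eta_{1}=\frac{1}{2}(q(e_{23})+q(e_{32}))$ etc., where $t=\|\frac{1}{2}(q(e_{12})-q(e_{21}))\|^{2}$; for that particular solution one computes $t=4\lambda\sin^{2}(\theta)\alpha_{1}\alpha_{2}\alpha_{3}=\lambda(4-\lambda^{2})\alpha_{1}\alpha_{2}\alpha_{3}=t_{0}$. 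Conditions $(c)$ and $(d)$ allow the $\eta_{i}$ to be rewritten in terms of the $q(e_{ii})$, which are real-analytic functions of $(\alpha_{1},\alpha_{2},\alpha_{3})$ on all of $(0,\infty)^{3}$; since the entries of $Q(A)-t_{0}D$ are polynomials, uniqueness of analytic continuation extends the Gram identity $Q(A)-t_{0}D=((\eta_{i},\eta_{j}))$ to all positive $\alpha_{i}$, and a Gram matrix is automatically positive semidefinite. This also bears out (and proves) your expectation that the determinant vanishes identically: the $\eta_{i}$ are real symmetric and traceless as operators on the two-dimensional space ${\cal S}$, hence lie in a two-dimensional real vector space and are linearly dependent. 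If you insist on a purely computational proof you must actually verify the minor inequalities; otherwise the Gram-matrix-plus-analytic-continuation argument is the workable route.
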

\begin{proof}
Assume first that $\lam{}\leq{}2$ and $\lam{}\al{i}-1\geq{}0,\;i=1,2,3.$ Let $(\eij{i}{j})_{i,j=1}^{3}$ be the explicit solution to condition $(1)$ in Theorem \ref{thmtre21}, described in example \ref{exatre24}. By the proof of lemma \ref{lemtre213}, $Q(A)+tD$ is positive definite for 
\[\begin{array}{lcl}t&=&\|\mbox{$\frac{1}{2}$}(q(\eij{1}{2})-q(\eij{2}{1}))\|^{^{2}}\\[0.2cm]
& = & \|\mbox{$\frac{1}{\sqrt{2}}$}(\eijeij{1}{2}-\eijeij{2}{1})\|^{^{2}}.
\end{array}\]
With the notation of example \ref{exatre24} we have
\[\eijeij{1}{2}-\eijeij{2}{1} =i2\sin(\theta)\sqrt{\al{1}\al{2}\al{3}}
\left(\begin{array}{ccc} 
0 & \sqrt{\al{3}} & -\sqrt{\al{2}}\\
-\sqrt{\al{3}}& 0 &\sqrt{\al{1}} \\
\sqrt{\al{2}}& -\sqrt{\al{1}}&0\end{array}\right).\]
Hence
\[t=4\lam{}\sin{}^{2}(\theta)\al{1}\al{2}\al{3} 
=\lam{}(4-\lambda^{^{2}})\al{1}\al{2}\al{3}=t_{_{0}}.\]
Furthermore by the proof of lemma \ref{lemtre212}
\[Q(A)-t_{_{0}}D =((\eta_{_{i}},\eta_{_{j}}))_{i,j=1}^{3},\]
where 
\[ \begin{array}{lcl}
\eta_{_{1}} & = & \mbox{$\frac{1}{2}$}(q(\eij{2}{3})+q(\eij{3}{2}))\\[0.2cm]
\eta_{_{2}} & = & \mbox{$\frac{1}{2}$}(q(\eij{1}{3})+q(\eij{3}{1}))\\[0.2cm]
\eta_{_{3}} & = & \mbox{$\frac{1}{2}$}(q(\eij{1}{2})+q(\eij{2}{1})).
\end{array}\]
By $(c)$ and $(d)$ of Theorem \ref{thmtre21}
\[\sum_{i=1}^{3}(q(\eij{i}{j})+q(\eij{j}{i}))\]
is a multiple of $I_{_{\cal S}},$ but since $\mbox{Tr}(q(x))=0$ for all $x\in{\cal S}$ actually
\[\sum_{i=1}^{3}(q(\eij{i}{j})+q(\eij{j}{i}))=0,\;\;\;j=1,2,3.\]
Hence $\eta_{_{2}}+\eta_{_{3}}=-q(\eij{1}{1}),$ $\eta_{_{1}}+\eta_{_{3}}=-q(\eij{2}{2})$ and $\eta_{_{1}}+\eta_{_{2}}=-q(\eij{3}{3})$ which gives
\[\begin{array}{lcl}
\eta_{_{1}} & = & q(\eij{1}{1})-q(\eij{2}{2})-q(\eij{3}{3})\\
\eta_{_{2}} & = & q(\eij{2}{2})-q(\eij{1}{1})-q(\eij{3}{3})\\
\eta_{_{3}} & = & q(\eij{3}{3})-q(\eij{1}{1})-q(\eij{2}{2}).
\end{array}\]
But $q(\eij{i}{i}),\;i=1,2,3$ are given by functions of \al{1}, \al{2}, \al{3} (and \lam{}=\al{1}+\al{2}+\al{3}) which are real analytic in the region $\al{1}>0,$ $\al{2}>0,$ and $\al{3}>0.$ For instance
\[\begin{array}{lccl}
q(\eij{1}{1}) & =  & & \sqrt{2}(\eijeij{1}{1}-\mbox{$\frac{1}{2}$}\|\eij{1}{1}\|^{^{2}}I_{_{\cal S}})\\[0.3cm]
& =  & & 
\mbox{$\frac{\sqrt{2}(\lambda\alpha_{_{1}}-1)}{\lambda}$}\left(\begin{array}{ccc} 
(\lam{}-\al{1})^{^{2}} & -\sqrt{\al{1}\al{2}}(\lam{}-\al{1})&-\sqrt{\al{1}\al{2}}(\lam{}-\al{1})\\
-\sqrt{\al{1}\al{2}}(\lam{}-\al{1})&\al{1}\al{2} &\al{1}\sqrt{\al{2}\al{3}}\\
-\sqrt{\al{1}\al{2}}(\lam{}-\al{1})&\al{1}\sqrt{\al{2}\al{3}}&\al{2}\al{3}\end{array}\right)\\[1cm]
& &-&\mbox{$\frac{(\lambda-\alpha_{_{1}})(\lambda\alpha_{_{1}}-1)}{\sqrt{2}\lambda}$}\left(
\begin{array}{ccc}
\lam{}-\al{1}&-\sqrt{\al{1}\al{2}} & -\sqrt{\al{1}\al{3}}\\
-\sqrt{\al{1}\al{2}} & \lam{}-\al{2} &-\sqrt{\al{2}\al{3}}\\
-\sqrt{\al{1}\al{3}} &-\sqrt{\al{2}\al{3}} &\lam{}-\al{3}
\end{array}\right).\end{array}\]
Hence $\eta_{_{1}},\eta_{_{2}},\eta_{_{3}}$ can also be extended to real analytic functions (with values in $B({\cal S)}$ identified with a subspace of $M_{_{3}}({\Bbb C})$). Since the entries of $Q(A)-t_{_{0}}D$ are polynomials in \al{1}, \al{2}, \al{3}, it follows by uniqueness of analytic continuation, that 
\[Q(A)-t_{_{0}}D=((\eta_{_{i}},\eta_{_{j}}))_{i,j=1}^{3},\]
for all $(\al{1}, \al{2}, \al{3}),$ $\al{i}>0,\;i=1,2,3,$ and therefore $Q(A)-t_{_{0}}D$ is positive semidefinite.
\end{proof}
\begin{proof}{\bf of necessity of (i), (ii) and (ii) in Theorem \ref{thmtre21}} By remark \ref{remtre22}, $(i)$ and $(ii)$ are necessary. If $\lam{}\leq{}2$ $(i)\Rightarrow(iii)$ by remark \ref{remtre23}, so in this case $(iii)$ is a necessary (but redundant!) condition. 

\noindent{}Assume $\lam{}>2.$ If $(\eij{i}{j})_{i,j=1}^{3}$ satisfy $(1)$ in Theorem \ref{thmtre21}, then by lemma \ref{lemtre213} and lemma \ref{lemtre214}
\[Q(A)-tD\]
is positive semidefinite for some $t\geq{}0,$ and also for $t=t_{_{0}}=\lam{}(4-\lambda^{^{2}})\al{1}\al{2}\al{3}<0,$ so by convexity of the positive cone in $M_{_{3}}({\Bbb R}),$ $Q(A)$ is also positive semidefinite. Hence, by lemma \ref{lemtre212} and lemma \ref{lemtre28}, $\det{}(A)\geq{}0.$ We have previously found that 
\[\det{}(A)=\lambda^{2}\al{1}\al{2}\al{3}(4\lam{}\al{1}\al{2}\al{3} -4(\al{1}\al{2}+\al{1}\al{3}+\al{2}\al{3}) + 3).\]
Thus $(1)$ implies $(iii)$ in Theorem \ref{thmtre21}. This completes the proof of Theorem \ref{thmtre21}.
\end{proof}

\setcounter{equation}{0}
%NB! Her er tilfoejet tekst i forhold til filen trestar3.tex
%sidst rettet 11/12

\section{The Graphs Satisfying the Condition}
In this section we will determine which of the graphs
\[\Gamma=\trestar{i}{j}{k},\;\;i\leq{}j\leq{}k,\;\mbox{ with }\lambda(\Gamma)\geq{}2\]
satisfy the condition (\ref{trekantulighed}). The two other conditions of theorem \ref{thmtre21} are trivially satisfied by the $\alpha'$s coming from \trestar{i}{j}{k}. If $\lambda_{_{0}}$ denotes the Perron-Frobenius eigenvalue of the adjacency matrix of $\trestar{i}{j}{k},\;i\leq{}j\leq{}k$ then the condition is
\[\frac{1}{\Rnlam{i}{0}}\leq{}\frac{1}{\Rnlam{j}{0}}+\frac{1}{\Rnlam{k}{0}},\]
or, if we multiply by $\Rnlam{i}{0}$
\begin{equation}
\label{tregrafeqn1}
1\leq{}\Rnlamkvot{i}{j}{0}+\Rnlamkvot{i}{k}{0}.
\end{equation}
\begin{lemma}
\label{tregraflem1}For $m\geq{}1,$ $n\geq{}0,$ and $\lambda\geq{}2,$ $\Rnlamkvot{n}{n+m}{}$ has the following properties
\begin{enumerate}
\item{}$\Rnlamkvot{n}{n+m}{}$ is decreasing in $\lambda.$
\item{}$\Rnlamkvot{n}{n+m}{}$ is increasing in $n.$
\item{}$\Rnlamkvot{n}{n+m}{}$ is decreasing in $m.$
\item{}$\lim_{n\rightarrow\infty}\Rnlamkvot{n}{n+m}{}=e^{-mx}.$
\end{enumerate}
\end{lemma}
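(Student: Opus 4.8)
\textbf{Proof proposal for Lemma \ref{tregraflem1}.}

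The plan is to reduce everything to the single-step ratio $R_{n}(\lambda)/R_{n+1}(\lambda)$, exactly as in the proof of Lemma \ref{firegraflem0} and Corollary \ref{firegrafcor0} in Chapter \ref{chapter1}; in fact this lemma is a verbatim restatement of Corollary \ref{firegrafcor0} together with Remark \ref{firegrafrem1}, so the cleanest route is simply to invoke those results. If instead I reprove it from scratch, the first and central step is to establish the identity
\[
R_{n}(\lambda)^{2} = R_{n-1}(\lambda)R_{n+1}(\lambda) + 1
\]
for all $n\geq 1$ and all $\lambda$, by showing that $R_{n}(\lambda)^{2} - \lambda R_{n}(\lambda)R_{n-1}(\lambda) + R_{n-1}(\lambda)^{2}$ is independent of $n$ (using the recursion $R_{n+1} = \lambda R_{n} - R_{n-1}$) and then evaluating it at $n=1$, where it equals $\lambda^{2} - \lambda^{2} + 1 = 1$; rewriting this constant via the recursion gives the displayed identity. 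Note $R_{n}(\lambda) > 0$ for $\lambda \geq 2$ (immediate from the recursion, or from the closed forms in Definition \ref{firestar2def21}), so all ratios below are well-defined and positive.

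Next I would derive the monotonicity in $n$ of the single-step ratio. From the identity above,
\[
\frac{R_{n-1}(\lambda)/R_{n}(\lambda)}{R_{n}(\lambda)/R_{n+1}(\lambda)}
= \frac{R_{n-1}(\lambda)R_{n+1}(\lambda)}{R_{n}(\lambda)^{2}} < 1,
\]
so $R_{n}(\lambda)/R_{n+1}(\lambda)$ is strictly increasing in $n$; this gives statement 2 for $m=1$. For statement 1 with $m=1$, rewrite the recursion as $R_{n+1}(\lambda)/R_{n}(\lambda) = \lambda - R_{n-1}(\lambda)/R_{n}(\lambda)$; since $R_{0}(\lambda)/R_{1}(\lambda) = 1/\lambda$ is manifestly decreasing in $\lambda$, induction on $n$ (each step flips a decreasing function to an increasing one inside the subtraction from $\lambda$, hence back to decreasing for the next ratio) shows $R_{n}(\lambda)/R_{n+1}(\lambda)$ is decreasing in $\lambda$ for every $n$. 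Then the general $m$ case of statements 1 and 2 follows from the telescoping product
\[
\frac{R_{n}(\lambda)}{R_{n+m}(\lambda)} = \frac{R_{n}(\lambda)}{R_{n+1}(\lambda)}\cdot\frac{R_{n+1}(\lambda)}{R_{n+2}(\lambda)}\cdots\frac{R_{n+m-1}(\lambda)}{R_{n+m}(\lambda)},
\]
since a product of positive terms each decreasing in $\lambda$ (resp.\ increasing in $n$) has the same property.

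For statement 3 (decreasing in $m$) I would write $\lambda = e^{x} + e^{-x}$ with $x \geq 0$, note that $R_{n}(\lambda)/R_{n+1}(\lambda) = (e^{nx}-e^{-nx})/(e^{(n+1)x}-e^{-(n+1)x})$ tends increasingly to $e^{-x} \leq 1$ as $n\to\infty$, hence $R_{n}(\lambda)/R_{n+1}(\lambda) \leq 1$ for all $n$; then
\[
\frac{R_{n}(\lambda)/R_{n+m+1}(\lambda)}{R_{n}(\lambda)/R_{n+m}(\lambda)} = \frac{R_{n+m}(\lambda)}{R_{n+m+1}(\lambda)} \leq 1
\]
gives the claim. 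Statement 4 is immediate from the closed form $R_{n}(\lambda) = (e^{(n+1)x}-e^{-(n+1)x})/(e^{x}-e^{-x})$ in Definition \ref{firestar2def21}: the ratio $R_{n}(\lambda)/R_{n+m}(\lambda) = (e^{(n+1)x}-e^{-(n+1)x})/(e^{(n+m+1)x}-e^{-(n+m+1)x})$ tends to $e^{-mx}$ as $n\to\infty$ for $x > 0$, and equals $(n+1)/(n+m+1) \to 1 = e^{0}$ when $x = 0$. The only mild obstacle is keeping the $\lambda = 2$ (i.e.\ $x=0$) boundary case consistent throughout, which is handled by the polynomial forms $R_{n}(2) = n+1$ and by continuity; otherwise the argument is entirely routine given Lemma \ref{firegraflem0}.
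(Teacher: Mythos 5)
Your proposal is correct and follows essentially the same route as the paper: the paper's proof simply refers to Corollary \ref{firegrafcor0} and Remark \ref{firegrafrem1}, whose arguments are exactly the ones you reproduce (the identity $R_{n}(\lambda)^{2}=R_{n-1}(\lambda)R_{n+1}(\lambda)+1$ from Lemma \ref{firegraflem0}, the telescoping product for monotonicity in $\lambda$ and $n$, the bound $R_{n}(\lambda)/R_{n+1}(\lambda)\leq 1$ for monotonicity in $m$, and the closed form for the limit). No gaps; citing those earlier results, as the paper does, would suffice.
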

\begin{proof}We refer to the proof of corollary \ref{firegrafcor0} and remark \ref{firegrafrem1}.
\end{proof}

\begin{lemma}
\label{tregraflem2}
\hfill
\begin{enumerate}
\item{}For $\lam{0} = \lambda(\trestar{j}{j+1}{j+1})$ we have $\Rnlamkvot{j}{j+1}{0} =\frac{1}{\sqrt{2}}.$
\item{}For $\lam{0} = \lambda(\trestar{j}{j+2}{j+2})$ we have $\Rnlamkvot{j}{j+2}{0} =\frac{1}{2}.$
\end{enumerate}
\end{lemma}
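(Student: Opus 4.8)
\textbf{Proof plan for Lemma \ref{tregraflem2}.}

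The plan is to use the eigenvalue equation for a 3-star together with the fundamental identity relating $R_n$ to $R_{n-1}$ and $R_{n+1}$, exactly as was done in Lemma \ref{firegraflem34} for the 4-star case. The key observation is that the recursion formula $R_{n+1}(\lambda) = \lambda R_n(\lambda) - R_{n-1}(\lambda)$ can be rewritten as
\[
\lambda - \Rnlamkvot{n-1}{n}{} = \Rnlamkvot{n+1}{n}{},
\]
so whenever two or more rays have equal length, the eigenvalue equation collapses into a relation among ratios at a single level.

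\emph{Step 1 (part 1).} For $\Gamma = \trestar{j}{j+1}{j+1}$ the eigenvalue equation (\ref{stareigenvaleq}) reads
\[
\Rnlamkvot{j-1}{j}{0} + 2\,\Rnlamkvot{j}{j+1}{0} = \lam{0}.
\]
Using the rewritten recursion $\lam{0} - \Rnlamkvot{j-1}{j}{0} = \Rnlamkvot{j+1}{j}{0}$, the left side becomes $2\,\Rnlamkvot{j}{j+1}{0}$ after moving the first term across, i.e.
\[
2\,\Rnlamkvot{j}{j+1}{0} = \Rnlamkvot{j+1}{j}{0} = \left(\Rnlamkvot{j}{j+1}{0}\right)^{-1},
\]
whence $2\left(\Rnlamkvot{j}{j+1}{0}\right)^{2} = 1$, giving $\Rnlamkvot{j}{j+1}{0} = \frac{1}{\sqrt{2}}$ (the positive root, since all $R_n(\lambda) > 0$ for $\lambda \geq 2$).

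\emph{Step 2 (part 2).} For $\Gamma = \trestar{j}{j+2}{j+2}$ the eigenvalue equation reads
\[
\Rnlamkvot{j-1}{j}{0} + 2\,\Rnlamkvot{j+1}{j+2}{0} = \lam{0}.
\]
Again substituting $\lam{0} - \Rnlamkvot{j-1}{j}{0} = \Rnlamkvot{j+1}{j}{0}$ yields $2\,\Rnlamkvot{j+1}{j+2}{0} = \Rnlamkvot{j+1}{j}{0}$. Since $\Rnlamkvot{j+1}{j}{0} = \Rnlamkvot{j+1}{j+2}{0}\cdot\Rnlamkvot{j+2}{j}{0}$ and $\Rnlamkvot{j+2}{j}{0} = \left(\Rnlamkvot{j}{j+2}{0}\right)^{-1}$, while $\Rnlamkvot{j+1}{j+2}{0}$ cancels, this gives $2\,\Rnlamkvot{j}{j+2}{0} = 1$, i.e. $\Rnlamkvot{j}{j+2}{0} = \frac{1}{2}$.

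There is essentially no obstacle here: both parts are short algebraic manipulations of the eigenvalue equation once the recursion is written in ratio form. The only point requiring a word of care is the cancellation of the common factor $\Rnlamkvot{j+1}{j+2}{0}$ in part 2, which is legitimate because $R_n(\lam{0}) > 0$ for all $n$ (as $\lam{0} \geq 2$, by the explicit formulas in Definition \ref{firestar2def21}), so no division by zero occurs. I would present both computations in a couple of display lines each, mirroring the proof of Lemma \ref{firegraflem34}.
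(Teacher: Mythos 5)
Your proposal is correct and follows essentially the same route as the paper: substitute the eigenvalue equation (\ref{stareigenvaleq}) for the 3--star into the recursion written as $\lambda_{_{0}}-\Rnlamkvot{j-1}{j}{0}=\Rnlamkvot{j+1}{j}{0}$, giving $2\left(\Rnlamkvot{j}{j+1}{0}\right)^{2}=1$ in part 1 and $2\Rnlamkvot{j+1}{j+2}{0}=\Rnlamkvot{j+1}{j}{0}$, hence $\Rnlamkvot{j}{j+2}{0}=\frac{1}{2}$, in part 2. Your remark on positivity of the $R_{n}(\lam{0})$ is a harmless extra precaution that the paper leaves implicit.
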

\begin{proof}
1. \lam{0} satisfies the equation
\[\begin{array}{cccl}
 & \lam{0} & = & \Rnlamkvot{j-1}{j}{0} +2\Rnlamkvot{j}{j+1}{0}\\
\Downarrow & & &\\
 & 2\Rnlamkvot{j}{j+1}{0} &= &\frac{\lambda_{_{0}}\Rnlam{j}{0}-\Rnlam{j-1}{0}}{\Rnlam{j}{0}}\\
\Downarrow & & &\\
 & 2\Rnlamkvot{j}{j+1}{0} &= &\Rnlamkvot{j+1}{j}{0}\\
\Downarrow & & & \\
 & \Rnlamkvot{j}{j+1}{0} & = & \frac{1}{\sqrt{2}}
\end{array}\]

\noindent{}2. Here \lam{0} satisfies the equation
\[\begin{array}{cccl}
 & \lam{0} & = & \Rnlamkvot{j-1}{j}{0} +2\Rnlamkvot{j+1}{j+2}{0}\\
\Downarrow & & &\\
 & \Rnlamkvot{j+1}{j}{0} &= &2\Rnlamkvot{j+1}{j+2}{0}\\
\Downarrow & & & \\
 & \Rnlamkvot{j}{j+2}{0} & = & \frac{1}{2}.
\end{array}\]
\end{proof}

\begin{lemma}
\label{tregraflem3}
If \trestar{j}{k}{l}, $j\leq{}k\leq{}l$ does not satisfy (\ref{tregrafeqn1}), then \trestar{j}{k+k'}{l+l'}, \\$j\leq{}k+k'\leq{}l+l'$  does not satisfy (\ref{tregrafeqn1}).
\end{lemma}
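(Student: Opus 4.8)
The plan is to reduce the statement to a monotonicity fact about the quantity $F(j,k,l) := R_{j-1}(\lambda)/R_j(\lambda) + \ldots$ Actually, the cleanest route is to work directly with the inequality (\ref{tregrafeqn1}), namely $1 \leq \frac{R_i(\lambda)}{R_j(\lambda)} + \frac{R_i(\lambda)}{R_k(\lambda)}$, and track how the right-hand side moves when we pass from $\trestar{j}{k}{l}$ to $\trestar{j}{k+k'}{l+l'}$. Two things change simultaneously: the ray lengths in the denominators increase, and the Perron--Frobenius eigenvalue $\lambda_0$ also changes. So the first step is to pin down the direction in which $\lambda_0$ moves: I would invoke the eigenvalue equation $\sum \frac{R_{m_i-1}(t)}{R_{m_i}(t)} = t$ together with lemma \ref{tregraflem1}(2) (that $R_{n-1}(\lambda)/R_n(\lambda)$ is increasing in $n$) and (1) (that it is decreasing in $\lambda$) to conclude that lengthening any ray strictly increases the Perron--Frobenius eigenvalue, i.e. $\lambda_0(\trestar{j}{k+k'}{l+l'}) \geq \lambda_0(\trestar{j}{k}{l})$. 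This is the analogue of the argument already used implicitly in chapter 1 (lemma \ref{firegraflem32}).

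The second step is to show each summand $\frac{R_j(t)}{R_{k}(t)} = \frac{R_j(t)}{R_{j+(k-j)}(t)}$ decreases under the passage. Here lemma \ref{tregraflem1} does all the work in the form I want: writing the summand as $\Rnlamkvot{j}{k}{}$ (with $m = k-j \geq 0$), part (1) says it is decreasing in $\lambda$, and part (3) says it is decreasing in $m$ — i.e. decreasing as the gap $k - j$ widens. Since $\lambda_0$ went up (step 1) and the gap $k - j$ went up to $(k+k') - j$ (because $k' \geq 0$), both effects push $\frac{R_j}{R_k}$ down; likewise for $\frac{R_j}{R_l}$. Therefore the right-hand side of (\ref{tregrafeqn1}) at $\trestar{j}{k+k'}{l+l'}$ is $\leq$ the right-hand side at $\trestar{j}{k}{l}$. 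If the latter was $< 1$ (which is exactly the hypothesis that $\trestar{j}{k}{l}$ fails (\ref{tregrafeqn1})), the former is $< 1$ too, so $\trestar{j}{k+k'}{l+l'}$ also fails (\ref{tregrafeqn1}). That is the desired conclusion. One should note the hypothesis $j \leq k \leq l$ and $j \leq k+k' \leq l+l'$ is used only to ensure the shortest ray is the one indexed $j$ both before and after, so that (\ref{tregrafeqn1}) is indeed the relevant reformulation of (\ref{trekantulighed}) in both cases; I would remark on this explicitly.

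The one mild subtlety — and the step I expect to need the most care — is step 1, the monotonicity of $\lambda_0$ in the ray lengths. The eigenvalue equation defines $\lambda_0$ as the largest root of $g(t) := \sum_{i} \frac{R_{m_i-1}(t)}{R_{m_i}(t)} - t = 0$. Lengthening a ray replaces one term $\frac{R_{m_i-1}(t)}{R_{m_i}(t)}$ by the larger term $\frac{R_{m_i-1+c}(t)}{R_{m_i+c}(t)}$ (larger by lemma \ref{tregraflem1}(2), valid for $t \geq 2$), so $g$ increases pointwise on $[2,\infty)$; since each $\frac{R_{n-1}(t)}{R_n(t)} \to e^{-x}$ and is well-behaved, $g$ is eventually dominated by $-t$ and crosses zero, and the largest crossing point can only move right when $g$ is raised. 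Making this rigorous requires knowing $g$ is continuous and that the relevant root lies in $[2,\infty)$ — both available from the setup (the hypothesis is $\lambda(\Gamma) \geq 2$) — plus a short argument that raising $g$ cannot destroy the largest root below a point where it was already the largest. Alternatively one can quote lemma \ref{tregraflem1}(1) applied to the equation directly. I would phrase this as a one-line citation of the corresponding fact from chapter 1 rather than re-deriving it, since the excerpt explicitly allows use of earlier results.
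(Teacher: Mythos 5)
Your proposal is correct and follows essentially the same route as the paper: the paper's proof is exactly the two-step chain, first using that $\lambda(\mbox{S}(j,k+k',l+l'))\geq\lambda(\mbox{S}(j,k,l))$ together with the monotonicity in $\lambda$ (lemma \ref{tregraflem1}(1)), and then the monotonicity in the ray-length gap (lemma \ref{tregraflem1}(3)) at the fixed smaller eigenvalue, to push the right-hand side of (\ref{tregrafeqn1}) below $1$. The only difference is that the paper simply asserts the eigenvalue comparison (it is the standard subgraph monotonicity), whereas you supply a short argument for it via the eigenvalue equation; that argument is sound and harmless, just not needed.
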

\begin{proof}
Let $\lam{0}=\lambda(\trestar{j}{k}{l})$ and $\lam{1}=\lambda(\trestar{j}{k+k'}{l+l'}).$ Then $\lam{1}\geq{}\lam{0}$ and we have
\[\begin{array}{lcl}
\Rnlamkvot{j}{k+k'}{1} + \Rnlamkvot{j}{l+l'}{1} & \leq & \;\;\;\;\;\;\mbox{ (by lemma \ref{tregraflem1} (1)) }\\[0.3cm]
\Rnlamkvot{j}{k+k'}{0} + \Rnlamkvot{j}{l+l'}{0} & \leq & \;\;\;\;\;\;\mbox{ (by lemma \ref{tregraflem1} (2)) }\\[0.3cm]
\Rnlamkvot{j}{k}{0} + \Rnlamkvot{j}{l}{0} & < & 1.
\end{array}\]
Hence \trestar{j}{k+k'}{l+l'}, $j\leq{}k+k'\leq{}l+l'$  does not satisfy (\ref{tregrafeqn1}).
\end{proof}
\begin{prop}
\trestar{j}{k}{l}, $j\leq{}k\leq{}l$ satisfies (\ref{tregrafeqn1}) if and only if $(j,k,l)$ is of one of the following
\[\begin{array}{rcl}
1)&(j,j,j+n) & j\geq{}2,\;n\geq{}0\\
2)&(j,j+1,j+1)& j\geq{}2\\
3)&(j,j+1,j+2)&j\geq{}2\\
4)&(j,j+1,j+3)&j\geq{}2\\
5)&(j,j+2,j+2)&j\geq{}1
\end{array}\]
\end{prop}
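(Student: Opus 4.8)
The plan is to verify condition (\ref{tregrafeqn1}), namely $\Rnlamkvot{j}{k}{0}+\Rnlamkvot{j}{l}{0}\geq{}1$ where $\lambda_{_{0}}=\lambda(\trestar{j}{k}{l})$, on the candidate list and to rule out everything else. The positive part is a finite check plus a monotonicity argument. For families $1)$--$5)$ I would first record, using lemma \ref{tregraflem2}, the exact values $\Rnlamkvot{j}{j+1}{0}=\tfrac{1}{\sqrt2}$ for $\trestar{j}{j+1}{j+1}$ and $\Rnlamkvot{j}{j+2}{0}=\tfrac12$ for $\trestar{j}{j+2}{j+2}$, and also note $\Rnlamkvot{j}{j}{0}=1$ trivially. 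For $1)$: $\Rnlamkvot{j}{j}{0}+\Rnlamkvot{j}{j+n}{0}\geq{}1$ is immediate since the first term is $1$. For $5)$ $(j,j+2,j+2)$: by lemma \ref{tregraflem1}(2) and lemma \ref{tregraflem2}(2), $2\Rnlamkvot{j}{j+2}{0}\geq{}2\Rnlamkvot{1}{3}{0'}=1$ where $0'$ refers to the eigenvalue of $\trestar{1}{3}{3}$; in fact equality already holds at $j=1$ so the condition is met for all $j\geq{}1$. For $2)$ $(j,j+1,j+1)$: $2\Rnlamkvot{j}{j+1}{0}=\sqrt2>1$. For $3)$ $(j,j+1,j+2)$ and $4)$ $(j,j+1,j+3)$: since $\Rnlamkvot{j}{j+1}{0}$ is increasing in $j$ by lemma \ref{tregraflem1}(2) and decreasing in $\lambda$ by lemma \ref{tregraflem1}(1), and $\lambda(\trestar{j}{j+1}{j+2})\leq{}\lambda(\trestar{j}{j+1}{j+1})$, one has $\Rnlamkvot{j}{j+1}{0}\geq{}\Rnlamkvot{1}{2}{*}$ where $*$ is the eigenvalue of $\trestar{1}{2}{2}$; a direct numerical evaluation of the right-hand side plus the positive contribution of $\Rnlamkvot{j}{j+k}{0}$ (with $k=2$ or $3$) gives a total $\geq{}1$ — this is a small finite computation and the hardest endpoint is $j=1$, which I would check by hand using the polynomial relations for $R_n$.

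For the negative direction, the engine is lemma \ref{tregraflem3}: if $\trestar{j}{k}{l}$ fails (\ref{tregrafeqn1}) then so does $\trestar{j}{k+k'}{l+l'}$ for any $k',l'\geq{}0$. So it suffices to identify the ``minimal'' failing configurations and show their upward closures cover everything not on the list. First, $j=1$: since $\alpha_{_{3}}=\xi(d)/\lambda=1/\lambda$ forces a degenerate situation and one computes $\Rnlamkvot{1}{k}{0}\leq{}\Rnlamkvot{1}{1}{0}=1/\lambda_{_{0}}$, the star $\trestar{1}{1}{n}$ or more relevantly $\trestar{1}{2}{2},\trestar{1}{2}{3},\ldots$ behave differently and I would handle $j=1$ separately: the list permits $(1,2,2)$ (family $5)$) but must exclude, e.g., $(1,1,n)$ for the reason that $\lambda(\trestar{1}{1}{n})<2$ for all $n$ (these are in the $ADE$ range) — so they fall outside the hypothesis $\lambda(\Gamma)\geq{}2$ entirely — and must exclude $(1,2,4)$, $(1,3,3)$, etc. Second, $j\geq{}2$: the minimal bad configurations to examine are $\trestar{j}{j+1}{j+4}$, $\trestar{j}{j+2}{j+3}$, $\trestar{j}{j+3}{j+3}$ (and their analogues), since the list allows $(j,j+1,j+m)$ only for $m\leq{}3$, $(j,j+2,j+m)$ only for $m=2$, and nothing with second gap $\geq{}3$. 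For each such candidate I would bound $\Rnlamkvot{j}{k}{0}+\Rnlamkvot{j}{l}{0}$ from above, the way it is done in chapter \ref{chapter1} section \ref{algnece}: pick a convenient lower bound $\lambda_{_{1}}$ for the Perron--Frobenius eigenvalue, write $\lambda_{_{1}}=e^{x}+e^{-x}$, use $\Rnlamkvot{j}{j+m}{}\leq{}e^{-mx}$ and lemma \ref{tregraflem1}(1), solve the resulting inequality $e^{-m_{1}x}+e^{-m_{2}x}<1$ for $x$ (equivalently for $\lambda$), and check that the actual eigenvalue of the graph exceeds that threshold — verifying the worst case $j=2$ (or $j=1$ where applicable) numerically.

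Assembling: by lemma \ref{tregraflem3}, failure of the minimal bad configurations propagates upward, so together with the $ADE$-exclusion (graphs with $\lambda<2$, which are outside the hypothesis) and the finite positive checks of families $1)$--$5)$, every $\trestar{j}{k}{l}$ with $\lambda\geq{}2$ either lies on the stated list or is dominated (in the $(k,l)$-partial order, for fixed $j$) by a configuration shown to fail. The main obstacle I anticipate is bookkeeping: making sure the ``minimal bad'' configurations genuinely dominate the complement of the list for every $j$ — in particular the interface cases like $(2,3,5)$, $(2,4,4)$, $(1,2,4)$, $(1,3,3)$ — and getting the numerical thresholds (which $\lambda$ makes $e^{-m_{1}x}+e^{-m_{2}x}=1$) accurate enough that the comparison with the true Perron--Frobenius eigenvalue is unambiguous. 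None of this is deep, but it requires the same careful case enumeration as in corollary \ref{firegrafcor33} and the discussion following it; I would organize it into steps (A)--(G) in parallel with section \ref{algnece}, reusing the monotonicity corollary \ref{firegrafcor0}/lemma \ref{tregraflem1} at every turn.
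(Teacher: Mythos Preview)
Your overall strategy matches the paper's: verify the five families directly, then use lemma \ref{tregraflem3} to reduce the exclusion to a finite set of minimal bad configurations and kill those by bounding $\Rnlamkvot{j}{j+1}{0}$ via lemma \ref{tregraflem2} and $\Rnlamkvot{j}{j+m}{0}$ via $e^{-mx}$. The negative direction is essentially right (your extra case $(j,j+3,j+3)$ is redundant, as it follows from $(j,j+2,j+3)$ via lemma \ref{tregraflem3}, but harmless).

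There is, however, a genuine error in your positive verification of families $3)$ and $4)$. You claim $\lambda(\trestar{j}{j+1}{j+2})\leq\lambda(\trestar{j}{j+1}{j+1})$, but the inequality goes the other way: $\trestar{j}{j+1}{j+1}$ is a subgraph of $\trestar{j}{j+1}{j+2}$, so its Perron--Frobenius eigenvalue is \emph{smaller}. Consequently the chain you propose, ending at $\Rnlamkvot{1}{2}{*}$ with $*$ the eigenvalue of $\trestar{1}{2}{2}=E_{6}$, fails --- in fact $\lambda(E_{6})<2$ while every $\trestar{j}{j+1}{j+2}$ with $j\geq{}2$ has eigenvalue $>2$, so since $\Rnlamkvot{1}{2}{}$ is \emph{decreasing} in $\lambda$ you get an inequality in the wrong direction. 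The paper fixes this by bounding $\lambda_{_{0}}$ from \emph{above} by $\lambda_{_{\infty}}=\lambda(\trestar{\infty}{\infty}{\infty})=\tfrac{3\sqrt{2}}{2}$ (every finite $3$--star is a subgraph of the infinite one), giving $\Rnlamkvot{j}{j+1}{0}\geq\Rnlamkvot{1}{2}{0}\geq\Rnlamkvot{1}{2}{\infty}=\tfrac{3\sqrt{2}}{7}$, and similarly $\Rnlamkvot{j}{j+2}{0}\geq\Rnlamkvot{1}{3}{\infty}=\tfrac{2}{5}$; the sum exceeds $1$. For family $4)$ one needs a slightly sharper bound at small $j$ (the paper checks $j=2$ against $\lambda_{_{1}}=2.1$). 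Your treatment of family $2)$ using lemma \ref{tregraflem2}(1) directly is actually cleaner than the paper's, and for family $5)$ note that lemma \ref{tregraflem2}(2) already gives $2\Rnlamkvot{j}{j+2}{0}=1$ exactly for \emph{every} $j$, not just $j=1$, so no monotonicity is needed there.
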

\begin{proof} Let $\lam{\infty}$ denote the largest eigenvalue of ``\trestar{\infty}{\infty}{\infty}''. It is not difficult to show that $\lam{\infty}=\frac{3\sqrt{2}}{2}.$

\noindent{}{\bf 1)} If $\lam{0}=\lambda(\trestar{j}{j}{j+n}),$ then clearly $\Rnlamkvot{j}{j}{0}+\Rnlamkvot{j}{j+n}{0}\geq{}1.$\\[0.3cm]

\noindent{}{\bf 2)} Let $\lam{0}=\lambda(\trestar{j}{j+1}{j+1}).$ We must show that $2\Rnlamkvot{j}{j+1}{0}\geq{}1.$ By lemma \ref{tregraflem1} we have
\[2\Rnlamkvot{j}{j+1}{0}\geq{}2\Rnlamkvot{1}{2}{0}\geq{}2\Rnlamkvot{1}{2}{\infty} =\frac{6\sqrt{2}}{7}>1.\]
Hence \trestar{j}{j+1}{j+1} satisfies (\ref{tregrafeqn1}).\\[0.3cm]
\noindent{}{\bf 3)}  Let $\lam{0}=\lambda(\trestar{j}{j+1}{j+2}).$ We must show that $\Rnlamkvot{j}{j+1}{0}+\Rnlamkvot{j}{j+2}{0}\geq{}1.$ We have
\[\Rnlamkvot{j}{j+1}{0}\geq{}\Rnlamkvot{1}{2}{0}\geq{}\Rnlamkvot{1}{2}{\infty}= \frac{3\sqrt{2}}{7}\]
\[\Rnlamkvot{j}{j+2}{0}\geq{}\Rnlamkvot{1}{3}{0}\geq{}\Rnlamkvot{1}{3}{\infty}= \frac{2}{5}.\]
So  $\Rnlamkvot{j}{j+1}{0}+\Rnlamkvot{j}{j+2}{0}\geq{}\frac{3\sqrt{2}}{7}+\frac{2}{5}>1,$ and hence \trestar{j}{j+1}{j+2} satisfies (\ref{tregrafeqn1}).\\[0.3cm]
\noindent{}{\bf 4)}  Let $\lam{0}=\lambda(\trestar{j}{j+1}{j+3}).$ We must show that $\Rnlamkvot{j}{j+1}{0}+\Rnlamkvot{j}{j+3}{0}\geq{}1.$

\noindent{}For $j\geq{}3$ it is enough to show
\[\Rnlamkvot{3}{4}{\infty}+\Rnlamkvot{3}{6}{\infty}\geq{}1.\]
And since $\Rnlamkvot{3}{4}{\infty}\approx{}0.68$ and $\Rnlamkvot{3}{6}{\infty}\approx{}0.33$ this is satisfied.\\[0.2cm]
For $j=2$ we have $\lam{0}\approx{}2.0697.$ Put $\lam{1}=2.1,$ then all we need to show is
\[\Rnlamkvot{2}{3}{1}+\Rnlamkvot{2}{5}{1}\geq{}1.\]
And since $\Rnlamkvot{2}{3}{1}\approx{}0.67$ and $\Rnlamkvot{2}{5}{1}\approx{}0.33$ this is satisfied.\\[0.2cm]

\noindent{}{\bf 5)}  Let $\lam{0}=\lambda(\trestar{j}{j+2}{j+2}).$ We must show that $2\Rnlamkvot{j}{j+2}{0}\geq{}1.$ $\lam{0}$ satisfies the equation
\[\begin{array}{cccl}
& \lam{0} & = & \Rnlamkvot{j-1}{j}{0}+2\Rnlamkvot{j+1}{j+2}{0}\\
\Downarrow & & & \\
& \lam{0}\Rnlam{j}{0}\Rnlam{j+2}{0} & = & \Rnlam{j-1}{0}\Rnlam{j+2}{0} +2\Rnlam{j+1}{0}\Rnlam{j}{0} \\
\Downarrow & & & \\
& 2\Rnlam{j+1}{0}\Rnlam{j}{0}&=&\Rnlam{j+2}{0}(\lam{0}\Rnlam{j}{0}-\Rnlam{j-1}{0})\\
\Downarrow & & & \\
& 2\Rnlam{j+1}{0}\Rnlam{j}{0}&=&\Rnlam{j+2}{0}\Rnlam{j+1}{0})\\
\Downarrow & & & \\
&2\Rnlamkvot{j}{j+2}{0} & = & 1.
\end{array}\]
Hence \trestar{j}{j+2}{j+2} satisfies (\ref{tregrafeqn1}).

To show that these are the only values of $(j,k,l),$ for which \trestar{j}{k}{l} satisfies (\ref{tregrafeqn1}), we argue as follows.

By lemma \ref{tregraflem3} it suffices to show that \trestar{j}{j+1}{j+4} and \trestar{j}{j+2}{j+3} do not satisfy (\ref{tregrafeqn1}).

Let $\lam{0}=\lambda(\trestar{j}{j+1}{j+m}),\;m\geq{}1$ and $\lam{1}=\lambda(\trestar{j}{j+1}{j+1}),$ then $\lam{1}\leq{}\lam{0}$ and by lemmas \ref{tregraflem1},  \ref{tregraflem2} we have
\[\Rnlamkvot{j}{j+1}{0} + \Rnlamkvot{j}{j+m}{0}\leq{}\Rnlamkvot{j}{j+1}{1}+e^{-mx}=\mbox{$\frac{1}{\sqrt{2}}$}+e^{-mx}.\]

\noindent{}For $m=4,$ $\frac{1}{\sqrt{2}}+e^{-4x}<1$ corresponds to $x>\frac{\ln(\sqrt{2})-\ln(\sqrt{2}-1)}{4},$ which again corresponds to\\$\lambda_{_{0}}^{2}>4.3889\cdots.$ We have $\lambda^{2}(\trestar{3}{4}{7})\approx{}4.4107,$ hence we have excluded \trestar{j}{j+1}{j+m} for $j\geq{}3$ and $m\geq{}4.$

\noindent{}For $m=5,$ $\frac{1}{\sqrt{2}}+e^{-5x}<1$ corresponds to $\lambda_{_{0}}^{2}>4.2461\cdots.$ We have $\lambda^{2}(\trestar{2}{3}{7})\approx{}4.3027,$ hence we have excluded \trestar{2}{3}{2+m} for $j\geq{}3$ and $m\geq{}5.$ 

We are left with the case \trestar{2}{3}{6}. Let $\lam{0}=\lambda(\trestar{2}{3}{6})\approx{}2.0728,$ and let $\lam{1}=2.07.$ Then 
\[\Rnlamkvot{2}{3}{0} + \Rnlamkvot{2}{6}{0}\leq{}\Rnlamkvot{2}{3}{1}+\Rnlamkvot{2}{6}{1}\approx{} 0.69+0.28<1.\]

Let $\lam{0}=\lambda(\trestar{j}{j+2}{j+m})$ and $\lam{1}=\lambda(\trestar{j}{j+2}{j+2}).$ As before we have
\[\Rnlamkvot{j}{j+2}{0} + \Rnlamkvot{j}{j+m}{0}\leq{}\Rnlamkvot{j}{j+2}{1}+e^{-mx}=\mbox{$\frac{1}{2}$}+e^{-mx}.\]
\noindent{}For $m=3,$ $\frac{1}{2}+e^{-3x}<1$ corresponds to $x>\frac{\ln2}{3},$ which again corresponds to $\lambda_{_{0}}^{2}>4.2173\cdots.$ We have $\lambda^{2}(\trestar{2}{4}{5})\approx{}4.3235,$ hence we have excluded \trestar{j}{j+2}{j+m} for $j\geq{}2$ and $m\geq{}3.$

\noindent{}For \trestar{1}{3}{4} we have $\lam{0}\approx{}2.0153.$ Put $\lam{1}=2.01,$ then 
\[\Rnlamkvot{1}{3}{0} + \Rnlamkvot{1}{4}{0}\leq{}\Rnlamkvot{1}{3}{1}+\Rnlamkvot{1}{4}{1}\approx{} 0.49+0.38<1,\] and we conclude that \trestar{j}{j+2}{j+m} does not satisfy (\ref{tregrafeqn1}) for $j\geq{}1$ and $m\geq{}3.$
\end{proof}
The above determined graphs give rise to the following values of the index (all terms are approximate), of irreducible subfactors of the hyperfinite $II_{1}-$factor.

\begin{center}
{\bf Tables of the Index Values Corresponding to the Graphs:\\[5mm]}
\begin{tabular}{|r|c|c|c|c|}
\hline
j&S(j,j+1,j+1)&S(j,j+1,j+2)&
S(j,j+1,j+3)&S(j,j+2,j+2)\\
\hline

2 &    4.214320 & 
   4.260757 & 
   4.283998 & 
   4.302776\\  
   3 &    4.379878 & 
   4.397514 & 
   4.406262 & 
   4.414214\\  
   4 &    4.445787 & 
   4.453260 & 
   4.456966 & 
   4.460505\\  
   5 &    4.474491 & 
   4.477873 & 
   4.479553 & 
   4.481194\\  
   6 &    4.487695 & 
   4.489287 & 
   4.490080 & 
   4.490864\\  
   7 &    4.493975 & 
   4.494744 & 
   4.495127 & 
   4.495508\\  
   8 &    4.497024 & 
   4.497400 & 
   4.497588 & 
   4.497775\\  
   9 &    4.498522 & 
   4.498708 & 
   4.498801 & 
   4.498894\\  
  10 &    4.499264 & 
   4.499356 & 
   4.499402 & 
   4.499449\\  
  11 &    4.499633 & 
   4.499679 & 
   4.499702 & 
   4.499725\\  
  12 &    4.499817 & 
   4.499840 & 
   4.499851 & 
   4.499863\\  
  13 &    4.499908 & 
   4.499920 & 
   4.499926 & 
   4.499931\\  
  14 &    4.499954 & 
   4.499960 & 
   4.499963 & 
   4.499966\\  
  15 &    4.499977 & 
   4.499980 & 
   4.499981 & 
   4.499983\\  
  16 &    4.499989 & 
   4.499990 & 
   4.499991 & 
   4.499991\\  
  17 &    4.499994 & 
   4.499995 & 
   4.499995 & 
   4.499996\\  
  18 &    4.499997 & 
   4.499997 & 
   4.499998 & 
   4.499998\\  
  19 &    4.499999 & 
   4.499999 & 
   4.499999 & 
   4.499999\\  
  20 &    4.499999 & 
   4.499999 & 
   4.499999 & 
   4.499999\\  
  21 &    4.500000 & 
   4.500000 & 
   4.500000 & 
   4.500000\\  
\hline
\end{tabular}\hspace{4mm}
\begin{tabular}{|r|c|}
\hline
$j$ & $S(j,j,\infty)$ \\ \hline 
2  &   4.236068 \\ 
3   &   4.382976 \\ 
4 &   4.446352 \\ 
5  &  4.474609 \\ 
6&   4.487721 \\ 
7&   4.493981 \\ 
8&   4.497025 \\ 
9&   4.498522 \\ 
10&   4.499264\\  
11&   4.499633 \\ 
12 &   4.499817 \\ 
13 &   4.499908 \\ 
14 &   4.499954 \\ 
15 &   4.499977 \\ 
16 &   4.499989 \\ 
17 &   4.499994 \\ 
18 &   4.499997 \\ 
19 &   4.499999 \\ 
20 &   4.499999 \\ 
21 &   4.500000 \\ \hline
\end{tabular}\\
\end{center}
\begin{center}\begin{tabular}{|r|c|c|c|c|c|c|}\hline
$l $&  S$(2,2,2+l)$ &S$(3,3,3+l)$ &
     S$(4,4,4+l)$ &S$(5,5,5+l)$&S$(6,6,6+l)$&S$(7,7,7+l)$\\  
     \hline 
 0 &    4.000000 & 
   4.302776 & 
   4.414214 & 
   4.460505 & 
   4.481194 & 
   4.490864\\  
   1 &    4.114908 & 
   4.342923 & 
   4.430385 & 
   4.467599 & 
   4.484472 & 
   4.492427\\  
   2 &    4.170086 & 
   4.362340 & 
   4.438283 & 
   4.471092 & 
   4.486095 & 
   4.493204\\  
   3 &    4.198691 & 
   4.372130 & 
   4.442232 & 
   4.472834 & 
   4.486905 & 
   4.493592\\  
   4 &    4.214320 & 
   4.377203 & 
   4.444234 & 
   4.473710 & 
   4.487311 & 
   4.493786\\  
   5 &    4.223177 & 
   4.379878 & 
   4.445259 & 
   4.474153 & 
   4.487514 & 
   4.493883\\  
   6 &    4.228328 & 
   4.381305 & 
   4.445787 & 
   4.474377 & 
   4.487617 & 
   4.493932\\  
   7 &    4.231379 & 
   4.382072 & 
   4.446059 & 
   4.474491 & 
   4.487669 & 
   4.493957\\  
   8 &    4.233210 & 
   4.382486 & 
   4.446200 & 
   4.474549 & 
   4.487695 & 
   4.493969\\  
   9 &    4.234318 & 
   4.382710 & 
   4.446273 & 
   4.474578 & 
   4.487708 & 
   4.493975\\  
  10 &    4.234993 & 
   4.382831 & 
   4.446311 & 
   4.474593 & 
   4.487714 & 
   4.493978\\  
  11 &    4.235407 & 
   4.382897 & 
   4.446331 & 
   4.474601 & 
   4.487718 & 
   4.493980\\  
  12 &    4.235660 & 
   4.382933 & 
   4.446341 & 
   4.474605 & 
   4.487719 & 
   4.493980\\  
  13 &    4.235817 & 
   4.382953 & 
   4.446346 & 
   4.474607 & 
   4.487720 & 
   4.493981\\  
  14 &    4.235913 & 
   4.382963 & 
   4.446349 & 
   4.474608 & 
   4.487721 & 
   4.493981\\  
  15 &    4.235972 & 
   4.382969 & 
   4.446351 & 
   4.474608 & 
   4.487721 & 
   4.493981\\  
  16 &    4.236009 & 
   4.382972 & 
   4.446351 & 
   4.474608 & 
   4.487721 & 
   4.493981\\  
  17 &    4.236031 & 
   4.382974 & 
   4.446352 & 
   4.474608 & 
   4.487721 & 
   4.493981\\  
  18 &    4.236045 & 
   4.382975 & 
   4.446352 & 
   4.474609 & 
   4.487721 & 
   4.493981\\  
  19 &    4.236054 & 
   4.382975 & 
   4.446352 & 
   4.474609 & 
   4.487721 & 
   4.493981\\  
  20 &    4.236059 & 
   4.382975 & 
   4.446352 & 
   4.474609 & 
   4.487721 & 
   4.493981\\
\hline
\end{tabular}
\end{center}

\setcounter{equation}{0}
%sidst rettet 17/12
\setcounter{equation}{0}

\section{Ocneanu's example of a Commuting Square Based on the Graph $E_{10}=S(1,2,6).$}
The construction in this section is due to A. Ocneanu \cite{Ocn}.
As shown in chapter \ref{chapter1} and chapter  \ref{trestarchap} it is not possible to construct commuting squares of one of the simple forms
\[\begin{array}{lcl}
      {\cal B} & \subset_{G^{^{t}}} & {\cal D} \\
       \cup_{G} &\,&\cup_{G^{^{t}}}\\
      {\cal A} & \subset_{G} & {\cal C}  \end{array}\hspace{1cm}
\begin{array}{lcl}
       {\cal B}  & \subset_{G^{^{t}}G-I} & {\cal D} \\
       \cup_{G} &\,&\cup_{G}\\
       {\cal A} & \subset_{GG^{^{t}}-I} & {\cal C}  \end{array},\]
where $G$ is the inclusion matrix with Bratteli-diagram equal to $E_{10}.$ The following example was found  by a search on a computer for a polynomial $P,$ such that $P(GG^{^{t}})$ and $P(G^{^{t}}G)$ matrices with small non-negative integer entries. It turned out that
\[P(t)= t^{4}-8t^{3}+20t^{2}-16t+3\]
could be used to produce a commuting square with inclusions
\begin{equation}
\label{specialcsq}
\begin{array}{lcl}
       {\cal C}  & \subset_{P(G^{^{t}}G)} & {\cal D} \\
       \cup_{G} &\,&\cup_{G}\\
       {\cal A} & \subset_{P(GG^{^{t}})} & {\cal B}  \end{array}
\end{equation}
for $E_{10}.$ Note that $P(GG^{^{t}})G=GP(G^{^{t}}G)$ and $G^{^{t}}P(GG^{^{t}})=P(G^{^{t}}G)G^{^{t}},$ so any commuting square with these inclusion matrices will be symmetric in the sense of  \ref{symmetricdefn}.

Let us label the vertices of $\;\Gamma = E_{10}$ as follows
\begin{center}
\setlength{\unitlength}{1cm}
\begin{picture}(10,3)
\put(0.1,0.9){$\Gamma=$}
\put(1,1){\line(1,0){8}}
\put(3,1){\line(0,1){1}}
\multiput(1,1)(1,0){9}{\circle*{0.15}}
\put(3,2){\circle*{0.15}}
\put(0.9,0.4){$A$}
\put(1.9,0.4){$a$}
\put(2.9,0.4){$B$}
\put(3.9,0.4){$c$}
\put(4.9,0.4){$C$}
\put(5.9,0.4){$d$}
\put(6.9,0.4){$D$}
\put(7.9,0.4){$e$}
\put(8.8,0.4){$E$}
\put(2.9,2.15){$b$}
\end{picture}
\end{center}
where $(A,B,C,D,E)$ and $(a,b,c,d,e)$ correspond to the two layers in a bi-partition of $\Gamma.$

\noindent{}Writing the vertices in the order $(A,B,C,D,E,a,b,c,d,e)$ the adjacency matrix of $\Gamma$ is
\[\Delta_{_{\Gamma}}=
\left(\begin{array}{cc}
0 & G\\G^{^{t}} & 0\end{array}\right),\]
where
\[G=\left(\begin{array}{ccccc}
1&0&0&0&0\\
1&1&1&0&0\\
0&0&1&1&0\\
0&0&0&1&1\\
0&0&0&0&1\end{array}\right),\]
and with the above polynomial one gets
\[P(GG^{^{t}})=\left(\begin{array}{ccccc}
0&1&0&0&1\\
1&2&1&1&0\\
0&1&2&0&0\\
0&1&0&1&0\\
1&0&0&0&1\end{array}\right),\;\;\;P(G^{^{t}}G)=\left(\begin{array}{ccccc}
1&1&1&0&1\\
1&1&0&1&0\\
1&0&2&1&0\\
0&1&1&1&0\\
1&0&0&0&1\end{array}\right).\]
Note that $P(GG^{^{t}})$ mixes the elements of the ``upper case letters'' of $\Gamma$ strongly, in the sense that there is an edge joining the endpoints $A$ and $E$ of the two long legs of the graph.

Let $c({\cal A})$ (respective $c({\cal B}),$ $c({\cal C})$ and $c({\cal D})$) denote the set of minimal central projections in \ca{} (respectively \cc{}, \cc{} and \cd{}). Then, with the chosen bi-partition of $E_{10},$ the elements of $c(\ca)$ and $c(\cc)$ are labeled by $(A,B,C,D,E),$  and the elements of $c(\cb)$ and $c(\cd)$ are labeled by $(a,b,c,d,e).$

To prove the existence of a commuting square of the form (\ref{specialcsq}) is equivalent to constructing a unitary matrix, $u,$ satisfying the bi-unitarity condition (\ref{biunitcond}). $u$ is of the form
\[u=\bigoplus_{(p,s)}u^{(p,s)}\]
where $(p,s)$ runs over all $p\in{}c(\ca)$ and all $s\in{}c(\cd)$ which are connected by a path (either through $c(\cc)$ or through $c(\cb)$). Each direct summand of $u^{(p,s)}$ is a $n(p,s)\times{}n(p,s)-$matrix, where $n(p,s)$ is the number of paths from $p$ to $s$ through $c(\cc)$ (or $c(\cb)$), so each $u^{(p,s)}$ is a block matrix, indexed as follows
\[u^{(p,s)} = \left(u^{(p,s)}_{q,r}\right)_{q,r},\]
where $(q,r)$ runs over all possible $r\in{}c(\cc)$ and  $q\in{}c(\cb),$ that a path from $p$ to $s$ can go through. Since the vertical inclusions, $\ca\subseteq\cb$ and $\cc\subseteq\cd,$ do not have multiple edges, each $u^{(p,s)}_{q,r}$ is a $m\times{}n-$matrix, where $m$ is the multiplicity of the edge $qs$ in the inclusion $\cb\subseteq\cd$ and  $n$ is the multiplicity of the edge $pr$ in the inclusion $\ca\subseteq\cc.$ 

Let $\xi(\cdot)$ (resp. $\eta(\cdot)$) denote the Perron-Frobenius vector for the graph of $\ca\subseteq\cb$ (resp. $\cc\subseteq\cd$). Set
\[w(p,q,r,s)=\sqrt{\frac{\xi(p)\eta(s)}{\xi(q)\eta(r)}}.\]
Define a matrix $v$ by
\[v=\bigoplus_{(q,r)}v^{(q,r)},\]
where $v^{(q,r)}$ is a square matrix, which can be written as a block matrix $v^{(q,r)}=\left(v^{(q,r)}_{p,s}\right)_{p,s},$ with each block  given by
\begin{equation}
\label{e10vdef}
v^{(q,r)}_{p,s}=w(p,q,r,s)\left(u^{(p,s)}_{q,r}\right)^{t},
\end{equation}
where $(p,q,r,s)$ runs through all quadruples in $c(\ca)\times{}c(\cb)\times{}c(\cc)\times{}c(\cd),$ which can be completed to a cycle $p-r-s-q-p.$ 

The bi-unitary condition says, that one should be able to choose a unitary $u$ as above, such that $v$ is also unitary.

The possible quadruples $(p,r,q,s)$ are completely determined by the two vertical edges $pq$ and $rs.$ Since $E_{10}$ has $9$ edges, there are at most $9\times{}9$ blocks in $u$ and $v$. The diagrams for $u$ and $v$ on the next page  show which combinations of $(pq,rs)$ occur. The dots indicates the size of a block, and the frames tell which blocks in $u$ (resp. $v$) belong to the same direct summand of $u$ (resp. $v$). The two figures are easily deduced from the given inclusion matrices.

\noindent{}Note how one figure can be obtained from the other by reflecting in the main diagonal.

\label{e10page}
\begin{center}
\setlength{\unitlength}{0.6cm}
\begin{picture}(19,19)
\put(7.5,19){Diagram for $u$.}
\multiput(1,0)(2,0){10}{\line(0,1){18}}
\multiput(1,0)(0,2){10}{\line(1,0){18}}
\put(1,18){\line(-1,1){1}}
\put(1,18){\line(-1,0){1}}
\put(1,18){\line(0,1){1}}
\put(-0.2,18.2){$pq$}
\put(0.4,18.8){$rs$}
\put(1.7,18.2){$Aa$}
\put(3.7,18.2){$Ba$}
\put(5.7,18.2){$Bb$}
\put(7.7,18.2){$Bc$}
\put(9.7,18.2){$Cc$}
\put(11.7,18.2){$Cd$}
\put(13.7,18.2){$Dd$}
\put(15.7,18.2){$De$}
\put(17.7,18.2){$Ee$}
\put(0,16.9){$Aa$}
\put(0,14.9){$Ba$}
\put(0,12.9){$Bb$}
\put(0,10.9){$Bc$}
\put(0,8.9){$Cc$}
\put(0,6.9){$Cd$}
\put(0,4.9){$Dd$}
\put(0,2.9){$De$}
\put(0,0.9){$Ee$}

\put(3.9,16.9){$\bullet$}
\put(5.9,16.9){$\bullet$}
\put(7.9,16.9){$\bullet$}
\put(17.9,16.9){$\bullet$}

\put(1.9,14.9){$\bullet$}
\put(4.15,14.9){$\bullet$}
\put(3.65,14.9){$\bullet$}
\put(6.15,14.9){$\bullet$}
\put(5.65,14.9){$\bullet$}
\put(8.15,14.9){$\bullet$}
\put(7.65,14.9){$\bullet$}
\put(9.9,14.9){$\bullet$}
\put(16,14.9){$\bullet$}

\put(1.9,12.9){$\bullet$}
\put(4.15,12.9){$\bullet$}
\put(3.65,12.9){$\bullet$}
\put(6.15,12.9){$\bullet$}
\put(5.65,12.9){$\bullet$}
\put(11.9,12.9){$\bullet$}
\put(13.9,12.9){$\bullet$}

\put(1.9,10.9){$\bullet$}
\put(4.15,10.9){$\bullet$}
\put(3.65,10.9){$\bullet$}
\put(8.15,11.2){$\bullet$}
\put(7.65,11.2){$\bullet$}
\put(8.15,10.6){$\bullet$}
\put(7.65,10.6){$\bullet$}
\put(9.9,10.6){$\bullet$}
\put(9.9,11.2){$\bullet$}
\put(11.9,10.9){$\bullet$}
\put(13.9,10.9){$\bullet$}

\put(3.9,8.9){$\bullet$}
\put(7.9,8.6){$\bullet$}
\put(7.9,9.2){$\bullet$}
\put(10.15,9.2){$\bullet$}
\put(9.65,9.2){$\bullet$}
\put(10.15,8.6){$\bullet$}
\put(9.65,8.6){$\bullet$}
\put(12.15,8.9){$\bullet$}
\put(11.65,8.9){$\bullet$}

\put(5.9,6.9){$\bullet$}
\put(7.9,6.9){$\bullet$}
\put(9.65,6.9){$\bullet$}
\put(10.15,6.9){$\bullet$}
\put(11.65,6.9){$\bullet$}
\put(12.15,6.9){$\bullet$}

\put(5.9,4.9){$\bullet$}
\put(7.9,4.9){$\bullet$}
\put(13.9,4.9){$\bullet$}

\put(3.9,2.9){$\bullet$}
\put(15.9,2.9){$\bullet$}

\put(1.9,0.9){$\bullet$}
\put(17.9,0.9){$\bullet$}
\linethickness{2pt}
\put(3,18){\line(1,0){6}}
\put(17,18){\line(1,0){2}}
\put(1,16){\line(1,0){10}}
\put(15,16){\line(1,0){4}}
\put(11,14){\line(1,0){6}}
\put(5,12){\line(1,0){2}}
\put(1,10){\line(1,0){4}}
\put(7,10){\line(1,0){8}}
\put(3,8){\line(1,0){4}}
\put(5,6){\line(1,0){10}}
\put(3,4){\line(1,0){6}}
\put(13,4){\line(1,0){4}}
\put(1,2){\line(1,0){4}}
\put(15,2){\line(1,0){4}}
\put(1,0){\line(1,0){2}}
\put(17,0){\line(1,0){2}}
\put(1,16){\line(0,-1){6}}
\put(1,2){\line(0,-1){2}}
\put(3,18){\line(0,-1){2}}
\put(3,10){\line(0,-1){2}}
\put(3,4){\line(0,-1){4}}
\put(5,18){\line(0,-1){16}}
\put(7,18){\line(0,-1){14}}
\put(9,18){\line(0,-1){2}}
\put(9,6){\line(0,-1){2}}
\put(11,16){\line(0,-1){10}}
\put(13,10){\line(0,-1){6}}
\put(15,16){\line(0,-1){6}}
\put(15,6){\line(0,-1){4}}
\put(17,18){\line(0,-1){4}}
\put(17,4){\line(0,-1){4}}
\put(19,18){\line(0,-1){2}}
\put(19,2){\line(0,-1){2}}
\end{picture}
\end{center}\vspace{0.3cm}
\begin{center}
\setlength{\unitlength}{0.6cm}
\begin{picture}(19,19)
\put(7.5,19){Diagram for $v$.}
\multiput(1,0)(2,0){10}{\line(0,1){18}}
\multiput(1,0)(0,2){10}{\line(1,0){18}}
\put(1,18){\line(-1,1){1}}
\put(1,18){\line(-1,0){1}}
\put(1,18){\line(0,1){1}}
\put(-0.2,18.2){$pq$}
\put(0.4,18.8){$rs$}
\put(1.7,18.2){$Aa$}
\put(3.7,18.2){$Ba$}
\put(5.7,18.2){$Bb$}
\put(7.7,18.2){$Bc$}
\put(9.7,18.2){$Cc$}
\put(11.7,18.2){$Cd$}
\put(13.7,18.2){$Dd$}
\put(15.7,18.2){$De$}
\put(17.7,18.2){$Ee$}
\put(0,16.9){$Aa$}
\put(0,14.9){$Ba$}
\put(0,12.9){$Bb$}
\put(0,10.9){$Bc$}
\put(0,8.9){$Cc$}
\put(0,6.9){$Cd$}
\put(0,4.9){$Dd$}
\put(0,2.9){$De$}
\put(0,0.9){$Ee$}

\put(3.9,16.9){$\bullet$}
\put(5.9,16.9){$\bullet$}
\put(7.9,16.9){$\bullet$}
\put(17.9,16.9){$\bullet$}

\put(1.9,14.9){$\bullet$}
\put(3.9,14.6){$\bullet$}\put(3.9,15.2){$\bullet$}
\put(5.9,14.6){$\bullet$}\put(5.9,15.2){$\bullet$}
\put(7.9,14.6){$\bullet$}\put(7.9,15.2){$\bullet$}
\put(9.9,14.9){$\bullet$}
\put(16,14.9){$\bullet$}

\put(1.9,12.9){$\bullet$}
\put(3.9,12.6){$\bullet$}\put(3.9,13.2){$\bullet$}
\put(5.9,12.6){$\bullet$}\put(5.9,13.2){$\bullet$}
\put(11.9,12.9){$\bullet$}
\put(13.9,12.9){$\bullet$}

\put(1.9,10.9){$\bullet$}
\put(3.9,10.6){$\bullet$}\put(3.9,11.2){$\bullet$}
\put(8.15,11.2){$\bullet$}
\put(7.65,11.2){$\bullet$}
\put(8.15,10.6){$\bullet$}
\put(7.65,10.6){$\bullet$}
\put(9.65,10.9){$\bullet$}\put(10.15,10.9){$\bullet$}
\put(11.9,10.9){$\bullet$}
\put(13.9,10.9){$\bullet$}

\put(3.9,8.9){$\bullet$}
\put(7.65,8.9){$\bullet$}\put(8.15,8.9){$\bullet$}
\put(10.15,9.2){$\bullet$}
\put(9.65,9.2){$\bullet$}
\put(10.15,8.6){$\bullet$}
\put(9.65,8.6){$\bullet$}
\put(11.9,8.6){$\bullet$}\put(11.9,9.2){$\bullet$}

\put(5.9,6.9){$\bullet$}
\put(7.9,6.9){$\bullet$}
\put(9.9,6.6){$\bullet$}\put(9.9,7.2){$\bullet$}
\put(11.9,7.2){$\bullet$}\put(11.9,6.6){$\bullet$}

\put(5.9,4.9){$\bullet$}
\put(7.9,4.9){$\bullet$}
\put(13.9,4.9){$\bullet$}

\put(3.9,2.9){$\bullet$}
\put(15.9,2.9){$\bullet$}

\put(1.9,0.9){$\bullet$}
\put(17.9,0.9){$\bullet$}
\linethickness{2pt}

\put(3,18){\line(1,0){6}}
\put(17,18){\line(1,0){2}}
\put(1,16){\line(1,0){2}}
\put(9,16){\line(1,0){2}}
\put(15,16){\line(1,0){4}}
\put(1,14){\line(1,0){16}}
\put(1,12){\line(1,0){14}}
\put(1,10){\line(1,0){2}}
\put(3,8){\line(1,0){10}}
\put(9,6){\line(1,0){6}}
\put(3,4){\line(1,0){6}}
\put(13,4){\line(1,0){4}}
\put(1,2){\line(1,0){4}}
\put(15,2){\line(1,0){4}}
\put(1,0){\line(1,0){2}}
\put(17,0){\line(1,0){2}}

\put(1,16){\line(0,-1){6}}
\put(1,2){\line(0,-1){2}}
\put(3,18){\line(0,-1){10}}
\put(3,4){\line(0,-1){4}}
\put(5,8){\line(0,-1){6}}
\put(7,14){\line(0,-1){2}}
\put(9,18){\line(0,-1){4}}
\put(9,12){\line(0,-1){8}}
\put(11,16){\line(0,-1){4}}
\put(13,14){\line(0,-1){10}}
\put(15,16){\line(0,-1){4}}
\put(15,6){\line(0,-1){4}}
\put(17,18){\line(0,-1){4}}
\put(17,4){\line(0,-1){4}}
\put(19,18){\line(0,-1){2}}
\put(19,2){\line(0,-1){2}}
\end{picture}
\end{center}

\newpage{}
\noindent{}The two figures show that $u$ and $v$ both have 20 direct summands, namely
3 $3\times{}3$ matrices, 3 $2\times{}2$ matrices and 14 scalar matrices,
and  these are subdivided in 36 blocks $u_{q,r}^{(p,s)}$ respectively
$v_{q,r}^{(p,s)}.$

As in chapter \ref{chapter1} we let $R_{n}(\lambda)$ denote the $n'$th degree
polynomial, defined inductively by
\[R_{0}(\lambda)=1,\;\;R_{1}(\lambda)=\lambda,\;\;R_{n}(\lambda)=
\lambda{}R_{n-1}(\lambda)-R_{n-2}(\lambda).\]
For $\lambda\geq{}2,$  $R_{n}(\lambda)$ is positive for all $n$ and increasing
in $n.$ For simplicity we will denote $R_{n}(\lambda(E_{10}))$ by $R_{n},$
where $\lambda(E_{10})$ is the largest eigenvalue of the adjacency matrix  of $E_{10}.$\\$\lambda(E_{10})\approx{}2.006594.$ The corresponding
eigenvector (the Perron-Frobenius eigenvector) $\xi$ is given by (see chapter \ref{chapter1})

\begin{center}
\setlength{\unitlength}{1cm}
\begin{picture}(10,3)
\put(1,1){\line(1,0){8}}
\put(3,1){\line(0,1){1}}
\multiput(1,1)(1,0){9}{\circle*{0.15}}
\put(3,2){\circle*{0.15}}
\put(0.9,0.4){$\frac{1}{R_{2}}$}
\put(1.9,0.4){$\rik{1}{2}$}
\put(2.9,0.4){$1$}
\put(3.9,0.4){$\rik{5}{6}$}
\put(4.9,0.4){$\rik{4}{6}$}
\put(5.9,0.4){$\rik{3}{6}$}
\put(6.9,0.4){$\rik{2}{6}$}
\put(7.9,0.4){$\rik{1}{6}$}
\put(8.8,0.4){$\frac{1}{R_{6}}$}
\put(2.8,2.4){$\frac{1}{R_{1}}$}
\end{picture}
\end{center}
and the eigenvalue equation gives

\begin{equation}
\label{e10eqn}
\frac{R_{1}}{R_{2}}  + \frac{1}{R_{1}} + \frac{R_{5}}{R_{6}} =\lambda(\Gamma) = R_{1}.
\end{equation}

In the following figure we list the transformation factors 
\[w(p,q,r,s)=\sqrt{\frac{\xi(p)\xi(s)}{\xi(q)\xi(r)}}\]
($\xi=\eta$ because the two vertical graphs are equal). Note that $w(p,q,r,s)$ only depends on the two edges $pq$ and $rs.$

\begin{center}
\setlength{\unitlength}{0.6cm}
\begin{picture}(19,19)
\put(6.7,19){Table of $w(p,q,r,s)^{^{2}}$}

\multiput(1,0)(2,0){10}{\line(0,1){18}}
\multiput(1,0)(0,2){10}{\line(1,0){18}}
\put(1,18){\line(-1,1){1}}
\put(1,18){\line(-1,0){1}}
\put(1,18){\line(0,1){1}}
\put(-0.2,18.2){$pq$}
\put(0.4,18.8){$rs$}
\put(1.7,18.2){$Aa$}
\put(3.7,18.2){$Ba$}
\put(5.7,18.2){$Bb$}
\put(7.7,18.2){$Bc$}
\put(9.7,18.2){$Cc$}
\put(11.7,18.2){$Cd$}
\put(13.7,18.2){$Dd$}
\put(15.7,18.2){$De$}
\put(17.7,18.2){$Ee$}
\put(0,16.9){$Aa$}
\put(0,14.9){$Ba$}
\put(0,12.9){$Bb$}
\put(0,10.9){$Bc$}
\put(0,8.9){$Cc$}
\put(0,6.9){$Cd$}
\put(0,4.9){$Dd$}
\put(0,2.9){$De$}
\put(0,0.9){$Ee$}

\put(3.8,16.9){$\frac{1}{R_{_2}}$}
\put(5.7,16.9){$\frac{1}{R_{_1}^{2}}$}
\put(7.4,16.9){\rikl{5}{1}{6}}
\put(17.9,16.9){1}

\put(1.8,14.9){$R_{_2}$}
\put(3.9,14.9){1}
\put(5.7,14.9){$\frac{R_{2}}{R_{1}^{2}}$}
\put(7.4,14.9){\rijkl{2}{5}{1}{6}}
\put(9.4,14.9){\rijkl{2}{5}{1}{4}}
\put(15.9,14.9){1}

\put(1.7,12.9){$R_{_1}^{2}$}
\put(3.7,12.9){$\frac{R_{1}^{2}}{R_{2}}$}
\put(5.9,12.9){1}
\put(11.4,12.9){\rijl{1}{3}{4}}
\put(13.4,12.9){\rijl{1}{3}{2}}

\put(1.4,10.9){\rijl{1}{6}{5}}
\put(3.4,10.9){\rijkl{1}{6}{2}{5}}
\put(7.9,10.9){1}
\put(9.7,10.9){$\frac{R_{6}}{R_{4}}$}
\put(11.4,10.9){\rijkl{3}{6}{4}{5}}
\put(13.4,10.9){\rijkl{3}{6}{2}{5}}

\put(3.4,8.9){\rijkl{1}{4}{2}{5}}
\put(7.7,8.9){$\frac{R_{4}}{R_{6}}$}
\put(9.9,8.9){1}
\put(11.7,8.9){$\frac{R_{3}}{R_{5}}$}

\put(5.4,6.9){\rikl{4}{1}{3}}
\put(7.4,6.9){\rijkl{4}{5}{3}{6}}
\put(9.7,6.9){$\frac{R_{5}}{R_{3}}$}
\put(11.9,6.9){1}

\put(5.4,4.9){\rikl{2}{1}{3}}
\put(7.4,4.9){\rijkl{2}{5}{3}{6}}
\put(13.9,4.9){1}

\put(3.9,2.9){1}
\put(15.9,2.9){1}

\put(1.9,0.9){1}
\put(17.9,0.9){1}

\end{picture}
\end{center}

The squares $\left\|u_{q,r}^{(p,s)}\right\|^{2}$ of 2--Hilbert--Schmidt norms of the blocks of $u,$ can be determined as follows. Using that the scalar summands in $u$ and $v$ must have modulus 1, together with the transformation formula (\ref{e10vdef}), one immediately finds 11 of the 36 norms. The fact that rows and columns in a unitary matrix have 2--norm equal to 1, combined with the transformation formula (\ref{e10vdef}), shows that there is at most one possible value for each $(pq,rs).$ The values are listed in the following diagram

\begin{center}
\setlength{\unitlength}{0.7cm}
\begin{picture}(19,19)
\put(6.9,19){Table of $\left\|u_{q,r}^{(p,s)}\right\|^{2}$}

\multiput(1,0)(2,0){10}{\line(0,1){18}}
\multiput(1,0)(0,2){10}{\line(1,0){18}}
\put(1,18){\line(-1,1){1}}
\put(1,18){\line(-1,0){1}}
\put(1,18){\line(0,1){1}}
\put(-0.2,18.2){$pq$}
\put(0.4,18.8){$rs$}
\put(1.7,18.2){$Aa$}
\put(3.7,18.2){$Ba$}
\put(5.7,18.2){$Bb$}
\put(7.7,18.2){$Bc$}
\put(9.7,18.2){$Cc$}
\put(11.7,18.2){$Cd$}
\put(13.7,18.2){$Dd$}
\put(15.7,18.2){$De$}
\put(17.7,18.2){$Ee$}
\put(0,16.9){$Aa$}
\put(0,14.9){$Ba$}
\put(0,12.9){$Bb$}
\put(0,10.9){$Bc$}
\put(0,8.9){$Cc$}
\put(0,6.9){$Cd$}
\put(0,4.9){$Dd$}
\put(0,2.9){$De$}
\put(0,0.9){$Ee$}

\put(3.9,16.9){$1$}
\put(5.9,16.9){$1$}
\put(7.9,16.9){1}
\put(17.9,16.9){1}

\put(1.8,14.9){$\frac{1}{R_{2}}$}
\put(3.4,14.9){$\frac{R_{2}\!-\!1}{R_{2}}$}
\put(5.9,14.9){$1$}
\put(7.4,14.9){\rikl{7}{2}{5}}
\put(9.4,14.9){\rijkl{1}{4}{2}{5}}
\put(15.9,14.9){1}

\put(1.7,12.9){$\frac{1}{R_{1}^{2}}$}
\put(3.7,12.9){$\frac{R_{2}}{R_{1}^{2}}$}
\put(5.9,12.9){1}
\put(11.4,12.9){\rikl{4}{1}{3}}
\put(13.4,12.9){\rikl{2}{1}{3}}

\put(1.4,10.9){\rikl{5}{1}{6}}
\put(3.4,10.9){\rikl{7}{1}{6}}
\put(7.1,10.9){$1\!\!+\!\!\rijkl{1}{4}{2}{5}$}
\put(9.4,10.9){\rikl{7}{2}{5}}
\put(11.4,10.9){\rikl{2}{1}{3}}
\put(13.4,10.9){\rikl{4}{1}{3}}

\put(3.9,8.9){1}
\put(7.7,8.9){$\frac{R_{3}}{R_{5}}$}
\put(9.3,8.9){$2\!-\!\frac{R_{3}}{R_{5}}$}
\put(11.9,8.9){$1$}

\put(5.9,6.9){$1$}
\put(7.3,6.9){$1\!-\!\frac{R_{3}}{R_{5}}$}
\put(9.7,6.9){$\frac{R_{3}}{R_{5}}$}
\put(11.9,6.9){1}

\put(5.9,4.9){$1$}
\put(7.9,4.9){$1$}
\put(13.9,4.9){$1$}

\put(3.9,2.9){$1$}
\put(15.9,2.9){$1$}

\put(1.9,0.9){$1$}
\put(17.9,0.9){$1$}
\linethickness{2pt}
\put(3,18){\line(1,0){6}}
\put(17,18){\line(1,0){2}}
\put(1,16){\line(1,0){10}}
\put(15,16){\line(1,0){4}}
\put(11,14){\line(1,0){6}}
\put(5,12){\line(1,0){2}}
\put(1,10){\line(1,0){4}}
\put(7,10){\line(1,0){8}}
\put(3,8){\line(1,0){4}}
\put(5,6){\line(1,0){10}}
\put(3,4){\line(1,0){6}}
\put(13,4){\line(1,0){4}}
\put(1,2){\line(1,0){4}}
\put(15,2){\line(1,0){4}}
\put(1,0){\line(1,0){2}}
\put(17,0){\line(1,0){2}}
\put(1,16){\line(0,-1){6}}
\put(1,2){\line(0,-1){2}}
\put(3,18){\line(0,-1){2}}
\put(3,10){\line(0,-1){2}}
\put(3,4){\line(0,-1){4}}
\put(5,18){\line(0,-1){16}}
\put(7,18){\line(0,-1){14}}
\put(9,18){\line(0,-1){2}}
\put(9,6){\line(0,-1){2}}
\put(11,16){\line(0,-1){10}}
\put(13,10){\line(0,-1){6}}
\put(15,16){\line(0,-1){6}}
\put(15,6){\line(0,-1){4}}
\put(17,18){\line(0,-1){4}}
\put(17,4){\line(0,-1){4}}
\put(19,18){\line(0,-1){2}}
\put(19,2){\line(0,-1){2}}
\end{picture}
\end{center}
All the numbers listed are clearly positive. To check, that this table of $\left\|u_{q,r}^{(p,s)}\right\|^{2}$ is an admissible solution to the square-norm problem, we have to check:
\begin{enumerate}
\item{}Sums of rows (resp. columns) within each frame should be 1 or 2, according to the number of rows (resp. columns) in the block. (See the diagram of the blocks of $u$.)
\begin{description}
\item[(a)] $\frac{1}{R_{1}^{2}} + \frac{R_{2}}{R_{1}^{2}} = 1.$
\item[(b)] $\rikl{5}{1}{6} + \rikl{7}{1}{6} = 1.$
\item[(c)] $\frac{1}{R_{2}} + \frac{1}{R_{1}^{2}}+\rikl{5}{1}{6}=1.$
\item[(d)] $\frac{R_{2}-1}{R_{2}}+\frac{R_{2}}{R_{1}^{2}}+\rikl{7}{1}{6}=2.$
\item[(e)] $\rikl{7}{2}{5}+\rijkl{1}{4}{2}{5}=1.$
\item[(f)] $\rikl{4}{1}{3} + \rikl{2}{1}{3}=1.$
\end{description}
\item{}Moreover, by the transformation formula (\ref{e10vdef})
\begin{equation}
\label{e10vmodsq}
\left\|v^{(q,r)}_{p,s}\right\|^{2}=w(p,q,r,s)^{2}\left\|u^{(p,s)}_{q,r}\right\|^{2},
\end{equation}
so by multiplying the numbers in the table of $\left\|u^{(p,s)}_{q,r}\right\|^{2},$ with the numbers in the table of the transition factors, one should obtain an admissible solution to the square-norm problem for $v.$ Because of the properties of the tables on page \pageref{e10page}, the solution to the square-norm problem for $v,$  should just be a  reflection of the table of $\left\|u^{(p,s)}_{q,r}\right\|^{2}$ in the main diagonal. Hence we must show that
\begin{description}
\item[(g)] $\rikl{7}{2}{5}\rik{6}{4}=\rik{3}{5}$  (position $Bc-Cc$).
\item[(h)] $\rikl{2}{1}{3}\rijkl{3}{6}{4}{5}=1-\rik{3}{5}$ (position $Bc-Cd$).
\item[(i)] $\rikl{4}{1}{3}\rijkl{3}{6}{2}{5} = 1$ (position $Bc-Dd$).
\end{description}
All the other identities are trivially true.
\end{enumerate}
Since $R_{1}=\lambda,$ the recursion formula for the $R_{n}'$s can be rewritten as
\[R_{1}R_{n}=R_{n-1}+R_{n+1},\;\;\;n\geq{}1.\]
From this, and $R_{0}=1,$ it follows by induction on $m,$ that
\begin{equation}
\label{e10rprod}
\ri{m}\ri{n}=\ri{n-m}+\ri{n-m+2}+\cdots{}+\ri{m+n-2}+\ri{m+n},\;\;n\geq{}m.\end{equation}
This proves (a), (b), (e) and (f). Moreover (c) follows from the equation (\ref{e10eqn}) by dividing with \ri{1}. Also (d) follows by subtracting (c) from the sum of (a) and (b). The remaining equations can be rewritten as
\begin{description}
\item[(g')] $\ri{6}\ri{7}=\ri{2}\ri{3}\ri{4}.$
\item[(h')] $\ri{2}\ri{6}=\ri{1}\ri{4}(\ri{5}-\ri{3}).$
\item[(i')] $\ri{4}\ri{6}=\ri{1}\ri{2}\ri{5}.$
\end{description}
Multiplying the equation (\ref{e10eqn}) by $\ri{1}\ri{2}\ri{6}$ yields
\[R_{1}^{2}\ri{6}+\ri{2}\ri{6}+\ri{1}\ri{2}\ri{5} = R_{1}^{2}\ri{2}\ri{6},\]
which, by repeated use of (\ref{e10rprod}), transforms to
\[\ri{10}-\ri{6}-\ri{4}=0.\]
Using (\ref{e10rprod}) once more, we get
\[\begin{array}{l}
\ri{6}\ri{7}-\ri{2}\ri{3}\ri{4} = \ri{3}(\ri{10}-\ri{6}-\ri{4})=0,\\[0.3cm]\ri{1}\ri{4}(\ri{5}-\ri{3})-\ri{2}\ri{6}=\ri{10}-\ri{6}-\ri{4}=0,\\[0.3cm]
\ri{4}\ri{6}-\ri{1}\ri{2}\ri{5}=\ri{10}-\ri{6}-\ri{4}=0,
\end{array}\]
which proves (g), (h) and (i).

We can now write up an explicit solution to the bi-unitarity condition.

\noindent{}By (c) $(\sqrt{\frac{1}{R_{2}}},\frac{1}{R_{1}},\sqrt{\rikl{5}{1}{6}})$ is a unit vector in ${\Bbb R}^{3},$ so we can find $x_{_{i,j}}\in{\Bbb R},\;\;i=1,2,3,\;j=1,2,$ such that
\[Y=\left(\begin{array}{ccc}
\sqrt{\frac{1}{R_{2}}} & x_{_{11}} & x_{_{12}} \\
\frac{1}{R_{1}}& x_{_{21}} & x_{_{22}}\\[0.15cm]
\sqrt{\rikl{5}{1}{6}}& x_{_{31}} & x_{_{32}}\end{array}\right)\]
is a unitary matrix.

\noindent{}By (a) and (b)
\[\begin{array}{lcl}
x_{_{11}}^{2}+x_{_{12}}^{2}&=&\frac{R_{2}-1}{R_{2}},\\[0.3cm]
x_{_{21}}^{2}+x_{_{22}}^{2}&=&\frac{R_{2}}{R_{1}^{2}},\\[0.3cm]
x_{_{31}}^{2}+x_{_{32}}^{2}&=&\rikl{7}{1}{6}.
\end{array}\]
Hence there exists 3 unit vectors, $e=(e_{_{1}},e_{_{2}}),\;f=(f_{_{1}},f_{_{2}})$ and $g=(g_{_{1}},g_{_{2}})$ in ${\Bbb R}^{2},$ such that
\[Y=\left(\begin{array}{cc}
\sqrt{\frac{1}{R_{2}}} &  \sqrt{\frac{R_{2}-1}{R_{2}}}(e_{_{1}},e_{_{2}})\\[0.2cm]
\frac{1}{R_{1}}&  \sqrt{\frac{R_{2}}{R_{1}^{2}}}(f_{_{1}},f_{_{2}})\\[0.35cm]
\sqrt{\rikl{5}{1}{6}}& \sqrt{\rikl{7}{1}{6}}(g_{_{1}},g_{_{2}})\end{array}\right).\]
Moreover, by a change of basis, we may assume that $g=(1,0).$

\noindent{}Set 
\[\begin{array}{lcllcllcllcl}
\sigma_{_{1}} & = & \sqrt{\frac{1}{R_{2}}},\hspace{0.5cm} &
\sigma_{_{2}} & = & \sqrt{\frac{R_{2}-1}{R_{2}}},\hspace{0.5cm}&
\sigma_{_{3}} & = & \frac{1}{R_{1}},\hspace{0.5cm}&
\sigma_{_{4}} & = & \sqrt{\frac{R_{2}}{R_{1}}},\\[0.3cm]
\sigma_{_{5}} & = & \sqrikl{5}{1}{6}, &
\sigma_{_{6}} & = & \sqrikl{7}{1}{6},&
\tau_{_{1}}   & = & \sqrikl{7}{2}{5},&
\tau_{_{2}}   & = & \sqrijkl{1}{4}{2}{5},\\[0.3cm]
\rho_{_{1}}   & = & \sqrikl{4}{1}{3}, &
\rho_{_{2}}   & = & \sqrikl{2}{1}{3},&
\mu_{_{1}}    & = & \sqrt{\frac{R_{3}}{R_{5}}},&
\mu_{_{2}}    & = & \sqrt{1-\frac{R_{3}}{R_{5}}}.
\end{array}\]
Then by (a), (b), (c), (d), (e) and (f)
\[\sigma_{_{1}}^{2}+\sigma_{_{2}}^{2}=1,\;\;\sigma_{_{3}}^{2}+\sigma_{_{4}}^{2}=1,\;\;\sigma_{_{5}}^{2}+\sigma_{_{6}}^{2}=1,\]
\[\sigma_{_{1}}^{2}+\sigma_{_{3}}^{2}+\sigma_{_{5}}^{2}=1,\;\;\sigma_{_{2}}^{2}+\sigma_{_{4}}^{2}+\sigma_{_{6}}^{2}=2,\]
\[\tau_{_{1}}^{2}+\tau_{_{2}}^{2}=1,\;\;\rho_{_{1}}^{2}+\rho_{_{2}}^{2}=1,\mbox{ and }\mu_{_{1}}^{2}+\mu_{_{2}}^{2}=1.\]
Define $u=\sum_{p,s}^{\oplus}u^{(p,s)}$ by the table in figure \ref{e10figU}

\begin{figure}
\label{e10figU}
\begin{center}
\setlength{\unitlength}{0.85cm}
\begin{picture}(19,19)
\multiput(1,0)(2,0){10}{\line(0,1){18}}
\multiput(1,0)(0,2){10}{\line(1,0){18}}
\put(1,18){\line(-1,1){1}}
\put(1,18){\line(-1,0){1}}
\put(1,18){\line(0,1){1}}
\put(0,18.2){$pq$}
\put(0.5,18.7){$rs$}
\put(1.7,18.2){$Aa$}
\put(3.7,18.2){$Ba$}
\put(5.7,18.2){$Bb$}
\put(7.7,18.2){$Bc$}
\put(9.7,18.2){$Cc$}
\put(11.7,18.2){$Cd$}
\put(13.7,18.2){$Dd$}
\put(15.7,18.2){$De$}
\put(17.7,18.2){$Ee$}
\put(0.3,16.9){$Aa$}
\put(0.3,14.9){$Ba$}
\put(0.3,12.9){$Bb$}
\put(0.3,10.9){$Bc$}
\put(0.3,8.9){$Cc$}
\put(0.3,6.9){$Cd$}
\put(0.3,4.9){$Dd$}
\put(0.3,2.9){$De$}
\put(0.3,0.9){$Ee$}

\put(3.9,16.9){1}
\put(5.9,16.9){1}
\put(7.9,16.9){1}
\put(17.9,16.9){1}

\put(1.9,14.9){$\sigma_{_{1}}$}
\put(3.1,14.9){$\sigma_{_{2}}(e_{_{1}},e_{_{2}})$}
\put(5.4,14.9){$(f_{_{1}},f_{_{2}})$}
\put(7.4,14.9){$(\tau_{_{1}},0)$}
\put(9.9,14.9){$\tau_{_{2}}$}
\put(15.9,14.9){1}

\put(1.9,12.9){$\sigma_{_{3}}$}
\put(3.1,12.9){$\sigma_{_{4}}(f_{_{1}},f_{_{2}})$}
\put(5.2,12.9){$(f_{_{2}},-f_{_{1}})$}
\put(11.9,12.9){$\rho_{_{1}}$}
\put(13.9,12.9){$\rho_{_{2}}$}

\put(1.9,10.9){$\sigma_{_{5}}$}
\put(3.4,10.9){$(\sigma_{_{6}},0)$}
\put(7,10.9){$\left(\begin{array}{cc}\tau_{_{2}}\!\! & 0 \\0 & 1\end{array}\right)$}
\put(9.1,10.9){$\left(\begin{array}{c}-\tau_{_{1}}\\0\end{array}\right)$}
\put(11.9,10.9){$\rho_{_{2}}$}
\put(13.6,10.9){$-\rho_{_{1}}$}

\put(3.9,8.9){1}
\put(7.1,8.9){$\left(\begin{array}{c}-\mu_{_{1}}\\0\end{array}\right)$}
\put(9,8.9){$\left(\begin{array}{cc}\mu_{_{2}} \!\!& 0 \\0 & 1\end{array}\right)$}
\put(11.6,8.9){(1,0)}

\put(5.9,6.9){1}
\put(7.9,6.9){$\mu_{_{2}}$}
\put(9.4,6.9){$(\mu_{_{1}},0)$}
\put(11.6,6.9){$(0,1)$}

\put(5.9,4.9){1}
\put(7.7,4.9){-1}
\put(13.9,4.9){1}

\put(3.9,2.9){1}
\put(15.9,2.9){1}

\put(1.9,0.9){1}
\put(17.9,0.9){1}
\linethickness{2pt}
\put(3,18){\line(1,0){6}}
\put(17,18){\line(1,0){2}}
\put(1,16){\line(1,0){10}}
\put(15,16){\line(1,0){4}}
\put(11,14){\line(1,0){6}}
\put(5,12){\line(1,0){2}}
\put(1,10){\line(1,0){4}}
\put(7,10){\line(1,0){8}}
\put(3,8){\line(1,0){4}}
\put(5,6){\line(1,0){10}}
\put(3,4){\line(1,0){6}}
\put(13,4){\line(1,0){4}}
\put(1,2){\line(1,0){4}}
\put(15,2){\line(1,0){4}}
\put(1,0){\line(1,0){2}}
\put(17,0){\line(1,0){2}}
\put(1,16){\line(0,-1){6}}
\put(1,2){\line(0,-1){2}}
\put(3,18){\line(0,-1){2}}
\put(3,10){\line(0,-1){2}}
\put(3,4){\line(0,-1){4}}
\put(5,18){\line(0,-1){16}}
\put(7,18){\line(0,-1){14}}
\put(9,18){\line(0,-1){2}}
\put(9,6){\line(0,-1){2}}
\put(11,16){\line(0,-1){10}}
\put(13,10){\line(0,-1){6}}
\put(15,16){\line(0,-1){6}}
\put(15,6){\line(0,-1){4}}
\put(17,18){\line(0,-1){4}}
\put(17,4){\line(0,-1){4}}
\put(19,18){\line(0,-1){2}}
\put(19,2){\line(0,-1){2}}

\end{picture}
\end{center}
\caption{The summands of $u.$}
\end{figure}

Then each of the 20 directed summands in $u,$ indicated by the frames, are unitary by the construction of the unit vectors $e$ and $f.$ Moreover the transformation formula (\ref{e10vdef}), together with (g), (h) and (i), show that $v$ is simply the mirror image of figure \ref{e10figU} in the main diagonal (transposing all matrices), and hence $v$ is also unitary. The conclusion is, that there exists a commuting square with the inclusion matrices given in (\ref{specialcsq}).

\noindent{}It is elementary to check that 
\[\left(\begin{array}{cc}0 & P(GG^{^{t}})\\P(GG^{^{t}}) & 0\end{array}\right)\hspace{1cm}
\left(\begin{array}{cc}0 & P(G^{^{t}}G)\\P(G^{^{t}}G) & 0\end{array}\right)\]
are adjacency matrices for connected bi-partite graphs, so the construction gives a subfactor of the hyperfinite $II_{1}-$factor, with index $\lambda(E_{10})^{2}.$ Furthermore $G$  clearly satisfies Wenzl's criterion for irreducibility of the pair of hyperfinite $II_{1}-$factors (see \cite{Wen2}). Hence we have proved:\\[0.3cm]

{\it There is an irreducible subfactor of the hyperfinite $II_{1}-$ factor of index $\lambda(E_{10})^{2}\approx{}4.026418.$}\\[0.2cm]

By \cite{HGJ} chapter 4, this is the lowest value of $\lambda(\Gamma)^{2}$ above 4, which can be obtained from a finite graph $\Gamma.$

\chapter{Commuting Squares Based on Dynkin Diagrams of Type A}
\label{angraphs}

\setcounter{equation}{0}
%sidst rettet 17/12
\section{Preliminaries}
\setcounter{equation}{0}
We shall consider the graph $A_{_{m}}$ 

\begin{center}
\setlength{\unitlength}{1mm}
\begin{picture}(100,20)
\multiput(10,10)(10,0){11}{\circle*{1.5}}
\put(10,10){\line(1,0){60}}
\put(80,10){\line(1,0){30}}
\put(73,9){$\cdots$}
\put(10,5){1}
\put(20,5){2}
\put(30,5){3}
\put(40,5){4}
\put(50,5){5}
\put(60,5){6}
\put(70,5){7}
\put(80,5){m-3}
\put(90,5){m-2}
\put(100,5){m-1}
\put(110,5){m}
\end{picture}
\end{center}

\noindent{}The adjacency matrix for $A_{_{m}}$ is 
$H = (h_{_{i,j}})_{i,j=1}^{m}$ with
\[h_{i,j} = 
\left\{ \begin{array}{ll}
1 & \mbox{{\rm if }} |i-j| = 1 \\
0 & \mbox{{\rm otherwise}}
\end{array} \right.
\]
Define inductively polynomials $R_{n}(t)$ by
\[\begin{array}{lcl}
R_{_{0}}(t) & = & 1\\
R_{_{1}}(t) & = & t\\
R_{_{n+1}}(t) & = & tR_{_{n}}(t) - R_{_{n-1}}(t).
\end{array} \]
Note that if $0\leq{}t\leq{}2,$ $t = 2\cos{}x$ for some $x$, we have
\[R_{n}(t) = \frac{\sin{}((n+1)x)}{\sin{}(x)}\]
and if $t>2$ with  $t = 2\cosh{}x$ for some $x,$ we have
\[R_{n}(t) = \frac{\sinh{}((n+1)x)}{\sinh{}(x)}.\]
\begin{lemma}
\label{lemma1}
For $l<m$  we denote $R_{l}(H)$ by  $H^{(l)} = (h^{(l)}_{i,j})_{i,j=1}^{m}$, and
we have
\begin{enumerate}
\item{}For $l$ even, $l=2n$
\[
h^{(2n)}_{i,j} = \left\{
\begin{array}{lll}
1 & \mbox{{\rm if }} |i-j| = 0, & n+1\leq{}i,j\leq{}m-n \\
1 & \mbox{{\rm if }} |i-j| = 2, & n \leq{}i,j\leq{}m-n+1 \\
1 & \mbox{{\rm if }} |i-j| = 4, & n-1\leq{}i,j\leq{}m-n+2 \\
\vdots & \, & \, \\
1 & \mbox{{\rm if }} |i-j| = 2n-2, & 2\leq{}i,j\leq{}m-1\\
1 & \mbox{{\rm if }} |i-j| = 2n, & 1\leq{}i,j\leq{}m \\
1 & \mbox{{\rm otherwise}} & \, \end{array} \right. \]
or in a more compact notation
\begin{equation}
\label{heven}
h^{(2n)}_{i,j} = \left\{
\begin{array}{lll}
1 & \mbox{{\rm if }} |i-j| = 2k,  n+1-k\leq{}i,j\leq{}m-n+k \\
0 & \mbox{{\rm otherwise}} & \, \end{array} \right. 
\end{equation}
\item{}For $l$ odd, $l=2n + 1$
\[
h^{(2n+1)}_{i,j} = \left\{
\begin{array}{lll}
1 & \mbox{{\rm if }} |i-j| = 1, & n+1\leq{}i,j\leq{}m-n \\
1 & \mbox{{\rm if }} |i-j| = 3, & n \leq{}i,j\leq{}m-n+1 \\
1 & \mbox{{\rm if }} |i-j| = 5, & n-1\leq{}i,j\leq{}m-n+2 \\
\vdots & \, & \, \\
1 & \mbox{{\rm if }} |i-j| = 2n-1, & 2\leq{}i,j\leq{}m-1\\
1 & \mbox{{\rm if }} |i-j| = 2n+1, & 1\leq{}i,j\leq{}m \\
1 & \mbox{{\rm otherwise}} & \, \end{array} \right. \]
that is
\[
h^{(2n+1)}_{i,j} = \left\{
\begin{array}{lll}
1 & \mbox{{\rm if }} |i-j| = 2k+1, & n+1-k\leq{}i,j\leq{}m-n+k \\
0 & \mbox{{\rm otherwise}} & \, \end{array} \right. 
\]
\end{enumerate}
\end{lemma}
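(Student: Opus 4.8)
\textbf{Proof plan for Lemma \ref{lemma1}.}

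The plan is to prove both formulas simultaneously by induction on $l$, exploiting the three-term recursion $R_{l+1}(H) = H R_{l}(H) - R_{l-1}(H)$, i.e.\ $H^{(l+1)} = H H^{(l)} - H^{(l-1)}$. First I would record the base cases: $H^{(0)} = I$ (which is the $l = 2n$ formula with $n = 0$, since then $|i-j| = 0$ forces $i = j$ and the range $1 \le i,j \le m$ is vacuous as a constraint) and $H^{(1)} = H$ (which is the $l = 2n+1$ formula with $n = 0$). Then the inductive step splits into two cases according to the parity of $l$, but both are governed by the same mechanism: multiplication by $H$ on the left sends a matrix unit $e_{a,b}$ to $e_{a-1,b} + e_{a+1,b}$ (with the understanding that indices outside $\{1,\dots,m\}$ are dropped), so $(H M)_{i,j} = M_{i-1,j} + M_{i+1,j}$, interpreting $M_{0,j} = M_{m+1,j} = 0$.

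The key computation, done once for the even-to-odd step and once for the odd-to-even step, is the following. Suppose inductively that $h^{(l)}_{i,j} = 1$ exactly when $|i-j| = l - 2r$ with $r \ge 0$ and $\min(i,j) \ge r+1$ and $\max(i,j) \le m - r$ (this is the unified form of both displayed formulas, writing $l = 2n$ or $l = 2n+1$ and $k = n - r$). Then $(H H^{(l)})_{i,j} = h^{(l)}_{i-1,j} + h^{(l)}_{i+1,j}$, and I would analyze which diagonals $|i-j| = d$ receive contributions. A term on diagonal $|i-j| = d$ in $H H^{(l)}$ comes from terms of $H^{(l)}$ on diagonals $d-1$ and $d+1$; subtracting $H^{(l-1)}$, whose nonzero diagonals are $|i-j| = (l-1) - 2s$, exactly cancels the ``interior overcounting'' and the boundary defects. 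Concretely, for a fixed off-boundary entry the count $h^{(l)}_{i-1,j} + h^{(l)}_{i+1,j}$ equals $2$ on diagonals strictly between the extreme ones and $1$ on the extreme diagonals $|i-j| = l+1$ and (when it occurs) the lowest one, while $h^{(l-1)}_{i,j}$ contributes $1$ on precisely the ``interior'' diagonals, so the difference is $1$ on the full set of diagonals $|i-j| = (l+1) - 2r'$ claimed for $H^{(l+1)}$. I would carry this out by fixing a diagonal index and tracking the range constraints: a contribution from $h^{(l)}_{i-1,j}$ on diagonal $d-1$ requires $\min(i-1,j) \ge r+1$, which shifts the range; combining the two contributions and removing the $H^{(l-1)}$ piece yields exactly the stated range $n+1-k \le i,j \le m-n+k$ for each diagonal of $H^{(l+1)}$.

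The main obstacle — and the place where care is genuinely needed — is the bookkeeping at the two ends of the chain $A_m$, i.e.\ verifying that the boundary corrections work out so that the subtraction of $H^{(l-1)}$ produces neither spurious nonzero entries nor missing ones near $i,j \in \{1,\dots\}$ or $\{\dots,m\}$. This is where the nontrivial ranges $n+1-k \le i,j \le m-n+k$ come from, and it is easy to be off by one. I would handle this by treating the ``bulk'' diagonals and the two extreme diagonals $|i-j| = l+1$ and $|i-j| = l-1-2n$ (the smallest one, present only when $l \ge 2$) separately, checking in each regime that $(H H^{(l)} - H^{(l-1)})_{i,j} \in \{0,1\}$ with the $1$'s located exactly as claimed; a small picture of the support of $H^{(l)}$ as a band of diagonals truncated into a ``staircase'' near the corners makes the cancellation transparent. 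Once the boundary analysis is nailed down, the bulk is immediate from the recursion, and reindexing $l+1 = 2(n+1)$ or $l+1 = 2(n+1)+1$ puts the result in the two displayed forms of the lemma.
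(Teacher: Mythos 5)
Your plan is correct and follows essentially the same route as the paper: induction on $l$ via $H^{(l+1)} = HH^{(l)} - H^{(l-1)}$, writing $(HH^{(l)})_{i,j} = h^{(l)}_{i-1,j} + h^{(l)}_{i+1,j}$ (the paper's $K+L$ decomposition) and tracking the shifted ranges so that subtracting $H^{(l-1)}$ leaves exactly the claimed truncated band of diagonals. The only quibble is your bulk count on the lowest diagonal (before subtraction it is $2$ there, not $1$), but the case-by-case verification you already propose would catch this.
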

\begin{proof}
The assertion is obviously true for $l=0$ and $l=1$. Assume the statement is
true for all $l\leq{}p$, $p\geq{}1$.

\noindent{}By definition $H^{(p+1)}= HH^{(p)}-H^{(p-1)}$.

\noindent{}If $p$ is even, $p=2n,$ we have
\[\begin{array}{lcl}
(HH^{(2n)})_{i,j} & = & 
\sum_{k}h_{i,k}h^{(2n)}_{k,j} \\[0.5cm]
\, & = & h_{i,i-1}h^{(2n)}_{i-1,j} + h_{i,i+1}h^{(2n)}_{i+1,j} \\[0.5cm]
\, & = & h^{(2n)}_{i-1,j} + h^{(2n)}_{i+1,j} \end{array}\]
Hence we may view $HH^{(2n)}$ as $K + L$, where 
$(K)_{i,j} = h^{(2n)}_{i-1,j}$ and $(L)_{i,j} = h^{(2n)}_{i+1,j}.$ From the
hypothesis on $H^{(2n)}$ we then get 
\[(K)_{i,j} = \left\{
\begin{array}{llc}
1 & \mbox{{\rm if }} |i-j| = 2k+1, & n+1-k\leq{}i,j\leq{}m-n+k+1 \\
0 & \mbox{{\rm otherwise}} & \, \end{array} \right. \]
and
\[(L)_{i,j} = \left\{
\begin{array}{llc}
1 & \mbox{{\rm if }} |i-j| = 2k+1, & n-k\leq{}i,j\leq{}m-n+k\\
0 & \mbox{{\rm otherwise}} & \, \end{array} \right. \]
and we have
\[h^{(2n+1)}_{i,j} = \left\{
\begin{array}{llc}
1 & \mbox{{\rm if }} |i-j| = 2k+1, & n+1-k\leq{}i,j\leq{}m-n+k \\
0 & \mbox{{\rm otherwise}} & \, \end{array} \right. \]

\noindent{}If $p$ is odd, $p=2n-1,$ we have
\[(HH^{(2n-1)})_{i,j}  =  h^{(2n-1)}_{i-1,j} + h^{(2n-1)}_{i+1,j} \]
and again we can  view $HH^{(2n-1)}$ as $K+L.$

\noindent{}This time we have
\[(K)_{i,j} = \left\{
\begin{array}{llc}
1 & \mbox{{\rm if }} |i-j| = 2k, & n+1-k\leq{}i,j\leq{}m-n+k+1 \\
0 & \mbox{{\rm otherwise}} & \, \end{array} \right. \]
and
\[(L)_{i,j} = \left\{
\begin{array}{llc}
1 & \mbox{{\rm if }} |i-j| = 2k, & n-k\leq{}i,j\leq{}m-n+k\\
0 & \mbox{{\rm otherwise}} & \, \end{array} \right. \]
and since
\[h^{(2n-2)}_{i,j} = \left\{
\begin{array}{llc}
1 & \mbox{{\rm if }} |i-j| = 2k, & n-k\leq{}i,j\leq{}m-n+k+1 \\
0 & \mbox{{\rm otherwise}} & \, \end{array} \right. \]
we get the desired result.
\end{proof}
\section{The Blocks of the Bi-unitary}
\setcounter{equation}{0}
We shall determine commuting squares og the form
\begin{equation}
\label{csq}
\begin{array}{lcl}
 C & \subset_{G^{t}} & D\\
  \cup_{H} &\,&\cup_{F}\\
 A & \subset_{G} & B \end{array}
\end{equation}
where $G$ is the adjacency matrix of $A_{_m}$ viewed as a bi--partite graph, and
$H,F$ are defined by:

If $l$ is even $R_{l}
\left(
\mbox{\scriptsize{$\begin{array}{cc} 0 & G^{t} \\G & 0 \end{array}$}}\right) = 
\left(\mbox{\scriptsize{$\begin{array}{cc} F & 0 \\0 & H \end{array}$}}\right)$

If $l$ is odd $R_{l}
\left(\mbox{\scriptsize{$\begin{array}{cc} 0 & G^{t} \\G & 0 \end{array}$}}\right) = 
\left(\mbox{\scriptsize{$\begin{array}{cc} 0 & F \\H & 0 \end{array}$}}\right)$

\noindent{}If $u$ is the bi-unitary matrix associated with a commuting square
of the form (\ref{csq}), then an entry in $u$ is specified by loops of the form
\begin{center}
\setlength{\unitlength}{0.25mm}
\begin{picture}(40,40)
\put(10,10){\line(1,0){20}} 
\put(10,10){\line(0,1){20}}
\put(30,10){\line(0,1){20}} 
\put(10,30){\line(1,0){20}}
\put(1.5,3){$\alpha$}
\put(32,3){$\beta$}
\put(32,33){$\delta$}
\put(2,33){$\gamma$}
\end{picture} 
\end{center}

\noindent{}This means that we can describe the entries of $u$ the following way
\begin{enumerate}
\item{}If $l$ is even. For any two edges of $A_{_m}$ (considered bi--partite)
$e,f$
 there corresponds an entry of $u,$ if there exists edges $\mu{},\nu{}$ in the
graphs corresponding to the matrices $F,H$ such that $\mu$ joins the odd
labeled vertices of $e$ and $f$, and $\nu$ joins the even
labeled vertices of $e$ and $f$.
\item{}If $l$ is odd. For any two edges of $A_{_m}$ (considered bi--partite)
$e,f$ there corresponds an entry of $u,$ if there exists edges $\mu{},\nu{}$ in the
graphs corresponding to the matrices $F,H$ such that $\mu$ joins the odd
labeled vertex of $e$ and the even labeled vertex of $f$, and $\nu$ joins the
two other  vertices of $e$ and $f$.
\end{enumerate}

\noindent{}Hence we can get a picture of the entries of $u$ in the form of a
diagram like

\begin{center}
\setlength{\unitlength}{1cm}
\begin{picture}(5,5)
\multiput(1,1)(0.5,0){8}{\line(0,1){3.5}}
\multiput(1,1)(0,0.5){8}{\line(1,0){3.5}}
\put(1,4.8){1}
\put(1.5,4.8){2}
\put(2,4.8){3}
\put(2.5,4.8){4}
\put(3,4.8){5}
\put(3.5,4.8){6}
\put(4,4.8){7}
\put(4.5,4.8){8}
\put(0.5,1){8}
\put(0.5,1.5){7}
\put(0.5,2){6}
\put(0.5,2.5){5}
\put(0.5,3){4}
\put(0.5,3.5){3}
\put(0.5,4){2}
\put(0.5,4.5){1}
\put(2.15,2.2){x}
\end{picture}
\end{center}

\noindent{}Where a box in the diagram will be filled if it corresponds to an entry
of $u$. Like the one with ``x'' in it, will correspond to an entry of $u,$
for $l$ even if the
loop
\begin{center}
\setlength{\unitlength}{0.25mm}
\begin{picture}(40,40)
\put(10,10){\line(1,0){20}} 
\put(10,10){\line(0,1){20}}
\put(30,10){\line(0,1){20}} 
\put(10,30){\line(1,0){20}}
\put(1.5,3){$6$}
\put(32,3){$4$}
\put(32,33){$3$}
\put(2,33){$5$}
\end{picture} 
\end{center}
exists, and in the case $l$ odd if the loop
\begin{center}
\setlength{\unitlength}{0.25mm}
\begin{picture}(40,40)
\put(10,10){\line(1,0){20}} 
\put(10,10){\line(0,1){20}}
\put(30,10){\line(0,1){20}} 
\put(10,30){\line(1,0){20}}
\put(1.5,3){$6$}
\put(33,3){$3$}
\put(33,33){$4$}
\put(2,33){$5$}
\end{picture} 
\end{center}
exists.  

\noindent{}For the determination of which of the entries in the diagram
corresponds to entries of $u$, we introduce the following notation

The edge in $A_{_m}$ joining vertex $2j-1$ to vertex $2j$ is called $\{2j-1\}$

The edge in $A_{_m}$ joining vertex $2j+1$ to vertex $2j$ is called $\{2j\}$

\noindent{}We will also denote vertices of $A_{_m}$ by the respective number put
in square brackets, e.g. $[j]$, in order to distinguish a vertex from a number. 

\noindent{}Finally we denote by $\alpha_{_{j}}$ the coordinate of the
Perron--Frobenius eigenvector of $A_{_m},$ corresponding to the vertex $[j]$.

\noindent{}Hence the edges and vertices of $A_{_m}$ are labeled by
  
\begin{center}
\setlength{\unitlength}{1mm}
\begin{picture}(100,20)
\multiput(10,10)(10,0){7}{\circle*{1.5}}
\put(10,10){\line(1,0){60}}
\put(73,9){$\cdots$}
\put(8,5){[1]}
\put(19,5){[2]}
\put(29,5){[3]}
\put(39,5){[4]}
\put(49,5){[5]}
\put(59,5){[6]}
\put(69,5){[7]}
\put(12,12){\{1\}}
\put(22,12){\{2\}}
\put(32,12){\{3\}}
\put(42,12){\{4\}}
\put(52,12){\{5\}}
\put(62,12){\{6\}}

\end{picture}
\end{center}

\noindent{}and we will label a box in the diagram by the corresponding edges of
$A_{_m}$, e.g. the box with an $x$ in it, will be labeled $(\{3\},\{7\})$.
\begin{lemma}
\label{lemma2}
For $R_{_{l}}(A_{_{m}}),$ $l\leq{}m$ the boxes in the diagram corresponding to entries of $u$ are
given by

\[(\{1+t+s\},\{l-t+s\})\mbox{ {\rm where } }
t\in\{0,1,\ldots,l-1\},s\in\{0,1,\ldots,m-l-1\}\]
and
\[(\{1+t+s\},\{l+1+t-s\})\mbox{ {\rm where } }
t\in\{0,1,\ldots,m-l-2\},s\in\{0,1,\ldots,l\}\]

\end{lemma}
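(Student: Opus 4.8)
The plan is to identify, for the matrix $R_l(A_m)$, exactly which pairs of edges $(e,f)$ of $A_m$ admit the connecting edges $\mu,\nu$ described in the two enumerated cases just above the statement, and to show this set of pairs coincides with the list in the lemma. Everything reduces to reading off the support of $H^{(l)}=R_l(H)$ (here $H$ is the adjacency matrix of $A_m$) from Lemma~\ref{lemma1}, and then translating "nonzero entry of $H^{(l)}$ between two vertices" into "edge $\mu$ (resp.\ $\nu$) in the graph of $F$ or $H$ joining the appropriate endpoints of $e$ and $f$."

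First I would set up the edge/vertex bookkeeping. Write $e=\{p\}$ and $f=\{q\}$; by the labelling convention an odd-numbered edge $\{2j-1\}$ joins $[2j-1]$ (odd vertex) to $[2j]$ (even vertex), and an even-numbered edge $\{2j\}$ joins $[2j+1]$ (odd vertex) to $[2j]$ (even vertex). So each edge $\{p\}$ has a well-defined "odd endpoint" and "even endpoint," and the odd (resp.\ even) endpoint of $\{p\}$ is the vertex $[p]$ if $p$ is odd... I will simply tabulate: edge $\{p\}$ has endpoints $[p]$ and $[p+1]$ if $p$ odd, and $[p+1]$ and $[p]$ — i.e.\ $[p]$ even, $[p+1]$ odd — if $p$ even. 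In all cases the two endpoints of $\{p\}$ are $[p]$ and $[p+1]$, one odd and one even; which is which alternates with the parity of $p$. Then: a box $(\{p\},\{q\})$ corresponds to an entry of $u$ precisely when there is a nonzero entry of $R_l\big(\left(\begin{smallmatrix}0&G^t\\G&0\end{smallmatrix}\right)\big)$ linking the prescribed endpoints — and by the block description of $R_l$ of the bipartite adjacency matrix ($l$ even gives $F\oplus H$ on the two colour classes, $l$ odd gives the off-diagonal $F,H$), this nonzero-entry condition is exactly a nonzero entry of $H^{(l)}$ between the two relevant vertices. Concretely, for $l$ even it is $h^{(l)}_{\,a,b}\neq 0$ where $[a],[b]$ are the two even endpoints and also $h^{(l)}_{\,a',b'}\neq 0$ for the two odd endpoints; for $l$ odd it is $h^{(l)}_{\,a,b'}\neq 0$ where $[a]$ is the even (resp.\ odd) endpoint of $e$ and $[b']$ the odd (resp.\ even) endpoint of $f$. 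Because $F$ and $H$ are the diagonal blocks (even $l$) or the two halves (odd $l$) of the same $R_l$, and the nonzero pattern of $H^{(l)}$ from Lemma~\ref{lemma1} is "symmetric under $i\leftrightarrow j$" in the needed sense, the conditions on $\mu$ and on $\nu$ turn out to be equivalent, so a single inequality on the indices controls whether the box is filled.

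Next I would plug in the explicit support from Lemma~\ref{lemma1}. For $l$ even, $h^{(l)}_{i,j}=1$ iff $|i-j|=l-2k$ for some $k$ with $n+1-k\le i,j\le m-n+k$ where $l=2n$; equivalently $|i-j|\le l$, $|i-j|\equiv l\pmod 2$, and $\min(i,j)+\max(i,j)\le m+1+\big(l-|i-j|\big)/\,$... — the cleanest form is: $h^{(l)}_{i,j}\neq0$ iff $|i-j|\equiv l \pmod 2$, $|i-j|\le l$, $i+j\ge l+2-|i-j|$... I will derive the precise two-sided band condition and then do the same for $l$ odd. Translating from vertex indices to edge indices $p=\,$(smaller endpoint) shifts everything by a fixed constant, and the two families in the statement — $(\{1+t+s\},\{l-t+s\})$ with $t\in\{0,\dots,l-1\}$, $s\in\{0,\dots,m-l-1\}$, and $(\{1+t+s\},\{l+1+t-s\})$ with $t\in\{0,\dots,m-l-2\}$, $s\in\{0,\dots,l\}$ — should appear as exactly the two "diagonal-strip" reparametrisations of the band: the first family sweeps the boxes where $e$ and $f$ are "antidiagonally" related (sum $p+q=l+1$ shifted by $2s$, with $t$ running along the antidiagonal of length $l$), the second where they are "diagonally" related ($q-p=l+1$ shifted by $-2s$, with $t$ running along the diagonal of length $m-l-1$). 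I would verify the endpoint ranges of $t$ and $s$ match the constraints $n+1-k\le i,j\le m-n+k$ case by case, checking in particular the corner boxes.

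The main obstacle will be the translation layer: keeping straight which endpoint (odd vs.\ even) of each edge plays which role in the $l$-even versus $l$-odd case, and confirming that the $\mu$-condition and the $\nu$-condition really are simultaneously satisfiable rather than independently — i.e.\ that the two band conditions (one from the even endpoints, one from the odd endpoints, shifted by $1$) are consistent and together equivalent to a single clean inequality. Once that equivalence is nailed down, the rest is the routine change of variables $(i,j)\leftrightarrow(p,q)$ and a bookkeeping check that the index ranges quoted in the lemma exhaust, without repetition, the support band of $H^{(l)}$; I would present that final comparison as a short two-way inclusion between the parametrised families and the explicit band description, and that will close the proof.
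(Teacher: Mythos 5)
Your proposal is correct in substance and rests on the same foundation as the paper's proof, namely the explicit support of $H^{(l)}=R_l(H)$ from Lemma \ref{lemma1}; the difference is in how the index bookkeeping is organised. The paper works "locally": it first records the symmetries $(\{j\},\{k\})\mapsto(\{k\},\{j\})$ and $(\{j\},\{k\})\mapsto(\{m-j\},\{m-k\})$, then checks the extreme boxes $(\{1+t\},\{l-t\})$ and $(\{l+1+t\},\{1+t\})$ one parity case at a time, showing each is filled while its neighbour outside the diamond is not, and finally fills the interior by appealing to the convex nature of the condition in Lemma \ref{lemma1}. You instead propose a "global" verification: translate the box condition into the conjunction of two nonvanishing conditions on $H^{(l)}$ (one for each endpoint pair), rewrite the support of $H^{(l)}$ as a band, and match the resulting region against the two parametrised families by a two-way inclusion. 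This works, and in fact more cleanly than you anticipate: the condition $n+1-k\le i,j\le m-n+k$ with $|i-j|=2k$ (resp.\ $2k+1$) is equivalent to $l+2\le i+j\le 2m-l$ together with $|i-j|\le l$ and $|i-j|\equiv l\pmod 2$, uniformly in the parity of $l$, so the two endpoint conditions become two such bands shifted by $2$ in $i+j$ (or by $1$ in $|i-j|$, depending on the parities of the edge labels), and their intersection, rewritten in edge coordinates $p,q$, is exactly the union of the two families $(\{1+t+s\},\{l-t+s\})$ and $(\{1+t+s\},\{l+1+t-s\})$ with the stated ranges. One caution: your intermediate assertion that the $\mu$-condition and the $\nu$-condition "turn out to be equivalent" is an overstatement — they are distinct, offset bands, and neither implies the other at the boundary of the diamond (this is precisely why the paper's boundary checks treat the two endpoint pairs separately); what is true, and what your own "main obstacle" paragraph correctly targets, is that their conjunction collapses to the single clean diamond description above. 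With that point made explicit, your route closes the proof; what it buys over the paper's argument is an explicit closed-form description of the filled region (no appeal to convexity or to the reflection symmetries), at the cost of a slightly heavier case analysis on the parities of $p$ and $q$.
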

\begin{proof}
First we note the following symmetries. If \edg{j}{k} defines an entry of $u$, then so do \edg{k}{j},
\edg{m-j}{m-k} and \edg{m-k}{m-j}.

\noindent{}Consider $l$ even, $l=2n$.

We first look at \edg{1+t}{2n-t}, $t\in\{0,1,\ldots,2n-1\}$. 

\noindent{}If $t$ is even, $t=2s,$ we
must show that there is an edge between 
\begin{enumerate}
\item{}$[2s+2]$ and $[2n-2s]$
\item{}$[2s+1]$ and $[2n-2s+1]$
\end{enumerate}

Ad 1. We may assume $2s+2 \leq{}2n-2s$, since the other case will be settled by
the above symmetries. From the form of $R_{l}(A_{_m})$ we need to look at 
$|2n-2s-2s-2| = 2n-4s-2$ which corresponds to $k=n-2s-1$ in (\ref{heven}). The
desired edge exists if
\[\begin{array}{clcccl}
\, & n+1-(n-2s-1)&\leq & 2s+2,2n-2s &\leq{}& m-n+(n-2s-1)\\
\Updownarrow{} & \, & \, & \,  & \, & \, \\
\, & 2s+2&\leq & 2s+2,2n-2s &\leq{}& m-2s+1
\end{array} \]
We thus have that $[2s+2]$ is connected to $[2n-2s]$, and that $[2s+2]$ is not
connected to a vertex with lower index than $2n-2s$.

Ad 2. Here the corresponding value of $k$ in (\ref{heven}) is $n - 2s$, and we
must have $ 2s+1\leq{}2s+1,2n-2s+1\leq{}m-2s$. Hence $[2s+1]$ is connected to 
$[2n-2s+1]$ and not connected to any vertex with lower index than $2n-2s+1$.
 
\noindent{}For $t$ even we conclude that \edg{1+t}{2n-t} corresponds to to an
entry of $u$, and no box  to the left of \edg{1+t}{2n-t} defines an entry of $u$.

\noindent{}If $t$ is odd, $t=2s+1,$ we
must show that there is an edge between 
\begin{enumerate}
\item{}$[2s+3]$ and $[2n-2s-1]$
\item{}$[2s+2]$ and $[2n-2s]$
\end{enumerate}
Calculating as before we get

Ad 1. $ 2s+3\leq{}2s+3,2n-2s-1\leq{}m-2s-2$.

Ad 2. $ 2s+2\leq{}2s+2,2n-2s\leq{}m-2s-1$.

\noindent{}In either case we see that the edge exists, and as above we may
conclude that for $t$ odd  \edg{1+t}{2n-t} corresponds to to an
entry of $u$, and no box  to the left of \edg{1+t}{2n-t} defines an entry of $u$.

By the noted symmetries, we  can also conclude that  
\[ \edg{m-2n+t}{m-1-t},\;\;\;t\in\{0,1,\ldots,2n-1\} \]
defines an entries of $u$, and no box to the right of these defines an entry of
$u$.

We now look at \edg{2n+1+t}{1+t}, $t\in\{0,1,\ldots,m-2n-2\}$.

\noindent{}If $t$ is even, $t= 2s,$ we must show that we have edges joining 
\begin{enumerate}
\item{}$[2n+2s+1]$ and $[2s+1]$
\item{}$[2n+2s+2]$ and $[2s+2]$
\end{enumerate} 
Since the numerical difference of the indices is $2n$ in both cases, we see
from the condition in (\ref{heven}) that the box defines an entry of $u,$  and
that no box to the right of it does.

\noindent{}If $t$ is odd, $t= 2s+1,$ we must show that we have edges joining 
\begin{enumerate}
\item{}$[2n+2s+3]$ and $[2s+3]$
\item{}$[2n+2s+2]$ and $[2s+2]$
\end{enumerate} 
The argument from $t$ even also works here, and we conclude that
\[\edg{2n+1+t}{1+t}, t\in\{0,1,\ldots,m-2n-2\}\] 
defines entries of $u$, and no box to the left of these boxes does define an
entry of $u$. We also get 
\[\edg{1+t}{2n+1+t}, t\in\{0,1,\ldots,m-2n-2\}\] 
defines entries of $u$, and no box to the right of these boxes does define an
entry of $u$.

I.e. we have now established that the following boxes define entries of $u$:

\setlength{\unitlength}{1cm}
\begin{picture}(10,4)
\put(4.25,3){\tinyedg{1}{2n}}
\put(5.75,3){\tinyedg{1}{2n+1}}
\put(1,2){\tinyedg{2n}{1}}
\put(8.5,2){\tinyedg{m-2n-1}{m-1}}
\put(1,1.5){\tinyedg{2n+1}{1}}
\put(8.5,1.5){\tinyedg{m-2n}{m-1}}
\put(3,0.5){\tinyedg{m-1}{m-2n-1}}
\put(6,0.5){\tinyedg{m-1}{m-2n}}
\multiput(4.5,2.75)(-0.5,-0.1){6}{.}
\multiput(6.5,2.75)(0.5,-0.1){6}{.}
\multiput(4.5,0.75)(-0.5,0.1){6}{.}
\multiput(6.5,0.75)(0.5,0.1){6}{.}
\end{picture}

\noindent{}The same type of argument, or using the convex nature of the criterion in
(\ref{heven}), now gives that all edge-pairs in the above sketched diamond
defines entries of $u$.

For odd $l$ we argue similarly.
\end{proof}

\noindent{}{\bf Example} $A_{_{8}},\; R_{_{3}}$
\begin{center}
\setlength{\unitlength}{1cm}
\begin{picture}(8,8)
\multiput(1,0)(1,0){8}{\line(0,1){7}}
\multiput(1,0)(0,1){8}{\line(1,0){7}}
\put(1.2,7.5){\{1\}}
\put(2.2,7.5){\{2\}}
\put(3.2,7.5){\{3\}}
\put(4.2,7.5){\{4\}}
\put(5.2,7.5){\{5\}}
\put(6.2,7.5){\{6\}}
\put(7.2,7.5){\{7\}}
\put(0.3,0.4){\{7\}}
\put(0.3,1.4){\{6\}}
\put(0.3,2.4){\{5\}}
\put(0.3,3.4){\{4\}}
\put(0.3,4.4){\{3\}}
\put(0.3,5.4){\{2\}}
\put(0.3,6.4){\{1\}}
\multiput(5.4,0.4)(1,1){3}{x}
\multiput(4.4,0.4)(1,1){4}{x}
\multiput(4.4,1.4)(1,1){3}{x}
\multiput(3.4,1.4)(1,1){4}{x}
\multiput(3.4,2.4)(1,1){3}{x}
\multiput(2.4,2.4)(1,1){4}{x}
\multiput(2.4,3.4)(1,1){3}{x}
\multiput(1.4,3.4)(1,1){4}{x}
\multiput(1.4,4.4)(1,1){3}{x}
\end{picture}
\end{center}

\section{The Bi-unitary Condition}
\setcounter{equation}{0}
We first consider  $l$ even. 

\noindent{}The boxes of the diagram that may define a block of
$u$ are of the form
\begin{equation}
\label{evenublock}
\begin{array}{cc}
\edg{2j-1}{2k} & \edg{2j-1}{2k+1} \\
\edg{2j}{2k} & \edg{2j}{2k+1}
\end{array}
\end{equation}
and the ones that may determine a block  of $v$ are of the form
\begin{equation}
\label{evenvblock}
\begin{array}{cc}
\edg{2k}{2j-1} & \edg{2k+1}{2j-1} \\
\edg{2k}{2j} & \edg{2k+1}{2j}
\end{array}
\end{equation}
since these are the labeling of edges which keep the required vertices fixed.

\noindent{}If we by \edgu{j}{k} and \edgv{j}{k} denote the value of the
entry  of $u$ resp. $v$, corresponding to the edge-pair \edg{j}{k}, then the
bi-unitary condition is 
\begin{equation}
\label{biu}
\begin{array}{lcl}
\edgv{2j-1}{2k} & = & 
\sqrt{
\frac{\alpha_{_{2k+1}}\alpha_{_{2j}}}{\alpha_{_{2k}}\alpha_{_{2j-1}}}}
\edgu{2j-1}{2k}\\[0.5cm]
\edgv{2j-1}{2k+1} & = & 
\sqrt{
\frac{\alpha_{_{2k+1}}\alpha_{_{2j}}}{\alpha_{_{2k+2}}\alpha_{_{2j-1}}}}
\edgu{2j-1}{2k+1}\\[0.5cm]
\edgv{2j}{2k} & = & 
\sqrt{
\frac{\alpha_{_{2k+1}}\alpha_{_{2j}}}{\alpha_{_{2k}}\alpha_{_{2j+1}}}}
\edgu{2j}{2k}\\[0.5cm]
\edgv{2j}{2k+1} & = & 
\sqrt{
\frac{\alpha_{_{2k+1}}\alpha_{_{2j}}}{\alpha_{_{2k+2}}\alpha_{_{2j+1}}}}
\edgu{2j}{2k+1}
\end{array} 
\end{equation}
If we can find a solution with
\[ \begin{array}{lcl}
\edgu{2j-1}{2k} & = & 
\sqrt{
\frac{\alpha_{_{2k+1}}\alpha_{_{2j}}}{\alpha_{_{2k}}\alpha_{_{2j-1}}}}
\edgu{2k}{2j-1}\\[0.5cm]
\edgu{2j-1}{2k+1} & = & 
\sqrt{
\frac{\alpha_{_{2k+1}}\alpha_{_{2j}}}{\alpha_{_{2k+2}}\alpha_{_{2j-1}}}}
\edgu{2k+1}{2j-1}\\[0.5cm]
\edgu{2j}{2k} & = & 
\sqrt{
\frac{\alpha_{_{2k+1}}\alpha_{_{2j}}}{\alpha_{_{2k}}\alpha_{_{2j+1}}}}
\edgu{2k}{2j}\\[0.5cm]
\edgu{2j}{2k+1} & = & 
\sqrt{
\frac{\alpha_{_{2k+1}}\alpha_{_{2j}}}{\alpha_{_{2k+2}}\alpha_{_{2j+1}}}}
\edgu{2k+1}{2j}
\end{array} \]
we will get 
\[\begin{array}{lclcl}
\edgu{2j-1}{2k} & = & \edgv{2k}{2j-1} & = & x_{_{1,1}}\\
\edgu{2j-1}{2k+1} & = & \edgv{2k+1}{2j-1}& = & x_{_{1,2}}\\
\edgu{2j}{2k} & = & \edgv{2k}{2j}& = & x_{_{2,1}}\\
\edgu{2j}{2k+1} & = & \edgv{2k+1}{2j}& = & x_{_{2,2}}
\end{array} \]
so if the block 
\mbox{\scriptsize{$\left(\begin{array}{cc} x_{_{1,1}} &  x_{_{1,2}} \\
 x_{_{2,1}} &  x_{_{2,2}} \end{array}\right)$}} 
of $u$ is unitary, then a corresponding block of $v$ also equals
\mbox{\scriptsize{$\left(\begin{array}{cc} x_{_{1,1}} &  x_{_{1,2}} \\
 x_{_{2,1}} &  x_{_{2,2}} \end{array}\right)$}}, and hence $v$  becomes a
unitary matrix.

\noindent{}For $l$ odd the situation is different. 

\noindent{}The boxes of the diagram that may define a block of
$u$ are of the form
\begin{equation}
\label{oddublock}
\begin{array}{cc}
\edg{2j-1}{2k-1} & \edg{2j-1}{2k} \\
\edg{2j}{2k-1} & \edg{2j}{2k}
\end{array}
\end{equation}
and those that may define a block of  $v$ are of the form
\begin{equation}
\label{oddvblock}
\begin{array}{cc}
\edg{2j}{2k} & \edg{2j}{2k+1} \\
\edg{2j+1}{2k} & \edg{2j+1}{2k+1}
\end{array}
\end{equation}
and the bi-unitarity condition is 
\[ \begin{array}{lcl}
\edgv{2j}{2k} & = & 
\sqrt{
\frac{\alpha_{_{2k}}\alpha_{_{2j}}}{\alpha_{_{2k+1}}\alpha_{_{2j+1}}}}
\edgu{2j}{2k}\\[0.5cm]
\edgv{2j}{2k+1} & = & 
\sqrt{
\frac{\alpha_{_{2k+2}}\alpha_{_{2j}}}{\alpha_{_{2k+1}}\alpha_{_{2j+1}}}}
\edgu{2j}{2k+1}\\[0.5cm]
\edgv{2j+1}{2k} & = & 
\sqrt{
\frac{\alpha_{_{2k}}\alpha_{_{2j+2}}}{\alpha_{_{2k+1}}\alpha_{_{2j+1}}}}
\edgu{2j+1}{2k}\\[0.5cm]
\edgv{2j+1}{2k+1} & = & 
\sqrt{
\frac{\alpha_{_{2k+2}}\alpha_{_{2j+2}}}{\alpha_{_{2k+1}}\alpha_{_{2j+1}}}}
\edgu{2j+1}{2k+1}
\end{array} \]

\section{A Solution}
\setcounter{equation}{0}
We will first give patterns of signs on the blocks of $u$ and $v$, which will
show that if there is a solution to the problem, then a real solution exists.
With this pattern of signs we will then proceed to show which modulus 
is to be put in each  of the boxes determined in lemma \ref{lemma2}.

Note that if $a,b\in{\Bbb R}$ with $a^{^{2}}+b^{^{2}}=1$ then any matrix of the
form 
\mbox{\scriptsize{$\left(\begin{array}{cc} \sigma_{_{1}}a & \sigma_{_{2}}b \\
\sigma_{_{3}}b & \sigma_{_{4}}a \end{array}\right)$}}, with $\sigma_{_{i}}$
denoting either $+$ or $-$ will be unitary if exactly three of the 
$\sigma_{_{i}}$'s are equal.

We will assign a sign to each box in the diagram, such that any of the $u$
and $v$ patterns described in lemma \ref{lemma2} will correspond to unitary
matrices, provided that the $a$'s and $b$'s we assign to the entries, are such
that  $a^{^{2}}+b^{^{2}}=1$.

For $l$ even the blocks of $u$ and $v$ coincide, and the extra condition we
have put on our desired solution, implies that any pattern of sign must be
symmetric with respect to the main diagonal. A pattern that will do the job is
found in figure \ref{fig1}.

For $l$ odd a pattern is found in figure \ref{fig2}.

\begin{theorem}
With the signs listed previously, the following is a solution of the bi-unitary
matrix $u$.
\begin{enumerate}
\item{}For $t\in\{0,1,\ldots,l-1\}$ and $s\in{}\{0,1,\ldots,m-l-1\}$ put
\[\edgu{1+s+t}{l-t+s} = \sqrt{
\frac{\alpha_{_{1+t}}\alpha_{_{l-t}}}{x_{_{1}}x_{_{2}}}}\]
where
\[x_{_{1}}=\left\{
\begin{array}{llll}
\alpha_{_{2+s+t}} & \mbox{if} & s+t & \mbox{is even} \\[0.3cm]
\alpha_{_{1+s+t}} & \mbox{if} & s+t & \mbox{is odd}
\end{array} \right. \]

\[x_{_{2}}=\left\{
\begin{array}{llll}
\alpha_{_{l-t+s+1}} & \mbox{if} & s - t & \mbox{is even} \\[0.3cm]
\alpha_{_{l-t+s}} & \mbox{if} & s - t & \mbox{is odd}
\end{array} \right. \]
\item{}For $t\in\{0,1,\ldots,m-l-2\}$ and $s\in{}\{0,1,\ldots,l\}$ put
\[\edgu{1+s+t}{l+t-s+1} = \sqrt{
\frac{\alpha_{_{1+t}}\alpha_{_{l+2+t}}}{y_{_{1}}y_{_{2}}}}\]
where
\[y_{_{1}}=\left\{
\begin{array}{llll}
\alpha_{_{2+s+t}} & \mbox{if} & s+t & \mbox{is even} \\[0.3cm]
\alpha_{_{1+s+t}} & \mbox{if} & s+t & \mbox{is odd}
\end{array} \right. \]

\[y_{_{2}}=\left\{
\begin{array}{llll}
\alpha_{_{l+t-s+1}} & \mbox{if} & s - t & \mbox{is even} \\[0.3cm]
\alpha_{_{l+t-s+2}} & \mbox{if} & s - t & \mbox{is odd}
\end{array} \right. \]
\end{enumerate}
\end{theorem}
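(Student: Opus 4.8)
The plan is to verify directly that the matrix $u$ defined by the two formulas, together with the sign pattern of figure \ref{fig1} (the $l$ even case), satisfies the bi-unitary condition of Remark \ref{biunitcond}, and to see that the symmetry built into the definition forces the associated matrix $v$ to coincide block-by-block with $u$, hence to be unitary as well. By Lemma \ref{lemma2} we already know which boxes $(\{j\},\{k\})$ occur, and by the discussion of the $l$ even case (equations (\ref{evenublock})--(\ref{biu})) the unknowns group into $2\times 2$ blocks of the form $\bigl(\begin{smallmatrix}\edgu{2j-1}{2k}&\edgu{2j-1}{2k+1}\\ \edgu{2j}{2k}&\edgu{2j}{2k+1}\end{smallmatrix}\bigr)$ and $1\times 1$ blocks at the boundary of the diamond. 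So the verification reduces to two local checks: that each $1\times 1$ block has modulus $1$, and that each $2\times 2$ block is unitary.

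\textbf{Key steps, in order.} First I would record the Perron--Frobenius coordinates of $A_m$: since each ray of $A_m$ (viewed from either end) is handled by Lemma \ref{fire2lem26}, $\alpha_{_j}$ is proportional to $R_{_{j-1}}(\lambda)$ (and also to $R_{_{m-j}}(\lambda)$), where $\lambda=2\cos\frac{\pi}{m+1}$; the identity $R_{_n}(\lambda)^2 = 1 + R_{_{n-1}}(\lambda)R_{_{n+1}}(\lambda)$ from the proof of Lemma \ref{firegraflem0} will be the workhorse. Second, for each box $(\{1+s+t\},\{l-t+s\})$ or $(\{1+s+t\},\{l+t-s+1\})$ I would compute $|\edgu{j}{k}|^2$ from the stated formula and check that summing the two entries in any row (or column) of the relevant $2\times 2$ block gives $1$: this is exactly an instance of the recursion $R_{_n}^2 = R_{_{n-1}}R_{_{n+1}}+1$ after clearing denominators, so a $2\times 2$ block $\bigl(\begin{smallmatrix}\sigma_1 a&\sigma_2 b\\ \sigma_3 b&\sigma_4 a\end{smallmatrix}\bigr)$ with $a^2+b^2=1$ appears. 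Third, I would check that the sign pattern of figure \ref{fig1} assigns to each such $2\times 2$ block exactly three equal signs (the criterion noted just before the theorem), making it unitary; here the symmetry of the pattern across the main diagonal is what guarantees, via the "extra condition" imposed in the previous section, that the four transformation formulas (\ref{biu}) are satisfied with $\edgu{j}{k}=\edgv{k}{j}$, so that $v$ is the reflection of $u$ and is automatically unitary. Fourth, I would treat the $1\times 1$ blocks at the four tips of the diamond (edges $\{1\}\{l\}$, $\{1\}\{l+1\}$, and their images under $j\mapsto m-j$): there the formula should give modulus $1$, which again follows from $\alpha_{_{1+t}}\alpha_{_{l-t}} = x_1 x_2$ in the degenerate range, i.e.\ from the eigenvalue equation at the relevant vertex. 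Finally, for $l$ odd I would run the parallel argument with the block shapes (\ref{oddublock})--(\ref{oddvblock}) and the sign pattern of figure \ref{fig2}; here $u$ and $v$ are genuinely different matrices, so one must verify the $l$-odd transformation formulas directly, but the modulus computations are the same recursion identity.

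\textbf{Main obstacle.} The bookkeeping is the real difficulty: one has to check that the four cases in the definitions of $x_1,x_2$ (and $y_1,y_2$) — governed by the parities of $s+t$ and $s-t$ — match, for every box, the actual Perron--Frobenius weights appearing on the two ``vertical'' edges of the corresponding $4$-cycle, and that adjacent boxes in a $2\times 2$ block produce complementary moduli $a,b$ with $a^2+b^2=1$. Getting the index shifts right at the boundary of the diamond, and confirming that the sign pattern in figures \ref{fig1} and \ref{fig2} really does give ``exactly three equal signs'' on every occurring $2\times 2$ block (not just generically), is where care is needed; the underlying algebra is entirely the single identity $R_{_n}(\lambda)^2 = R_{_{n-1}}(\lambda)R_{_{n+1}}(\lambda)+1$ plus the eigenvalue equation, and once the indexing is pinned down the unitarity of $u$ and $v$ — hence, by Theorem \ref{fire1thm10}, the existence of the commuting square (\ref{csq}) — is immediate.
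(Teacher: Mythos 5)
Your proposal follows essentially the same route as the paper: verify the bi-unitary condition directly, checking that the moduli squared in each $2\times 2$ block sum to $1$ along rows and columns, that the boundary $1\times 1$ blocks have modulus $1$ (using the symmetry $\alpha_{_{j}}=\alpha_{_{m+1-j}}$ of the Perron--Frobenius vector), that the sign patterns of figures \ref{fig1} and \ref{fig2} make each block unitary, and that for $l$ even the symmetric ansatz forces $v$ to coincide block-by-block with $u$, while for $l$ odd the transformation formulas are checked separately. One small correction: the row-sum check for a general block is not literally an instance of $R_{_{n}}(\lambda)^{2}=R_{_{n-1}}(\lambda)R_{_{n+1}}(\lambda)+1$; after substituting $p=j-k$, $q=j+k$ it is the three-term identity $\alpha_{_{n+p}}\alpha_{_{n-p+1}}+\alpha_{_{q-n}}\alpha_{_{n+q+1}}=\alpha_{_{p+q}}\alpha_{_{q-p+1}}$ (and its odd-$l$ analogue), which the paper verifies directly with exponentials and treats uniformly over all boxes via the conventions $\alpha_{_{0}}=0$, $\alpha_{_{-n}}=-\alpha_{_{n}}$; this identity is equally elementary, so the substance of your argument is unaffected.
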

\begin{proof}
We will first disregard the limitations on which boxes of the diagram that correspond 
to entries of $u$. We will do this by assuming that every box gets a number 
assigned to it
via the statement of the theorem, and by  putting $\alpha_{_{0}}=0,$ and 
$\alpha_{_{-n}}= -\alpha_{_{n}}$. This identification of negative labeled
$\alpha'$s is justified since $\alpha_{_{n}}=\frac{\sin(nx)}{\sin(x)}$ for some
$x.$ 

\noindent{}In the $2\times{}2$ blocks we must check that the sum of the
moduli squared in a row or a column equals 1. And for the $1\times{}1$ blocks
we must check that the assigned scalar is 1.

For $l=2n$ we have the following block indices from (\ref{evenublock})
\[\left(\begin{array}{cc}
\edgu{2j-1}{2k} & \edgu{2j-1}{2k+1} \\
\edgu{2j}{2k} & \edgu{2j}{2k+1}
\end{array}\right)\]
Obviously \edgu{2j-1}{2k} =  \edgu{2j}{2k+1} and \edgu{2j-1}{2k+1} = 
\edgu{2j}{2k}. We have that  \edg{2j-1}{2k} = \edg{1+t+s}{2n-t+s} for some
choice of $s$ and $t$. Hence $1+t+s-2n+t-s = 2j-1-2k$ and we must have 
$t=n+j-k-1$. This gives that the numerator of \edgu{2j-1}{2k} is
\[\sqrt{\alpha_{_{n+j-k}}\alpha_{_{n-j+k+1}}}\]
We also have that  \edg{2j-1}{2k+1} = \edg{1+t+s}{2n+1+t-s} for some
choice of $s$ and $t$. Hence $1+t+s+2n+1+t-s = 2j+2k$ and we must have 
$t=j+k-n-1$. This gives that the numerator of \edgu{2j-1}{2k+1} is
\[\sqrt{\alpha_{_{j+k-n}}\alpha_{_{n+j+k+1}}}\]
In either case the  denominator is 
\[\sqrt{\alpha_{_{2j}}\alpha_{_{2k+1}}}\]
and hence we must check that
\begin{equation}
\label{check1}
\alpha_{_{n+j-k}}\alpha_{_{n-j+k+1}}+
\alpha_{_{j+k-n}}\alpha_{_{n+j+k+1}} =
\alpha_{_{2j}}\alpha_{_{2k+1}}
\end{equation}
If we put $p=j-k$ and $q=j+k,$ (\ref{check1}) transforms to
\[\alpha_{_{n+p}}\alpha_{_{n-p+1}}+
\alpha_{_{q-n}}\alpha_{_{n+q+1}} =
\alpha_{_{p+q}}\alpha_{_{q-p+1}}\]
Since $\alpha_{_{j}}=\frac{\sin{}(jx)}{\sin{}(x)}$ for some $x$, it is enough
to verify
\[\begin{array}{r}
(\expp{n+p})(\expp{n-p+1}) \\
+(\expp{q-n})(\expp{q+n+1}) \\
=(\expp{p+q})(\expp{q-p+1})
\end{array}\]
which is a trivial calculation.

\noindent{}For $l$ odd, $l=2n+1,$ the $2\times{}2$ blocks of $u$ are of the form
\[\left(
\begin{array}{cc}
\edgu{2j-1}{2k-1} & \edgu{2j-1}{2k} \\
\edgu{2j}{2k-1} & \edgu{2j}{2k}
\end{array}
\right)\]
Also here we have $\edgu{2j-1}{2k-1}=\edgu{2j}{2k}$ and 
$ \edgu{2j-1}{2k}=\edgu{2j}{2k-1}$.

\noindent{}$\edg{2j-1}{2k-1} = \edg{1+s+t}{2n+1-t+s}$ for some $t$ and $s$. As
before we get that $t=n+k-j$, and that the numerator of $\edgu{2j-1}{2k-1}$ is
\[\sqrt{\alpha_{_{1-j+k+n}}\alpha_{_{n+1+j-k}}}.\]
Also \edg{2j-1}{2k} = \edg{1+s+t}{2n+1+1+t-s} for some $t$ and $s$. We get 
$t=j+k-n-2,$ and that the numerator of \edgu{2j-1}{2k} is
\[\sqrt{\alpha_{_{j+k-n-1}}\alpha_{_{n+j+k+1}}}.\]
In both cases the denominator is
\[\sqrt{\alpha_{_{2j}}\alpha_{_{2k}}},\]
so we need to verify
\[\alpha_{_{1+n-p}}\alpha_{_{n+p+1}}+
\alpha_{_{q-n-1}}\alpha_{_{n+q+1}} =
\alpha_{_{p+q}}\alpha_{_{q-p}}\]
where $p=j-k$ and $q=j+k$. 

\noindent{}Using exponentials this verification is also easy.

\noindent{}A $2\times{}2$ block of $v$ looks like (\ref{oddvblock}). By the
just determined 
\[ \begin{array}{lcl}
\edgu{2j}{2k} & = & 
\sqrt{
\frac{\alpha_{_{1-j+k+n}}\alpha_{_{n+1+j-k}}}
{\alpha_{_{2j}}\alpha_{_{2k}}}}\\[0.5cm]
\edgu{2j}{2k+1} & = & 
\sqrt{
\frac{\alpha_{_{j+k-n}}\alpha_{_{n+j+k+2}}}
{\alpha_{_{2j}}\alpha_{_{2k+2}}}}\\[0.5cm]
\edgu{2j+1}{2k} & = & 
\sqrt{
\frac{\alpha_{_{j+k-n}}\alpha_{_{n+k+j+2}}}
{\alpha_{_{2j+2}}\alpha_{_{2k}}}}\\[0.5cm]
\edgu{2j+1}{2k+1} & = & 
\sqrt{
\frac{\alpha_{_{1-j+k+n}}\alpha_{_{n+1+j-k}}}
{\alpha_{_{2j+2}}\alpha_{_{2k+2}}}}
\end{array} \]
and from the bi-unitary condition (\ref{biu}), we get

\[ \begin{array}{lclcl}
\edgv{2j}{2k} & = & \edgv{2j+1}{2k+1} & = &
\sqrt{
\frac{\alpha_{_{1-j+k+n}}\alpha_{_{n+1+j-k}}}
{\alpha_{_{2j+1}}\alpha_{_{2k+1}}}}\\[0.5cm]
\edgv{2j}{2k+1} & = & \edgv{2j+1}{2k} & = &
\sqrt{
\frac{\alpha_{_{j+k-n}}\alpha_{_{n+j+k+2}}}
{\alpha_{_{2j+1}}\alpha_{_{2k+1}}}}
\end{array} \]
We thus need to verify
\[\alpha_{_{1-j+k+n}}\alpha_{_{n+1+j-k}}+
\alpha_{_{j+k-n}}\alpha_{_{n+j+k+2}} =
\alpha_{_{2j+1}}\alpha_{_{2k+1}}\]
which again is easy.

We will now look at the blocks of $u$ and $v$ which are only
$1\times{}1-$blocks. The labels of these blocks must be found in the boundary
of the diamond determined in lemma \ref{lemma2}, i.e. among
\begin{enumerate}
\item{}\edg{1+t}{l-t}.
\item{}\edg{m+t-l}{m-t-1}.
\item{}\edg{1+t}{l+t+1}.
\item{}\edg{l+t+1}{1+t}.
\end{enumerate}
If a block is of the form 
$\left(
\mbox{\scriptsize{$\begin{array}{cc} (1,1) &(1,2) \\(2,1) & (2,2)
\end{array}$}}\right)$ with at least one index defining an entry of $u$ resp.
$v$, then the block does not define a $2\times{}2-$block of $u$ or $v$, if in
the above cases 
\begin{enumerate}
\item{}\edg{1+t}{l-t} defines index $(2,2)$.
\item{}\edg{m+t-l}{m-t-1} defines index $(1,1)$.
\item{}\edg{1+t}{l+t+1} defines index $(2,1)$.
\item{}\edg{l+t+1}{1+t} defines index $(1,2)$.
\end{enumerate}
\noindent{}For $l$ even, $l=2n$, the blocks of $u$ and of $v$ are determined
in (\ref{evenublock}). 

\noindent{}{\bf Case 1:} 
For $t$ even the index determined is of the form \edg{2j-1}{2k} 
and hence part of a $2\times{}2-$block.

\noindent{}For $t$ odd the index determined is of the form \edg{2j}{2k+1} for 
$j=\mbox{\scriptsize{$\frac{1+t}{2}$}}$ and
$k=\mbox{\scriptsize{$\frac{2n-t-1}{2}$}}.$ The previous calculations now
give that the modulus of the corresponding entry is 
\[\taa{1+t}{2n-t}{1+t}{2n-t} = 1.\]

\noindent{}{\bf Case 2:} For $m+t$ even the index determined is of the form \edg{2j}{2k+1} 
and hence part of a $2\times{}2-$block.

\noindent{}For $m+t$ odd the index determined is of the form 
\edg{2j-1}{2k}
for $j= \mbox{\scriptsize{$\frac{m+t-2n+1}{2}$}}$ 
and
$k=\mbox{\scriptsize{$\frac{m-t-1}{2}$}}.$ And we get that the modulus 
of the corresponding entry is  \[\taa{1+t}{2n-t}{m+1-2n+t}{m-t} = 1\]
since $\alpha_{_{m+1-t }}=\alpha_{_{t}}.$

\noindent{}{\bf Case 3:} For $t$ even the index determined is of the form \edg{2j-1}{2k+1} 
and hence part of a $2\times{}2-$block.

\noindent{}For $t$ odd the index determined is of the form 
\edg{2j}{2k}
for $j= \mbox{\scriptsize{$\frac{t+1}{2}$}}$ 
and
$k=\mbox{\scriptsize{$\frac{2n+t+1}{2}$}}.$ And we get that the modulus 
of the corresponding entry is  \[\taa{1+t}{2n+t+2}{1+t}{2n+t+2} = 1.\]

\noindent{}{\bf Case 4:} This is settled like case 3.\\[0.5cm]

For $l$ odd, $l=2n+1,$ the blocks of $u$ and $v$ are given by
 (\ref{oddublock}) and (\ref{oddvblock}).

\noindent{}The $1\times{}1$ blocks of $u$:

\noindent{}{\bf Case 1:} For $t$ even the index determined is of the form \edg{2j+1}{2k-1} 
and hence part of a $2\times{}2-$block.

\noindent{}For $t$ odd the index determined is of the form 
\edg{2j}{2k}
for $j= \mbox{\scriptsize{$\frac{t+1}{2}$}}$ 
and
$k=\mbox{\scriptsize{$\frac{2n+1-t}{2}$}}.$ And we get that the modulus 
of the corresponding entry is  \[\taa{1+t}{2n+1-t}{1+t}{2n+1-t} = 1.\]

\noindent{}{\bf Case 2:} For $m+t$ odd the index determined is of the form \edg{2j}{2k} 
and hence part of a $2\times{}2-$block.

\noindent{}For $m+t$ even the index determined is of the form 
\edg{2j-1}{2k-1}
for $j= \mbox{\scriptsize{$\frac{m+t-2n}{2}$}}$ 
and
$k=\mbox{\scriptsize{$\frac{m-t}{2}$}}.$ And we get that the modulus  
of the corresponding entry is  \[\taa{2n+1-t}{t+1}{m+t-2n}{m-t} = 1.\]

\noindent{}{\bf Case 3:} For $t$ even the index determined is of the form \edg{2-j}{2k} 
and hence part of a $2\times{}2-$block.

\noindent{}For $t$ odd the index determined is of the form 
\edg{2j}{2k-1}
for $j= \mbox{\scriptsize{$\frac{t+1}{2}$}}$ 
and
$k=\mbox{\scriptsize{$\frac{2n+t+3}{2}$}}.$ And we get that the modulus 
of the corresponding entry is  \[\taa{t+1}{2n+t+3}{t+1}{2n+t+3} = 1.\]

\noindent{}{\bf Case 4:}  Is settled like case 3.\\[0.5cm]

\noindent{}The $1\times{}1$ blocks of $v$:

\noindent{}{\bf Case 1:} For $t$ odd the index determined is of the form \edg{2j}{2k} 
and hence part of a $2\times{}2-$block.

\noindent{}For $t$ even the index determined is of the form 
\edg{2j+1}{2k+1}
for $j= \mbox{\scriptsize{$\frac{t}{2}$}}$ 
and
$k=\mbox{\scriptsize{$\frac{2n-t}{2}$}}.$ And we get that the modulus 
of the corresponding entry is  \[\taa{2n+1-t}{t+1}{2n+1-t}{t+1} = 1.\]

\noindent{}{\bf Case 2:} For $m+t$ even the index determined is of the form \edg{2j+1}{2k+1} 
and hence part of a $2\times{}2-$block.

\noindent{}For $m+t$ odd the index determined is of the form 
\edg{2j}{2k}
for $j= \mbox{\scriptsize{$\frac{m+t-2n-1}{2}$}}$ 
and
$k=\mbox{\scriptsize{$\frac{m-t-1}{2}$}}.$ And we get that the modulus 
of the corresponding entry is  \[\taa{2n+1-t}{t+1}{m+t-2n}{m-t} = 1.\]

\noindent{}{\bf Case 3:} For $t$ odd the index determined is of the form \edg{2j}{2k+1} 
and hence part of a $2\times{}2-$block.

\noindent{}For $t$ even the index determined is of the form 
\edg{2j+1}{2k}
for $j= \mbox{\scriptsize{$\frac{t}{2}$}}$ 
and
$k=\mbox{\scriptsize{$\frac{2n+2+t}{2}$}}.$ And we get that the modulus 
of the corresponding entry is  \[\taa{t+1}{2n+t+3}{t+1}{2n+t+3} = 1.\]

\noindent{}{\bf Case 4:} Is settled like case 3.
\end{proof}
The calculations in the above proof gives the following
\begin{cor}
\label{cor1}
For $l$ even the moduli of the entries of $u$ are given by 
\[\begin{array}{lclcl}
\edgu{2j-1}{2k}&=&\edgu{2j}{2k+1}&=&\taa{n+j-k}{n-j+k+1}{2j}{2k+1}\\
\edgu{2j}{2k}&=&\edgu{2j-1}{2k+1}&=&\taa{j+k-n}{n+j+k+1}{2j}{2k+1}
\end{array}\]

\noindent{}For $l$ odd the moduli of the entries of $u$ are given by

\[\begin{array}{lclcl}
\edgu{2j}{2k}&=&\edgu{2j-1}{2k-1}&=&\taa{1-j+k+n}{n+j-k+1}{2j}{2k}\\
\edgu{2j-1}{2k}&=&\edgu{2j}{2k-1}&=&\taa{j+k-n-1}{n+j+k+1}{2j}{2k}
\end{array}\]
\end{cor}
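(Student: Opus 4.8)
The statement to be proved is Corollary~\ref{cor1}, which collects the moduli of the entries of the bi-unitary matrix $u$ in the two cases ($l$ even and $l$ odd), reading them off from the explicit solution just constructed in the preceding theorem. The plan is essentially bookkeeping: the theorem has already exhibited a solution in which, for each pair of edges, the entry $\edgu{j}{k}$ equals $\sqrt{\alpha_{_{1+t}}\alpha_{_{l-t}}/(x_{_{1}}x_{_{2}})}$ (or the analogous $y_1,y_2$ formula on the other half of the diamond), with $t,s$ determined by the position of the box. What remains is to re-index those formulas in the ``matrix block'' coordinates $(2j-1,2k)$, $(2j,2k+1)$, etc., rather than the $(1+t+s, l-t+s)$ coordinates used in the theorem statement.

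First I would treat the case $l = 2n$. As in the proof of the theorem, if $\edg{2j-1}{2k} = \edg{1+t+s}{2n-t+s}$, subtracting the two labels forces $t = n+j-k-1$; substituting this $t$ into the numerator $\sqrt{\alpha_{_{1+t}}\alpha_{_{l-t}}}$ gives $\sqrt{\alpha_{_{n+j-k}}\,\alpha_{_{n-j+k+1}}}$, and the denominator is $\sqrt{\alpha_{_{2j}}\alpha_{_{2k+1}}}$ (one checks which of $x_1,x_2$ applies by the parity of $s\pm t$, but the product $x_1 x_2$ is $\alpha_{_{2j}}\alpha_{_{2k+1}}$ regardless — this is exactly the computation already done to verify the row-sum identity (\ref{check1})). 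Similarly $\edg{2j-1}{2k+1} = \edg{1+t+s}{2n+1+t-s}$ forces $t = j+k-n-1$ and yields numerator $\sqrt{\alpha_{_{j+k-n}}\,\alpha_{_{n+j+k+1}}}$ over the same denominator. The equalities $\edgu{2j-1}{2k} = \edgu{2j}{2k+1}$ and $\edgu{2j}{2k} = \edgu{2j-1}{2k+1}$ were noted in the theorem's proof (they are immediate from the structure of the $2\times 2$ blocks and the sign pattern), so combining gives precisely the two displayed lines of Corollary~\ref{cor1} for $l$ even. The $l = 2n+1$ case is the same argument with the odd-$l$ block shape (\ref{oddublock}): $\edg{2j-1}{2k-1} = \edg{1+s+t}{2n+1-t+s}$ forces $t = n+k-j$, numerator $\sqrt{\alpha_{_{1-j+k+n}}\,\alpha_{_{n+1+j-k}}}$, denominator $\sqrt{\alpha_{_{2j}}\alpha_{_{2k}}}$; and $\edg{2j-1}{2k} = \edg{1+s+t}{2n+2+t-s}$ forces $t = j+k-n-2$, numerator $\sqrt{\alpha_{_{j+k-n-1}}\,\alpha_{_{n+j+k+1}}}$, same denominator; together with $\edgu{2j}{2k}=\edgu{2j-1}{2k-1}$ and $\edgu{2j-1}{2k}=\edgu{2j}{2k-1}$ this gives the second pair of displayed lines.

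There is essentially no obstacle here: every ingredient — the explicit solution, the index identifications $t = n+j-k-1$ etc., the symmetries of the blocks, and the fact that $x_1x_2 = \alpha_{_{2j}}\alpha_{_{2k+1}}$ (and its odd-$l$ analogue) — has already appeared inside the proof of the theorem. The only mild care needed is to be consistent about which convention of $\alpha_{_{-n}} = -\alpha_{_n}$, $\alpha_0 = 0$ is in force so that the boundary $1\times1$ blocks, where one of the two index pairs may fail to label an actual entry of $u$, are still covered by the same formula; but since $|\alpha_{_{-n}}| = |\alpha_{_n}|$ this does not affect the stated moduli. Thus the ``proof'' of Corollary~\ref{cor1} is just: extract from the theorem the value of $t$ forced by each block position, substitute into $\sqrt{\alpha_{_{1+t}}\alpha_{_{l-t}}/(x_1x_2)}$ (resp.\ the $y$-formula), simplify using the parity bookkeeping already carried out, and invoke the within-block equalities. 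I would write this out as a short paragraph of substitution in each of the two cases, with a pointer back to equation (\ref{check1}) and its odd-$l$ counterpart for the denominator simplification.
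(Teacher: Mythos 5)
Your proposal is correct and follows exactly the paper's route: the paper states the corollary with the one-line justification that ``the calculations in the above proof gives the following,'' i.e.\ it is read off from the index identifications $t=n+j-k-1$, $t=j+k-n-1$ (and their odd-$l$ analogues), the resulting numerators, the denominator $\sqrt{\alpha_{2j}\alpha_{2k+1}}$ (resp.\ $\sqrt{\alpha_{2j}\alpha_{2k}}$), and the within-block equalities already established in the theorem's proof. Your write-up just makes this bookkeeping explicit, which is fine.
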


\noindent{}{\bf Example} $A_{_{13}}$, $R_{_{6}}.$
For simplicity we put $\beta_{_{j}}=\sqrt{\alpha_{_{j}}}$
\newlength{\leng}
\setlength{\leng}{\arraycolsep}
\setlength{\arraycolsep}{0mm}
\begin{center}
\[\begin{array}{cccccccccccc}
\cdot{}                
& \cdot{}                
&\cdot{}                
&\cdot{}                
&\cdot{}                
& \ta{6}{1}{2}{7}  
& -\ta{8}{1}{2}{7}  
&\cdot{}                
&\cdot{}                
&\cdot{}              
 & \cdot{}                   
&\cdot{}                \\[0.3cm]
\cdot{}               
 & \cdot{}               
 &\cdot{}                
&\cdot{}                
& 1                
& \ta{8}{1}{2}{7} 
 & \ta{6}{1}{2}{7} 
 & -1               
 &\cdot{}                
&\cdot{}               
& \cdot{}                   
&\cdot{}                \\[0.3cm]
\cdot{}                
& \cdot{}                
&\cdot{}                
& \ta{3}{4}{4}{5}  
& -\ta{8}{1}{4}{5}  
& \ta{2}{5}{4}{7}  
& \ta{2}{9}{4}{7}  
& \ta{6}{1}{4}{9}  
& -\ta{10}{3}{4}{9}
&\cdot{}               
& \cdot{}                   
&\cdot{}                \\[0.3cm]
\cdot{}                
& \cdot{}                
& 1                
& \ta{8}{1}{4}{5}  
& \ta{3}{4}{4}{5}  
& -\ta{2}{9}{4}{7}  
& \ta{2}{5}{4}{7}  
& \ta{10}{3}{4}{9} 
& \ta{6}{1}{4}{9}  
& -1               
& \cdot{}                   
&\cdot{}                \\[0.3cm]
\cdot{}                
& \ta{2}{5}{3}{6}   
& -\ta{8}{1}{3}{6}  
& \ta{3}{4}{5}{6}  
& \ta{2}{9}{5}{6}  
& \ta{3}{4}{7}{6}  
& -\ta{10}{3}{7}{6} 
& \ta{2}{5}{9}{6}  
& \ta{11}{4}{9}{6} 
& \ta{6}{1}{11}{6} 
&  -\ta{12}{5}{11}{6}   
 &\cdot{}                \\[0.3cm]
  1               
& \ta{8}{1}{3}{6}   
& \ta{2}{5}{3}{6}  
& -\ta{2}{9}{5}{6}  
& \ta{3}{4}{5}{6}  
& \ta{10}{3}{7}{6} 
& \ta{3}{4}{7}{6}  
&-\ta{11}{4}{9}{6}  
& \ta{2}{5}{9}{6} 
& \ta{12}{5}{11}{6}   
&  \ta{6}{1}{11}{6} 
& -1                \\[0.3cm]
  -1               
& \ta{6}{1}{3}{8}   
& \ta{2}{9}{3}{8}  
& \ta{2}{5}{5}{8}  
& -\ta{10}{3}{5}{8} 
& \ta{3}{4}{7}{8}  
& \ta{11}{4}{7}{8} 
& \ta{3}{4}{8}{9}  
& -\ta{12}{5}{8}{9} 
& \ta{2}{5}{11}{8} 
&  \ta{6}{13}{11}{8}  
& 1               \\[0.3cm]
\cdot{}                
& -\ta{2}{9}{3}{8}   
& \ta{6}{1}{3}{8}  
& \ta{10}{3}{5}{8} 
& \ta{2}{5}{5}{8}  
& -\ta{11}{4}{7}{8} 
& \ta{3}{4}{7}{8}  
& \ta{12}{5}{8}{9}  
&\ta{3}{4}{8}{9}  
& -\ta{6}{13}{11}{8} 
& \ta{2}{5}{11}{8}
&\cdot{}                \\[0.3cm]
\cdot{}                
& \cdot{}                
& -1                
& \ta{6}{1}{5}{10} 
& \ta{11}{4}{5}{10} 
& \ta{2}{5}{7}{10}   
& -\ta{12}{5}{7}{10} 
& \ta{3}{4}{9}{10}  
& \ta{6}{13}{9}{10} 
& 1               
& \cdot{}                   
&\cdot{}                \\[0.3cm]
\cdot{}                
& \cdot{}                
&\cdot{}                
& -\ta{11}{4}{5}{10}
&\ta{6}{1}{5}{10}  
&\ta{12}{5}{7}{10}  
&\ta{2}{5}{7}{10} 
&-\ta{6}{13}{9}{10} 
&\ta{3}{4}{9}{10}  
&\cdot{}               
& \cdot{}                   
&\cdot{}                \\[0.3cm]
\cdot{}                
& \cdot{}                
&\cdot{}                
&\cdot{}                
& -1               
&\ta{6}{1}{7}{12} 
&\ta{6}{13}{7}{12} 
& 1                
&\cdot{}                
&\cdot{}               
& \cdot{}                  
 &\cdot{}                \\[0.3cm]
\cdot{}                
& \cdot{}                
&\cdot{}                
&\cdot{}                
&\cdot{}                
&-\ta{6}{13}{7}{12} 
& \ta{6}{1}{7}{12} 
& \cdot{}               
&\cdot{}                
&\cdot{}               
& \cdot{}                   
&\cdot{}                
\end{array}\]
\end{center}
\setlength{\arraycolsep}{\leng}

\newpage{}

\begin{figure}
\caption{Sign pattern for $l$ even}
\label{fig1}
\setlength{\unitlength}{5mm}
\begin{center}
\begin{picture}(15,20)
\thinlines
\multiput(15.5,18.3)(0,-1){14}{$\cdots$}
\multiput(1.5,4)(1,0){14}{$\vdots$}
\put(15,4.3){$\ddots$}
\multiput(1,19)(0,-1){15}{\line(1,0){14}}
\multiput(1,19)(1,0){15}{\line(0,-1){14}}
\multiput(1.2,18.3)(1,-1){14}{+}
\multiput(2.2,18.3)(1,-1){13}{+}
\multiput(3.4,18.3)(1,-1){12}{-}
\multiput(4.2,18.3)(1,-1){11}{+}
\multiput(5.2,18.3)(1,-1){10}{+}
\multiput(6.2,18.3)(1,-1){9}{+}
\multiput(7.4,18.3)(1,-1){8}{-}
\multiput(8.2,18.3)(1,-1){7}{+}
\multiput(9.2,18.3)(1,-1){6}{+}
\multiput(10.2,18.3)(1,-1){5}{+}
\multiput(11.4,18.3)(1,-1){4}{-}
\multiput(12.2,18.3)(1,-1){3}{+}
\multiput(13.2,18.3)(1,-1){2}{+}
\multiput(14.2,18.3)(1,-1){1}{+}

\multiput(1.2,17.3)(1,-1){13}{+}
\multiput(1.4,16.3)(1,-1){12}{-}
\multiput(1.2,15.3)(1,-1){11}{+}
\multiput(1.2,14.3)(1,-1){10}{+}
\multiput(1.2,13.3)(1,-1){9}{+}
\multiput(1.4,12.3)(1,-1){8}{-}
\multiput(1.2,11.3)(1,-1){7}{+}
\multiput(1.2,10.3)(1,-1){6}{+}
\multiput(1.2,9.3)(1,-1){5}{+}
\multiput(1.4,8.3)(1,-1){4}{-}
\multiput(1.2,7.3)(1,-1){3}{+}
\multiput(1.2,6.3)(1,-1){2}{+}
\multiput(1.2,5.3)(1,-1){1}{+}
\multiput(3,18)(2,0){6}{\circle{0.5}}
\multiput(3,16)(2,0){6}{\circle{0.5}}
\multiput(3,14)(2,0){6}{\circle{0.5}}
\multiput(3,12)(2,0){6}{\circle{0.5}}
\multiput(3,10)(2,0){6}{\circle{0.5}}
\multiput(3,8)(2,0){6}{\circle{0.5}}
\multiput(3,6)(2,0){6}{\circle{0.5}}
\put(-2.7,2){Where }
\put(0,2.2){\circle{0.5}}
\put(0.4,2){ denotes that the 4 adjacent boxes may span a block of $u$ resp. $v$}
\end{picture}
\end{center}
\end{figure}

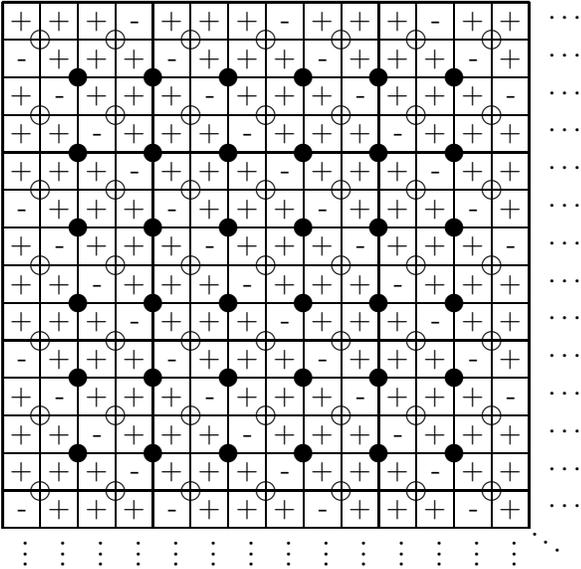
\begin{figure}
\caption{Sign pattern for $l$ odd}
\label{fig2}
\setlength{\unitlength}{5mm}
\begin{center}
\begin{picture}(15,20)
\thinlines
\multiput(15.5,18.4)(0,-1){14}{$\cdots$}
\multiput(1.5,4)(1,0){14}{$\vdots$}
\put(15,4.3){$\ddots$}
\multiput(1,19)(0,-1){15}{\line(1,0){14}}
\multiput(1,19)(1,0){15}{\line(0,-1){14}}
\multiput(1.2,18.3)(1,-1){14}{+}
\multiput(2.2,18.3)(1,-1){13}{+}
\multiput(3.2,18.3)(1,-1){12}{+}
\multiput(4.4,18.3)(1,-1){11}{-}
\multiput(5.2,18.3)(1,-1){10}{+}
\multiput(6.2,18.3)(1,-1){9}{+}
\multiput(7.2,18.3)(1,-1){8}{+}
\multiput(8.4,18.3)(1,-1){7}{-}
\multiput(9.2,18.3)(1,-1){6}{+}
\multiput(10.2,18.3)(1,-1){5}{+}
\multiput(11.2,18.3)(1,-1){4}{+}
\multiput(12.4,18.3)(1,-1){3}{-}
\multiput(13.2,18.3)(1,-1){2}{+}
\multiput(14.2,18.3)(1,-1){1}{+}
\multiput(1.4,17.3)(1,-1){13}{-}
\multiput(1.2,16.3)(1,-1){12}{+}
\multiput(1.2,15.3)(1,-1){11}{+}
\multiput(1.2,14.3)(1,-1){10}{+}
\multiput(1.4,13.3)(1,-1){9}{-}
\multiput(1.2,12.3)(1,-1){8}{+}
\multiput(1.2,11.3)(1,-1){7}{+}
\multiput(1.2,10.3)(1,-1){6}{+}
\multiput(1.4,9.3)(1,-1){5}{-}
\multiput(1.2,8.3)(1,-1){4}{+}
\multiput(1.2,7.3)(1,-1){3}{+}
\multiput(1.2,6.3)(1,-1){2}{+}
\multiput(1.4,5.3)(1,-1){1}{-}
\multiput(2,18)(2,0){7}{\circle{0.5}}
\multiput(2,16)(2,0){7}{\circle{0.5}}
\multiput(2,14)(2,0){7}{\circle{0.5}}
\multiput(2,12)(2,0){7}{\circle{0.5}}
\multiput(2,10)(2,0){7}{\circle{0.5}}
\multiput(2,8)(2,0){7}{\circle{0.5}}
\multiput(2,6)(2,0){7}{\circle{0.5}}
\multiput(3,17)(2,0){6}{\circle*{0.5}}
\multiput(3,15)(2,0){6}{\circle*{0.5}}
\multiput(3,13)(2,0){6}{\circle*{0.5}}
\multiput(3,11)(2,0){6}{\circle*{0.5}}
\multiput(3,9)(2,0){6}{\circle*{0.5}}
\multiput(3,7)(2,0){6}{\circle*{0.5}}
\put(-2.7,2){Where }
\put(0,2.2){\circle{0.5}}
\put(0.4,2){ denotes that the 4 adjacent boxes may span a block of $u$ and}
\put(-2.7,0.5){where }
\put(0,0.7){\circle*{0.5}}
\put(0.4,0.5){ denotes that the 4 adjacent boxes may span a block of $v$}
\end{picture}
\end{center}
\end{figure}

\part{Infinite Dimensional Commuting Squares}
\label{part2}

\setcounter{equation}{0}
\chapter{Hyperfinite $II_{_{1}}-$factors From Infinite Dimensional Multi-Matrix Algebras}
\label{uendeligcsq}

\setcounter{equation}{0}
%sidst rettet 19/12

\noindent{}In this chapter we will prove, that the construction of inclusions of Hyperfinite $II_{_{1}}-$factors, based on ladders of multi-matrix algebras, used previously, generalizes to constructions based on, what we have chosen to call infinite dimensional multi-matrix algebras.
\section{Preliminaries}
\setcounter{equation}{0}
An infinite  dimensional multi-matrix algebra is an infinite direct sum of full matrix 
algebras.  
\[ A = \bigoplus_{j=1}^{\infty}A_{j} , A_{j}
\cong{} M_{a_{j}}({\Bbb C})\] 
The {\em dimension vector} of $A$ is $(a_{j})_{j=1}^{\infty}$

\noindent{}A trace on $A$ is given by its action on each direct summand, i.e. 
a trace on 
$A$ is determined by the {\em trace vector} 
\[ \alpha = (\alpha_{j})_{j=1}^{\infty}\] 
where $ \alpha_{j}$ is the trace of a minimal projection in $A_{j}$

\noindent{}The trace $tr$ defined by $\alpha $ is {\em finite } if 
\begin{equation} tr({\bf1}) = \sum_{j=1}^{\infty}\alpha_{j}a_{j} < \infty 
\label{fintr} \end{equation} 
Note that \ref{fintr} implies that $\etnorm{\alpha} < \infty,$ since  $a_{j}
\geq 1$

\noindent{}If $A,B$ are infinite  dimensional multi-matrix algebras $A \subset{} B$ we define 
the  {\em
inclusion matrix} $G$ of $A \subset{} B$ by 
\[ G = (g_{ij})_{i,j=1}^{\infty}
,  g_{ij} = \mbox{ multiplicity of } A_{i} \mbox{ in } B_{j}\] 
and we write $A \subset_{G} B.$ If $\alpha,\beta$ denote trace vectors for $A,B$
defining finite traces which extend one another, and  $a,b$ denote the dimension
vectors then 
\[ \alpha = G\beta \mbox{ and } b = G^{t}a.\] 
All traces on multi-matrix algebras in the following, are assumed to be finite.

\noindent{}In \cite{S} Chapter 6 there is a discussion of countable non-negative 
matrices $T = (t_{ij})_{i,j=1}^{\infty}$ under the assumptions 
\begin{enumerate}
\item $T^{k} = (t^{k}_{ij})_{i,j=1}^{\infty}$ are all  element wise finite.
\item $T$ is irreducible, in the usual Perron-Frobenius-Theory sense.
\end{enumerate}
Theorem 6.4 of \cite{S} states

\noindent{}{\em{}If $x = (x_{i})_{i=1}^{\infty}$ is a positive right 
eigenvector of $T$ and $y = (y_{i})_{i=1}^{\infty}$ is a positive 
left eigenvector of $T$, both corresponding to the same eigenvalue then
(1) $\sum_{i=1}^{\infty}x_{i}y_{i} < \infty $ if and only if 
(2) $x$ resp. $y$ are multiples of unique right resp. left eigenvectors of $T$ 
corresponding to the largest eigenvalue of $T$.}

\noindent{}For our purposes we will be interested in infinite, locally finite, 
connected 
graphs. If $G$ is the adjacency matrix of a infinite, locally finite, connected 
graph, then $G$ is symmetric. Hence any left eigenvector of $G$ will also be a 
right eigenvector. And the above theorem allows us to conclude
\begin{cor}
\label{Scor}
If $x$ is a positive eigenvector of $G$ then

\noindent{}$\|x\|_{_{2}} < \infty\Leftrightarrow{}x$ is 
proportional to the unique positive eigenvector corresponding to the 
largest eigenvalue of $G$.
\end{cor}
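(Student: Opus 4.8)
The plan is to deduce Corollary \ref{Scor} directly from Theorem 6.4 of \cite{S} applied to $T=G$. First I would check that the standing hypotheses of that theorem are met in our situation: $G$ is the adjacency matrix of an infinite, locally finite, connected graph, so each entry $g_{ij}$ is a finite non-negative integer, and the $(i,j)$ entry of $G^k$ counts walks of length $k$ from vertex $i$ to vertex $j$, which is finite by local finiteness; thus condition (1) holds. Connectedness of the graph is exactly irreducibility of $G$ in the Perron--Frobenius sense, so condition (2) holds. Hence Theorem 6.4 is applicable.

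Next I would exploit the symmetry $G=G^t$. A positive eigenvector $x$ of $G$ with eigenvalue $\mu$ is simultaneously a positive right eigenvector and a positive left eigenvector of $G$ corresponding to the same eigenvalue $\mu$. Therefore, taking $x=y$ in Theorem 6.4, the quantity $\sum_i x_i y_i = \sum_i x_i^2 = \|x\|_{_{2}}^2$ is finite if and only if $x$ is a multiple of the unique positive right (equivalently left) eigenvector of $G$ corresponding to the largest eigenvalue of $G$. Since $\|x\|_{_{2}}<\infty$ is equivalent to $\|x\|_{_{2}}^2<\infty$, this is precisely the stated equivalence, and the proof is complete.

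The only slightly delicate point — and the thing I would be careful to state cleanly rather than a genuine obstacle — is the logical bookkeeping: Theorem 6.4 as quoted gives ``$(1)\Leftrightarrow(2)$'' where $(2)$ asserts that \emph{both} $x$ and $y$ are multiples of the respective extremal eigenvectors. When we specialize $y=x$, the two clauses of $(2)$ collapse to the single statement ``$x$ is proportional to the unique positive eigenvector for the largest eigenvalue,'' so no information is lost. One should also note in passing that such an extremal positive eigenvector need not a priori exist for an infinite graph, but the corollary is a conditional statement: if $x$ is any positive eigenvector with $\|x\|_{_{2}}<\infty$, then in particular the hypotheses of the ``$(1)\Rightarrow(2)$'' direction are satisfied, so the extremal eigenvector exists and $x$ is a multiple of it; conversely if it exists and $x$ is such a multiple, then $\|x\|_{_2}<\infty$. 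No further computation is needed.
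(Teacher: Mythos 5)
Your proposal is correct and follows essentially the same route as the paper: the paper likewise verifies that a locally finite connected graph makes Seneta's Theorem 6.4 applicable, uses the symmetry $G=G^{t}$ to view a positive eigenvector as simultaneously a left and right eigenvector, and applies the theorem with $y=x$ so that $\sum_i x_i y_i=\|x\|_{_{2}}^{2}$. Your extra remarks on the logical bookkeeping are fine but not needed beyond what the paper already does.
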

For our constructions we will actually  be interested in positive eigenvectors 
$x$ satisfying $\|x\|_{_{1}} < \infty,$ but the above corollary also applies 
to such vectors.

All inclusion matrices in this chapter will be adjacency matrices for locally finite graphs, which may be finite or countably infinite.

\section{Towers of Infinite Multi-Matrix Algebras}

\setcounter{equation}{0}  
\begin{lemma}  \label{lemma21}	
 Let \mbox{\scriptsize{$\begin{array}{lcl}  B_{0} & \subset & B_{1} \\
  \cup &\,&\cup\\   A_{0} & \subset & A_{1} \end{array} $}}
	be a commuting square of infinite  dimensional multi-matrix algebras with respect to a trace  \trBen on $B_{1}.$

\noindent{}Put $B_{2} = \BenexBnul,$ 
 and  $A_{2} = \AenexBnul,$ then \begin{enumerate} \item
$\overline{A_{1}\eBnul{}A_{1}}^{weak} = wA_{2} ,$ 
 where
 $w = {\cal Z}_{A_{2}}(\eBnul) ,$ 
 the central support of  \eBnul in $A_{2}.$ \item If the representation of 
$A_{1}$ on \LAen{}  is denoted by  $\pi{},$ 
 we define   
\[ \phi : A_{2} \rightarrow \AenexAnul\]  
by  
\begin{equation}
\phi{}(x) = \pi{}( x\restrLAen), x \in{} A_{2}. \label{phidef} 
\end{equation} 
If \trBen is a Markov trace of modulus  $\beta$ for $B_{0} \subset{} B_{1},$
 and the restriction to  $A_{1},$ \trAen, is a Markov trace of modulus $\gamma$
for  $A_{0} \subset{} A_{1},$  then 	
\[A_{2} = zA_{2} \oplus ({\bf 1}-z)A_{2},\]  
where  $zA_{2} \cong \AenexAnul$  and $({\bf 1}-z)A_{2} \cong $
 a subalgebra of $A_{1}.$ \end{enumerate} Furthermore $\trAto(z) =
\frac{\gamma}{\beta}$ 

\end{lemma}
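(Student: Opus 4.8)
The statement to prove is Lemma \ref{lemma21}, and within it the assertions about $A_2 = \AenexBnul$: namely (1) that $\overline{A_1 e_{B_0} A_1}^{weak} = wA_2$ with $w$ the central support of $e_{B_0}$ in $A_2$, (2) the decomposition $A_2 = zA_2 \oplus (1-z)A_2$ with $zA_2 \cong \AenexAnul$ and $(1-z)A_2$ isomorphic to a subalgebra of $A_1$ under the Markov hypotheses, and finally that $\trAto(z) = \gamma/\beta$. The key point is that although these algebras are infinite dimensional, everything is a (possibly infinite) direct sum of finite matrix blocks, so one can localize: work block-by-block on the minimal central projections of $A_1$ (equivalently, on the vertices of the inclusion graph), reduce to the finite-dimensional statements of \cite{HGJ} Chapter 3 (the basic construction for multi-matrix algebras) and Chapter 4 (commuting squares), and then sum up. Summability of the relevant eigenvectors (Corollary \ref{Scor}) guarantees the traces remain finite throughout.

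First I would recall that for a commuting square, the conditional expectation $E_{A_0}$ restricted to $B_0$ equals $E_{B_0}$ restricted to $A_1$ composed appropriately; concretely $e_{B_0} x e_{B_0} = E_{B_0}(x) e_{B_0}$ for $x \in A_1$, and since the square commutes, $E_{B_0}(A_1) \subseteq A_0$. This is what makes $A_2 = \{A_1, e_{B_0}\}''$ a sensible object: $e_{B_0}$ "implements" an expectation onto $A_0$ inside $A_1$. For assertion (1), I would observe that $\overline{A_1 e_{B_0} A_1}^{weak}$ is a weakly closed two-sided ideal of $A_2$ (it is clearly a $*$-subspace stable under left and right multiplication by $A_1$ and contains $e_{B_0}$, hence is an ideal), so it is of the form $w' A_2$ for a central projection $w'$; that $w'$ is exactly the central support $w$ of $e_{B_0}$ follows because $w' \geq e_{B_0}$ forces $w' \geq w$, while $w A_2 \ni w e_{B_0} = e_{B_0}$ and $w A_2$ is generated as an ideal by $e_{B_0}$ together with $A_1$-conjugates, giving the reverse inclusion. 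All of this is purely algebraic and does not feel the infinite dimensionality.

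For assertion (2) and the trace computation, which is the real content, I would proceed as follows. Under the Markov hypotheses, $\text{tr}_{B_1}$ is Markov of modulus $\beta$ for $B_0 \subset B_1$, so on the ideal generated by $e_{B_0}$ the algebra $B_2 = \langle B_1, e_{B_0}\rangle$ looks like the basic construction, and $\phi$ of (\ref{phidef}) restricted there is a $*$-isomorphism onto $\langle A_1, e_{A_0}\rangle \cong \AenexAnul$ — this is where I invoke \cite{HGJ} lemma 3.6.2 / proposition 2.4.1 blockwise. Set $z = w$; then $z A_2 \cong \AenexAnul$. On the complement $(1-z)A_2$, the element $e_{B_0}$ acts as $0$, so $(1-z)A_2$ is generated by $(1-z)A_1$ alone, hence is a quotient — and in the multi-matrix setting an isomorphic copy of a subalgebra — of $A_1$. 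The trace of $z$: one computes $\trAto(e_{B_0})$ blockwise using that $e_{B_0}$ is (in each finite block) a sum of minimal projections of $A_2$ whose $\text{tr}_{A_2}$-weights are governed by the Markov moduli; the standard identity $\text{tr}(z) = \text{tr}($ central support of $e_{B_0}) $ and the Markov relations give $\trAto(z) = \gamma/\beta$ — the ratio of the two moduli — exactly as in the finite-dimensional computation in \cite{HGJ}. The subtle point is to check that the infinite sums defining these traces converge and that $z$ genuinely lies in $A_2$ (not just its weak closure in a larger algebra); this is handled by Corollary \ref{Scor}, since the trace vectors involved are the Perron–Frobenius eigenvectors of the relevant graph Laplacians and hence $\ell^1$-summable, so $\text{tr}_{A_2}(1) < \infty$ and all manipulations are legitimate.

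\textbf{Main obstacle.} I expect the hard part to be the infinite-dimensional bookkeeping: verifying that $A_2 = \AenexBnul$ is again a finite-trace infinite multi-matrix algebra (its dimension vector is controlled by the inclusion graphs $G, H$ and could a priori fail to give a finite trace), and that the blockwise isomorphisms assemble into a global isomorphism $z A_2 \cong \AenexAnul$ respecting traces. Once one knows the summability of the Perron–Frobenius vectors from Corollary \ref{Scor}, the algebraic identities are exactly the finite-dimensional ones from \cite{HGJ} applied in each matrix block, so the proof reduces to carefully matching up the block structure of $A_2$ with that of $\AenexAnul$ via the graph combinatorics and then checking that $\sum_j \alpha_j^{(2)} a_j^{(2)} < \infty$, which is where the Markov property does its work.
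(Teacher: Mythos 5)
Your part (1) is essentially the paper's own argument (two-sided weakly closed ideal, hence $w'A_{2}$, then comparison with the central support using that $z$ commutes with $A_{1}$), so no issue there. The gap is in part (2). First, the announced strategy of localizing ``block-by-block on the minimal central projections of $A_{1}$'' and invoking the finite-dimensional results of \cite{HGJ} does not work as stated: a minimal central projection $p$ of $A_{1}$ is not central in $B_{0}$ or $B_{1}$, so cutting by $p$ does not produce a finite-dimensional commuting square (the conditional expectations do not restrict to the corner $pB_{1}p$), and the block structures of the four algebras are interlocked through the locally finite graphs, so there is no invariant finite corner to reduce to. Second, the two substantive claims are asserted rather than proved: (a) that $\phi|_{zA_{2}}$ is an isomorphism onto \AenexAnul{} requires both the identification $\phi(\eBnul)=\eAnul$ --- which is exactly where the commuting square condition enters, since the restriction of \eBnul{} to \LAen{} is the projection onto $L^{2}(A_{0})$ --- and an injectivity argument; and (b) the value $\trAto(z)=\frac{\gamma}{\beta}$. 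Your route to (b), computing $\trAto(\eBnul)$ blockwise and then passing to the central support ``by the standard identity,'' is not a proof: the trace of the central support is not determined by the trace of \eBnul{} alone but by how \eBnul{} distributes over the central decomposition of $A_{2}$, which is precisely the structure the lemma is establishing, so the argument is circular.

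The paper closes both gaps simultaneously with a single global trace comparison, which is the real content of the lemma: let $tr'$ be the modulus-$\gamma$ Markov extension of \trAen{} to \AenexAnul{} and let \trAto{} be the restriction to $A_{2}$ of the modulus-$\beta$ Markov extension of \trBen{} to $B_{2}$; checking on generators $a\eBnul{}a'$, $a,a'\in A_{1}$, gives $tr'\circ\phi=\frac{\beta}{\gamma}\,\trAto$ on $zA_{2}$. Faithfulness of the two traces (plus normality of $\phi$) then yields injectivity of $\phi$ on $zA_{2}$, density of the image being clear from $\phi(A_{1}\eBnul{}A_{1})=A_{1}\eAnul{}A_{1}$; and evaluating the scaling relation at $z$ itself, using $\phi(z)={\bf 1}$ and $\phi({\bf 1}-z)=0$, gives $1=tr'({\bf 1})=\frac{\beta}{\gamma}\trAto(z)$, i.e. $\trAto(z)=\frac{\gamma}{\beta}$. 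Your identification of $({\bf 1}-z)A_{2}$ with a quotient, hence subalgebra, of $A_{1}$ is fine. Finally, the appeal to Corollary \ref{Scor} is out of place here: the proof only uses that the given traces are finite and faithful; no Perron--Frobenius summability is needed in this lemma.
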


\begin{proof}
$A_{1}\eBnul{}A_{1}$  is a $*$-algebra,  and for $a \in{} A_{1}$ we have

\[ \eBnul{}a \eBnul{} = \EBnul(a)\eBnul =  \EBnul \EAen(a) \eBnul = \EAnul(a)
\eBnul, \] 
and 
\[ \eBnul{}a \eBnul{} = \eBnul\EAnul{}(a). \] 
Hence $A_{1}\eBnul{}A_{1}$  is a two-sided ideal in  alg$(A_{1},\eBnul),$
 and we have that  

\noindent{}$\overline{A_{1}\eBnul{}A_{1}}^{weak}$  is a two-sided ideal
in  $\AenexBnul= A_{2}.$
 In particular there exists a projection  $w \in{} A_{2}$ such that 
$\overline{A_{1}\eBnul{}A_{1}}^{weak}$  $= wA_{2},$  and since  $\eBnul \in{}
wA_{2}$  we must have  $w \geq {\cal Z}_{A_{2}}(\eBnul) ,$
 the central support of \eBnul{} in $A_{2}.$  Put $z = {\cal
Z}_{A_{2}}(\eBnul).$  For $x, y \in{} A_{1}$  we then have  $x \eBnul{}y \in{}
zA_{2},$  since  $z \in{} A_{2}' \subset{} A_{1}' $ i.e. 
$\overline{A_{1}\eBnul{}A_{1}}^{weak}\subset{} zA_{2} ,$
 and hence $w \leq{} z.$ This proves $z = w.$ 

\noindent{}For $a \in A_{1}$  we have  
\[ \phi{}(a) = \pi{}(a\restrLAen{} ) = \pi{}(a).\]  
Since \eBnul ``lives'' on \LBen{} the restriction to $\L^{2}(A_{1}) \subset{}
L^{2}(B_{1})$  is just composition with the orthogonal projection from 
$\L^{2}(B_{1})$ to $\L^{2}(A_{1}),$ and by the commuting square condition this 
equals \eAnul. I.e.  
\[ \phi{}(\eBnul) = \pi{}( \eBnul\restrLAen{}) = \eAnul. \] 
Since $\overline{A_{1}\eAnul{}A_{1}}^{weak} = \AenexAnul$  we have that 
\[ \phi{}(A_{1}\eBnul{}A_{1})  \mbox{ is dense in } \AenexAnul,\] 
and since $A_{1}\eBnul{}A_{1} \subset zA_{2},$   we get  $\phi{}(zA_{2})$  is
dense in \AenexAnul. Let \trBto{} be the uniquely defined Markov extension of
\trBen{} to $B_{2},$
 that is 
\[ \trBto(b\eBnul{}) = \beta{}^{-1}\trBto(b)\ \mbox{ for all } b \in B_{1},\] 
and let \trAto{} be the restriction of this trace to $A_{2}.$  Let also \trAen{}
be the trace on $A_{1}$ with Markov extension $tr'$ of modulus $\gamma$ to
\AenexAnul.

\noindent{}For $a , a'\in{} A_{1}$ we get 
\[	\trAto(a\eBnul{}a') = \trBto(a'a\eBnul) =\] 
\[	\beta{}^{-1}\trBen(a'a) = \beta{}^{-1}\trAen(a'a),\] 
and
	\[ tr'(a\eAnul{}a') = \gamma{}^{-1}\trAen(a'a) , \] 
and hence
	\[tr'\circ \phi{}(a\eBnul{}a') = \gamma{}^{-1}\trAen(a'a)
 = \mbox{$\frac{\beta}{\gamma}$}\trAto(a\eBnul{}a').\] 
Because  $\phi$ is normal we now have  
\[ tr'\circ \phi\restrLAto = \mbox{$\frac{\beta}{\gamma}$}\trAto , \] 
and the faithfulness of $tr'$ and \trAto{} implies  that $\phi$ is injective on
$zA_{1}.$

\noindent{}$z$ is the largest projection in  
$\overline{A_{1}\eBnul{}A_{1}}^{weak},$  and
hence
	$\phi{}(z) = {\bf 1},$ the largest projection in \AenexAnul, and
	$\phi{}({\bf 1 }- z) = 0.$

\noindent{}We have now established 
\begin{equation}  
\phi{}|_{zA_{2}} \mbox{ is an isomorphism of }\ zA_{2} \mbox{ onto }  \AenexAnul 
\label{21a} \end{equation}
\begin{equation} 
\phi{}|_{({\bf 1}-z)A_{2}} = 0 \label{21b} 
\end{equation}
(\ref{21b}) implies that $({\bf 1} - z)A_{2}$ is a subalgebra of $A_{1}$
 since $z = {\cal Z}_{A_{2}}(\eBnul).$  

\noindent{}Finally we have 
\[1=tr'(1)=tr'\circ{}\phi(z)=\mbox{$\frac{\beta}{\gamma}$}\trAto(z),\]
hence
\[ \trAto{}(z) = \mbox{$\frac{\gamma}{\beta}$} ,\] 
and we must have $\gamma \leq \beta$ with  equality if and only if $z = {\bf
1},$ 
 that is if and only if $A_{2}\cong \AenexAnul.$  

\end{proof}

\begin{lemma}  \label{lemma22}

Let $A \subset B \subset {\cal B}( \LB )$  be finite von Neumann-algebras, and
$p$ be a minimal central  projection of $A.$ Then $p'= J_{B}pJ_{B}$  is a
minimal central projection in \BexA{} and $\eA{}p' = \eA{}p.$ 
 
\end{lemma}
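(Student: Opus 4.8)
The plan is to unwind the definitions and use the standard structure theory of the basic construction. Recall that $\langle B, e_A\rangle$ is generated by $B$ together with the Jones projection $e_A$, and that it carries the canonical trace coming from the Markov trace on $B$; equivalently it is $JA'J$ for the modular conjugation $J=J_B$ of $L^2(B,\mathrm{tr}_B)$. So the first step is to identify $p' = J_B p J_B$ as a central projection in $\langle B,e_A\rangle$: since $p$ is central in $A$, it lies in $A \cap A' \subseteq A' $, hence $J_B p J_B \in J_B A' J_B = \langle B,e_A\rangle$, and conjugation by $J_B$ is a $*$-antiautomorphism, so $J_B p J_B$ lies in the center of $J_B A' J_B$ because $p$ lies in the center of $A'$ — wait, $p$ central in $A$ need not be central in $A'$, so I would instead argue directly: $p' $ commutes with $B$ (since $p\in A'$ and $J_B B J_B = B'$... let me be careful) — the cleanest route is to note that $\langle B,e_A\rangle'' $ has center equal to $J_B \mathcal Z(A) J_B$, which is the classical fact that the basic construction ``reflects'' the Bratteli diagram of $A\subset B$. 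Granting that, $p'=J_B p J_B$ is automatically a minimal central projection of $\langle B,e_A\rangle$ precisely because $p$ is one in $A$ and $J_B(\cdot)J_B$ restricts to an order-isomorphism $\mathcal Z(A)\to \mathcal Z(\langle B,e_A\rangle)$ preserving minimality.

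For the identity $e_A p' = e_A p$, I would compute on the dense subspace $B\hat 1 \subseteq L^2(B,\mathrm{tr}_B)$, using that $e_A$ is the orthogonal projection onto $\overline{A\hat 1}$ and that $e_A$ commutes with every element of $A$. For $b\in B$ one has $p' b\hat 1 = J_B p J_B b \hat 1 = J_B p (b^*\hat 1)^{\widehat{\ }}\!\!$ — more precisely, using $J_B x\hat 1 = x^*\hat 1$ and the bimodule relations, $p' = J_B p J_B$ acts by right multiplication by $p$ (since $p$ is selfadjoint). Then $e_A p' \xi = e_A (\xi p)$ for $\xi$ in the dense domain, and since $e_A$ lands in $\overline{A\hat 1}$ while $p$ is central in $A$, one checks $e_A(\xi p) = e_A(\xi) p = p\, e_A(\xi)$; on the other hand $e_A p$ likewise acts as $e_A$ followed by multiplication by $p\in A$. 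Matching the two computations — both reduce to ``apply $e_A$, then multiply by the central projection $p$'' — gives $e_A p' = e_A p$. The key relation making this work is that $e_A$ intertwines left and right multiplication by elements of $A$ in the appropriate sense, i.e. $e_A a = a e_A$ and $e_A$ maps onto the $A$-cyclic subspace.

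The step I expect to be the main obstacle is the first one: rigorously identifying the center of $\langle B,e_A\rangle$ with $J_B\mathcal Z(A)J_B$ and checking that $J_B(\cdot)J_B$ preserves minimality of central projections, in the generality of (possibly infinite-dimensional) multi-matrix algebras with a finite trace. In the finite-dimensional case this is immediate from the Bratteli-diagram description of the basic construction (\cite{HGJ} chapter 4); in the infinite-dimensional case I would invoke the preliminaries of this chapter — each $B_j$ is a full matrix algebra, the trace is finite, and $\langle B,e_A\rangle$ is again an infinite-dimensional multi-matrix algebra whose dimension/inclusion data are governed by $G^t$ — so the same reflection statement holds summand by summand. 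Once that structural fact is in place, everything else is a short computation on the standard form, and I would present it in two or three lines.
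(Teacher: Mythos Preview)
Your approach is essentially the same as the paper's, but you tie yourself in knots on the first part. The sentence ``$p$ central in $A$ need not be central in $A'$'' is simply false: for any von Neumann algebra, $\mathcal Z(A) = A\cap A' = A''\cap A' = \mathcal Z(A')$. So $p$ minimal central in $A$ is automatically minimal central in $A'$, and since $J_B(\cdot)J_B$ is a $*$-anti-isomorphism from $A'$ onto $\langle B,e_A\rangle = J_B A' J_B$, the image $p'=J_BpJ_B$ is a minimal central projection of $\langle B,e_A\rangle$. That is the paper's one-line argument; the detour through the ``classical fact'' about reflected Bratteli diagrams and your ``main obstacle'' are entirely unnecessary.

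For the second part your computation is correct and is exactly what the paper does: on $x\xi\in B\xi$ one has $J_BpJ_B(x\xi)=xp\xi$, then $e_A(xp\xi)=E_A(xp)\xi=E_A(x)p\,\xi=pE_A(x)\xi$ (using $p\in A$ and $p\in\mathcal Z(A)$), and finally $pE_A(x)\xi=pe_A(x\xi)=e_Ap(x\xi)$ since $e_A\in A'$. Density of $B\xi$ finishes it.
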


\begin{proof}
$p'= J_{B}pJ_{B}$ is a minimal central projection in \BexA,  since $\BexA$ $ =
J_{B}A'J_{B}.$  Let $\xi$ be the cyclic and separating trace vector for \LB, 
and let $x \in B.$ Then 
\renewcommand{\arraystretch}{1.5} 
\[ \begin{array}{ll}
 \eA{}J_{B}pJ_{B}(x\xi{}) = \eA{}xp\xi{} = &	 (\mbox{ since } \eA{} 
\mbox{ acts as } \EA{} \mbox{ on } B ) \\  
\EA(xp)\xi{} = p\EA(x)\xi{} =	& (\mbox{ since } p \in A' \cap{} A ) \\  
p\eA(x\xi{}) = \eA{}p(x\xi) 	&	(\mbox{ since }\ \eA \in{} A' ). 
\end{array}\]
\renewcommand{\arraystretch}{1} 
The density of $B\xi$ in \LB{} then implies $\eA{}p'$ = $\eA{}p.$
  
\end{proof}
 
\begin{lemma} \label{lemma23}

If ${\bf 1} \in{} A \subset B \subset {\cal B}(\LB ) $  are finite von
Neumann-algebras, and $p$ is a minimal  central projection of $A,$ and $f \in Ap$
is a projection,
 then $f\eA$ is a projection in $\BexA{}p',$  where $p' = J_{B}pJ_{B}.$ 
Furthermore, if $f$ is a minimal projection then $f\eA$
 is minimal in $\BexA{}p'.$   \end{lemma}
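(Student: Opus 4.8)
The plan is to verify directly that $f\eA$ is a self-adjoint idempotent lying in $\BexA{}p'$, and then to identify it as a minimal projection by exhibiting it as $f'\eA$ for a suitable minimal projection $f'$ in the known matrix summand $\BexA{}p'$. First I would note that since $f\in Ap\subset A$ and $\eA\in A'$, the operators $f$ and $\eA$ commute, so $(f\eA)^{2}=f^{2}\eA^{2}=f\eA$ and $(f\eA)^{*}=f^{*}\eA^{*}=f\eA$; hence $f\eA$ is a projection in $\langle B,\eA\rangle$. That $f\eA\leq p'\eA$ follows once I establish $f\eA = fp\eA = f(p\eA)$ and use Lemma~\ref{lemma22}, which gives $\eA p'=\eA p$; combined with $f=fp$ (as $f\in Ap$) this yields $f\eA = f p\eA = f p'\eA = p' (f\eA)$, so $f\eA\in\BexA{}p'$, using also that $p'$ is central in $\BexA$.

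Next, for minimality I would use the structure of $\BexA{}p'$ coming from Lemma~\ref{lemma22}: $p'=J_{B}pJ_{B}$ is a minimal central projection of $\BexA$, so $\BexA{}p'$ is a full matrix algebra, and its minimal projections are all equivalent. It therefore suffices to compare $f\eA$ with one explicitly known minimal projection of $\BexA{}p'$. The natural candidate is $\eA p' = \eA p$ itself when restricted appropriately: since $\eA$ is, up to the multi-matrix structure, a sum of matrix units, $\eA p$ decomposes into minimal projections of $\BexA{}p'$, and $f\eA$ is one summand of exactly the same type whenever $f$ is a minimal projection of $Ap$. Concretely I would pick a minimal projection $e\in Ap$ and work in the standard picture where $\BexA{}p'$ is generated by $B p'$ and $\eA p'$; there the trace of a minimal projection of $\BexA{}p'$ is a fixed scalar, and I would compute the (unnormalized) trace $\mathrm{Tr}(f\eA)$ and show it equals that scalar, forcing $f\eA$ to be minimal. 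Equivalently, one can argue that $f\eA\,\BexA\,f\eA$ is one-dimensional: for $x\in B$, $f\eA x \eA f = f\EA(x) f = \EA(x)f$ since $f$ is minimal in $Ap$ and $\EA(x)\in A$, which shows $f\eA\langle B,\eA\rangle f\eA = {\Bbb C}f\eA$, i.e. $f\eA$ is minimal in $\BexA$, and since it lies under $p'$ it is minimal in $\BexA{}p'$.

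The main obstacle I expect is the minimality argument in the infinite-dimensional multi-matrix setting: one must be careful that "minimal projection of $Ap$" and the reduction $f=fp$ interact correctly with the conditional expectation $\EA$, and that the identity $f\EA(x)f=\EA(x)f$ really does hold for all $x\in B$ (not just $x\in A$) — this uses that $\EA$ maps $B$ into $A$ and that $f$ is a minimal projection in the summand $Ap\cong M_{a_{p}}({\Bbb C})$, so that $f\,y\,f\in{\Bbb C}f$ for all $y\in Ap$. Once that one-dimensionality of the corner $f\eA\BexA f\eA$ is secured, minimality of $f\eA$ in $\BexA$ is immediate, and minimality in the summand $\BexA p'$ follows since $p'$ is the central support-type projection carrying $\eA p$ by Lemma~\ref{lemma22}. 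I would also double-check that $\eA\in A'$ and $\eA\in\langle B,\eA\rangle$ so that all products written above indeed live in $\BexA$, and that the finiteness hypotheses guarantee $\BexA$ is again a finite von Neumann algebra so the trace arguments are legitimate.
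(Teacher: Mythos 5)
Your proposal is correct and follows essentially the same route as the paper: membership of $f\eA$ in $\BexA{}p'$ rests on Lemma \ref{lemma22} (the paper verifies it on the dense set $B\xi$, you use the operator identity $\eA{}p'=\eA{}p$ together with centrality of $p'$ and $f=fp$), and minimality rests on $\eA\BexA\eA=A\eA$ plus minimality of $f$ — the paper phrases this as every subprojection of $f\eA$ being of the form $g\eA$ with $g\in A$, you phrase it as the corner $f\eA\,\BexA\,f\eA$ being one-dimensional, which is the same argument. Only correct the slip ``$f\EA(x)f=\EA(x)f$'': it should read $f\EA(x)f\in{\Bbb C}f$, which is the fact you in fact invoke and justify later via $fyf\in{\Bbb C}f$ for $y\in Ap$.
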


\begin{proof} 
Since $\eA \in{} A', \; f\eA$ is a projection. 

\noindent{}Let $\xi$ be the cyclic and separating trace vector for \LB{}
 and $x \in{} B$ then 
\begin{enumerate} \item	
$f\eA{}J_{B}pJ_{B}(x\xi) =$  $
f\eA{}px\xi{} =$	\hfill( by lemma \ref{lemma22} )

$f\EA(xp)\xi{} = f\EA(x)p\xi = $
	$fp\eA{}x\xi{} = f\eA(x\xi)	$\hfill(since $f \leq{} p $)

\item $J_{B}pJ_{B}f\eA(x\xi) = J_{B}pJ_{B}f\EA(x)\xi{} =$
		$f\EA(x)p\xi{}= fp\EA(x)\xi{} =$

		$f\EA(x)\xi = f\eA(x\xi)$		\hfill(since $f \leq p $) \end{enumerate}
I.e. $f\eA{}p' = p'f\eA = f\eA \Rightarrow{} f\eA \leq p',$
 hence $f\eA \in \BexA{}p'.$ 

\noindent{}Assume $f$ is minimal in $Ap$ and that $f_{0} \in \BexA$ is a 
projection  such
that $f_{0} \leq f\eA.$  Then  
\[ f_{0} \in \eA\BexA\eA, \mbox{ since } f_{0}
 \leq{} f\eA \in \eA \BexA \eA = A\eA ,\] 
hence we can find a projection  $g \in{} A,$ such that $f_{0} = g\eA \leq{}
f\eA,$ but
 $A \rightarrow{} A\eA$ is an isomorphism, so the minimality of $f$  implies $g
= f$ or $g = 0.$ Consequently $f_{0} = 0$
 or $f_{0} = f\eA,$ and $f\eA$ is minimal in \BexA.
 
\end{proof}
										
\begin{prop}  \label{prop24} 

Let \mbox{\scriptsize{$\begin{array}{lcl}
 B_{0} & \subset_{L} & B_{1} \\
  \cup_{K} &\,&\cup_{H}\\
 A_{0} & \subset_{G} & A_{1} \end{array} $}}	 be a commuting square of infinite  dimensional multi-matrix algebras with respect
to \trBen on $B_{1}$ and put $B_{2} =$ \BenexBnul{} and $A_{2} =$ \AenexBnul. 

\noindent{}If $\phi : A_{2} \rightarrow \AenexAnul$  is defined 
(as in \ref{phidef}) by  
\[\phi{}(x) = \pi{}( x\restrLAen), x \in A_{2},\] 
is  an isomorphism with inverse $\psi$ then 
\begin{enumerate}  
\item 
\[	A_{2} \subset_{K} B_{2} ,\]  
Or stated in other terms: For $p , q$  minimal central projections of $A_{0}$
resp. $B_{0},$  $q' = \psi(J_{A_{1}}qJ_{A_{1}})$ and  $p' =
J_{B_{1}}pJ_{B_{1}}$  are the corresponding minimal central projections of
$A_{2}$  resp. $B_{2},$ and  
\[\left[ (B_{0})_{pq} : (A_{0})_{pq} \right]
=  \left[ (B_{2})_{p'q'} : (A_{2})_{p'q'} \right].\]

\item{} Assume furthermore that \trBen{} is a Markov trace of modulus
 $\beta$ for $B_{0} \subset{} B_{1},$ and  the restriction to $A_{1},$ \trAen,
is a Markov trace of modulus
 $\gamma$ for $A_{0} \subset{} A_{1}$ and put 
\[B_{j} = \langle{} B_{j-1} ,e_{B_{j-2}} \rangle{} \mbox{ and } 
A_{j} = \{ A_{j-1} , e_{B_{j-2}} \}'', \mbox{ for }j \geq{} 2 ,\] 
then the inclusions are given by :

	$B_{j} \subset{} B_{j+1}$ 	is given by $L$ when $j$ 
 is even and $L^{t}$ when $j$ is odd.

	$A_{j} \subset{} A_{j+1}$	is given by $G$ when $j$  is even and $G^{t}$ when
$j$ is odd.

	$A_{j} \subset{} B_{j}$ 	is given by $K$ when $j$  is even and $H$ when $j$ is
odd. \end{enumerate} 

\end{prop}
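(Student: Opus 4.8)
The plan is to imitate the finite--dimensional tower argument (compare \cite{HGJ}, Corollary 4.2.3 and the lemmas preceding it), feeding in Lemmas \ref{lemma21}, \ref{lemma22} and \ref{lemma23} in place of their finite--dimensional counterparts, and invoking Corollary \ref{Scor} whenever a trace must be kept finite, i.e.\ whenever the summable eigenvector forced by a Markov condition must be identified with the genuine Perron--Frobenius vector.

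\emph{Part 1.} Because the given diagram is a commuting square, the restriction of $e_{B_0}\in{\cal B}(L^2(B_1))$ to the $A_1$-invariant subspace $L^2(A_1)$ is the projection onto $L^2(A_0)$, so $\phi(e_{B_0})=e_{A_0}$; together with the hypothesis that $\phi$ is an isomorphism this identifies $A_2$ with the basic construction $\langle A_1,e_{A_0}\rangle$, with $\psi$ the identity on $A_1$ and $\psi(e_{A_0})=e_{B_0}$. Apply Lemma \ref{lemma22} on both sides: $p\mapsto J_{A_1}pJ_{A_1}$ is the canonical bijection from minimal central projections of $A_0$ onto those of $\langle A_1,e_{A_0}\rangle$, and $q\mapsto J_{B_1}qJ_{B_1}$ the one from those of $B_0$ onto those of $B_2=\langle B_1,e_{B_0}\rangle$; write $p'=\psi(J_{A_1}pJ_{A_1})$ and $q'=J_{B_1}qJ_{B_1}$ for the corresponding minimal central projections of $A_2$ and $B_2$. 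Fix a minimal projection $f$ of the $p$-block of $A_0$. By Lemma \ref{lemma23} for $A_0\subset A_1$, $fe_{A_0}$ is minimal in $\langle A_1,e_{A_0}\rangle p'$, hence $\psi(fe_{A_0})=fe_{B_0}$ is minimal in $A_2p'$. On the other hand $f=\bigoplus_q f_q$, where $f_q$ is a rank-$K_{pq}$ projection in the $q$-block of $B_0$ ($K_{pq}$ being the inclusion multiplicity of the $p$-block of $A_0$ in the $q$-block of $B_0$), so $fe_{B_0}=\bigoplus_q f_qe_{B_0}$, and Lemma \ref{lemma23} for $B_0\subset B_1$ makes $f_qe_{B_0}$ a rank-$K_{pq}$ projection of the $q'$-block of $B_2$. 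Thus the minimal projection $fe_{B_0}$ of $A_2p'$ has rank $K_{pq}$ inside $B_2q'$, i.e.\ the inclusion matrix of $A_2\subset B_2$ has $(p',q')$-entry $K_{pq}$; under the canonical bijections $p\leftrightarrow p'$, $q\leftrightarrow q'$ this says exactly $A_2\subset_K B_2$.

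\emph{Part 2.} First, the Markov hypotheses together with $\phi$ an isomorphism force $\gamma=\beta$: by Lemma \ref{lemma21}(2), $tr_{A_2}({\cal Z}_{A_2}(e_{B_0}))=\gamma/\beta$, while $\phi$ an isomorphism means ${\cal Z}_{A_2}(e_{B_0})=1$. Hence the Markov extension $tr_{B_2}$ of $tr_{B_1}$ is a Markov trace of modulus $\beta$ for $B_1\subset B_2$, and (by the identity $tr'\circ\phi|_{L^2(A_2)}=(\beta/\gamma)\,tr_{A_2}$ obtained in the proof of Lemma \ref{lemma21}) its restriction to $A_2$ is a Markov trace of modulus $\gamma=\beta$ for $A_1\subset A_2$; moreover the square $A_1\subset A_2$, $A_1\subset B_1$, $B_1\subset B_2$, $A_2\subset B_2$ is again commuting (as in \cite{HGJ}, Corollary 4.2.3, since the conditional expectations onto $A_1$, $B_1$, $A_2$ still satisfy $E_{A_1}=E_{B_1}E_{A_2}=E_{A_2}E_{B_1}$, $e_{B_0}$ meeting $L^2(A_1)$ only through the original commuting square). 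We may therefore repeat Part 1 and Lemma \ref{lemma21} with $(A_0,A_1,B_0,B_1)$ replaced by $(A_j,A_{j+1},B_j,B_{j+1})$ for all $j$, the Markov moduli staying equal to $\beta$ at every level so that $z=1$ throughout. Induction on $j$ then gives: $B_j\subset B_{j+1}=\langle B_j,e_{B_{j-1}}\rangle$ has inclusion matrix the transpose of that of $B_{j-1}\subset B_j$, hence $L$ for $j$ even and $L^t$ for $j$ odd; since $z=1$ at every level, $A_{j+1}=\{A_j,e_{B_{j-1}}\}''\cong\langle A_j,e_{A_{j-1}}\rangle$, so $A_j\subset A_{j+1}$ has inclusion matrix $G$ for $j$ even and $G^t$ for $j$ odd; and Part 1 shows the inclusion matrix of $A_{j+2}\subset B_{j+2}$ equals that of $A_j\subset B_j$, so $A_j\subset B_j$ alternates between $K$ ($j$ even) and $H$ ($j$ odd), matching the two base squares.

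\emph{The main obstacle.} The delicate point is the multiplicity count in Part 1: one must propagate a fixed system of matrix units of $A_0$ and of $B_0$ up through the two basic constructions and check that the propagations are compatible, so that the rank of $fe_{B_0}$ inside $B_2$ genuinely collapses to the rank of $f$ inside $B_0$. The commuting square condition is used exactly here, through $\phi(e_{B_0})=e_{A_0}$ and through ${\cal Z}_{A_2}(e_{B_0})=1$ (so that $A_2$ is all of $\langle A_1,e_{A_0}\rangle$, nothing being lost). Passing to infinitely many blocks adds nothing essential: every sum occurring is finite because, by Corollary \ref{Scor}, the $\ell^1$ eigenvectors produced by the Markov conditions are the Perron--Frobenius vectors, and Lemmas \ref{lemma21}--\ref{lemma23} are already stated for infinite direct sums.
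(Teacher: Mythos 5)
Your proof is correct and follows essentially the paper's own route. Part 1 is the paper's argument: fix a minimal projection $f$ in a block of $A_{0}$, note $\psi(fe_{A_{0}})=fe_{B_{0}}$ is minimal in the corresponding block of $A_{2}$ (Lemma \ref{lemma23} applied to $A_{0}\subset A_{1}$), cut it by the minimal central projections of $B_{2}$ using $e_{B_{0}}q'=e_{B_{0}}q$ (Lemma \ref{lemma22}), and count $K_{pq}$ minimal pieces coming from the decomposition of $f$ inside $B_{0}$ (Lemma \ref{lemma23} applied to $B_{0}\subset B_{1}$). Part 2 also starts exactly as in the paper: $\phi$ an isomorphism forces ${\cal Z}_{A_{2}}(e_{B_{0}})=1$, hence $\gamma=\beta$ by Lemma \ref{lemma21}, and the computation $tr_{B_{2}}(ae_{B_{0}})=\beta^{-1}tr_{A_{1}}(a)$ identifies the restriction of the Markov extension with the Markov extension on $A_{2}$. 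The one place you diverge is the inductive step: you propagate the equality of Markov moduli ($\gamma_{j}=\beta_{j}=\beta$) level by level and read off $z=1$ from $tr(z)=\gamma_{j}/\beta_{j}$, whereas the paper obtains $z={\cal Z}_{A_{j+2}}(e_{B_{j}})=1$ more directly, observing that $e_{B_{j-1}}=\beta\,e_{B_{j-1}}e_{B_{j}}e_{B_{j-1}}$ lies in the ideal $zA_{j+2}=\overline{A_{j+1}e_{B_{j}}A_{j+1}}^{\,weak}$, which therefore contains $\overline{A_{j}e_{B_{j-1}}A_{j}}^{\,weak}=A_{j+1}\ni 1$; this sidesteps re-deriving the modulus of the restricted trace at each level, though both variants rely equally on the (largely asserted, in the paper as in your write-up) propagation of the commuting-square and Markov-trace structure up the ladder, so neither is materially more or less complete than the other.
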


\begin{proof} Let $f$ be a minimal projection in $A_{0}q,$  and let   $pf =
\sum_{i=1}^{n}g_{i}$
 be a decomposition into  minimal orthogonal projections in $B_{0}p.$  I.e.
\[ K_{pq} = n = \left[ (B_{0})_{pq} : (A_{0})_{pq} \right].\] 
By lemma \ref{lemma23} $f\eAnul$ is minimal in 
$\AenexAnul_{J_{A_{1}}qJ_{A_{1}}},$ hence $\psi(f\eAnul)$  $=f\eBnul{}$ is
minimal in $A_{2}q'.$  By lemma \ref{lemma21} we then have 
\renewcommand{\arraystretch}{1.5}
\[  \begin{array}{lcl} 
f\eBnul{}p' & = & \, \\
f\eBnul{}p & = & \, \\ 
fp\eBnul & = & (\mbox{ since } p \in B_{0} \mbox{ and } \eBnul \in B_{0}' ) \\
 pf\eBnul & = & \, \\ \sum_{i=1}^{n}g_{i}\eBnul. &\, \, \end{array} \] 
\renewcommand{\arraystretch}{1}
I.e. $f\eBnul{}p'$ is a sum of $n$ minimal orthogonal  projections in
$(B_{2})_{p'q'} ,$ and hence  $A_{2} \subset_{K} B_{2}.$ 

\noindent{}By assumption $A_{2} \cong \AenexAnul,$ so lemma \ref{lemma21}
 yields 
\[{\cal Z}_{A_{2}}(\eBnul) = {\bf 1} \Leftrightarrow \beta = \gamma.\]
Also by assumption and (1) we have the commuting squares 
\[\begin{array}{lclcl}
 B_{0} & \subset_{L} & B_{1} & \subset_{L^{t}} & B_{2}\\
  \cup_{K} &\,&\cup_{H}&\,&\cup_{K}\\
 A_{0} & \subset_{G} & A_{1} & \subset_{G^{t}} & A_{2} \end{array} \] 
and hence the extension \trBto{} of \trBen{} is a Markov trace for  $ B_{1}
\subset{}B_{2},$ and  $A_{2} \cong \AenexAnul$ also implies that the 
restriction of \trBto to $A_{2}$ is a Markov trace since:

\noindent{}For $a \in{} A_{1}$ we have 
\[tr_{\langle{}A_{1},e_{A_{0}}\rangle{}}(\psi{}(a\eAnul)) = 
tr_{A_{2}}(a\eBnul) =\]
\[tr_{B_{2}}(a\eBnul) = {\beta}^{-1}tr_{A_{1}}(a) = 
{\beta}^{-1}tr_{B_{1}}(a).\]
 Assume now that $\phi : A_{j+1} \rightarrow 
\langle{} A_{j} , e_{A_{j-1}} \rangle{}$ is an
isomorphism for some 

\noindent{}$j \geq 2,$
 where  $A_{j} = \{A_{j-1},e_{B_{j-2}}\}''$ is defined inductively.  We then
have the following picture 
\[\begin{array}{lclcl}
 B_{j} & \subset & B_{j+1} & \subset & B_{j+2}\\
  \cup &\,&\cup&\,&\cup\\
 A_{j} & \subset & A_{j+1} & \subset & A_{j+2} \end{array} , \] 
with $A_{j+1} \cong \langle{} A_{j} , e_{A_{j-1}} \rangle{}.$

\noindent{}According to lemma \ref{lemma21}  
\[ A_{j+2} = zA_{j+2}\oplus({\bf 1}-z)A_{j+2}, \mbox{ with }  z ={\cal
Z}_{A_{j+2}}(e_{B_{j}}).\] 
Since $z$ is central,  $zA_{j+2}$ is a two-sided ideal in $A_{j+2},$  and lemma
\ref{lemma21} also yields that $A_{j+1}e_{B_{j}}A_{j+1}$
 is a dense  $*$-subalgebra. In particular
\[\beta{}e_{B_{j-1}}e_{B_{j}}e_{B_{j-1}}= e_{B_{j}} \in{}zA_{j+2},\] 
that is
	\[zA_{j+2} \supset{} A_{j}e_{B_{j-1}}A_{j} ,\] 
and since $zA_{j+2}$ is weakly closed, we have  
\[zA_{j+2} \cong \overline{A_{j}e_{B_{j-1}}A_{j}}^{weak}
 = \{A_{j} , e_{B_{j-1}}\}'' \ni {\bf 1}.\] 
Hence $z = {\bf 1}$ and $A_{j+2} = $ $zA_{j+2} \cong$  
$\langle{} A_{j+1} , e_{A_{j}} \rangle{}
,$ and the statements concerning the  inclusion patterns follows from the first
part and induction.

\end{proof}
													 
\begin{cor} \label{cor25}

Let $\Gamma_{1}, \Gamma_{2},\Gamma_{3}$, and $\Gamma_{4}$ 
 be finite or infinite, locally finite bi-partite graphs with adjacency matrices  of a 
bi-partition $G, H, K $ resp. $L$ and Per\-ron-Fro\-be\-ni\-us vectors 
$\xi_{_{1}}, \xi_{_{2}},\xi_{_{3}}$ resp. $ \xi_{_{4}}.$  If 
\begin{equation}
\begin{array}{lcl}
 B_{0} & \subset_{L} & B_{1} \\
  \cup_{K} &\,&\cup_{H}\\
 A_{0} & \subset_{G} & A_{1} \end{array}  \label{sycsq} 
\end{equation} 
is a symmetric commuting square with respect to the finite trace, \trBen, on
$B_{1}$ given  by the corresponding partition of $\xi_{_{2}}$ resp.
$\xi_{_{4}},$ 
 we define inductively
\[ B_{j} = \langle{} B_{j-1} , e_{B_{j-2}}\rangle{}\mbox{ and  } A_{j} = 
\{ A_{j-1} , e_{B_{j-2}}\}'',\mbox{ for } j \geq 2.\] 
Then	
\[\begin{array}{lcl}
 B_{j} & \subset & B_{j+1} \\
  \cup &\,&\cup\\
 A_{j} & \subset & A_{j+1} \end{array} \] 
is a symmetric commuting square for each $j.$  And we obtain the ladder
	\[\begin{array}{lclclclc}
 B_{0} & \subset_{L} & B_{1} & \subset_{L^{t}} & B_{2}
        & \subset_{L} & B_{3} &\cdots\\
  \cup_{K} &\,&\cup_{H}&\,&\cup_{K}&\,&\cup_{H}&\,\\
 A_{0} & \subset_{G} & A_{1} & \subset_{G^{t}} & A_{2}
         & \subset_{G} & A_{3} &\cdots  \end{array} \] 
of multi-matrix algebras.

\end{cor}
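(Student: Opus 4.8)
The plan is to deduce Corollary~\ref{cor25} directly from Proposition~\ref{prop24}, checking that proposition's hypotheses at the base of the tower and then letting the induction contained in its proof run up every rung. Since a symmetric commuting square is given, what remains to verify at the base level is: that $\trBen$, the trace on $B_{1}$ determined by the partition of $\xi_{_{4}}$ (equivalently $\xi_{_{2}}$), is a Markov trace of modulus $\beta$ for $B_{0}\subset_{L}B_{1}$; that its restriction $\trAen$ to $A_{1}$ is a Markov trace of modulus $\gamma$ for $A_{0}\subset_{G}A_{1}$; and — the point that makes the machinery turn — that $\beta=\gamma$, so that the map $\phi$ of Lemma~\ref{lemma21} is an isomorphism.

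For this I would let $t$ be the trace vector of $\trBen$ on $B_{1}$, namely the restriction of the Perron--Frobenius vector $\xi_{_{4}}$ of $\Gamma_{4}$ to the vertices indexing the minimal central projections of $B_{1}$. The eigenvector equation for the bipartite graph $\Gamma_{4}$ immediately gives $L^{t}Lt=\|\Gamma_{4}\|^{2}t$, which is exactly the statement that $\trBen$ is a Markov trace of modulus $\beta:=\|\Gamma_{4}\|^{2}$ for $B_{0}\subset_{L}B_{1}$ (the finiteness of this trace, and hence the existence of the Markov extension, is provided by Corollary~\ref{Scor} together with the construction of the preceding sections). The trace vector of $\trAen$ on $A_{1}$ is then $s:=Ht$, positive and $\ell^{1}$ since $\trAen$ is a restriction of the finite trace $\trBen$; using the two defining relations of a symmetric commuting square, $GH=KL$ and $G^{t}K=HL^{t}$, one computes
\[
G^{t}Gs \;=\; G^{t}GHt \;=\; G^{t}KLt \;=\; HL^{t}Lt \;=\; \beta\,Ht \;=\; \beta\,s ,
\]
so $\trAen$ is a Markov trace of modulus $\gamma:=\beta$ for $A_{0}\subset_{G}A_{1}$ (and Corollary~\ref{Scor}, applied to the bipartite graph $\Gamma_{1}$, identifies $\sqrt{\beta}$ with $\|\Gamma_{1}\|$, so $\|\Gamma_{1}\|=\|\Gamma_{4}\|$). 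Since $\gamma=\beta$, Lemma~\ref{lemma21}(2) forces $\trAto(z)=\gamma/\beta=1$ with $z={\cal Z}_{A_{2}}(e_{B_{0}})$, hence $z={\bf 1}$ and $\phi:A_{2}\to\AenexAnul$ is an isomorphism.

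Now all hypotheses of Proposition~\ref{prop24} hold: part~(1) gives $A_{2}\subset_{K}B_{2}$, and part~(2) produces the entire ladder with the asserted alternation $L/L^{t}$, $G/G^{t}$, $K/H$ of inclusion matrices. The Markov moduli $\beta=\gamma$ are unchanged under the fundamental construction, so the hypothesis ``$\phi$ an isomorphism'' re-appears at every level and the induction inside the proof of Proposition~\ref{prop24} (the repeated identification $A_{j+2}\cong\langle A_{j+1},e_{A_{j}}\rangle$) carries through; that same induction exhibits each rung $\begin{array}{lcl} B_{j} & \subset & B_{j+1} \\ \cup &\,&\cup\\ A_{j} & \subset & A_{j+1}\end{array}$ as a commuting square, being the basic construction \cite{HGJ} applied to the commuting square one level down together with its Markov trace. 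Finally, to see that each rung is \emph{symmetric}, one observes that its four inclusion matrices are, up to transposition, the original quadruple $(G,H,K,L)$, and that the relations $GH=KL$ and $G^{t}K=HL^{t}$ are stable under simultaneously transposing all four matrices; hence the defining relation of a symmetric square holds at every level, and each rung is a symmetric commuting square.

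The only genuine obstacle I anticipate is the first step: ensuring, in the infinite-dimensional setting, that the trace built from the Perron--Frobenius vectors really is the Markov trace and that the two moduli coincide. This is where Corollary~\ref{Scor} (summability of a positive eigenvector forces it to be Perron--Frobenius, hence to have a well-defined modulus) and the identities $GH=KL$, $G^{t}K=HL^{t}$ do the essential work, and it is also the place where one must confirm that finiteness of the traces — hence applicability of Lemma~\ref{lemma21} and Proposition~\ref{prop24} — propagates up the tower, as in the constructions of the preceding sections. Everything past that point is bookkeeping with the lemmas already established.
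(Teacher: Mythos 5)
Your argument is correct and follows essentially the same route as the paper's proof: show that $\trBen$ and its restriction $\trAen$ are Markov traces of equal modulus (the paper asserts this from symmetry of the square; you verify it directly via the trace-vector computation $G^{t}GHt=G^{t}KLt=HL^{t}Lt=\beta Ht$), conclude $\beta=\gamma$ so that lemma \ref{lemma21} makes $\phi$ an isomorphism, and then invoke proposition \ref{prop24} to obtain the ladder. The only difference is that you also spell out the finiteness/summability point and the commuting-square and symmetry properties of each rung, which the paper leaves implicit.
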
 

\begin{proof}
Since the square (\ref{sycsq}) is symmetric, \trBen{} is a Markov trace  for
$B_{0} \subset{} B_{1}$ of modulus \norm{ LL^{t}} and the restriction to $A_{1}$
is a Markov trace  of modulus \norm{GG^{t}}  for $A_{0} \subset{} A_{1},$ and
we  have \norm{ LL^{t}} = \norm{GG^{t}}. In the terminology of  lemma
\ref{lemma21} $\beta = \gamma$ and hence  $A_{2}\cong \AenexAnul.$ We then get
the ladder of multi-matrix  algebras by proposition \ref{prop24}.
 
\end{proof}

\section{The Limit of the Algebras}

\setcounter{equation}{0} 

\subsection{Extremality of the Trace}
 
Let $\Gamma$ be an infinite, locally finite bi-partite graph with 
Perron-Frobenius vector $\xi,$ 
with corresponding eigenvalue $\lambda.$ Let $G$ be the adjacency matrix of a
bi-partition of $\Gamma$
 and let $\xi_{_{1}}$ , $\xi_{_{2}}$ be the corresponding splitting of $\xi.$

\noindent{}Assume that  
\[A_{0} \subset_{G} A_{1} \subset_{G^{t}} A_{2} \subset_{G}
A_{3} \subset_{G^{t}} A_{4} \ldots\] 
is a tower of multi-matrix algebras, and that the obvious trace, $tr_{n},$   on
the $A_{n}$'s 
 defined by the vectors  \lampow{-(n-1)}$\xi_{_{1}}$ when $n$ is odd,
 and \lampow{-(n-1)}$\xi_{_{2}}$ when $n$ is even, is finite.

\noindent{}The induced trace on $ A_{\infty} = \cup_{n=1}^{\infty}A_{n}$
 is denoted by $tr.$

\noindent{}Assume now that $\omega_{n}$ is another trace on the $A_{n}$'s 
(extending one
another) with induced trace $\omega$  on $A_{\infty},$ with the property that $
0 < \omega_{n} \leq tr_{n}$ for all $n.$ Let $\omega_{n}$
 be given by the vector \etalow{n}. The assertion $ 0 < \omega_{n} \leq tr_{n}$
is equivalent to
 $ 0 < \etalow{n} \leq \lampow{-(n-1)}\xi_{_{i}}$ where $i = 1$ if $n$  is odd
and $i = 2\ $ if $n$ is even. I.e. we must have
	\[\etalow{2n} \leq \lampow{- (2n-1)} \xi_{_{2}}\] 
and  
\[\etalow{2n+1} \leq \lampow{-2n} \xi_{_{1}}\] 
for all n. In particular
\[ \tonorm{\etalow{2n}} \leq \lampow{-(2n-1)} \tonorm{\xi_{_{2}}} \leq 
\lampow{(-2n-1)} \tonorm{\xi} \]  
and 
\[ \tonorm{\etalow{2n+1}} \leq \lampow{-2n} \tonorm{\xi_{_{1}}} \leq  
\lampow{-2n} \tonorm{\xi} .\] 	 
I.e. 
\[\tonorm{\etalow{k}} \leq \lampow{-k+1}\tonorm{\xi}. \] 
The extension property of the $ \omega_{n}$'s is stated as
	\[ \etalow{1} = \GGt \etalow{3} = (\GGt )^{2} \etalow{5} =
 (\GGt )^{3} \etalow{7} \ldots \] 
and 
\[ \etalow{2} = \GtG \etalow{4} = (\GtG)^{2} \etalow{6} =
 (\GtG )^{3} \etalow{8} \ldots \] 
Consider $k = 2n + 1 , n \in{} {\Bbb N} .$ Let $\phi_{_{k}}$ equal the
projection of \etalow{k} on $\xi_{_{1}},$  and put
\[\psi_{_{k}} = \etalow{k} - \phi_{_{k}}.\] 
Since \GGt{} leaves ${\Bbb C}\xi_{_{1}}$ and its orthogonal complement
invariant,  we have
\[\GGt\phi_{_{k}} = \phi_{_{k-2}} \mbox{ and } \GGt \psi_{_{k}} =
\psi_{_{k-2}}.\] 
Consider the functions $ f_{n}(t) = t^{n} , t \in [0,1].$ 

\noindent{}Since
$\|\,f_{n}\,\|_{_{_{\infty}}} \leq{} 1$ and  $ f_{n} \stackrel{n \rightarrow
\infty}{\rightarrow} \mbox{{\large$\chi_{_{\{1\}}}$}}$ pointwise, and we have
\[ f_{n} ( \frac{1}{\lambda^{2}}GG^{t}) 
\stackrel{n \rightarrow \infty}{\rightarrow} 
\mbox{\Large{$\chi_{_{\{1\}}}$}}(\frac{1}{\lambda^{2}}GG^{t})  \mbox{
strongly. }\] 
I.e. 
\begin{equation} 
f_{n} (\frac{1}{\lambda^{2}}GG^{t}) \stackrel{n \rightarrow \infty}{\rightarrow} 
\mbox{ the projection onto }
\xi_{_{1}}  \label{xiproj} 
\end{equation} 
We now have  
\[\tonorm{\psi_{_{k}}}
= \tonorm{(GG^{t})^{n}\psi_{_{k+2n}}} \rightarrow{} 0 \mbox{ as } n
\rightarrow \infty \] 
hence $\tonorm{\psi_{_{k}}} = 0$ for all odd $k$

\noindent{}Similarly we get $\tonorm{\psi_{_{k}}} = 0$ for all even $k.$ 

\noindent{}I.e. $\etalow{k} = c_{k}\xi_{_{1}}$ for $k$ odd and $\etalow{k} =
c_{k}\xi_{_{2}}$ for $k$ even.

\noindent{}Let $k$ be odd, then  
\[ GG^{t}\etalow{k} = \etalow{k-2} =c_{k-2}\xi_{_{1}}\]
and 
\[ GG^{t}\etalow{k} = \lambda^{2}c_{k}\xi_{_{1}}\] 
hence $c_{k-2} = \lambda^{2}c_{k}$  which yields  $c_{2n + 1} =
\lambda^{-2n}c_{1}$

\noindent{}The same way we get $c_{2n} = \lambda^{-2n +1}c_{2}$

\noindent{}From $G\etalow{2} = \etalow{1} = c_{1}\xi_{_{1}}$ and  $Gc_{2}\xi_{_{2}} =
\lambda{}c_{2}\xi_{_{1}}$ we also have $c_{2} = \lambda^{-1}c_{1}$

\noindent{}Hence $tr' = c_{1}tr,$ implying that $tr$ is extremal.

\subsection{Construction of Subfactors}

Let $ \Gamma_{1} , \Gamma_{2} , \Gamma_{3} , \Gamma_{4}$ be bi-partite, 
locally finite graphs, 
and $G, H, K$ and $L$ the adjacency matrices of a bi-partition. Let
\mbox{\scriptsize{$\begin{array}{lcl}
 B_{0} & \subset_{L} & B_{1} \\
  \cup_{K} &\,&\cup_{H}\\
 A_{0} & \subset_{G} & A_{1} \end{array} $}} be a symmetric commuting square
with respect to the finite trace defined  by the Perron- Frobenius vector of
$L^{t}L$ and its restrictions to the other  multi-matrix algebras. Since the
square is a symmetric commuting square  $\norm{GG^{t}} = \norm{LL^{t}},$ so by
corollary \ref{cor25}, and the construction herein, we  get the infinite ladder
of multi-matrix algebras
	\[\begin{array}{lclclclc}
 B_{0} & \subset_{L} & B_{1} & \subset_{L^{t}} & B_{2}
        & \subset_{L} & B_{3} &\cdots\\
  \cup_{K} &\,&\cup_{H}&\,&\cup_{K}&\,&\cup_{H}&\,\\
 A_{0} & \subset_{G} & A_{1} & \subset_{G^{t}} & A_{2}
         & \subset_{G} & A_{3} &\cdots  \end{array} \] 
with traces $ \trA_{n}$ and $ \trBn$ extending each other. 

\noindent{}The induced trace on
the inductive limit  $B_{\infty} = \bigcup_{n=1}^{\infty}B_{n}$ is denoted by $
\trB_{\infty}.$ Put $A_{\infty} = \bigcup_{n=1}^{\infty}A_{n},$ the inductive
limit of the $A_{n}$'s, with limit of traces denoted by $ \trA_{\infty}.$  Then
$A_{\infty} \subset{} B_{\infty},$ and $ \trB_{\infty}$ extends $
\trA_{\infty}.$ Let $B$ equal the weak closure of the G-N-S-representation  of
$B_{\infty},$ and $A$ equal the weak closure of $A_{\infty}$ in $B.$ Then $A
\subset{} B.$

\noindent{}By Kaplansky's density theorem  $Unitball(B) =
\overline{Unitball(B_{\infty})}^{2-norm}.$  The unitball of the weak closure of
$A_{\infty}$ in $B$ is,
 according to Kaplansky, equal to  $\overline{Unitball(A_{\infty})}^{2-norm},$ 
 and since $ \trB_{\infty}$ extends $ \trA_{\infty}$  the weak closure of
$A_{\infty}$ in $B$ equals the  weak closure of $A_{\infty}$ in the
G-N-S-representation of  $A_{\infty}$ with respect to $ \trA_{\infty}.$  The
previous part of this section shoved that the traces $ \trA_{\infty}$ resp. 
 $ \trB_{\infty}$ are extremal, and hence $A \subset{} B$  are hyperfinite
$II_{1}$-factors with traces \trA{} resp. \trB.

\noindent{}From now on consider the $A_{n}$'s and the $B_{n}$'s  as algebras 
represented on
$L^{2}(B_{\infty},\trB_{\infty}) = $ \LB, we then have the following

\begin{lemma} \label{lemma32}

Let $e : \LB \rightarrow \LA$ be the orthogonal projection. For all $n$ the
restriction of $e$ to $L^{2}(B_{n},\trBn),$
 $e|_{L^{2}(B_{n},\trBn)},$ equals  $ e_{n} : L^{2}(B_{n},\trBn) \rightarrow{}
L^{2}(A_{n},\trA_{n}) .$  \end{lemma}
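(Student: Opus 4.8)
The plan is to exploit the fact that all the algebras in the ladder are represented on the single Hilbert space $\LB = L^{2}(B_{\infty},\trB_{\infty})$, so that the various conditional expectations are all implemented by compressions of one and the same projection $e$ onto $\LA$. First I would fix $n$ and identify $L^{2}(B_{n},\trBn)$ with the closed subspace of $\LB$ obtained by taking the $\trB_{\infty}$-norm closure of $B_{n}$ (viewed inside $B_{\infty}$); this identification is legitimate because $\trB_{\infty}$ restricts to $\trBn$ on $B_{n}$, so the inclusion $B_{n}\hookrightarrow B_{\infty}$ is an isometry for the respective $2$-norms. Likewise $L^{2}(A_{n},\trA_{n})$ is the closure of $A_{n}$ inside $\LB$, and this sits inside both $L^{2}(B_{n},\trBn)$ and $\LA$.

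Next I would show that $e\restriction L^{2}(B_{n},\trBn)$ has range exactly $L^{2}(A_{n},\trA_{n})$ and acts there as the trace-preserving conditional expectation $E_{A_{n}}$ of $B_{n}$ onto $A_{n}$. For this, recall that for $b\in B_{n}$ the vector $e(b)$ is by definition the projection of $b$ onto $\LA$; I would verify that this projection lands in $L^{2}(A_{n},\trA_{n})$. The key input is that each square $\mbox{\scriptsize$\begin{array}{ccc}B_{j}&\subset&B_{j+1}\\ \cup&\,&\cup\\ A_{j}&\subset&A_{j+1}\end{array}$}$ is a commuting square (Corollary \ref{cor25}), which by Lemma \ref{fire1lem11} means $E_{A_{\infty}}\restriction B_{n}=E_{A_{n}}$, i.e. the $\trB_{\infty}$-orthogonal projection of $B_{n}$ onto $\overline{A_{\infty}}$ already lies in $A_{n}$. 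Concretely: a commuting square at each level gives $E_{A_{n+1}}(b)=E_{A_{n}}(b)$ for $b\in B_{n}$, and iterating, $E_{A_{m}}(b)=E_{A_{n}}(b)$ for all $m\ge n$; since $E_{A_{\infty}}$ is the strong limit of the $E_{A_{m}}$ (the $A_{m}$ increase to $A_{\infty}$ and $\trB_{\infty}$ is their common trace), we get $E_{A_{\infty}}(b)=E_{A_{n}}(b)\in A_{n}\subset L^{2}(A_{n},\trA_{n})$. But on $\LB$ the projection $e$ onto $\LA=\overline{A_{\infty}}$ is precisely the $L^{2}$-extension of $E_{A_{\infty}}$, so $e(b)=E_{A_{\infty}}(b)=E_{A_{n}}(b)$ for every $b\in B_{n}$.

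Finally I would pass from the dense subspace $B_{n}$ to all of $L^{2}(B_{n},\trBn)$ by continuity: $e$ is norm-one, $e_{n}$ is norm-one, and they agree on the dense subset $B_{n}$, hence they agree on the whole closed subspace $L^{2}(B_{n},\trBn)$. This also shows $e(L^{2}(B_{n},\trBn))\subseteq L^{2}(A_{n},\trA_{n})$; the reverse inclusion is clear since $e$ fixes $A_{n}\subseteq\LA$ pointwise, so the range is exactly $L^{2}(A_{n},\trA_{n})$, i.e. $e\restriction L^{2}(B_{n},\trBn)=e_{n}$.

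The only genuinely delicate point — the ``main obstacle'' — is the interchange of limits: justifying that the $L^{2}$-projection onto $\LA$, when restricted to the level-$n$ algebra, coincides with the finite-level conditional expectation $E_{A_{n}}$. One must be careful that $\LA$ is genuinely $\overline{A_{\infty}}^{\,2}$ inside $\LB$ (which follows from Kaplansky density plus the compatibility of traces, as already noted in the paragraph preceding the lemma) and that the commuting-square relation $E_{A_{n+1}}\restriction B_{n}=E_{A_{n}}$ genuinely propagates up the ladder; both are consequences of Corollary \ref{cor25} and Lemma \ref{fire1lem11}, so no new work beyond careful bookkeeping is needed. Everything else is a routine density/continuity argument.
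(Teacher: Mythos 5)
Your proof is correct, but it is organized in the opposite direction from the paper's. The paper glues the finite--level projections together: since each square of the ladder is a commuting square, $e_{n+1}$ extends $e_{n}$, so the $e_{n}$ assemble into a single linear, symmetric, idempotent contraction $f$ on $\bigcup_{n}L^{2}(B_{n},\trBn)$; its continuous extension $g$ is then an orthogonal projection whose range contains every $L^{2}(A_{n},\trAn)$, and after observing (Kaplansky plus compatibility of the traces) that $\overline{\bigcup_{n}L^{2}(B_{n},\trBn)}=\LB$ and $\overline{\bigcup_{n}L^{2}(A_{n},\trAn)}=\LA$, one concludes $g=e$ and hence $e|_{L^{2}(B_{n},\trBn)}=e_{n}$. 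You instead restrict $e$ from above: for $b\in B_{n}$ you use the stabilization $E_{A_{m}}(b)=E_{A_{n}}(b)$ for all $m\geq n$ (which, as you note, follows by iterating the defining relation $E_{A_{j}}=E_{A_{j+1}}E_{B_{j}}$ of the commuting squares --- this is really the relevant fact, rather than Lemma \ref{fire1lem11} itself, which is the finite-dimensional characterization) together with the strong convergence of the increasing projections onto $L^{2}(A_{m},\trAm)$ to the projection onto $\LA$, and then finish by density and contractivity. Both arguments rest on exactly the same two ingredients (compatibility of the expectations forced by the commuting squares, and the $2$-norm density of the unions); the trade-off is that your route needs the martingale-type convergence of increasing projections as an extra standard lemma, while the paper's route avoids it at the cost of verifying by hand that the glued map is a well-defined self-adjoint idempotent contraction and identifying its range. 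Either way the conclusion and the essential mechanism coincide, so your proposal is a sound alternative presentation.
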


\begin{proof} 
The commuting square condition implies that $e_{n+1}$ extends $e_{n},$  hence we
can define a surjection  
\[f : \bigcup_{n = 1}^{\infty}L^{2}(B_{n},\trBn) \rightarrow{}
 \bigcup_{n = 1}^{\infty}L^{2}(A_{n},\trA_{n}) \] 
by
	\[f|_{L^{2}(B_{n},\trBn)} = e_{n}.\] 
Then $f$ is linear, $f^{2} = f,$ $ \norm{f} \leq{} 1$ and  $(fx,y) = (x,fy)$ for
all 

\noindent{}$x,y \in{} \bigcup_{n = 1}^{\infty}L^{2}(B_{n},\trBn) .$  Hence $f$  
has a unique extension to
\[g : \overline{\bigcup_{n = 1}^{\infty}L^{2}(B_{n},\trBn)}
\rightarrow
  \overline{\bigcup_{n = 1}^{\infty}L^{2}(A_{n},\trAn)}\] 
$g^{2} = g$ and $g^{*} = g ,$ hence $g$ is an orthogonal projection,   and its
image is closed and contains $\bigcup_{n = 1}^{\infty}L^{2}(A_{n},\trAn).$  This
now implies that $g$ is onto 
 $\overline{\bigcup_{n = 1}^{\infty}L^{2}(A_{n},\trAn)}.$ 

\noindent{}Since $\bigcup_{n = 1}^{\infty}B_{n}$ is dense in $B$ in the  
\tonorm{\cdot }-norm we have 
\[ \LB \supset \overline{\bigcup_{n = 1}^{\infty}L^{2}(B_{n},\trBn)} 
\supset \LB\]	 
hence 
\[ \LB = \overline{\bigcup_{n =1}^{\infty}L^{2}(B_{n},\trBn)}.\] 
Similarly we get 
\[ \LA = \overline{\bigcup_{n = 1}^{\infty}L^{2}(A_{n},\trAn)}.\] 
I.e. $g$ is the orthogonal projection of \LB{} onto \LA, that is $g = e ,$  and
hence  $e|_{L^{2}(B_{n},\trBn)} = e_{n}$ for all $n.$  

\end{proof}

\section{A Trace on $\langle{} B , e \rangle{}$ and the Index}

\setcounter{equation}{0} 
Assume the symmetric commuting squares of infinite  dimensional multi-matrix algebras
\[\begin{array}{lclclclc}
 B_{0} & \subset_{L} & B_{1} & \subset_{L^{t}} & B_{2}
        & \subset_{L} & B_{3} &\cdots\\
  \cup_{K} &\,&\cup_{H}&\,&\cup_{K}&\,&\cup_{H}&\,\\
 A_{0} & \subset_{G} & A_{1} & \subset_{G^{t}} & A_{2}
         & \subset_{G} & A_{3} &\cdots  \end{array} \] 
all algebras considered represented on \LB, and let $e$ denote the orthogonal
projection  $e : \LB \rightarrow \LA,$ $A$ and $B$ as previously.

\noindent{}By lemma \ref{lemma32}  $e|_{L^{2}(B_{n},\trBn)} = e_{n}$ is the fundamental
projection of

\noindent{}$A_{n}|_{L^{2}(B_{n},\trBn)} \subset B_{n}|_{L^{2}(B_{n},\trBn)}$ 
hence 
$\langle{}B_{n} , e \rangle{}\, \cong \,
\langle{} B_{n}|_{L^{2}(B_{n},\trBn)},e_{n} \rangle{}.$  Let $p_{n}$
denote the ortho\-gonal projection of \LB{} onto $L^{2}(B_{n},\trBn),$ then
$p_{n}$ is the projection  corresponding to the fundamental construction for
$B_{n} \subset B,$ 
 and $ep_{n} = p_{n}e$ since \mbox{\scriptsize{$\begin{array}{lcl}
 B_{n} & \subset & B \\
  \cup &\,&\cup\\
 A_{n} & \subset & A \end{array} $}}
 is a commuting square.

\noindent{}$\langle{} B_{n} ,e \rangle{}$ has a unique normal trace 
$tr_{\langle{}B_{n},e\rangle{}}$ such that 
	\[ tr_{\langle{}B_{n},e\rangle{}}(xe) = \beta^{-1}tr_{B_{n}}(x)  
\mbox{ for  all } x \in{}B_{n} ,\] 
where $\beta$ is the Perron-Frobenius eigenvalue of $HH^{t}.$ We will now show
that for $x \in \langle{} B , e \rangle{},$
	\[ p_{n}xp_{n}|_{L^{2}(B_{n},\trBn)} \in \langle{} B_{n} , e
\rangle{}|_{L^{2}(B_{n},\trBn)}.\] 
To prove this it is enough to consider $x \in{} B \cup{} eBe,$  which is a dense
subalgebra of $\langle{} B , e \rangle{}.$ 
\begin{enumerate} \item $x = b \in B.$ 
\renewcommand{\arraystretch}{2}
\[ \begin{array}{lcl}
 p_{n}bp_{n}|_{L^{2}(B_{n},\trBn)} & = & \EBn (b) p_{n}|_{L^{2}(B_{n},\trBn)}
=\\
	 \EBn (b) |_{L^{2}(B_{n},\trBn)} & \in & B_{n}|_{L^{2}(B_{n},\trBn)},
\end{array} \] 
\renewcommand{\arraystretch}{1} 
where \EBn{} is the trace preserving conditional expectation of $B$ onto
$B_{n}.$ \item $x = ebe,b \in{} B.$
\renewcommand{\arraystretch}{2}
	\[ \begin{array}{lcl}
 p_{n}ebep_{n}|_{L^{2}(B_{n},\trBn)} & = & \ \\
 ep_{n}bp_{n}e|_{L^{2}(B_{n},\trBn)} & = & \ \\
 e\EBn(b)ep_{n}|_{L^{2}(B_{n},\trBn)}& = & \ \\
	 e\EBn (b)|_{L^{2}(B_{n},\trBn)} & \in & \langle{} B_{n} , 
e \rangle{}|_{L^{2}(B_{n},\trBn)}.
\end{array}\] 
\renewcommand{\arraystretch}{1}
\end{enumerate} 
For $n \in {\Bbb N}$ define a positive normal state $ \tau_{n}$ on $\langle{} B , e \rangle{}$ by
	\[\tau_{n}(x) = 
tr_{\langle{}B_{n},e\rangle{}}(p_{n}xp_{n}|_{L^{2}(B_{n},\trBn)})  
\mbox{ for } x \in \langle{} B , e \rangle{}.\] 
We will show that $\tau_{n}$ is independent of $n.$

\noindent{}Again it is enough to consider $x \in{} B \cup{} eBe $ 
\begin{enumerate} 
\item $x = b \in B.$
	\[ \tau_{n}(b) = tr_{B_{n}}(\EBn(b)) = \trB(b) \] 
\item $x = ebe,b \in{} B.$
	\[ \tau_{n}(x) = tr_{\langle{}B_{n},e\rangle{}}(e\EBn(b)e) = \] 
\[\beta^{-1}tr_{B_{n}}(\EBn(b)) = \beta^{-1} \trB(b) \] 
\end{enumerate} 
I.e. $\tau_{n}$ is independent of $n.$

\noindent{}On $\bigcup_{n=1}^{\infty}\langle{} B_{n} , e \rangle{} ,$  
which is weakly dense in $\langle{} B , e
\rangle{},$ put $\tau = \tau_{n}$ ``for all $n$''.

For $b \in{} B_{n}$ we get, 
\begin{enumerate} 
\item  \[\tau(b) = \tau_{n}(b) =
tr_{B_{n}}(\EBn(b)) = tr_{B_{n}}(b) = tr_{\langle{}B_{n},e\rangle{}}(b)\] 
and 
\item  \[\tau(ebe) = \tau_{n}(ebe) = tr_{\langle{}B_{n},e\rangle{}}(e\EBn(b)e) = 
tr_{\langle{}B_{n},e\rangle{}}(ebe),\] 
\end{enumerate} 
 i.e. $\tau$ extends all the $tr_{\langle{}B_{n},e\rangle{}}$'s,
 and hence $\tau$ is a trace on 
$\langle{} B ,e \rangle{}$ extending the trace \trB{} on $B.$ 

\noindent{}For any $b \in{} B_{n}$ we have
	\[\tau(be) = tr_{\langle{}B_{n},e\rangle{}}(e\EBn(b)e) =  
\beta^{-1}tr_{B_{n}}(\EBn(b)) =
\beta^{-1}\trB(b),\] 
and since $B = \overline{\bigcup_{n=1}^{\infty} B_{n}}^{weak}$ we get  $\tau(be)
= \beta^{-1} \trB(b)$ for all $b \in{} B.$  I.e. $\tau$ is a Markov extension 
of \trB{} to $\langle{} B , e \rangle{}$ of modulus $\beta.$

\noindent{}The index $[B : A] = [\langle{} B , e \rangle{} : B]$ is 
now determined as  $\tau(e)^{-1} =
\beta.$

\section{The Dimension of the Relative Commutant of $A$ in $B$}

\setcounter{equation}{0} Let $\Gamma$ be an infinite, locally finite 
bi-partite graph  with
Perron-Frobenius vector $\xi,$ $G$ the  adjacency matrix of a bi-partition, and
$\ \xi_{_{1}} , \xi_{_{2}}$ the corresponding splitting of $\xi.$ 

\noindent{}Assume that  
\[ A_{0} \subset_{G} A_{1}  \subset_{G^{t}} A_{2} \subset_{G} A_{3} \cdots\] 
is a tower of multi-matrix algebras, with finite trace defined by $\xi$ that is,
if say $\xi_{_{1}}$ defines the trace on $A_{0}$ 
\[ tr({\bf 1}) = \sum_{j=1}^{\infty}a_{j}^{0}(\xi_{_{1}})_{j} < \infty \] 
where $a_{n} = (a_{i}^{n})_{i = 1}^{\infty}$ is the dimension vector of 
 $A_{n}.$  We then have the following proposition 

\begin{prop} \label{prop61}

In the above situation put $\xi_{_{1}}$ equal to the Perron-Frobeni\-us vector
of \GGt{} and $\xi_{_{2}}$ equal to the Perron-Frobenius vector of \GtG. 
 Then the dimension vectors converge pointwise to a multiple of  $\xi_{_{1}}$
when $n$ is even resp. a multiple of $\xi_{_{2}}$ when $n$ is odd. 

\end{prop}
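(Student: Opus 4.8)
The plan is to track how the dimension vectors $a_n$ evolve under the fundamental construction and exploit the same kind of eigenvector-iteration argument that was used in the subsection ``Extremality of the Trace.'' First I would record the recursion for the dimension vectors. By Proposition~\ref{prop24}, the inclusions alternate $A_n\subset_G A_{n+1}$ for $n$ even and $A_n\subset_{G^t}A_{n+1}$ for $n$ odd, and for an inclusion $A\subset_G B$ the dimension vectors satisfy $b=G^t a$. Hence $a_{n+1}=G a_n$ when $n$ is odd and $a_{n+1}=G^t a_n$ when $n$ is even, so that $a_{2m}=(G^tG)^m a_0$ (up to the initial step) and $a_{2m+1}=G(G^tG)^m a_0=(GG^t)^m a_1$. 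Thus, for $n$ even, $a_n$ is obtained by applying a high power of $G^tG$ to a fixed positive vector, and for $n$ odd a high power of $GG^t$; the claim is that these, suitably normalized by $\lambda^n$, converge pointwise to the Perron--Frobenius vectors of $G^tG$, resp. $GG^t$.

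Next I would set up the convergence argument. The obstacle is that $G^tG$ and $GG^t$ are infinite matrices, so one cannot simply invoke finite-dimensional spectral theory; instead I would use the functional-calculus trick already appearing in the proof preceding Proposition~\ref{prop61}. Work in $\ell^2$: the hypothesis that the trace defined by $\xi$ is finite gives $\|\xi_{_1}\|_{_2}<\infty$ and $\|\xi_{_2}\|_{_2}<\infty$, and by Corollary~\ref{Scor} this forces $\xi_{_1},\xi_{_2}$ to be (proportional to) the Perron--Frobenius eigenvectors of $GG^t$ and $G^tG$ with eigenvalue $\lambda^2$. Decompose the relevant $\ell^2$ into ${\Bbb C}\xi_{_2}\oplus\xi_{_2}^{\perp}$, both invariant under the bounded positive operator $\frac{1}{\lambda^2}G^tG$ (bounded because $\|G^tG\|=\lambda^2$, $\lambda$ being the norm of the graph). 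On the orthogonal complement the spectrum is contained in $[0,1)$, so $f_m(t)=t^m$ applied to $\frac{1}{\lambda^2}G^tG$ converges strongly to the projection onto ${\Bbb C}\xi_{_2}$, exactly as in equation~(\ref{xiproj}). Applying this to the vector $a_0$ shows that $\lambda^{-2m}(G^tG)^m a_0\to c\,\xi_{_2}$ in $\ell^2$-norm, hence also pointwise (coordinatewise), where $c$ is the coefficient of $a_0$ along $\xi_{_2}$, which is strictly positive since $a_0$ has strictly positive entries and $\xi_{_2}$ does too. The same argument with $GG^t$ handles the odd case.

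Finally I would assemble the two pieces. For $n=2m$ even, $a_n=\lambda^{2m}\big(\lambda^{-2m}(G^tG)^m a_0\big)\to(\text{after renormalizing by }\lambda^{-n})\ c\,\xi_{_2}$ pointwise; for $n=2m+1$ odd, similarly $\lambda^{-n}a_n\to c'\xi_{_1}$ pointwise. A small bookkeeping check is needed at the very first step: whether $a_0$ or $a_1$ is the correct ``seed'' vector and which of $\xi_{_1},\xi_{_2}$ is attached to $A_0$, but this only shifts an index and does not affect the limit. I expect the main obstacle to be the legitimacy of the $\ell^2$ functional-calculus step, i.e. making sure $G^tG$ really is a bounded self-adjoint operator on $\ell^2$ with norm $\lambda^2$ and that $a_0\in\ell^2$; boundedness follows from local finiteness together with $\lambda<\infty$ being the graph norm, and $a_0\in\ell^2$ follows from the finite-trace hypothesis as above. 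Once these are in place the convergence is immediate from strong convergence of $f_m(\tfrac{1}{\lambda^2}G^tG)$, so the proof is short.
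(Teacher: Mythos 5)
Your overall strategy (spectral iteration of $\frac{1}{\lambda^{2}}GG^{t}$, resp. $\frac{1}{\lambda^{2}}G^{t}G$, using the strong convergence $f_{m}(\frac{1}{\lambda^{2}}GG^{t})\rightarrow$ projection onto ${\Bbb C}\xi_{_{1}}$ from (\ref{xiproj})) is the right one and is indeed what the paper uses, but there is a genuine gap at the step where you apply this operator convergence directly to the dimension vector: your claim that ``$a_{0}\in\ell^{2}$ follows from the finite-trace hypothesis'' is false in general. The finite-trace hypothesis only says $\sum_{j}a_{j}^{0}(\xi_{_{1}})_{j}<\infty$; since $a_{j}^{0}\geq 1$ this forces $\xi_{_{1}}\in\ell^{1}$, but it puts no square-summability (or even boundedness) constraint on the integers $a_{j}^{0}$. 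For instance, if $(\xi_{_{1}})_{j}\sim 2^{-j}$ one may have $a_{j}^{0}\sim 2^{j/2}$ with the trace still finite, and then $a_{0}\notin\ell^{2}$. Consequently you cannot expand $a_{0}$ along an orthonormal basis containing $\xi_{_{2}}$, the ``coefficient of $a_{0}$ along $\xi_{_{2}}$'' need not exist, and the strong convergence of $f_{m}$ cannot be evaluated at $a_{0}$. This is precisely the difficulty the paper's proof is organized around: it applies the operator powers to the basis vectors $\delta_{i}$ (which are in $\ell^{2}$), uses positivity/self-adjointness (``Fubini for positive functions'') to write $\langle(\frac{1}{\lambda^{2}}GG^{t})^{n}a_{0},\delta_{i}\rangle=\langle a_{0},(\frac{1}{\lambda^{2}}GG^{t})^{n}\delta_{i}\rangle$, proves $(\frac{1}{\lambda^{2}}GG^{t})^{n}\delta_{i}\rightarrow(\xi_{_{1}})_{i}\xi_{_{1}}$ in $2$-norm, and then passes to the limit in the pairing with the possibly unbounded $a_{0}$ via the domination $\delta_{i}\leq(\xi_{_{1}})_{i}^{-1}\xi_{_{1}}$, hence $(\frac{1}{\lambda^{2}}GG^{t})^{n}\delta_{i}\leq(\xi_{_{1}})_{i}^{-1}\xi_{_{1}}$, with $\xi_{_{1}}$ summable against $a_{0}$ exactly by finiteness of the trace. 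Your proposal is missing this dualization-plus-domination step, and without it the argument does not go through; the identification of the limit constant as $\mbox{tr}({\bf 1})(\xi_{_{1}})_{i}$ also comes out of that pairing.

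A smaller point: your recursion has $G$ and $G^{t}$ interchanged. With $a_{1}=G^{t}a_{0}$, $a_{2}=Ga_{1}$, one gets $a_{2m}=(GG^{t})^{m}a_{0}$ and $a_{2m+1}=(G^{t}G)^{m}a_{1}$, so the even-level limit is along $\xi_{_{1}}$ (the Perron--Frobenius vector of $GG^{t}$) and the odd-level limit along $\xi_{_{2}}$, as the proposition asserts; your final paragraph has these swapped. You flagged this as bookkeeping, and it is, but it should be fixed so the conclusion matches the statement.
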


\begin{proof} 
Since $a_{j}^{0} > 1$ we must have $\etnorm{\xi_{_{1}}} < \infty.$ 

\noindent{}Formally the dimension of the $i$'th summand in $A_{2n}$ is given as 
\[ \langle{}(\GGt)^{n}a_{0},\delta_{i}\rangle{} ,  
a_{0} = \left ( \begin{array}{c} a_{1}^{0}\\
a_{2}^{0}\\ \vdots \end{array} \right ) , 
(\delta_{i})_{j} = \left \{
\renewcommand{\arraystretch}{1.5}
\begin{array}{cl} 1 & \mbox{ if } j=i\\ 0 & \mbox{ if } j \neq{} i \end{array}
\renewcommand{\arraystretch}{1}
\right.\] 
Fubini's theorem for positive functions gives
\[\langle{}(\lGGt)^{n}a_{0},\delta_{i}\rangle{} = 
\langle{}a_{0},(\lGGt)^{n}\delta_{i}\rangle{}\] 
Choose $\xi_{_{1}}$ with $\tonorm{\xi_{_{1}}} = 1,$ and let 
$\xi_{_{1}},x_{1},x_{2},\ldots$ be an orthonormal basis for $l^{2}({\Bbb N})$
then 
\[(\lGGt)^{n}\delta_{i} = 
\langle{}(\lGGt)^{n}\delta_{i},\xi_{_{1}}\rangle{}\xi_{_{1}} +
\sum_{j=1}^{\infty}\langle{}(\lGGt)^{n}\delta_{i},x_{j}\rangle{}x_{j}\] 
and
\[\tonormsq{(\lGGt)^{n}\delta_{i} - (\xi_{_{1}})_{i}\xi_{_{1}}} =\] 
\[\tonormsq{\langle{}(\lGGt)^{n}\delta_{i},\xi_{_{1}}\rangle{}\xi_{_{1}} - 
(\xi_{_{1}})_{i}\xi_{_{1}}} + 
 \sum_{j=1}^{\infty}\tonormsq{\langle{}(\lGGt)^{n}\delta_{i},x_{j}\rangle{}x_{j}} \] 
Each $x_{j} \in ({\Bbb C}\xi_{_{1}})^{\perp}$ so by (\ref{xiproj}) we get
$(\lGGt)^{n}x_{j} \stackrel{n \rightarrow \infty}{\rightarrow} 0$ for any $j.$ 
Hence 
\[ \tonorm{\langle{}(\lGGt)^{n}\delta_{i},x_{j}\rangle{}x_{j}} = 
\tonorm{\langle{}\delta_{i},(\lGGt)^{n}x_{j}\rangle{}x_{j}} 
\stackrel{n \rightarrow\infty}{\rightarrow} 0 \] 
and 
\[\lim_{n \rightarrow\infty}\tonorm{(\lGGt)^{n}\delta_{i} - 
(\xi_{_{1}})_{i} \xi_{_{1}}} = 
\lim_{n \rightarrow \infty}
\tonorm{\langle{}(\lGGt)^{n}\delta_{i},\xi_{_{1}}\rangle{}\xi_{_{1}}-
(\xi_{_{1}})_{i}\xi_{_{1}}} \] 
For any $n$ we have 
\[ \langle{}(\lGGt)^{n}\delta_{i},
\xi_{_{1}}\rangle{} = \langle{}\delta_{i},(\lGGt)^{n}\xi_{_{1}}\rangle{} =
\langle{}\delta_{i},\xi_{_{1}}\rangle{} = (\xi_{_{1}})_{i}\] 
i.e. 
\[ (\lGGt)^{n}\delta_{i} \stackrel{\|\cdot\|_{_{_{2}}}}{\rightarrow}  
(\xi_{_{1}})_{i}\xi_{_{1}} \mbox{ as } n\rightarrow \infty.\] 
We have $\delta_{i} \leq \frac{1}{(\xi_{_{1}})_{i}}\xi_{_{1}},$ and since 
\etnorm{\xi_{_{1}}}$ < \infty \ ,\frac{1}{(\xi_{_{1}})_{i}}\xi_{_{1}}$ is
summable, and we get
\[\langle{}a_{0},(\lGGt)^{n}\delta_{i}\rangle{} = \sum_{j=1}^{\infty}a_{j}^{0}
((\lGGt)^{n}\delta_{i})_{j} \stackrel{n\rightarrow \infty} {\rightarrow}
\sum_{j=1}^{\infty}a_{j}^{0}(\xi_{_{1}})_{j}(\xi_{_{1}})_{i} = tr({\bf 1})
(\xi_{_{1}})_{i}\]

\noindent{}A similar argument holds for the odd labeled floors of the tower 

\end{proof}

\begin{remark} \label{rem1} 

{\rm If we are in the above situation then the trace vector $\alpha_{n}$ of
$A_{n}$ is given as 
\[\alpha_{2k} = \frac{1}{\lambda^{2k}}\xi_{_{1}} \;,\;
\alpha_{2k+1} = \frac{1}{\lambda^{2k+1}}\xi_{_{2}}\] 
and we have  
\[\tonorm{\xi_{_{2}}} = \langle{}\xi_{_{2}},\xi_{_{2}}\rangle{} = 
\frac{1}{\lambda^{2}}\langle{}G^{t}\xi_{_{1}},G^{t}\xi_{_{1}}\rangle{} = 
\tonorm{\xi_{_{1}}}\]

\noindent{}Let $z^{k}_{i}$ be the minimal central projection in $A_{k}$ 
corresponding to the $i$'th component. Then
\renewcommand{\arraystretch}{2} 
\[ \begin{array}{lcl} tr(z^{2l}_{i}) & = &
(\alpha_{2l})_{i}(a_{2l})_{i} \\ \ & = & \lambda^{2l}
(\alpha_{2l})_{i}\frac{1}{\lambda^{2l}}(a_{2l})_{i}\\ \ & = &
(\xi_{_{1}})_{i}( (\lGGt)^{l}1)_{i}\\ \ & \stackrel{n \rightarrow
\infty}{\rightarrow} & (\xi_{_{1}})_{i}(\xi_{_{1}})_{i}tr({\bf 1}) >
0. \end{array} \]
\renewcommand{\arraystretch}{1} 
In particular there exists a constant $c_{i}$ independent of $l$ s.t.  
\[ tr(z^{2l}_{i}) \geq{} c_{i} \mbox{ for all } l \] 
A similar argument holds for the odd labeled $A_{n}$'s.} 

\end{remark}

\noindent{}The following is essentially contained in \cite{Wen1} 

\begin{lemma} \label{lemma62}

Let $\{ \alpha_{1},\ldots ,\alpha_{m} \}$ be $m$ different real numbers, and
$\{ t_{1}, \ldots ,t_{m} \}$ be positive numbers with sum $1.$ Then there
exists  $\epsilon > 0$ such that:

\noindent{}For any $ II_{1}$-factor $A$ and two selfadjoint elements 
$ a,b \in{} A$ 
satisfying 
\begin{enumerate} 
\item $ a = \sum_{i=1}^{m} \alpha_{i}p_{i} , \{
p_{i} \}$ orthogonal projections  with $tr(p_{i}) = t_{i}$ 
\item $b$ has strictly less than $m$ spectral values, and  \norm{b} $\leq $
\norm{a} \end{enumerate} 
then \tonormsq{a-b} $\geq \epsilon$ 

\end{lemma}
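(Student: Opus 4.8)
The plan is to argue by a compactness/continuity argument on the (finite-dimensional) space of possible pairs $(a,b)$, using that the constraint ``$b$ has fewer than $m$ spectral values'' forces $b$ to lie in a closed set disjoint from $a$, so the $\|\cdot\|_2$-distance is bounded below. First I would fix $a_{0}=\sum_{i=1}^{m}\alpha_{i}q_{i}$ to be a concrete model element: take $A=M_{N}(\mathbb{C})$ for $N$ large enough that each $t_{i}$ is (approximately, and after passing to a limit exactly) a sum of $1/N$'s, with $q_{i}$ the obvious diagonal projections; more cleanly, work in the hyperfinite $II_{1}$-factor $R$ where a partition of unity into projections of traces exactly $t_{1},\dots,t_{m}$ genuinely exists, so $a_{0}$ is a well-defined self-adjoint element with $\|a_{0}\|=\max_{i}|\alpha_{i}|=:M$. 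The key observation is that up to a trace-preserving automorphism of $R$ every $a$ as in hypothesis 1 is conjugate to $a_{0}$, and $\|a-b\|_{2}$ is unitarily invariant, so it suffices to prove the bound for $a=a_{0}$ and $b$ ranging over all self-adjoints with $\|b\|\le M$ and at most $m-1$ spectral values.

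Next I would set
\[
\epsilon=\inf\bigl\{\,\|a_{0}-b\|_{2}^{2}\;:\;b=b^{*}\in R,\ \|b\|\le M,\ b\text{ has }\le m-1\text{ spectral values}\,\bigr\},
\]
and show $\epsilon>0$. Suppose not: choose $b_{k}=\sum_{j=1}^{m-1}\beta_{j}^{(k)}r_{j}^{(k)}$ with $\|a_{0}-b_{k}\|_{2}\to 0$, where the $\beta_{j}^{(k)}$ lie in the compact interval $[-M,M]$ and the $r_{j}^{(k)}$ are orthogonal projections summing to $1$. Passing to a subsequence, $\beta_{j}^{(k)}\to\beta_{j}$ for each $j$. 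Writing $\chi$ for any continuous function, $\chi(b_{k})\to\chi(a_{0})$ in $\|\cdot\|_{2}$; choosing $\chi$ to be a polynomial that separates the limiting values $\{\beta_{1},\dots,\beta_{m-1}\}$ but is identically zero on a neighbourhood of all of them except possibly coinciding ones, one concludes that the spectral projections of $a_{0}$, namely the $q_{i}$, must be sums of spectral projections of the limit of $b_{k}$, which has at most $m-1$ distinct spectral values — forcing at least two of the $m$ distinct values $\alpha_{1},\dots,\alpha_{m}$ to be carried on a common spectral projection of the limit, hence to coincide, a contradiction. More concretely: since the $\alpha_{i}$ are distinct, pick $\delta>0$ with $|\alpha_{i}-\alpha_{j}|>3\delta$ for $i\ne j$, and let $f_{i}$ be a continuous bump equal to $1$ on $[\alpha_{i}-\delta,\alpha_{i}+\delta]$ and $0$ outside $[\alpha_{i}-2\delta,\alpha_{i}+2\delta]$; then $f_{i}(a_{0})=q_{i}$ and $f_{i}(b_{k})\to q_{i}$ in $\|\cdot\|_{2}$, so $\operatorname{tr}(f_{i}(b_{k}))\to t_{i}$ for every $i$, while $\sum_{i}f_{i}(b_{k})\le 1$ because the supports of the $f_{i}$ are disjoint; evaluating $\sum_{i}f_{i}$ on $b_{k}$'s $\le m-1$ spectral values shows $\sum_{i}\operatorname{tr}(f_{i}(b_{k}))$ counts each spectral value of $b_{k}$ at most once, hence is $\le m-1\cdot(\text{something})$; a short pigeonhole then contradicts $\sum_{i}t_{i}=1$ with $m$ positive summands once $k$ is large. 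This $\epsilon$ depends only on $\{\alpha_{i}\}$ and $\{t_{i}\}$, as required.

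Finally I would check the statement is genuinely ``$II_{1}$-factor-free'': for an arbitrary $II_{1}$-factor $A$ and $a,b$ satisfying the two hypotheses, by uniqueness of the trace and the fact that $A$ contains a copy of the partition $\{q_{i}\}$ with the prescribed traces, there is a trace-preserving isomorphism of the von Neumann subalgebra generated by $a$ onto that generated by $a_{0}$ inside $R$; but to compare $\|a-b\|_{2}$ we cannot move $b$ into $R$, so instead I would rerun the compactness argument intrinsically inside $A$: the numbers $\operatorname{tr}(f_{i}(b))$ and the inequality $\sum_{i}f_{i}(b)\le 1$ are statements about $A$ alone, and the contradiction derived above used only these together with $\|a-b\|_{2}$ small. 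So the cleanest route is to prove directly: if $\|a-b\|_{2}^{2}<\epsilon$ with $\epsilon$ chosen as the explicit quantity coming from the bump-function estimate (e.g. $\epsilon=\tfrac{1}{4}\delta^{2}\min_{i}t_{i}^{2}$ works after tracking constants), then $\sum_{i}\operatorname{tr}(f_{i}(b))>m-1$, which is impossible since $b$ has $\le m-1$ spectral values and the $f_{i}$ have disjoint supports bounded by $1$.

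The main obstacle I anticipate is making the pigeonhole step fully rigorous and quantitative: one must convert ``$b$ has at most $m-1$ spectral values'' plus ``$\sum f_{i}\le 1$ with disjoint supports'' into the clean inequality $\sum_{i}\operatorname{tr}(f_{i}(b))\le m-1$ — this requires noting that each spectral value $\beta$ of $b$ can lie in the support of at most one $f_{i}$, so $\sum_{i}f_{i}(\beta)\le 1$ and summing over the $\le m-1$ spectral projections weighted by their traces gives the bound — and then choosing $\delta$ and hence $\epsilon$ so that $\|a-b\|_{2}<\sqrt{\epsilon}$ propagates (via $\|f_{i}(a)-f_{i}(b)\|_{2}\le \operatorname{Lip}(f_{i})\|a-b\|_{2}$, using that $f_{i}$ can be taken Lipschitz with constant $1/\delta$) to $|\operatorname{tr}(f_{i}(b))-t_{i}|<t_{i}$, forcing $\sum_{i}\operatorname{tr}(f_{i}(b))>\sum_{i}0 = 0$ is too weak — one actually needs the sharper comparison $\sum_i \operatorname{tr}(f_i(b)) > m-1$, which comes out of choosing the approximation fine enough that each $\operatorname{tr}(f_i(b))$ is within $\tfrac{1}{2m}$ of $t_i$, whence the sum exceeds $1-\tfrac12 = \tfrac12$... so in fact one should instead use functions $g_i$ that are $1$ on a neighbourhood of $\alpha_i$ and sum the complementary estimate. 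Sorting out exactly which family of test functions gives the crispest constant is the fiddly part; everything else is soft.
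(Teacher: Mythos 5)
Your overall strategy (disjoint bump functions around the $\alpha_{i}$ plus a pigeonhole on the spectral values of $b$) can be made to work, but the decisive step is stated wrongly and you yourself leave it unresolved. The inequality you aim for at the end, $\sum_{i}\operatorname{tr}(f_{i}(b))>m-1$, is unattainable: since the $f_{i}$ have disjoint supports and $0\le f_{i}\le 1$, one always has $\sum_{i}f_{i}(b)\le 1$ and hence $\sum_{i}\operatorname{tr}(f_{i}(b))\le 1$, so no choice of test functions or constants will produce that contradiction, and the ``$\le m-1$'' bound you want to convert it from is trivially true and useless. The correct pigeonhole is existential, not a sum bound: each of the at most $m-1$ spectral values of $b$ lies in the support of at most one $f_{i}$, so at most $m-1$ of the operators $f_{i}(b)$ are nonzero, hence there is some $i_{0}$ with $f_{i_{0}}(b)=0$. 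Then $t_{i_{0}}=\operatorname{tr}(f_{i_{0}}(a))=\operatorname{tr}\bigl(f_{i_{0}}(a)-f_{i_{0}}(b)\bigr)\le \|f_{i_{0}}(a)-f_{i_{0}}(b)\|_{2}\le \delta^{-1}\|a-b\|_{2}$, so $\|a-b\|_{2}^{2}\ge \delta^{2}\min_{i}t_{i}^{2}$, which is the $\epsilon$ you want; no sums over $i$ are needed. Two further points: the Lipschitz estimate $\|f(a)-f(b)\|_{2}\le \operatorname{Lip}(f)\,\|a-b\|_{2}$ is not free and should be justified (for $a,b$ with finitely many spectral values it follows from expanding $\|f(a)-f(b)\|_{2}^{2}=\sum_{i,j}(f(\alpha_{i})-f(\beta_{j}))^{2}\operatorname{tr}(p_{i}q_{j}p_{i})$, the cross terms vanishing), and your opening reduction to the hyperfinite factor $R$ and conjugation of $a$ to a model $a_{0}$ is both unnecessary and, as you note, does not transport $b$; the argument must be run intrinsically in $A$, as your later paragraphs in effect do.

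For comparison, the paper's proof is shorter and entirely explicit: it writes $\|a-b\|_{2}^{2}=\sum_{i,j}(\alpha_{i}-\beta_{j})^{2}\operatorname{tr}(p_{i}q_{j}p_{i})$ exactly (the same cross-term computation that underlies your Lipschitz step), observes that the matrix $w_{ij}=\operatorname{tr}(p_{i}q_{j}p_{i})$ satisfies $w_{ij}\ge 0$, $\sum_{j}w_{ij}=t_{i}$, and then takes $\epsilon$ to be the minimum of the continuous function $F(\beta,V)=\sum_{i,j}(\alpha_{i}-\beta_{j})^{2}v_{ij}$ over the compact set of admissible $(\beta,V)$ with $k<m$ columns; positivity of the minimum follows because with $k<m$ some $\alpha_{i}$ is missed by $\{\beta_{j}\}$ while $t_{i}>0$ forces some $v_{ij}>0$. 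Your bump-function route, once repaired as above, is a legitimate alternative and even yields an explicit constant $\delta^{2}\min_{i}t_{i}^{2}$, but as written the key contradiction is not established.
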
 

\begin{proof}
Let $k < m$ and put  
\[ {\cal K} = \left\{  (\beta,V)\left|\begin{array}{l}
 \beta =(\beta_{1},\ldots ,\beta_{k}) ,\\  
 0 \leq\beta_{j} \leq \max\{ |\alpha_{i}|\} ,\\ 
 V = (v_{ij}) \in M_{m\times{} k}([0,1]) , \sum_{j}^{}v_{ij} = t_{i} 
\end{array} \right.\right\} \] 
Then $\cal K$ is compact and 
\[F(\beta,V) = \sum_{i,j}^{}(\alpha_{i} - \beta_{j})^{2}v_{ij}\] 
has a minimum on $\cal K.$ Assume this minimum is
attained at $(\beta',V').$ Since $k < m$ there exists $i$ s.t. $\alpha_{i}
\not\in  (\beta_{1}',\ldots ,\beta_{k}') $ and $t_{i} > 0$ implies that at
least one $v_{ij}' > 0.$ Hence $F(\beta',V') > 0.$ 

\noindent{}Put $\epsilon = F(\beta',V')$, and let $A$ be a $II_{1}$-factor, 
and let $a,b\in{} A_{sa}$ satisfy $(1)$ and $(2).$ 

\noindent{}Let $ b =\sum_{j=1}^{k}\beta_{j}q_{j} $  
be the spectral decomposition of $b.$ Since
$\sum_{}^{}p_{j} = {\bf 1} =  \sum_{}^{}q_{i}$ we have 
\[ a = \sum_{i,j}^{}\alpha_{i}p_{i}q_{j} \mbox{ and } b =
\sum_{i,j}^{}\beta_{j}p_{i}q_{j}\]
 i.e. 
\[ (a-b) = \sum_{i,j}^{}(\alpha_{i} - \beta_{j})p_{i}q_{j} \mbox{ and } \] 
\renewcommand{\arraystretch}{2}
\[ \begin{array}{lc} 
(a-b)^{2} & = \\
(a-b)(a-b)^{*} & = \\ 
\sum_{i,j}^{}(\alpha_{i} - \beta_{j})^{2}p_{i}q_{j}p_{i} + & \ \\ 
\sum_{i,j,i',j',i \neq{} i',j \neq{}
j'} (\alpha_{i} - \beta_{j}) (\alpha_{i'} - \beta_{j'})p_{i}q_{j}q_{j'}p_{i'}
& .  \end{array} \] 
\renewcommand{\arraystretch}{1}
Hence 
\[ tr((a-b)^{2}) = \sum_{i,j}^{}(\alpha_{i} - 
\beta_{j})^{2}tr(p_{i}q_{j}p_{i}) \] 
since $tr(p_{i}q_{j}p_{i'}q_{j'}) = 0$ when $(i,j) \neq (i',j').$ 

\noindent{}Since 
\[ 0\leq{} tr(p_{i}q_{j}p_{i}) \leq
\sum_{j}^{}tr(p_{i}q_{j}p_{i}) =  tr(p_{i}) = t_{i}\] 
$W = (w_{ij}) , w_{ij}
= tr(p_{i}q_{j}p_{i})$ is a matrix of the type defining the second coordinate
of $\cal K,$ and we have obtained 
\[ \tonormsq{a-b} = tr((a-b)^{2}) =F{(\beta,W)} \geq{} F(\beta',V') = \epsilon\]

\end{proof} 

\begin{theorem} \label{thm64} 

If $ \Gamma_{G}, \Gamma_{H}, \Gamma_{K}, \Gamma_{L}$ are finite or infinite, 
locally finite bi-partite  
graphs with Per\-ron-Fro\-be\-ni\-us vectors  $\xi_{1},\xi_{2},\xi_{3},\xi_{4}$ 
defining finite traces on the ladder  
\[\begin{array}{lclclclc}
 B_{0} & \subset_{L} & B_{1} & \subset_{L^{t}} & B_{2}
        & \subset_{L} & B_{3} &\cdots\\
  \cup_{K} &\,&\cup_{H}&\,&\cup_{K}&\,&\cup_{H}&\,\\
 A_{0} & \subset_{G} & A_{1} & \subset_{G^{t}} & A_{2}
         & \subset_{G} & A_{3} &\cdots  \end{array} \] 
of multi-matrix algebras, and $A \subset B$ are $II_{1}$-factors constructed from
the tower in the usual way. Then 
\[ {\rm dim}\{A' \cap{} B\} \leq \left(\min\{\mbox{ 1-norm of rows of } K  
\mbox{ and } H\right\})^{^{2}}\] 

\end{theorem}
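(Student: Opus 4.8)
The plan is to run the Pimsner--Popa/Wenzl relative-commutant argument, but with the twist that the compressions used are minimal \emph{central} projections of the $A_m$'s, so that Remark \ref{rem1} keeps their traces bounded away from $0$; this is what makes the $L^2$-approximation survive while still collapsing the relevant relative commutant to the size prescribed by one row of $K$ (or $H$).

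Concretely, I would fix any finitely many linearly independent elements $a^{(1)},\dots,a^{(D)}\in A'\cap B$; it suffices to prove $D\leq\min\{\sum_{j}K_{ij}^{2}:i\}\wedge\min\{\sum_{j}H_{ij}^{2}:i\}$, since this is $\leq(\min\{\text{1-norm of a row of }K\text{ and }H\})^{2}$ and forces $A'\cap B$ to be finite dimensional of dimension $\leq$ that bound. For $m$ even let $z_{m}$ be the minimal central projection of $A_{m}$ in the block indexed by a fixed $i_{0}$ (to be optimized at the end), and let $\tau_{m}=tr_{B}(\cdot)/tr_{B}(z_{m})$ be the normalized trace on the $II_{1}$-factor $z_{m}Bz_{m}$. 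Since $z_{m}\in A_{m}\subseteq A$ commutes with each $a^{(i)}\in A'\cap B$, one has $z_{m}a^{(i)}z_{m}=a^{(i)}z_{m}$, and a one-line computation using that $E_{A}(A'\cap B)\subseteq A'\cap A={\cal Z}(A)={\Bbb C}\mathbf 1$ (because $E_{A}$ is $A$-bimodular and $A$ is a factor) gives $tr_{B}(a^{(j)*}a^{(i)}z_{m})=tr_{B}(a^{(j)*}a^{(i)})\,tr_{B}(z_{m})$, hence the Gram matrix of $z_{m}a^{(1)}z_{m},\dots,z_{m}a^{(D)}z_{m}$ with respect to $\tau_{m}$ is the fixed, $m$-independent matrix $G_{\infty}=(tr_{B}(a^{(j)*}a^{(i)}))_{i,j}$, which has rank $D$.

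Next I would introduce the approximants $b^{(i)}_{m}=E_{B_{m}}(a^{(i)})$, where $E_{B_{m}}\colon B\to B_{m}$ is the trace-preserving conditional expectation. Because $E_{B_{m}}$ is $A_{m}$-bimodular and $a^{(i)}$ commutes with $A_{m}$, one gets $b^{(i)}_{m}\in A_{m}'\cap B_{m}$, so $z_{m}b^{(i)}_{m}z_{m}\in z_{m}(A_{m}'\cap B_{m})z_{m}$. Now read off the block structure of $A_{m}'\cap B_{m}$ from $\Gamma_{K}$: for $m$ even, $A_{m}\subset_{K}B_{m}$ with the \emph{same} matrix $K$ at every even level by Proposition \ref{prop24}(2), and inside the $j$-th block of $B_{m}$ the relative commutant of $A_{m}$ is $\bigoplus_{i}\mathbf 1\otimes M_{K_{ij}}$; compressing by $z_{m}$ (the identity of the $i_{0}$-th block of $A_{m}$) keeps only the $i=i_{0}$ summand up to multiplicity, so $z_{m}(A_{m}'\cap B_{m})z_{m}\cong\bigoplus_{j}M_{K_{i_{0}j}}$, a space of dimension $R:=\sum_{j}K_{i_{0}j}^{2}$. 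Letting $m\to\infty$ through even integers: $\bigcup_{m}B_{m}$ is $2$-norm dense in $B$, so $b^{(i)}_{m}\to a^{(i)}$ in $\|\cdot\|_{2,tr_{B}}$, and since $tr_{B}(z_{m})\geq c_{i_{0}}>0$ uniformly by Remark \ref{rem1}, also $z_{m}b^{(i)}_{m}z_{m}\to z_{m}a^{(i)}z_{m}$ in $\|\cdot\|_{2,\tau_{m}}$ with all norms uniformly bounded; hence the Gram matrices of the $z_{m}b^{(i)}_{m}z_{m}$ converge to $G_{\infty}$. But each of these Gram matrices has rank $\leq R$ (its vectors lie in a space of dimension $\leq R$), and $\{$rank $\leq R\}$ is closed in $M_{D}({\Bbb C})$, so $D=\mbox{rank}(G_{\infty})\leq R=\sum_{j}K_{i_{0}j}^{2}$. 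Minimizing over $i_{0}$ and repeating verbatim at odd levels (with $H$ in place of $K$) yields the theorem.

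The one genuinely delicate point --- and the thing the argument is really built around --- is the normalization in this last limit. If one compressed instead by an ordinary minimal projection of $A_{m}$ one would get exactly $\bigoplus_{j}M_{K_{i_{0}j}}$, but its trace would decay like $\lambda^{-m}$, so the division by it could swamp the $L^{2}$-convergence rate (which need not be exponential). Using the minimal central projection $z_{m}$, whose trace Remark \ref{rem1} pins away from $0$ while Proposition \ref{prop24} keeps the compressed relative commutant controlled by the single row $(K_{i_{0}j})_{j}$, resolves exactly this tension. (If one prefers a more computational route one can, alternatively, replace the rank-semicontinuity step by Lemma \ref{lemma62}: assuming $\dim(A'\cap B)>N^{2}$ one produces a self-adjoint element of $A'\cap B$ with at least $N+1$ distinct eigenvalues, compresses it by $z_{m}$ as above so that the compression has the same eigenvalue data with respect to $\tau_{m}$, and derives a contradiction because its $\tau_{m}$-close approximant in $z_{m}(A_{m}'\cap B_{m})z_{m}$ has at most $N$ distinct eigenvalues.)
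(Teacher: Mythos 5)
Your argument is correct, and it shares the paper's skeleton — compressing by the minimal central projection $z_{m}$ of $A_{m}$ over a fixed block $i_{0}$, using Remark \ref{rem1} to keep $tr_{B}(z_{m})\geq c_{i_{0}}>0$, the block structure of $A_{m}'\cap B_{m}$ coming from Proposition \ref{prop24}, and the trace factorization $tr_{B}(ab)=tr_{B}(a)tr_{B}(b)$ for $a\in A$, $b\in A'\cap B$ — but it finishes differently. The paper approximates via Kaplansky density plus $E_{A_{2l}'\cap B}$, builds a self-adjoint $x=\sum_{j}\frac{j}{m}p_{j}$ out of a maximal family of minimal projections of $A'\cap B$, and invokes the spectral-separation Lemma \ref{lemma62} to show the compressed approximants (which live in $\bigoplus_{j}M_{K_{i_{0}j}}$ and hence have at most $m_{0}=\sum_{j}K_{i_{0}j}$ spectral projections) cannot converge to $xz_{m}$ unless $m\leq m_{0}$, and then bounds $\dim(A'\cap B)\leq m^{2}$. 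You replace both steps: the approximants $b^{(i)}_{m}=E_{B_{m}}(a^{(i)})$ land in $A_{m}'\cap B_{m}$ automatically by $A_{m}$-bimodularity (no Kaplansky needed), and instead of counting spectral values you compare Gram matrices with respect to $\tau_{m}$ and use lower semicontinuity of rank, which requires no analogue of Lemma \ref{lemma62} and in fact yields the sharper bound $\dim(A'\cap B)\leq\min_{i}\sum_{j}K_{ij}^{2}$ (and likewise with $H$), of which the stated inequality is a corollary since $\sum_{j}K_{ij}^{2}\leq\bigl(\sum_{j}K_{ij}\bigr)^{2}$. The small facts you rely on — that $G_{\infty}$ is positive definite (faithfulness of $tr_{B}$ plus linear independence), that $\mathcal{Z}(A)={\Bbb C}\mathbf 1$ (factoriality, established earlier via extremality of the trace), and that local finiteness makes $z_{m}(A_{m}'\cap B_{m})z_{m}\cong\bigoplus_{j}M_{K_{i_{0}j}}({\Bbb C})$ finite dimensional — are all available in the paper, so the proof goes through; the trade-off is that the paper's route, while cruder, is the one that generalizes Wenzl's original spectral-counting argument, whereas yours is a cleaner linear-algebra argument giving a quantitatively better constant.
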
 

\begin{proof}
Denote the steps of the ladder by $ \bigoplus_{i} A_{i}^{n}$ and  $ \bigoplus_{j}
B_{j}^{n}$ with dimension vectors  $a_{n} = (a_{i}^{n})$ and $b_{n} =
(b_{j}^{n}).$ 

\noindent{}Assume $n$ is even $n = 2l.$ 
 Put $ m_{0} = \min\{\mbox{ 1-norm of rows of } K \}$ and choose $i_{0}$ s.t.
the
 $i_{0}$'th row of $k$ has \etnorm{\cdot} $= m_{0}$

Let $z^{2l}_{i_{0}}$ be the corresponding minimal central projection in
$A_{2l}.$  
\[ z^{2l}_{i_{0}} \in {\cal Z}(A_{2l}) \subset A_{2l}' \cap{} B_{2l}\] 
Let $q_{j}^{2l}$ be the minimal central projection in $B_{2l}$
corresponding to $ B_{j}^{2l}$

\noindent{}$A_{2l}q_{j}^{2l}$ is of the form 
\[\underbrace{ \left (  \begin{array}{ccccc}
x_{1} &\, &\,&\,&\,\\ \, &\ddots &\,&\,&\,\\ \,&\,& x_{1} &\,&\,\\ \,&\,&\,&
x_{2} &\, \\ \,&\,&\,&\,&\ddots  \end{array} \right ) }_{b_{j}^{2l}} \] 
where $x_{i} \in{} A_{i}^{2l}$ is repeated $K_{ij}$-times. In this setting
$z^{2l}_{i_{0}}q_{j}^{2l}$ has
\renewcommand{\arraystretch}{1.5} 
\[ x_{i} = \left \{ \begin{array}{cc} 
{\bf 1}_{a^{2l}_{i_{0}} \times{} a^{2l}_{i_{0}}} & \mbox{ if } i= i_{0}\\ 
0 &\mbox{ otherwise.} \end{array} \right. \]
\renewcommand{\arraystretch}{1} 
We also have 
\[ A_{2l}' \cap{} B_{2l} \cong \bigoplus_{i,j}^{}M_{K_{ij}}({\Bbb C})\] 
and $z^{2l}_{i_{0}}$ corresponds to
the identity in 
\[\bigoplus_{j}^{}M_{K_{i_{0}j}}({\Bbb C})\] 
and hence $z^{2l}_{i_{0}}$ can be split in $m_{0}$ orthogonal projections in 
$A_{2l}' \cap{} B_{2l}$

\noindent{}Let $\{p_{j} \,|\,j=1,\ldots,m\}$ be a maximal splitting of the 
identity in
$A' \cap{} B$ into minimal non-trivial projections, and put 
\[x = \sum_{j=1}^{m} \frac{j}{m}p_{j} .\] 
Then $x$ is selfadjoint and \norm{x} $=
1.$  By Kaplansky's density theorem there exists  
\[ (x_{_{n}}) \subset ( \cup B_{n})_{sa}\, ,\, \norm{x_{_{n}}} \leq 1 , x_{_{n}} 
\stackrel{n \rightarrow \infty}{\rightarrow} x \mbox{ strongly }\] 
i.e. dist$_{2}(( \cup B_{n})_{1},x) = 0.$  Since $B_{0} \subset{} B_{1} \subset \cdots{}\ x_{n}$ can be chosen in
$B_{n}.$ 

\noindent{}In particular  
$x_{_{2l}} \stackrel{l \rightarrow \infty}{\rightarrow}
x $\ strongly. Put $y_{_{2l}} = E_{A_{2l}' \cap{} B_{2l}}(x_{_{2l}}) =  
E_{A_{2l}' \cap{} B}(x_{_{2l}})$ then 
\[\tonorm{y_{_{2l}} - x} \leq \tonorm{y_{_{2l}} - E_{A_{2l}'
\cap{} B}(x) } + \tonorm{E_{A_{2l}' \cap{} B}(x) - x}.\] 
Since $A_{2l}' \cap{} B \stackrel{l \rightarrow \infty}{\searrow} A' 
\cap B$ we get
\[\tonorm{E_{A_{2l}' \cap{} B}(x) - x} \stackrel{l \rightarrow\infty}
{\rightarrow} \tonorm{E_{A' \cap{} B}(x) - x} = 0\] 
and
\[\tonorm{y_{_{2l}} - E_{A_{2l}' \cap{} B}(x) } = \tonorm{E_{A_{2l}' \cap{}
B}(x_{_{2l}} -x)} \leq \tonorm{x_{_{2l}} -x} \stackrel{l \rightarrow
\infty}{\rightarrow} 0.\] 
This shows 
\[ y_{_{2l}} \stackrel{l \rightarrow \infty}{\rightarrow} x \mbox{ strongly }\]

\noindent{}$y_{_{2l}}z^{2l}_{i_{0}}$ has at most $m_{0}$ 
spectral projections, since:

\noindent{}$y_{_{2l}} \in{} A_{2l}' \cap{} B_{2l}$ and 
$z^{2l}_{i_{0}} \in{} A_{2l}$ hence
$\left [ y_{_{2l}},z^{2l}_{i_{0}}\right ] = 0.$ In particular 
\[y_{_{2l}}z^{2l}_{i_{0}} \in{} z^{2l}_{i_{0}}B_{2l}z^{2l}_{i_{0}}
 \cong \bigoplus_{j}^{}M_{K_{i_{0}j}}({\Bbb C})\] 
which only contains $m_{0}$
minimal projections.

\noindent{}Since $z^{2l}_{i_{0}} \in{} A$ and $p_{j} \in{} A' \cap{} B$ we have
$tr_{B}(p_{j}z^{2l}_{i_{0}}) = $ $tr_{B}(p_{j}) tr_{B}(z^{2l}_{i_{0}})$  $\neq
0.$ I.e. $p_{j}z^{2l}_{i_{0}} = z^{2l}_{i_{0}}p_{j}$ is a non-zero projection,
and we get 
\[xz^{2l}_{i_{0}} = \sum_{j=1}^{m} \frac{j}{m}p_{j}z^{2l}_{i_{0}}\]
has exactly $m_{0}$ spectral projections.

\noindent{}Assume $m > m_{0}.$ 

\noindent{}In lemma \ref{lemma62} put $\alpha_{j} = \frac{j}{m} , 
t_{j} = \trB{(p_{j})}$
and let the $II_{1}$-factor be $ z^{2l}_{i_{0}}Bz^{2l}_{i_{0}}.$ Then there
exists  $\epsilon > 0$ s.t. for all $y \in{} z^{2l}_{i_{0}}Bz^{2l}_{i_{0}}$
selfadjoint with
 
\noindent{}\norm{y} $\leq$ \norm{xz^{2l}_{i_{0}}} and $y$ less than $m$ 
spectral projections 
\[ \| y - xz^{2l}_{i_{0}}\|^{^{2}}_{_{_{2,z^{2l}_{i_{0}}Bz^{2l}_{i_{0}}}}}
 \geq \epsilon.\] 
The trace on $z^{2l}_{i_{0}}Bz^{2l}_{i_{0}}$ is given by
\[tr_{z^{2l}_{i_{0}}Bz^{2l}_{i_{0}}}( \cdot) =\frac{tr_{B}(\cdot)}
{tr_{B}(z^{2l}_{i_{0}})}\] 
hence 
\[ \| y - xz^{2l}_{i_{0}}
\|^{^{2}}_{_{_{2,B}}} \geq  \epsilon{}\, tr_{B}(z^{2l}_{i_{0}}) .\]
\norm{xz^{2l}_{i_{0}}} $ = 1$ and \norm{y_{_{2l}}z^{2l}_{i_{0}}} $ \leq $
\norm{x_{_{2l}}} $\leq 1$ and we get 
\[ \| y_{_{2l}}z^{2l}_{i_{0}} - xz^{2l}_{i_{0}}\|^{^{2}}_{_{_{2,B}}} \geq  
\epsilon{}\,tr_{B}(z^{2l}_{i_{0}}) \geq \epsilon{}\, c_{i_{0}} > 0 \] 
where $c_{i_{0}}$ is the constant discussed in remark \ref{rem1}.

\noindent{}On the other hand we have 
\[ \| y_{_{2l}}z^{2l}_{i_{0}} - xz^{2l}_{i_{0}}
\|_{_{_{2,B}}} \leq \ \| z^{2l}_{i_{0}} \|_{_{_{2,B}}} \ \| y_{_{2l}} - x
\|_{_{_{2,B}}} \stackrel{l \rightarrow \infty}{\rightarrow} 0.\] 
This is a contradiction. I.e. $m \leq{} m_{0}.$ 
Since 
\[\sum_{j}K_{_{i_{_{0}},j}}^{2} \leq \left(\sum_{j}K_{_{i_{_{0}},j}}\right)^{2}\]
we have 
\[ {\rm dim}\{A' \cap{} B\} \leq \left(\min\{\mbox{ 1-norm of rows of } K \}\right)^{^{2}}\] 

\noindent{}The same argument holds for odd $n$'s involving the matrix $H$ 
instead. And we get 
\[ {\rm dim}\{A' \cap{} B\} \leq \left(\min\{\mbox{ 1-norm of rows of } K 
\mbox{ and } H\}\right)^{^{2}},\] 
because 
\[{\rm dim}\{A' \cap{} B\}\leq{}({\rm dim}\{\mbox{ maximal Abelian subalgebra of }A' \cap{} B\})^{2}=m^{2}.\]
\end{proof}
\begin{cor}
\label{cor65}
If there exists a symmetric commuting square 
\mbox{\scriptsize{$\begin{array}{lcl}
 B_{0} & \subset_{L} & B_{1} \\
  \cup_{K} &\,&\cup_{H}\\
 A_{0} & \subset_{G} & A_{1} \end{array} $}} of infinite dimensional  multi-matrix algebras, then there exists a pair of hyperfinite $II_{1}-$factors
 $A\subset{}B$ with 
\[ {\rm dim}\{A' \cap{} B\} \leq \left(\min\{\mbox{ 1-norm of rows and columns of } K 
\mbox{ and } H\}\right)^{^{2}}\] 
\end{cor}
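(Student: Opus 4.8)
The plan is to derive Corollary \ref{cor65} directly from Theorem \ref{thm64} together with Corollary \ref{cor25} and the trace/index analysis developed in the preceding three sections. The key observation is that a single symmetric commuting square of infinite dimensional \mma{}s already contains all the data needed to run the fundamental construction indefinitely.

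First I would invoke Corollary \ref{cor25}: starting from the given symmetric commuting square
\[\begin{array}{lcl}
 B_{0} & \subset_{L} & B_{1} \\
  \cup_{K} &\,&\cup_{H}\\
 A_{0} & \subset_{G} & A_{1} \end{array}\]
with the trace \trBen{} coming from the Perron--Frobenius vector of $L^{t}L$ and its restrictions (these restrictions being again the Perron--Frobenius vectors of $H^{t}H,$ $K^{t}K,$ $G^{t}G$ by Proposition \ref{fire1prop17}), one obtains the infinite ladder $A_{n}\subset{}B_{n}$ with inclusion matrices alternating between $G,G^{t}$ and $L,L^{t}$ vertically, and $K,H$ horizontally, each square being a symmetric commuting square. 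Second, I would pass to the inductive limits $A_{\infty}=\bigcup{}A_{n}\subset{}B_{\infty}=\bigcup{}B_{n},$ and complete in the GNS representation of \trB_{\infty}{} to get von Neumann algebras $A\subset{}B.$ The extremality argument of Section 6.3.1 shows that \trA_{\infty}{} and \trB_{\infty}{} are extremal, so $A$ and $B$ are hyperfinite $II_{1}$-factors. (Here one must check that the traces are finite; this is exactly the hypothesis that the Perron--Frobenius vectors define finite traces, which holds automatically in the infinite dimensional \mma{} setting since the dimension vector entries are $\geq{}1,$ forcing $\ell^{1}$-summability of the Perron--Frobenius vectors — this is the content of Corollary \ref{Scor} and the discussion at the start of Section 6.1.) Then Theorem \ref{thm64} applies verbatim and gives
\[{\rm dim}\{A'\cap{}B\}\leq\left(\min\{\mbox{1-norm of rows of }K\mbox{ and }H\}\right)^{2}.\]

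The remaining point is the improvement from ``rows'' to ``rows and columns.'' The plan here is to exploit the symmetry of the ladder. Observe that the same ladder, read with the roles of the $A$'s and $B$'s in each square interchanged appropriately, or rather the ladder obtained by transposing — since $\Gamma_{K},\Gamma_{H},\Gamma_{G},\Gamma_{L}$ are bi-partite graphs, a row of $K$ with respect to one bi-partition is a column with respect to the other. More precisely, a bi-partite graph $\Gamma_{K}$ has adjacency matrix $\left(\begin{array}{cc}0&K\\K^{t}&0\end{array}\right),$ and the ``1-norm of a row of $K$'' together with the ``1-norm of a row of $K^{t}$'' (= 1-norm of a column of $K$) are both just valencies of vertices of $\Gamma_{K}.$ Running the fundamental construction in the ladder produces, at alternate floors, inclusions $A_{2l}\subset{}B_{2l}$ governed by $K$ and $A_{2l+1}\subset{}B_{2l+1}$ governed by $H$; but one may equally well build the ladder ``downwards'' (or start the tower one step earlier), which amounts to replacing $K$ by $K^{t}$ and $H$ by $H^{t}$ in the estimate of Theorem \ref{thm64}. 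Since $A'\cap{}B$ is an invariant of the pair $A\subset{}B$ and does not depend on which admissible tower computes it, one concludes that ${\rm dim}\{A'\cap{}B\}$ is bounded by the minimum over all four quantities: 1-norms of rows of $K,$ of columns of $K,$ of rows of $H,$ and of columns of $H.$ I expect this bookkeeping — making precise the claim that transposing the horizontal inclusion matrices yields another legitimate tower computing the same $A'\cap{}B$ — to be the only real subtlety; once it is granted, the corollary is immediate. The alternative, cleaner route is simply to note that $\Gamma_{K}$ and $\Gamma_{H}$ being bi-partite means the ``rows'' in the statement of Theorem \ref{thm64} can be taken on either side of the bi-partition, and one takes the minimum; I would present it that way to avoid rebuilding the tower.

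Finally I would remark that connectedness of $\Gamma_{G},\Gamma_{H},\Gamma_{K},\Gamma_{L}$ (implicit in the symmetric commuting square hypothesis via Proposition \ref{fire1prop17}) guarantees that $A$ and $B$ are genuinely factors and that the index $[B:A]=\|H\|^{2}=\|K\|^{2}$ by the computation in Section 6.5, so the statement is non-vacuous. The main obstacle, as noted, is purely organizational: confirming that the ``rows versus columns'' strengthening is legitimate, which follows from the bi-partite structure with no new estimates needed.
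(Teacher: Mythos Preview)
Your overall strategy---invoke Corollary \ref{cor25} to build the ladder, pass to the GNS completion, use extremality to get factors, then apply Theorem \ref{thm64}---is correct and matches the paper exactly for the row bound.

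For the column bound, however, your two proposed routes diverge from the paper, and one of them does not work. The paper's argument is concrete: perform the basic construction \emph{vertically} for $A_{1}\subset B_{1}$, setting $C_{1}=\langle B_{1},e_{A_{1}}\rangle$ and $C_{0}=\{B_{0},e_{A_{1}}\}''$. This yields a new symmetric commuting square
\[\begin{array}{lcl}
 C_{0} & \subset_{G} & C_{1} \\
  \cup_{K^{t}} &\,&\cup_{H^{t}}\\
 B_{0} & \subset_{L} & B_{1} \end{array}\]
with the vertical inclusion matrices transposed. Applying Theorem \ref{thm64} to \emph{this} square gives a pair of hyperfinite $II_{1}$-factors whose relative commutant dimension is bounded by the minimal row-sum of $K^{t}$ and $H^{t}$, i.e.\ the minimal column-sum of $K$ and $H$. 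Since the corollary only asserts the \emph{existence} of some pair, this suffices: one simply uses whichever square realizes the overall minimum.

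Your ``build downwards / start one step earlier'' description is this construction in spirit, but stated too loosely to be a proof. More seriously, your ``cleaner route''---claiming that bi-partiteness of $\Gamma_{K},\Gamma_{H}$ lets one read rows on either side of the bi-partition directly in Theorem \ref{thm64}---does not work. The proof of Theorem \ref{thm64} uses that the minimal central projection $z_{i_{0}}^{2l}$ lies in $A_{2l}\subset A$, which forces it to commute with every $p_{j}\in A'\cap B$; this is what guarantees $xz_{i_{0}}^{2l}$ retains exactly $m$ spectral projections. A minimal central projection of $B_{2l}$ (which is what a column index of $K$ labels) lies in $B$, not in $A$, and has no reason to commute with elements of $A'\cap B$, so the spectral-counting argument collapses. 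You genuinely need to pass to the new commuting square.

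Finally, you spend effort arguing that the pair obtained from the transposed square coincides with the original one. The corollary does not require this, and the paper does not claim it; existence of \emph{some} pair is all that is asserted.
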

\begin{proof}
If the minimum is attained for a row of either $H$ or $K,$ the result follows from  Theorem \ref{thm64}. If the minimum is attained for a column of either $H$ or $K,$ we argue as follows.

Let $e_{_{A_{_{1}}}}$ denote the projection in the basic construction for $A_{_{1}}\subset{}B_{_{1}},$   $C_{_{1}}=\langle{}B_{_{1}},e_{_{A_{_{1}}}}\rangle$ and $C_{_{0}}=\langle{}B_{_{0}},e_{_{A_{_{1}}}}\rangle.$ Then
\[\begin{array}{lcl}
 C_{0} & \subset_{G} & C_{1} \\
  \cup_{K^{^{t}}} &\,&\cup_{H^{^{t}}}\\
 B_{0} & \subset_{L} & B_{1} \end{array} \]
is a symmetric commuting square of infinite dimensional multi-matrix algebras. By theorem \ref{thm64} we then get a pair of hyperfinite $II_{1}-$factors $A\subset{}B$ with
\[ {\rm dim}\{A' \cap{} B\} \leq \left(\min\{\mbox{ 1-norm of rows  of } K^{^{t}} \mbox{ and } H^{^{t}}\}\right)^{^{2}}.\] 
This proves the assertion.
\end{proof}
\section{Constructing Commuting Squares of Infinite Multi--matrix Algebras}
The proof of the bi--unitary condition (\ref{biunitcond}) for commuting squares of finite multi-matrix algebras, only uses local properties of the involved Bratteli diagrams. All the arguments on pp \pageref{fire1page1}--\pageref{fire1page2} proving theorem \ref{fire1thm17}, may be repeated for inclusions of infinite  dimensional multi-matrix algebras, provided that the inclusion matrices correspond to locally finite, countably infinite graphs. In particular the blocks, $u^{(i,k)}$ and $v^{(j,l)}$ of $u$ and $v$ are finite dimensional unitaries. Furthermore we may conclude 
\begin{theorem}
\label{uendeligversionfire1thm10}
Let $G$ $H$ $K$ and $L$ be  adjacency matrices for locally finite, countably infinite, bi-partite graphs, such that
\[GH=KL\;\;\mbox{ and }\;\;G^{t}K=HL^{t}.\]
Then the following conditions are equivalent
\begin{description}
\item[(a)]{}There exists a (symmetric) commuting square 
\[(A\subset{}B\subset{}D,\;\;A\subset{}C\subset{}D,\;\;\mbox{tr}_{_{D}})\]
of infinite  dimensional multi-matrix algebras, with inclusion matrices
\[\begin{array}{lcl}
      C & \subset_{L} & D \\
       \cup_{K} &\,&\cup_{H}\\
      A & \subset_{G} & B.  \end{array}  \]
\item[(b)]{}There exists a pair of matrices $(u,v)$ satisfying the bi--unitary condition, i.e.
\[u=\bigoplus_{(i,k)}u^{(i,k)},\;\;\;\;\;v=\bigoplus_{(j,l)}v^{(j,l)}\]
where the direct summands
\[u^{(i,k)}=\left(
u_{(j,\rho,\sigma)(l,\phi,\psi)}^{(i,k)}\right)_{\stackrel{\mbox{\tiny$(i,j,k,\rho,\sigma)\in{\cal S}$}}{\mbox{\tiny$(i,l,k,\phi,\psi)\in{\cal T}$}}},\]
\[v^{(j,l)}=\left(v_{_{(i,\rho,\phi)(k,\sigma,\psi)}}^{(j,l)}\right)_{\stackrel{\mbox{\tiny$(i,j,k,\rho,\sigma)\in{\cal S}$}}{\mbox{\tiny$(i,l,k,\phi,\psi)\in{\cal T}$}}}\]
are unitary matrices and
\[v_{_{(i,\rho,\phi)(k,\sigma,\psi)}}^{(j,l)}=
\sqrt{\mbox{$\frac{\alpha_{_{i}}\delta_{_{k}}}{\beta_{_{j}}\gamma_{_{l}}}$}}u_{_{(j,\rho,\sigma)(l,\phi,\psi)}}^{(i,k)}\]
Here $\alpha_{_{i}},$ $\beta_{_{j}},$ $\gamma_{_{l}}$ and $\delta_{_{k}}$ are the trace weights on $A,$ $B,$ $C$ resp. $D$ coming from $\mbox{tr}_{_{D}},$ and the indices $i,j,k,l,\rho,\sigma,\phi$ and $\psi$ are as in theorem \ref{fire1thm17}.\end{description}
\end{theorem}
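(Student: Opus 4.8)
The plan is to reduce Theorem~\ref{uendeligversionfire1thm10} to the finite-dimensional result Theorem~\ref{fire1thm10} by a careful inspection of the path-model construction, noting that every step in it is \emph{local} in the sense that it only ever involves finitely many vertices and edges at a time. Concretely, for the implication $(b)\Rightarrow(a)$ I would carry out the path-model construction of Section~1 verbatim, but now with the index sets
\[
{\cal S}=\left\{(i,j,k,\rho,\sigma)\;|\;G_{_{ij}}H_{_{jk}}\neq{}0,\;1\leq{}\rho\leq{}G_{_{ij}},\;1\leq\sigma\leq{}H_{_{jk}}\right\},\qquad
{\cal T}=\left\{(i,l,k,\phi,\psi)\;|\;K_{_{il}}L_{_{lk}}\neq{}0,\;1\leq{}\phi\leq{}K_{_{il}},\;1\leq{}\psi\leq{}L_{_{lk}}\right\}
\]
now countably infinite. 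Because $G,H,K,L$ are adjacency matrices of locally finite graphs, the product $GH=KL$ is well-defined entrywise, each vertex $i$ meets only finitely many $(j,k,\rho,\sigma)$, and each block $u^{(i,k)}$ (resp. $v^{(j,l)}$) is a finite unitary of size $(GH)_{_{ik}}=(KL)_{_{ik}}$ (resp. $(G^{t}K)_{_{jl}}=(HL^{t})_{_{jl}}$). So $u=\bigoplus_{(i,k)}u^{(i,k)}$ and $v=\bigoplus_{(j,l)}v^{(j,l)}$ are genuine unitary operators on the (separable, infinite-dimensional) Hilbert spaces ${\cal H}$ and ${\cal K}$, and one defines $A,B,D\subset B({\cal H})$ and $A_{_1},C_{_1},D_{_1}\subset B({\cal K})$ exactly as before as infinite direct sums of full matrix algebras, using the projections $p_{_i},q_{_j},r_{_k}$ and matrix units $f^{(j)}_{_{\cdot\cdot}},g^{(k)}_{_{\cdot\cdot}}$. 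The trace $\mbox{tr}_{_D}$ is the finite trace given by the weights $\alpha_{_i},\beta_{_j},\gamma_{_l},\delta_{_k}$; finiteness is part of the hypothesis on the trace vectors (Section~6.1 of the excerpt, i.e. $tr(\mathbf 1)<\infty$), and it is needed so that $E_{_A},E_{_B},E_{_C}$ exist as normal trace-preserving conditional expectations.

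The heart of $(b)\Rightarrow(a)$ is then the computation that $\mbox{tr}_{_D}(E_{_A}(f)E_{_A}(h))=\mbox{tr}_{_D}(fh)$ for matrix units $f$ of $B$ and $h$ of $C=U^{*}C_{_1}U$, which by Lemma~\ref{fire1lem11}(3) is equivalent to the commuting square condition. Every identity in the chain of equalities on pp.~\pageref{fire1page1}--\pageref{fire1page2} — formulas (\ref{fire10111}), (\ref{fire10112}), (\ref{fire10113}), (\ref{fire10114}), and the reduction of (\ref{fire10114a}) to ``each $v^{(j,l)}$ is unitary'' — involves only finite sums over the finitely many $k$ (resp. $j$) adjacent to the relevant pair of vertices, so each identity goes through unchanged. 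The only place where passing to infinitely many summands could matter is convergence of the series defining $E_{_A}(d)=\sum_i\frac{1}{\alpha_{_i}}\mbox{tr}_{_D}(dp_{_i})p_{_i}$, but $\mbox{tr}_{_D}$ being finite guarantees this converges in $L^2$, and on matrix units only finitely many terms are nonzero. Hence I would package the conclusion as: Theorem~\ref{fire1thm17} and Remark~\ref{biunitcond} hold verbatim for locally finite countably infinite $G,H,K,L$, because their proofs are block-by-block arguments on finite subgraphs; therefore $(b)\Rightarrow(a)$ follows.

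For $(a)\Rightarrow(b)$ I would mimic the proof of $(a)\Rightarrow(b)$ in Theorem~\ref{fire1thm10}: given a (symmetric) commuting square of infinite-dimensional multi-matrix algebras, first reduce the smallest algebra $A$ to an abelian one by compressing with an abelian projection $e\in A$ of central support $1$ (Lemma~\ref{fire1lem12}; this lemma is purely algebraic and extends to infinite direct sums, since an abelian projection with full central support still exists in each summand and the compression preserves the inclusion matrices and multiplies all trace weights by the same constant $\mbox{tr}_{_D}(e)^{-1}$, leaving the factor $\sqrt{\alpha_{_i}\delta_{_k}/\beta_{_j}\gamma_{_l}}$ unchanged). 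Then represent $D$ so that $D'$ is abelian, and invoke the spatial-uniqueness lemmas: Lemma~\ref{fire1lem13}, Lemma~\ref{fire1lem14} and Corollary~\ref{fire1cor5} rely on Bratteli's classification of AF inclusions by inclusion matrices (\cite{bratteli}) and on the fact that type~I von Neumann algebras with abelian commutant have their automorphisms spatially implemented (\cite{dixmier}); both inputs are valid for countable inclusion matrices. So the given square is spatially isomorphic to the path-model square $U^{*}C_{_1}U\subset D$, $A\subset B$ for some unitary $U$ with the block decomposition $u=\bigoplus_{(i,k)}u^{(i,k)}$, and the commuting square condition forces $v=\bigoplus_{(j,l)}v^{(j,l)}$ to be the isometry/unitary of the bi-unitary condition, again by the local computation above.

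The main obstacle — and the only real content beyond bookkeeping — is verifying that the spatial-uniqueness machinery (Bratteli's theorem and the implementation of isomorphisms of type~I algebras by unitaries) still applies when the multi-matrix algebras are infinite direct sums and the Hilbert spaces are infinite-dimensional. Bratteli's classification is stated for AF algebras and countable Bratteli diagrams, so that is fine; the delicate point is that the reduction by $e$ and the representation with abelian commutant must be arranged simultaneously and compatibly with the fixed center-isomorphisms $\Phi,\Psi$, exactly as on p.~\pageref{fire1page1}. I expect no genuine new difficulty here, only the need to state precisely that ``locally finite, countably infinite'' is enough for every lemma invoked; once that is checked, the theorem falls out of the finite-dimensional proof word for word, and I would write it up as a short ``the proof of Theorem~\ref{fire1thm10} applies verbatim'' argument with the above remarks inserted at the two places (finiteness of $\mbox{tr}_{_D}$; validity of \cite{bratteli} and \cite{dixmier} for countable diagrams) where something must be said.
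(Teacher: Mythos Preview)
Your proposal is correct and follows exactly the paper's own approach: the paper does not give a separate proof of Theorem~\ref{uendeligversionfire1thm10} but simply remarks (in the paragraph preceding the statement) that the arguments on pp.~\pageref{fire1page1}--\pageref{fire1page2} proving Theorem~\ref{fire1thm17} use only local properties of the Bratteli diagrams and therefore go through verbatim for locally finite, countably infinite graphs, with the blocks $u^{(i,k)}$ and $v^{(j,l)}$ remaining finite-dimensional unitaries. Your write-up is in fact more detailed than the paper's, spelling out the places (finiteness of $\mbox{tr}_{_D}$, applicability of \cite{bratteli} and \cite{dixmier} in the countable setting) where one should pause, but the strategy is identical.
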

We can also conclude the statements of  proposition \ref{fire1prop17} for infinite  dimensional multi-matrix algebras, since also this proof is  only concerned with local properties of the involved Bratteli diagrams.
\begin{prop}
\label{uendeligfire1prop17}
If
\[(A\subset_{G}B\subset_{H}D,\;\;A\subset_{K}C\subset_{L}D,\;\;\mbox{tr}_{_{D}})\]
is a symmetric commuting square of infinite  dimensional multi-matrix algebras, such that the Bratteli diagrams $\Gamma_{_{G}},$  $\Gamma_{_{H}},$  $\Gamma_{_{K}}$ and $\Gamma_{_{L}}$ are connected, locally finite, countably infinite graphs, then
\begin{description}
\item[(I)]{}$\|K\|=\|H\|.$ Moreover $\mbox{tr}_{_{D}}$ is the Markov trace of the embedding $C\subset{}D,$ and $\mbox{tr}_{_{D}}|_{_{B}}$ is  the Markov trace of the embedding $A\subset{}B.$
\item[(II)]{}$\|G\|=\|L\|.$ Moreover $\mbox{tr}_{_{D}}$ is the Markov trace of the embedding $B\subset{}D,$ and $\mbox{tr}_{_{D}}|_{_{C}}$ is  the Markov trace of the embedding $A\subset{}C.$
\end{description}
\end{prop}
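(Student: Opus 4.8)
The plan is to reduce Proposition~\ref{uendeligfire1prop17} to the arguments already given in the finite-dimensional case (Proposition~\ref{fire1prop17}), observing that every estimate used there is local on the four Bratteli diagrams and hence survives the passage to countably infinite, locally finite graphs. First I would invoke Theorem~\ref{uendeligversionfire1thm10}: since we have a symmetric commuting square of infinite dimensional multi-matrix algebras with inclusion matrices $G,H,K,L$ satisfying $GH=KL$ and $G^{t}K=HL^{t}$, there exist a pair $(u,v)$ satisfying the bi-unitary condition, with $u=\bigoplus_{(i,k)}u^{(i,k)}$ and $v=\bigoplus_{(j,l)}v^{(j,l)}$, each direct summand a \emph{finite}-dimensional unitary matrix (this finiteness is exactly what local finiteness of the graphs buys us). Then, verbatim as in the proof of Proposition~\ref{fire1prop17}, I would set
\[N(i,j,k,l,\rho,\sigma,\phi,\psi)=\alpha_{_{i}}\delta_{_{k}}\left|u_{(j,\rho,\sigma)(l,\phi,\psi)}^{(i,k)}\right|^{2}=\beta_{_{j}}\gamma_{_{l}}\left|v_{_{(i,\rho,\phi)(k,\sigma,\psi)}}^{(j,l)}\right|^{2}\]
when $(i,j,k,\rho,\sigma)\in{\cal S}$ and $(i,l,k,\phi,\psi)\in{\cal T}$, and zero otherwise.

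The four identities (\ref{fire1N1})--(\ref{fire1N4}) are obtained by summing, for a fixed row or column, the moduli-squared of the entries of $u$ or of $v$ along that row/column; each such sum is finite because $u^{(i,k)}$ and $v^{(j,l)}$ are finite matrices, and the sums equal $1$ by unitarity, so the derivation of (\ref{fire1N1})--(\ref{fire1N4}) goes through unchanged. Next, assuming an edge $l-k$ (i.e. $L_{_{lk}}\neq0$), I would combine (\ref{fire1N1}) and (\ref{fire1N3}) exactly as in (\ref{fire1N*1}) to get
\[\frac{(H^{t}\beta)_{_{k}}}{\delta_{_{k}}}=\frac{(K^{t}\alpha)_{_{l}}}{\gamma_{_{l}}}\quad\text{whenever }L_{_{lk}}\neq0;\]
here $(H^{t}\beta)_{_{k}}$ and $(K^{t}\alpha)_{_{l}}$ are finite sums since $\Gamma_{_{H}}$ and $\Gamma_{_{K}}$ are locally finite. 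Connectedness of $\Gamma_{_{L}}$ then propagates the common value of this ratio to all $l,k$, producing $\mu>0$ with $H^{t}\beta=\mu\delta$ and $K^{t}\alpha=\mu\gamma$; using $\beta=H\delta$ and $\alpha=K\gamma$ gives $H^{t}H\delta=\mu\delta$ and $K^{t}K\gamma=\mu\gamma$. Since $\delta,\gamma$ are strictly positive and $\|\delta\|_1<\infty$, $\|\gamma\|_1<\infty$ (finiteness of the traces) and $H^{t}H$, $K^{t}K$ are irreducible countable non-negative matrices with finite powers, Corollary~\ref{Scor} identifies $\delta$ and $\gamma$ as (multiples of) the Perron--Frobenius eigenvectors of $H^{t}H$ and $K^{t}K$, both for the eigenvalue $\mu=\|H\|^{2}=\|K\|^{2}$; hence $\|H\|=\|K\|$. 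The Markov-trace assertions of (I) then follow from (\cite{HGJ} theorem 2.1.3(i)) together with the eigenvector relations $H^{t}\beta=\mu\delta$, $K^{t}\alpha=\mu\gamma$ and $H^{t}H\delta=\mu\delta$, $K^{t}K\gamma=\mu\gamma$, exactly as in the finite case. Part (II) is proved identically by considering (\ref{fire1N2}) and (\ref{fire1N4}) for fixed $i,j$ with $G_{_{ij}}\neq0$ and using connectedness of $\Gamma_{_{G}}$.

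The one genuinely new ingredient — and the step I expect to be the main obstacle — is the appeal to Corollary~\ref{Scor}: in the finite-dimensional setting the eigenvector equations $H^{t}H\delta=\mu\delta$ with $\delta>0$ automatically force $\mu=\|H\|^{2}$ by finite Perron--Frobenius theory, but in the infinite case a positive eigenvector need not correspond to the spectral radius unless it is $\ell^{2}$ (equivalently here, summable). So the care needed is: (i) to check that the matrices $H^{t}H$ and $K^{t}K$ satisfy the hypotheses of Theorem 6.4 of \cite{S} (irreducibility — which holds because $\Gamma_{_{H}}$, $\Gamma_{_{K}}$ are connected — and element-wise finiteness of all powers — which holds by local finiteness); and (ii) to record that $\delta$ and $\gamma$ are summable, which is immediate from the standing assumption that $\mathrm{tr}_{_{D}}$ is a \emph{finite} trace (so $\sum_k a_k\delta_k<\infty$ with $a_k\geq1$ forces $\|\delta\|_1<\infty$, and similarly for $\gamma$). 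With these two points in place, Corollary~\ref{Scor} applies and the rest of the argument is a word-for-word transcription of the proof of Proposition~\ref{fire1prop17}. I would therefore write the proof as: ``This proof is only concerned with local properties of the involved Bratteli diagrams, so the argument of Proposition~\ref{fire1prop17} applies verbatim, using Theorem~\ref{uendeligversionfire1thm10} in place of Theorem~\ref{fire1thm10}, and Corollary~\ref{Scor} to conclude that $\delta$ (resp. $\gamma$) is the Perron--Frobenius eigenvector of $H^{t}H$ (resp. $K^{t}K$).''
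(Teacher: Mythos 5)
Your proposal is correct and follows essentially the same route as the paper, which simply observes that the proof of Proposition \ref{fire1prop17} uses only local properties of the Bratteli diagrams (all relevant sums being finite by local finiteness) and, in the surrounding discussion, appeals to \cite{S} Theorem 6.4 for the Perron--Frobenius identification. Your explicit treatment of that one non-local step --- summability of $\delta$ and $\gamma$ from finiteness of $\mbox{tr}_{_{D}}$, irreducibility and finiteness of powers of $H^{t}H$ and $K^{t}K$, and then Corollary \ref{Scor} to get $\mu=\|H\|^{2}=\|K\|^{2}$ --- is exactly the point the paper leaves implicit, so no gap remains.
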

Hence, if we, via the path model described in chapter \ref{chapter1}, construct  a square of infinite dimensional multi-matrix algebras
\[\begin{array}{lcl}
      C & \subset_{L} & D \\
       \cup_{K} &\,&\cup_{H}\\
      A & \subset_{G} & B  \end{array}  \]
with 
\[GH=KL\;\;\mbox{ and }\;\;G^{t}K=HL^{t},\] 
where the inclusions are defined by locally finite, countably infinite, connected Bratteli diagrams, and have a trace defined on $D$ by a summable vector, then, by proposition \ref{uendeligfire1prop17} and \cite{S} theorem 6.4,  this trace is the only trace for which there can exist a  symmetric commuting square of infinite  dimensional multi-matrix algebras, with the given  Bratteli diagrams. To show the existence of a symmetric commuting square, with the the above inclusions, we need to show the existence of $(u,v),$ from theorem \ref{uendeligversionfire1thm10}, satisfying the bi-unitary condition \ref{biunitcond}.

\newpage{}

\chapter{Remarks on Some Infinite Graphs}

\setcounter{equation}{0}
%sidst rettet 11/12
We will now be concerned with the properties of some well studied infinite graphs.

\section{The Graphs Determined by Shearer have Summable Perron-Frobenius Vectors}
\setcounter{equation}{0}
\label{l1ofshearer}
We will show that the eigenvector, corresponding to the largest eigenvalue of the infinite graphs constructed in \cite{shearer} is summable.

If $\Gamma$ is a graph we let $\lambda(\Gamma)$ denote the largest eigenvalue of the adjacency matrix, $\Delta_{_{\Gamma}},$ of $\Gamma.$ 
In \cite{shearer} it is proved that for any real number 
$\lambda>\sqrt{2+\sqrt{5}}$ there exists a sequence of graphs 
$\Gamma_{_{1}}^{\lambda},\Gamma_{_{2}}^{\lambda},\ldots$ such that
\[\lim_{n\rightarrow\infty}\lambda(\Gamma_{_{n}}^{\lambda})=\lambda.\]

Another way to view this construction is, that for $\lambda>\sqrt{2+\sqrt{5}}$ there is a countably infinite graph, $\Gamma_{_{\lambda}},$ with 
$\lambda(\Gamma_{_{\lambda}})=\lambda.$ The description of $\Gamma_{_{\lambda}}$ is as follows.

For $\lambda>\sqrt{2+\sqrt{5}}$ we choose $x>0$ such that $\lambda=e^{x}+e^{-x},$ and consider the graph consisting of the vertices $P_{_{0}},P_{_{1}},\ldots$ and edges $e_{_{0}},e_{_{1}},\ldots$ where $e_{_{i}}$ connects $P_{_{i}}$ and $P_{_{i+1}},$ i.e. the graph
\setlength{\unitlength}{1cm}
\begin{center}
\begin{picture}(9,1.5)
\put(2,0,3){$\Gamma$}
\put(3,0.5){\line(1,0){5}}
\put(8.3,0.5){$\ldots$}
\multiput(3,0.5)(1,0){6}{\circle*{0.15}}
\put(2.9,0.1){$P_{_{0}}$}
\put(3.9,0.1){$P_{_{1}}$}
\put(4.9,0.1){$P_{_{2}}$}
\put(5.9,0.1){$P_{_{3}}$}
\put(6.9,0.1){$P_{_{4}}$}
\put(7.9,0.1){$P_{_{5}}$}
\put(3.4,0.75){$e_{_{0}}$}
\put(4.4,0.75){$e_{_{1}}$}
\put(5.4,0.75){$e_{_{2}}$}
\put(6.4,0.75){$e_{_{3}}$}
\put(7.4,0.75){$e_{_{4}}$}
\end{picture}
\end{center}

The numbers $n_{_{k}},r_{_{k}},a_{_{k}},k\in{\Bbb N}\cup\{0\}$ are defined inductively by $n_{_{0}}=0,r_{_{1}}=\lambda,a_{_{0}} =1$
and
\[
\begin{array}{clcll}
\mbox{(a)} & n_{_{k}} & = & 
\max\{j\in{\Bbb Z}|\lambda-\frac{1}{r_{_{k}}}-\frac{j}{\lambda}\geq{}e^{-x}\}, & k\geq{}1\\
\mbox{(b)} & r_{_{k+1}} & = &\lambda-\frac{1}{r_{_{k}}}-\frac{n_{_{k}}}{\lambda}, & k\geq{}1\\
\mbox{(c)} & a_{_{k}} & = & r_{_{k}}a_{_{k-1}}, & k\geq{}1
\end{array}
\]
\begin{prop}
\label{prop1s}
With the above notation the graph $\Gamma_{_{\lambda}}$ given by the graph $\Gamma$ with $n_{_{k}}$ leaves, $Q_{_{k,1}},\ldots,Q_{_{k,n_{_{k}}}},$ added at the vertex $P_{_{k}},$ has norm of its adjacency matrix equal to  $\lambda$. Furthermore the corresponding Perron-Frobenius vector $\xi$  is given by the coordinates 
\[\begin{array}{lcll}
\xi(P_{_{k}}) & = & a_{_{k}} & \\
\xi(Q_{_{k,j}}) & =  & \frac{a_{_{k}}}{\lambda} & \mbox{if } n_{_{k}}\neq{}0
\end{array} \] 
\end{prop}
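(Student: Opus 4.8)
\textbf{Proof plan for Proposition \ref{prop1s}.}

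The strategy is to verify directly that the proposed vector $\xi$ is a positive eigenvector of $\Delta_{_{\Gamma_{_{\lambda}}}}$ with eigenvalue $\lambda$, and then invoke Corollary \ref{Scor} to conclude that $\xi$ is in fact the Perron--Frobenius vector (equivalently, that $\lambda$ is the largest eigenvalue). The key point is that $\Gamma_{_{\lambda}}$ is locally finite and connected --- local finiteness holds because each $n_{_{k}}$ is finite by definition (a), and $P_{_{k}}$ has valency $2+n_{_{k}}$ (or $1+n_{_{k}}$ at $P_{_{0}}$) --- so Corollary \ref{Scor} applies: a positive $\ell^2$-eigenvector must be proportional to the top eigenvector. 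Since I will show below that $\xi$ is summable, hence in $\ell^2$, this will give both the eigenvalue assertion and the formula for $\xi$ simultaneously.

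First I would check the eigenvector equation $\Delta_{_{\Gamma_{_{\lambda}}}}\xi = \lambda\xi$ vertex by vertex. At a leaf $Q_{_{k,j}}$ the only neighbour is $P_{_{k}}$, so the equation reads $\xi(P_{_{k}}) = \lambda\,\xi(Q_{_{k,j}})$, which is exactly $a_{_{k}} = \lambda\cdot\frac{a_{_{k}}}{\lambda}$. At an interior vertex $P_{_{k}}$ with $k\geq 1$ the neighbours are $P_{_{k-1}}$, $P_{_{k+1}}$ and the $n_{_{k}}$ leaves $Q_{_{k,j}}$, so I must verify
\[
a_{_{k-1}} + a_{_{k+1}} + n_{_{k}}\frac{a_{_{k}}}{\lambda} = \lambda\, a_{_{k}}.
\]
Dividing by $a_{_{k}}$ and using (c) in the forms $a_{_{k-1}}/a_{_{k}} = 1/r_{_{k}}$ and $a_{_{k+1}}/a_{_{k}} = r_{_{k+1}}$, this becomes $\tfrac{1}{r_{_{k}}} + r_{_{k+1}} + \tfrac{n_{_{k}}}{\lambda} = \lambda$, which is precisely the defining relation (b). The vertex $P_{_{0}}$ needs a small separate check: its neighbours are $P_{_{1}}$ and (if $n_{_{0}}\neq 0$, but $n_{_{0}}=0$ since $n_{_{0}}$ is given as $0$) no leaves, so the equation is $a_{_{1}} = \lambda a_{_{0}}$, i.e. $r_{_{1}}a_{_{0}} = \lambda a_{_{0}}$, true because $r_{_{1}}=\lambda$.

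The main obstacle is controlling the growth of $a_{_{k}}$ so as to prove summability of $\xi$; this is where the hypothesis $\lambda > \sqrt{2+\sqrt 5}$ (equivalently the definition (a) of $n_{_{k}}$, forcing $r_{_{k+1}}\geq e^{-x}$... actually forcing $r_{_{k+1}}$ to stay in a bounded range) must be used. The plan is to show that $r_{_{k}}\leq e^{-x} + \text{something}$, more precisely that for $k\geq 2$ one has $r_{_{k}} < 1$, so that $a_{_{k}} = a_{_{k-1}}\prod$ decays geometrically; this follows from (a), which by construction guarantees $\lambda - \tfrac{1}{r_{_{k}}} - \tfrac{n_{_{k}}}{\lambda}\geq e^{-x}$ while $\lambda - \tfrac{1}{r_{_{k}}} - \tfrac{n_{_{k}}+1}{\lambda} < e^{-x}$, i.e. $r_{_{k+1}}\in[e^{-x}, e^{-x}+\tfrac{1}{\lambda})$. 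Since $e^{-x} < 1$ and $\tfrac{1}{\lambda}$ is small, for $k$ large $r_{_{k}}$ is bounded away from $1$ from below by a fixed constant $\rho<1$; then $a_{_{k}}\leq C\rho^{k}$ and the leaves contribute $n_{_{k}}\tfrac{a_{_{k}}}{\lambda}$, so I need $n_{_{k}}$ to grow subexponentially --- but $n_{_{k}}\leq \lambda^2$ is bounded (from (a), $n_{_{k}}\leq \lambda(\lambda - \tfrac{1}{r_{_{k}}} - e^{-x}) \leq \lambda^2$), so the total sum $\sum_k (1 + n_{_{k}}/\lambda)a_{_{k}}$ converges. Once summability is in hand, $\xi\in\ell^1\subset\ell^2$ (the vertex set is countable and the coordinates are bounded, in fact summable), and Corollary \ref{Scor} finishes the argument: $\xi$ is proportional to the unique positive eigenvector for the largest eigenvalue, so $\|\Delta_{_{\Gamma_{_{\lambda}}}}\| = \lambda$ and $\xi$ is (up to scaling) the Perron--Frobenius vector, as claimed.
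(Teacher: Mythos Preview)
Your eigenvector verification and the appeal to Corollary \ref{Scor} are correct and match exactly what the paper does (the paper simply cites \cite{shearer} for the eigenvector equation rather than checking it vertex by vertex, and likewise defers summability to a forward reference). The structure of your argument is right.

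The gap is in your summability sketch. You claim that the bound $r_{_{k+1}}\in[e^{-x},\,e^{-x}+\tfrac{1}{\lambda})$ forces $r_{_{k}}<\rho<1$ for some fixed $\rho$, but this is false for $\lambda$ close to $\sqrt{2+\sqrt{5}}$: at that endpoint $e^{-x}+\tfrac{1}{\lambda}=e^{x}>1$, and for $\lambda$ slightly above it one still has $e^{-x}+\tfrac{1}{\lambda}>1$ (e.g.\ at $\lambda=2.1$ the upper bound is about $1.2$). So individual ratios $r_{_{k}}$ can exceed $1$, and $a_{_{k}}$ need not be monotonically decreasing; your geometric decay argument breaks down.

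The paper's actual summability proof (Lemma \ref{lemma2s} and Proposition \ref{prop2s}) gets around this with a pairing trick: between two consecutive indices $k<l$ with $n_{_{k}},n_{_{l}}\neq 0$ and $n_{_{j}}=0$ in between, one shows $r_{_{k+1}}r_{_{l}}\leq 1-\epsilon(\lambda)$, then $r_{_{k+2}}r_{_{l-1}}\leq r_{_{k+1}}r_{_{l}}$, and so on, pairing from both ends. This uses that the intermediate sequence $r_{_{k+1}},\ldots,r_{_{l}}$ is nondecreasing (Lemma \ref{lemma3s}) together with the concavity of $t\mapsto t(\lambda-t)$. The product $r_{_{k+1}}\cdots r_{_{l}}$ is then at most $(1-\epsilon(\lambda))^{(l-k)/2}$, and a convexity argument for $\log a_{_{n}}$ extends this to all intermediate $a_{_{n}}$. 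The hypothesis $\lambda>\sqrt{2+\sqrt{5}}$ enters precisely to make $\epsilon(\lambda)>0$.
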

\begin{proof}
From \cite{shearer} it follows that the vector $\xi,$ listed above, is an eigenvector for the adjacency matrix $\Delta_{_{\Gamma_{_{\lambda}}}}$ of $\Gamma_{_{\lambda}}$ corresponding to the eigenvalue $\lambda.$ That the vector is also  a Perron-Frobenius vector follows since 1) $\Delta_{_{\Gamma_{_{\lambda}}}}$ is symmetric and irreducible, 2) $\xi(v)>0$ for all vertices $v,$ 3) the result Thm 6.4 from \cite{S} and 4) our proof that the above vector is summable.
\end{proof} 
To sum up: For $\lambda>\sqrt{2+\sqrt{5}}=e^{x}+e^{-x}$ we are looking at the graph 

\setlength{\unitlength}{1cm}
\begin{center}
\begin{picture}(6,2)
\put(0.1,0,3){$\Gamma_{_{\lambda}}$}
\put(1,0.5){\line(1,0){2}}
\put(4,0.5){\line(1,0){2}}
\put(3.25,0.5){$\ldots$}
\put(6.25,0.5){$\ldots$}
\multiput(1,0.5)(1,0){2}{\circle*{0.15}}
\put(5,0.5){\circle*{0.15}}
\put(0.9,0.1){$P_{_{0}}$}
\put(1.9,0.1){$P_{_{1}}$}
\put(4.9,0.1){$P_{_{k}}$}
\put(2,0.5){\line(-1,1){0.5}}
\put(2,0.5){\line(1,1){0.5}}
\put(1.8,1){$\cdots$}
\put(1.8,1.3){$n_{_{1}}$}
\put(0.7,1){$Q_{_{1,1}}$}
\put(1.5,1){\circle*{0.15}}
\put(2.5,1){\circle*{0.15}}
\put(2.7,1){$Q_{_{1,n_{_{1}}}}$}
\put(5,0.5){\line(-1,1){0.5}}
\put(5,0.5){\line(1,1){0.5}}
\put(4.8,1){$\cdots$}
\put(4.8,1.3){$n_{_{k}}$}
\put(3.7,1){$Q_{_{k,1}}$}
\put(4.5,1){\circle*{0.15}}
\put(5.5,1){\circle*{0.15}}
\put(5.7,1){$Q_{_{k,n_{_{k}}}}$}
\end{picture}
\end{center}

\begin{lemma}
\label{lemma0s}
In the above notation $n_{_{k}}\leq{}[\lambda^{^{2}}]-2 = n_{_{\lambda}},$ where  $[\lambda^{^{2}}]$ denotes the integer part of $\lambda^{^{2}}.$
\end{lemma}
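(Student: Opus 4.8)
The claim is a purely arithmetic bound on the number $n_{_{k}}$ of leaves attached at $P_{_{k}}$ in Shearer's graph $\Gamma_{_{\lambda}}$. The definition (a) of $n_{_{k}}$ says
\[
n_{_{k}}=\max\Bigl\{\,j\in{\Bbb Z}\ \Bigl|\ \lambda-\tfrac{1}{r_{_{k}}}-\tfrac{j}{\lambda}\geq e^{-x}\,\Bigr\}.
\]
In particular $j=n_{_{k}}$ itself satisfies the inequality, so
\[
\lambda-\frac{1}{r_{_{k}}}-\frac{n_{_{k}}}{\lambda}\geq e^{-x},
\]
which rearranges to
\[
n_{_{k}}\leq\lambda\Bigl(\lambda-\frac{1}{r_{_{k}}}-e^{-x}\Bigr)=\lambda^{2}-\frac{\lambda}{r_{_{k}}}-\lambda e^{-x}.
\]
So the whole proof reduces to showing $\dfrac{\lambda}{r_{_{k}}}+\lambda e^{-x}\geq 2$, because then $n_{_{k}}\leq\lambda^{2}-2$, and since $n_{_{k}}$ is an integer this gives $n_{_{k}}\leq[\lambda^{2}]-2$ (using that $\lambda^2-2 < [\lambda^2]-1$ forces $n_k \le \lfloor \lambda^2\rfloor - 2$; more carefully, $n_k \le \lambda^2 - 2$ and $n_k\in{\Bbb Z}$ gives $n_k \le \lfloor \lambda^2 - 2\rfloor = \lfloor\lambda^2\rfloor - 2$).

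\textbf{Key steps.} First I would record the two ingredients just above: the defining inequality for $n_{_{k}}$ and its rearrangement into an upper bound for $n_{_{k}}$. Second, I would establish the estimate $r_{_{k}}\leq\lambda$ for all $k\geq 1$. This should follow by induction from the recursion: $r_{_{1}}=\lambda$, and from (b), $r_{_{k+1}}=\lambda-\frac{1}{r_{_{k}}}-\frac{n_{_{k}}}{\lambda}\leq\lambda$ since $\frac{1}{r_{_{k}}}>0$ and $n_{_{k}}\geq 0$ (one should check $r_k > 0$ along the way, which also follows from the definition of $n_k$, since the max is taken over $j$ with $\lambda - \frac1{r_k} - \frac j\lambda \ge e^{-x} > 0$, forcing $\frac1{r_k} < \lambda$, i.e. $r_k > \frac1\lambda > 0$). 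From $r_{_{k}}\leq\lambda$ we get $\frac{\lambda}{r_{_{k}}}\geq 1$. Third, I note $\lambda e^{-x}\geq 1$: since $\lambda=e^{x}+e^{-x}$, we have $\lambda e^{-x}=1+e^{-2x}\geq 1$. Combining, $\frac{\lambda}{r_{_{k}}}+\lambda e^{-x}\geq 1+1=2$, which is exactly what is needed. Feeding this back into the bound for $n_{_{k}}$ completes the argument.

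\textbf{Main obstacle.} There is essentially no deep obstacle here; the only thing that needs care is making sure the recursion is well-defined and $r_{_{k}}>0$, $r_{_{k}}\leq\lambda$ for every $k$ — i.e. that the inductive bookkeeping closes properly, including the base case $k=1$ where $r_{_{1}}=\lambda$ gives $\frac{\lambda}{r_{_{1}}}=1$ on the nose. A secondary point worth stating cleanly is the passage from the real inequality $n_{_{k}}\leq\lambda^{2}-2$ to the integer inequality $n_{_{k}}\leq[\lambda^{2}]-2$; this is immediate since $[\lambda^2] - 2 \le \lambda^2 - 2 < [\lambda^2] - 1$ shows $\lfloor \lambda^2 - 2\rfloor = [\lambda^2]-2$, and $n_k$ being an integer $\le \lambda^2 - 2$ must be $\le \lfloor\lambda^2-2\rfloor$. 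I would present the proof in the order: rearrange the defining inequality, prove $0 < r_{_{k}}\leq\lambda$ by induction, observe $\lambda e^{-x}=1+e^{-2x}\ge 1$, conclude.
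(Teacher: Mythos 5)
Your proof is correct in substance, but it takes a genuinely different route from the paper. The paper's proof is a one-line graph-theoretic argument: the vertex $P_{_{k}}$, together with its $n_{_{k}}$ leaves and its two neighbours $P_{_{k-1}},P_{_{k+1}}$ on the ray, spans a star with $n_{_{k}}+2$ rays inside $\Gamma_{_{\lambda}}$; since a star with $n$ rays has norm $\sqrt{n}$ and the norm of a subgraph is dominated by $\|\Gamma_{_{\lambda}}\|=\lambda$, one gets $n_{_{k}}+2\leq\lambda^{2}$, hence $n_{_{k}}\leq[\lambda^{2}]-2$ by integrality. You instead argue arithmetically from Shearer's recursion: rearranging the defining inequality for $n_{_{k}}$, proving $e^{-x}\leq r_{_{k}}\leq\lambda$ by induction, and using $\lambda e^{-x}=1+e^{-2x}\geq 1$ to get $n_{_{k}}\leq\lambda^{2}-2$. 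The paper's route is shorter and makes the degree obstruction transparent, at the cost of invoking monotonicity of the spectral radius under passing to subgraphs (which for the infinite graph deserves a word, e.g.\ via \cite{S}); yours is self-contained within definitions (a)--(c), and as a by-product yields $e^{-x}\leq r_{_{k}}\leq\lambda$, anticipating part of what the paper proves separately in lemma \ref{lemma1s} (which gives the sharper $r_{_{k}}\leq\frac{1}{\lambda}+e^{-x}$ for $k\geq 2$).

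One small repair is needed in your bookkeeping: the parenthetical justification of $r_{_{k}}>0$ does not quite work, since from $\frac{1}{r_{_{k}}}<\lambda$ alone you cannot conclude $r_{_{k}}>\frac{1}{\lambda}$ without already knowing the sign of $r_{_{k}}$. The clean statement is: $r_{_{1}}=\lambda>0$, and for $k\geq 1$ the choice $j=n_{_{k}}$ in (a) combined with (b) gives $r_{_{k+1}}\geq e^{-x}>0$ directly; this in turn gives $n_{_{k+1}}\geq 0$ (because $\frac{1}{r_{_{k+1}}}\leq e^{x}$, so $\lambda-\frac{1}{r_{_{k+1}}}\geq e^{-x}$ and $j=0$ is admissible), which is exactly what your inductive step $r_{_{k+2}}\leq\lambda$ uses -- alternatively $n_{_{k}}\geq 0$ is built into the construction, since it counts leaves. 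With that adjustment your induction closes and the argument is complete.
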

\begin{proof}
Let star($n$) denote the graph with vertices $0,1,\ldots,n$ and edges $e_{_{1}},\ldots,e_{_{n}},$ where $e_{_{i}}$ joins the vertices $0$ and $i$. Then
\[\lambda(\mbox{star}(n)) = \sqrt{n}.\]
Hence $\Gamma_{_{\lambda}}$ can only contain star($n$) as a subgraph if $[\lambda^{^{2}}]\geq{}n,$ and the maximal number of leaves that can be added to a $P_{_{k}}$ is $[\lambda^{^{2}}]-2.$
\end{proof}
\begin{lemma}\hfill{}
\label{lemma1s}
\begin{enumerate}
\item{}For $k\geq{}2$ we have 
$e^{-x}\leq{}r_{_{k}}\leq{}\frac{1}{\lambda} + e^{-x}.$
\item{}For $\lambda>\sqrt{2+\sqrt{5}}$ we have $\frac{1}{\lambda} + e^{-x}<e^{x},$ and $\lambda=\sqrt{2+\sqrt{5}}$ implies $\frac{1}{\lambda} + e^{-x}=e^{x}.$
\end{enumerate}
\end{lemma}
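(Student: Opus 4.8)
The statement to prove is Lemma~\ref{lemma1s}, which has two parts. Part~(1) is an inductive estimate on the sequence $(r_{_{k}})$ defined by $r_{_{1}}=\lambda$ and $r_{_{k+1}}=\lambda-\frac{1}{r_{_{k}}}-\frac{n_{_{k}}}{\lambda}$, where $n_{_{k}}$ is chosen by rule~(a) to be the largest integer keeping $\lambda-\frac{1}{r_{_{k}}}-\frac{n_{_{k}}}{\lambda}\geq e^{-x}$. Part~(2) is a self-contained inequality about $\lambda$ and $x$ with $\lambda=e^{x}+e^{-x}$. I would do part~(2) first, since it is purely elementary and is used implicitly in part~(1) (it guarantees the interval $[e^{-x},\tfrac1\lambda+e^{-x}]$ is nonempty and sits below $e^{x}$, which is what makes rule~(a) pick at least something consistent).

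\textbf{Part (2).} Set $f(\lambda)=e^{x}-e^{-x}-\frac{1}{\lambda}$ where $\lambda=e^{x}+e^{-x}$, $x>0$; equivalently work directly in $\lambda$: we want $\frac1\lambda+e^{-x}<e^{x}$, i.e. $\frac1\lambda<e^{x}-e^{-x}=\sqrt{\lambda^{2}-4}$ (using $(e^{x}-e^{-x})^{2}=(e^{x}+e^{-x})^{2}-4=\lambda^{2}-4$). Squaring (both sides positive for $\lambda>2$), this is $\frac{1}{\lambda^{2}}<\lambda^{2}-4$, i.e. $\lambda^{4}-4\lambda^{2}-1>0$. The quadratic in $\lambda^{2}$ has positive root $\lambda^{2}=2+\sqrt{5}$, so $\lambda^{4}-4\lambda^{2}-1>0 \iff \lambda^{2}>2+\sqrt{5} \iff \lambda>\sqrt{2+\sqrt5}$, with equality in the last displayed inequality exactly when $\lambda=\sqrt{2+\sqrt5}$. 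That settles (2); I just need to note $\sqrt{2+\sqrt5}>2$ so the squaring step is reversible.

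\textbf{Part (1).} I would prove $e^{-x}\leq r_{_{k}}\leq \frac1\lambda+e^{-x}$ for all $k\geq 2$ by showing the implication ``if $r_{_{k}}>0$ then $e^{-x}\leq r_{_{k+1}}\leq \frac1\lambda+e^{-x}$.'' The lower bound $r_{_{k+1}}\geq e^{-x}$ is immediate from the defining rule~(a): $n_{_{k}}$ is chosen so that $\lambda-\frac1{r_{_{k}}}-\frac{n_{_{k}}}{\lambda}\geq e^{-x}$, and $r_{_{k+1}}$ is exactly that quantity. For the upper bound: by maximality of $n_{_{k}}$, replacing $n_{_{k}}$ by $n_{_{k}}+1$ violates the constraint, i.e. $\lambda-\frac1{r_{_{k}}}-\frac{n_{_{k}}+1}{\lambda}<e^{-x}$, which rearranges to $r_{_{k+1}}=\lambda-\frac1{r_{_{k}}}-\frac{n_{_{k}}}{\lambda}<e^{-x}+\frac1\lambda$. (One should check $n_{_{k}}+1$ is actually an admissible candidate index, i.e. a nonnegative integer — it is, since $n_{_{k}}\geq 0$; the maximum in (a) is over $j\in\mathbb Z$, and by Lemma~\ref{lemma0s} the set is bounded, and by part~(2) it is nonempty, so $n_{_{k}}$ is well-defined and $n_{_{k}}\geq 0$, hence $n_{_{k}}+1\geq 1$ and the strict inequality above is valid.) This gives both bounds for every $k\geq 2$ at one stroke, since the argument only needs $r_{_{k}}>0$, and $r_{_{1}}=\lambda>0$, while inductively $r_{_{k}}\geq e^{-x}>0$.

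\textbf{Main obstacle.} There is no deep obstacle; the only subtlety is the bookkeeping that rule~(a) genuinely determines $n_{_{k}}$ (nonempty, bounded, integer-valued set) so that both the ``$n_{_{k}}$ admissible'' and ``$n_{_{k}}+1$ not admissible'' facts hold — and for that one must invoke part~(2) (nonemptiness: the value $e^{-x}$ lies in the achievable window because $\frac1\lambda+e^{-x}<e^{x}\le\lambda-\frac1{r_{_{k}}}$ once $r_{_{k}}$ is controlled) together with Lemma~\ref{lemma0s} (boundedness). So the clean order is: prove (2); then prove that $n_{_{k}}$ is a well-defined nonnegative integer; then run the one-line induction in (1). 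I expect the whole thing to be under a page.
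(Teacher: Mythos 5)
Your proof is correct and follows essentially the same route as the paper: part (1) is exactly the paper's argument (the lower bound from the defining constraint for $n_{_{k}}$, the upper bound from the failure of the constraint at $n_{_{k}}+1$, combined via rule (b)), and part (2) reduces, as in the paper, to the threshold equation $\frac{1}{\lambda}+e^{-x}=e^{x}$ with critical value $\lambda^{2}=2+\sqrt{5}$. Your extra remarks on the well-definedness of $n_{_{k}}$ are harmless additional care and do not change the argument.
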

\begin{proof}

\noindent{}1. From the property (a) we have
\[\lambda-\frac{1}{r_{_{k}}}-\frac{n_{_{k}}}{\lambda}\geq{}e^{-x}
\mbox{ and }
\lambda-\frac{1}{r_{_{k}}}-\frac{n_{_{k}}+1}{\lambda}<e^{-x}.\]
So by property (b) we get
\[e^{-x}\leq{}r_{_{k+1}}\leq{}\frac{1}{\lambda} + e^{-x}.\]

\noindent{}2. The solution to $\frac{1}{\lambda}+e^{-x}=e^{x}$ is $e^{x}=\sqrt{\frac{1}{2}(1+\sqrt{5})},$ corresponding to
\[\lambda^{^{2}}=e^{2x}+2+e^{-2x}=2+\sqrt{5}.\]
This now implies the assertion.
\end{proof}
\begin{lemma}
\label{lemma3s}
On the interval $[e^{-x},e^{x}]$ the iteration $t_{_{k+1}}=\lambda-\frac{1}{t_{_{k}}}$ is non-decreasing with fix points  $e^{-x}$ and $e^{x}.$
\end{lemma}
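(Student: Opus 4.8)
The statement to prove is: on the interval $[e^{-x},e^{x}]$ the map $f(t)=\lambda-\frac{1}{t}$ (with $\lambda=e^{x}+e^{-x}$) is non-decreasing and has fixed points exactly $e^{-x}$ and $e^{x}$. The plan is to handle these two assertions separately, since both are essentially one-variable calculus facts.

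First, the monotonicity. On $[e^{-x},e^{x}]$ we have $t>0$, so $f'(t)=\frac{1}{t^{2}}>0$; hence $f$ is strictly increasing wherever it is defined and positive, and in particular on the whole interval $[e^{-x},e^{x}]$. This is immediate and needs no real work; I would just remark that $f$ maps $[e^{-x},e^{x}]$ into itself (which follows from the fixed-point computation below together with monotonicity: $f(e^{-x})=e^{-x}$ and $f(e^{x})=e^{x}$, so by monotonicity $e^{-x}\le f(t)\le e^{x}$ for $t$ in the interval), so the iteration stays in the interval and the statement makes sense.

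Second, the fixed points. A fixed point satisfies $t=\lambda-\frac{1}{t}$, i.e. $t^{2}-\lambda t+1=0$. The two roots are $\frac{\lambda\pm\sqrt{\lambda^{2}-4}}{2}$. Substituting $\lambda=e^{x}+e^{-x}$ gives $\lambda^{2}-4=(e^{x}+e^{-x})^{2}-4=(e^{x}-e^{-x})^{2}$, so $\sqrt{\lambda^{2}-4}=e^{x}-e^{-x}$ (using $x>0$), and the roots are $\frac{(e^{x}+e^{-x})\pm(e^{x}-e^{-x})}{2}$, which are $e^{x}$ and $e^{-x}$. Thus the only fixed points of $f$ are $e^{-x}$ and $e^{x}$, and both lie in $[e^{-x},e^{x}]$ (trivially, as endpoints). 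One can also note $e^{x}\cdot e^{-x}=1$ matches the product of roots of $t^2-\lambda t+1$, as a consistency check.

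There is essentially no obstacle here — this is a short verification lemma supporting the analysis of Shearer's graphs. The only thing to be mildly careful about is the direction of the square root (taking $\sqrt{\lambda^2-4}=e^x-e^{-x}$ rather than its negative), which is justified by $x>0$; and one should state explicitly that $f$ preserves the interval so that ``the iteration $t_{k+1}=\lambda-1/t_k$ is non-decreasing on $[e^{-x},e^{x}]$'' is a well-posed claim. I would write the whole thing in three or four lines.
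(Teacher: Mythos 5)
Your fixed-point computation is exactly the paper's: $t=\lambda-1/t$ gives $t^{2}-\lambda t+1=0$, whose roots are $\frac{\lambda\pm\sqrt{\lambda^{2}-4}}{2}=e^{\pm x}$ since $\lambda^{2}-4=(e^{x}-e^{-x})^{2}$. The problem is the other half. The paper's "non-decreasing" does not refer to monotonicity of the map $t\mapsto\lambda-1/t$; it means that the \emph{sequence of iterates} is non-decreasing, i.e. $t_{k+1}-t_{k}=\lambda-\frac{1}{t_{k}}-t_{k}\geq 0$ whenever $t_{k}\in[e^{-x},e^{x}]$. The paper proves exactly this: $\lambda-\frac{1}{t}-t\geq 0$ is equivalent to $t^{2}-\lambda t+1\leq 0$, which holds precisely for $t$ between the two roots $e^{-x}$ and $e^{x}$. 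This is the form in which the lemma is actually used later (in Lemma 5.1.5 and Lemma 5.1.6 it is cited to conclude $r_{k+1}\leq r_{k+2}\leq\cdots$ directly from $r_{k+1}\in[e^{-x},e^{x}]$).

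Your argument that $f'(t)=1/t^{2}>0$ shows the map is increasing and, together with $f(e^{\pm x})=e^{\pm x}$, that it preserves the interval — both true, but neither gives $f(t)\geq t$. An increasing self-map of an interval can just as well satisfy $f(t)<t$ on the interior (its iterates would then be decreasing); monotonicity of the map only propagates an inequality $t_{k+1}\geq t_{k}$ once you have it for the first step, and that first step is exactly what is missing from your proof. To close the gap you must show $\lambda-\frac{1}{t}\geq t$ on $[e^{-x},e^{x}]$, either via the quadratic inequality $t^{2}-\lambda t+1\leq 0$ as in the paper, or by noting that $f(t)-t$ has no zero in the open interval (you located all fixed points) and checking its sign at one interior point such as $t=1$, where $f(1)-1=e^{x}+e^{-x}-2>0$ for $x>0$.
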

\begin{proof}
If $\tilde{t}$ is a fix point for $t_{_{k}}\mapsto\lambda-\frac{1}{t_{_{k-1}}}$ we have 
\[\tilde{t}=\frac{\lambda\tilde{t}-1}{\tilde{t}}\Leftrightarrow
\tilde{t}=\frac{\lambda\pm\sqrt{\lambda^{2}-4}}{2}.\]
Since $\lambda^{^{2}}-4= (e^{x}-e^{-x})^{^{2}}$ we get that 
$\tilde{t}=e^{x}$ or $\tilde{t}=e^{-x}.$

Consider $x_{_{k+1}}-x_{_{k}} = \lambda-\frac{1}{x_{_{k}}}-x_{_{k}},$ then 
$x_{_{k+1}}-x_{_{k}}\geq{}0$ is equivalent to
\[x_{_{k}}^{2} -\lambda{}x_{_{k}} + 1 \leq{}0\Leftrightarrow\frac{1}{2}(\lambda-\sqrt{\lambda^{^{2}}-4})\leq{}x_{_{k}}\leq{}\frac{1}{2}(\lambda+\sqrt{\lambda^{^{2}}-4}),\]
and hence the iteration is non-decreasing for $e^{-x}\leq{}x_{_{k}}\leq{}e^{x}.$
\end{proof}
\begin{lemma}
\label{lemma35s}
Assume $\lambda>\sqrt{2+\sqrt{5}}.$ If $n_{_{k}}= 0$ for $k\geq{}k_{_{0}},$ then $r_{_{k}}=e^{-x}$ for $k\geq{}k_{_{0}}+1.$
\end{lemma}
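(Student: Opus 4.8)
The plan is to exploit the recursion $r_{_{k+1}}=\lambda-\frac{1}{r_{_{k}}}-\frac{n_{_{k}}}{\lambda}$ from property (b) together with the hypothesis $n_{_{k}}=0$ for $k\geq{}k_{_{0}}.$ Once $k\geq{}k_{_{0}},$ the recursion collapses to the simpler iteration $r_{_{k+1}}=\lambda-\frac{1}{r_{_{k}}},$ which is exactly the map studied in lemma \ref{lemma3s}. So the whole statement reduces to understanding the orbit of this one-dimensional map starting from $r_{_{k_{_{0}}}}.$

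First I would record that by lemma \ref{lemma1s} 1., we have $e^{-x}\leq{}r_{_{k}}\leq{}\frac{1}{\lambda}+e^{-x}$ for all $k\geq{}2,$ and by lemma \ref{lemma1s} 2., since $\lambda>\sqrt{2+\sqrt{5}},$ the upper bound satisfies $\frac{1}{\lambda}+e^{-x}<e^{x}.$ Hence $r_{_{k}}\in[e^{-x},e^{x}]$ (in fact in $[e^{-x},\frac{1}{\lambda}+e^{-x}]$) for every $k\geq{}2,$ so in particular $r_{_{k_{_{0}}}}$ lies in this interval (we may assume $k_{_{0}}\geq{}2,$ enlarging $k_{_{0}}$ if necessary). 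By lemma \ref{lemma3s} the iteration $t\mapsto\lambda-\frac{1}{t}$ is non-decreasing on $[e^{-x},e^{x}]$ with fixed points exactly $e^{-x}$ and $e^{x}.$ Since all the $r_{_{k}}$ for $k\geq{}k_{_{0}}+1$ are produced by this iteration and all stay bounded above by $\frac{1}{\lambda}+e^{-x}<e^{x},$ the sequence $(r_{_{k}})_{k\geq{}k_{_{0}}}$ is a monotone (non-decreasing) sequence in the compact interval $[e^{-x},\frac{1}{\lambda}+e^{-x}],$ hence convergent to some limit $r_{_{\infty}}$ in that interval; by continuity $r_{_{\infty}}$ is a fixed point of $t\mapsto\lambda-\frac{1}{t},$ and the only fixed point available in $[e^{-x},\frac{1}{\lambda}+e^{-x}]$ is $e^{-x}$ (since $e^{x}$ is excluded by the strict inequality $\frac{1}{\lambda}+e^{-x}<e^{x}$). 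Thus $r_{_{k}}\searrow{}e^{-x}.$

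The remaining point — and the one I expect to be the actual content rather than a limit argument — is to upgrade "converges to $e^{-x}$" to "equals $e^{-x}$ for all $k\geq{}k_{_{0}}+1.$" Here I would use property (a) in the form that $n_{_{k}}$ is chosen maximal with $\lambda-\frac{1}{r_{_{k}}}-\frac{n_{_{k}}}{\lambda}\geq{}e^{-x},$ i.e. $r_{_{k+1}}\geq{}e^{-x}$ and $r_{_{k+1}}-\frac{1}{\lambda}<e^{-x};$ combined with $n_{_{k}}=0$ this says $r_{_{k+1}}=\lambda-\frac{1}{r_{_{k}}}$ and $r_{_{k+1}}<\frac{1}{\lambda}+e^{-x}.$ The key observation is that $e^{-x}$ is an \emph{attracting-from-below but repelling-from-above} type fixed point is not quite right; rather, since the map is monotone and $r_{_{k}}\geq{}e^{-x}$ with $e^{-x}$ a fixed point, monotonicity forces $r_{_{k+1}}=\lambda-\frac{1}{r_{_{k}}}\geq{}\lambda-\frac{1}{e^{-x}}=e^{-x},$ so the sequence is non-decreasing; but it also converges to $e^{-x};$ a non-decreasing sequence converging to its own initial-side bound must be constant. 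Therefore $r_{_{k}}=e^{-x}$ for all $k\geq{}k_{_{0}}+1,$ which is the claim. The one subtlety to check carefully is the monotonicity direction: I must verify $r_{_{k}}\geq{}e^{-x}\Rightarrow{}r_{_{k+1}}=\lambda-\frac{1}{r_{_{k}}}\geq{}e^{-x}$ (immediate from $r_{_{k}}\mapsto\lambda-\frac{1}{r_{_{k}}}$ increasing and fixing $e^{-x}$) and simultaneously $r_{_{k+1}}\leq{}r_{_{k}}$ is \emph{not} what happens — rather $r_{_{k+1}}\geq{}r_{_{k}}$ would push the sequence up, contradicting the bound $<\frac{1}{\lambda}+e^{-x}$ only in the limit. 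The cleanest route is: the sequence is non-decreasing and bounded, so it converges to a fixed point $\geq{}r_{_{k_{_{0}}+1}}\geq{}e^{-x};$ the only such fixed point in range is $e^{-x};$ hence $r_{_{k_{_{0}}+1}}\leq{}e^{-x},$ forcing $r_{_{k_{_{0}}+1}}=e^{-x},$ and then inductively $r_{_{k}}=e^{-x}$ for all larger $k.$ I expect this monotonicity bookkeeping to be the only place requiring care; everything else is a direct appeal to lemmas \ref{lemma1s} and \ref{lemma3s}.
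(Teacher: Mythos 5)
Your argument is correct and is essentially the paper's proof: both rest on lemma \ref{lemma1s} (the bounds $e^{-x}\leq r_{k}\leq\frac{1}{\lambda}+e^{-x}<e^{x}$) and lemma \ref{lemma3s} (monotonicity of $t\mapsto\lambda-\frac{1}{t}$ with fixed points $e^{-x},e^{x}$), concluding that the non-decreasing sequence would have to climb to the fixed point $e^{x}$ unless it already sits at $e^{-x}$; the paper merely phrases this as a contradiction starting from $r_{k_{0}+1}>e^{-x}$, while you argue directly via the limit. (Your hedge ``enlarge $k_{0}$ to be $\geq 2$'' is unnecessary — the indices $k\geq k_{0}+1\geq 2$ are all that lemma \ref{lemma1s} is needed for — and the stray ``$r_{k}\searrow e^{-x}$'' should read convergence of a non-decreasing sequence, as your final paragraph correctly has it.)
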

\begin{proof}
By (b) we have that $r_{_{k+1}}=\lambda-\frac{1}{r_{_{k}}},\;k\geq{}k_{_{0}}.$ 
If $r_{_{k_{_{0}}+1}}>e^{-x}$ lemma \ref{lemma3s} gives
\[r_{_{k_{_{0}}+1}}\leq{}r_{_{k_{_{0}}+2}}\leq{}\cdots,\]
so $r_{_{k}}\nearrow{}r,$ with $r$ a fix point for the iteration in lemma \ref{lemma3s}. The fix points are $e^{-x}$ and $e^{x},$ and since $r>e^{-x}$  we must have $r=e^{x}.$

\noindent{}Using lemma \ref{lemma1s}  we get
\[r\leq{}\sup_{k\geq{}k_{_{0}}}r_{_{k}}\leq\lambda-e^{-x}<e^{x}.\]
This is a contradiction, so $r_{_{k_{_{0}}+1}}=e^{-x},$ and we must have 
\[r_{_{k}}=e^{-x} \mbox{ for } k\geq{}k_{_{0}}+1.\]
\end{proof}
\begin{lemma}
\label{lemma2s}
Assume $\lambda>\sqrt{2+\sqrt{5}}$ then, if $2\leq{}k<l$ are integers, such that $n_{_{k}}\neq{}0$ and $n_{_{l}}\neq{}0$ but $n_{_{j}}=0,k<j<l,$ then there is a constant $\epsilon(\lambda)$ (independent of $k,l$) such that
\[r_{_{k+1}}r_{_{k+2}}\cdots{}r_{_{l}}\leq(1-\epsilon(\lambda))^{^{\frac{l-k}{2}}}.\]
\end{lemma}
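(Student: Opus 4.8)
\textbf{Proof plan for Lemma~\ref{lemma2s}.}

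The plan is to reduce everything to the single-step iteration $t\mapsto\lambda-\tfrac1t$ analyzed in Lemma~\ref{lemma3s}. Between indices $k$ and $l$ the recursion (b) has no leaves contributing, so for $k<j<l$ we have $r_{j+1}=\lambda-\tfrac1{r_j}$, exactly the iteration of Lemma~\ref{lemma3s}. Thus the block $r_{k+1},r_{k+2},\dots,r_l$ is: first the value $r_{k+1}$ (which is produced by step (b) \emph{with} $n_k\ne0$, so it can be anything in $[e^{-x},\tfrac1\lambda+e^{-x}]$ by Lemma~\ref{lemma1s}), then a finite run of the plain iteration $t\mapsto\lambda-\tfrac1t$, ending with $r_l$. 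First I would record the uniform two-sided bound $e^{-x}\le r_j\le \tfrac1\lambda+e^{-x}<e^{x}$ for all $j\ge2$ (Lemma~\ref{lemma1s}), so every factor lies in a fixed compact subinterval of $(0,1)$ strictly below $e^{x}$ — in particular every $r_j<1$ since $\tfrac1\lambda+e^{-x}<1$ when $\lambda>\sqrt{2+\sqrt5}$ (this inequality I would check directly, or note it follows from $\tfrac1\lambda+e^{-x}<e^{x}<\lambda$... actually more carefully: $\tfrac1\lambda+e^{-x}<1\iff e^{-x}<1-\tfrac1\lambda\iff \lambda(1-e^{-x})>1$, easy to verify for $\lambda$ in range).

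The key quantitative step is to show that \emph{within each such block, consecutive pairs of factors have a uniformly bounded product}. Here is the mechanism: by Lemma~\ref{lemma3s} the iteration $t\mapsto\lambda-\tfrac1t$ is non-decreasing on $[e^{-x},e^{x}]$ with fixed points $e^{-x}$ and $e^{x}$, and on the whole block all $r_j\le \tfrac1\lambda+e^{-x}=:M<e^{x}$. So once we are inside the run of plain iterations the sequence is monotone non-decreasing but capped below $e^{x}$; combined with the lower bound $e^{-x}$ this forces each $r_j\in[e^{-x},M]$. I would then prove: there exists $\epsilon(\lambda)>0$ with $r_j r_{j+1}\le 1-\epsilon(\lambda)$ for every consecutive pair in the block. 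Indeed $r_jr_{j+1}=r_j(\lambda-\tfrac1{r_j})=\lambda r_j-1$, a continuous increasing function of $r_j$ on $[e^{-x},M]$, taking its maximum $\lambda M-1$ at $r_j=M$; and $\lambda M-1<\lambda e^{x}-1=e^{2x}+1-1=e^{2x}$... that's not $<1$. So the naive ``product of a pair'' bound as written isn't literally $\lambda r_j-1$; I would instead just use $r_jr_{j+1}\le M^2$ and set $\epsilon(\lambda)=1-M^2$, which is $>0$ precisely because $M=\tfrac1\lambda+e^{-x}<1$. (The subtle point the lemma is really exploiting is only that \emph{each single factor} is bounded away from $1$, so a product of $l-k$ of them decays geometrically; the exponent $\tfrac{l-k}2$ is a deliberately weak, clean bound that tolerates e.g. the first factor $r_{k+1}$ possibly being larger.)

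Assembling: the block $r_{k+1}\cdots r_l$ is a product of $l-k$ factors, each at most $M<1$, hence $r_{k+1}\cdots r_l\le M^{\,l-k}=(M^2)^{(l-k)/2}=(1-\epsilon(\lambda))^{(l-k)/2}$ with $\epsilon(\lambda):=1-M^2=1-\bigl(\tfrac1\lambda+e^{-x}\bigr)^2>0$ by Lemma~\ref{lemma1s}(2) and the standing hypothesis $\lambda>\sqrt{2+\sqrt5}$. Since $l-k\ge1$ we can also absorb any off-by-one issue into the constant. The only place I expect to need care is the constant-chasing to confirm $\bigl(\tfrac1\lambda+e^{-x}\bigr)^2<1$ uniformly — i.e.\ that $\epsilon(\lambda)$ is genuinely positive and does not degenerate — which amounts to the elementary estimate $\tfrac1\lambda+e^{-x}<1$ for $\lambda>\sqrt{2+\sqrt5}$; this follows by the same root computation as in Lemma~\ref{lemma1s}(2) (at $\lambda=\sqrt{2+\sqrt5}$ one has $\tfrac1\lambda+e^{-x}=e^{x}<\lambda$, but one needs the sharper $<1$, obtained from $e^{x}=\sqrt{\tfrac{1+\sqrt5}2}$ and checking this quantity is less than... actually $\sqrt{(1+\sqrt5)/2}\approx1.27>1$, so $M$ is \emph{not} $<1$ at the endpoint!). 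This forces a genuine reconsideration: the correct reading must be that the \emph{pair} product $r_jr_{j+1}$, not the single factor, is what is bounded below $1$, and since pairs overlap one uses $r_{k+1}\cdots r_l\le\prod(\text{pair products over a disjoint pairing})\le(1-\epsilon)^{\lfloor(l-k)/2\rfloor}$. So the real main obstacle is establishing $\sup\{r_jr_{j+1}\}<1$ on the relevant range; I would prove this by noting $r_jr_{j+1}=\lambda r_j-1$ is maximized over $r_j\in[e^{-x},M]$ at $r_j=M=\tfrac1\lambda+e^{-x}$, giving $\lambda M-1=e^{-x}\cdot\lambda-1+1=\lambda e^{-x}=1+e^{-2x}$... still $>1$. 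The resolution: one must additionally use that $r_{j+1}\le M$ forces $r_j\ge\tfrac1{\lambda-M}$, giving a genuine \emph{upper} gap — combining $r_j\le M$ with $r_{j+1}=\lambda-\tfrac1{r_j}\le M$ yields $r_j$ confined to $[\tfrac1{\lambda-M},M]$, and on this shrunken interval $r_jr_{j+1}=\lambda r_j-1\le\lambda M-1$, while also $r_jr_{j+1}\ge\lambda\cdot\tfrac1{\lambda-M}-1$; the product over the whole block then telescopes via $r_{j}r_{j+1}r_{j+1}r_{j+2}\cdots$ and one extracts the geometric rate from the fact that the interval $[\tfrac1{\lambda-M},M]$ sits strictly inside $(0,1)\cup\dots$ — so the cleanest route I would actually take is: show directly that $r_j\le M<1$ \emph{fails} only if some earlier leaf count is nonzero, hence within the leaf-free block past the first index $r_j\le e^{-x}\cdot(\text{something})$; concretely, I would prove $r_{j}r_{j+1}\le M\cdot\max(r_{j+1})$ and iterate the bound $r_{j+1}=\lambda-1/r_j$ to get an explicit contraction, making $\epsilon(\lambda)$ the gap. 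This constant-optimization is the one nontrivial piece; the structural reduction to Lemma~\ref{lemma3s} and the geometric-decay conclusion are routine.
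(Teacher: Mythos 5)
Your proposal does not go through, and the difficulty you stumbled on midway is the real one. The a priori bounds of Lemma \ref{lemma1s} only give $e^{-x}\le r_j\le M:=\tfrac1\lambda+e^{-x}$, and near the threshold $M>1$ (as you noticed, at $\lambda=\sqrt{2+\sqrt5}$ one has $M=e^{x}=\sqrt{(1+\sqrt5)/2}\approx1.27$), so single-factor geometric decay is unavailable. Your fallback, bounding consecutive products, also fails with the information you allow yourself: inside the leaf-free run $r_jr_{j+1}=\lambda r_j-1$, and on the admissible range $r_j\le M$ this can be as large as $\lambda M-1=1+e^{-2x}>1$, so no uniform $\epsilon$ comes out of consecutive pairing either. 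The closing paragraph of your plan (confining $r_j$ to $[\tfrac1{\lambda-M},M]$, ``telescoping'', an unproved contraction) never produces a concrete inequality, so there is no proof there. Most importantly, you explicitly discard the one hypothesis that makes the lemma true: you say $r_{k+1}$ ``can be anything in $[e^{-x},M]$'', whereas the whole point of assuming $n_k\neq0$ is that the eigenvector relation at $P_k$, $\tfrac1{r_k}+r_{k+1}+\tfrac{n_k}{\lambda}=\lambda$ with $n_k\ge1$ and $r_k\le M$, gives the strictly better bound $r_{k+1}\le\lambda-\tfrac1\lambda-M^{-1}$, which is well below $1$.

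The argument the paper runs, and which your plan is missing, pairs the factors from opposite ends of the block rather than consecutively: $r_{k+1}$ with $r_l$, $r_{k+2}$ with $r_{l-1}$, and so on — this is what the exponent $\tfrac{l-k}{2}$ is counting. The outermost pair satisfies $r_{k+1}r_l\le(\lambda-\tfrac1\lambda-M^{-1})M=\frac{\lambda e^{-x}(\lambda^2-1)-1}{\lambda^2}$, and this is $<1$ precisely when $\lambda>\sqrt{2+\sqrt5}$; that strict inequality is where the threshold enters and where $\epsilon(\lambda)$ comes from. One then shows the inner pairs are no larger: since the interior is leaf-free, $r_{k+2}=\lambda-\tfrac1{r_{k+1}}$ and $r_{l-1}=\frac1{\lambda-r_l}$, so $r_{k+2}r_{l-1}\le r_{k+1}r_l$ reduces to $\tfrac1{r_{k+1}}(\lambda-\tfrac1{r_{k+1}})\le r_l(\lambda-r_l)$, which follows from the monotonicity of the block (Lemma \ref{lemma3s}), the identity $\tfrac1{r_{k+1}}=\lambda-r_{k+2}\ge\lambda-r_l$ together with $\tfrac1{r_{k+1}}\ge r_l$ (from $r_{k+1}r_l<1$), and the shape of $t\mapsto t(\lambda-t)$. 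Iterating inward bounds every symmetric pair by $1-\epsilon(\lambda)$; a convexity remark handles all intermediate indices and the lone middle factor when $l-k$ is odd. Without the sharpened bound on $r_{k+1}$ coming from $n_k\neq0$ and without this outside-in pairing, no choice of $\epsilon(\lambda)$ independent of $k,l$ can be extracted from the estimates you are using.
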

\begin{proof}
In the vertex $P_{_{k}},$   $\xi$ satisfies the equation
\[\begin{array}{clcl}
\ & a_{_{k-1}} + a_{_{k+1}} + \frac{n_{_{k}}}{\lambda}a_{_{k}} & = & \lambda{}a_{_{k}}\\
\Updownarrow &  & & \\
 & \frac{a_{_{k-1}}}{a_{_{k}}} +\frac{a_{_{k+1}}}{a_{_{k}}}+ \frac{n_{_{k}}}{\lambda} & = & \lambda\\
\Updownarrow & & & \\
 & \frac{1}{r_{_{k}}} +r_{_{k+1}}+ \frac{n_{_{k}}}{\lambda} & = & \lambda.
\end{array}\]
Using lemma \ref{lemma1s}, we then have
\[r_{_{k+1}}=\lambda-\frac{n_{_{k}}}{\lambda}-\frac{1}{r_{_{k}}}\leq{}\lambda-\frac{1}{\lambda}-\left(\frac{1}{\lambda}+e^{-x}\right)^{^{-1}}.\]
We also have $r_{_{l}}\leq{}\frac{1}{\lambda}+e^{-x},$ and so we get 
\[\begin{array}{lcl}
r_{_{k+1}}r_{_{l}} & \leq & \left(\lambda-\frac{1}{\lambda}-
\left(\frac{1}{\lambda}+e^{-x}\right)^{^{-1}}\right)
\left(\frac{1}{\lambda}+e^{-x}\right)\\[0.4cm]
 & = & (\lambda -\frac{1}{\lambda})(\frac{1}{\lambda}+e^{-x})-1\\[0.2cm] & = &
\frac{\lambda{}e^{-x}(\lambda^{^{2}}-1)-1}{\lambda^{^{2}}}.
\end{array}\]
The denominator can be rewritten as $e^{2x}+1-2e^{-2x}-e^{-4x},$ using 
$\lambda=e^{x}+e^{-x},$ and the numerator is $e^{2x}+2+e^{-2x}.$

We want to show that $r_{_{k+1}}r_{_{l}}<1,$ which is equivalent to 
\[e^{2x}+1-2e^{-2x}-e^{-4x}<e^{2x}+2+e^{-2x}\Leftrightarrow{}
\lambda>\sqrt{2+\sqrt{5}}.\]
Hence we can find $\epsilon(\lambda)=1-\frac{\lambda{}e^{-x}(\lambda^{^{2}}-1)-1}{\lambda^{^{2}}}>0$ such that $r_{_{k+1}}r_{_{l}}<1-\epsilon(\lambda).$

We will show that $r_{_{k+2}}r_{_{l-1}}\leq{}1-\epsilon(\lambda)$ for $l-k\geq{}3.$

Since $n_{_{k+1}},n_{_{k+2}}$ and $n_{_{l-1}}$ all equal $0,$ (b) yields that $r_{_{k+2}}=\lambda-\frac{1}{r_{_{k+1}}}$ and $r_{_{l-1}}=\frac{1}{\lambda-r_{_{l}}},$ so 
\[r_{_{k+2}}r_{_{l-1}}=\left(\lambda-\frac{1}{r_{_{k+1}}}\right)\left(\frac{1}{\lambda-r_{_{l}}}\right).\]
It suffices to show $r_{_{k+2}}r_{_{l-1}}\leq{}r_{_{k+1}}r_{_{l}},$ which is equivalent to showing
\[\frac{1}{r_{_{k+1}}}\left(\lambda-\frac{1}{r_{_{k+1}}}\right) \leq{}
r_{_{l}}(\lambda-r_{_{l}}).\]
For $j\in\{k+1,\ldots,l-1\}$ we have  $r_{_{j}}=\lambda-\frac{1}{r_{_{j-1}}},$ so by lemma \ref{lemma3s} 
$r_{_{k+1}},\ldots,r_{_{l}}$ is a non-decreasing sequence.

Since $\xi$ is eigenvector we have $\frac{1}{r_{_{k+1}}} + r_{_{k+2}}=\lambda,$ and hence
\[\frac{1}{r_{_{k+1}}}= \lambda-r_{_{k-2}}\geq{}\lambda-r_{_{l}}.\]
We also have $\frac{1}{r_{_{k+1}}}\geq{}r_{_{l}},$ since $r_{_{k+1}}r_{_{l}}<1,$ and we may conclude
\[\frac{1}{r_{_{k+1}}}\geq\max\{r_{_{l}},\lambda-r_{_{l}}\}.\]
By the properties of the function $t\mapsto{}t(\lambda-t), 0\leq{}t\leq\lambda$ we now conclude
\[\frac{1}{r_{_{k+1}}}\left(\lambda-\frac{1}{r_{_{k+1}}}\right) \leq{}
r_{_{l}}(\lambda-r_{_{l}})\]
and hence
\[r_{_{k+2}}r_{_{l-1}}\leq{}1-\epsilon(\lambda).\]
Continuing this argument we have that all the numbers $r_{_{k+1}}r_{_{l}},r_{_{k+2}}r_{_{l-1}},\ldots,r_{_{k+t}}r_{_{l-t+1}}$ are do\-minated by $1-\epsilon(\lambda)$ provided $k+t\leq{}l-t+1.$

For $l-k$ even we have $r_{_{k+1}}\cdots{}r_{_{l}}\leq{}(1-\epsilon(\lambda))^{^{\frac{l-k}{2}}}.$

For $l-k$ odd, $l-k=2n+1,$ we have
\[r_{_{k+1}}\cdots{}r_{_{k+n}}r_{_{k+n+2}}\cdots{}r_{_{l}}\leq{}(1-\epsilon(\lambda))^{^{\frac{l-k-1}{2}}}\]
and since the previous argument gives $r_{_{k+n+1}}=\sqrt{r_{_{k+n+1}}r_{_{k+n+1}}}\leq{}\sqrt{1-\epsilon(\lambda)}$ we have
\[r_{_{k+1}}\cdots{}r_{_{k+n}}r_{_{k+n+1}}r_{_{k+n+2}}\cdots{}r_{_{l}}\leq{}(1-\epsilon(\lambda))^{^{\frac{l-k}{2}}}.\]
\end{proof}
\begin{prop}
\label{prop2s}
For $\lambda>\sqrt{2+\sqrt{5}}$ the eigenvector $\xi$ of $\Gamma_{_{\lambda}}$ given by 
\[\begin{array}{lcll}
\xi(P_{_{k}}) & = & a_{_{k}} & \\
\xi(Q_{_{k,j}}) & =  & \frac{a_{_{k}}}{\lambda} & \mbox{if } n_{_{k}}\neq{}0
\end{array} \] 
is summable.
\end{prop}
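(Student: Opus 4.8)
The plan is to estimate $\|\xi\|_{_{1}}$ by splitting the sum over the ``backbone'' vertices $P_{_{k}}$ into blocks delimited by the indices where leaves are attached, and to use the geometric-decay estimate of Lemma~\ref{lemma2s} on each block. First I would reduce the whole question to summability of $\sum_{k}a_{_{k}}$: since each $P_{_{k}}$ carries at most $n_{_{\lambda}}=[\lambda^{2}]-2$ leaves (Lemma~\ref{lemma0s}), and each leaf $Q_{_{k,j}}$ contributes $a_{_{k}}/\lambda$, we have
\[\|\xi\|_{_{1}}=\sum_{k\geq{}0}a_{_{k}}+\sum_{k:n_{_{k}}\neq{}0}\frac{n_{_{k}}}{\lambda}a_{_{k}}\leq{}\left(1+\frac{n_{_{\lambda}}}{\lambda}\right)\sum_{k\geq{}0}a_{_{k}},\]
so it suffices to show $\sum_{k}a_{_{k}}<\infty$. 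Recall $a_{_{k}}=r_{_{1}}r_{_{2}}\cdots{}r_{_{k}}$ by property (c), so I must bound these partial products.

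The key structural fact is that $n_{_{k}}\neq{}0$ for only finitely many $k$, or else for infinitely many $k$; I would handle both cases. If $n_{_{k}}=0$ for all $k\geq{}k_{_{0}}$, then Lemma~\ref{lemma35s} gives $r_{_{k}}=e^{-x}<1$ for all $k\geq{}k_{_{0}}+1$ (here $e^{-x}<1$ since $\lambda>2$), so $a_{_{k}}=a_{_{k_{_{0}}}}\cdot{}e^{-x(k-k_{_{0}})}$ eventually, a convergent geometric series. If instead there are infinitely many indices $k_{_{1}}<k_{_{2}}<\cdots$ with $n_{_{k_{_{i}}}}\neq{}0$, I would apply Lemma~\ref{lemma2s} to consecutive pairs: for each $i$, writing $k=k_{_{i}}$, $l=k_{_{i+1}}$ (with no leaves strictly between), $r_{_{k+1}}\cdots{}r_{_{l}}\leq{}(1-\epsilon(\lambda))^{(l-k)/2}$. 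Multiplying these telescoping blocks, together with the crude bound $r_{_{j}}\leq{}\frac{1}{\lambda}+e^{-x}$ for $j\geq{}2$ (Lemma~\ref{lemma1s}.1) on the initial segment up to $k_{_{1}}$, shows that the product $a_{_{k_{_{i}}}}=r_{_{1}}\cdots{}r_{_{k_{_{i}}}}$ decays like $(1-\epsilon(\lambda))^{k_{_{i}}/2}$ up to a fixed multiplicative constant; and for $k$ between $k_{_{i}}$ and $k_{_{i+1}}$ the extra factors $r_{_{j}}\leq{}\frac{1}{\lambda}+e^{-x}<1$ only help. Summing over all $k$ then gives a convergent geometric-type series.

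I would organize the final estimate by grouping: $\sum_{k\geq{}0}a_{_{k}}=\sum_{i\geq{}0}\sum_{k_{_{i}}\leq{}k<k_{_{i+1}}}a_{_{k}}$, bound each inner sum by $(k_{_{i+1}}-k_{_{i}})\,a_{_{k_{_{i}}}}$ (since the $r$'s in between are $<1$), and then use that $m\mapsto{}m\,\rho^{m/2}$ is summable for $0<\rho<1$ to conclude $\sum_{i}(k_{_{i+1}}-k_{_{i}})(1-\epsilon(\lambda))^{k_{_{i}}/2}<\infty$. The main obstacle I anticipate is not any single estimate but the bookkeeping needed to combine Lemma~\ref{lemma2s} (which only controls products between consecutive \emph{leaf-bearing} vertices) with the behaviour on long leafless stretches and on the initial segment; one must be careful that the constant $\epsilon(\lambda)$ is genuinely uniform in the block endpoints (which it is, by the statement of Lemma~\ref{lemma2s}) and that the leafless tail case is not accidentally excluded. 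Once the case analysis is set up cleanly, each piece is a routine geometric-series bound.
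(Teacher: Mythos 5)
Your architecture is the same as the paper's (reduce to $\sum_k a_{_{k}}$ via Lemma~\ref{lemma0s}, split into the eventually-leafless case handled by Lemma~\ref{lemma35s} and the case of infinitely many leaf-bearing vertices handled by Lemma~\ref{lemma2s}), but there is a genuine gap in your treatment of the indices strictly between consecutive leaf-bearing vertices. You dismiss these with the claim that $r_{_{j}}\leq\frac{1}{\lambda}+e^{-x}<1$, so that ``the extra factors only help.'' The second inequality is false on the range of $\lambda$ covered by the proposition: by Lemma~\ref{lemma1s}~(2), at $\lambda=\sqrt{2+\sqrt{5}}$ one has $\frac{1}{\lambda}+e^{-x}=e^{x}\approx1.27>1$, so for $\lambda$ just above the threshold (indeed for all $\lambda$ up to roughly $2.3$) the bound $\frac{1}{\lambda}+e^{-x}$ exceeds $1$. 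Moreover this is not just a defect of the bound: on a long leafless stretch the iteration $r_{_{j+1}}=\lambda-\frac{1}{r_{_{j}}}$ is non-decreasing and climbs toward the fixed point $e^{x}>1$ (Lemma~\ref{lemma3s}), so individual $r_{_{j}}$ really do exceed $1$ and $a_{_{k}}$ can increase inside a block. Consequently your inner-sum estimate $\sum_{k_{_{i}}\leq{}k<k_{_{i+1}}}a_{_{k}}\leq(k_{_{i+1}}-k_{_{i}})\,a_{_{k_{_{i}}}}$ is not justified by the reason you give (it happens to be true, but you have not proved it).

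The paper closes exactly this gap with a convexity argument that you omit: on a leafless stretch the $r_{_{j}}$ are non-decreasing, hence $\log a_{_{k}}$ is convex on each block $[k_{_{i}},k_{_{i+1}}]$; lying below its chord and using the endpoint estimate $a_{_{k_{_{i+1}}}}\leq(1-\epsilon(\lambda))^{(k_{_{i+1}}-k_{_{i}})/2}a_{_{k_{_{i}}}}$ of Lemma~\ref{lemma2s}, one gets $a_{_{n}}\leq(1-\epsilon(\lambda))^{(n-k_{_{i}})/2}a_{_{k_{_{i}}}}$ for every intermediate $n$, and chaining blocks gives the pointwise bound $a_{_{n}}\leq(1-\epsilon(\lambda))^{(n-k_{_{0}})/2}a_{_{k_{_{0}}}}$ for all $n\geq{}k_{_{0}}$. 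With that, $\sum_{n}a_{_{n}}$ is a plain geometric series and your $m\mapsto{}m\rho^{m/2}$ device is unnecessary. If you add this convexity (equivalently, geometric-mean monotonicity) step for the intermediate indices, the rest of your proposal coincides with the paper's proof.
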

\begin{proof}
If  $n_{_{k}}=0$ for $k\geq{}k_{_{0}},$ lemma \ref{lemma35s} implies that $r_{_{k}}=e^{-x}$ for $k\geq{}k_{_{0}}+1.$ By definition  $a_{_{k+1}} = r_{_{k+1}}a_{_{k}},$ so we have 
\[a_{_{k_{_{0}}+n}}=e^{-nx}a_{_{k_{_{0}}}},\]
and the eigenvector $\xi$ is seen to be summable.

If $n_{_{k}}$ does not eventually equal $0,$ we consider $k<l$  such that $n_{_{k}}\neq{}0$ and $n_{_{l}}\neq{}0$ but $n_{_{j}}=0$ for $k<j<l.$

By  lemma \ref{lemma2s} we have
\[a_{_{l}}\leq{}(1-\epsilon(\lambda))^{^{\frac{l-k}{2}}}a_{_{k}}.\]
If we consider $\log{}a_{_{k}},\log{}a_{_{k+1}},\ldots,\log{}a_{_{l}}$ this is a convex function of the index, since the differences 
$\log{}r_{_{k+1}},\log{}r_{_{k+2}},\ldots,\log{}r_{_{l}}$ satisfy
\[\log{}r_{_{k+1}}\leq{}\log{}r_{_{k+2}}\leq{}\ldots\leq{}\log{}r_{_{l}}\]
and it follows that
\[a_{_{n}}\leq{}(1-\epsilon(\lambda))^{^{\frac{n-k}{2}}}a_{_{k}},\,\,\,\,k\leq{}n\leq{}l.\]
If we let $k_{_{0}}\geq{}2$ denote the smallest integer such that $n_{_{k_{_{0}}}}\neq{}0$ and note that $\epsilon(\lambda)$ is independent of $k_{_{0}}$ we get
\[a_{_{n}}\leq{}(1-\epsilon(\lambda))^{^{\frac{n-k_{_{0}}}{2}}}a_{_{k_{_{0}}}},\,\,\,\,n\geq{}k_{_{0}}.\]
Using lemma \ref{lemma0s} we now get
\[\begin{array}{lcl}
\|\xi\|_{_{1}} & = & 
\sum_{j<k_{_{0}}}a_{_{j}} + \sum_{j\geq{}k_{_{0}}}(a_{_{j}}+n_{_{j}}\frac{a_{_{j}}}{\lambda})\\[0.2cm]
 & \leq & \sum_{j<k_{_{0}}}a_{_{j}} + \left(\frac{n_{_{\lambda}}}{\lambda} + 1\right)\sum_{j\geq{}k_{_{0}}}a_{_{j}}\\[0.3cm] 
& \leq & \sum_{j<k_{_{0}}}a_{_{j}} + \left(\frac{n_{_{\lambda}}}{\lambda} + 1\right)a_{_{k_{_{0}}}}\sum_{j\geq{}k_{_{0}}}(1-\epsilon(\lambda))^{^{\frac{j-n_{_{k_{_{0}}}}}{2}}}\\[0.3cm]
 & < & \infty.
\end{array}\]
\end{proof} 

\newpage{}

\setcounter{equation}{0}
%rettet sidst 19/12
\section{Not All Shearer-Graphs Can Define Commuting Squares}
\setcounter{equation}{0}
If we try to build a commuting square of infinite dimensional multi-matrix algebras, with one of the infinite graphs, $\Gamma_{_{\lambda}},$ defined by Shearer (see section \ref{l1ofshearer}) as the index defining side, there are not many obvious choices of the form of the inclusion matrices that define the other sides of the commuting square. The other inclusions have to have compatible Perron-Frobenius eigenvector, so a polynomial applied to the adjacency matrix of $\Gamma_{_{\lambda}}$ is a possibility (as described below), and it does not seem likely that one can define any other form of inclusions, that will work in general, to construct such commuting squares. We will show, that if $\Gamma_{_{\lambda}}$ is the index defining inclusion of a commuting square of infinite multi-matrix algebras of the above form, then $\Gamma_{_{\lambda}}$ has to be eventually periodic.

As a consequence Shearer's result, that the set of Perron-Frobenius eigenvalues of infinite graphs contains all of $ {\cal F}=\{x\in{\Bbb R}|x\geq\sqrt{2+\sqrt{5}}\},$ cannot be used to produce values of the index of irreducible Hyperfinite $II_{_{1}}-$factors which form a closed subset of ${\cal F}.$ 

The argument is as follows.

\noindent{}Let  $\lambda>\sqrt{2+\sqrt{5}}=e^{x}+e^{-x}$ and let $\Gamma_{_{\lambda}}$ be the graph discussed in (\ref{l1ofshearer}). Let $\Delta_{_{\lambda,bp}}$ be the adjacency matrix of a bi-partition of $\Gamma_{_{\lambda}}.$ I.e. 
$\Delta_{_{\lambda,bp}}= \left(\mbox{\tiny{$\begin{array}{cc}0&G\\G^{t}&0\end{array}$}}\right).$ Let $p$ be a polynomial such that 
$p(\Delta_{_{\lambda,bp}})$ corresponds to the adjacency matrix of a bi-partite graph. If the exponents of $p$ are all even, $p(\Delta_{_{\lambda,bp}})$ is of the form $\left(\mbox{\tiny{$\begin{array}{cc}H&0\\0&K\end{array}$}}\right).$ If the exponents are all odd, $p(\Delta_{_{\lambda,bp}})$ is of the form $\left(\mbox{\tiny{$\begin{array}{cc}0&H\\K&0\end{array}$}}\right).$ 

\noindent{}Let $n$ be the highest degree in $p(t)$ and $c_{_{n}}$ be the coefficient of $t^{^{n}}.$ Assume that there exists a commuting square of infinite dimensional multimatrix algebras of the form 
\[\begin{array}{lcl}
 C & \subset_{K} & D \\
  \cup_{G} &\,&\cup_{G}\\
 A & \subset_{H} & B \end{array} 
\mbox{ if $n$ is even, }\;\;\;\;\;\;
\begin{array}{lcl}
 C & \subset_{K} & D \\
  \cup_{G} &\,&\cup_{G^{^{t}}}\\
 A & \subset_{H} & B \end{array} 
\mbox{ if $n$ is odd.}
\]
If we look at the cycles involving the vertices $P_{_{k}},P_{_{k+1}},P_{_{n+k}}$ and $P_{_{n+k+1}}$

\begin{center}
\setlength{\unitlength}{1cm}
\begin{picture}(12,2)

\put(1.9,0.1){$P_{_{k}}$}
\put(1.8,1.3){$n_{_{k}}$}
\put(1.8,1){$\cdots$}
\put(1.5,1){\circle*{0.15}}
\put(2.5,1){\circle*{0.15}}
\put(2,0.5){\circle*{0.15}}
\put(2,0.5){\line(-1,1){0.5}}
\put(2,0.5){\line(1,1){0.5}}

\put(3.9,0.1){$P_{_{k+1}}$}
\put(3.8,1.3){$n_{_{k+1}}$}
\put(3.8,1){$\cdots$}
\put(3.5,1){\circle*{0.15}}
\put(4.5,1){\circle*{0.15}}
\put(4,0.5){\circle*{0.15}}
\put(4,0.5){\line(-1,1){0.5}}
\put(4,0.5){\line(1,1){0.5}}

\put(7.9,0.1){$P_{_{k+n}}$}
\put(7.8,1.3){$n_{_{k+n}}$}
\put(7.8,1){$\cdots$}
\put(7.5,1){\circle*{0.15}}
\put(8.5,1){\circle*{0.15}}
\put(8,0.5){\circle*{0.15}}
\put(8,0.5){\line(-1,1){0.5}}
\put(8,0.5){\line(1,1){0.5}}

\put(9.9,0.1){$P_{_{k+n+1}}$}
\put(9.8,1.3){$n_{_{k+n+1}}$}
\put(9.8,1){$\cdots$}
\put(9.5,1){\circle*{0.15}}
\put(10.5,1){\circle*{0.15}}
\put(10,0.5){\circle*{0.15}}
\put(10,0.5){\line(-1,1){0.5}}
\put(10,0.5){\line(1,1){0.5}}
\put(2,0.5){\line(1,0){3}}
\put(7,0.5){\line(1,0){3}}
\put(5.8,0.5){$\ldots$}
\put(10.2,0.5){$\ldots$}
\put(1.3,0.5){$\ldots$}
\end{picture}
\end{center}
Then either\\
\raisebox{0.4cm}{there is only one $c_{_{n}}\times{}c_{_{n}}$ unitary block, $A,$ in $u,$  corresponding to cycles of the form }
\setlength{\unitlength}{0.25mm}
\begin{picture}(50,45)
\put(20,10){\line(1,0){20}} 
\put(20,10){\line(0,1){20}}
\put(40,10){\line(0,1){20}} 
\put(20,30){\line(1,0){20}}
\put(10,3){\tiny\mbox{$P_{_{k}}$}}
\put(40,3){\tiny\mbox{$ P_{_{k+n}}$}}
\put(40,38){\tiny\mbox{$ P_{_{k+n+1}}$}}
\put(5,38){\tiny\mbox{$ P_{_{k+1}}$}}
\end{picture}\\[0.2cm]
\raisebox{0.4cm}{or there is only one $c_{_{n}}\times{}c_{_{n}}$ unitary block, $B,$ in $v,$  corresponding to cycles of the form }
\setlength{\unitlength}{0.25mm}
\begin{picture}(50,45)
\put(20,10){\line(1,0){20}} 
\put(20,10){\line(0,1){20}}
\put(40,10){\line(0,1){20}} 
\put(20,30){\line(1,0){20}}
\put(10,3){\tiny\mbox{$P_{_{k+1}}$}}
\put(40,3){\tiny\mbox{$ P_{_{k+n+1}}$}}
\put(40,38){\tiny\mbox{$ P_{_{k+n}}$}}
\put(7,38){\tiny\mbox{$ P_{_{k}}$}}
\end{picture}\\
In the first case, the scalar involved in the transition from $u$ to $v$ is $\sqrt{\frac{\xi(P_{_{k}})\xi(P_{_{k+n+1}})}{\xi(P_{_{k+1}})\xi(P_{_{k+n}})}},$ and in\\[0.2cm] the second case, the scalar involved in the transition from $v$ to $u$ is $\sqrt{\frac{\xi(P_{_{k}})\xi(P_{_{k+n+1}})}{\xi(P_{_{k+1}})\xi(P_{_{k+n}})}}.$ So in either case we must have 
\begin{equation}
\label{rcond}
\nu=\sqrt{\frac{\xi(P_{_{k}})\xi(P_{_{k+n+1}})}{\xi(P_{_{k+1}})\xi(P_{_{k+n}})}}\leq{}1\end{equation}
since either $\nu{}A$ must be part of a unitary in $v,$ or $\nu{}B$ must be part of a unitary in $u.$
In other terms (\ref{rcond}) can be stated as
\[r_{_{k+n}}\leq{}r_{_{k}},\]
and consequently we must have
\begin{equation}
\label{rrr}
r_{_{k}}\geq{}r_{_{k+n}}\geq{}r_{_{k+2n}}\geq\cdots\geq{}e^{-x},
\end{equation}
where the last inequality comes from lemma \ref{lemma1s}. Hence we have 
\[\lim_{j\rightarrow\infty}r_{_{k+nj}} = r \geq{}e^{-x}.\]
By definition $n_{_{k}}$ is the largest integer such that
\[\frac{1}{r_{_{k}}}+\frac{n_{_{k}}}{\lambda}+e^{-x}\leq{}\lambda,\]
so (\ref{rrr}) implies
\[n_{_{k}}\geq{}n_{_{k+n}}\geq{}n_{_{k+2n}}\geq\cdots\geq{}0,\]
and since they are all integers, there exists $j_{_{k}}$ such that 
$n_{_{k+jn}}$ is constant for $j\geq{}j_{_{k}}.$

Using this argument for $k=1,2,\ldots,n$ we find $J$ such that $n_{_{i}}=n_{_{i+n}},$ for all $i\geq{}J,$ i.e. $\Gamma_{_{\lambda}}$ is eventually periodic.

\newpage{}

\chapter{Some Index Values Which Do Not Occur From Finite Graphs}

\setcounter{equation}{0}

In this chapter we will show that the largest eigenvalue, $\lambda_{_{n}},$ of the graphs $T(1,n,\infty)$ cannot occur as eigenvalues of finite graphs. Furthermore we will construct commuting squares which will give $\lambda_{_{2}}^{2}, \lambda_{_{3}}^{2}$ and $\lambda_{_{4}}^{2}$ as index for a pair of irreducible Hyperfinite $II_{_{1}}-$factors.

\section{The Largest Eigenvalue of $T(1,n,\infty)$ Does Not Occur as Eigenvalue of a Finite Graph}
We will look at the graph $T(1,n,\infty)$ as defined by Hoffmann in \cite{Hof}.
\begin{center}
\setlength{\unitlength}{1cm}
\begin{picture}(11,3)
\put(1,1){\line(1,0){2}}
\put(4,1){\line(1,0){4}}
\put(9,1){\line(1,0){1}}
\put(6,1){\line(0,1){1}}
\put(3.3,1){$\ldots$}
\put(8.3,1){$\ldots$}
\put(10.2,1){$\ldots$}
\multiput(1,1)(1,0){3}{\circle*{0.15}}
\multiput(4,1)(1,0){5}{\circle*{0.15}}
\multiput(9,1)(1,0){2}{\circle*{0.15}}
\put(6,2){\circle*{0.15}}
\put(1,0.5){$\underbrace{\mbox{\hspace{4cm}}}_{\mbox{n vertices}}$}
\put(7,0.5){$\underbrace{\mbox{\hspace{4cm}}}_{\infty{}\mbox{ vertices}}$}\end{picture}
\end{center}
\setcounter{equation}{0}
\noindent{}If we let \lam{} denote the largest eigenvalue of this graph, then $\lam{} =e^{x}+e^{-x}$ for some $x>0.$ If we put $\ro{}=e^{2x},$ $\rho$ satisfies the equation (see \cite{Hof})
\begin{equation}
\label{hofequation}
\ro{n+2}-\ro{n+1}-\ro{n} +1 =0,
\end{equation}
or, if we divide by $\ro{}-1$
\begin{equation}
\label{uffehofequation}
\ro{n+1}-\ro{n-1}-\ro{n-2}-\cdots{}-\ro{}-1.
\end{equation}
In this section we will show that $\lambda(T(1,n,\infty))$ is an algebraic integer, and that $\lambda(T(1,n,\infty))$  does not occur as eigenvalue for any finite graph.

\begin{remark}{\rm 
\label{hofrem1}
If we for $\lambda=e^{x}+e^{-x}$ define
\[P_{n}(\lambda)=\efrac{(n+1)}\]
then
\[P_{_{0}}(\lambda)=1\]
\[P_{_{1}}(\lambda)=\efrac{2}=\lambda.\]
Also 
\[(e^{x}+e^{-x})\efrac{(n+1)}-\efrac{n}=\efrac{(n+2)},\]
so $P_{n}(\lambda)=R_{n}(\lambda).$ If we expand $P_{n}(\lambda)$ as a finite sum of quotients, we have
\[R_{n}(\lambda)= e^{nx}+e^{(n-2)x}+e^{(n-4)x}+\cdots{}+e^{-nx}.\]

}\end{remark}
\begin{prop}
\label{hofpro1}
The largest eigenvalue, $\lam{n}=\lambda(T(1,n,\infty)),$ for $T(1,n,\infty)$ is a root of the $(2n+2)-$degree polynomial 
\[K_{n}(\lambda)= (R_{n+3}(\lambda) -R_{n+1}(\lambda)-R_{n-1}(\lambda))R_{n-1}(\lambda)-1.\]
\end{prop}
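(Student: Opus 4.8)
The plan is to reduce $K_n$ to a very compact form and then evaluate it at $\lambda=\lambda_n$ using the equation for $\rho_n:=e^{2x_n}$ recalled from \cite{Hof} (where $\lambda_n=e^{x_n}+e^{-x_n}$), namely $\rho_n^{n+2}-\rho_n^{n+1}-\rho_n^n+1=0$.

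First I would prove the polynomial identity, valid for all $\lambda$,
\[
K_n(\lambda)=R_{n+1}(\lambda)^2-R_n(\lambda)^2-R_{n-1}(\lambda)^2-\lambda^2 .
\]
Distributing the factor $R_{n-1}$ in the definition gives $K_n=R_{n+3}R_{n-1}-R_{n+1}R_{n-1}-R_{n-1}^2-1$, so the identity follows from two facts about the polynomials $R_m$: the ``Cassini'' identity $R_m(\lambda)^2=R_{m-1}(\lambda)R_{m+1}(\lambda)+1$, which is established inside the proof of Lemma \ref{firegraflem0} and gives $R_{n+1}R_{n-1}=R_n^2-1$; and its wider version $R_{m+2}(\lambda)R_{m-2}(\lambda)=R_m(\lambda)^2-\lambda^2$, which one obtains by writing $R_{m+2}=\lambda R_{m+1}-R_m$ and $R_{m-2}=\lambda R_{m-1}-R_m$, multiplying out, and using Cassini together with $R_{m+1}+R_{m-1}=\lambda R_m$. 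Taking $m=n+1$ gives $R_{n+3}R_{n-1}=R_{n+1}^2-\lambda^2$, and substituting both products into the expression for $K_n$ yields the identity.

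Next I would evaluate at $\lambda=\lambda_n$. Writing $\lambda_n=2\cosh x_n$ and $\rho=\rho_n=e^{2x_n}$, Definition \ref{firestar2def21} gives $R_m(\lambda_n)^2=\frac{\rho^{m+1}+\rho^{-(m+1)}-2}{\rho+\rho^{-1}-2}$, hence
\[
R_{n+1}(\lambda_n)^2-R_n(\lambda_n)^2-R_{n-1}(\lambda_n)^2
=\frac{(\rho^{n+2}-\rho^{n+1}-\rho^{n})+(\rho^{-(n+2)}-\rho^{-(n+1)}-\rho^{-n})+2}{\rho+\rho^{-1}-2}.
\]
Hoffmann's equation gives $\rho^{n+2}-\rho^{n+1}-\rho^{n}=-1$; equivalently $\rho^{n}(\rho^2-\rho-1)=-1$, so $\rho^{-n}=1+\rho-\rho^2$, whence $\rho^{-(n+1)}=\rho^{-1}+1-\rho$ and $\rho^{-(n+2)}=\rho^{-2}+\rho^{-1}-1$, and therefore $\rho^{-(n+2)}-\rho^{-(n+1)}-\rho^{-n}=\rho^2+\rho^{-2}-3$. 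The numerator collapses to $\rho^2+\rho^{-2}-2$. Since $\lambda_n^2=\rho+\rho^{-1}+2$, we also have $\lambda_n^2(\rho+\rho^{-1}-2)=(\rho+\rho^{-1})^2-4=\rho^2+\rho^{-2}-2$. Hence $R_{n+1}(\lambda_n)^2-R_n(\lambda_n)^2-R_{n-1}(\lambda_n)^2=\lambda_n^2$, i.e.\ $K_n(\lambda_n)=0$. The case $n=1$, where $\rho_1=1$ and $\lambda_1=2$, is degenerate for the displayed computation but is checked directly: $K_1(2)=R_2(2)^2-R_1(2)^2-R_0(2)^2-4=9-4-1-4=0$.

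The only real obstacle is recognizing the compact shape of $K_n$ and the ``low-degree reciprocal'' trick: a priori the negative powers $\rho^{-n},\rho^{-(n+1)},\rho^{-(n+2)}$ look uncontrollable, but Hoffmann's relation rewrites each of them as an explicit quadratic in $\rho$, after which everything cancels. One could also avoid citing \cite{Hof} by constructing the Perron--Frobenius eigenvector of $T(1,n,\infty)$ directly --- coordinates $R_0,\dots,R_{n-1}$ along the finite arm, value $R_n$ at the branch vertex, $R_n/\lambda$ at the pendant leaf, and geometric decay $R_n e^{-jx}$ along the infinite arm (forced by summability, Corollary \ref{Scor}) --- so that the eigenvalue equation at the branch vertex simultaneously recovers $\rho^{n+2}-\rho^{n+1}-\rho^{n}+1=0$ and yields the convenient relation $\lambda_n R_{n-1}(\lambda_n)=\rho_n R_n(\lambda_n)$, from which $K_n(\lambda_n)=0$ can be read off in the same way.
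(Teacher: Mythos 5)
Your proof is correct, but it runs along a genuinely different track than the paper's. The paper's proof is a two-line factorization: writing $\lambda_{0}=e^{x_{0}}+e^{-x_{0}}$, $\rho_{0}=e^{2x_{0}}$, it uses the expansion $R_{m}(\lambda)=e^{mx}+e^{(m-2)x}+\cdots+e^{-mx}$ of Remark \ref{hofrem1} to recognize $R_{n+3}-R_{n+1}=e^{(n+3)x}+e^{-(n+3)x}$ and thereby identifies $K_{n}(\lambda_{0})$ with $-(e^{(n+3)x_{0}}-R_{n-1}(\lambda_{0}))(e^{-(n+3)x_{0}}-R_{n-1}(\lambda_{0}))=-\phi_{n}(\rho_{0})\phi_{n}(\rho_{0}^{-1})$, where $\phi_{n}(\rho)=\rho^{n+1}-\rho^{n-1}-\cdots-\rho-1$ is Hoffmann's equation divided by $\rho-1$ (equation (\ref{uffehofequation})); since $\phi_{n}(\rho_{0})=0$, the claim is immediate. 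You instead compress $K_{n}$ to $R_{n+1}^{2}-R_{n}^{2}-R_{n-1}^{2}-\lambda^{2}$ using the Cassini identity $R_{m}^{2}=R_{m-1}R_{m+1}+1$ from the proof of Lemma \ref{firegraflem0} together with its second-order analogue $R_{m+2}R_{m-2}=R_{m}^{2}-\lambda^{2}$ (your derivation of the latter is correct), and then verify vanishing at $\lambda_{n}$ by expressing $R_{m}^{2}$ through $\rho^{m+1}+\rho^{-(m+1)}$ and reducing the negative powers $\rho^{-n},\rho^{-(n+1)},\rho^{-(n+2)}$ via $\rho^{n}(\rho^{2}-\rho-1)=-1$; I checked the cancellations and the identity $\lambda_{n}^{2}(\rho+\rho^{-1}-2)=\rho^{2}+\rho^{-2}-2$, and your separate treatment of the degenerate case $\rho=1$ ($\lambda=2$) covers the only place the division could fail. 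What the paper's route buys that yours does not is reusability: the factorization $K_{n}=-\phi_{n}(\rho)\phi_{n}(\rho^{-1})$ is exactly the tool exploited again in Proposition \ref{hofpro2} to pin down \emph{all} real roots of $K_{n}$ (and hence feed Theorem \ref{hofthm1}), whereas your argument only certifies that $\lambda_{n}$ is a root; conversely, your version stays entirely at the level of the polynomials $R_{m}$ and the unreduced Hoffmann relation, which some may find more mechanical to check. Your closing sketch via the Perron--Frobenius vector of $T(1,n,\infty)$ is also sound (the branch-vertex equation $R_{n+1}(\lambda)-e^{-x}R_{n}(\lambda)=R_{n}(\lambda)/\lambda$ reduces to $\rho^{n+2}-\rho^{n+1}-\rho^{n}+1=0$), but as written it is only an outline, not a substitute for the citation of \cite{Hof}.
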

\begin{proof}
Let $\lam{0} = e^{x_{_{0}}}+e^{-x_{_{0}}}$ for some $x_{_{0}}>0$ and let $\rho_{_{0}}=e^{2x_{_{0}}}.$ Then
\[\phi_{_{n}}(\rho_{_{0}})=\rho^{n+1}_{_{0}}-\rho^{n-1}_{_{0}}-\rho^{n-2}_{_{0}}-\cdots{}-\rho^{}_{_{0}}-1=0.\]
Hence also 
\[-\phi_{_{n}}(\rho_{_{0}})\phi_{_{n}}(\rho^{-1}_{_{0}})=0.\]
Dividing the first factor by $e^{(n-1)x_{_{0}}}$ and multiplying the second by $e^{(n-1)x_{_{0}}}$ we get
\[\begin{array}{lcl}
0 &= & -\left(e^{(n+3)x_{_{0}}}-e^{(n-1)x_{_{0}}}-e^{(n-3)x_{_{0}}}-\cdots{}-e^{-(n-1)x_{_{0}}}\right)\\[0.2cm]
 & &\;\;\cdot{}\left(e^{-(n+3)x_{_{0}}}-e^{-(n-1)x_{_{0}}}-e^{-(n-3)x_{_{0}}}-\cdots{}-e^{(n-1)x_{_{0}}}\right)\\[0.2cm]
& = & -\left(e^{(n+3)x_{_{0}}}-R_{n-1}(\lam{0})\right)
       \left(e^{-(n+3)x_{_{0}}}-R_{n-1}(\lam{0})\right)\\[0.2cm]
& = & \left(e^{(n+3)x_{_{0}}}+e^{-(n+3)x_{_{0}}}\right)R_{n-1}(\lam{0})
    -R_{n-1}(\lam{0})^{2}-1\\[0.2cm]
&=&\left(R_{n+3}(\lam{0})-R_{n+1}(\lam{0})-R_{n-1}(\lam{0})\right)R_{n-1}(    \lam{0})-1\\[0.2cm]
&=&K_{n}(\lam{0}),
\end{array}\]
where the second equality follows by remark \ref{hofrem1}.
\end{proof}
\begin{prop}
\label{hofpro2}Consider the polynomial
\[K_{n}(\lam{})=\left(R_{n+3}(\lam{})-R_{n+1}(\lam{})-R_{n-1}(\lam{})\right)R_{n-1}(\lam{})-1,\;\;\; \lam{}\in{\Bbb C}.\]
Then
\begin{enumerate}
\item{}For $n$ even the only real roots of $K_{n}(\lam{})$ are $\pm{}\lambda(T(1,n,\infty)).$
\item{}For $n$ odd the only real roots of $K_{n}(\lam{})$ are $\pm{}\lambda(T(1,n,\infty))$ and 0.
\end{enumerate}
\end{prop}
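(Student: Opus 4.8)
The key identity established in Proposition \ref{hofpro1} is that $K_n(\lambda) = -\phi_n(\rho)\phi_n(\rho^{-1})$ after suitable normalization, where $\lambda = e^x + e^{-x}$, $\rho = e^{2x}$, and $\phi_n(t) = t^{n+1} - t^{n-1} - t^{n-2} - \cdots - t - 1$. The plan is to exploit this factorization: a real $\lambda$ is a root of $K_n$ precisely when either $\phi_n(\rho) = 0$ or $\phi_n(\rho^{-1}) = 0$, with $\rho = e^{2x}$ ranging over all of $(0,\infty)$ as $x$ ranges over $\mathbb{R}$ (and $\lambda = 2$ corresponding to $\rho = 1$, $x = 0$). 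Note that $\rho \mapsto \rho^{-1}$ and $\lambda \mapsto \lambda$ is the same relation, and $\lambda$ is unchanged under $x \mapsto -x$; so the set of real roots $\lambda$ of $K_n$ is the image, under $\rho \mapsto \rho^{1/2} + \rho^{-1/2}$, of the set of positive real roots of $\phi_n$, together possibly with $\lambda = 0$ and negative values obtained from the parity of $K_n$.

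First I would analyze the positive real roots of $\phi_n(t) = t^{n+1} - t^{n-1} - \cdots - t - 1$. For $t > 1$ one can write $\phi_n(t)(t-1) = t^{n+2} - t^{n+1} - t^n + 1$; a sign-change / monotonicity argument (or Descartes' rule applied carefully) shows $\phi_n$ has exactly one root in $(1,\infty)$, namely $\rho_0 = e^{2x_0}$ giving $\lambda(T(1,n,\infty)) = e^{x_0} + e^{-x_0}$. For $0 < t \le 1$: evaluate $\phi_n(1) = 1 - (n-1) - 1 = 1 - n \le 0$ for $n \ge 1$ (with equality iff $n=1$), and $\phi_n(0^+) = -1 < 0$; one checks $\phi_n$ is actually negative throughout $(0,1)$ for $n \ge 2$ (the negative terms dominate), so the only positive root in that range is the boundary case $t = 1$ when $n = 1$, corresponding to $\lambda = 2$. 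This $\lambda = 2$ contribution must then be reconciled with the claimed answer; for $n=1$, $T(1,1,\infty)$ is essentially $D_\infty$-like and has $\lambda = 2$, so for $n = 1$ the statement would need separate checking — I would verify the proposition is stated for $n \ge 2$ or handle $n=1$ explicitly. The upshot: the only positive real root of $\phi_n$ giving $\lambda > 2$ is $\rho_0$, hence the only real root of $K_n$ with $|\lambda| \ne$ small values is $\pm\lambda(T(1,n,\infty))$.

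Next I would pin down the parity of $K_n$ and the fate of $\lambda = 0$. Since each $R_k(\lambda)$ is even or odd according as $k$ is even or odd ($R_k$ has the same parity as $k$), the factor $R_{n+3} - R_{n+1} - R_{n-1}$ has parity $n+1$ and $R_{n-1}$ has parity $n-1 \equiv n+1 \pmod 2$; their product has parity $2(n+1) \equiv 0$, i.e. is even, so $K_n(\lambda) = (\text{even polynomial}) - 1$ is even in $\lambda$ for every $n$. This gives the symmetry $\lambda \leftrightarrow -\lambda$ in the root set, accounting for both $\pm\lambda(T(1,n,\infty))$. For $\lambda = 0$: compute $R_k(0)$ — it is $0$ for $k$ odd and $\pm 1$ for $k$ even (from $R_0 = 1, R_1 = 0, R_2 = -1, \dots$, i.e. $R_{2m}(0) = (-1)^m$). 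When $n$ is odd, $n-1$ is even and $n+3, n+1$ are even, so $R_{n-1}(0) = \pm 1$ and $R_{n+3}(0) - R_{n+1}(0) - R_{n-1}(0)$ is an integer; one checks $K_n(0) = 0$. When $n$ is even, $n - 1$ is odd so $R_{n-1}(0) = 0$, giving $K_n(0) = -1 \ne 0$. This is exactly the dichotomy in the statement.

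\textbf{Main obstacle.} The delicate part is the clean count of positive real roots of $\phi_n$ in the interval $(0,1]$ and the verification that no spurious real root of $K_n$ sneaks in from $\rho \in (0,1)$ mapping to $\lambda \in (2, \infty)$ — which cannot happen since $\rho^{1/2} + \rho^{-1/2} \ge 2$ always, but one must be careful that every real root of $K_n$ genuinely arises from a \emph{positive} real root of $\phi_n$ (as opposed to a complex root $\rho$ with $\rho + \rho^{-1}$ real, i.e. $|\rho| = 1$, which would give $\lambda \in [-2,2]$). So the structure of the argument is: (i) real roots $\lambda$ of $K_n$ in $(2,\infty)$ $\leftrightarrow$ real roots $\rho > 1$ of $\phi_n$ $\leftrightarrow$ $\{\rho_0\}$; (ii) by evenness, $\lambda \in (-\infty,-2)$ contributes $\{-\lambda(T(1,n,\infty))\}$; (iii) real roots in $[-2,2]$ come from $|\rho| = 1$, and a direct factorization or resultant computation shows the only such root is $\lambda = 0$, present iff $n$ is odd. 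Step (iii), excluding all of $(-2,2)\setminus\{0\}$, is where I expect to spend the most care — likely by showing $K_n(\lambda) \ne 0$ for $\lambda = 2\cos\theta$, $\theta \in (0,\pi)$, $\theta \ne \pi/2$, using $R_k(2\cos\theta) = \sin((k+1)\theta)/\sin\theta$ and reducing $K_n$ to a trigonometric identity whose vanishing forces $\theta = \pi/2$. Once that is in hand, combining (i)–(iii) yields exactly the two cases claimed.
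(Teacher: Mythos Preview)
Your plan matches the paper's proof: factor $K_n(\lambda) = -\phi_n(\rho)\,\phi_n(\rho^{-1})$ with $\lambda=\rho^{1/2}+\rho^{-1/2}$, show $\phi_n$ has a unique positive real root $\rho_0>1$ (giving $\lambda(T(1,n,\infty))$), use evenness of $K_n$ for the sign symmetry, and treat $\lambda\in[0,2]$ via $|\rho|=1$. For the step you flag as the main obstacle, the paper avoids a trigonometric reduction in the $R_k$ by a shorter device: on $|\rho|=1$ with $\rho\neq 1$, the equation $\phi_n(\rho)=0$ multiplied by $\rho-1$ gives $\rho^{n+2}-\rho^{n+1}-\rho^{n}+1=0$, i.e.\ $\rho^2-\rho-1=-\rho^{-n}$, so $|\rho^2-\rho-1|=1$; but a direct expansion yields $|\rho^2-\rho-1|^2=1+4\sin^2\psi$ for $\rho=e^{i\psi}$, forcing $\rho=\pm 1$, and one checks $\phi_n(-1)=0$ iff $n$ is odd (which corresponds to $\lambda=0$).
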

\begin{proof}
Since $K_{n}$ is an even polynomial, it is enough to consider $\lam{}\geq{}0.$

\noindent{\bf{}I}. If $\lam{}>2,$ we may write $\lam{}=e^{x}+e^{-x},\;x\geq{}0$ and put $\rho=e^{2x}>1.$ Then 
\[K_{n}(\lam{})=-\phi_{_{n}}(\rho)\phi_{_{n}}(\rho^{-1})\]
where
\[\phi_{_{n}}(\rho)=\rho^{n+1}-\rho^{n-1}-\rho^{n-2}-\cdots{}-\rho^{}-1.\]Since  a) $\rho\mapsto\frac{\phi_{_{n}}(\rho)}{\rho^{n+1}}$ is strictly increasing on ${\Bbb R}_{+}$  and b) $\phi_{_{n}}(0)=-1, $ $\lim_{\rho\rightarrow\infty}\phi_{_{n}}(\rho)=\infty,$ the equation $\phi_{_{n}}(\rho)=0$ has precisely one solution, $\rho_{_{0}},$ in ${\Bbb R}_{+}.$ Moreover $\phi_{_{n}}(1)=1-n<0,$ so the solution $\rho_{_{0}}$ is in the interval $1<\rho_{_{0}}<\infty.$
 Hence the equation $K_{n}(\lam{})=0$ has exactly one solution, \lam{0}, with $2<\lam{0}<\infty.$ This value must then equal $\lam{}(T(1,n,\infty)).$

\noindent{\bf{}II}. If $0\leq{}\lam{}\leq{}2,$ we can write $\lam{}=2\cos\theta,\;0\leq\theta\leq\frac{\pi}{2}.$ Put $\rho=e^{i\theta}.$ Then
\[K_{n}(\lam{})=\left|\rho^{n+1}-\rho^{n-1}-\rho^{n-2}-\cdots{}-\rho^{}-1
\right|^{^{2}},\]
so $K_{n}(\lam{})=0$ implies
\begin{equation}
\label{zeroeqn}
\rho^{n+1}-\rho^{n-1}-\rho^{n-2}-\cdots{}-\rho^{}-1=0.
\end{equation}
This is in fact (\ref{uffehofequation}), which implies  (\ref{hofequation})
\[\ro{n+2}-\ro{n+1}-\ro{n} +1 =0,\]
or
\[\ro{2}-\ro{}-1 =-\ro{-n}.\]
Hence $|\ro{2}-\ro{}-1|=1.$ Since $\rho=e^{i\theta}$ we have
\[|\ro{2}-\ro{}-1| = 3-2\mbox{Re}(\rho) -2\mbox{Re}(\rho^{2})+2\mbox{Re}(\rho)=3-2\cos{}2\theta = 1 + 4\sin^{2}\theta,\]
i.e. $|\ro{2}-\ro{}-1|>1$ except for $\theta=p\pi,\;p\in{\Bbb Z},$ or equivalently: $|\ro{2}-\ro{}-1|>1$ except for $\rho=\pm{}1.$ The case $\rho=1$ is excluded by (\ref{zeroeqn}), and $\rho=-1$ is a solution to (\ref{zeroeqn}) if and only if $n$ is odd. Since $\rho=-1$ corresponds to $\lam{}=0,$ we have proved the assertions of the proposition.
\end{proof}
\begin{theorem}
\label{hofthm1}
The numbers $\lam{n}=\lambda(T(1,n,\infty)),\;\;n\geq{}2$ and $\lam{\infty}=\lambda(T(1,\infty,\infty))$ are algebraic integers, and none of these numbers can be obtained as an eigenvalue of a finite graph.
\end{theorem}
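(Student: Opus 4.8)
$\textbf{Proof plan.}$ The statement has two parts: first that each $\lam{n}$ (and $\lam{\infty}$) is an algebraic integer, and second that no finite graph has $\lam{n}$ as an eigenvalue. For the first part, Proposition~\ref{hofpro1} already exhibits $\lam{n}$ as a root of the polynomial $K_{n}(\lambda)=(R_{n+3}(\lambda)-R_{n+1}(\lambda)-R_{n-1}(\lambda))R_{n-1}(\lambda)-1$. Since each $R_{k}(\lambda)$ is a monic polynomial in $\lambda$ with integer coefficients (immediate from the recursion $R_{0}=1$, $R_{1}=\lambda$, $R_{n+1}=\lambda R_{n}-R_{n-1}$ by induction), the leading term of $K_{n}$ is $R_{n+3}\cdot R_{n-1}$, which is monic of degree $2n+2$, and all coefficients are integers. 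Hence $K_{n}\in{\Bbb Z}[\lambda]$ is monic, so every root, in particular $\lam{n}$, is an algebraic integer. For $\lam{\infty}$ I would take the limit: $\lam{\infty}=\lim_{n\to\infty}\lam{n}$, and one checks from the defining equation $\rho^{2}-\rho-1=0$ (the $n\to\infty$ form of (\ref{hofequation})/(\ref{uffehofequation})) that $\rho=e^{2x}=\tfrac{1+\sqrt5}{2}$, giving $\lam{\infty}=e^{x}+e^{-x}$ with $e^{2x}$ the golden ratio; this is a root of a small monic integer polynomial (one obtains $\lam{\infty}^{2}=2+\sqrt5$, so $\lam{\infty}$ satisfies $t^{4}-4t^{2}-1=0$), hence an algebraic integer.

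$\textbf{The finite-graph obstruction.}$ The key tool is the classical fact that the largest eigenvalue of a finite connected graph is an algebraic integer all of whose conjugates lie in the interval $[-\lambda_{\max},\lambda_{\max}]$ — indeed the spectrum of the adjacency matrix of a finite graph on $N$ vertices is real and contained in $[-\lambda_{\max},\lambda_{\max}]$, so if $\mu$ is an eigenvalue of a finite graph then its minimal polynomial divides the characteristic polynomial and therefore all Galois conjugates of $\mu$ are again eigenvalues of that same graph, hence real and bounded in absolute value by $\lambda_{\max}\le\lambda_{\max}$. Thus it suffices to show that $\lam{n}$ has a Galois conjugate that is \emph{not} real, or is real but exceeds $\lam{n}$ in absolute value; either way $\lam{n}$ cannot be the largest eigenvalue of a finite graph (and since for a connected finite graph the largest eigenvalue is the Perron--Frobenius value, and any eigenvalue of a finite graph is $\le$ the Perron value of its connected component, this rules out $\lam{n}$ occurring as an eigenvalue at all). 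The plan is therefore to analyze the roots of $K_{n}$: by Proposition~\ref{hofpro2} the only \emph{real} roots of $K_{n}$ are $\pm\lam{n}$ (and $0$ when $n$ is odd), so $K_{n}$ has at least $2n+2-2=2n$ (or $2n-1$) genuinely complex (non-real) roots when $n\ge2$. Provided $\lam{n}$ and one of these non-real roots lie in the same irreducible factor of $K_{n}$ over ${\Bbb Q}$, we are done.

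$\textbf{The main obstacle.}$ The delicate point is exactly that irreducibility/common-factor issue: a priori $K_{n}$ could factor over ${\Bbb Q}$ with $\lam{n}$ sitting in a factor whose other roots are only $-\lam{n}$ (and possibly $0$), which would leave $\lam{n}$'s conjugates all real and bounded by $\lam{n}$. I would address this by identifying the minimal polynomial of $\lam{n}$ more precisely. Recall from the proof of Proposition~\ref{hofpro1} that $K_{n}(\lambda)=-\phi_{n}(\rho)\phi_{n}(\rho^{-1})$ where $\rho=e^{2x}$, $\lambda=\rho^{1/2}+\rho^{-1/2}$, and $\phi_{n}(\rho)=\rho^{n+1}-\rho^{n-1}-\cdots-\rho-1$; dividing (\ref{hofequation}) shows $\phi_{n}(\rho)(\rho-1)=\rho^{n+2}-\rho^{n+1}-\rho^{n}+1$. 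One shows $\psi_{n}(\rho):=\rho^{n+2}-\rho^{n+1}-\rho^{n}+1$ — or rather $\phi_{n}$ itself — is irreducible over ${\Bbb Q}$ for $n\ge2$ (this is the real work; Hoffmann's reference \cite{Hof} or a Perron-type argument on the unique root outside the unit disk, combined with a reciprocal-polynomial symmetry since $\rho\leftrightarrow\rho^{-1}$ exchanges $\phi_{n}(\rho)$ with $\pm\rho^{-(n+1)}\phi_{n}(\rho^{-1})$, should force any rational factor to be self-reciprocal and hence to contain $\rho_{0}$). Granting irreducibility of $\phi_{n}$, the conjugates of $\rho_{0}=e^{2x}$ are the other roots of $\phi_{n}$, which by the argument in Proposition~\ref{hofpro2} part II all lie strictly inside the unit circle and are non-real for $n\ge2$ (the only unit-circle root is $\rho=-1$, occurring only for $n$ odd and corresponding to $\lambda=0$, which is not conjugate to $\rho_{0}$ since $0\ne\rho_{0}$). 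Mapping back via $\lambda=\rho^{1/2}+\rho^{-1/2}$, a conjugate $\rho'$ of $\rho_{0}$ with $|\rho'|<1$ and $\rho'\notin{\Bbb R}$ produces a conjugate $\lambda'$ of $\lam{n}$ that is non-real, hence cannot be an eigenvalue of a finite graph. This contradicts the algebraic-integer-with-real-bounded-conjugates property above, completing the proof. I would finish by remarking that the same computation $\lam{\infty}^{2}=2+\sqrt5$ shows $\lam{\infty}$ has the conjugate $\sqrt{2-\sqrt5}\notin{\Bbb R}$, so $\lam{\infty}$ likewise cannot occur from a finite graph.
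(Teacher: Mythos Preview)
Your framework is correct and matches the paper's: both establish that $\lam{n}$ is an algebraic integer via $K_{n}$, both reduce the finite-graph obstruction to showing that the minimal polynomial $Q_{n}$ of $\lam{n}$ over ${\Bbb Q}$ has a non-real root, and both use Proposition~\ref{hofpro2} to constrain the real roots of any factor of $K_{n}$. You also correctly name the obstacle: one must rule out the possibility that $Q_{n}$ has only real roots.

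Your proposed resolution, however, has a genuine gap. You assert (or conjecture) that $\phi_{n}$ is irreducible over ${\Bbb Q}$ and defer this to ``a Perron-type argument'' or to \cite{Hof}. But $\phi_{n}$ is \emph{not} irreducible for odd $n$: since $\phi_{n}(-1)=(-1)^{n+1}-\bigl((-1)^{n-1}+\cdots+(-1)+1\bigr)$ vanishes when $n$ is odd, $\rho=-1$ is a root and $(\rho+1)\mid\phi_{n}(\rho)$. (This is exactly the $\lambda=0$ root in Proposition~\ref{hofpro2}.) Even after removing this factor, you would still need to prove irreducibility of the quotient for every $n\ge2$, which you have not done and which is not obviously available in \cite{Hof}. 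Moreover, the passage from conjugates of $\rho_{0}$ to conjugates of $\lam{n}$ via $\lambda=\rho^{1/2}+\rho^{-1/2}$ requires care: this is a two-valued map, and one needs to know the minimal polynomial of $\lam{n}$, not just that of $\rho_{0}$.

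The paper sidesteps all of this with a much simpler observation. Since $Q_{n}\mid K_{n}$, the real roots of $Q_{n}$ lie in $\{0,\pm\lam{n}\}$ by Proposition~\ref{hofpro2}; irreducibility of $Q_{n}$ excludes $0$, and irreducible polynomials over ${\Bbb Q}$ have no repeated roots. So if $Q_{n}$ were totally real it would equal $\lambda-\lam{n}$ or $\lambda^{2}-\lam{n}^{2}$. But $2<\lam{n}<\sqrt{2+\sqrt5}<\sqrt5$ gives $4<\lam{n}^{2}<5$, so neither $\lam{n}$ nor $\lam{n}^{2}$ is an integer, contradicting $Q_{n}\in{\Bbb Z}[\lambda]$. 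Hence $Q_{n}$ has a non-real root, and the finite-graph obstruction follows exactly as you outlined. No irreducibility of $\phi_{n}$ or $K_{n}$ is needed.
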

\begin{proof}
It is easy to check that $\lam{\infty}=\sqrt{2+\sqrt{5}}$ is a root in the polynomial $Q(\lam{})=\lambda^{4}-4\lambda^{2}-1.$ 

\noindent{}$Q$ is irreducible since: The roots of $Q$ are $\pm{}\sqrt{2+\sqrt{5}}$ and $\pm{}i\sqrt{\sqrt{5}-2}.$ None of these roots are integers, so any irreducible factor of $Q$ is of degree at least 2. Moreover the two complex conjugate roots must be roots of the same irreducible factor of $Q.$ Hence the only possible factorization of $Q$ into monic, irreducible polynomials, would be $Q=Q_{1}Q_{2}$ where
\[Q_{1}(\lam{})=(\lam{}-\sqrt{2+\sqrt{5}})(\lam{}+\sqrt{2+\sqrt{5}})=\lambda^{2}-2-\sqrt{5}\]
and
\[Q_{2}(\lam{})=(\lam{}-i\sqrt{\sqrt{5}-2})(\lam{}+i\sqrt{\sqrt{5}-2})=\lambda^{2}-2+\sqrt{5}.\]
However $Q_{1}$ and $Q_{2}$ do not have integer coefficients, and hence $Q$ is irreducible.

Assume that \lam{\infty} is an eigenvalue of the adjacency matrix $\Delta_{\Gamma}$ of the finite graph $\Gamma.$ The characteristic polynomial
\[f(\lam{})=\mbox{det}(\lam{}I-\Delta_{\Gamma})\] 
is  monic with integer coefficients. Moreover, since $\Delta_{\Gamma}$ is symmetric, all the roots of $f$ are real. Since $f(\lam{\infty})=0=Q(\lam{\infty}),$ the irreducibility of $Q$ implies that $Q$ divides $f$. This is impossible because $Q$ has non-real roots.

\noindent{}We will now turn to $\lam{n}=\lam{}(T(1,n,\infty)).$ By proposition \ref{hofpro1}, $\lam{n}$ is root in a monic polynomial, $K_{n},$ with integer coefficients. Let $Q_{n}$ be the minimal monic polynomial over ${\Bbb Q},$ which has $\lam{n}$ as a root. Since \lam{n} is an algebraic integer, $Q_{n}$ has integer coefficients (see \cite{ST} lemma 2.12). 

\noindent{}We claim that $Q_{n}$ must have non-real roots. Indeed, since $Q_{n}$ is a factor in $K_{n},$ the only possible real roots of $Q_{n}$ are $\pm\lam{n}$ and $0$ by proposition \ref{hofpro2}. However $0$ is not a root of $Q_{n},$ because $Q_{n}$ is irreducible. Hence, if $Q_{n}$ has only real roots, it must be of the form
\[Q_{n}(\lam{})=\lam{}-\lam{n}\]
or
\[Q_{n}(\lam{})=(\lam{}-\lam{n})(\lam{}+\lam{n})=\lambda^{2}-\lambda_{n}^{2}.\]
Here it is used that irreducible polynomials do not have multiple roots. (See \cite{ST} corollary 1.2)
However, $\lam{n}\not\in{\Bbb Z}$ and $\lambda_{n}^{2}\not\in{\Bbb Z},$ because 
$2<\lam{n}<\sqrt{2+\sqrt{5}}<\sqrt{5},$ which is a contradiction. Hence $Q_{n}$ has at least one non-real root, and, as in the case \lam{\infty}, it follows that \lam{n} is not an eigenvalue of a finite graph.
\end{proof}
\begin{remark}{\rm
The idea to the above proof is due to P. de la Harpe, \cite{Pierre}, who used the method to prove that $\lam{2},\lam{3}$ and $\lam{\infty}$ are not eigenvalues of any finite graph. 
}\end{remark}
\newpage{}

\setcounter{equation}{0}
%sidst rettet 19/12

\section{A Commuting Square Based on $T(1,2,\infty)$}
\setcounter{equation}{0}
We will look at the graph $T(1,2,\infty)$ as defined by Hoffmann in \cite{Hof}.
\begin{center}
\setlength{\unitlength}{1cm}
\begin{picture}(12,3)
\put(1,1){\line(1,0){10}}
\put(3,1){\line(0,1){1}}
\put(11.4,1){$\ldots$}
\multiput(1,1)(1,0){11}{\circle*{0.15}}
\put(3,2){\circle*{0.15}}
\put(0.9,0.4){1}
\put(1.9,0.4){2}
\put(2.9,0.4){3}
\put(3.9,0.4){5}
\put(4.9,0.4){6}
\put(5.9,0.4){7}
\put(6.9,0.4){8}
\put(7.9,0.4){9}
\put(8.8,0.4){10}
\put(9.8,0.4){11}
\put(10.8,0.4){12}
\put(2.6,2.1){4}

\put(0.9,1.4){$\alpha_{_{1}}$}
\put(1.9,1.4){$\alpha_{_{2}}$}
\put(2.4,1.4){$\alpha_{_{3}}$}
\put(3.9,1.4){$\alpha_{_{5}}$}
\put(4.9,1.4){$\alpha_{_{6}}$}
\put(5.9,1.4){$\alpha_{_{7}}$}
\put(6.9,1.4){$\alpha_{_{8}}$}
\put(7.9,1.4){$\alpha_{_{9}}$}
\put(8.9,1.4){$\alpha_{_{10}}$}
\put(9.9,1.4){$\alpha_{_{11}}$}
\put(10.9,1.4){$\alpha_{_{12}}$}
\put(3.2,2.1){$\alpha_{_{4}}$}
\end{picture}
\end{center}If $\lambda =e^{x}+e^{-x}$ is the Perron-Frobenius eigenvalue of $T(1,2,\infty),$  and $\rho$ denotes $e^{2x}$ then $\rho$ satisfies the equation (see \cite{Hof})
\begin{equation}
\label{egvaleq3}
\rho^{3} - \rho-1=0
\end{equation}
corresponding to $\rho\cong{}1.32472$ and $\lambda\cong{}2.01980.$

The corresponding coordinates of the Perron-Frobenius vector, $\alpha,$ are
\[\begin{array}{lclclcl}
\alpha_{_{1}} & = &e^{-8x}&\;\;\;\;&\alpha_{_{2}} & = &e^{-3x}\\[0.2cm]
\alpha_{_{3}} & = &1 & & \alpha_{_{4}} & = & e^{-5x}\end{array}\]
and for $i>5$ we have $\alpha_{_{i}}=e^{4-i}$.

\noindent{}These are determined as follows.  $\alpha_{_{4}}$ must equal $\frac{1}{\lambda},$ when we have scaled the vector to $1$ at the vertex $3.$ If we use the equation (\ref{egvaleq3}) this is easily seen to equal $e^{-5x}.$ If $z$ denotes $\alpha_{_{2}}$ then $z= \lambda{}-\frac{1}{\lambda}-e^{-x}=\frac{e^{2x}}{\lambda},$ and we can  use (\ref{egvaleq3}) to show that this equals $e^{-3x}.$ Finally $\alpha_{_{1}}=\frac{\alpha_{_{2}}}{\lambda}=e^{-8x}.$

We will construct a commuting square with the adjacency matrix, $\Delta,$ of  $T(1,2,\infty)$ as the index defining inclusion.\label{inftyconstruct} More precisely we let 
$\Delta_{_{bp}}=$ {\tiny\mbox{$ \left(% [inline block 0: 27 envs, 36607 chars -> data_tex | \begin{array}{cc}0 & H\\ K&0\end{array}\right)$}} be the adjacency matrix of a bi-partition of $T(1,2,\infty).$  We will...]
\right)$}} is a doubly stochastic matrix, and put \\[0.2cm]
$\alpha = d_{_{1,1}}d_{_{2,1}},$ $\beta = d_{_{1,2}}d_{_{2,2}}$ and $\gamma = d_{_{1,3}}d_{_{2,3}}.$ Then there exists a unitary $u =(u_{_{i,j}})$ with\\ $|u_{_{i,j}}|^{^{2}}=d_{_{i,j}},\;\;i,j=1,2,3$ if and only if 
\begin{equation}
\label{33existcond}\alpha^{^{2}}+\beta^{^{2}}+\gamma^{^{2}}-2\alpha\gamma-2\beta\gamma-2\alpha\beta\leq{}0
\end{equation}
\end{prop}
In our situation we have 
\[\alpha=\lambda^{^{-2}}e^{3x},\;\;\beta=\lambda^{^{-2}}e^{x},\;\;\gamma=\lambda^{^{-1}}e^{-4x}\]
for which we may substitute
\[\alpha'=\lambda^{^{-1}}e^{2x},\;\;\beta'=\lambda^{^{-1}},\;\;\gamma'=e^{-5x}\]
If we plug these values into the condition (\ref{33existcond}), and reduce as much as possible by the identity (\ref{egvaleq2}), we find that a solution exists if and only if
\[-4\rho^{2}-\rho-5\leq{}0,\]
which is clearly satisfied for any positive value of $\rho.$ 

By rescaling by complex numbers of modulus 1, we can obtain a solution to the $3\times{}3-$block of $v$ as listed in the table, where $\xi,\eta,\nu$ and $\mu$ are complex numbers with modulus 1.
\section{A Commuting Square Based on $T(1,4,\infty)$}
\setcounter{equation}{0}
\subsection{A Not So Successful Attempt}
\begin{center}
\setlength{\unitlength}{1cm}
% [inline block 1: 10 envs, 27043 chars -> data_tex | \begin{picture}(12,3) \put(0,2.8){The graph $T(1,4,\infty)$}...]

\end{center}

If a solution to these vectors has been found, all the other elements can be determined. \label{argument}This can be seen as follows. The ``phases'' in this $3\times{}3$ part of $u$ and $v,$ influences the other blocks of $u$ and $v$ via the transition from $u$ to $v$ and vice versa. However this influence is restricted to at most one entry of a $2\times{}2-$ or $1\times{}1-$block, and so phases for the entire $2\times{}2-$ resp. $1\times{}1-$block can be determined to make it unitary. The phases determined in a  $2\times{}2-$block may influence other blocks, but again at most one entry in a block is determined this way, and we can continue the argument as above.

If we rescale the first column, of the part coming from $u,$ by $\sqrt{\lambda\alpha_{_{4}}}=\sqrt{\rho},$ the second by $\sqrt{\lambda\alpha_{_{6}}}=1$ and the third by $\sqrt{\lambda\alpha_{_{7}}}=\sqrt{\lambda{}e^{-x}},$ and if we also rescale the first row, of the part coming from $v,$ by $\sqrt{\rho},$ the second by $1$ and the third by $\sqrt{\lambda{}e^{-x}},$ the moduli in both blocks becomes 
\setlength{\unitlength}{1.5cm}
\begin{center}
\begin{picture}(4,3)
\multiput(1,0)(1,0){4}{\line(0,1){3}}
\multiput(1,0)(0,1){4}{\line(1,0){3}}
\put(1.1,2.4){\tiny\mbox{$\sqrt{\rho(2-\rho)}$}}
\put(2.5,2.4){\tiny\mbox{$1$}}
\put(3.2,2.4){\tiny\mbox{$\sqrt{\rho^{2} - 1}$}}
\put(1.5,1.4){\tiny\mbox{$1$}}
\put(3.5,1.4){\tiny\mbox{$1$}}
\put(1.2,0.4){\tiny\mbox{$\sqrt{\rho^{2}  - 1}$}}
\put(2.5,0.4){\tiny\mbox{$1$}}
\put(3,0.4){\tiny\mbox{$\sqrt{\frac{2+2\rho-\rho^{3}}{\rho}}$}}
\end{picture}
\end{center}
And hence all we need to find are 9 vectors $(e_{_{i,j}})_{_{i,j+1}}^{^{3}}$ with 
\[\left(\begin{array}{ccc}\|e_{_{1,1}}\|_{_{2}}^{^{2}} &\|e_{_{1,2}}\|_{_{2}}^{^{2}} &\|e_{_{1,3}}\|_{_{2}}^{^{2}} \\[0.2cm]
\|e_{_{2,1}}\|_{_{2}}^{^{2}} &\|e_{_{2,2}}\|_{_{2}}^{^{2}} &\|e_{_{2,3}}\|_{_{2}}^{^{2}} \\[0.2cm]
\|e_{_{3,1}}\|_{_{2}}^{^{2}} &\|e_{_{3,2}}\|_{_{2}}^{^{2}} &\|e_{_{3,3}}\|_{_{2}}^{^{2}}\end{array}\right)=
\left(\begin{array}{ccc} \rho( 2-\rho) & 1 &\rho^{2} - 1\\ 
1 & 0 & 1\\ 
\rho^{2}-1 & 1 & \frac{2+2\rho-\rho^{3}}{\rho}\end{array}\right)\]
and the ``right'' inner products.

\noindent{}More precisely, having the right inner products can be stated as
\[\sum_{i}e_{_{i,j}}\otimes{}\overline{e}_{_{i,j}} = c_{_{j}}I \mbox{ and } \sum_{j}e_{_{i,j}}\otimes{}\overline{e}_{_{i,j}} = c_{_{i}}I\]
where
\[\begin{array}{lclcl}
c_{_{1}} & = & \frac{1}{2}(\rho( 2-\rho) +1 +\rho^{2} -1 ) & = & \rho \\[0.3cm]
c_{_{2}} & = & 1 & &\\[0.3cm]
c_{_{3}} & = & \frac{1}{2}(\rho^{2} + \frac{2+2\rho-\rho^{3}}{\rho})& = & \frac{\rho+1}{\rho}
\end{array}\]
\begin{prop}
\label{prop1t14}
There exists 9 vectors $(e_{_{i,j}})_{_{i,j+1}}^{^{3}}\in {\Bbb C}^{2}$ with 
\[\left(\begin{array}{ccc}\|e_{_{1,1}}\|_{_{2}}^{^{2}} &\|e_{_{1,2}}\|_{_{2}}^{^{2}} &\|e_{_{1,3}}\|_{_{2}}^{^{2}} \\[0.2cm]
\|e_{_{2,1}}\|_{_{2}}^{^{2}} &\|e_{_{2,2}}\|_{_{2}}^{^{2}} &\|e_{_{2,3}}\|_{_{2}}^{^{2}} \\[0.2cm]
\|e_{_{3,1}}\|_{_{2}}^{^{2}} &\|e_{_{3,2}}\|_{_{2}}^{^{2}} &\|e_{_{3,3}}\|_{_{2}}^{^{2}}\end{array}\right)=
\left(\begin{array}{ccc} a & d &b\\ 
d & 0 & d\\ 
b & d & c\end{array}\right)\]
and
\[\sum_{i}e_{_{i,j}}\otimes{}\overline{e}_{_{i,j}} = c_{_{j}}I \mbox{ and } \sum_{j}e_{_{i,j}}\otimes{}\overline{e}_{_{i,j}} = c_{_{i}}I\]
where $c_{_{1}}=\frac{1}{2}(a+b+d),$ $c_{_{2}}=d$ and $c_{_{3}}=\frac{1}{2}(b+c+d)$ if and only if
\[b^{^{2}}+d^{^{2}}\geq{}\frac{1}{2}(a^{^{2}}+c^{^{2}}) \mbox{ and }
 |b^{^{2}}-d^{^{2}}|\leq ac.\]
\end{prop}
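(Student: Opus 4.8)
The plan is to reduce the problem about nine vectors in $\mathbb{C}^2$ to the same kind of ``$2\times 2$ block / inner product'' analysis that was used for the $3$--star problem in Chapter~2, and in particular to invoke the machinery of lemmas \ref{lemtre25}, \ref{lemtre26}, \ref{lemtre28} together with the quadratic-form trick of lemma \ref{lemtre210} and lemma \ref{lemtre212}. The structure of the target matrix is special: the $(2,2)$ entry is $0$, so $e_{_{22}}=0$, and the row/column conditions $\sum_i e_{_{ij}}\otimes\overline{e}_{_{ij}}=c_{_j}I$, $\sum_j e_{_{ij}}\otimes\overline{e}_{_{ij}}=c_{_i}I$ force the remaining eight vectors to satisfy rigid orthogonality relations. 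First I would record the necessary conditions exactly as in Remark \ref{remtre22}: each $e_{_{ij}}\otimes\overline{e}_{_{ij}}\le c_{_i}I$ and $\le c_{_j}I$, which gives $\|e_{_{ij}}\|^2\le\min(c_{_i},c_{_j})$; checking these against the stated norms (using $c_{_1}=\tfrac12(a+b+d)$, $c_{_2}=d$, $c_{_3}=\tfrac12(b+c+d)$) is routine and should produce no new constraints beyond positivity of the entries.

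Next I would translate the existence problem into a positive-semidefiniteness statement. By lemma \ref{lemtre25}(c), whenever $x_1\otimes\overline{x}_1+x_2\otimes\overline{x}_2+x_3\otimes\overline{x}_3=cI$ in a two-dimensional Hilbert space, one has $|(x_i,x_j)|^2=(c-\|x_i\|^2)(c-\|x_j\|^2)$; applying this to each row and each column of the array $(e_{_{ij}})$ pins down all the moduli $|(e_{_{ij}},e_{_{kl}})|$ that share a row or a column. In particular, along row $2$ (which contains $e_{_{21}}$, $e_{_{22}}=0$, $e_{_{23}}$) and column $2$ we get $|(e_{_{21}},e_{_{23}})|^2=(d-d)\cdot 0=0$... wait, more carefully: row $2$ sums to $c_{_2}I=dI$ with $\|e_{_{21}}\|^2=\|e_{_{23}}\|^2=d$ and $\|e_{_{22}}\|^2=0$, so $|(e_{_{21}},e_{_{23}})|^2=(d-d)(d-d)=0$, i.e.\ $e_{_{21}}\perp e_{_{23}}$. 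Similarly $e_{_{12}}\perp e_{_{32}}$. This rigidity means the whole configuration is essentially determined by the ``corner'' vectors $e_{_{11}},e_{_{13}},e_{_{31}},e_{_{33}}$ together with phase choices. The right framework is then lemma \ref{lemtre212}: form the $3\times 3$ matrix $A$ whose off-diagonal entries are the prescribed moduli of inner products of a suitable triple (as in lemma \ref{lemtre213}, taking $\xi_{_1},\xi_{_2},\xi_{_3}$ to be $e_{_{23}},e_{_{31}},e_{_{12}}$ or the analogous choice), and then show $A$ satisfies condition $(2)$ of lemma \ref{lemtre28} — equivalently $Q(A)\ge 0$ — precisely under the two stated inequalities.

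Concretely, I expect the two inequalities to arise as follows. The condition $b^2+d^2\ge\tfrac12(a^2+c^2)$ should come out of a $\det(A')\le 0$ type computation (the sign-flipped determinant in lemma \ref{lemtre28}), analogous to how $\det(A')=\lambda(4-\lambda^2)\alpha_1\alpha_2\alpha_3<0$ appeared for $\lambda>2$; here the ``$4-\lambda^2$''-flavoured quantity should be exactly $b^2+d^2-\tfrac12(a^2+c^2)$ up to a positive factor, so its sign controls feasibility. The condition $|b^2-d^2|\le ac$ should be the ``triangle'' / $\det(A)\ge 0$ condition, playing the role that $4\lambda\alpha_1\alpha_2\alpha_3-4(\alpha_1\alpha_2+\cdots)+3\ge 0$ (equivalently the triangle inequality on $\delta_1,\delta_2,\delta_3$) played before; note $b^2-d^2 = \|e_{_{13}}\|^4-\|e_{_{21}}\|^4$ is exactly the kind of difference-of-squares that shows up in the $\kappa(0)$, $\kappa(\pi)$ analysis of proposition \ref{fire4prop15} and in lemma \ref{lemtre28}'s formula for $\det(A)$. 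For the converse (sufficiency), given the two inequalities I would build $Q(A)$, verify $Q(A)\ge 0$ by checking its principal minors (the $1\times 1$ minors are the $a^2,0,c^2\ge 0$ and similar; the $2\times 2$ minors and the determinant reduce to the two inequalities after expansion), then apply lemma \ref{lemtre212} to get vectors $\xi_{_1},\xi_{_2},\xi_{_3}\in\mathbb{C}^2$ with the right pairwise moduli, then run the extension argument of proposition \ref{protre27}/lemma \ref{lemtre26} twice (once per ``direction'') to fill in $e_{_{11}},e_{_{33}}$ and finally $e_{_{22}}=0$, checking the row/column sum conditions hold.

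The main obstacle I anticipate is bookkeeping: getting the dictionary between the nine $e_{_{ij}}$, the auxiliary triple, and the matrix $A$ of lemma \ref{lemtre28} exactly right so that the two abstract conditions $\det(A)\ge 0$, $\det(A')\le 0$, $a_{_{ij}}^2\le a_{_{ii}}a_{_{jj}}$ collapse cleanly to the two displayed inequalities — and verifying that the ``small'' minor conditions $a_{_{ij}}^2\le a_{_{ii}}a_{_{jj}}$ are automatically implied (they should be, from $b^2,d^2\le\min(c_{_i},c_{_j})$, i.e.\ from the necessary conditions already noted) rather than giving extra constraints. A secondary subtlety is the presence of the zero entry $e_{_{22}}=0$: one must make sure the extension lemma \ref{lemtre26} still produces a third vector of the correct (here vanishing, or here length$^2$ $=d$) norm, which is just the assertion $\|x_3\|^2=2c-\|x_1\|^2-\|x_2\|^2$ applied with $2c_{_2}-d-d=0$, so it is consistent but needs to be stated. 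Once the correspondence is fixed, the actual inequalities follow by the same determinant expansions already carried out in the proofs of lemmas \ref{lemtre213}, \ref{lemtre214} and proposition \ref{fire4prop15}, so I would cite those patterns rather than redo them in full.
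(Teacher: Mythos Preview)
You have the right tool --- the map $q$ of lemma \ref{lemtre210} --- but you propose to route through lemmas \ref{lemtre28}/\ref{lemtre212} by isolating a ``suitable triple,'' and this is where your plan becomes vague and diverges from the paper. The paper's proof is considerably more direct and does not use \ref{lemtre28} or \ref{lemtre212} at all.

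The key observation you are missing is that applying $q$ converts each nonlinear constraint $\sum e_{ij}\otimes\overline{e}_{ij}=c\,I$ into the \emph{linear} constraint $\sum q_{ij}=0$ (the sum is a scalar with trace $0$). With $q_{2,2}=q(0)=0$, the six row/column sum-zero equations force $q_{2,3}=-q_{2,1}$, $q_{3,2}=-q_{1,2}$, $q_{1,3}=-q_{1,1}-q_{1,2}$, $q_{3,1}=-q_{1,1}-q_{2,1}$, and $q_{3,3}=q_{1,1}+q_{1,2}+q_{2,1}$. So the entire array of $q_{ij}$'s in ${\Bbb R}^3$ is determined by $q_{1,1},q_{1,2},q_{2,1}$ alone. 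From the prescribed norms $\|q_{ij}\|=\|e_{ij}\|^2$ one computes by polarisation $(q_{1,1},q_{2,1})=(q_{1,1},q_{1,2})=\tfrac12(b^2-a^2-d^2)$, hence $q_{1,1}\perp(q_{2,1}-q_{1,2})$, and then $\|q_{2,1}-q_{1,2}\|^2=2b^2+2d^2-a^2-c^2$: nonnegativity of this is exactly the first inequality. Likewise $(q_{1,1},q_{3,3})=b^2-d^2$, and Cauchy--Schwarz with $\|q_{1,1}\|=a$, $\|q_{3,3}\|=c$ gives the second. Sufficiency reverses this: choose $q_{1,1},q_{3,3}\in{\Bbb R}^3$ with the right norms and inner product (possible by $|b^2-d^2|\le ac$), pick $h\perp q_{1,1},q_{3,3}$ with $\|h\|^2=2b^2+2d^2-a^2-c^2$, set $q_{2,1}=\tfrac12(q_{3,3}-q_{1,1}+h)$, $q_{1,2}=\tfrac12(q_{3,3}-q_{1,1}-h)$, and lift each $q_{ij}$ back via surjectivity of $q$.

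Your proposed detour through a $3\times 3$ determinant analysis might be forced through, but you never specify which triple captures the full system (a single triple cannot, since row and column constraints are coupled and the problem is not a priori symmetric in $e_{ij}\leftrightarrow e_{ji}$), and your guess that $\det(A')\le 0$ yields $b^2+d^2\ge\tfrac12(a^2+c^2)$ is unsupported. The linear-constraint argument above is both shorter and sidesteps this bookkeeping entirely.
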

\begin{proof}
Again we consider the map (see lemma \ref{lemtre210})
\[q:{\Bbb C}^{2}\rightarrow\left\{\mbox{ self-adjoint $2\times{}2$ matrices with trace 0}\right\}\cong{}{\Bbb R}^{3}\]
given by
\[q(x)=\sqrt{2}(x\otimes{}\overline{x}-\frac{1}{2}\|x\|_{_{2}}^{^{2}}I).\]
Then $\|q(x)\|_{_{HS}}=\|x\|_{_{2}}^{^{2}}$ since
\[\|q(x)\|_{_{HS}}^{^{2}}=2(\|x\otimes\overline{x}\|_{_{HS}}^{^{2}}-
\frac{1}{2}\|x\|_{_{2}}^{^{4}}) = \|x\|_{_{2}}^{^{4}}.\]
Also $(q(x),q(y))=2|(x,y)|-\|x\|_{_{2}}^{^{2}}\|y\|_{_{2}}^{^{2}},$ so if $|(x,y)|=\|x\|_{_{2}}\|y\|_{_{2}}\cos{}\theta,\;0\leq\theta\leq2\pi$ then
\[(q(x),q(y))=\|x\|_{_{2}}^{^{2}}|y\|_{_{2}}^{^{2}}(2\cos{}\theta -1)= \|q(x)\|\|q(y)\|\cos{}2\theta.\]
Put $q_{_{i,j}}=q(e_{_{i,j}})$ for $i,j=1,2,3.$ Then 
\[\sum_{i}q_{_{i,j}}=0 \mbox{ and } \sum_{j}q_{_{i,j}}=0\] 
since the sums are scalars with trace 0.

\noindent{}Hence we have 
\[\left(\begin{array}{ccc}
q_{_{1,1}} & q_{_{1,2}} & q_{_{1,3}} \\
q_{_{2,1}} & q_{_{2,2}} & q_{_{2,3}} \\
q_{_{3,1}} & q_{_{3,2}} & q_{_{3,3}} \end{array}\right)=
\left(\begin{array}{ccc}
q_{_{1,1}} & q_{_{1,2}} & q_{_{1,3}} \\
q_{_{2,1}} & 0 & -q_{_{2,1}} \\
q_{_{3,1}} & -q_{_{1,2}} & q_{_{3,3}} \end{array}\right)\]
and get
\[q_{_{3,3}}=-q_{_{3,1}}+q_{_{1,2}}=q_{_{1,1}}+q_{_{1,2}}+q_{_{2,1}}\]
which implies
\[\begin{array}{lcl}
(q_{_{1,1}},q_{_{2,1}}) & = & 
\frac{1}{2}(\|q_{_{1,1}}+q_{_{2,1}}\|_{_{2}}^{^{2}}-\|q_{_{1,1}}\|_{_{2}}^{^{2}}-\|q_{_{2,1}}\|_{_{2}}^{^{2}})\\[0.25cm]
 & = &
\frac{1}{2}(\|q_{_{3,1}}\|_{_{2}}^{^{2}}-\|q_{_{1,1}}\|_{_{2}}^{^{2}}-\|q_{_{2,1}}\|_{_{2}}^{^{2}})\\[0.25cm]
& = &\frac{1}{2}(b^{^{2}}-a^{^{2}}-d^{^{2}}).
\end{array}\]
Similarly we get
\[\begin{array}{lcl}
(q_{_{1,1}},q_{_{1,2}}) & =  &\frac{1}{2}(b^{^{2}}-a^{^{2}}-d^{^{2}})
\end{array}\]
and we have
\[(q_{_{1,1}},q_{_{2,1}}+q_{_{1,2}}) = b^{^{2}}-a^{^{2}}-d^{^{2}} \mbox{ and } (q_{_{1,1}},q_{_{2,1}}-q_{_{1,2}}) =0.\]
Also $q_{_{3,3}}= q_{_{1,1}}+(q_{_{2,1}}+q_{_{1,2}}),$ hence
\[\|q_{_{3,3}}\|_{_{2}}^{^{2}}=
\|q_{_{1,1}}\|_{_{2}}^{^{2}}+\|q_{_{2,1}}+q_{_{1,2}}\|_{_{2}}^{^{2}}+
2(q_{_{1,1}},q_{_{2,1}}+q_{_{1,2}}),\]
and we get
\begin{equation}
\label{lign1}
\begin{array}{lcl}\|q_{_{2,1}}+q_{_{1,2}}\|_{_{2}}^{^{2}} & = &
c^{^{2}}-a^{^{2}}-2(b^{^{2}}-a^{^{2}}-d^{^{2}})\\[0.25cm]
 & = & a^{^{2}}+c^{^{2}}-2b^{^{2}}+2d^{^{2}}
\end{array}\end{equation}
and
\begin{equation}
\label{lign2}
\begin{array}{lcl}\|q_{_{2,1}}-q_{_{1,2}}\|_{_{2}}^{^{2}} & = &
2\|q_{_{2,1}}\|_{_{2}}^{^{2}} +2\|q_{_{1,2}}\|_{_{2}}^{^{2}} - \|q_{_{2,1}}+q_{_{1,2}}\|_{_{2}}^{^{2}} \\[0.25cm]
& = & 4d^{^{2}}-a^{^{2}}-c^{^{2}}+2b^{^{2}}+2d^{^{2}}\\
 & = & 2d^{^{2}}+2b^{^{2}}-a^{^{2}}-c^{^{2}}.
\end{array}
\end{equation}
This now implies 
\[\|q_{_{3,3}}-q_{_{1,1}}\|_{_{2}}^{^{2}}=\|q_{_{2,1}}+q_{_{1,2}}\|_{_{2}}^{^{2}}=a^{^{2}}+c^{^{2}}-2b^{^{2}}+2d^{^{2}}\]
and hence
\[(q_{_{1,1}},q_{_{3,3}})=\frac{1}{2}(\|q_{_{1,1}}\|_{_{2}}^{^{2}}+\|q_{_{3,3}}\|_{_{2}}^{^{2}}-\|q_{_{1,1}}-q_{_{3,3}}\|_{_{2}}^{^{2}})=
b^{^{2}}-d^{^{2}}.\]
Cauchy--Schwartz gives
\[|b^{^{2}}-d^{^{2}}|\leq{}\|q_{_{1,1}}\|_{_{2}}\|q_{_{3,3}}\|_{_{2}} = ac.\]
From (\ref{lign1}) and (\ref{lign2}) we must have 
\[2d^{^{2}}+2b^{^{2}}\geq{}a^{^{2}}+c^{^{2}}.\]

To show that the condition is also sufficient we argue as follows. 

Assume the two conditions are satisfied. Choose $q_{_{1,1}}$ and $q_{_{3,3}}$ in ${\Bbb C}^{3}$ with the right length and inner product, and pick $h\in{\Bbb R}^{3}$ with $(h,q_{_{1,1}})=0=(h,q_{_{3,3}})$ and $\|h\|_{_{2}}^{^{2}}=
2d^{^{2}}+2b^{^{2}}-a^{^{2}}-c^{^{2}}.$ Put
\[q_{_{2,1}}=\frac{1}{2}(q_{_{3,3}}-q_{_{1,1}}+h) \mbox{ and }
 q_{_{1,2}}=\frac{1}{2}(q_{_{3,3}}-q_{_{1,1}}-h).\]
Put also
\[q_{_{1,3}}= -q_{_{1,1}}-q_{_{2,1}} \mbox{ and } 
q_{_{3,1}}= -q_{_{1,1}}-q_{_{1,2}}\]
Then the defined vectors have the right length and inner products.

Now identify ${\Bbb R}^{3}$ with the $2\times{}2$ matrices with trace 0, and pick $e_{_{i,j}}$ such that 
\[\begin{array}{lclclcl}
q(e_{_{1,1}}) & = & q_{_{1,1}} &, &q(e_{_{1,2}})& = & q_{_{1,2}}\\ 
q(e_{_{1,3}}) & = & q_{_{1,3}} &, &q(e_{_{2,1}}) & = & q_{_{2,1}}\\
q(e_{_{2,3}}) & = & q_{_{2,3}} &, &q(e_{_{3,1}}) & = & q_{_{3,1}}\\
q(e_{_{3,2}}) & = & q_{_{3,2}} &, &q(e_{_{3,3}}) & = & q_{_{3,3}}
\end{array}\]
\end{proof}
If we apply the above proposition to our construction for 
$S_{_{6}}(T(1,4,\infty)),$ we have
$a=\rho(2-\rho),$\\[0.2cm]
 $b=\rho^{2}-1,$ $c=\frac{(\rho^{2}-1)(\rho+1)}{\rho^{3}}$ and $d=1.$ In numerical entities we have $a\cong{}0.71468,$\\[0.2cm]
$b\cong{}1.35364$ and $c\cong{}0.95001$ and we see that the first criterion of  proposition \ref{prop1t14} is satisfied, but 
$|b^{^{2}}-d^{^{2}}|\cong{}0.83234$ and $ac\cong{}0.67895,$ so the second condition is not satisfied.
\newpage{}
\subsection{A Successful Attempt}
Since the first attempt to construct a commuting square with $T(1,3,\infty)$ as the index defining side, did not succed, we did a little experimenting and found another polynomial which will do the job. We shall be loking at the polynomial $S_{_{6}}(t)+1 = t^{^{6}}-6t^{^{4}}+9t^{^{2}}-1.$\\
$(S_{_{6}}+ 1)(T(1,4,\infty))$ is given by

\[\left(\mbox{\tiny$% [inline block 2: 12 envs, 33080 chars -> data_tex | \begin{array}{cccccccccccccccccccc}    1 &    0 &    0 &    0 &    0 &    0 &    0 &    1 &    0 &    0 &    0 &    0 & ...]
\right)\end{equation}
where $e,f,h\in{}{\Bbb C}^{3}$ with $\|e\|_{_{2}}^{^{2}} = d,$ 
$\|f\|_{_{2}}^{^{2}} = b,$ $\|h\|_{_{2}}^{^{2}} = d$ and $\xi,\eta,\nu,\mu\in{\Bbb C}.$ The left-hand matrix\\[0.1cm] 
is of the form $\sqrt{\frac{a+b+d}{3}}\mbox{unitary}$ and the right-hand matrix is of the form $\sqrt{\frac{c+b+d}{3}}\mbox{unitary}.$ 

\noindent{}If we can find $e,f,h,\xi,\eta,\nu$ and $\mu$ with the desired properties, the rest of the above matrices can be determined by extending to an orthonormal basis of ${\Bbb C}^{4}$ in either case.

\noindent{}Since a rescaling of the vectors $e$ and $h$ gives columns in a unitary in $v,$ we have $e\perp{}h.$

\noindent{}We also have
\[\begin{array}{lclclcll}
|\xi|^{^{2}} & = & \frac{a+b+d}{3}-d & = &\frac{a+b-2d}{3}&\geq{}&0&\mbox{ if } a\geq{}b-2d\\[0.2cm]
|\eta|^{^{2}} & = & \frac{a+b+d}{3}-b & = &\frac{a-2b+d}{3}&\geq{}&0&\mbox{ if } a\geq{}d-2b
\end{array}.\]
The criteria for positivity are both satisfied. Hence
\[|(e,f)| = |\xi\eta| = \mbox{$\frac{1}{3}$}\sqrt{(a+b-2d)(a-2b+d)}=\mbox{$\frac{1}{3}$}\sqrt{-2(b-d)^{2}+(a-b)(a-d)}.\]
Similarly we get
\[\begin{array}{lclclcll}
|\nu|^{^{2}} & = & \frac{c+b+d}{3}-d & = &\frac{c+b-2d}{3}\geq{}&0&\mbox{ if } c\geq{}b-2d\\[0.2cm]
|\mu|^{^{2}} & = & \frac{c+b+d}{3}-b & = &\frac{c-2b+d}{3}\geq{}&0&\mbox{ if } c\geq{}d-2b.\end{array}\]
Again both criteria for positivity are satisfied, and we get
\[|(f,h)| = |\mu\nu| = \mbox{$\frac{1}{3}$}\sqrt{(c+b-2d)(c-2b+d)}=\mbox{$\frac{1}{3}$}\sqrt{-2(b-d)^{2}+(c-b)(c-d)}.\]
In particular, a necessary condition for a solution to the bi--unitary problem is 
\[a\geq{}\max\{b-2d,d-2b\}\;\;\;\mbox{ and }\;\;\;c\geq{}\max\{b-2d,d-2b\},\]
which is seen to be satisfied by the values of $a,$ $b,$ $c,$ and $d$ above.

\noindent{}Let $k_{_{1}},$ $k_{_{2}}$ and $k_{_{3}}$ be an orthonormal,basis for ${\Bbb C}^{3},$ and put
\[e=dk_{_{1}},\;\;\;h=dk_{_{2}}\;\;\mbox{ and }\;f=\gamma_{_{1}}k_{_{1}}+\gamma_{_{2}}k_{_{2}}+\gamma_{_{3}}k_{_{3}},\]
where $\gamma_{_{1}}=\frac{1}{3}\sqrt{-2(b-d)^{2}+(a-b)(a-d)}$ and $\gamma_{_{2}}=\frac{1}{3}\sqrt{-2(b-d)^{2}+(c-b)(c-d)}.$

\noindent{}If $\gamma_{_{3}}$ can be chosen such that $|\gamma_{_{1}}|^{2}+|\gamma_{_{2}}|^{2}+|\gamma_{_{3}}|^{2}=b$ then
\[\|e\|^{2}=\|h\|^{2}=d,\;\;\;\|f\|^{2}=b,\;\;\;(e,h)=0,\]
\[(e,f)=\mbox{$\frac{1}{3}$}\sqrt{-2(b-d)^{2}+(a-b)(a-d)} \mbox{ and }(h,f)=\mbox{$\frac{1}{3}$}\sqrt{-2(b-d)^{2}+(c-b)(c-d)}.\]
Computing, we get
\[b-|\gamma_{_{1}}|^{2}-|\gamma_{_{2}}|^{2}=\mbox{$\frac{1}{9}$}(9b-(a-b)(a-d)-(c-b)(c-d)+4(b-d)^{2})\cong{}1.031>0,\]
so $\gamma_{_{3}}=\mbox{$\frac{1}{3}$}\sqrt{9b-(a-b)(a-d)-(c-b)(c-d)+4(b-d)^{2}}$ will do the job.

\noindent{}Since the above solution to $e,$ $f$ and $h$ is real, we can obtain a solution to the part of $u$ corresponding to (\ref{T14eqn666}) as follows.
Put
\[\mu'=\sqrt{\frac{\rho}{3(\rho+1)}}\mu,\;\;\;\nu'=\sqrt{\frac{\rho}{3(\rho+1)}}\nu,\;\;\;\eta'=\frac{1}{\sqrt{3\rho}}\eta,\]
\[\xi'=\frac{1}{\sqrt{3\rho}}\xi,\;\;\;\delta_{_{i}}=\frac{1}{\sqrt{3\rho}}\gamma_{_{i}},\;\;\;\epsilon_{_{i}}=\sqrt{\frac{\rho}{3(\rho+1)}}\gamma_{_{i}}.\]
Then, with $A_{_{1}},A_{_{2}},B_{_{1}}$ and $B_{_{2}}$ obtained by extending to orthonormal bases of ${\Bbb C}^{3},$ we have the following solution to (\ref{T14eqn666})
\[\left(\begin{array}{cc}
A_{_{1}}&B_{_{1}}\\[0.2cm]
\xi'&(\frac{1}{\sqrt{3\rho}},0,0)\\[0.2cm]
\eta'&(\delta_{_{1}},\delta_{_{2}},\delta_{_{3}})
\end{array}\right)\;\;\;\;\;
\left(\begin{array}{ccc}
(\epsilon_{_{1}},\epsilon_{_{2}},\epsilon_{_{3}})&\nu'\\[0.2cm]
(\sqrt{\frac{\rho}{3(\rho+1)}},0,0)&\mu'\\[0.2cm]
B_{_{2}}&A_{_{2}}
\end{array}\right).\]
Extend the above solution to a larger part of $u,$ by reflecting it in the main diagonal of the diagram and then rescale to get the right Hilbert--Schmidt norms. Then the ``directions'' of the corresponding summands of $v$ is given by:
\[\left(\begin{array}{ccc}
A_{_{1}}^{t}&\xi&\eta\\[0.2cm]
B_{_{1}}^{t}&e^{t}&f^{t}
\end{array}\right)\;\;\;\;\;
\left(\begin{array}{ccc}
f^{t}&h^{t}&B_{_{2}}^{t}\\[0.2cm]
\nu&\mu&A_{_{2}}^{t}
\end{array}\right),
\]
and hence, with the right scaling, unitary.

\noindent{}To show that there is a solution to the rest of $u$ and $v$ we argue as on page \pageref{argument}.

\newpage{}

%sidst rettet 19/12


\begin{thebibliography}{XXXXXX}
\bibitem[Bra]{bratteli}O.\ Bratteli:\\Inductive Limits of Finite Dimensional $C^{*}-$algebras\\Trans.\ Amer.\ Math.\ Soc. 171 (1972) pp.  195--234.
\bibitem[Dix]{dixmier}J.\ Dixmier:\\Von Neumann Algebras\\North Holland 1981.

\bibitem[G.H.J.]{HGJ}F.\ Goodman, P.\ de la Harpe \& V.\ Jones:\\Coxeter Graphs and Towers of Algebras.\\Springer Verlag 1989.


\bibitem[Hof]{Hof}A.\ J.\ Hoffmann:\\On Limit Points of Spectral Radii of Non-negative Symmetric Integral Matrices.\\Springer Lecture Notes in Mathematics vol. 303, pp. 165-172.

\bibitem[HW]{wenzlharpe}P.\ de la Harpe \& H. Wenzl:\\Op\'{e}rations sur les rayons spectraux de matrices sym\'{e}triques enti\`{e}res positives.\\C.\ R. Acad. Paris, Ser. I 305, 1987, pp. 733-736.

\bibitem[Jo]{jones}V. Jones:\\Index for Subfactors\\Inventiones  mathematicae 72 (1983) pp. 1--25. 
\bibitem[O]{Ocn}Adrian Ocneanu:\\Private communications. Fall 1988.

\bibitem[PH]{Pierre}Pierre de la Harpe:\\Private communications. Fall 1990.

\bibitem[Po1]{popa1}S.\ Popa:\\Markov Traces on Universal Jones Algebras and Subfactors of Finite Index \\Preprint, IHES 1990.

\bibitem[Po2]{popa2}S.\ Popa:\\Private communications.  Fall 1990.

\bibitem[S]{S}E.\ Seneta:\\Non-negative Matrices and Markov Chains.\\Springer Verlag 1981.

\bibitem[Sh]{shearer}James B.\ Shearer:\\On the Distribution of the Maximum Eigenvalue of Graph.\\Linear Algebra and its Applications 114/115, 1989, pp. 17-20.

\bibitem[ST]{ST}I.\ N.\ Stewart and D.\ O.\ Tall:\\Algebraic Number Theory.\\Chapman \& Hall 1979.

\bibitem[Wen1]{Wen1}H.\ Wenzl:\\Representations of Hecke Algebras and Subfactors, thesis\\University of Pennsylvania 1985.

\bibitem[Wen2]{Wen2}H.\ Wenzl:\\Hecke Algebras of Type $A_{n}$ and Subfactors.\\Inventiones  mathematicae 92, 1989, pp. 349--383.


\end{thebibliography}
 \end{document}